\documentclass[11pt,a4paper]{amsart} % draft

% précompilation : latex -ini -job-name="envir" "&latex envir.tex\dump"

\usepackage[T1]{fontenc}
 \usepackage[english]{babel}
 \usepackage[latin1]{inputenc}
\usepackage[dvips]{graphics}
\usepackage{xcolor}
\usepackage{url}
\usepackage{stmaryrd}
\usepackage{rotating}
\usepackage{longtable}
\usepackage{multicol}
\usepackage{etoolbox}

\usepackage{amsmath}
\usepackage{amsthm}
\usepackage{amssymb}
\usepackage{bbm}
\usepackage{enumerate}
\usepackage[shortlabels]{enumitem} \setlist{leftmargin=5.5mm}
\usepackage[all,poly,necula]{xy} 
% \usepackage{ifthen}
% \usepackage{wrapfig} 
% 
% \DeclareSymbolFont{bbdold}{U}{bbold}{m}{n}
% \DeclareSymbolFontAlphabet{\mathbbd}{bbdold}
% \newcommand{\un}{\mathbbd 1}

\theoremstyle{plain}
\newtheorem{theorem}{Theorem}[section]
\newtheorem{theoremi}{Theorem}

\newtheorem{proposition}[theorem]{Proposition}
\newtheorem{lemma}[theorem]{Lemma}
\newtheorem{corollary}[theorem]{Corollary}

\theoremstyle{definition}
\newtheorem{definition}[theorem]{Definition}
\newtheorem{example}[theorem]{Example}

\theoremstyle{remark}
\newtheorem{remark}[theorem]{Remark}

\DeclareMathOperator{\id}{Id}

\DeclareMathOperator{\ima}{im}

\DeclareMathOperator{\Hom}{Hom}

\DeclareMathOperator{\End}{End}

\DeclareMathOperator{\add}{add}

\DeclareMathOperator{\op}{op}

\DeclareMathOperator{\soc}{soc}

\DeclareMathOperator{\Kb}{K^b}
\DeclareMathOperator{\proj}{proj}

\newenvironment{sbmatrix}{\left[\begin{smallmatrix}}{\end{smallmatrix}\right]}

\renewcommand{\leq}{\leqslant}
\renewcommand{\geq}{\geqslant}

\newcommand{\Br}{\mathcal{B}}

\newcommand{\Er}{\mathcal{E}}

\renewcommand{\Pr}{\mathcal{P}}
\newcommand{\M}{\mathbb{M}}
\newcommand{\N}{\mathbb{N}}
\newcommand{\Z}{\mathbb{Z}}
\newcommand{\Q}{\mathbb{Q}}
\newcommand{\R}{\mathbb{R}}

\newcommand{\e}{\mathbf{e}}

\renewcommand{\P}{\mathbb{P}}

\renewcommand{\epsilon}{\varepsilon}

\newcommand{\RR}{\mathcal{R}}
\newcommand{\tens}{\otimes}

\renewcommand{\phi}{\varphi}

\renewcommand{\bar}[1]{\overline{#1}}

\newcommand{\comment}[1]{}

\newcommand{\Gs}{\Gamma_\sigma}
\newcommand{\Gso}{\Gamma_\sigma^\circ}

\newcommand{\uQ}{\underline{\smash{Q}}}

\newcommand{\surj}{\twoheadrightarrow}

\renewcommand{\tilde}[1]{\smash{\widetilde{#1}}}

\definecolor{orange}{rgb}{1,0.90,0.5}
\setlength{\marginparwidth}{1.5cm}
\let\oldmarginpar\marginpar
\renewcommand\marginpar[1]{(**)\oldmarginpar[\raggedleft\footnotesize \fcolorbox{blue}{orange}{\parbox{\marginparwidth}{\color{blue}{(**) #1}}}]%
{\raggedright\footnotesize \fcolorbox{blue}{orange}{\parbox{\marginparwidth}{\color{blue}{(**) #1}}}}}

\newcommand{\forloop}[5][1] { \setcounter{#2}{#3} \ifthenelse{#4} { #5 \addtocounter{#2}{#1} \forloop[#1]{#2}{\value{#2}}{#4}{#5} }{ } } 
\newcounter{i}
\newcounter{j}
\newcommand{\dets}[4]{
    \displaystyle \left|\;
    \begin{matrix}
      \forloop{i}{1}{\value{i} < 7}{
       \forloop{j}{1}{\value{j} < 7}{
        \ifthenelse{\value{i} < #1 \or \value{i} > #2 \or \value{j} < #3 \or \value{j} > #4}{\ifthenelse{\value{i}<\value{j}}{\cdot}{\ifthenelse{\value{i} = \value{j}}{1}{}}}{\bullet} \ifthenelse{\value{j}<6}{&}{}
       } \ifthenelse{\value{i}<6}{\\}{}
      }
    \end{matrix}
    \;\right|^{\vphantom{M^{M^M}}}_{\vphantom{M_{M_M}}}}

\makeatletter
\def\overarrowb@#1#2#3{\vbox{\vspace*{-.3ex}\ialign{##\crcr\vspace*{-.3ex}#1#2\crcr
 \noalign{\nointerlineskip}$\m@th\hfil#2#3\hfil$\crcr}}}
\newcommand{\amsvect}{%
  \mathpalette{\overarrowb@\rightarrowfill@}}
\makeatother

\newcommand{\io}[1]{\mathopen] #1 \mathclose[}
\newcommand{\ic}[1]{\mathopen[ #1 \mathclose]}
\newcommand{\vct}[1]{\amsvect{#1}}

\newcommand{\rs}[1]{\setcounter{enumi}{#1}}
\renewcommand{\mod}{\operatorname{mod}}

\newsavebox\locboxinminipage
\newlength\locboxinminipagel
\newcommand{\boxinminipage}[1]
{%
 \sbox\locboxinminipage{#1}%
 \settowidth\locboxinminipagel{\usebox{\locboxinminipage}}%
 \begin{minipage}{\locboxinminipagel}\usebox{\locboxinminipage}\end{minipage}%
}

\newenvironment{itemizedec}{
  \begin{itemize}[labelindent=1cm,leftmargin=*]}{\end{itemize}}

% \numberwithin{equation}{section}

\AtBeginEnvironment{figure}{\stepcounter{theorem}}
\AtBeginEnvironment{equation}{\stepcounter{theorem}}

% \usepackage[color, notcite]{showkeys} \definecolor{labelkey}{rgb}{1,0,0} 
% \usepackage[color=magenta, width=150pt, height=0.5\baselineskip]{overcolored}
% \newxycolor{grar}{.5 .5 .5 rgb}

% \usepackage{layouts}

\begin{document}

% \begin{figure}
% \listdiagram
% \caption{List parameters}
% \end{figure}

\setlength{\parindent}{5mm}
\title[Algebras of partial triangulations]{Algebras of partial triangulations}
\author{Laurent Demonet}
\address{Graduate School of Mathematics, Nagoya University, Furo-cho, Chikusa-ku, 464-8602 Nagoya, Japan}
\email{Laurent.Demonet@normalesup.org}

\date{}

\begin{abstract}
 We introduce two classes of algebras coming from partial triangulations of marked surfaces. The first one, called frozen algebra of a partial triangulation, is generally of infinite rank and contains frozen Jacobian algebras of triangulations of marked surfaces. The second one, called algebra of a partial triangulation, is always of (explicit) finite rank and contains classical Jacobian algebras of triangulations of marked surfaces and Brauer graph algebras. We classify the partial triangulations, depending on the complexity of their frozen algebras (some are free of finite rank, some are lattices over a formal power series ring and most of them are not finitely generated over their centre). For algebras of partial triangulations, we prove that they are symmetric when the surface has no boundary. From a more representation theoretical point of view, we prove that these algebras of partial triangulations are of tame representation type and we define a combinatorial operation on partial triangulation, generalizing Kauer moves of Brauer graphs and flips of triangulations, which give derived equivalences of the corresponding algebras.
\end{abstract}

\maketitle

\setcounter{tocdepth}{1}
\tableofcontents

\section{Introduction}

The aim of this paper is to introduce two new classes of algebras, called \emph{frozen} and \emph{algebras of a partial triangulation}, generalizing Brauer graph algebras on the one hand and Jacobian algebras coming from triangulations of surfaces on the other hand. Then we give some properties of these algebras which justify the interest of their study.

In the forties, Brauer introduced \emph{Brauer tree algebras} which are of finite representation type. These algebras have been then generalized by various authors to \emph{Brauer graph algebras}. Brauer graph algebras are defined from the combinatorial datum of a ribbon graph and have many nice properties: they are finite dimensional and symmetric, they are of tame representation type with completely classified modules, and their tilting theory is well understood. For more details about Brauer graph algebras, see for example \cite{AdAiCh, Ka98, Ro98, WaWa85}. Some generalizations, going in different direction than this paper, have already been proposed (see for example \cite{GrSc}).

On the other hand, \emph{Jacobian algebras} have been introduced more recently \cite{DeWeZe08}, and in particular \emph{Jacobian algebras of triangulations of surfaces} \cite{La09, CeLa12}. These algebras are defined by using triangulations of oriented surfaces with marked points (with or without boundary). They share many nice properties with Brauer graph algebras. They are finite dimensional, they are symmetric when the surface have no boundary \cite{La}, they are tame \cite{GeLaSc16}. Moreover, in certain cases, their derived equivalence classes are understood \cite{La2}.

We will now give an overview of this paper. All along, $k$ is a commutative ring with unit. We fix a compact connected oriented surface $\Sigma$ with or without boundary and a non-empty finite set $\M$ of marked points. For each 
$M \in \M$, we fix $m_M \in \N_{>0}$ and $\lambda_M \in k$ invertible. We have to exclude few degenerated cases for simplicity (see beginning of Section \ref{algparttri}). A \emph{partial triangulation} $\sigma$ of $(\Sigma, \M)$ is, roughly speaking, a subset of a triangulation of $(\Sigma, \M)$. 

To any partial triangulation, in Section \ref{algparttri}, we associate a quiver $Q_\sigma$, the vertices of which are indexed by the edges of $\sigma$ and the arrows of which winds counter-clockwisely around marked points. Then we get the \emph{frozen algebra $\Gamma_\sigma = \Gamma_\sigma^\lambda$ associated with the partial triangulation $\sigma$} by factoring out some relations in the (complete) path algebra of $Q_\sigma$. 
% In general, $\Gamma_\sigma$ is of infinite length. 

In Section \ref{nonfroz}, we introduce the \emph{algebra $\Delta_\sigma = \Delta_\sigma^\lambda$ associated with $\sigma$}. It is the quotient of $\Gamma_\sigma$ by the ideal generated by the idempotent corresponding to the boundary of $\Sigma$. An important structural result about $\Gamma_\sigma$ and $\Delta_\sigma$, which permits to do inductive arguments, is the following one:
\begin{theoremi}[Theorem \ref{subtri} and Corollary \ref{subtri2}]
 Let $\tau \subset \sigma$. Then we have 
 $$\Gamma_\tau \cong e_\tau \Gamma_\sigma e_\tau \quad \text{and} \quad \Delta_\tau \cong e_\tau \Delta_\sigma e_\tau$$
 where $e_\tau$ is the idempotent of $\Gamma_\sigma$ or $\Delta_\sigma$ corresponding to arcs in $\tau$. 
\end{theoremi}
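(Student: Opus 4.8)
The plan is to establish the isomorphism for $\Gamma$ first and then deduce the statement for $\Delta$ by quotienting out the boundary idempotent. \textbf{Reduction to a single arc.} For a chain $\rho \subset \tau \subset \sigma$ the vertex set of $\rho$ sits inside that of $\tau$, so $e_\rho e_\tau = e_\tau e_\rho = e_\rho$ and hence $e_\rho \Gamma_\sigma e_\rho = e_\rho (e_\tau \Gamma_\sigma e_\tau) e_\rho$. If the result is known for the pairs $(\tau,\sigma)$ and $(\rho,\tau)$, composing the two isomorphisms yields it for $(\rho,\sigma)$. Therefore, by induction on $\card(\sigma \setminus \tau)$, I may assume $\tau = \sigma \setminus \{i\}$ for a single arc $i$, so that $e_\tau = 1 - e_i$.

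\textbf{Construction of the comparison map.} I would first compare the quivers $Q_\sigma$ and $Q_\tau$. Since the arrows of $Q_\sigma$ turn counter-clockwise around the marked points, at each marked point $M$ incident to $i$ the arc $i$ has a well-defined predecessor and successor in the cyclic order of arcs around $M$, and deleting $i$ merges these two into a single arrow of $Q_\tau$. This suggests defining a continuous $k$-algebra homomorphism $\phi \colon \Gamma_\tau \to e_\tau \Gamma_\sigma e_\tau$ at the level of complete path algebras: every arrow of $Q_\tau$ that is not created by a merging is sent to the corresponding arrow of $Q_\sigma$, while each merging arrow is sent to the length-two path of $Q_\sigma$ running through the vertex $i$ (which indeed has both endpoints in $\tau$). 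The first genuine task is to check that $\phi$ annihilates the defining relations of $\Gamma_\tau$; this is a local computation around each marked point, comparing the two families of relations (the commutativity relations equating the cycles around the two marked points at a vertex, up to the scalars $\lambda_M$, and the relations governed by the multiplicities $m_M$) before and after contracting $i$.

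\textbf{Bijectivity.} It then remains to prove that $\phi$ is an isomorphism, which I expect to be the heart of the argument. Injectivity and surjectivity are best handled simultaneously by comparing explicit bases of nonzero reduced paths, which are available since the paper computes the rank of these algebras. The vertex $i$ carries exactly one incoming and one outgoing arrow for each marked point incident to $i$ (generically two of each), so any path of $Q_\sigma$ with both endpoints in $\tau$ that visits $i$ must enter and leave through such arrows; the only subtlety is a path looping at $i$. Using the commutativity relations at $i$ one rewrites a loop based at $i$ as a cycle that can be pushed off $i$, while the $m_M$-relations control when it vanishes, so that every $\tau$-to-$\tau$ path is congruent to the $\phi$-image of a path of $Q_\tau$. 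This gives surjectivity and matches the two bases, hence injectivity. The main obstacle is precisely this rewriting: one must treat carefully the degenerate local configurations (a loop at a marked point, an arc with both ends at the same point, or an arc meeting the boundary) and ensure all manipulations are compatible with the completion, i.e. that $\phi$ and its inverse are continuous and that the infinite sums occurring in $e_\tau \Gamma_\sigma e_\tau$ converge.

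\textbf{The case of $\Delta$.} Finally, recall that $\Delta_\sigma = \Gamma_\sigma/(e_\partial)$, where $e_\partial$ is the boundary idempotent, and that the boundary arcs of $\tau$ and $\sigma$ coincide, so $e_\partial$ is an idempotent below $e_\tau$ and $\phi$ is compatible with it. Since for an idempotent $e$ and a two-sided ideal $I$ of an algebra $A$ one has $e(A/I)e \cong eAe/(eAe \cap I)$, I would identify $e_\tau \Gamma_\sigma e_\tau \cap (e_\partial)$ with the ideal $(e_\partial)$ of $e_\tau \Gamma_\sigma e_\tau \cong \Gamma_\tau$, which reduces the corollary to the isomorphism already proved for $\Gamma$. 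Verifying this identification of ideals is routine once the $\Gamma$ statement is in hand.
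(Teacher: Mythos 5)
Your setup matches the paper's: the reduction to $\card(\sigma\setminus\tau)=1$, the naive comparison map sending each arrow $\ic{\vec u,\vec v}$ of $Q_\tau$ to the corresponding path of $Q_\sigma$ through the removed vertex, and the derivation of the $\Delta$-statement from the $\Gamma$-statement via $e(A/(e_0))e \cong eAe/(eAe\, e_0\, eAe)$ are all exactly how the paper proceeds (and the well-definedness of $\phi$ is even easier than your ``local computation'': every polygon of $\tau$ is a polygon of $\sigma$ with the same set of punctures, so every defining relation of $\Gamma_\tau$ literally holds in $\Gamma_\sigma$). But your bijectivity argument has a genuine gap, in two respects. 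First, the basis comparison you invoke is unavailable and circular: the paper has no basis for $\Gamma_\sigma$ at all --- by Theorem \ref{classiford2} it is generally of infinite rank and not even module-finite over its centre --- and the only basis result, Theorem \ref{basisDsig} for $\Delta_\sigma$, is proved \emph{later} and its proof repeatedly reduces to two arcs via Corollary \ref{subtri2}, i.e.\ via the very statement you are proving. The only basis actually at hand is Proposition \ref{Csig} for the intermediate algebra $\Gamma^\circ_\sigma$, before the polygon relations $I_\sigma$ are imposed; it yields injectivity of $\Gamma^\circ_\tau \hookrightarrow e_\tau\Gamma^\circ_\sigma e_\tau$ but says nothing after quotienting by $I_\tau$ and $I_\sigma$.

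Second, your rewriting heuristic (``push loops off $i$ using the commutativity relations'') never engages the actual defining relations, namely the polygon relations $R_{P,i}$, and this is where all the difficulty sits: removing the arc $u$ merges the two minimal polygons $P'$, $P''$ containing $\vec u$ and $-\vec u$ into a polygon $P$ of $\tau$, and one must show that the naive identification carries $e_\tau I_\sigma e_\tau$ \emph{exactly onto} $I_\tau$ --- surjectivity of $\phi$ is indeed the easy half, but proving that the image ideal is not strictly larger (equivalently, constructing the inverse $\psi^{\circ\circ}$ on all generators of $e_\tau kQ_\sigma e_\tau$ and checking it kills $e_\tau(I_\sigma + I^\circ_\sigma)e_\tau$) occupies most of Subsections \ref{proofmainthmsa}--\ref{proofmainthmsb}. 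It requires Theorem \ref{thmmin} (reduction of $I_\sigma$ to minimal polygons, itself nontrivial), the classification of reduction situations (Ra), (Rb), (Rc), and Lemma \ref{redideal}, where the comparison $R_{P,i}\kappa - \alpha_{i+1}R_{P',i}\beta_i \in I_{P,u} + S^P$ only holds up to an invertible factor $\kappa = 1 - \lambda_\M e$ in the degenerate sphere-with-four-punctures case --- this is precisely where the standing hypothesis that $\nu_\M$ is invertible is used, and where special monogons force corrections; moreover the mixed case (Rc) with $0 < \#\M_{P'} < \infty$ needs a separate induction on punctures through an auxiliary enlarged triangulation. None of these phenomena (special monogons, polygons with holes, the $\nu_\M$-obstruction) are visible in your sketch, so as written the proposal would not close: it proves well-definedness and, plausibly, surjectivity, but not injectivity.
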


In Sections \ref{casetri} and \ref{brauer}, we give the following results, which can be seen as the first motivation to introduce $\Gamma_\sigma$ and $\Delta_\sigma$:
\begin{theoremi}[Theorems \ref{thmmin2} and \ref{thmbrauer}] \label{recov}
 \begin{enumerate}[\rm (1)]
  \item If all $m_M$ are invertible in $k$ and $\sigma$ is a triangulation, then $\Gamma_\sigma$ (respectively $\Delta_\sigma$) is the frozen (respectively classical) Jacobian algebra of a quiver with potential.
  \item If $m_M = 1$ for every marked point $M$ and $\sigma$ is a triangulation, then $\Gamma_\sigma$ and $\Delta_\sigma$ correspond to the quiver with potential introduced in \cite{La09}.
  \item If $\sigma$ is sparse (that is ``far'' from a triangulation, see Definition \ref{sparse}), then $\Delta_\sigma$ is the Brauer graph algebra of the ribbon graph underlying to $\sigma$.
  \item Every Brauer graph algebra is obtained from a sparse partial triangulation of a surface without boundary.
 \end{enumerate}
\end{theoremi}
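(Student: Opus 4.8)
The four assertions are all \emph{identifications}: each compares the explicit presentation of $\Gamma_\sigma$ (respectively $\Delta_\sigma$) by the quiver $Q_\sigma$ with relations against a previously known presentation. The uniform plan is therefore to match the two sides generator by generator — first the vertices and arrows, then the defining relations — and I would systematically use the localisation $\Gamma_\tau \cong e_\tau \Gamma_\sigma e_\tau$ of the preceding theorem to reduce each relation check to the star of a single marked point or a single triangle, where everything is finite and combinatorial.

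For (1) I would begin with the case $\sigma$ a triangulation, where every arrow of $Q_\sigma$ lies on exactly one counter-clockwise cycle around a marked point and the arrows assemble into the standard quiver of the triangulated surface. I would then exhibit the potential
\[
W_\sigma \;=\; \sum_{T} \gamma_T \;+\; \sum_{M\in\M}\lambda_M\, c_M^{\,m_M},
\]
where $\gamma_T$ is the $3$-cycle of an internal triangle $T$ and $c_M$ is the full counter-clockwise cycle of arrows around $M$. The crux is to verify that the cyclic derivatives $\partial_a W_\sigma$ generate precisely the ideal defining $\Gamma_\sigma$; here the invertibility of each $m_M$ is exactly what is used, since differentiating $\lambda_M c_M^{\,m_M}$ produces a factor $m_M\lambda_M$ times the complementary path, and one must divide by $m_M$ to normalise it to the relation imposed in the construction of $\Gamma_\sigma$. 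Passing to the quotient by the boundary idempotent $e_{\partial}$ corresponds on the Jacobian side to deleting the frozen (boundary) vertices, which turns the frozen Jacobian algebra into the classical one, giving the statement for $\Delta_\sigma$. Statement (2) is then the specialisation $m_M=1$: now $c_M^{\,m_M}=c_M$, invertibility is automatic, and $W_\sigma$ should be matched term by term with the potential written down in \cite{La09}, which I would present as a dictionary check once the quivers are identified.

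For (3) the sparse hypothesis forces $Q_\sigma$, after deleting $e_{\partial}$, to reduce to the disjoint cycles around the marked points with no surviving triangle $3$-cycles, and this is exactly the shape of the quiver of the Brauer graph algebra of the underlying ribbon graph, with the $m_M$ reading off the vertex multiplicities and the $\lambda_M$ the scalar parameters. I would then match the two families of relations. I expect this step to be the main obstacle: the condition that the $m_M$-th power of one cycle through an arc equals the power of the opposite cycle, together with the socle (length) relations killing paths that wind around two distinct vertices, is encoded quite differently in the Brauer graph presentation than in the relations of $\Delta_\sigma$, and reconciling them requires careful bookkeeping of path lengths and of the scalars produced by the cyclic derivatives.

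Finally, (4) is the converse and is handled by an explicit geometric construction feeding into (3). Given a ribbon graph $G$ with its multiplicity and scalar data, I would fatten $G$ using its ribbon (cyclic-ordering) structure to obtain an oriented surface with boundary, then cap each boundary component with a disc to produce a closed oriented surface $\Sigma$; I would take $\M$ to be the vertices of $G$ with the prescribed $m_M$ and $\lambda_M$, and let $\sigma$ be the partial triangulation whose arcs are the edges of $G$. One checks that this $\sigma$ is sparse, so that (3) identifies $\Delta_\sigma$ with the given Brauer graph algebra. The only delicate points are the verification that capping off leaves the local counter-clockwise cyclic structure at each vertex unchanged and that the excluded degenerate configurations do not arise, which I would dispose of by a short case analysis.
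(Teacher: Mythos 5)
The most concrete failure is in (4): capping each boundary component of the fattened ribbon graph with a plain disc destroys exactly the hypothesis you need. A face of the ribbon graph that is a triangle becomes, after capping, a triangle of $\sigma$ without puncture, so $\sigma$ is no longer sparse and the nonzero term $f_{\vec v} = \io{-\vec u, \vec w}$ enters the relations, so $\Delta_\sigma$ is \emph{not} the Brauer graph algebra; monogon or digon faces become contractible or pairwise homotopic arcs (no longer a legal partial triangulation); and small graphs produce the excluded degenerate surfaces (e.g.\ a sphere with $\#\M < 3$). These configurations genuinely arise — take the Brauer graph that is a single triangle — so no short case analysis disposes of them. The paper avoids this precisely by \emph{not} capping: in the quotient of glued $2d_M$-gons every face retains a hole, so every polygon $P$ of $\sigma$ has $\#\M_P = \infty$ and all polygon relations degenerate to the zero-products $\omega_{i+1}^P \omega_i^P$, which are exactly the Brauer relations independently of face size; to reach a closed surface as in the introduction one then caps with polygons carrying \emph{two} punctures each (Definition \ref{defsp}), which keeps $\#\M_P \geq 2$ for every face and hence preserves sparseness. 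Your fix is therefore not bookkeeping but a different construction.

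For (1) and (3) your skeleton matches the paper, but the load-bearing step is missing in each. The ideal $I_\sigma$ is generated by the $R_{P,i}$ for \emph{all} polygons of $\sigma$ — arbitrarily large ones and polygons with holes included — whereas the cyclic derivatives of $W_\sigma$ only produce the relations of minimal triangles; the assertion that these already generate $I_\sigma$ is precisely Theorem \ref{thmmin}, whose proof requires the reduction-situation machinery of Subsection \ref{proofmainthmsa}, and the restriction isomorphism $\Gamma_\tau \cong e_\tau \Gamma_\sigma e_\tau$ you invoke does not yield it (it restricts to subtriangulations; it does not show non-minimal relations are redundant). Likewise your (3) correctly names the relation-matching as the main obstacle but leaves it open; in the paper it is discharged at once by the completion-free presentation of Theorem \ref{altpres2} (built on Proposition \ref{altpres0}), since under sparseness every correction term $f_{\vec v}$ vanishes and $\RR_{\vec v} = \ic{\vec u, -\vec v}\ic{\vec v, -\vec w}$ is exactly the Brauer relation. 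Two further slips: the potential must be $\sum_P \omega_3^P\omega_2^P\omega_1^P - \sum_M (\lambda_M/m_M)\,\alpha_M^{m_M}$ — invertibility of $m_M$ is used to \emph{write} this coefficient so that $\partial_\alpha W_\sigma = R_{P,1}$ on the nose; with your $+\lambda_M c_M^{m_M}$ the derivative carries relative coefficient $+m_M\lambda_M$ instead of $-\lambda_M$, so you recover $\Gamma_\sigma$ only after rescaling the $\lambda_M$. And in (2), when the triangulation contains self-folded triangles $\Gamma_\sigma$ is non-basic and only \emph{Morita equivalent} to the QP of \cite{La09, CeLa12}; this needs the idempotent-splitting construction of Section \ref{casetri}, not a dictionary check.
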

In addition to Theorem \ref{recov}, notice that \emph{tiling algebras}, introduced in \cite{GaMC}, are also special cases of algebras of partial triangulations.
For the definition of a frozen Jacobian algebra of a quiver with potential, see for example \cite{BuIyReSm11, DeLu16, DeLu16-2} (see also Section \ref{casetri}). We use the expression ``classical Jacobian algebra'' to refer to usual Jacobian algebras of a quiver with potential as defined in \cite{DeWeZe08}.

In Section \ref{order}, we classify partial triangulations $\sigma$ in function of the complexity of $\Gamma_\sigma$. We prove that:
\begin{theoremi}[Theorem \ref{classiford2}]
 If $\sigma$ is connected, we have
 \begin{enumerate}[\rm (1)]
  \item $\Gamma_\sigma$ is free of finite rank as a $k$-module if $\sigma$ is not connected to the boundary of $\Sigma$.
  \item $\Gamma_\sigma$ is a lattice over $k \llbracket x \rrbracket$ (\emph{i.e.} free of finite rank) over $k \llbracket x \rrbracket$) if
   \begin{itemize}
    \item[] $\Sigma$ is a polygon with no puncture and at most one $m_M$ greater than $1$;
    \item[or] $\Sigma$ is a polygon with one puncture and $m_M = 1$ for all $M$ on the boundary;
    \item[or] all $M$ on the boundary of $\Sigma$ satisfy $m_M = 1$ and all arcs in $\sigma$ are homotopic to a part of the boundary.
   \end{itemize} 
  \item $\Gamma_\sigma$ is not finitely generated over its centre in any other case. %\rem{it is likely that we can say more: I do not expect anything like $\Gs/torsion$ is f.g.}
 \end{enumerate}
\end{theoremi}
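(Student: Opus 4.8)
The plan is to read off the structure of $\Gamma_\sigma$ from an explicit $k$-basis of walks in $Q_\sigma$ taken modulo the defining relations, and to track the two possible sources of unbounded length: windings around interior marked points (punctures) and windings along the boundary of $\Sigma$. The defining relations bound how many times a walk may wind around a puncture $M$ in terms of $m_M$, so a walk that only winds around punctures has bounded length and contributes a finite number of basis elements; by contrast a walk that follows a boundary component of $\Sigma$ (or winds around a boundary marked point of multiplicity $>1$, which the frozen structure does not truncate) may be iterated indefinitely. The whole classification then reduces to counting how many \emph{independent} unbounded boundary directions occur, and deciding whether the element they generate is central.

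First I would settle case~(1). If $\sigma$ is not connected to the boundary, every arc is interior and every oriented cycle of $Q_\sigma$ winds only around punctures, hence is nilpotent by the remark above; the walk basis is therefore finite and $\Gamma_\sigma$ is free of finite rank over $k$, with an explicit rank obtained by summing the admissible windings bounded by the $m_M$. For case~(2), in each of the three listed geometries there is exactly one unbounded direction, and I would exhibit a single element $x \in \Gamma_\sigma$ generating it, realised as a full loop along the boundary, and check by direct computation in the basis that $x$ is central and that $\Gamma_\sigma$ is free of finite rank over $k \llbracket x \rrbracket$, the rank being the finite number of basis walks lying in each fixed power of $x$. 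The hypotheses of case~(2) --- polygon, at most one $m_M>1$, arcs homotopic to the boundary --- are precisely what force this loop to commute with every arrow; centrality is the only delicate point here and it fails as soon as a second independent loop, a second nontrivial boundary point, or a second puncture is present.

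The remaining case~(3) is the main obstacle, and I would attack it by the contrapositive, using the subtriangulation isomorphism $\Gamma_\tau \cong e_\tau \Gamma_\sigma e_\tau$ of Theorem~\ref{subtri}. The key elementary fact is that finite generation over the centre passes to corners: if $A = \sum_{i} Z(A)\, g_i$ with finitely many $g_i$, then since every $z \in Z(A)$ commutes with $e$ one has $z e \in Z(eAe)$ and $eAe = \sum_i Z(eAe)\,(e g_i e)$, so $eAe$ is again finitely generated over its centre. Hence it suffices to show that every $\sigma$ not covered by~(1)--(2) contains a subtriangulation $\tau$ for which $\Gamma_\tau$ is \emph{not} finitely generated over its centre, and to verify this for the finitely many minimal offending configurations, such as an annulus, a twice-punctured polygon, or a polygon carrying two boundary marked points of multiplicity $m_M>1$.

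The genuine work is the explicit computation of $Z(\Gamma_\tau)$ in these minimal cases. In each of them one finds two competing unbounded families of walks, and I would show that the centre reduces to power series in a single symmetric combination of the two loops, while $\Gamma_\tau$ contains walks of arbitrarily high and \emph{unequal} degree in the two loop directions which remain linearly independent modulo this centre precisely because the two loops fail to commute. This produces infinitely many module generators over $Z(\Gamma_\tau)$ and hence over $Z(\Gamma_\sigma)$. Pinning down the centre exactly in the minimal configurations, and checking that each such configuration genuinely embeds as a subtriangulation in every surface falling under case~(3), is where the combinatorial bookkeeping is heaviest and is the step I expect to require the most care.
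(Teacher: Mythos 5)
Your overall architecture coincides with the paper's: case (1) via the rank formula (Corollary \ref{dimd}), case (2) via a central element built from a full cycle along the boundary (the paper's big cycles $S_u$ and $U_\sigma$ of Lemma \ref{centreord}), and case (3) via corner reduction through Theorem \ref{subtri} --- and your proof that finite generation over the centre passes to corners $eAe$ is correct, a fact the paper uses only implicitly when it cuts $\sigma$ down. The first genuine gap is the foundation you build everything on: an ``explicit $k$-basis of walks in $Q_\sigma$ taken modulo the defining relations'' for $\Gamma_\sigma$ does not exist in the paper and is not easy to obtain. Proposition \ref{Csig} gives a basis of $\Gamma^\circ_\sigma$ over $k[x]$ \emph{before} the polygon relations $R_{P,i}$ are imposed, and Theorem \ref{basisDsig} gives a basis of $\Delta_\sigma$ only, after all boundary idempotents are killed; a normal form for $\Gamma_\sigma$ with the full ideal $I_\sigma$ is precisely what is hard, and the paper's proofs of (2) and (3) are engineered to avoid needing one. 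For (2) the paper reduces to two arcs via Theorem \ref{subtri}, inducts on boundary marked points with Lemma \ref{inducord}, and exhibits explicit matrix lattice presentations (Proposition \ref{boundord}, Lemma \ref{bascases}); your ``direct computation in the basis'' of centrality and of freeness over $k\llbracket x \rrbracket$ has no basis to compute in, and you also omit the well-definedness of the boundary loop --- different polygons at $u_0$ a priori give different big cycles, and proving they agree is Lemma \ref{centreord}(1), itself an induction resting on Lemma \ref{inducord}.

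In case (3) the gap is sharper. You propose to compute $Z(\Gamma_\tau)$ in minimal configurations and assert that it ``reduces to power series in a single symmetric combination of the two loops''; this is unproved, and nothing in the relations makes it plausible over an arbitrary commutative ring $k$ (corners of these algebras can carry central nilpotents --- compare the $R[\varepsilon]/(\varepsilon^2)$ entries in Proposition \ref{boundord}(3)). The paper never computes the centre: Lemmas \ref{reford1} and \ref{reford2} choose paths $\alpha$ (respectively $\beta_1, \beta_2$) no factor of which occurs in any defining relation, so the powers $\alpha^\ell$ are visibly linearly independent without any normal form, and then rule out a hypothetical central element $U = \alpha^\ell + U'$ by a leading-term obstruction: every rewriting of $\omega_1^P U$ retains the term $\omega_1^P \alpha^\ell$ and so fails to be right divisible by $\omega_1^P$, contradicting $\omega_1^P U = U \omega_1^P$. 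That mechanism --- which makes the exact centre irrelevant --- is what your sketch lacks, and it is the actual content of (3). Finally, your list of minimal configurations is incomplete (it misses, for instance, a once-punctured polygon with one boundary multiplicity $m_M > 1$, and an essential arc attached to a boundary polygon enclosing at least two punctures or a hole, i.e.\ the cases of Lemmas \ref{reford1} and \ref{reford2}), and since Theorem \ref{subtri} keeps the ambient surface fixed, the configurations cannot be small standalone surfaces such as ``an annulus'' but must be parametrized by the ambient data $\# \M_P$, holes and multiplicities of the boundary polygon.
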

Notice that the second case generalizes slightly \cite{DeLu16, DeLu16-2}. Moreover, if $\sigma$ is not connected, the result can be applied independently to each connected component.

The rest of the paper is dedicated to prove a certain number of properties of $\Delta_\sigma$, already known for Brauer graph algebras and for some Jacobian algebras coming from triangulations. In Theorem \ref{basisDsig}, we give a basis of $\Delta_\sigma$ and we deduce:
\begin{theoremi}[Corollary \ref{dimd}]
 The $k$-algebra $\Delta_\sigma$ is a free $k$-module of rank
 $$\sum_{M \in \M \setminus \P} \frac{d_M(d_M-1)}{2} + \sum_{M \in \P} m_M d_M^2 + f$$
 where $\P \subset \M$ is the set of punctures (\emph{i.e.} non-boundary marked points), for $M \in \M$, $d_M$ is the degree of $M$ in the graph $\sigma$ (without counting boundary components), and $f$ is the number of arcs in $\sigma$ with both endpoints on boundaries. 
\end{theoremi}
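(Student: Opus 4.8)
The statement is a direct consequence of the explicit basis produced in Theorem~\ref{basisDsig}: once we know that $\Delta_\sigma$ has a $k$-basis given by a prescribed set $B$ of paths in $Q_\sigma$, with all structure constants lying in $k$ (the scalars $\lambda_M$ being invertible), freeness as a $k$-module is automatic and it only remains to compute $|B|$. The plan is therefore purely enumerative: I would partition $B$ according to the local combinatorial datum attached to each path and check that the parts reproduce the three summands.

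First I would attach to every nonconstant basis path the marked point it ``rotates around''. Since the arrows of $Q_\sigma$ turn counter-clockwise around the marked points, a nonconstant path in $B$ follows, from its source arc, the fan of arcs incident to a single marked point $M$; I would organise the count by \emph{arc-ends} rather than arcs, so that $\sum_M d_M$ registers incidences with the correct multiplicity (a loop at $M$ contributing $2$ to $d_M$). For a boundary marked point $M \in \M \setminus \P$, the $d_M$ incident arc-ends are \emph{linearly} ordered (the fan runs from one boundary segment to the next, and the boundary idempotent has been killed in $\Delta_\sigma$), so the rotation paths issued from the $i$-th arc have lengths $1,\dots,d_M-i$; summing over the source gives $\sum_{i=1}^{d_M}(d_M-i)=\frac{d_M(d_M-1)}{2}$, the first summand. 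For a puncture $M \in \P$, the $d_M$ incident arc-ends are \emph{cyclically} ordered and the multiplicity $m_M$ allows winding up to the full power of the rotation cycle, of length $m_M d_M$; counting the $d_M$ source arcs against the $m_M d_M$ admissible (positive) lengths gives $m_M d_M^{2}$ per puncture, whence the second summand $\sum_{M\in\P}m_M d_M^{2}$.

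The delicate point, and the step I expect to be the main obstacle, is the bookkeeping of the \emph{degenerate} basis elements — the idempotents $e_a$ and the tops (socles) of the indecomposable projectives — which must each be counted exactly once. The per-end count above tallies each maximal winding once \emph{per end}, hence counts the socle of $P_a$ twice while omitting $e_a$ altogether. At an arc $a$ whose two winding directions both terminate at punctures, or at a puncture and a boundary segment, the two maximal rotations represent the same socle element and are identified up to the invertible scalar; the resulting double count of the socle exactly offsets the omitted idempotent $e_a$, so that the rotation count needs no correction at such arcs (this is already the mechanism underlying the Brauer graph case, where $f=0$ and the total is $\sum_M m_M d_M^2$). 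The remaining case is an arc both of whose endpoints lie on the boundary: here neither fan closes up, the two maximal windings no longer represent a common socle, the cancellation between socle-identification and idempotent breaks down, and one genuinely extra basis vector survives per such arc. Summing this correction over the $f$ arcs with both endpoints on boundaries produces the trailing $+f$.

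Concretely, I would verify directly from the basis of Theorem~\ref{basisDsig} the three claims that drive the count: that at a puncture--puncture or puncture--boundary arc the doubly-counted socle cancels the idempotent, and that at a boundary--boundary arc it does not; the edge cases of loops and of arcs incident to a single marked point at both ends require a separate but routine check that they do not disturb the per-end enumeration. Once these verifications are in place, adding the three partial counts yields the claimed rank, and freeness is inherited from the basis.
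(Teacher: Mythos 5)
Your proposal is correct and takes essentially the paper's own route: the paper gives no separate argument for Corollary~\ref{dimd}, deducing it directly by counting the explicit basis $\Br$ of Theorem~\ref{basisDsig}, which is exactly your enumeration (winding paths sorted by the marked point they turn around, plus the idempotents $e_u$ and the elements $c_u = e_u C_\sigma$). Your per-end organisation and the final arithmetic are sound: $d_M(d_M-1)/2$ paths at each boundary point, $m_M d_M^2 - d_M$ genuine winding paths per puncture, $a$ idempotents and $a_{pp}$ socles, which combine via $\sum_{M\in\P} d_M = 2a_{pp} + a_{pb}$ and $a = a_{pp} + a_{pb} + f$ to the stated rank, with the $+f$ arising precisely from the boundary--boundary arcs as you say.

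One detail in your bookkeeping is misstated, although it does not alter the count. At an arc $u$ with one end at a puncture $M$ and the other on the boundary, there are \emph{not} ``two maximal rotations representing the same socle element'': the fan at the boundary end never closes up, and the unique maximal winding $\ic{\vec u, \vec u}^{m_M}$ at the puncture end is in fact \emph{zero} in $\Delta_\sigma$, since by the relation $C_{\vec u}$ it equals $\lambda_M^{-1}\lambda_{t(\vec u)} \ic{-\vec u, -\vec u}^{m_{t(\vec u)}}$, a path winding around the boundary point $t(\vec u)$ and hence passing through a boundary idempotent killed in $\Delta_\sigma$; consistently, $\Br$ contains $c_u$ only for arcs disconnected from the boundary. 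So the correct mechanism at puncture--boundary arcs is that your per-end count includes one spurious (vanishing) element, which numerically offsets the one omitted idempotent $e_u$ --- the cancellation is $1=1$, not a doubly-counted socle against an idempotent. The direct verification you promise from Theorem~\ref{basisDsig} would catch this, and the claimed rank formula stands.
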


We also get the following property:
\begin{theoremi}[Theorem \ref{symm}]
 If $\sigma$ has no arc incident to the boundary, then $\Delta_\sigma$ is a symmetric $k$-algebra (\emph{i.e.} $\Hom_k(\Delta_\sigma, k) \cong \Delta_\sigma$ as $\Delta_\sigma$-bimodules). 
\end{theoremi}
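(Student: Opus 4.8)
The plan is to exhibit $\Delta_\sigma$ as a symmetric algebra by constructing an explicit symmetrizing form. Recall that a $k$-algebra $A$ which is free of finite rank as a $k$-module is symmetric precisely when there is a $k$-linear map $\tau \colon A \to k$ such that $\tau(ab) = \tau(ba)$ for all $a, b \in A$ and such that the induced bilinear form $(a,b) \mapsto \tau(ab)$ is nondegenerate; indeed, such a $\tau$ produces the bimodule isomorphism $\Delta_\sigma \to \Hom_k(\Delta_\sigma, k)$, $a \mapsto \tau(a \cdot {-})$. Since $\Delta_\sigma$ is free of finite rank by Corollary \ref{dimd}, it suffices to produce such a $\tau$, and I would read off everything from the explicit $k$-basis furnished by Theorem \ref{basisDsig}.

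First I would describe the socle. Under the hypothesis that no arc of $\sigma$ is incident to the boundary, every arc $i$ joins two punctures (possibly equal), and each basis path is obtained by winding counter-clockwise around a marked point, capped by the relations that make winding $m_M$ times around a puncture $M$ act as multiplication by a power of $\lambda_M$. I would show that for each arc $i$ the indecomposable projective $e_i \Delta_\sigma$ has one-dimensional socle, spanned by a distinguished maximal cycle $z_i$: the longest non-zero path at $i$, equivalently the full loop winding $m_M$ times around one endpoint $M$ of $i$. The essential point, to be extracted from the defining relations of $\Gamma_\sigma$ and hence of $\Delta_\sigma$, is that the two maximal loops attached to the two endpoints of $i$ coincide up to a unit in $\Delta_\sigma$, so that $z_i$ is well defined and $\soc(e_i \Delta_\sigma) = k z_i$.

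I would then define $\tau$ by declaring $\tau(z_i)$ to be a suitable unit of $k$ for each arc $i$ and $\tau(b) = 0$ for every other basis element $b$. Two verifications remain. For the trace identity $\tau(ab) = \tau(ba)$ it is enough, by bilinearity, to treat basis paths $a$ and $b$; both products have nonzero $\tau$-value only when $a$ is a path $i \to j$ and $b$ a path $j \to i$ whose concatenations $ab$ and $ba$ are maximal cycles, and here the cyclic nature of winding around a marked point forces $\tau(ab) = \tau(ba)$ once the scalars $\lambda_M$ are matched. For nondegeneracy I would pair each basis path $p \colon i \to j$ with a complementary path $\bar p \colon j \to i$ such that $p \bar p = z_i$, check that this assignment is a bijection of the basis, and observe that with respect to the length filtration the Gram matrix of the form is (anti)unitriangular, hence invertible over $k$; this gives nondegeneracy.

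The main obstacle is the bookkeeping of the scalars $\lambda_M$ and the multiplicities $m_M$ at the two endpoints of each arc, namely proving that the two maximal loops really are identified in $\Delta_\sigma$ and computing the precise unit by which they differ, so that $\tau$ is simultaneously well defined, symmetric and nondegenerate. This is exactly the place where the hypothesis that no arc is incident to the boundary is indispensable: an arc meeting the boundary produces a projective whose top maximal path does not close up into a loop at the same arc, the socle ceases to be generated by such a $z_i$, and the pairing $p \mapsto \bar p$ breaks down---so the algebra is then only Frobenius-like on a subquotient rather than genuinely symmetric.
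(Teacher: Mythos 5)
Your overall plan---read a symmetrizing trace off the explicit basis of Theorem \ref{basisDsig}, supported on distinguished socle elements, and verify trace property and nondegeneracy by pairing basis paths---is exactly the paper's strategy (Definition \ref{trace}, Lemmas \ref{commC} and \ref{commC2}), and your identification of the socle generators is right: the two maximal loops at the endpoints of an arc $u$ are identified by the relations $C_{\vec u}$, and $e_u C_\sigma$ spans the socle there. But the specific form you define is \emph{not} a trace, and this is precisely the pitfall the paper's Definition \ref{trace} is built to avoid. Suppose $-\vec v$ encloses a monogon containing exactly two punctures $M, N$ with $m_M = m_N = 1$ (a ``$2$-special'' arc in the paper's terminology), and $\vec u$ is an oriented arc with $\vec v$, $-\vec v$, $\vec u$ strictly ordered around $s(\vec u)$. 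Then by Lemma \ref{tech2}, $\ic{\vec u, \vec v}\, \ic{-\vec v, \vec u} = \lambda_M \lambda_N\, e_u C_\sigma$, a unit multiple of your socle generator $z_u$, whereas the opposite product $\ic{-\vec v, \vec u}\, \ic{\vec u, \vec v} = \ic{-\vec v, \vec v}$ is a basis element of $\Br$ that does \emph{not} lie in $(C_\sigma)$. So any linear form vanishing on all non-socle basis elements gives $\tau(ab) \neq 0 = \tau(ba)$ here, and no choice of units $\tau(z_i)$ can repair this: the trace must be nonzero on the non-socle elements $\lambda_{M_{\vec u}}^{-1}\lambda_{N_{\vec u}}^{-1}\ic{\vec u, -\vec u}$ as well, which is exactly how the paper defines $\Er$. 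The ``bookkeeping'' you defer is then the genuine content: the full case analysis of Lemmas \ref{tech2} and \ref{commC} (special monogons, punctured digons, the sphere with four punctures where invertibility of $\nu_\M$ is used). The same configurations break your proposed bijection $p \mapsto \bar p$ and hence the claimed (anti)unitriangular Gram matrix.

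A secondary point: $k$ is an arbitrary commutative ring, and the paper does not prove nondegeneracy by a triangular Gram matrix. It uses the criterion of Lemma \ref{caracnondeg} (together with the minor argument of Lemma \ref{zerodivdet}): given $a \neq 0$, nilpotency of $J$ produces $x, x'$ with $xax' \neq 0$ and $J\,xax' = xax'\,J = 0$; Theorem \ref{basisDsig}(3) then forces $xax' \in (C_\sigma)$, hence a nonzero multiple of some $e_u C_\sigma$, on which the trace is nonzero; non-invertible scalars $\mu$ are handled by base change to $k/(\mu k)$. Your closing diagnosis of where the no-boundary-arc hypothesis enters is correct in spirit, and can be made precise: it is exactly what makes Theorem \ref{basisDsig}(3) (the two-sided socle equals $(C_\sigma)$) true, which is the engine of the nondegeneracy argument.
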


%  which permits to conclude that $\Delta_\sigma$ is always free of finite rank over $k$ and to give an explicit formula for the rank of $\Delta_\sigma$ (Corollary \ref{dimd}). We prove that if no arc of $\sigma$ is incident to the boundary of $\Sigma$ then $\Delta_\sigma$ is symmetric as a $k$-algebra (Theorem \ref{symm}). 

We then deal with two representation theoretical questions. In Section \ref{reptype}, we prove:
\begin{theoremi}[Theorem \ref{tame}]
 If $k$ is an algebraically closed field, then $\Delta_\sigma$ is tame (or representation-finite).
\end{theoremi}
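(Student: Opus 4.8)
The plan is to read the representation type directly off the quiver $Q_\sigma$ and its relations, exploiting that $Q_\sigma$ has an essentially biserial shape. Since every arc of $\sigma$ has exactly two endpoints among the marked points, each vertex of $Q_\sigma$ lies on at most two of the counter-clockwise cycles (one around each endpoint), so at most two arrows start and at most two arrows end at every vertex. First I would make the relations explicit in these local terms and show that, away from punctures, they are exactly the monomial ``no-switching'' relations together with the length-bounding relations of a \emph{special biserial} algebra: for each arrow $\alpha$ there is at most one arrow $\beta$ with $\alpha\beta \neq 0$ and at most one $\gamma$ with $\gamma\alpha \neq 0$. Here the explicit basis of Theorem~\ref{basisDsig} and the rank formula of Corollary~\ref{dimd} are essential, since they certify that each cyclic path around a marked point $M$ is bounded in length (it may wind $m_M$ times before vanishing) and that no spurious continuations survive, which is precisely the data needed to verify the special biserial axioms.

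With this structural input the base cases fall out of known results. When all $m_M=1$ and $\sigma$ has no puncture, $\Delta_\sigma$ is special biserial and its tameness (or representation-finiteness) follows from the string-and-band classification of modules \cite{WaWa85}, the dichotomy being decided by counting the band combinatorics. The two extreme configurations are already identified by Theorem~\ref{recov}: sparse partial triangulations yield Brauer graph algebras, which are special biserial and tame, while triangulations yield the Jacobian algebras of \cite{La09}, whose tameness is \cite{GeLaSc16}. I would then handle the general connected case inductively, peeling off arcs via the corner-algebra identification $\Delta_\tau \cong e_\tau \Delta_\sigma e_\tau$ of Corollary~\ref{subtri2}, and using the derived equivalences supplied by the flip/Kauer operation to move $\sigma$ into a convenient normal form within its class (representation type being stable under the relevant equivalences).

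The main obstacle is the presence of punctures with $m_M>1$: there the cyclic relation lets each arrow of the surrounding $d_M$-cycle be traversed up to $m_M$ times, and at each arc two such cycles meet, so $\Delta_\sigma$ is genuinely \emph{not} special biserial. I would treat these points by recognising $\Delta_\sigma$ as a \emph{clannish} (skew-gentle) algebra, with the puncture playing the role of a special cycle carrying the separable relation $C^{m_M}=\lambda_M$ (recall $\lambda_M$ is invertible), which splits the local band combinatorics into finitely many controllable families, and then invoking the string/band calculus for such algebras in the spirit of \cite{GeLaSc16}; alternatively one relates $\Delta_\sigma$ through a flat family to the split special biserial algebra obtained by separating the $m_M$-th roots of $\lambda_M$ and transports tameness across the degeneration. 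Either way the crux, and the step I expect to cost the most work, is proving that these puncture cycles produce only finitely many one-parameter families of band modules — never a wild, several-parameter configuration — and that this count is consistent with the rank in Corollary~\ref{dimd}; once that local analysis is secured, tameness of $\Delta_\sigma$ follows globally.
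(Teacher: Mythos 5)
Your structural premise is wrong, and it undermines the whole plan. Away from punctures the relations are \emph{not} monomial: every minimal triangle without puncture imposes $\ic{\vec u, -\vec v}\,\ic{\vec v, -\vec w} = \io{-\vec u, \vec w}$ (see Theorem \ref{altpres2} and Case a of Figure \ref{polnonz}), so both compositions through a vertex survive and the special biserial axiom ``at most one $\beta$ with $\alpha\beta \neq 0$'' fails. This is precisely why Jacobian algebras of triangulated surfaces are not special biserial and why \cite{GeLaSc16} could not argue by string-and-band combinatorics. Your induction also points the wrong way in places: ``peeling off arcs'' only transports tameness downward ($\Delta_\tau = e_\tau \Delta_\sigma e_\tau$ tame when $\Delta_\sigma$ is), so after completing $\sigma$ to a triangulation everything rests on the triangulation case with arbitrary multiplicities $m_M$ --- which is exactly the new content of Theorem \ref{tame}, not covered by \cite{GeLaSc16} (which assumes $m_M = 1$) nor by the clannish/skew-gentle recognition you sketch. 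That recognition is unsubstantiated: the puncture contributes a cycle of length $d_M$ wound $m_M$ times, not a special loop with a quadratic (idempotent or involutive) relation, so the clannish axioms do not hold as stated, and no reduction to skew-gentle band calculus is available.

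What the paper actually does, after the same two reductions you identify (kill the boundary via the augmented surface of Definition \ref{defsp}, then complete to a triangulation via Corollary \ref{subtri2}), is a one-parameter degeneration: it sets $A = k\uQ_\sigma/(JC_\sigma + (C_{\vec v})_{\vec v})$ and deforms the triangle relations to $\RR_{\vec v}(x)$ so that $A_x \cong \Delta_\sigma$ for all $x \neq 0$, while $A_0$ is a quotient of the Brauer graph algebra of $\sigma$, hence tame by \cite{WaWa85}; Crawley-Boevey's Theorem B (Theorem \ref{thcb}) then transports tameness from the fibre $x=0$. The technical heart, which your proposal has no substitute for, is Lemma \ref{pl}: a weight function $p: Q_{\sigma,1} \to \N_{>0}$ with $\kappa_P < \kappa$ (resp.\ $> \kappa$ for special self-folded triangles) making the arrow-rescaling $q \mapsto x^{p_q} q$ an isomorphism $\Delta_\sigma \to A_x$; its existence hinges on the $1/a + 1/b + 1/c < 1$ trichotomy and genuinely fails for the two triangulations of the four-punctured sphere in Figure \ref{spectria}, which the paper must handle separately via the flip derived equivalence (Theorem \ref{thmtilt}) together with Rickard's result \cite{Ri89b} that derived equivalence of self-injective algebras implies stable equivalence. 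Your alternative ``flat family separating the $m_M$-th roots of $\lambda_M$'' is not this family, there is no argument that its special fibre is known tame or that the generic fibres are constant (both hypotheses of Theorem \ref{thcb}), and you have no mechanism for the exceptional sphere cases. So while you correctly spot several ingredients (the idempotent reductions, the Brauer-graph and \cite{GeLaSc16} anchors, a CB-style degeneration as a fallback, the relevance of flips), the proof's core --- the weight lemma, the explicit degeneration to the Brauer graph quotient, and the exceptional-case analysis --- is missing, and the structural claims meant to replace it are false.
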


We can even prove (Proposition \ref{qq}) that, if $\Sigma$ has no boundary and $\sigma$ is a triangulation, then $\Delta_\sigma$ is of \emph{quasi-quaternion type} in the sense of \cite{La3} (see also \cite{Er16, Sk16}).

In Section \ref{secflip}, we define flips $\mu_u(\sigma)$ of a partial triangulation $\sigma$ with respect to most arcs $u$ (see Definition \ref{defflip}) which generalize flips for triangulations and Kauer moves for Brauer graph algebras. We also define coefficients $\mu_u(\lambda)_M$ for $M \in \M$. Then we obtain:
\begin{theoremi}[Theorem \ref{thmtilt}]
 If $u$ is not close to the boundary (in the sense of Definition \ref{defflip}), then the algebras $\Delta_\sigma^\lambda$ and $\Delta^{\mu_u(\lambda)}_{\mu_u(\sigma)}$ are derived equivalent.
\end{theoremi}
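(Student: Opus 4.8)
The plan is to exhibit an explicit tilting complex $T$ over $\Delta_\sigma^\lambda$ whose endomorphism algebra is isomorphic to $\Delta_{\mu_u(\sigma)}^{\mu_u(\lambda)}$; by Rickard's Morita theory for derived categories of rings, producing such a complex yields the desired derived equivalence. Writing $P_v$ for the indecomposable projective $\Delta_\sigma^\lambda$-module attached to an arc $v$ of $\sigma$, I would set $T = T_u \oplus \bigoplus_{v \neq u} P_v$, where the $P_v$ sit in degree $0$ and $T_u$ is a two-term complex $P_u \to E$ concentrated in degrees $-1$ and $0$. Here $T_u$ is the mapping cone of a minimal left $\add\bigl(\bigoplus_{v \neq u} P_v\bigr)$-approximation of $P_u$, whose target $E = \bigoplus P_w$ is read off from the arrows leaving $u$ in $Q_\sigma$, that is, from the arcs bounding the two triangles adjacent to $u$. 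The hypothesis that $u$ is not close to the boundary (Definition \ref{defflip}) guarantees that this approximation has the expected shape — both adjacent triangles genuinely present and no collapse against a boundary segment — so that $T_u$ is the silting mutation of $P_u$ in the sense of Aihara--Iyama.

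Then I would verify that $T$ is a tilting complex in two steps. First, generation: since $T_u$ fits into a triangle with $P_u$ and an object of $\add\bigl(\bigoplus_{v\neq u} P_v\bigr)$, the thick subcategory generated by $T$ contains all the $P_v$ and hence equals $\Kb(\proj \Delta_\sigma^\lambda)$. Second, the pretilting condition $\Hom_{\Kb(\proj)}(T, T[i]) = 0$ for $i \neq 0$: this reduces to the summands involving $T_u$, and I would check the vanishing of $\Hom(T_u, P_v[i])$, $\Hom(P_v, T_u[i])$ and $\Hom(T_u, T_u[i])$ directly from the relations of $\Delta_\sigma^\lambda$ and the explicit basis of Theorem \ref{basisDsig}. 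When $\Sigma$ has no boundary, $\Delta_\sigma^\lambda$ is symmetric by Theorem \ref{symm}, so silting objects are automatically tilting and this step is immediate; the boundary condition on $u$ is precisely what lets the same conclusion survive in the presence of boundary.

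To compute $\End_{\Kb(\proj)}(T)$ I would localize the problem using Theorem \ref{subtri}: choosing a small partial triangulation $\tau$ consisting of $u$ together with the arcs of the two adjacent triangles, one has $e_\tau \Delta_\sigma^\lambda e_\tau \cong \Delta_\tau^\lambda$, so that the endomorphisms of $T$ involving $T_u$ are supported on this local piece, while the summands $P_v$ with $v \notin \tau$ contribute unchanged. This reduces the endomorphism computation to a fixed finite list of local models for $\mu_u$, in which one reads off the quiver of $\End(T)$ and matches it arrow by arrow with $Q_{\mu_u(\sigma)}$, the new arc $u^*$ corresponding to the summand $T_u$. I expect the main obstacle to lie in this final matching: tracking how the structure constants transform, i.e.\ verifying that the relations of $\End(T)$ are exactly those defining $\Delta_{\mu_u(\sigma)}^{\mu_u(\lambda)}$ with the mutated coefficients $\mu_u(\lambda)_M$, rather than merely recovering the correct quiver. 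Producing the comparison isomorphism therefore amounts to a careful but finite bookkeeping of paths winding around the marked points incident to $u$, using the basis of Theorem \ref{basisDsig}, with the equality of $k$-ranks furnished by Corollary \ref{dimd} serving as a consistency check on both sides.
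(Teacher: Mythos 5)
Your proposal follows essentially the same route as the paper: it takes $T = e_\tau\Delta_\sigma \oplus P_u^*$ with $P_u^*$ the two-term Okuyama--Rickard complex given by the minimal right $\add(e_\tau\Delta_\sigma)$-approximation $e_{\vec u^+}\Delta_\sigma \oplus e_{(-\vec u)^+}\Delta_\sigma \to e_u\Delta_\sigma$ (the summands determined, as you say, by the two arrows leaving $u$ in $Q_\sigma$), proves tiltingness by exactly the socle argument you sketch --- the hypothesis that $u$ is not close to the boundary is used precisely to kill $\Hom(P_u^*, e_\tau\Delta_\sigma[-1])$ --- and then computes $\End_{\Kb(\proj\Delta_\sigma)}(T) \cong \Delta^{\mu_u(\lambda)}_{\mu_u(\sigma)}$ after localizing to a small partial triangulation via Corollary \ref{subtri2} and Lemma \ref{reducidemisom}, with the bulk of the work being the coefficient bookkeeping (including the sign changes in $\mu_u(\lambda)$) that you correctly identify as the main obstacle. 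The only deviations are cosmetic or omitted technicalities: the paper uses the right approximation placed in degrees $0$ and $1$ rather than your left one in degrees $-1,0$ (the latter computes the inverse flip, which suffices for the derived-equivalence statement), the local models are the cases (F1), (F1$'$), (F2), (F3) around the endpoints of $u$ rather than literal adjacent triangles (since $\sigma$ is only a partial triangulation), and the special-monogon configurations are first removed by Morita equivalence (Remark \ref{avoidcases}, Proposition \ref{revidem}) with the idempotent $\e$ adjusted accordingly.
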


% Ai15 wrong <- Z. Pogorzaly, Algebras stably equivalent to selfinjective special biserial algebras, Commun. Algebra 22 (1994) 1127–1160. <- REPRESENTATION TYPE OF FINITE QUIVER (2) HECKE ALGEBRAS OF TYPE Dn+1 SUSUMU ARIKI AND EUIYONG PARK 
% Ka98 <- short
Notice that most of arcs are not close to the boundary. Notice also that this theorem recovers the result for Brauer graph algebras, which is considered to be known (\cite{Ai15, Ka98}), but not proven completely. The term Kauer move was introduced in \cite{MaSc14}, where authors analyse the case of Brauer graph algebras coming from $m$-angulations of an oriented surface.

% Then, if $k$ is an algebraically closed field, we show that $\Delta_\sigma$ is at most tame in Theorem \ref{tame}. Finally, we prove that there exist a combinatorial transformation of partial triangulations (see Definition \ref{defflip}) generalizing Kauer moves for Brauer graph algebras and flips for triangulations which gives rise to a derived equivalence of the corresponding non-frozen algebras (see Theorem \ref{thmtilt}).

\section*{Acknowledgments}

The author would like to thank Osamu Iyama, Takahide Adachi and Aaron Chan for interesting discussions in relation to this subject. The author was partially supported by JSPS Grant-in-Aid for Young Scientist (B) 26800008.

\section{Frozen algebras of partial triangulations} \label{algparttri}

We consider a connected compact oriented bordered surface $\Sigma$ with non-empty finite set of marked points $\M \subset \Sigma$. For each marked point $M \in \M$, we fix an invertible scalar $\lambda_M \in k$ and a positive integer $m_M$. If a marked point $M \in \M$ is not on the boundary $\partial \Sigma$ of $\Sigma$, it is called a \emph{puncture}. We define $\lambda_\M := \prod_{M \in \M} \lambda_M$. If $\Sigma$ is a sphere (without boundary), $\# \M = 4$ and $m_M = 1$ for all $M \in \M$, we define $\nu_\M = 1 - \lambda_\M$. Otherwise, we set $\nu_\M = 1$. We assume that
\begin{itemize}
 \item if $\Sigma$ is a sphere (without boundary), then $\# \M \geq 3$;
 \item if $\Sigma$ is a sphere (without boundary) and $\# \M = 3$, then $m_M > 1$ for all $M \in \M$;
 \item $\nu_\M$ is invertible;
 \item if $\Sigma$ is a disc and $\# \M < 3$ then $\M$ contains at least one puncture $M$ satisfying $m_M > 1$ or at least two punctures.
\end{itemize}

% 
% exclude the following cases:
% \begin{itemize}
%  \item $\Sigma$ is a sphere without boundary, and $\# \M < 3$;
%  \item $\Sigma$ is a sphere without boundary, $\# \M = 3$, and $m_M = 1$ for some $M \in \M$;
%  \item $\nu_\M$ is not invertible;
%  \item $\Sigma$ is a sphere with connected boundary, $\# \M < 3$ and $\M$ contains either no puncture, either a puncture $M$ with $m_M = 1$.
% \end{itemize}

An \emph{oriented edge} of $(\Sigma, \M)$ is a continuous map $\vec u: [0, 1] \to \Sigma$ such that
\begin{itemize}
 \item $\vec u(\{0,1\}) \subset \M$;
 \item $\vec u|_{(0,1)}$ is injective from $(0,1)$ to $\Sigma \setminus \M$;
 \item $\vec u$ is not homotopic to a constant path relatively to its endpoints.
\end{itemize}
% considered up to homotopy relative to its endpoints in $\Sigma \setminus \M$.
The \emph{opposite} of the oriented edge $\vec u$ is the oriented edge $-\vec u$ defined by $(-\vec u)(t) = \vec u (1-t)$. For an oriented edge $\vec u$ of $(\Sigma, \M)$, we denote $s(\vec u) := \vec u(0)$, $t(\vec u) := \vec u(1)$ and we call $u := \{\vec u, -\vec u\}$ the corresponding \emph{(non-oriented) edge}.  We say that two oriented edges $\vec u$ and $\vec v$ are \emph{homotopic} if they are homotopic relative to $\{s(\vec v) = s(\vec u), t(\vec v) = t(\vec u)\}$ in $\Sigma \setminus \M$. Two (non-oriented) edges $u = \{\vec u, -\vec u\}$ and $v = \{\vec v, -\vec v\}$ are homotopic if $\vec v$ is homotopic to $\vec u$ or $-\vec u$.
 We call \emph{boundary edge} an edge homotopic to an edge 
% admits a representative which 
entirely included in a boundary component and we call \emph{arc} an edge which is not a boundary edge. 

Two arcs of $(\Sigma, \M)$ are compatible if they are not homotopic and they do not cross except maybe at their endpoints. A \emph{partial triangulation} $\sigma$ of $(\Sigma, \M)$ is a set of compatible edges containing boundary edges. An oriented edge (respectively arc) of a partial triangulation $\sigma$ is a orientation of an edge (respectively arc) of $\sigma$. 
% An (oriented) edge of $\sigma$ is either an (oriented) arc of $\sigma$, either an (oriented) boundary edge of $\Sigma$.

% We call \emph{realization} of a partial triangulation $\sigma$ a choice of a map $\vec u$ for each oriented edge of $\sigma$ in such a way that $(- \vec u)(t) = \vec u(t)$ for any edge $u$, and two distinct edges of $\sigma$ are non-crossing for this realization. 
In a partial triangulation, the set of oriented edges starting at a given $M \in \M$ can be ordered counter-clockwisely up to cyclic permutation. Notice that if an edge $u$ has its two endpoints at $M$, its two orientations $\vec u$ and $-\vec u$ appear at different positions in this order. We call this order \emph{the cyclic order of oriented edges around $M$}.  
% It is elementary and classical that for two realizations of $\sigma$, there exists an automorphism of $(\Sigma, \M)$ mapping one to the other, so the cyclic order of oriented edges around $M$ is independent of the choice of the realization.
Finally, we call as usual \emph{triangulation} a maximal partial triangulation.

\begin{remark}
 In some contexts, in particular concerning Brauer graph algebras, an oriented edge is called a \emph{half edge}.
\end{remark}

\begin{example} \label{expartri} In the following diagrams, we draw two partial triangulations. A partial triangulation of a disc and a partial triangulation of a torus.
 $$\xymatrix@L=.05cm@R=1.5cm@C=1.5cm@M=0.01cm{
   & A \ar@`{c+(-20,0),p+(-20,0)}[dd]|(.25){\vec u}^{{}}="AB" \ar[d]|(.25){\vec v}^{{}}="AD" \\
   & D & C \ar@`{c+(0,10),p+(10,0)}[ul]|(.25){\vec y}^{{}}="CA" \\
   & B \ar@`{c+(10,0),p+(0,-10)}[ur]|(.25){\vec x}^{{}}="BC" \ar@`{c+(12,10), p+(12,-10)}[uu]|(.25){\vec w}^{{}}="BA"
%    \ar@{..>}@/^/^a"AB";"AD"
%    \ar@{..>}@/^/|b"AD";"BA"
%    \ar@{..>}^c"BA";"CA"
%    \ar@`{c+(-8,-8),p+(0,-16),p+(8,-8)}@{..>}_d"AD";"AD"
%    \ar@{..>}^e"BC";"BA"
%    \ar@{..>}@/^1cm/^f"BA";"AB"
%    \ar@{..>}@/^/_g"CA";"BC"
%    \ar@{..>}@`{c+(-10,-10),c+(20,-20), p+(0,-10)}_\beta"AB";"BC"
%    \ar@{..>}@`{c+(10,0),c+(10,10),p+(10,0)}_\gamma"BC";"CA"
%    \ar@{<--}@`{c+(-10,10),c+(20,20), p+(0,10)}^\alpha"AB";"CA"
 } \quad \quad \xymatrix@L=.05cm@!=0cm@R=.5cm@C=.5cm@M=0.00cm{
   & \\
   & \\
   & \\
   & & M \ar@`{c+(5,-4),c+(35,3),p+(5,8)}[]_{\vec u}|(.12)*+<.1cm>{{}}="su"|(.88){{}}="tu" & & & \bullet N\\
   \ar@{-}@`{c+(0,15),p+(0,15)}[rrrrrrrr] \ar@{-}@`{c+(0,-15),p+(0,-15)}[rrrrrrrr] & & \ar@{-}@`{c+(5,-5),p+(-5,-5)}[rrrr]|(.1){{}}="P"|(.9){{}}="Q" &  & & & & & \\
   & \\
   & 
   \ar@{-}@`{c+(5,4),p+(-5,4)}"P";"Q"
%    \ar@/_/@{..>}"su";"tu"_a
%    \ar@`{c+(-5,0),c+(-10,-3),p+(-5,-4)}@{..>}"tu";"su"_b
 }$$
 In the first case, $\vec u$, $\vec x$ and $\vec y$ are boundary edges, while $\vec v$ and $\vec w$ are arcs. In the second case, $\vec u$ is an arc. In the first example, $A = s(\vec u) = s(\vec v) = t(\vec w) = t(\vec y)$ and the cyclic order around $A$ is $\vec u$, $\vec v$, $-\vec w$, $-\vec y$.
\end{example}

We associate to the partial triangulation $\sigma$ a quiver:

\begin{definition}
 We define $Q_\sigma$ to be a quiver with set of vertices the set of edges of $\sigma$. Then, each vertex $u$ has exactly two outgoing arrows, constructed in the following way: for both possible orientations $\vec u$ of the edge $u$, an arrow $\ic{\vec u, \vec v}$ points to the next (oriented) edge $\vec v$ around $s(\vec u)$.

 If $M \in \M$ has at least one incident edge in $\sigma$ and the oriented edges starting at $M$ are ordered $\vec u_1$, $\vec u_2$, \dots, $\vec u_n$, we fix the following paths of $Q_\sigma$:
 \begin{itemize}
  \item $\ic{\vec u_i, \vec u_j} := \ic{\vec u_i, \vec u_{i+1}}\ic{\vec u_{i+1}, \vec u_{i+2}} \cdots \ic{\vec u_{j-1},\vec u_j}$ composed of at least one arrow and at most $n$ arrows;
  \item $\io{\vec u_i, \vec u_j} := \lambda_M \ic{\vec u_i,\vec u_i}^{m_M - 1}\ic{\vec u_i, \vec u_{i+1}}\ic{\vec u_{i+1}, \vec u_{i+2}} \cdots \ic{\vec u_{j-1},\vec u_j}$ composed of at least $n(m_M-1)$ arrows and at most $n m_M - 1$ arrows. It is $\lambda_M$ times the idempotent at $u_i$ if $\vec u_i = \vec u_j$ and $m_M = 1$.
 \end{itemize}
 Thus we get in particular $\ic{\vec u_i, \vec u_j}\cdot \io{\vec u_j, \vec u_i} = \io{\vec u_i, \vec u_j}\cdot \ic{\vec u_j, \vec u_i} = \lambda_M \ic{\vec u_i, \vec u_i}^{m_M}$. 
\end{definition}

\begin{example} \label{exquiv}
 The quivers corresponding to partial triangulations of Example \ref{expartri} are represented by dashed arrows in the following diagrams:
$$\xymatrix@L=.05cm@R=1.5cm@C=1.5cm@M=0.01cm{
   & A \ar@`{c+(-20,0),p+(-20,0)}[dd]|(.25){\vec u}^{{}}="AB" \ar[d]|(.25){\vec v}^{{}}="AD" \\
   & D & C \ar@`{c+(0,10),p+(10,0)}[ul]|(.25){\vec y}^{{}}="CA" \\
   & B \ar@`{c+(10,0),p+(0,-10)}[ur]|(.25){\vec x}^{{}}="BC" \ar@`{c+(12,10), p+(12,-10)}[uu]|(.25){\vec w}^{{}}="BA"
   \ar@{..>}@/^/^a"AB";"AD"
   \ar@{..>}@/^/|b"AD";"BA"
   \ar@{..>}^c"BA";"CA"
   \ar@{..>}@`{c+(-8,-8),p+(0,-16),p+(8,-8)}_d"AD";"AD"
   \ar@{..>}^e"BC";"BA"
   \ar@{..>}@/^1cm/^f"BA";"AB"
   \ar@{..>}@/^/_g"CA";"BC"
   \ar@{..>}@`{c+(-10,-10),c+(20,-20), p+(0,-10)}_\beta"AB";"BC"
   \ar@{..>}@`{c+(10,0),c+(10,10),p+(10,0)}_\gamma"BC";"CA"
   \ar@{..>}@{<-}@`{c+(-10,10),c+(20,20), p+(0,10)}^\alpha"AB";"CA"
 } \quad \quad \xymatrix@L=.05cm@!=0cm@R=.5cm@C=.5cm@M=0.00cm{
   & \\
   & \\
   & \\
   & & M \ar@`{c+(5,-4),c+(35,3),p+(5,8)}[]_{\vec u}|(.12)*+<.1cm>{{}}="su"|(.88){{}}="tu" & & & \bullet N\\
   \ar@{-}@`{c+(0,15),p+(0,15)}[rrrrrrrr] \ar@{-}@`{c+(0,-15),p+(0,-15)}[rrrrrrrr] & & \ar@{-}@`{c+(5,-5),p+(-5,-5)}[rrrr]|(.1){{}}="P"|(.9){{}}="Q" &  & & & & & \\
   & \\
   & 
   \ar@{-}@`{c+(5,4),p+(-5,4)}"P";"Q"
   \ar@/_/@{..>}"su";"tu"_a
   \ar@`{c+(-5,0),c+(-10,-3),p+(-5,-4)}@{..>}"tu";"su"_b
 }$$
 In the first diagram, the arrows of the quiver are
 $$\begin{array}{rrrrr} \ic{\vec u, \vec v} = a,& \ic{\vec v, -\vec w} = b,& \ic{-\vec w, -\vec y} = c,& \ic{-\vec v, -\vec v} = d,& \ic{\vec x, \vec w} = e, \\ \ic{\vec w, -\vec u} = f,& \ic{\vec y, -\vec x} = g,& \ic{-\vec y, \vec u} = \alpha,& \ic{-\vec u, \vec x} = \beta,& \ic{-\vec x, \vec y} = \gamma,\end{array}$$
 and we also consider symbolic compositions like $\ic{\vec u, -\vec y} = abc$. Moreover, if we suppose that $m_A = 1$ and $m_D = 3$, we have for example
 $\io{-\vec y, \vec v} = \lambda_A \alpha a$ and $\io{-\vec v, -\vec v} = \lambda_D d^2$.

 In the second diagram, we have $\ic{\vec u, -\vec u} = a$ and $\ic{-\vec u, \vec u} = b$ but also $\ic{\vec u, \vec u} = ab$, $\ic{-\vec u, -\vec u} = ba$, $\io{\vec u, -\vec u} = \lambda_M (ab)^{\lambda_M-1} a$.
\end{example}

% We will need the following completion of $k Q_\sigma$:
% 
% \begin{definition}
%  We call \emph{special} and oriented arc $\vec u$ if $s(\vec u) = t(\vec u)$ and $\vec u$ winds counter-clockwisely around a topological disc containing exactly one puncture $M$ satisfying $m_M = 1$. In this case, we say that $M$ is the \emph{special puncture} of $\vec u$.
% 
%  We consider the ideal $J$ of $k Q_\sigma$ generated by the arrows $q \in Q_1$ such that $q$ is not of the form $\ic{\vec u, -\vec u}$, $\ic{\vec u, \vec v}$ or $\ic{\vec v, -\vec u}$ where $\vec u$ is special and $\vec v$ is the oriented edge pointing to the special puncture if it is in $\sigma$.
% 
%  We denote by $\widehat{k Q_\sigma}^J$ the completion of $k Q_\sigma$ with respect to the ideal $J$.
% \end{definition}

We construct a first algebra associated to $\sigma$:

\begin{definition}
 For each oriented edge $\vec u$ of $\sigma$, we denote $$C_{\vec u} := \lambda_{s(\vec u)} \ic{\vec u, \vec u}^{m_{s(\vec u)}} - \lambda_{t(\vec u)} \ic{-\vec u, -\vec u}^{m_{t(\vec u)}}$$ and we denote by $I_\sigma^\circ$ the ideal of $k Q_\sigma$ generated by all possible $C_{\vec u}$. We denote $\Gso := k Q_\sigma / I_\sigma^\circ$.

 We say that a path $\omega$ of $Q_\sigma$ is \emph{$C$-irreducible} if it can not be written as $\omega = \omega_1 \ic{\vec u, \vec u}^{m_{s(\vec u)}} \omega_2$ for some paths $\omega_1$ and $\omega_2$ and some oriented edge $\vec u$.
\end{definition}

\begin{example} \label{exgso} 
 If we continue Example \ref{exquiv}, we have, for the first example, if $m_A = m_B = 1$, $m_C = 2$ and $m_D = 3$,
 $$\begin{array}{ll} C_{\vec u} = \lambda_A abc\alpha - \lambda_B \beta e f, & C_{\vec v} = \lambda_A bc\alpha a - \lambda_D d^3, \\ C_{\vec w} = \lambda_B f\beta e - \lambda_A c \alpha a b, & C_{\vec x} = \lambda_B ef\beta - \lambda_C (\gamma g)^2, \\ C_{\vec y} = \lambda_C (g \gamma)^2 - \lambda_A \alpha a b c.\end{array}$$
 And we have, in the second example, $$C_{\vec u} = \lambda_M \left((ab)^{m_M} - (ba)^{m_M}\right).$$
\end{example}

We give a convenient structural description of $\Gso$.

\begin{proposition} \label{Csig}
 For each edge $u$ of $\sigma$, choose an orientation $\vec u$ and denote $C_\sigma := \sum_{u \in \sigma} \lambda_{s(\vec u)} \ic{\vec u, \vec u}^{m_{s(\vec u)}} \in \Gamma^\circ_\sigma$. Then 
 \begin{enumerate}[\rm (1)]
  \item $C_\sigma$ does not depend on the chosen orientations;
  \item there is an injection from $k[x]$ to the centre of $\Gamma^\circ_\sigma$ mapping $x$ to $C_\sigma$; 
  \item $\Gamma^\circ_\sigma$ is free over $k[x]$ with basis consisting of the $C$-irreducible paths.
 \end{enumerate}
\end{proposition}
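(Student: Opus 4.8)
The plan is to settle the three assertions in order, deducing both the injectivity in (2) and the whole of (3) from a single normal-form theorem that I prove by a confluence (diamond lemma) argument. For (1), I would show that each summand of $C_\sigma$ is orientation-independent already in $\Gso$. Fix an edge $u$ with orientations $\vec u$ and $-\vec u$, and recall $s(-\vec u)=t(\vec u)$. Since the generator $C_{\vec u}=\lambda_{s(\vec u)}\ic{\vec u,\vec u}^{m_{s(\vec u)}}-\lambda_{t(\vec u)}\ic{-\vec u,-\vec u}^{m_{t(\vec u)}}$ lies in $I_\sigma^\circ$, we have in $\Gso$ the equality $\lambda_{s(\vec u)}\ic{\vec u,\vec u}^{m_{s(\vec u)}}=\lambda_{s(-\vec u)}\ic{-\vec u,-\vec u}^{m_{s(-\vec u)}}$, so reversing the orientation of $u$ does not change its contribution to $C_\sigma$. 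Consequently $C_\sigma$ is well defined, the summand $c_u:=e_uC_\sigma e_u=\lambda_{s(\vec u)}\ic{\vec u,\vec u}^{m_{s(\vec u)}}$ may be computed from either orientation, and $e_vC_\sigma=C_\sigma e_v=c_v$ for every edge $v$.

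For the centrality part of (2), since the arrows and idempotents generate $\Gso$ as a $k$-algebra it suffices to check that $C_\sigma$ commutes with each of them; commutation with idempotents is immediate from $e_vC_\sigma=C_\sigma e_v=c_v$. For an arbitrary arrow $a=\ic{\vec u,\vec v}$, with $M=s(\vec u)=s(\vec v)$, I would invoke the \emph{rotation identity} $\ic{\vec u,\vec u}\,\ic{\vec u,\vec v}=\ic{\vec u,\vec v}\,\ic{\vec v,\vec v}$, which holds as an identity of paths in $kQ_\sigma$ because both sides are the very same string of arrows winding once around $M$, merely read from two consecutive starting positions; iterating gives $\ic{\vec u,\vec u}^{m_M}a=a\,\ic{\vec v,\vec v}^{m_M}$. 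Using (1) to write $c_{s(a)}=\lambda_M\ic{\vec u,\vec u}^{m_M}$ and $c_{t(a)}=\lambda_M\ic{\vec v,\vec v}^{m_M}$ (valid whatever orientations were chosen in $C_\sigma$), together with $C_\sigma a=c_{s(a)}a$ and $aC_\sigma=ac_{t(a)}$, we obtain $C_\sigma a=\lambda_M\ic{\vec u,\vec u}^{m_M}a=\lambda_M a\,\ic{\vec v,\vec v}^{m_M}=aC_\sigma$. Thus $C_\sigma$ is central and $x\mapsto C_\sigma$ makes $\Gso$ a $k[x]$-algebra; the injectivity of this map I postpone, as it falls out of (3).

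The heart of the matter is (3), which I would obtain from Bergman's diamond lemma after presenting $\Gso$ by monomial reductions. Let $J$ be the two-sided ideal of the path algebra $k[x]Q_\sigma$ (over the base ring $k[x]$) generated by the elements $\ic{\vec u,\vec u}^{m_{s(\vec u)}}-\lambda_{s(\vec u)}^{-1}x\,e_u$, one for each oriented edge $\vec u$. Using $c_u=\lambda_{s(\vec u)}\ic{\vec u,\vec u}^{m_{s(\vec u)}}=C_\sigma e_u$ from (1)--(2), I would check that $x\mapsto C_\sigma$ together with the identity on arrows descends to a $k[x]$-algebra isomorphism $k[x]Q_\sigma/J\xrightarrow{\ \sim\ }\Gso$, the inverse being induced by $kQ_\sigma\to k[x]Q_\sigma/J$ (well defined because each $C_{\vec u}$ maps to $x e_u-x e_u=0$). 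On $k[x]Q_\sigma/J$ I then run the diamond lemma with the reduction rules $\ic{\vec u,\vec u}^{m_{s(\vec u)}}\rightsquigarrow\lambda_{s(\vec u)}^{-1}x\,e_u$: every application strictly shortens the underlying path (the left side has length $n_Mm_M\geq1$, where $n_M$ is the number of oriented edges at $M=s(\vec u)$, while the right side carries only the length-$0$ path $e_u$), so the system terminates, and the irreducible monomials are exactly the $C$-irreducible paths. Granting confluence, the diamond lemma yields that these form a $k[x]$-basis of $\Gso$, which is (3). Finally, since each idempotent $e_u$ is itself $C$-irreducible, the elements $x^ne_u=C_\sigma^ne_u$ are distinct $k[x]$-multiples of a basis element, so $p\mapsto p(C_\sigma)$ is injective on $k[x]$; this completes (2).

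The main obstacle is the confluence verification. The only ambiguities are overlaps between two rule left-hand sides $\ic{\vec u,\vec u}^{m_M}$ and $\ic{\vec v,\vec v}^{m_{M'}}$, and I would argue that a nonempty overlap forces $M=M'$ with both windows lying inside a single maximal run of arrows winding around $M$: a suffix of the first consists of $M$-rotations and a prefix of the second of $M'$-rotations, and these can share arrows only when $M=M'$. For such a run, of length $n_Mm_M+t$ with $1\leq t\leq n_Mm_M-1$, reducing either the leading or the trailing length-$n_Mm_M$ window produces the same element $\lambda_{s(\vec u)}^{-1}x\,\ic{\vec u,\vec u_{+t}}$, where $\vec u_{+t}$ is the edge reached after $t$ steps of the rotation; inclusion ambiguities are vacuous, since equal-length windows coincide and no full $M'$-loop can sit strictly inside an $M$-loop when $M'\neq M$. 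The two orientations of an edge, and edges that are loops at a single marked point, are the cases requiring the most care, but they are subsumed by this single-run analysis. Once confluence is in place, the diamond lemma delivers the basis and all three statements follow.
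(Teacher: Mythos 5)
Your proof is correct, but for the substantive parts — injectivity in (2) and the freeness statement (3) — it takes a genuinely different route from the paper. For (1) and centrality you do essentially what the paper does: your rotation identity $\ic{\vec u,\vec u}\,\ic{\vec u,\vec v}=\ic{\vec u,\vec v}\,\ic{\vec v,\vec v}$ is the same computation the paper phrases as $\ic{\vec u,\vec v}C_\sigma=\ic{\vec u,\vec v}\cdot\io{\vec v,\vec u}\cdot\ic{\vec u,\vec v}=C_\sigma\ic{\vec u,\vec v}$. From there the paper proceeds in three separate steps: it introduces a fractional grading (an arrow $\ic{\vec u,\vec v}$ has degree $1/(d_{s(\vec u)}m_{s(\vec u)})$) under which every $C_{\vec w}$ is homogeneous of degree $1$, deducing linear independence of the powers $C_\sigma^\ell$ and hence injectivity in (2); it proves a left-cancellation lemma ($\alpha\omega=\alpha\omega'$ in $\Gso$ forces $\omega=\omega'$) by a rather delicate double induction on the number of relation applications and the length of $\alpha$; and it derives linear independence of the elements $C_\sigma^\ell\omega$ from an ad hoc equivalence relation on paths, $\omega_1\ic{\vec u,\vec u}^{m_{s(\vec u)}}\omega_2\sim\omega_1\ic{-\vec u,-\vec u}^{m_{t(\vec u)}}\omega_2$, observing that the defining relations only connect monomials within a single class. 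Your single move — re-presenting $\Gso$ as $k[x]Q_\sigma/J$ with $J=(\ic{\vec u,\vec u}^{m_{s(\vec u)}}-\lambda_{s(\vec u)}^{-1}x\,e_u)$ and running Bergman's diamond lemma — subsumes all three at once, and your confluence analysis is sound: each arrow $\ic{\vec w,\vec w'}$ determines its marked point $s(\vec w)$, so overlaps are confined to runs of consecutive arrows around a single $M$, where both resolutions of a run of length $n_Mm_M+t$ give $\lambda_M^{-1}x\,\ic{\vec u,\vec u_{+t}}$ (the scalar agrees because both windows are based at the same $M$), and inclusion ambiguities vanish since all left-hand sides at a fixed $M$ have the same length $n_Mm_M$; this covers loops at $M$, where both orientations wind around the same point. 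There is also no circularity: centrality of $C_\sigma$, which your isomorphism $k[x]Q_\sigma/J\cong\Gso$ needs both to define $x\mapsto C_\sigma$ and to check that each $C_{\vec u}$ maps to $xe_u-xe_u=0$, is established beforehand. What the rewriting argument buys is a uniform normal form that replaces the paper's fractional grading and the inductive cancellation lemma; what the paper's route buys is the cancellation property of $\Gso$ as a standalone statement, but since that lemma is not reused elsewhere in the paper, your packaging loses nothing.
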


Proof of Proposition \ref{Csig} is given in Subsection \ref{proofCsig}

\begin{definition} \label{definpoly}
 For $n \in \Z_{\geq 1}$ we fix a \emph{model $n$-gon without hole} $\Pr(n)$ which is an (oriented) closed disc with $n$ marked points on its boundary, called its \emph{vertices}. We number the vertices of $\Pr(n)$ from $1$ to $n$ in the counter-clockwise order. We also fix a \emph{model $n$-gon with hole} $\Pr^\circ(n)$ which is $\Pr(n) \setminus D$ where $D$ is a closed disc contained in the interior of $\Pr(n)$. 

 Let us consider a partial triangulation $\sigma$ of $(\Sigma, \M)$. An \emph{$n$-gon without hole} (respectively \emph{$n$-gon with hole}) of $\sigma$ is an oriented continuous map $P: \Pr(n) \rightarrow \Sigma$ (respectively $P: \Pr^\circ(n) \rightarrow \Sigma$) satisfying:
 \begin{itemize}
  \item $P$ is injective on the interior of $\Pr(n)$ (respectively $\Pr^\circ(n)$);
  \item each side of $\Pr(n)$ (respectively $\Pr^\circ(n)$) is mapped injectively to an edge of $\sigma$;
  \item in the case of a polygon with hole, we cannot fill the hole: $P$ cannot be extended to a polygon without hole $P' : \Pr(n) \rightarrow \Sigma$ along the canonical inclusion $\Pr^\circ(n) \subset \Pr(n)$.
 \end{itemize}
 We call \emph{interior of $P$} the image by $P$ of the interior of $\Pr(n)$ (respectively $\Pr^\circ(n)$). If $P$ has no hole, we call \emph{set of punctures of $P$} the intersection $\M_P$ of $\M$ with the interior of $P$. If $P$ has a hole, we will use the symbolic notation $\# \M_P = \infty$.

 We denote by $P_i$ the image by $P$ of the vertex number $i$ of $\Pr(n)$ (respectively $\Pr^\circ(n)$). We denote by $\vct{P_i P_{i+1}}$ the oriented edge of $\sigma$ on which is mapped the corresponding side of $\Pr(n)$ and $\vct{P_{i+1} P_i} = - \vct{P_i P_{i+1}}$. We call $\vct{P_i P_{i+1}}$ an \emph{(oriented) side} of $P$. In the same way we define $\vct{P_n P_1}$ and $\vct{P_1 P_n}$. 

 We call \emph{special monogon} a polygon without hole $P$ with one side and one puncture $M$ satisfying $m_M = 1$, called \emph{special puncture}.
%  Finally, we denote $\lambda_P := \lambda_{M_1} \lambda_{M_2} \cdots \lambda_{M_p}$ where $M_1$, $M_2$, \dots, $M_p$ are the punctures of $P$.
\end{definition}

\begin{example} \label{expoly} We continue with the figures of Example \ref{exquiv}. 
 The first one has five polygons:
 \begin{itemize}
  \item $P$ with sides $\vec u$, $\vec w$: we have $\M_{P} = \{D\}$;
  \item $Q$ with sides $-\vec w$, $\vec x$, $\vec y$: we have $\M_{Q} = \emptyset$;
  \item $R$ with sides $\vec u$, $\vec x$, $\vec y$: we have $\M_{R} = \{D\}$;
  \item $S$ with sides $\vec u$, $\vec w$, $\vec v$, $-\vec v$: we have $\M_{S} = \emptyset$;
  \item $T$ with sides $\vec u$, $\vec x$, $\vec y$, $\vec v$, $-\vec v$: we have $\M_{T} = \emptyset$.
 \end{itemize}
 The second has two polygons: a monogon $P$ with side $\vec u$ ($\M_P = \{N\}$) and a monogon $Q$ with side $-\vec u$ ($\# \M_Q = \infty$ as $Q$ has a hole). 
\end{example}

% Notice that in Definition \ref{definpoly}, the choice of the realization of $\sigma$ does not matter, up to an automorphism of $(\Sigma, \M)$. Therefore, we can speak about a $n$-gon of $(\Sigma, \M)$.

The following proposition, proven in Subsection \ref{proofcaracpoly}, gives a combinatorial description of polygons of $\sigma$.

\begin{proposition} \label{caracpoly}
 We consider a sequence of oriented edges $\vec u_1$, $\vec u_2$, \dots, $\vec u_n$, with indices considered modulo $n$, such that $t(\vec u_i) = s(\vec u_{i+1})$ for $i = 1 \dots n$. The following conditions are equivalent:
 \begin{enumerate}[\rm (i)]
  \item \label{caracpoly1} There is an $n$-gon having oriented sides $\vec u_1$, $\vec u_2$, \dots, $\vec u_n$ in this order.
  \item \label{caracpoly3} The following conditions are satisfied:
    \begin{itemize}
     \item if $i \neq j$ then $\vec u_i \neq \vec u_j$;
     \item for each $i$, if $\vec u_i$ is an oriented boundary edge, then it is oriented clockwisely around the boundary;
     \item for any $i$ and $j$ such that $M := s(\vec u_i) = s(\vec u_j)$, we have that $-\vec u_{i-1}$, $\vec u_i$, $-\vec u_{j-1}$ and $\vec u_j$ are ordered clockwisely around $M$;
     \item for any oriented boundary edge $\vec v$ and $i$ such that $M := s(\vec u_i) = s(\vec v)$, we have that $-\vec u_{i-1}$, $\vec u_i$ and $\vec v$ are ordered clockwisely around $M$.
    \end{itemize}
 \end{enumerate}
\end{proposition}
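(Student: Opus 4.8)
The plan is to translate both conditions into the local geometry of $\Sigma$ at each marked point, using the orientation. The key preliminary observation, which I would isolate as a lemma, is that since $P$ is orientation-preserving and the vertices of $\Pr(n)$ are numbered counter-clockwise, traversing the boundary of $P$ in the order $P_1, P_2, \dots, P_n$ keeps the interior of $P$ on the left. Consequently, at the vertex $P_i = t(\vec u_{i-1}) = s(\vec u_i) =: M$, the interior of $P$ occupies exactly the angular sector swept \emph{clockwise} from $-\vec u_{i-1}$ to $\vec u_i$ among the oriented edges issuing from $M$. (A quick check on a counter-clockwise triangle confirms the clockwise direction.) Both implications rest on this identification of the interior wedge.

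For (i)$\Rightarrow$(ii), I would read off each of the four conditions from this local picture together with the injectivity of $P$ on the interior. Distinctness of the $\vec u_i$ follows because two sides carrying the same oriented edge would force their adjacent interior strips, both lying on the left of that edge, to overlap, contradicting injectivity. If $\vec u_i$ were a boundary edge oriented the wrong way, the interior of $P$ would lie on the side of the boundary not meeting $\Sigma$, which is impossible; this pins down the orientation, which by the stated convention is the clockwise one. For the two ordering conditions, when $s(\vec u_i) = s(\vec u_j) = M$ the interior sectors at $P_i$ and $P_j$ are disjoint by injectivity, and disjointness of the clockwise sectors from $-\vec u_{i-1}$ to $\vec u_i$ and from $-\vec u_{j-1}$ to $\vec u_j$ is precisely the required clockwise cyclic ordering of $-\vec u_{i-1}, \vec u_i, -\vec u_{j-1}, \vec u_j$; the boundary variant is identical, with $\vec v$ marking a direction along which the interior wedge cannot extend.

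For (ii)$\Rightarrow$(i), I would assemble $P$ from local pieces. For each $i$ choose a small embedded corner sector $K_i \subset \Sigma$ at $M = P_i$, bounded by initial segments of $-\vec u_{i-1}$ and $\vec u_i$ and filling the clockwise wedge between them, and a thin strip $E_i$ that is a one-sided (left) neighborhood of the interior of $\vec u_i$. The distinctness condition guarantees the strips $E_i$ are genuinely distinct; the two ordering conditions guarantee the corner sectors at a common vertex are disjoint and remain inside $\Sigma$. Gluing $K_i$ to $E_i$ and $E_i$ to $K_{i+1}$ along the appropriate segments of $\vec u_i$ then produces an immersed annular neighborhood of the edge-path $\vec u_1 \cdots \vec u_n$, injective on its interior and realizing the prescribed sides. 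Finally I would cap the inner boundary circle inward, as far as possible inside $\Sigma$: if it bounds a disc (punctures in the enclosed region are harmless and become $\M_P$) we obtain a map from $\Pr(n)$, giving an $n$-gon without hole; otherwise a topological obstruction to capping, namely an enclosed boundary component or an essential loop of $\Sigma$, yields a map from $\Pr^\circ(n)$ with a genuine, unfillable hole. In both cases condition (i) holds.

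The main obstacle is the global injectivity in the construction for (ii)$\Rightarrow$(i): the four conditions are purely local, at single vertices and along single edges, so they give injectivity only near the boundary necklace, and one must still rule out self-overlap of the inward cap and verify that the hole/no-hole dichotomy is exhaustive. I would handle this by passing to the orientation-preserving universal cover of $\Sigma$ (equivalently, cutting along the arcs involved), where the developed necklace bounds an honest disc; the conditions force the developing map to be injective on the interior of that disc, and pushing forward gives the required polygon. The degenerate corners where $-\vec u_{i-1} = \vec u_i$, corresponding to a side immediately retraced such as the pair $\vec v, -\vec v$ among the sides of a polygon, need separate bookkeeping since the associated sector is a full turn; these are exactly the configurations that can force a hole, and I would treat them as the base/boundary cases of the gluing argument.
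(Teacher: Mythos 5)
Your route is, in substance, the paper's own. The paper proves the proposition through a more precise lemma with an intermediate condition phrased in terms of \emph{angles}: an angle is exactly one of your corner sectors together with its two adjacent edge germs, packaged as a map $\hat\alpha_i\colon\Delta\to\Sigma$ with first side $\vec u_{i+1}$ and second side $-\vec u_i$, and a precise overlap (``compatibility'') relation replacing your gluing of $K_i$ to $E_i$ to $K_{i+1}$. Your reading of (i)$\Rightarrow$(ii) from the clockwise interior wedge at each vertex (distinctness from overlapping left strips, boundary orientation from the interior having to lie inside $\Sigma$, the two ordering conditions from disjointness of wedges) is the paper's argument verbatim, and your strips-and-corners assembly, with conditions 2 and 4 keeping the pieces inside $\Sigma$, is its construction for the converse (the paper builds half-strips $f_i\colon\Delta^{(1)}\to\Sigma$ along the edges, routes arcs $\gamma_i$ around the corners using the third and fourth conditions, and extends circle homeomorphisms to the disc to get the angles).

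The one step that would not survive as written is your universal-cover fix for ``global injectivity of the inward cap'' --- and, fortunately, it is also unnecessary. If the hole is genuine, the edge loop can be essential in $\Sigma$, so the developed necklace need not close up in the universal cover, let alone bound a disc; and even when it closes up it is only a closed curve with self-touchings at repeated vertices, and injectivity of the pushforward would require ruling out overlaps with deck translates, which your purely local conditions cannot do. The paper sidesteps all of this because Definition \ref{definpoly} makes the final dichotomy tautological: once the thin annulus is injective on its interior (which your local disjointness of sectors and strips already yields, and which in the paper is exactly the hypothesis on the angles), it satisfies every axiom of an $n$-gon with hole except possibly unfillability; either the map extends along $\Pr^\circ(n)\subset\Pr(n)$ to a polygon without hole --- and (i) holds with that extension --- or it does not --- and then it \emph{is}, by definition, a polygon with hole. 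No injectivity of any cap ever has to be proved, which is precisely the sentence ``up to filling the hole of $P$ if possible, $P$ is an $n$-gon'' in the paper. Two smaller corrections: the degenerate corners $-\vec u_{i-1}=\vec u_i$ need no separate bookkeeping in the angle formalism (the angle simply has both sides equal to the same oriented edge, a full turn, and compatibility applies verbatim), and they are not ``exactly the configurations that can force a hole'': in Example \ref{expoly}, the quadrilateral $S$ with sides $\vec u,\vec w,\vec v,-\vec v$ backtracks and has no hole, while the monogon $Q$ on the torus has a hole with no backtracking corner --- holes come from the global topology enclosed by the necklace, not from the local combinatorics your conditions see.
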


\begin{definition}
 Two $n$-gons without hole $P, P': \Pr(n) \rightarrow \Sigma$ of $\sigma$ are said to be \emph{equivalent} if there exist an oriented automorphism $\psi$ of $\Pr(n)$ permuting the sides such that $P = P' \circ \psi$. Two $n$-gons with hole $P, P': \Pr^\circ(n) \rightarrow \Sigma$ of $\sigma$ are said to be \emph{equivalent} if there exist two injection $\iota, \iota': \Pr^\circ(n) \hookrightarrow \Pr^\circ(n)$ which map the sides of the $n$-gon to the sides of the $n$-gon and satisfy $P \circ \iota = P' \circ \iota'$.
\end{definition}

Notice that this equivalence relation relates polygons which either have both hole, either have the same set of punctures. Moreover, an equivalence class of polygons is entirely determined by the sequence of its sides. From now on, we will consider polygons up to this equivalence relation. Using Proposition \ref{caracpoly} permits to consider polygons as combinatorial objects.

We associate a second algebra to $\sigma$:

\begin{definition} \label{defgamma}
 Consider a $n$-gon $P$. For $i \in \{1, 2, \dots, n\}$, we call \emph{internal path winding around $P_i$} the path $$\omega_i^P := \ic{\vct{P_i P_{i+1}}, \vct{P_i P_{i-1}}}$$ and we call \emph{coefficiented external path winding around $P_i$} the (multiple of a) path $$\xi_i^P := \io{\vct{P_i P_{i-1}}, \vct{P_i P_{i+1}}}.$$ Notice that we have $\omega^P_i \xi^P_i = \xi^P_{i+1} \omega^P_{i+1}$ in $\Gso$. For $i \in \{1, 2, \dots, n\}$, we denote
 \begin{align*} & \Gso \ni R_{P, i} := \omega_{i+1}^P \omega_i^P -  \\
 &\left\{\begin{array}{ll}
                                               \xi_{i+2}^P \xi_{i+3}^P \cdots \xi_{i-2}^P \xi_{i-1}^P, & \text{if $\M_P = \emptyset$;} \\
                                               \lambda_M \omega_{i+1}^P (\xi_{i+1}^P \xi_{i+2}^P \cdots \xi_{i}^P)^{m_M-1} \xi_{i+1}^P \cdots \xi_{i-1}^P, & \text{if  $\M_P = \{M\}$;} \\
					       0, & \text{if $\# \M_P \geq 2$.} 
                                            \end{array}\right.
 \end{align*}
  (Notice that, if $n = 1$ or $n = 2$, we have $\M_P \neq \emptyset$).

 Finally, we denote by $I_\sigma$ the ideal of $\Gso$ generated by all $R_{P,i}$ for all polygons of $\sigma$. We call $$\Gs = \Gs^\lambda := \widehat{\Gamma^\circ_\sigma / I_\sigma}^{J}$$ the \emph{frozen algebra associated to $\sigma$} where the completion is taken with respect to the ideal 
 $J$ of $\Gamma^\circ_\sigma / I_\sigma$ generated by the arrows $q \in Q_1$ such that $q$ does not divide an idempotent in $\Gamma^\circ_\sigma / I_\sigma$ or equivalently $q$ is not of the form $\ic{\vec u, -\vec u}$, $\ic{\vec u, \vec v}$ or $\ic{\vec v, -\vec u}$ where $\vec u$ is the oriented side of a special monogon and $\vec v$ is the oriented edge pointing to the special puncture if it is in $\sigma$.
\end{definition}

\begin{example} \label{exalgb} We continue Examples \ref{exquiv}, \ref{exgso} and \ref{expoly}. For the first partial triangulation, in addition to relations $C_{\vec u}$, $C_{\vec v}$, $C_{\vec w}$, $C_{\vec x}$ and $C_{\vec y}$, we have the following relations:
  \begin{itemize}
   \item Coming from $P$: $$fab = \lambda_D f (\lambda_B \lambda_A \beta e c\alpha )^2  \lambda_B \beta e \quad \text{and} \quad abf = \lambda_D ab (\lambda_A \lambda_B c \alpha \beta e)^2 \lambda_A c \alpha;$$
   \item Coming from $Q$: $$ge = \lambda_A \alpha a b \quad \text{and} \quad ec = \lambda_C \gamma g \gamma\quad \text{and} \quad cg = \lambda_B f \beta;$$
   \item Coming from $R$: \begin{align*} gef &= \lambda_D g (\lambda_C \lambda_A \lambda_B \gamma g \gamma \alpha \beta)^2 \lambda_C \lambda_A \gamma g \gamma \alpha \\ \text{and} \quad efabc &= \lambda_D ef (\lambda_B \lambda_C \lambda_A \beta \gamma g \gamma \alpha)^2 \lambda_B \lambda_C \beta \gamma g \gamma \\ \text{and} \quad abcg &= \lambda_D abc (\lambda_A \lambda_B \lambda_C \alpha \beta \gamma g \gamma)^2 \lambda_A \lambda_B \alpha \beta;\end{align*}
   \item Coming from $S$: \begin{align*} ad &= \lambda_B \beta e \lambda_A c \alpha a \quad \text{and} \quad db = \lambda_A bc \alpha \lambda_B \beta e \\ \text{and} \quad bf &= \lambda_D d^2  \lambda_A bc\alpha \quad \text{and} \quad fa = \lambda_A c\alpha a \lambda_D d^2; \end{align*}
   \item Coming from $T$: \begin{align*} ad &= \lambda_B \beta \lambda_C \gamma g \gamma \lambda_A \alpha a \quad \text{and} \quad dbc = \lambda_A bc\alpha \lambda_B \beta \lambda_C \gamma g \gamma \\ \text{and} \quad bcg &= \lambda_D d^2 \lambda_A bc \alpha \lambda_B \beta\quad \text{and} \quad gef = \lambda_A \alpha a \lambda_D d^2 \lambda_A bc \alpha \\ \text{and} \quad efa &= \lambda_C \gamma g \gamma \lambda_A \alpha a \lambda_D d^2.\end{align*}
  \end{itemize}
  The ideal $J$ is generated by all arrows of $Q_\sigma$. Notice that these relations are redundant. For instance, relations for $T$ can be recovered from the relations of $\Gamma^\circ_\sigma$, the relations for $Q$ and the relations from $S$. For example, using relations of $Q$, $S$ and relations of $\Gamma^\circ_\sigma$, we find
  $$dbc = \lambda_A bc\alpha \lambda_B \beta ec = \lambda_A bc\alpha \lambda_B \beta \lambda_C \gamma g \gamma$$
  which is a relation coming from $T$. This remark will be generalized in Theorem \ref{thmmin}.

  For the second partial triangulation, in addition to $C_{\vec u}$, we get the relation $a^2 = \lambda_N a (\lambda_M (ba)^{m_M - 1}b)^{m_N - 1}$ from $P$ and $b^2 = 0$ from $Q$. Using the second relation, the first relation can be simplified to 
  $$a^2 = \left\{\begin{array}{ll}
                  \lambda_N a & \text{if $m_N = 1$;} \\
                  \lambda_N \lambda_M (ab)^{m_M} & \text{if $m_N = 2$;} \\
                  0 & \text{else.}
                 \end{array}\right.$$
  The ideal $J$ is generated by $a$ and $b$ if $m_N \geq 2$ and by only $b$ if $m_N = 1$. We find easily
   $$\Gamma_\sigma = \left\{\begin{array}{ll}
                             k \left. \left(\boxinminipage{\xymatrix@L=.05cm{e_1 \ar@`{p+(-5,-5),p+(-10,0),p+(-5,5)}[]^(.8){b_{11}} \ar@/^/[r]^{b_{12}} & e_2 \ar@`{p+(5,5),p+(10,0),p+(5,-5)}[]^(.2){b_{22}} \ar@/^/[l]^{b_{21}}     }}\right)\right/ \left(
				\begin{array}{l}  b_{11}^2 + b_{12}b_{21}, \\ b_{11} b_{12} + b_{12} b_{22}, \\ b_{21} b_{11} + b_{22} b_{21}, \\ b_{21} b_{12} + b_{22}^2, \\ b_{11}^{m_M-1} b_{12}, b_{21} b_{11}^{m_M-1} \end{array}   \right) & \text{if $m_N = 1$;} \\ 
                             k \langle a, b \rangle / \left( a^2 - \lambda_N \lambda_M (ab)^{m_M}, a^2 - \lambda_N \lambda_M (ba)^{m_M}, b^2\right) & \text{if $m_N = 2$;} \\ 
                             k \langle a, b \rangle / \left( (ab)^{m_M} - (ba)^{m_M}, a^2, b^2\right) & \text{else}
                            \end{array} \right.$$
  (in the first case, we separated the idempotents $a / \lambda_N$ and $1 - a/\lambda_N$). Notice that in all cases, the ideal contains all paths of length $m_M+1$ so there is no need of completion here.
\end{example}

\begin{definition}
 An arc $u$ of a partial triangulation $\sigma$ is said to be a \emph{reduction arc} for a polygon $P$ if it is connected to at least one vertex of $P$ and if the interior of $P$ intersects $u$. 

 A $n$-gon $P$ is said to be \emph{minimal} if it does not have any reduction arc. 
\end{definition}

\begin{example} 
 In Example \ref{expoly}, minimal polygons are $Q$ and $S$.
\end{example}

We finish this Section by giving two important theorems to understand the algebra $\Gamma_\sigma$. These theorems are proven in Subsections \ref{proofmainthmsa} and \ref{proofmainthmsb}.

\begin{theorem} \label{thmmin}
 The ideal $I_\sigma$ is the ideal of $\Gamma^\circ_\sigma$ generated by all $R_{P,i}$ for $P$ minimal.
\end{theorem}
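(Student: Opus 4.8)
The plan is to prove the nontrivial inclusion: since minimal polygons are in particular polygons, the ideal $I_\sigma^{\mathrm m}$ generated by the $R_{P,i}$ with $P$ minimal is contained in $I_\sigma$, so it suffices to show that $R_{P,i} \in I_\sigma^{\mathrm m}$ for every polygon $P$ and every $i$. I would argue by induction on a complexity measure $\mu(P) := r(P) + p(P)$, where $r(P)$ is the number of reduction arcs of $P$ and $p(P)$ counts the punctures in the interior of $P$ together with a $+1$ when $P$ has a hole; this is a finite nonnegative integer, and $P$ is minimal exactly when $r(P) = 0$, which I take as the base case (there $R_{P,i}$ is a generator of $I_\sigma^{\mathrm m}$).

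For the inductive step, a non-minimal $P$ has a reduction arc $u$, whose second endpoint---by compatibility, $u$ cannot cross the sides of $P$---is either another vertex of $P$ or a puncture of $\M_P$. In the first case $u$ is a chord cutting $P$ into two polygons $P'$, $P''$ glued along $u$; in the second case cutting along $u$ produces a single polygon $\tilde P$ in which the puncture becomes a vertex flanked by two copies of $u$. Proposition \ref{caracpoly} guarantees that the pieces are genuine polygons of $\sigma$, and in either case the measure strictly drops: $u$ ceases to be a reduction arc and, in the puncture case, one interior puncture is removed, so $\mu(P'), \mu(P'') < \mu(P)$ (resp. $\mu(\tilde P) < \mu(P)$), with $\M_P = \M_{P'} \sqcup \M_{P''}$ in the chord case. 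By the induction hypothesis the relations of the pieces already lie in $I_\sigma^{\mathrm m}$.

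It then remains to express $R_{P,i}$ through the relations of the pieces modulo $I_\sigma^\circ$. The geometric input is that, at each vertex where the cut is attached, the internal winding of $P$ factors as the product of the two corresponding internal windings of the pieces, because the oriented edge $\vec u$ sits in the angular sector of $P$ at that vertex: e.g.\ $\omega^P_a = \ic{\vct{P_a P_{a+1}}, \vec u}\cdot \ic{\vec u, \vct{P_a P_{a-1}}}$, the two factors being internal windings of $P'$ and $P''$. Likewise the external $\xi$-path of $P$ threads through the arrows around $u$ into the external paths of the pieces. Using these factorizations, the compatibility $\omega^P_j \xi^P_j = \xi^P_{j+1}\omega^P_{j+1}$, and the $C_{\vec u}$-relations, one substitutes a relation of one piece into a relation of the other exactly as in the computation $dbc = \lambda_A bc\alpha\,\lambda_B\beta ec = \lambda_A bc\alpha\,\lambda_B\beta\,\lambda_C\gamma g\gamma$ of Example \ref{exalgb}; I would organize this by the type of $\M_P$. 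When $\M_P = \emptyset$ both pieces are puncture-free and the external path of $P$ reassembles from theirs; when $\M_P = \{M\}$ the puncture lies in one piece, which contributes the $\lambda_M$- and $m_M$-twisted factor while the other contributes its puncture-free external path, reproducing the single-puncture form of $R_{P,i}$.

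The main obstacle is the case $\#\M_P \ge 2$ (including holes), where $R_{P,i} = \omega^P_{i+1}\omega^P_i$ must be shown to be genuinely zero modulo the smaller relations. If one of the pieces inherits $\ge 2$ punctures or the hole, its own zero-relation does the job directly; the delicate situation is when a chord distributes the punctures one-on-one, so that both pieces carry only single-puncture relations. There I would transfer the winding across the chord $u$ using the $C$-relation $\lambda_{s(\vec u)}\ic{\vec u, \vec u}^{m_{s(\vec u)}} = \lambda_{t(\vec u)}\ic{-\vec u, -\vec u}^{m_{t(\vec u)}}$, which identifies the turns at the two endpoints of $u$, and show that the two single-puncture relations then telescope so that no consistent closed winding survives, forcing the short path to vanish. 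Keeping track of the accumulated scalars $\lambda_M$ and of the exponents $m_M$ throughout this telescoping---so that the recombined expression matches the defined $R_{P,i}$ on the nose---is the real bookkeeping cost of the argument; the degenerate shapes (monogons and bigons arising when $u$ is a loop at a single vertex, where $\M_P \neq \emptyset$ is automatic) are handled as special instances of the same cutting procedure.
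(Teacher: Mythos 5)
Your overall strategy --- cut $P$ along a reduction arc, induct on a complexity that drops under cutting, and reassemble $R_{P,i}$ from the pieces' relations by substitution through the $C_{\vec u}$-relations --- is indeed the paper's strategy (its ``reduction situations'' and Lemma \ref{redideal}). But there are genuine gaps. A first, repairable one: your dichotomy for the second endpoint of the reduction arc omits the paper's case (Rb), where that endpoint is a marked point of $\M_P$ to which further arcs of $\sigma$ are attached inside $P$. There, cutting along $u$ alone does not produce a polygon of $\sigma$ --- the interior of your $\tilde{P}$ would meet those arcs --- and the correct piece is a single polygon whose sides are those of $P$, then $\vec u$, then the boundary of the enclosed island, then $-\vec u$, with $\vec u$ and $-\vec u$ non-consecutive; your measure $\mu$ and the claim that it drops must also be checked in this configuration (the analogue of Lemma \ref{defpoly}).

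The serious gap is the case you yourself flag as delicate: a chord distributing two punctures one per piece. The two single-puncture relations do not ``telescope so that no consistent closed winding survives, forcing the short path to vanish.'' Substituting one into the other yields, as in case (Rc) of Lemma \ref{redideal}, an identity of the form $\alpha_{i+1} R_{P',i} \beta_i = R_{P,i}(1-\lambda_M \lambda_N \nu)$, so $R_{P,i}$ is recovered only after inverting $1-\lambda_M \lambda_N \nu$. Generically $\nu \in J$ and this inversion already requires working in the $J$-adic completion (the paper's Lemmas \ref{RRp} and \ref{RRp2} are stated in $\widehat{\Gamma^\circ_\sigma}^J$ for exactly this reason). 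Worse, on a sphere with four punctures and all multiplicities equal to $1$, $\nu$ can fail to lie in $J$: it is then a multiple of an idempotent $e$ \emph{modulo the relations of special monogons} (Figure \ref{excdec}), the factor becomes $1-\lambda_\M e$, and its invertibility holds precisely because the standing hypothesis makes $\nu_\M = 1-\lambda_\M$ invertible. This forces the entire induction to be carried out modulo the ``special ideal'' $S^P$ of special-monogon relations, which must then be discharged at the end: the relation of a non-minimal special monogon lies in the ideal of minimal ones because, by hypothesis, a loop cannot cut $(\Sigma,\M)$ into two special monogons. Your proposal mentions none of this --- neither $\nu_\M$, nor $S^P$, nor the completion --- and since the statement would genuinely fail without the invertibility of $\nu_\M$, what you dismiss as scalar bookkeeping is in fact the mathematical core of the hard case.
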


\begin{theorem} \label{subtri}
 Let $\tau \subset \sigma$ be an inclusion of partial triangulations. Let $e_\tau$ be the idempotent of $\Gs$ corresponding to the set of edges of $\tau$. There is an isomorphism of (non-unital) algebras
 $$\phi: \Gamma_\tau \xrightarrow{\sim} e_\tau \Gs e_\tau$$
 mapping each idempotent $e_u$ to $e_u$ and each arrow $\ic{\vec u, \vec v}$ to the path $\ic{\vec u, \vec v}$.
\end{theorem}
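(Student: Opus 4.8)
The plan is to reduce to the case where $\sigma$ is obtained from $\tau$ by adding a single arc, and then to analyse the corner algebra $e_\tau\Gs e_\tau$ through the $C$-irreducible basis together with the polygon relations. If $\tau=\sigma_0\subset\sigma_1\subset\cdots\subset\sigma_r=\sigma$ with $\sigma_{j+1}=\sigma_j\cup\{w_j\}$, then, since the single-arc isomorphism $\phi$ fixes vertex idempotents ($\phi(e_u)=e_u$) and $e_\tau e_{\sigma_j}=e_\tau$ whenever $\tau\subset\sigma_j$, the isomorphisms compose to give the general statement $\Gamma_\tau\cong e_\tau\Gs e_\tau$. So assume $\sigma=\tau\cup\{w\}$, whence $e_\tau=1-e_w$. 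The generator $\ic{\vec u,\vec v}$ of $\Gamma_\tau$ maps to the path $\ic{\vec u,\vec v}$ of $Q_\sigma$: because adding $w$ inserts at most one oriented edge ($\vec w$ or $-\vec w$) between $\vec u$ and $\vec v$ in the cyclic order around $s(\vec u)$, this image is either the corresponding single arrow of $Q_\sigma$ or a length-two path through the vertex $w$. A full revolution $\ic{\vec u,\vec u}$ around a marked point $M$ in $\tau$ thus maps to the full revolution around $M$ in $\sigma$ (we simply reinsert the arrows at $w$), so $\phi(C^\tau_{\vec u})=C^\sigma_{\vec u}$ and $\phi(C_\tau)=e_\tau C_\sigma$. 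Hence $\phi$ descends to a $k[x]$-algebra homomorphism $\bar\phi\colon\Gamma^\circ_\tau\to e_\tau\Gso e_\tau$.

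Next I would match bases. By Proposition \ref{Csig}, $\Gamma^\circ_\tau$ and $\Gso$ are free over $k[x]$ on their $C$-irreducible paths, and I claim $\bar\phi$ identifies the $C$-irreducible paths of $\tau$ with exactly those $C$-irreducible paths of $\sigma$ that neither begin, end, nor \emph{turn} at $w$. A path turns at $w$ precisely when it contains $\ic{\cdot,\vec w}\ic{-\vec w,\cdot}$ or $\ic{\cdot,-\vec w}\ic{\vec w,\cdot}$ (a reversal of orientation at $w$); otherwise $w$ occurs only as a pass-through vertex while winding around one of its endpoints, and deleting these pass-through occurrences recovers a unique $\tau$-path. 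This gives a bijection preserving $C$-irreducibility and the winding grading, so $\bar\phi$ is injective at the level of $\Gamma^\circ$. Its image, however, misses precisely the $C$-irreducible $\sigma$-paths with endpoints in $\tau$ that turn at $w$; these are what the polygon relations must absorb.

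The decisive point, and the step I expect to be the main obstacle, is the interaction with $I_\sigma$. Removing $w$ merges the (at most two) polygons of $\sigma$ incident to $w$ into a single polygon of $\tau$ and leaves every other polygon unchanged, so I must prove two dual facts: (a) each generator $R^\tau_{P,i}$ of $I_\tau$ maps under $\bar\phi$ into $I_\sigma$; and (b) modulo $I_\sigma$, every path turning at $w$ is a combination of paths not turning at $w$. Both rest on the same computation: the relation of a merged $\tau$-polygon $P$ is recovered from the relations of the two $\sigma$-polygons $P',P''$ glued along $w$, together with the relations $C_{\vec w}$ — exactly the redundancy $T\rightsquigarrow S,Q$ exhibited in Example \ref{exalgb}, and the mechanism formalized in Theorem \ref{thmmin}. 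Invoking Theorem \ref{thmmin} to restrict attention to minimal polygons streamlines the bookkeeping of the $\xi^P$-factors and the scalars $\lambda_M$. Granting (a) and (b), $\bar\phi$ induces an isomorphism $\Gamma^\circ_\tau/I_\tau\xrightarrow{\sim}e_\tau(\Gso/I_\sigma)e_\tau$, with $\phi(I_\tau)=I_\sigma\cap e_\tau\Gso e_\tau$ matching the surviving bases on each side (injectivity) and (b) supplying surjectivity.

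Finally I would pass to completions. Since $e_\tau$ is a finite sum of vertex idempotents, taking the corner commutes with $J$-adic completion, so $e_\tau\,\widehat{(\Gso/I_\sigma)}^{\,J}e_\tau=\widehat{e_\tau(\Gso/I_\sigma)e_\tau}$. Moreover $\phi$ sends each $\tau$-arrow to a $\sigma$-path of length one or two, while any $\sigma$-path between $\tau$-vertices of large $J$-length decomposes into correspondingly many $\tau$-arrows; hence the $J_\tau$-adic and restricted $J_\sigma$-adic filtrations are cofinal and $\phi$ is bicontinuous. The only care required is that the arrows dividing an idempotent (those attached to special monogons) correspond correctly, which holds because inserting $w$ inside a special monogon alters its special status in a controlled, checkable way. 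Combining these steps yields the desired isomorphism $\phi\colon\Gamma_\tau\xrightarrow{\sim}e_\tau\Gs e_\tau$.
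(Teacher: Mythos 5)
Your overall skeleton matches the paper's: reduce to $\sigma = \tau \cup \{w\}$, observe that $\phi$ is well defined because every polygon of $\tau$ is a polygon of $\sigma$, match $C$-irreducible bases via Proposition \ref{Csig} to get injectivity at the level of $\Gamma^\circ$, and absorb the paths turning at $w$ using the relations of the two minimal polygons flanking $w$. Two remarks on the easy part: your claim that (a) and (b) ``rest on the same computation'' overstates (a) --- that $\bar\phi(R^\tau_{P,i}) \in I_\sigma$ is immediate, since a polygon of $\tau$ is literally a polygon of $\sigma$ with the same puncture set, so $R^\tau_{P,i}$ maps to the generator $R^\sigma_{P,i}$ on the nose; no merging argument is needed for well-definedness.

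The genuine gap is injectivity. You assert $\phi(I_\tau) = I_\sigma \cap e_\tau \Gso e_\tau$ and say the surviving bases match, but this identity is the entire content of the theorem and does not follow formally from the merged-polygon redundancy of Theorem \ref{thmmin}. The corner ideal $e_\tau I_\sigma e_\tau$ is generated by all products $a R_{P,i} b$ where $a, b$ may themselves pass through or turn at $w$, and it also contains the elements of $e_\tau (C_{\vec w}) e_\tau$, such as $\omega^{P'}_1 (\omega^{P'}_{n'})^{\ell_1} C_{\vec w} (\omega^{P'}_{n'})^{\ell_2} \omega^{P'}_{n'-1}$, none of which reduce by inspection. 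The paper's proof handles this by constructing an explicit candidate inverse $\psi^{\circ\circ}: e_\tau k Q_\sigma e_\tau \to \Gamma_\tau$ (whose values on turning paths depend delicately on whether $\M_{P'}$ is empty, a point your rewriting scheme never has to decide) and verifying through the case analysis of reduction situations (Ra), (Rb), (Rc) that it annihilates $e_\tau(C_{\vec w})e_\tau$ and $e_\tau I_\sigma e_\tau$. Two phenomena your proposal would run into and does not address: first, in case (Rc) with $0 < \# \M_{P'} < \infty$ the direct computation fails even in the paper, which instead performs a secondary induction on the number of punctures by adjoining an auxiliary arc $v$ and passing through a larger triangulation $\sigma'$; second, the degenerate configurations (flat digons, polygons with repeated sides, and the sphere with four punctures treated in Lemma \ref{RRp} and the special ideal $S^P$) produce coefficients of the form $1 - \lambda_\M e$ whose invertibility rests on the standing hypothesis that $\nu_\M$ is invertible --- a hypothesis that must enter any correct proof and never appears in yours. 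Without confronting these, the statement ``matching the surviving bases on each side (injectivity)'' is a restatement of the theorem rather than a proof of it.
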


\section{When is $\Gs$ module-finite over its centre?} \label{order} 

In this section, we classify partial triangulations in three families depending on the complexity of $\Gs$. Recall the lattice over a commutative ring $R$ is an $R$-algebra which is free of finite rank as an $R$-module. The main theorem of this section is the following one:
\begin{theorem} \label{classiford2}
 Let $E \subset \sigma$ be a connected component of $\sigma$. Let $e$ be the idempotent of $\Gs$ corresponding to $E$. Then:
 \begin{enumerate}[\rm (1)]
  \item If $E$ is disconnected from the boundary, then $e\Gs e$ is free of finite rank as a $k$-module.
  \item If we are in one of the following three cases, then $e\Gs e$ is a lattice over $R := k \llbracket x \rrbracket$:
   \begin{enumerate}[\rm (a)]
    \item $(\Sigma, \M)$ is a polygon without puncture and $m_M = 1$ for all $M \in \M$ except at most one;
    \item $(\Sigma, \M)$ is a polygon with one puncture and $m_M = 1$ for all $M \in \M$ on the boundary;
    \item all marked points $M$ incident to $E$ satisfy $m_M = 1$ and any arc of $E$ is homotopic to a part of a boundary component.
   \end{enumerate}
  \item In any other case, $e \Gs e$ is not finitely generated as a module over its centre. 
 \end{enumerate}
\end{theorem}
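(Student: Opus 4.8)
The plan is to reduce the whole trichotomy to a study of $e\Gs e$ as a module over the central subring $R = k\llbracket x \rrbracket$, where $x$ is the image of the central element $C_\sigma$ of Proposition \ref{Csig}. First I would apply Theorem \ref{subtri} to the inclusion $E \subset \sigma$ to identify $e\Gs e$ with $\Gamma_E$; since all three cases depend only on $E$ and on the part of $(\Sigma,\M)$ that it fills, this reduces the statement to the situation where $\sigma = E$ is connected and $\Gs$ itself is the object under study. By Proposition \ref{Csig} the injection $k[x] \to Z(\Gso)$ survives the quotient by $I_\sigma$ and the $J$-adic completion, so that $R \subseteq Z(\Gs)$, and $\Gso$ is free over $k[x]$ on the set $\Pi$ of $C$-irreducible paths. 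The point of working over $R$ is that $\Pi$ is typically infinite (a path may wind partially around one endpoint, switch orientation, wind around another, and so on), so that finiteness has to be measured \emph{after} allowing the central direction $x$ to absorb full windings.

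Next I would set up the dictionary that turns the statement into two yes/no questions. Since $\Gs$ is $J$-adically complete and $x \in J$ (so $x$ is topologically nilpotent for the $J$-adic topology), a topological Nakayama argument shows that $\Gs$ is module-finite over $R$ if and only if $\Gs / x\Gs$ is finite-dimensional over $k$. Moreover, using the $k[x]$-freeness of $\Gso$, I would in that case extract an explicit \emph{finite} subset of $\Pi$ whose images form an $R$-basis of $\Gs$, thereby getting freeness over $R$ directly; this is essential because $k$ is an arbitrary commutative ring, so torsion-freeness alone would not give a lattice. Within the module-finite case, $\Gs$ is free of finite rank over $k$ exactly when $x$ acts nilpotently, and is an $R$-lattice exactly when $x$ survives as a free variable. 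Thus the heart of the proof is the combinatorial computation of $\Gs / x\Gs$, that is, of the quotient of the $k$-span of $\Pi$ by the \emph{minimal} polygon relations $R_{P,i}$ (Theorem \ref{thmmin}) after imposing $C_\sigma = 0$.

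I would then run the three cases by analysing which elements of $\Pi$ survive these relations. In case (1), $E$ is disconnected from the boundary, so each incident marked point is a puncture and every side of every incident polygon is an arc; the relations $R_{P,i}$ attached to polygons with $\#\M_P \geq 2$ or with a hole read $\omega_{i+1}^P \omega_i^P = 0$, and together with the one-puncture relations they force every sufficiently long winding, hence a power of $C_\sigma$, to vanish. This simultaneously bounds the surviving paths (finite $k$-dimension) and shows $x$ is nilpotent, so $\Gs$ is free of finite rank over $k$, the rank being computable in the spirit of Corollary \ref{dimd}. In cases (2a)--(2c) I would instead exhibit a single boundary-parallel family of windings (around the unique puncture in (2b)) whose lengths are controlled exactly by powers of $x = C_\sigma$: these contribute an infinite $k$-dimension organised into one free $R$-direction, while the hypotheses ($m_M = 1$ with at most one exception, a single puncture, or arcs homotopic into the boundary) kill every transverse winding via the polygon relations. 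Verifying that the resulting finite set of paths is an $R$-basis — spanning through the minimal relations and $R$-independence through Proposition \ref{Csig} — yields the lattice conclusion.

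Finally, for case (3) I would show both that $\Gs/x\Gs$ is infinite-dimensional and, crucially, that this infiniteness is not absorbed by the centre. Outside (1) and (2) the component $E$ meets the boundary and supports two \emph{independent} unbounded winding directions — two boundary marked points around which one winds separately, or one puncture with $m_M > 1$ together with a boundary incidence — and from the surviving $C$-irreducible paths I would build a surjection from $e\Gs e$ onto an explicit completed path algebra carrying two non-commuting unbounded cycles (of the shape $k\lla a,b \rra$ modulo controlled relations), which is patently not module-finite over its centre. Since module-finiteness over the centre passes to quotients, this rules out finiteness over the entire centre of $\Gs$, not merely over $R$. I expect this last point to be the main obstacle: the bounding of $C$-irreducible paths modulo the minimal-polygon relations is laborious but routine given Theorem \ref{thmmin}, whereas excluding module-finiteness over the \emph{a priori} larger centre in case (3) demands a careful choice of the two independent directions and of a non-finite quotient that genuinely detects them.
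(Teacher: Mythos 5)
Your reduction via Theorem \ref{subtri} and your treatment of cases (1) and (2a) match the paper's in spirit, but there is a genuine gap at the heart of case (2): the lattice parameter cannot be $x = C_\sigma$. In cases (2b) and (2c) the element $C_\sigma$ is \emph{not} the free central direction, and your dictionary (``module-finite over $R$ iff $\Gs/x\Gs$ is finite-dimensional'', lattice iff $x$ survives) then misclassifies these cases as (3). Concretely, for a boundary $n$-gon $P$ with $m_{P_i}=1$ enclosing a hole or at least two punctures (an instance of (2c)), Proposition \ref{boundord}(3) identifies $e\Gamma_\sigma e$ with a matrix order over $R' = R[\varepsilon]/(\varepsilon^2)$ in which each full winding $\ic{\vec u,\vec u}^{m_{s(\vec u)}}$ around a boundary vertex is a multiple of $\varepsilon$; hence $C_\sigma^2 = 0$ while $e\Gamma_\sigma e$ has infinite rank over $k$, so $e\Gamma_\sigma e$ is not module-finite over $k\llbracket C_\sigma\rrbracket$ even though it is a lattice. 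Similarly, in case (2b) (Proposition \ref{boundord}(2), Lemma \ref{bascases}) the component of $C_\sigma$ at a boundary edge acts as $(0,t^m)$ on $R_{m-1}\subset R^2$, whose image has infinite corank. The boundary-parallel windings you invoke are controlled not by $C_\sigma$ but by the \emph{big cycle} $S_u$ of Definition \ref{bigcycle}, and the paper's central element is $U_\sigma$ of Lemma \ref{centreord} (equal to $e_uC_\sigma$ only in case (a) or at punctured vertices, and to $\lambda_M S_u^{m_M}$ or $S_u$ otherwise). Establishing that $U_\sigma$ is well defined and central is itself nontrivial: it rests on the inductive Lemma \ref{inducord}, which deletes a boundary marked point with $m_M=1$, and this same induction (with the explicit matrix base cases of Proposition \ref{boundord} and Lemma \ref{bascases}) is how the paper proves the lattice property --- not by counting $C$-irreducible paths modulo the minimal relations of Theorem \ref{thmmin}, which would in any case be organised around the wrong parameter.

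Two smaller points. First, your claim that the injection $k[x]\to Z(\Gamma^\circ_\sigma)$ of Proposition \ref{Csig} ``survives'' the quotient and completion so that $R\subseteq Z(\Gs)$ is too strong: as just noted, $C_\sigma$ can become nilpotent in $\Gs$ (this is exactly what happens in your own case (1), and in (2c)), so the image of $k\llbracket x\rrbracket$ is in general a proper quotient. Second, your strategy for case (3) --- surjecting onto a completed path algebra with two non-commuting unbounded cycles and using that module-finiteness over the centre passes to quotients --- is sound in principle and genuinely different from the paper, which instead argues directly inside $e\Gs e$: Lemmas \ref{reford1} and \ref{reford2} exhibit explicit families of paths (powers of $\alpha$, words in $\beta_1,\beta_2$) untouched by any relation and derive a contradiction from the existence of a central element $U = \alpha^\ell + U'$ by showing $\omega_1^P U$ cannot be right divisible by $\omega_1^P$. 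Your route would still require verifying, case by case, that such a two-cycle quotient exists in every residual configuration, which is comparable in labour; but the fatal issue remains the choice of central parameter in case (2), without which the trichotomy itself comes out wrong.
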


Proof of Theorem \ref{classiford2} is given in Section \ref{proofclassiford2}. Notice that $\Gs$ is the direct product of the $e \Gs e$ corresponding to the connected components of $\sigma$ so Theorem \ref{classiford2} exhausts all cases. Notice also that Theorem \ref{classiford2} (2) generalizes slightly the observations of \cite{DeLu16} and \cite{DeLu16-2}. We give now a precise description of the lattices of Theorem \ref{classiford2} (b) when $\sigma$ contains only boundary edges. We fix $E$ and $e$ as in Theorem \ref{classiford2}.
% The algebra $\Gs$ is usually infinite dimensional. Recall the lattice over a commutative ring $R$ is an $R$-algebra which is free of finite rank as an $R$-module. A natural question to investigate in view of \cite{DeLu16} and \cite{DeLu2} is to know when it is a lattice over the polynomial algebra $R := k\llbracket t \rrbracket$. The answer of the question is the object of the following theorem. Notice that a precise description of the lattices is given in Proposition \ref{boundord} when $\sigma$ contains only boundary edges.
% 
% \begin{theorem} \label{classiford}
%  The algebra $\Gs$ is a lattice over $R$ if and only if we are in one of these cases:
%  \begin{enumerate}[\rm (a)]
%   \item $(\Sigma, \M)$ is a sphere with one boundary component and no puncture and $m_M = 1$ for all $M \in \M$ except at most one;
%   \item $(\Sigma, \M)$ is a sphere with one boundary component and one puncture and $m_M = 1$ for all $M \in \M$ on the boundary;
%   \item All marked points $M$ on the boundary satisfy $m_M = 1$ and any arc of $\sigma$ is homotopic to a part of a boundary component.
%  \end{enumerate}
% \end{theorem}
\begin{proposition} \label{boundord}
 Suppose that $E$ is a boundary component of $\Sigma$. Let $P$ be the $n$-gon corresponding to this boundary component. We have:
 \begin{enumerate}[\rm (1)]
  \item If $\M_P = \emptyset$, $m_{P_1} = m$ and $m_{P_i} = 1$ for $2 \leq i \leq n$, then
   $$e \Gamma_\sigma e \cong \begin{bmatrix}
                                  R' & R' & R' & \cdots & R' & t^{-m} R' \\
				  t^m R' & \Delta & \Delta & \cdots & \Delta & R' \\
				  t^{m+1} R' & t^m R' & \Delta & \cdots & \Delta & R' \\
				  \vdots & \vdots & \vdots & \ddots & \vdots & \vdots \\
				  t^{m+1} R' & t^{m+1} R' & t^{m+1} R' & \cdots & \Delta & R' \\
				  t^{m+1} R' & t^{m+1} R' & t^{m+1} R' & \cdots & t^m R' & R'
                                 \end{bmatrix}_{n \times n}$$
  where $R' := k \llbracket t \rrbracket$, $\Delta := k \oplus t^m R'$ and $R$ is identified to a subalgebra of $R'$ and $\Delta$ by mapping $x$ to $t^m$.
 \item If $\M_P = \{M\}$, $m_{P_i} = 1$ for $1 \leq i \leq n$ and $m_M = m$, then
  $$e \Gamma_\sigma e \cong \begin{bmatrix}
                                 R_m & R_m & R_m & \cdots & R_m & R_{m-1} \\
				 xR_{m-1} & R_m & R_m & \cdots & R_m & R_m \\
				 xR_{m} & xR_{m-1} & R_m & \cdots & R_m & R_m \\
				 \vdots & \ddots & \ddots & \ddots & \vdots & \vdots \\
				 xR_{m} & xR_{m} & xR_{m} & \ddots & R_m & R_m \\
                                 xR_{m} & xR_{m} & xR_{m} & \cdots & xR_{m-1} & R_m 
                                \end{bmatrix}_{n \times n}$$
  where $R_j := \{(P, Q) \in R^2 \mid P-Q \in x^j R\}$ for $j \geq 0$. Notice that if $n = 1$, it degenerates to $e_u \Gamma_\sigma e_u \cong R_{m-1}$.
% $$e_u \Gamma_\sigma e_u \cong \begin{bmatrix}
%                                  R \frac{\,m\,}{} R & R \frac{\,m\,}{} R & R \frac{\,m\,}{} R & \cdots & R \frac{\,m\,}{} R & R \frac{\,m-1\,}{} R \\
% 				 tR \frac{\,m-1\,}{} tR & R \frac{\,m\,}{} R & R \frac{\,m\,}{} R & \cdots & R \frac{\,m\,}{} R & R \frac{\,m\,}{} R \\
% 				 tR \frac{\,m\,}{} tR & tR \frac{\,m-1\,}{} tR & R \frac{\,m\,}{} R & \cdots & R \frac{\,m\,}{} R & R \frac{\,m\,}{} R \\
% 				 \vdots & \ddots & \ddots & \ddots & \vdots & \vdots \\
% 				 tR \frac{\,m\,}{} tR & tR \frac{\,m\,}{} tR & tR \frac{\,m\,}{} tR & \ddots & R \frac{\,m\,}{} R & R \frac{\,m\,}{} R \\
%                                  tR \frac{\,m\,}{} tR & tR \frac{\,m\,}{} tR & tR \frac{\,m\,}{} tR & \cdots & tR \frac{\,m-1\,}{} tR & R \frac{\,m\,}{} R 
%                                 \end{bmatrix}_{n \times n}$$
%  and $t^i R \frac{\;j\;}{} t^i R := \{(t^i P, t^i Q) \in R^2 \mid P-Q \in t^j R\}$. Notice that if $n = 1$, it degenerates to $e_u \Gamma_\sigma e_u \cong R \frac{\,m-1\,}{} R$.
 \item If $\# \M_P > 1$ and $m_{P_i} = 1$ for $1 \leq i \leq n$, then
  $$e \Gamma_\sigma e \cong \begin{bmatrix}
                                 R' & R' & R' & \cdots & R' & R' & x^{-1} I \\
				 I & R' & R' & \cdots & R' & R' & R' \\
				 x R' & I & R' & \cdots & R' & R' & R' \\
				 \vdots & \vdots & \vdots & \ddots & \vdots & \vdots & \vdots \\
                                 x R' & x R' & x R' & \cdots & R' & R' & R' \\
				 x R' & x R' & x R' & \cdots & I & R' & R' \\
				 x R' & x R' & x R' & \cdots &xR' & I & R' \\
                                \end{bmatrix}_{n \times n}$$
  where $R' := R[\varepsilon]/(\varepsilon^2)$ and $I := (\varepsilon, x) \subset R'$.
 \end{enumerate}
\end{proposition}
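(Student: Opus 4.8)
The plan is to reduce everything to the intrinsic algebra of the boundary component and then realise it as an explicit order inside a matrix algebra. Since $E$ is a connected component of $\sigma$, Theorem \ref{subtri} gives $e\Gs e \cong \Gamma_E$, so I may replace $\sigma$ by the partial triangulation $E$ consisting of the $n$ boundary edges $u_1,\dots,u_n$ of the component alone. First I would describe $Q_E$: at each boundary marked point $P_i$ only the edges $u_{i-1},u_i$ meet, so $Q_E$ is the double cycle on $n$ vertices, with a backward arrow $\alpha_i:=\ic{\vct{P_iP_{i+1}},\vct{P_iP_{i-1}}}$ from $u_i$ to $u_{i-1}$ and a forward arrow $\beta_i:=\ic{\vct{P_iP_{i-1}},\vct{P_iP_{i+1}}}$ from $u_{i-1}$ to $u_i$. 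Proposition \ref{caracpoly} shows that the only polygon of $E$ is the interior $n$-gon $P$ bounded by the whole cycle, and since $E$ has no arcs, $P$ is minimal; hence by Theorem \ref{thmmin} the relations defining $\Gamma_E$ are exactly the $C_{\vec u}$ of $\Gso$ together with the $R_{P,i}$. Proposition \ref{Csig} pins down the central variable $x=C_\sigma=\sum_i\lambda_{P_i}(\alpha_i\beta_i)^{m_{P_i}}$ and tells us $\Gso$ is $k[x]$-free on the $C$-irreducible paths.

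\textbf{The matrix realisation.} The entry in row $i$, column $j$ of each displayed matrix is to be the space $e_{u_i}\Gamma_E e_{u_j}$ of paths from $u_j$ to $u_i$, regarded as an $R$-module; the proposition then amounts to computing these Hom-spaces and observing that composition is matrix multiplication. Concretely I would define $\Phi\colon\Gamma_E\to M_n(R')$ sending $e_{u_i}$ to the $(i,i)$ matrix unit and each arrow to an elementary matrix whose single nonzero entry is a monomial: in $t$ for case (1), where $R'=k\llbracket t\rrbracket$ and $x\mapsto t^m$; in the fibre products $R_m,R_{m-1}$ for case (2); and in $\varepsilon$ over $R'=R[\varepsilon]/(\varepsilon^2)$ for case (3). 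In case (1) the forward arrows carry positive $t$-powers and the backward arrows the complementary ones, so that each loop $\lambda_{P_i}\alpha_i\beta_i$ is sent to $t^m$ times the $(i,i)$ unit for $i\neq 1$, whereas the special vertex $P_1$ (with $m_{P_1}=m$) forces $\alpha_1\beta_1$ to $t$ times the $(1,1)$ unit. This is what produces the non-maximal diagonal blocks $\Delta=k\oplus t^mR'$ at the vertices away from $P_1$, the full $R'$ at the two edges $u_1,u_n$ incident to $P_1$, and the asymmetric powers $t^m,t^{m+1},t^{-m}$. In cases (2) and (3) the long external path $\io{\cdots}$ occurring in $R_{P,i}$ is responsible, respectively, for the second branch (the winding around the puncture $M$, giving the diagonal fibre products $R_m$ glued modulo $x^m$, with $R_{m-1}$ at the exceptional spots) and for the square-zero class $\varepsilon$ with ideal $I=(\varepsilon,x)$ (since $R_{P,i}$ now kills $\omega_{i+1}^P\omega_i^P$). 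Rescaling the arrows by units reduces the verification of $C_{\vec u}$ and $R_{P,i}$ against $\Phi$ to a normalised choice of the $\lambda_M$.

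\textbf{Isomorphism and completion.} Finally I would show $\Phi$ is an isomorphism onto the displayed order. Injectivity comes from Proposition \ref{Csig}: the $C$-irreducible paths from $u_j$ to $u_i$, taken modulo the $R_{P,i}$, form an explicit $R$-basis, and $\Phi$ carries it bijectively to an $R$-basis of the corresponding matrix entry, so comparing ranks entry by entry yields bijectivity. The $J$-adic completion in the definition of $\Gs$ is precisely what upgrades the polynomial coefficients to the power series rings $k\llbracket t\rrbracket$, $R$ and $R[\varepsilon]/(\varepsilon^2)$. Surjectivity is then clear, the elementary matrices generating each order being images of the $\alpha_i,\beta_i$ and of $x$. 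The degenerate case $n=1$ in (2) is checked directly, giving $e_u\Gamma_E e_u\cong R_{m-1}$.

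\textbf{Main obstacle.} I expect the real work to lie in the bookkeeping at the special vertex of case (1): one must verify that the interaction of the long path $\io{\vct{P_1P_n},\vct{P_1P_2}}=\lambda_{P_1}(\beta_1\alpha_1)^{m-1}\beta_1$ through $P_1$ with the relations $R_{P,i}$ yields exactly the non-maximal order with diagonal entries $\Delta$ and boundary exponents $t^{\pm m},t^{m+1}$, rather than a naive tiled order, which it cannot be (the diagonal $\Delta\neq R'$ and the incompatible $(1,n)$ and $(n,1)$ exponents already rule that out). Cases (2) and (3) are analogous but lighter, since there the external paths reduce to a single winding and the normal-form computation of $e_{u_i}\Gamma_E e_{u_j}$ is more uniform.
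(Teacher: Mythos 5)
Your overall strategy coincides with the paper's: reduce to $\Gamma_E$ via Theorem \ref{subtri}, observe via Proposition \ref{caracpoly} and Theorem \ref{thmmin} that the only relations are the $C_{\vec u}$ together with the $R_{P,i}$ of the unique (minimal) polygon $P$, send the arrows to monomial multiples of elementary matrices, and conclude by determining a basis of each entry $e_{u}\Gamma_\sigma e_{v}$ from the relations, the $J$-adic completion accounting for the power series. The paper's proof is exactly this: it lists the explicit images of the generators in each of the three cases and remarks that the verification reduces to producing a basis of each $e_u \Gamma_\sigma e_v$ over the chosen central subring.

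There is, however, one genuine error in your proposal: the identification of the central variable. You set $x = C_\sigma$ in all three cases, invoking Proposition \ref{Csig}; but that proposition concerns $\Gamma^\circ_\sigma$, before the relations $R_{P,i}$ are imposed, and in $\Gamma_\sigma$ the element $C_\sigma$ realizes the $k\llbracket x\rrbracket$-lattice structure only in case (1). In case (2), under the displayed isomorphism one checks that $e_{u}C_\sigma$ is a unit multiple of $(0,x^m)E_{i,i}$ (the internal loop at $P_i$ is identified by $R_{P,i}$ with an external winding around the puncture $M$, which lives on one branch of the fibre product only); since multiplication by $(0,x^m)$ annihilates the first coordinate, $e\Gamma_\sigma e$ is not even module-finite over $k\llbracket C_\sigma\rrbracket$. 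In case (3) the relations kill $\omega^P_{i+1}\omega^P_i$, so $C_\sigma$ maps to a diagonal matrix of unit multiples of $\varepsilon$ and $C_\sigma^2 = 0$, hence $k\llbracket C_\sigma\rrbracket$ is not a power series ring at all. The paper instead takes $U := \sum_{i=1}^n \xi^P_{i+1}\xi^P_{i+2}\cdots\xi^P_i$, the sum of the external cycles, in cases (2) and (3); under the displayed maps $U$ goes to $(x,x)\cdot\mathrm{Id}$, respectively $x\cdot\mathrm{Id}$, and $k\llbracket x \rrbracket \cong k\llbracket U\rrbracket$, which is precisely what makes the entries $R_m$, $R_{m-1}$, $R'$ and $I$ free (or fractional) $R$-modules of the stated shape. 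With $U$ in place of $C_\sigma$ your basis-comparison step goes through; as written, your injectivity argument in cases (2) and (3) (``the $C$-irreducible paths modulo the $R_{P,i}$ form an $R$-basis'') is anchored to the wrong subring and would fail. Your treatment of case (1), including the bookkeeping of the exponents $t^{\pm m}$, $t^{m+1}$ and the non-maximal diagonal $\Delta$ at the special vertex, is consistent with the paper's images of the generators.
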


Proposition \ref{boundord} is proven in Subsection \ref{proofboundord}.

\section{Algebras of a partial triangulations} \label{nonfroz}

In this section, we define algebras of partial triangulations and we give first results about these algebras. As usual, $\sigma$ is a partial triangulation of $(\Sigma, \M)$. All results are proven in Subsections \ref{proofaltpres0}, \ref{proofaltpres2}, \ref{proofbasisDsig} and \ref{proofsymm}.

\begin{definition}
 We call \emph{algebra associated to $\sigma$} the algebra $$\Delta_\sigma = \Delta_\sigma^\lambda := \Gs^\lambda / (e_0)$$ where $e_0$ is the sum of the idempotent corresponding to boundary components.
\end{definition}

First, notice that we can use the same techniques as for frozen algebras:
\begin{corollary}[of Theorem \ref{subtri}] \label{subtri2}
 If $\tau \subset \sigma$ then $\Delta_\tau = e_\tau \Delta_\sigma e_\tau$.
\end{corollary}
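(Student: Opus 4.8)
The plan is to deduce this directly from Theorem \ref{subtri}, combined with a standard fact relating corner rings to quotients by ideals generated by an idempotent. Recall that Theorem \ref{subtri} provides an isomorphism $\phi \colon \Gamma_\tau \xrightarrow{\sim} e_\tau \Gamma_\sigma e_\tau$ sending each idempotent $e_u$ to $e_u$ (and each arrow to the corresponding arrow). The point is that passing from $\Gamma$ to $\Delta$ amounts to quotienting by the ideal generated by the boundary idempotent $e_0$, and this operation is visibly compatible with taking the corner $e_\tau(-)e_\tau$, so the corollary should follow with no new surface-topological input.

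First I would record the elementary but crucial observation that, by definition, every partial triangulation contains all the boundary edges of $(\Sigma,\M)$; in particular $\tau$ and $\sigma$ share the same set of boundary edges, so the boundary idempotent $e_0$ is literally the same element of $\Gamma_\tau$ and of $\Gamma_\sigma$. Since all boundary edges are edges of $\tau$, we have $e_0 e_\tau = e_\tau e_0 = e_0$, that is $e_0 \leq e_\tau$, and under $\phi$ the boundary idempotent of $\Gamma_\tau$ is sent to the boundary idempotent $e_0$ of $\Gamma_\sigma$.

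The second ingredient is the following general lemma, which I would state and prove in one line: if $f \leq e$ are idempotents of a ring $A$ and $I = AfA$ is the two-sided ideal generated by $f$, then
$$\bar e \, (A/I) \, \bar e \;\cong\; eAe \big/ (eAe)\,f\,(eAe).$$
Indeed, the image of $eAe$ in $A/I$ is $eAe/(eAe \cap I)$, and $eAe \cap I = eIe$ because any $x \in eAe$ satisfies $x = exe$; finally $eIe = e(AfA)e = (eAe)\,f\,(eAe)$, using $f = efe$ to absorb the outer factors, e.g. $eaf = (eae)f$ and $fbe = f(ebe)$.

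Applying this with $A = \Gamma_\sigma$, $e = e_\tau$ and $f = e_0$, and using $\Delta_\sigma = \Gamma_\sigma/(e_0)$, gives
$$e_\tau \Delta_\sigma e_\tau \cong e_\tau \Gamma_\sigma e_\tau \big/ (e_\tau \Gamma_\sigma e_\tau)\,e_0\,(e_\tau \Gamma_\sigma e_\tau).$$
Transporting the right-hand side through $\phi$, which carries $e_0$ to $e_0$, identifies the ideal $(e_\tau \Gamma_\sigma e_\tau)\,e_0\,(e_\tau \Gamma_\sigma e_\tau)$ with the ideal $\Gamma_\tau e_0 \Gamma_\tau$ generated by $e_0$ in $\Gamma_\tau$; hence the quotient is $\Gamma_\tau/(e_0) = \Delta_\tau$, as required. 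I expect the only point needing care to be the interaction with the $J$-adic completion used to define $\Gamma_\sigma$ and $\Gamma_\tau$: one must check that the corner functor $e_\tau(-)e_\tau$ and the quotient by $(e_0)$ commute with completion. This is harmless because $\phi$ is already a topological isomorphism and $e_0$ is an idempotent, so the ideal $(e_0)$ is controlled by the corresponding idempotent splitting; but it is the step I would write out most carefully.
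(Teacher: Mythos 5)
Your proof is correct and is essentially the argument the paper leaves implicit in deducing Corollary \ref{subtri2} from Theorem \ref{subtri}: since every partial triangulation contains all boundary edges, $e_0 \leq e_\tau$ and $\phi(e_0) = e_0$, so the general corner-ring identity $e_\tau(A/Ae_0A)e_\tau \cong e_\tau A e_\tau/(e_\tau A e_\tau)e_0(e_\tau A e_\tau)$ applied to $A = \Gamma_\sigma$ transports directly to $\Gamma_\tau/(e_0) = \Delta_\tau$. Your final worry about completion is moot: $\Delta_\sigma$ is \emph{defined} as the algebraic quotient of the already-completed $\Gamma_\sigma$ by $(e_0)$, and Theorem \ref{subtri} is an isomorphism between the completed algebras, so the purely ring-theoretic corner lemma applies with no further compatibility check.
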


By abuse of notation, we denote by $J$ the ideal of $\Delta_\sigma$ obtained by projection of $J$.
We start by giving an alternative presentation of $\Delta_\sigma$, which does not involve completion. We consider the full subquiver $\uQ_\sigma$ of $Q_\sigma$ with vertices corresponding to arcs (\emph{i.e.} non-boundary edges). For each oriented arc $\vec v$ of $\sigma$, we define a relation $\RR_{\vec v}$ in the following way. Let $P$ be the minimal polygon of $\sigma$ having $\vec v$ as an oriented side, and $\vec u$ (respectively $\vec w$) be the side of $P$ following (respectively preceding) $\vec v$. If one at least of $\vec u$ or $\vec w$ is a boundary edge, we put $\RR_{\vec v} = 0$. Otherwise, we put $\RR_{\vec v} = \ic{\vec u, -\vec v} \ic{\vec v, -\vec w} - f_{\vec v}$, where, denoting by $n$ the number of sides of $P$:
 \begin{itemize}
  \item if $n = 3$ and $\M_P = \emptyset$, $f_{\vec v} = \io{-\vec u, \vec w}$;
  \item if $n = 1$ and $\M_P = \{M\}$ with $m_M = 1$, $f_{\vec v} = \lambda_M \ic{\vec v, -\vec v}$;
  \item if $2 \notin k^\times$, $n = 1$ and $\M_P = \{M\}$ with $m_M = 2$, $f_{\vec v} = \lambda_M \lambda_{s(\vec v)} \ic{\vec v, \vec v}^{m_{s(\vec v)}}$;
  \item in any other case, $f_{\vec v} = 0,$
 \end{itemize}
 where any path of $Q_\sigma$ passing through a boundary component is $0$ in $k \uQ_\sigma$.

 Then we have the following result:
 \begin{theorem} \label{altpres2}
  We have an isomorphism
  $$\Delta_\sigma \cong \Delta^s_\sigma := \frac{k \uQ_\sigma}{JC_\sigma + (C_{\vec v}, \RR_{\vec v})_{\vec v \in \sigma}}$$
  which maps $e_u$ to $e_u$ for any $u \in \sigma$, such that $C_\sigma$ comes from the same element of $\uQ_\sigma$ on both side.
 \end{theorem}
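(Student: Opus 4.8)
The plan is to produce mutually inverse algebra morphisms between $\Delta^s_\sigma$ and $\Delta_\sigma$, both of which fix every idempotent $e_u$ and send each arrow of $\uQ_\sigma$ to the path carrying the same name; with that normalization the identification of $C_\sigma$ on the two sides is automatic. All the content then lies in two points: that the two presentations impose the same relations, and that the $J$-adic completion in the definition of $\Gs$ (see Definition \ref{defgamma}) becomes vacuous once the boundary is removed.

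First I would construct $\Phi\colon\Delta^s_\sigma\to\Delta_\sigma$. Since $\uQ_\sigma$ is the full subquiver of $Q_\sigma$ on the arcs, the composite $kQ_\sigma\to\Gso\to\Gs\to\Delta_\sigma=\Gs/(e_0)$ restricts to a morphism $k\uQ_\sigma\to\Delta_\sigma$, and I must check that it annihilates the defining relations of $\Delta^s_\sigma$. The relations $C_{\vec v}$ lie in $I^\circ_\sigma$ by the very definition of $\Gso$, so they vanish. For $\RR_{\vec v}$, the key observation is that $\ic{\vec u,-\vec v}\ic{\vec v,-\vec w}$ is exactly the product of two consecutive internal paths $\omega^P_{i+1}\omega^P_i$ appearing in $R_{P,i}$, where $P$ is the minimal polygon having $\vec v$ as a side; by Theorem \ref{thmmin} the ideal $I_\sigma$ is generated by the $R_{P,i}$ for \emph{minimal} $P$, so it suffices to verify that $f_{\vec v}$ is precisely the image in $k\uQ_\sigma$ of the external term of $R_{P,i}$ after every path through a boundary component is set to zero. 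This is a finite case analysis following the three branches in the definition of $R_{P,i}$ ($\M_P$ empty, a single puncture, or $\#\M_P\geq 2$), refined by the number of sides $n$ and by whether a side of $P$ is a boundary edge; the characteristic-two clause for $n=1$, $m_M=2$ records the single branch where the external monogon term does not collapse.

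The delicate relation is $JC_\sigma$, and it is the same phenomenon that makes the completion superfluous. On the one hand I would show that the image of $C_\sigma$ in $\Delta_\sigma$ is killed by $J$: by Proposition \ref{Csig} the component of $C_\sigma$ at a vertex $u$ is the full winding $\lambda_{s(\vec u)}\ic{\vec u,\vec u}^{m_{s(\vec u)}}$, which the $C_{\vec v}$ relations let me evaluate at either endpoint of $u$; if $s(\vec u)$ lies on $\partial\Sigma$ this winding passes through the boundary edge at $s(\vec u)$ and hence dies in $\Delta_\sigma$, while if $s(\vec u)$ is a puncture I would rewrite the winding through the minimal-polygon relations around that puncture and check that multiplication by any arrow of $J$ again forces it through the boundary. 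On the other hand, in $\Delta^s_\sigma$ the relation $JC_\sigma=0$, together with the fact that the non-special part of $C_\sigma$ lies in $J$, yields $C_\sigma^2=C_\sigma\cdot C_\sigma^{\mathrm{sp}}$ where $C_\sigma^{\mathrm{sp}}$ is the idempotent-dividing part supported on special monogons; thus $x=C_\sigma$ is no longer a free central variable but satisfies an integral relation, and the $k[x]$-module $\Gso$, which by Proposition \ref{Csig} is free of finite rank on the $C$-irreducible paths, acts through a finite quotient. Consequently $\Delta^s_\sigma$ is a finite $k$-module, $J$ is nilpotent in it, and $\Delta^s_\sigma$ is already $J$-adically complete.

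Completeness of $\Delta^s_\sigma$ now allows the morphism $kQ_\sigma\to\Delta^s_\sigma$ (arrows to arrows, boundary idempotents to $0$) to factor through the completion $\Gs=\smash{\widehat{\Gso/I_\sigma}}^{\,J}$ and then through $(e_0)$, producing $\Psi\colon\Delta_\sigma\to\Delta^s_\sigma$; here I would also invoke Theorem \ref{subtri} only insofar as it guarantees the idempotents behave as expected under the boundary quotient. Since both $\Phi$ and $\Psi$ are the identity on the generating idempotents and arrows, they are mutually inverse. I expect the main obstacle to be exactly the twin statements $JC_\sigma=0$ in $\Delta_\sigma$ and the nilpotency of $J$ in $\Delta^s_\sigma$: these are the two halves of the assertion that deleting the boundary collapses the $k\llbracket x\rrbracket$-type completion of $\Gs$ to a finite-rank $k$-algebra, and both reduce to controlling the full windings $\ic{\vec u,\vec u}^{m_{s(\vec u)}}$ via the minimal-polygon relations and the special-monogon exception built into the definition of $J$.
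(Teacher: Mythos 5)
There is a genuine gap, and it is precisely the point the paper flags in the remark immediately following Theorem \ref{altpres2}: the isomorphism $\Delta_\sigma \cong \Delta^s_\sigma$ does \emph{not} come from the identity of $k\uQ_\sigma$, whereas your entire strategy rests on producing mutually inverse maps $\Phi$, $\Psi$ that send every arrow to the arrow of the same name. Your key claim---that $f_{\vec v}$ is ``precisely the image in $k\uQ_\sigma$ of the external term of $R_{P,i}$ after every path through a boundary component is set to zero''---is false. The correct such images are the elements $f^\circ_{\vec v}$ of Figure \ref{polnonz} (this is the content of Proposition \ref{altpres0}), and they differ from the $f_{\vec v}$ of the theorem in Cases b, c, e, f, and in Case d with $m_M=2$ and $2\in k^\times$. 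Concretely, take Case c: a quadrilateral $P$ with sides $\vec w$, $\vec v$, $\vec u$, $-\vec u$, with $\M_P=\emptyset$ and $m_{P_3}=1$. Since $n=4$ and $\M_P=\emptyset$, your relation is $\RR_{\vec v}=\ic{\vec u,-\vec v}\ic{\vec v,-\vec w}$, yet in $\Delta_\sigma$ one has $\ic{\vec u,-\vec v}\ic{\vec v,-\vec w}=\lambda_{P_3}\io{\vec u,\vec w}$, which is nonzero in general (it is part of the basis $\Br$ of Theorem \ref{basisDsig}); so your $\Phi$ is not well defined, and symmetrically $\Psi$ fails in the other direction. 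The same breakdown occurs for a digon enclosing a single puncture $M$ with $m_M=1$ (Case b), where the product equals $\lambda_M e_u C_\sigma\neq 0$, and your ``finite case analysis'' would run into these cases and could not close them by any identity-on-arrows argument.

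For comparison, the paper's proof has two genuinely distinct stages. First it establishes Proposition \ref{altpres0}, the presentation with the corrected relations $\RR^\circ_{\vec v}$; this is where the material in your sketch about $JC_\sigma=0$ in $\Delta_\sigma$ (Lemma \ref{Csigsoc}) and the collapse of the $J$-adic completion lives, and that portion of your outline is sound in spirit. It then passes from the $\RR^\circ_{\vec v}$ to the $\RR_{\vec v}$ via Proposition \ref{multipres}: an induction over $\epsilon\in\{0,1\}^\Pr$ replacing one polygon's relation at a time, each step realized by an explicit \emph{non-identity} automorphism of $k\uQ_\sigma$ twisting a single arrow, e.g.\ $\ic{\vec v,-\vec w}\mapsto \tilde\nu^{-1}(\ic{\vec v,-\vec w}-\lambda_{P_3}\io{-\vec v,\vec w})$ in Case c, or $\ic{\vec u,-\vec u}\mapsto \ic{\vec u,-\vec u}-\lambda_M\io{-\vec u,\vec u}/2$ in Case d---note this last substitution needs $2\in k^\times$, which is exactly why the definition of $f_{\vec v}$ carries the clause $2\notin k^\times$; your proposal has no mechanism to detect this dichotomy. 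Cases b, e, f moreover require enlarging $\sigma$ by an auxiliary arc and invoking Corollary \ref{subtri2}. This case-by-case change of variables is the real content of the theorem, and it is the reason its statement promises only $e_u\mapsto e_u$ and a matching $C_\sigma$, never matching arrows.
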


 \begin{remark}
  We will see that the isomorphism of Theorem \ref{altpres2} does not come from the identity of $k \uQ_\sigma$. In particular, the isomorphism of Corollary \ref{subtri2} cannot any more be realized by the naive identifications $\ic{\vec u, \vec v}$ in $\Delta^s_\tau$ and $\Delta^s_\sigma$. We will give in Proposition \ref{altpres0} an other presentation of $\Delta_\sigma$ for which this naive identification still works.
 \end{remark}

We now give a convenient basis of $\Delta_\sigma$. Let $\Br$ the subset of $k \uQ_\sigma/(C_{\vec v})_{\vec v \in \sigma}$ consisting of $e_u$ for $u \in \sigma$ (a non-boundary edge), $c_u := e_u C_\sigma$ for $u$ an arc of $\sigma$ not connected to a boundary and $\ic{\vec u, \vec u}^\ell \ic{\vec u, \vec v}$ where
 \begin{itemize}
  \item $\vec u$ and $\vec v$ are non-boundary oriented edges satisfying $s(\vec u) = s(\vec v)$;
  \item $0 \leq \ell < m_{s(\vec u)}$;
  \item If $\ell = m_{s(\vec u)} - 1$ then $\vec u \neq \vec v$;  
  \item If there is a boundary edge $\vec b$ with $s(\vec b) = s(\vec u)$ then $\vec u$, $\vec v$ and $\vec b$ are strictly ordered counter-clockwisely around $s(\vec u)$ and $\ell = 0$.
 \end{itemize}

\begin{theorem} \label{basisDsig}
 \begin{enumerate}[\rm (1)]
  \item The set $\Br$ is mapped to $k$-bases of $\Delta_\sigma$ and $\Delta^s_\sigma$.
  \item For two elements $x$, $y$ of $\Br$, the product $xy$ is a scalar multiple of an element of $\Br$ in $\Delta_\sigma$ and in $\Delta_\sigma^s$.
  \item If $\sigma$ has no arc incident to the boundary, then for any $x \in \Delta_\sigma$, $J x = x J = 0$ if and only if $x \in (C_\sigma)$.
 \end{enumerate}
\end{theorem}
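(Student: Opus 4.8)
The plan is to prove all three statements inside the presentation $\Delta^s_\sigma$ of Theorem \ref{altpres2}, which has no completion, and to transport the conclusions to $\Delta_\sigma$ along that isomorphism. Since every element of $\Delta^s_\sigma$ is a $k$-combination of classes of paths in $\uQ_\sigma$, I would first show that $\Br$ spans. The rewriting rests on three moves. The relations $\RR_{\vec v}$ replace any internal corner $\ic{\vec u, -\vec v}\ic{\vec v, -\vec w}$ either by $0$ (when an adjacent side is a boundary edge) or by the coefficiented external winding $f_{\vec v}$; applied repeatedly, this turns an arbitrary path into a single monotone winding $\ic{\vec u, \vec u}^\ell \ic{\vec u, \vec v}$, possibly multiplied by a power of $C_\sigma$. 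The relations $C_{\vec v}$ then bound the exponent: any full turn $\ic{\vec u, \vec u}^{m_{s(\vec u)}}$ is absorbed into $C_\sigma$, which by Proposition \ref{Csig} is central and can be pulled out as a scalar copy. Finally the defining relation $JC_\sigma = 0$ annihilates every product of $C_\sigma$ with an arrow, so the surviving classes are exactly the $e_u$, the $c_u = e_u C_\sigma$, and the windings subject to the constraints $0 \le \ell < m_{s(\vec u)}$ and $\ell = m_{s(\vec u)}-1 \Rightarrow \vec u \ne \vec v$ appearing in the definition of $\Br$.

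For linear independence, which I expect to be the main obstacle, I would localize via Corollary \ref{subtri2}. Each winding $\ic{\vec u, \vec u}^\ell \ic{\vec u, \vec v}$ lies in $e_u \Delta_\sigma e_v$, so any relation $\sum_{b}\lambda_b b = 0$ splits into its $(e_u,e_v)$-components. Taking $\tau \subset \sigma$ to be the subconfiguration formed by $u$, $v$, the edges lying between them around the shared marked points, and the boundary, Corollary \ref{subtri2} identifies $e_u \Delta_\sigma e_v$ with a corner of the much smaller algebra $\Delta_\tau$. This reduces the question to partial triangulations with one or two arcs --- a polygon, an annulus, or a once-punctured polygon --- where the local structure around each marked point $M$ is of cyclic Nakayama type and the windings of bounded length are manifestly independent, as in the explicit computations of Example \ref{exalgb}. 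The delicate point is to verify that the polygon relations, generated by minimal polygons via Theorem \ref{thmmin}, create no collapse among windings attached to \emph{distinct} marked points; this is precisely what the $C_{\vec v}$-bookkeeping governs, since those relations only identify a full turn at one endpoint with a full turn at the other through the central element $C_\sigma$.

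Statement (2) is then a finite case analysis, clean once (1) holds. Products with an idempotent $e_u$ are immediate. A product of two windings is nonzero only when the target vertex of the first equals the source vertex of the second and the oriented edges match, in which case they concatenate around the common marked point to $\ic{\vec u, \vec u}^{\ell+\ell'}\ic{\vec u, \vec w}$; reducing the exponent modulo $m_{s(\vec u)}$ by $C_{\vec v}$ yields a shorter winding, a scalar multiple of $c_u$, or $0$. Products involving $c_u$ use that $C_\sigma$ is central with $C_\sigma J = 0$ and $C_\sigma^2 = 0$ (each summand of $C_\sigma$ is a cyclic path of positive length, hence in $J$), so $c_u\, b$ equals $c_u$ or $0$ according as $b = e_u$ or not. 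In every case the outcome is a scalar multiple of an element of $\Br$, in both $\Delta_\sigma$ and $\Delta^s_\sigma$.

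For statement (3), assume no arc is incident to the boundary, so $c_u$ is defined for every arc $u$ and, by the previous paragraph, $(C_\sigma) = C_\sigma \Delta_\sigma = \sum_{u} k\,c_u$. Since $C_\sigma$ is central with $C_\sigma J = 0$, every element of $(C_\sigma)$ satisfies $Jx = xJ = 0$, giving $\supseteq$. Conversely, writing $x$ in the basis $\Br$, I would show that any basis vector other than a $c_u$ can be prolonged to a nonzero element on at least one side: an idempotent $e_u$ gives $\ic{\vec u, \vec v}\,e_u = \ic{\vec u, \vec v} \ne 0$, and a winding $\ic{\vec u, \vec u}^\ell \ic{\vec u, \vec v}$ extends by one further arrow around $s(\vec u)$ either to another element of $\Br$ or, when the extra arrow closes the $m_{s(\vec u)}$-th turn, to $c_u = \lambda_{s(\vec u)}\ic{\vec u,\vec u}^{m_{s(\vec u)}} \ne 0$ by part (2). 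Hence the only basis vectors annihilated by $J$ on both sides are the $c_u$, so $Jx = xJ = 0$ forces $x \in \sum_u k\,c_u = (C_\sigma)$.
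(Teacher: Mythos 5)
Your overall architecture has a structural problem before the details even start: you propose to prove everything in $\Delta^s_\sigma$ and ``transport the conclusions to $\Delta_\sigma$ along that isomorphism,'' but the paper's own remark after Theorem \ref{altpres2} warns that this isomorphism does \emph{not} come from the identity of $k \uQ_\sigma$, so the statement ``$\Br$ is mapped to a basis'' does not pass between the two presentations for free --- the image of a path class on one side is not the class of the same path on the other. The paper resolves this by proving independence directly in $\Delta_\sigma$ using the presentation of Proposition \ref{altpres0} (which \emph{is} compatible with the naive identification and with Corollary \ref{subtri2}), and only afterwards transporting to $\Delta^s_\sigma$ through the chain of explicit isomorphisms $\psi^\epsilon$ of Proposition \ref{multipres}, checking by inspection that each $\phi^\circ$ is unitriangular on $\Br$ (e.g.\ $\phi^\circ(\ic{\vec u,-\vec u}) = \ic{\vec u,-\vec u} - \lambda_M \io{-\vec u,\vec u}/2$). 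Your sketch contains no substitute for that check. Relatedly, your claim that after localizing to one or two arcs ``the windings of bounded length are manifestly independent'' is exactly where the real content lies and where it is false as stated: in the degenerate two-arc configurations (flat digons, loops, special monogons, and the sphere with four punctures with all $m_M = 1$) the polygon relations \emph{do} mix windings attached to distinct marked points, and independence there requires the invertibility of $\nu_\M$ --- see the paper's cases (b) and (e), where one only deduces $\nu_\M \alpha_0 = \nu_\M \beta_0 = 0$ and must invoke the standing hypothesis that $\nu_\M$ is invertible. Your proposal never mentions $\nu_\M$, so it cannot close these cases.

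Your arguments for (2) and (3) also rest on the false premise that every non-idempotent basis element lies in $J$. Precisely at special monogons this fails (Lemma \ref{xiJ}): if $\vec u$ encloses a special monogon with special puncture $M$, then $\ic{\vec u, -\vec u} \notin J$ and $c_u \ic{\vec u, -\vec u} = \lambda_M c_u \neq 0$, contradicting your claim that $c_u b$ is ``$c_u$ or $0$ according as $b = e_u$ or not.'' For the same reason $(C_\sigma) \neq \sum_u k\, c_u$: elements such as $C_\sigma \ic{\vec u, \vec v}$ with $\ic{\vec u,\vec v}$ a special (non-$J$) arrow are nonzero and lie in $(C_\sigma)$ without being any $c_u$; indeed the paper's case (c) exhibits a basis element $x = \ic{-\vec u,-\vec u}^{m_{s(\vec u)}-1}\ic{-\vec u,\vec u}\ic{\vec u,\vec v}$ with $xJ = Jx = 0$ which is not a $c_u$ but satisfies $\lambda_{t(\vec v)} x = \lambda_{s(\vec u)}^{-1} C_\sigma \ic{\vec u,\vec v} \in (C_\sigma)$. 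So your conclusion ``the only basis vectors annihilated by $J$ on both sides are the $c_u$'' is wrong (though the theorem survives, because the extra elements still land in $(C_\sigma)$), and your one-arrow-extension argument is inconclusive, since extending by an arrow only witnesses $xJ \neq 0$ when that arrow actually belongs to $J$. Note finally that the paper gets (2) almost for free from (1) together with Proposition \ref{altpres0} (the relations are combinations of at most two paths), whereas your case analysis for (2) is both unnecessary and, at the $c_u$ step, incorrect.
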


We deduce the following easy corollary.

\begin{corollary} \label{dimd}
 The $k$-algebra $\Delta_\sigma$ is free of rank
 $$\sum_{M \in \M \setminus \P} \frac{d_M(d_M-1)}{2} + \sum_{M \in \P} m_M d_M^2 + f$$
 where $\P \subset \M$ is the set of punctures, for $M \in \M$, $d_M$ is the degree of $M$ in the graph $\sigma$ (without counting boundary components), and $f$ is the number of arcs in $\sigma$ with both endpoints on boundaries. 
\end{corollary}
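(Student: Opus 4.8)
The plan is to simply enumerate the basis $\Br$ furnished by Theorem~\ref{basisDsig}(1): since that theorem asserts $\Br$ maps to a $k$-basis of $\Delta_\sigma$, the rank equals $\#\Br$, and the corollary reduces to a bookkeeping exercise. I would split $\Br$ into its three declared families — the idempotents $e_u$, the central elements $c_u = e_u C_\sigma$, and the ``angle paths'' $\ic{\vec u,\vec u}^\ell\ic{\vec u,\vec v}$ — count each family, and check that the total collapses to the claimed expression.

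The first two families are immediate. There is one idempotent $e_u$ per arc of $\sigma$, contributing the number of arcs. The elements $c_u$ occur exactly for arcs not meeting the boundary; since $c_u = e_u C_\sigma = \lambda_{s(\vec u)}\ic{\vec u,\vec u}^{m_{s(\vec u)}} = \lambda_{t(\vec u)}\ic{-\vec u,-\vec u}^{m_{t(\vec u)}}$ and the winding $\ic{\vec u,\vec u}$ around a boundary marked point passes through a boundary-edge vertex, this element is killed in $\Delta_\sigma = \Gs/(e_0)$ as soon as one endpoint of $u$ lies on the boundary (by the relation $C_{\vec u}$ both loops agree, so one vanishing forces $c_u = 0$). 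Hence the surviving $c_u$ are precisely those attached to arcs with both endpoints at punctures. The third family I would count by grouping the angle paths according to their common source $M = s(\vec u) = s(\vec v)$, using that $d_M$ is the number of oriented arcs based at $M$. For a puncture $M$ there is no incident boundary edge, so all $d_M^2$ ordered pairs $(\vec u,\vec v)$ are allowed for each $\ell \in \{0,\dots,m_M-1\}$, except that $\ell = m_M-1$ forbids $\vec u = \vec v$; this gives $m_M d_M^2 - d_M$ paths. For a boundary marked point $M$ the presence of an incident boundary edge $\vec b$ forces $\ell = 0$ and requires $\vec u,\vec v,\vec b$ to be strictly counter-clockwise; reading the oriented arcs at $M$ in the linear order obtained by starting from an incident boundary edge, this leaves exactly the strictly increasing pairs, namely $\binom{d_M}{2} = \tfrac{d_M(d_M-1)}2$.

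It then remains to assemble the pieces. Writing $f$, $g$, $h$ for the numbers of arcs with, respectively, both, exactly one, and no endpoints on the boundary, the number of arcs is $f+g+h$, the number of surviving $c_u$ is $h$, and counting arc-ends at punctures gives $\sum_{M\in\P} d_M = g + 2h$. Substituting the per-$M$ counts,
\begin{align*}
 \#\Br &= (f+g+h) + h + \sum_{M\in\M\setminus\P}\tfrac{d_M(d_M-1)}2 + \sum_{M\in\P}\bigl(m_M d_M^2 - d_M\bigr) \\
 &= \sum_{M\in\M\setminus\P}\tfrac{d_M(d_M-1)}2 + \sum_{M\in\P} m_M d_M^2 + \bigl((f+g+2h) - (g+2h)\bigr),
\end{align*}
and the last bracket is $f$, the desired rank. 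The only genuinely delicate point — and the step I would treat most carefully — is the boundary case of the angle count: translating the ``strictly counter-clockwise with a boundary edge, $\ell = 0$'' condition into the clean count $\binom{d_M}2$, and dually confirming that the $-d_M$ defect at each puncture is exactly compensated by the idempotent and central contributions through the identity $\sum_{M\in\P}d_M = g+2h$. Everything else is routine once Theorem~\ref{basisDsig} is in hand.
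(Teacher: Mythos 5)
Your proof is correct and is essentially the paper's own argument: the paper deduces Corollary \ref{dimd} directly by enumerating the basis $\Br$ of Theorem \ref{basisDsig}, and your count --- $\binom{d_M}{2}$ angle paths at each boundary marked point (linear order cut by the boundary edges, $\ell=0$), $m_M d_M^2 - d_M$ at each puncture (diagonal pairs excluded only at $\ell = m_M-1$), plus $f+g+h$ idempotents and $h$ elements $c_u$, with the defect cancelling via $\sum_{M\in\P} d_M = g+2h$ --- is exactly that computation. Nothing further is needed.
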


\begin{example}
 In Examples \ref{exquiv}, \ref{exgso}, \ref{exalgb}, the rank of the algebra of the left partial triangulation is $1 + 3 + 1 = 5$ and the one of the right partial triangulation is $4 m_M$.
\end{example}

\begin{remark}
 It is immediate that $\Delta_\sigma$ does not depend of the values of $m_M$ for $M \in \M$ on the boundary of $\Sigma$. This is reflected in the rank formula.
\end{remark}

We also get the following theorem:

\begin{theorem} \label{symm}
 If $\sigma$ has no arc incident to the boundary, then $\Delta_\sigma$ is a symmetric $k$-algebra (\emph{i.e.} $\Hom_k(\Delta_\sigma, k) \cong \Delta_\sigma$ as $\Delta_\sigma$-bimodules). 
\end{theorem}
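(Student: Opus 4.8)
The plan is to prove symmetry by exhibiting an explicit symmetrizing linear form $\psi\colon \Delta_\sigma \to k$ and verifying (i) that $\psi$ induces a nondegenerate bilinear pairing $(x,y) \mapsto \psi(xy)$, and (ii) that this pairing is symmetric, i.e. $\psi(xy) = \psi(yx)$ for all $x,y$. Recall that a finite free $k$-algebra $A$ is symmetric precisely when such a $\psi$ exists; the isomorphism $\Hom_k(\Delta_\sigma, k) \cong \Delta_\sigma$ of bimodules then sends $1 \mapsto \psi$. Since $\sigma$ has no arc incident to the boundary, Corollary \ref{dimd} gives the clean rank $\sum_{M \in \M \setminus \P} \frac{d_M(d_M-1)}{2} + \sum_{M \in \P} m_M d_M^2$, and the basis $\Br$ of Theorem \ref{basisDsig} is available throughout.

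First I would define $\psi$ on the basis $\Br$. The natural candidate is a \emph{socle-detecting} form: $\psi$ should vanish on all basis elements except those spanning the (one-dimensional pieces of the) socle of each indecomposable projective, where it takes the value $1$ (or a suitable unit). Concretely, using the description of $\Br$, the "longest" monomials $\ic{\vec u, \vec u}^{\ell}\ic{\vec u, \vec v}$ at each marked point $M$ — those that land in the socle — are the support of $\psi$, together with the central elements $c_u = e_u C_\sigma$ for arcs $u$, since $C_\sigma$ is the canonical central socle element by Proposition \ref{Csig}. The key structural input is Theorem \ref{basisDsig}(3): when $\sigma$ has no arc incident to the boundary, $Jx = xJ = 0$ if and only if $x \in (C_\sigma)$, which identifies the two-sided socle with the principal ideal generated by $C_\sigma$ and pins down exactly where $\psi$ must be nonzero.

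Next I would verify symmetry of the pairing. By Theorem \ref{basisDsig}(2), for $x, y \in \Br$ the product $xy$ is a scalar multiple of a basis element, so it suffices to check $\psi(xy) = \psi(yx)$ on pairs of basis elements, and both sides vanish unless $xy$ (respectively $yx$) lands in the socle support of $\psi$. This reduces to a local computation around each marked point $M$: one must match, for each oriented arc $\vec u$, the contribution of $\psi(\ic{\vec u, \vec v}\,\omega)$ against $\psi(\omega\,\ic{\vec u, \vec v})$, using the cyclic order of oriented edges around $M$ and the defining relations $C_{\vec v}$ (which equate the two long cycles $\lambda_{s(\vec u)}\ic{\vec u,\vec u}^{m}$ and $\lambda_{t(\vec u)}\ic{-\vec u,-\vec u}^{m}$). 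The cyclic symmetry of the monomials winding around $M$, combined with the symmetric bookkeeping of the scalars $\lambda_M$ built into the relations, is what makes the pairing symmetric; the hypothesis that no arc touches the boundary is exactly what guarantees every oriented arc sits inside such a cycle rather than terminating at a boundary idempotent that would be killed in $\Delta_\sigma$.

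Finally, nondegeneracy follows once I check that for every basis element $x \in \Br$ there is a basis element $y$ with $\psi(xy)$ a unit, which is a finite matching argument: each $x$ can be completed by a suitable "complementary" cyclic segment to reach the socle monomial at the relevant marked point. \textbf{The main obstacle} I expect is the bookkeeping at punctures, where $m_M$ may exceed $1$ and the projective has the larger dimension $m_M d_M^2$; there the socle structure is more intricate and one must carefully verify that the pairing remains symmetric across the full power $\ic{\vec u, \vec u}^{m_M}$ rather than only at length $1$, ensuring the $\lambda_M$-weights introduced by the $\io{-,-}$ paths cancel correctly on both sides. The passage to the completion in the definition of $\Gs$ is harmless here because $\Delta_\sigma$ is finite free by Corollary \ref{dimd}, so all relevant sums are finite and the form $\psi$ is genuinely $k$-linear on a finite basis.
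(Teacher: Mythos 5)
Your overall strategy coincides with the paper's: both build an explicit trace supported ``at the socle'' and verify the trace property via Theorem \ref{basisDsig}(2) and nondegeneracy via the socle characterization. But as written your proposal has a genuine gap in the choice of the form itself. By Theorem \ref{basisDsig}(3) the two-sided socle is exactly $(C_\sigma)$, spanned by the elements $c_u = e_u C_\sigma$, and a form supported only there (your ``socle-detecting'' $\psi$; the ``longest monomials'' $\ic{\vec u,\vec u}^{\ell}\ic{\vec u,\vec v}$ are \emph{not} socle elements) is \emph{not} a trace. The obstruction is the $2$-special arcs: when $\vec u$ encloses a monogon containing exactly two punctures $M_{\vec u}, N_{\vec u}$ with $m_{M_{\vec u}} = m_{N_{\vec u}} = 1$, there are pairs of basis elements $a, b$ with $\mu ab = e_u C_\sigma$ while $\mu ba$ equals $\lambda_{M_{\vec u}}^{-1}\lambda_{N_{\vec u}}^{-1}\ic{\vec u, -\vec u}$, a basis element \emph{outside} $(C_\sigma)$; see Lemma \ref{tech2} (third bullet: $\ic{\vec u,\vec v}\ic{-\vec v,\vec u} = \lambda_M\lambda_N e_u C_\sigma$ when $-\vec v$ encloses two such punctures) and cases (c), (e) in the proof of Lemma \ref{commC}. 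This is precisely why Definition \ref{trace} takes $\Er$ to consist of the $e_u C_\sigma$ \emph{together with} the rescaled elements $\lambda_{M_{\vec u}}^{-1}\lambda_{N_{\vec u}}^{-1}\ic{\vec u,-\vec u}$ for $2$-special $\vec u$: the form must equal $1$ there too, or else $\psi(ab) \neq \psi(ba)$ on those pairs. Your ``cyclic symmetry around each marked point'' heuristic breaks exactly in these special-monogon configurations (and in the sphere-with-few-punctures cases where $\nu_\M \neq 1$ enters), not, as you anticipated, at ordinary punctures with $m_M > 1$, which cause no difficulty.

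The nondegeneracy step also has a gap. Checking that each basis element $x$ admits some $y$ with $\psi(xy)$ a unit does not give nondegeneracy even over a field: it shows no \emph{basis vector} lies in the radical of the pairing, but a nontrivial linear combination still could (a Gram matrix can have a unit in every row and be singular). Over the general commutative ring $k$ of the paper one must show that $a \mapsto \psi(a\,\cdot\,)$ is an isomorphism onto $\Hom_k(\Delta_\sigma, k)$, and the paper does this through the criterion of Lemma \ref{caracnondeg} (with Lemma \ref{zerodivdet}): it suffices that $\psi(ax) \in \mu k$ for all $x$ forces $a \in \mu\Delta_\sigma$. For $\mu = 0$ this uses Theorem \ref{basisDsig}(3) in an essential way --- since $J$ is nilpotent, a nonzero $a$ can be multiplied on both sides into a nonzero multiple of some $e_u C_\sigma$, where $\Er^*$ does not vanish --- and the case of a non-invertible $\mu$ is handled by reducing modulo $\mu$, observing that $\Delta_\sigma/\mu\Delta_\sigma$ is again the algebra of $\sigma$ over $k/\mu k$ with the induced trace. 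Freeness from Corollary \ref{dimd} alone does not substitute for this argument; your matching step would need to be replaced by something of this shape.
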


We construct a triangulation $\sigma'$ of a surface without boundary from the triangulation $\sigma$ having the property that $\Delta_{\sigma} = \Delta_{\sigma'}/(e_0)$ for an idempotent $e_0 \in \Delta_{\sigma'}$:

\begin{definition} \label{defsp}
 For each boundary of $\Sigma$ with $n$ marked points, we patch a $n$-gon with two punctures to form a marked surface $(\Sigma', \M')$ without boundary that we call \emph{augmented surface of $(\Sigma, \M)$}. We call $\sigma'$ the partial triangulation of $(\Sigma', \M')$ having the same edges and we call it its \emph{augmented partial triangulation}. 
\end{definition}

Notice that it is immediate by definition that $\Delta_{\sigma'}^\lambda = \Gamma_{\sigma'}^\lambda$ does not depend on the choice of the $\lambda_M$'s for the added punctures $M$.

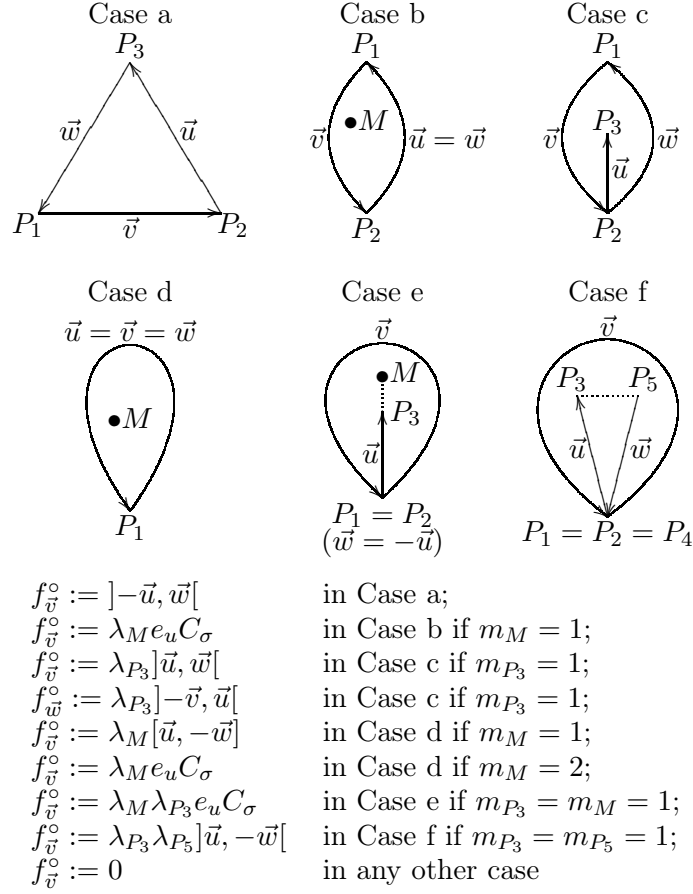
\begin{figure}
 \begin{tabular}{ccc}
  Case a & Case b & Case c \\
\boxinminipage{$\renewcommand{\labelstyle}{\textstyle} \xymatrix@L=.05cm@!=0cm@R=.4cm@C=.4cm@M=0.00cm{ 
    & & & \ar[dddddlll]_{\vec w} \ar@{{}{ }{}}@`{p+(0,3), p+(0,3)}[]|{P_3}\\ \\ \\ \\ \\ 
   \ar[rrrrrr]_{\vec v} \ar@{{}{ }{}}@`{p+(-2,-2), p+(-2,-2)}[]|{P_1} & & & & & & \ar[uuuuulll]_{\vec u} \ar@{{}{ }{}}@`{p+(2,-2), p+(2,-2)}[]|{P_2}
}$}
&   
\boxinminipage{$\renewcommand{\labelstyle}{\textstyle} \xymatrix@L=.05cm@!=0cm@R=.4cm@C=.4cm@M=0.00cm{ 
    & & & \ar@/_.5cm/[ddddd]_{\vec v} \ar@{{}{ }{}}@`{p+(0,3), p+(0,3)}[]|{P_1} & & & \\ \\ & & & \bullet M \\ \\ \\ 
    & & & \ar@/_.5cm/[uuuuu]_{\vec u = \vec w} \ar@{{}{ }{}}@`{p+(0,-3), p+(0,-3)}[]|{P_2}
}$}
& 
\boxinminipage{$\renewcommand{\labelstyle}{\textstyle} \xymatrix@L=.05cm@!=0cm@R=.4cm@C=.4cm@M=0.00cm{ 
    & & & \ar@/_.6cm/[ddddd]_{\vec v} \ar@{{}{ }{}}@`{p+(0,3), p+(0,3)}[]|{P_1} & & & \\ \\ & & & P_3 \\ \\ \\ 
    & & & \ar[uuu]_{\vec u} \ar@/_.6cm/[uuuuu]_{\vec w} \ar@{{}{ }{}}@`{p+(0,-3), p+(0,-3)}[]|{P_2}
}$}
  \\
  & & \\
  Case d & Case e & Case f \\
\boxinminipage{$\renewcommand{\labelstyle}{\textstyle} \xymatrix@L=.05cm@!=0cm@R=.4cm@C=.4cm@M=0.00cm{ 
    & & & & & & \\ \\ & & & \bullet M \\ \\ \\ 
    & & & \ar@`{p+(10,14),p+(0,30),p+(-10,14)}[]_{\vec u = \vec v = \vec w} \ar@{{}{ }{}}@`{p+(0,-3), p+(0,-3)}[]|{P_1} \ar@{{}{ }{}}@`{p+(0,-8), p+(0,-8)}[]|{\quad}
}$}
   &  
\boxinminipage{$\renewcommand{\labelstyle}{\textstyle} \xymatrix@L=.05cm@!=0cm@R=.4cm@C=.4cm@M=0.00cm{ 
    & & & & & & \\ & & & \bullet \ar@{{}{ }{}}@`{p+(4,1), p+(4,1)}[]|{M} \\ & & & \ar@{..}[u] \ar@{{}{ }{}}@`{p+(4,-1), p+(4,-1)}[]|{P_3} \\ \\ \\ 
    & & & \ar@`{p+(13,12),p+(0,29),p+(-13,12)}[]_{\vec v} \ar[uuu]^{\vec u} \ar@{{}{ }{}}@`{p+(0,-3), p+(0,-3)}[]|{P_1 = P_2} \ar@{{}{ }{}}@`{p+(0,-8), p+(0,-8)}[]|{(\vec w = -\vec u)}
}$}
 & 
\boxinminipage{$\renewcommand{\labelstyle}{\textstyle} \xymatrix@L=.05cm@!=0cm@R=.4cm@C=.4cm@M=0.00cm{ 
    & & & & & & \\ & & \ar@{{}{ }{}}@`{p+(-1,3), p+(-1,3)}[]|{P_3} \ar@{..}[rr] & & \ar@{{}{ }{}}@`{p+(1,3), p+(1,3)}[]|{P_5} \ar[ddddl]^(.4){\vec w} \\ \\ \\ \\ 
    & & & \ar@`{p+(16,12),p+(0,35),p+(-16,12)}[]_{\vec v} \ar[uuuul]^(.6){\vec u} \ar@{{}{ }{}}@`{p+(0,-3), p+(0,-3)}[]|{P_1 = P_2 = P_4} \ar@{{}{ }{}}@`{p+(0,-8), p+(0,-8)}[] \ar@{{}{ }{}}@`{p+(0,-8), p+(0,-8)}[]|{\quad}
}$}
 \end{tabular}
 
 $$\begin{array}{ll}
  f^\circ_{\vec v} := \io{-\vec u, \vec w} & \text{ in Case a;} \\
  f^\circ_{\vec v} := \lambda_M e_u C_\sigma & \text{ in Case b if $m_M = 1$;} \\
  f^\circ_{\vec v} := \lambda_{P_3} \io{\vec u, \vec w} & \text{  in Case c if $m_{P_3} = 1$;} \\
  f^\circ_{\vec w} := \lambda_{P_3} \io{-\vec v, \vec u} & \text{  in Case c if $m_{P_3} = 1$;} \\
  f^\circ_{\vec v} := \lambda_{M} \ic{\vec u, -\vec w} & \text{  in Case d if $m_M = 1$;} \\
  f^\circ_{\vec v} := \lambda_{M} e_u C_\sigma & \text{  in Case d if $m_M = 2$;} \\
  f^\circ_{\vec v} := \lambda_{M} \lambda_{P_3} e_u C_\sigma & \text{  in Case e if $m_{P_3} = m_M = 1$;} \\
  f^\circ_{\vec v} := \lambda_{P_3} \lambda_{P_5} \io{\vec u, -\vec w} & \text{  in Case f if $m_{P_3} = m_{P_5} = 1$;} \\
  f^\circ_{\vec v} := 0 & \text{  in any other case} \end{array}
 $$
 \caption{Polygons inducing non-zero relations in $\Delta_\sigma$ and corresponding $f^\circ_{\vec v}$}\label{polnonz} \label{fcircv}
\end{figure}

Finally, we give a variant of Theorem \ref{altpres2}, which has the advantage to be compatible with Corollary \ref{subtri2}, but gives a more complicated presentation of $\Delta_\sigma$. We keep notations introduced at the beginning of this section.

For each $\vec v \in \sigma$, we define $\RR^\circ_{\vec v} \in k \uQ_\sigma/(C_{\vec x})_{\vec x \in \sigma}$. If one at least of $\vec u$, $\vec w$ is a boundary edge, we put $\RR^\circ_{\vec v} = 0$. Otherwise, we define $\RR^\circ_{\vec v} := \ic{\vec u, -\vec v} \ic{\vec v, -\vec w} - f^\circ_{\vec v}$ where $f^\circ_{\vec v}$ depends on the situation as in Figure \ref{polnonz}. On this figure, there are no hole or puncture other than the one depicted inside the drawn polygon (but there can be an arc in Case e from $P_3$ to $M$, and in Case $f$ from $P_3$ to $P_5$). Moreover, for Case a, we can have that $P_1$, $P_2$, $P_3$ or $u$, $v$, $w$ are not distinct. In Cases b and c, $P_1$ and $P_2$ are not necessarily distinct. Let us define $f^\circ_{\vec v}$ as in Figure \ref{fcircv} (any path of $Q_\sigma$ passing through a boundary component is $0$ in $k \uQ_\sigma$).

 \begin{proposition} \label{altpres0}
  We have the equality
  $$\Delta_\sigma = \frac{k \uQ_\sigma}{JC_\sigma + (C_{\vec v}, \RR^\circ_{\vec v})_{\vec v \in \sigma}}.$$
 \end{proposition}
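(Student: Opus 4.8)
The plan is to deduce Proposition~\ref{altpres0} from the already-established Theorem~\ref{altpres2}, which gives the presentation
$$\Delta_\sigma \cong \Delta^s_\sigma = \frac{k \uQ_\sigma}{JC_\sigma + (C_{\vec v}, \RR_{\vec v})_{\vec v \in \sigma}}.$$
Both presentations use the same quiver $\uQ_\sigma$ and the same relations $C_{\vec v}$ and the same $JC_\sigma$-part; the only difference is that the polygon relations $\RR_{\vec v} = \ic{\vec u, -\vec v}\ic{\vec v, -\vec w} - f_{\vec v}$ of Theorem~\ref{altpres2} are expressed in terms of the \emph{minimal} polygon $P$ having $\vec v$ as a side, whereas $\RR^\circ_{\vec v} = \ic{\vec u, -\vec v}\ic{\vec v, -\vec w} - f^\circ_{\vec v}$ of Proposition~\ref{altpres0} uses the local data drawn in Figure~\ref{polnonz}. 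So the whole content is to show that the ideals $(\RR_{\vec v})$ and $(\RR^\circ_{\vec v})$ coincide modulo $JC_\sigma + (C_{\vec v})$, or more precisely that each $f^\circ_{\vec v}$ agrees with the corresponding $f_{\vec v}$ after rewriting using $(C_{\vec v})$ and the basis of Theorem~\ref{basisDsig}. First I would fix an oriented arc $\vec v$ and identify, among Cases a--f of Figure~\ref{polnonz}, which one applies; this is a purely combinatorial case analysis on the minimal polygon $P$ and on whether the preceding/following edges $\vec u$, $\vec w$ are boundary edges, exactly mirroring the branching in the definition of $\RR_{\vec v}$.

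Next, for each case I would compute $f^\circ_{\vec v}$ and compare it with $f_{\vec v}$. The key observation is that $f_{\vec v}$ is defined via the \emph{minimal} polygon, while $f^\circ_{\vec v}$ is read off from a local picture where $P$ need not be minimal (note the remarks below the figure: in Case a the vertices $P_1,P_2,P_3$ or edges $u,v,w$ may coincide, and in Cases b,c,e,f there may be extra arcs, e.g. from $P_3$ to $M$). The mechanism that lets me pass between a non-minimal polygon and the minimal one is precisely Theorem~\ref{thmmin}, which asserts that $I_\sigma$ is already generated by the $R_{P,i}$ for $P$ minimal; combined with the defining relation $\omega^P_i\xi^P_i = \xi^P_{i+1}\omega^P_{i+1}$ in $\Gso$, the external path $\io{\,\cdot\,,\cdot}$ around a larger polygon can be rewritten through the relations of $\Gamma^\circ_\sigma$ and the minimal-polygon relations into the form recorded in Figure~\ref{fcircv}. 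The translation of the $\io{\,}$ and $\ic{\,}$ notation into the $C_\sigma$-multiples $e_u C_\sigma$ appearing in Cases b, d, e uses Proposition~\ref{Csig}, specifically the identity $\ic{\vec u_i,\vec u_j}\io{\vec u_j,\vec u_i} = \lambda_M \ic{\vec u_i,\vec u_i}^{m_M}$ and the fact that $C_\sigma = \sum_u \lambda_{s(\vec u)}\ic{\vec u,\vec u}^{m_{s(\vec u)}}$ is central; the small-characteristic branches ($2 \notin k^\times$ with $m_M = 2$) must be tracked carefully, since they are exactly the cases where $f^\circ_{\vec v}$ becomes an $e_uC_\sigma$-term rather than zero.

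The cleanest way to organize the comparison is to show directly that the two-sided ideals generated by $\{\RR_{\vec v}\}$ and by $\{\RR^\circ_{\vec v}\}$ inside $k\uQ_\sigma/(JC_\sigma + (C_{\vec v}))$ are equal, rather than matching relations one by one; since the leading terms $\ic{\vec u, -\vec v}\ic{\vec v, -\vec w}$ are identical in both, it suffices to prove $f_{\vec v} \equiv f^\circ_{\vec v}$ modulo the ideal generated by the other relations. Here I would invoke Theorem~\ref{basisDsig}(1): once I know $\Br$ is a $k$-basis of $\Delta^s_\sigma$, any two elements that reduce to the same $k$-linear combination of elements of $\Br$ are genuinely equal, so the verification reduces to a finite basis computation in each case. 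Concretely, I expand both $f_{\vec v}$ and $f^\circ_{\vec v}$ in the basis $\Br$ using part~(2) of that theorem (products of basis elements are scalar multiples of basis elements) and check agreement.

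The main obstacle I expect is Case e (and its higher analogue, Case f), where the local polygon is genuinely non-minimal and carries an interior arc from $P_3$ to $M$: here $f^\circ_{\vec v} = \lambda_M\lambda_{P_3} e_u C_\sigma$ must be shown to arise from rewriting the minimal-polygon relation $f_{\vec v}$ through the arc, and this is where the interplay of Theorem~\ref{thmmin} (reduction to minimal polygons), the centrality of $C_\sigma$ from Proposition~\ref{Csig}, and the subtle appearance of $C_\sigma$-multiples all converge. I would treat this case last, after the routine cases a, c, f-with-no-extra-arc have fixed the bookkeeping conventions, and I would lean on the subalgebra description $\Gamma_\tau \cong e_\tau\Gs e_\tau$ of Theorem~\ref{subtri} to localize the computation to the sub-partial-triangulation consisting of the edges of $P$ together with the interior arc, keeping the case analysis finite and self-contained.
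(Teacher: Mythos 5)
Your proposal fails on two counts, one structural and one mathematical. Structurally, it is circular relative to the paper: Theorem \ref{altpres2} is \emph{not} available before Proposition \ref{altpres0} — the paper proves it afterwards, via Proposition \ref{multipres}, whose induction starts at $\epsilon = \mathbf{0}$, i.e.\ at exactly the presentation asserted by Proposition \ref{altpres0}; likewise the proof of Theorem \ref{basisDsig} opens with ``We will use the presentation of Proposition \ref{altpres0}.'' So both results you invoke rest on the statement you are trying to prove. Mathematically, your central reduction — ``it suffices to prove $f_{\vec v} \equiv f^\circ_{\vec v}$ modulo the ideal generated by the other relations,'' so that the ideals $(\RR_{\vec v})$ and $(\RR^\circ_{\vec v})$ coincide modulo $JC_\sigma + (C_{\vec v})$ — is false, and the paper flags this explicitly: the isomorphism of Theorem \ref{altpres2} ``does not come from the identity of $k\uQ_\sigma$,'' whereas Proposition \ref{altpres0} is an equality of ideals. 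Concretely, take Case d of Figure \ref{polnonz} with $m_M = 2$ and $2 \in k^\times$: there $f^\circ_{\vec v} = \lambda_M e_u C_\sigma$, while the definition of $\RR_{\vec v}$ gives $f_{\vec v} = 0$ (the $e_u C_\sigma$-term enters $\RR_{\vec v}$ only when $2 \notin k^\times$). If the two ideals were equal, their difference $\lambda_M e_u C_\sigma$ would lie in the defining ideal of $\Delta^s_\sigma$, forcing $c_u = e_u C_\sigma = 0$ there — contradicting the very basis theorem you lean on, in which $c_u \in \Br$. Passing between the two presentations in such cases requires a genuinely non-trivial endomorphism of $k\uQ_\sigma$, e.g.\ $\ic{\vec u, -\vec u} \mapsto \ic{\vec u, -\vec u} - \lambda_M \io{-\vec u, \vec u}/2$ in the proof of Proposition \ref{multipres}, not a rewriting modulo relations.

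The paper instead proves Proposition \ref{altpres0} directly, from the completed definition of $\Delta_\sigma$. After Lemma \ref{Csigsoc} ($C_\sigma J = 0$), the key Lemma \ref{altpresl} shows, for each $\vec v$ with minimal polygon $P$, that $(R_{P,1}) + S^P + I_0 + JI + IJ = (\RR^\circ_{\vec v}) + S^P + I_0 + JI + IJ$, where $I$ is the true kernel and $I_0 = (C_{\vec v})_{\vec v} + C_\sigma J$; note the error terms $JI + IJ$ are indispensable precisely because the naive generator-by-generator equality fails. That lemma is proved by a local case analysis (first assuming $\sigma = \{u,v,w\}$, then inducting on arcs via reduction situations and Lemma \ref{redideal}) — here your instincts about Theorem \ref{thmmin}, minimal polygons, and localizing via Theorem \ref{subtri} do match real ingredients. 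The proof then concludes with Theorem \ref{thmmin} plus a $J$-adic absorption argument: iterating $I = (\RR^\circ_{\vec v}, C_{\vec v})_{\vec v} + C_\sigma J + JI + IJ$ and using that $\sum_{n'=0}^{n} J^{n'} I J^{n-n'} \subset C_\sigma J$ for $n$ large. That absorption mechanism, which your proposal has no substitute for, is what turns the approximate local comparisons into the asserted equality of ideals.
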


 Notice that in Proposition \ref{altpres0}, in contrast to Theorem \ref{altpres2} where we need a non-trivial endomorphism of $k \uQ_\sigma$, we just give an alternative set of generators of the ideal of relations defining $\Delta_\sigma$.

\section{Case of triangulations} \label{casetri}

Let us suppose in this section that $(\Sigma, \M)$ admits at least one marked point on each boundary component and that $\sigma$ is a triangulation. We suppose also that for any $M \in \M$, $m_M$ is invertible in $k$. We will prove that in this case the relations defining $\Gs$ and $\Delta_\sigma$ come from a potential. Up to Morita equivalence, $\Delta_\sigma$ is the algebra introduced by Labardini-Fragoso in \cite{La09} and Cerulli Irelli, Labardini-Fragoso in \cite{CeLa12} when $m_M = 1$ for all $M \in \M$. For general definitions and results about Jacobian algebras and frozen Jacobian algebras, we refer to \cite{DeWeZe08} and \cite{BuIyReSm11}. 

We call \emph{potential attached to $\sigma$} the following linear combination of cycles in $k Q_\sigma / [k Q_\sigma, k Q_\sigma]$:
$$W_\sigma = \sum_{P \text{ triangle of $\sigma$}} \omega_3^P \omega_2^P \omega_1^P - \sum_{M \in \M} \frac{\lambda_M}{m_M} \alpha_M^{m_M}$$
where the first sum runs over all (minimal) triangles of $\sigma$ and for each $M \in \M$, $\alpha_M$ is the cycle running around $M$ with arbitrary starting vertex.

For any arrow $\alpha \in Q_{\sigma, 1}$, and any cycle $u = u_1 u_2 \cdots u_n$ of $Q_\sigma$, we define the \emph{cyclic derivative}
 $$\partial_\alpha (u) := e_{t(\alpha)} \sum_{u_i = \alpha} u_{i+1} u_{i+2} \cdots u_{i-1}$$
and we extend this definition to $k Q_\sigma / [k Q_\sigma, k Q_\sigma]$.

Let us call an arrow $\ic{\vec u, \vec v}$ \emph{frozen} if $-\vec v$ and $\vec u$ are two consecutive oriented boundary edges winding counter-clockwisely around a hole and let us call $F$ the set of frozen arrows. We call \emph{frozen Jacobian ideal} of $W_\sigma$ the ideal $J(W_\sigma)$ of $k Q_\sigma$ generated by the elements $\partial_\alpha W_\sigma$ where $\alpha$ runs over non-frozen arrows of $Q_\sigma$. Adapting \cite{BuIyReSm11}, we call \emph{frozen Jacobian algebra} of the frozen quiver with potential $(Q_\sigma, W_\sigma, F)$ the algebra
$$\Pr(Q_\sigma, W_\sigma, F) := \widehat{\left(\frac{k Q_\sigma}{J(W_\sigma)}\right)}^J.$$

The main theorem of this Section is:

\begin{theorem} \label{thmmin2}
 The identity map of $k Q_\sigma$ induces an isomorphism $$\Gamma_\sigma \cong \Pr(Q_\sigma, W_\sigma, F).$$
\end{theorem}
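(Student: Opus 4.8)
My plan is to prove the equality of two-sided ideals
\[ J(W_\sigma) = I_\sigma^\circ + I_\sigma \]
inside $kQ_\sigma$. Granting this, the quotients $kQ_\sigma/J(W_\sigma)$ and $kQ_\sigma/(I_\sigma^\circ + I_\sigma) = \Gso/I_\sigma$ coincide, and since both $\Pr(Q_\sigma, W_\sigma, F)$ and $\Gs$ are by definition the $J$-adic completion of these (now identical) quotients with respect to the same ideal $J$, the identity of $kQ_\sigma$ induces the asserted isomorphism with no further argument. So the whole content is the ideal identity above, which I would establish before any completion.

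The engine is the computation of the cyclic derivatives $\partial_\alpha W_\sigma$ for non-frozen arrows $\alpha$. Because $\sigma$ is a triangulation, each non-frozen arrow $\ic{\vec u, \vec v}$ is the internal arrow $\omega^P_j$ at exactly one corner $P_j$ of exactly one triangle $P$, and it occurs exactly once in one period of the winding cycle $\alpha_{P_j}$. Differentiating the cubic summand $\omega_3^P\omega_2^P\omega_1^P$ of $P$ returns the product of the two other internal arrows; differentiating $-\frac{\lambda_{P_j}}{m_{P_j}}\alpha_{P_j}^{m_{P_j}}$ returns the coefficiented external path, the $m_{P_j}$ periods producing a combinatorial factor $m_{P_j}$ that is cancelled by the prefactor precisely because $m_{P_j}$ is invertible, and the identity $\ic{\vec u,\vec v}\io{\vec v,\vec u} = \lambda_M\ic{\vec u,\vec u}^{m_M}$ identifies it with $\xi^P_j$. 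Tracking indices modulo $3$ I expect
\[ \partial_{\omega^P_j} W_\sigma = \omega^P_{j-1}\omega^P_{j+1} - \xi^P_j = R_{P, j+1}. \]
As $\alpha$ runs over all non-frozen arrows this runs over all $R_{P,i}$ attached to triangles of $\sigma$, so $J(W_\sigma)$ is the ideal of $kQ_\sigma$ generated by the relations $R_{P,i}$ with $P$ a triangle.

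It then remains to match this with $I_\sigma^\circ + I_\sigma$. By Theorem \ref{thmmin}, $I_\sigma$ is generated by the $R_{P,i}$ with $P$ minimal, and in a triangulation the minimal polygons are exactly the triangles, so $J(W_\sigma)$ and $I_\sigma$ have the same generators; it only remains to absorb $I_\sigma^\circ$. For this I would use, for an edge $\vec u = \vct{P_i P_{i+1}}$ bounding a triangle $P$, the identities $\omega^P_i\xi^P_i = \lambda_{s(\vec u)}\ic{\vec u,\vec u}^{m_{s(\vec u)}}$ and $\xi^P_{i+1}\omega^P_{i+1} = \lambda_{t(\vec u)}\ic{-\vec u,-\vec u}^{m_{t(\vec u)}}$, whence $C_{\vec u} = \omega^P_i\xi^P_i - \xi^P_{i+1}\omega^P_{i+1}$. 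Substituting $\xi^P_i = \omega^P_{i-1}\omega^P_{i-2} - R_{P,i-2}$ and $\omega^P_i\omega^P_{i-1} = \xi^P_{i+1} + R_{P,i-1}$, and using $\omega^P_{i-2} = \omega^P_{i+1}$ modulo $3$, yields the explicit membership
\[ C_{\vec u} = \omega^P_i\xi^P_i - \xi^P_{i+1}\omega^P_{i+1} = R_{P,i-1}\,\omega^P_{i+1} - \omega^P_i\,R_{P,i-2} \in (R_{P,\cdot}). \]
Hence $I_\sigma^\circ \subseteq J(W_\sigma)$ and $J(W_\sigma) = I_\sigma^\circ + I_\sigma$, as required.

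The main obstacle is not this generic computation but the degenerate configurations, where the roles of the scalars $\lambda_M$, the integers $m_M$ and the frozen set $F$ become delicate and a separate check is unavoidable: self-folded triangles (a face with a repeated side, whose associated puncture relation takes the special $\M_P = \{M\}$ shape of Definition \ref{defgamma} rather than the cubic one), triangles carrying a boundary side, and the interaction of the winding cycles $\alpha_M$ at boundary marked points with the frozen arrows one is instructed \emph{not} to differentiate. I would organize these as a case analysis paralleling the classification of minimal polygons, verifying in each case both that the non-frozen cyclic derivatives still reproduce exactly the relations $R_{P,i}$ and that no relation is lost by omitting the frozen arrows, the omitted derivatives corresponding precisely to the non-minimal boundary polygons already discarded by Theorem \ref{thmmin}.
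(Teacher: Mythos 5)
Your proposal is correct and takes essentially the same route as the paper: the paper likewise computes the non-frozen cyclic derivatives $\partial_{\omega_3^P} W_\sigma = R_{P,1}$ (each non-frozen arrow lying, by Lemma \ref{findminpol}, in a unique minimal polygon, which for a triangulation is an unpunctured triangle), absorbs $I_\sigma^\circ$ via $C_{\vec u} = (\partial_{\omega_1^P} W_\sigma)\,\omega_1^P - \omega_3^P\,(\partial_{\omega_3^P} W_\sigma)$ — which is literally your identity $R_{P,i-1}\,\omega^P_{i+1} - \omega^P_i\,R_{P,i-2}$ — and concludes by Theorem \ref{thmmin}. One correction to your worry list: in a triangulation a self-folded triangle has its puncture as a \emph{vertex}, not in its interior, so $\M_P = \emptyset$ and the generic cubic computation applies verbatim, while the $\M_P = \{M\}$-shaped relations arise only from non-minimal polygons (such as special monogons) already discarded by Theorem \ref{thmmin}, which is why the paper treats your anticipated case analysis as immediate.
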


\begin{proof}
 Let $\ic{\vec u, \vec v}$ be a non-frozen arrow of $Q_\sigma$. Then $\vec u$ is not a boundary edge winding counter-clockwisely around a hole. Thus, thanks to Proposition \ref{findminpol}, there is a unique minimal polygon $P$ having $\vec u$ as a oriented side. As $\sigma$ is a triangulation, it is immediate that $P$ is a triangle and $\M_P = \emptyset$. Moreover, $P$ has also $-\vec v$ as an oriented side and, up to numbering correctly the vertices of $P$, $\ic{\vec u, \vec v} = \omega_3^P$. Notice also that $\omega_3^P$ is part of $\alpha_M$ only for $M = s(\vec u)$. Thus, an elementary computation gives $\partial_{\omega_3^P} W_\sigma = R_{P,1}$. Moreover, all $R_{P, i}$ for minimal triangles $P$ are obtained in this way.

 Let $u \in Q_{\sigma, 0}$. There is an orientation $\vec u$ of $u$ such that $\vec u$ is not a boundary edge winding counter-clockwisely around a hole. Then taking the same notation as before for this $\vec u$, we get $$C_{\vec u} = \lambda_{s(\vec u)} \ic{\vec u, \vec u}^{m_{s(\vec u)}} - \lambda_{t(\vec u)} \ic{-\vec u, -\vec u}^{m_{t(\vec u)}} = (\partial_{\omega_1^P} W_\sigma) \omega_1^P - \omega_3^P (\partial_{\omega_3^P} W_\sigma) $$
 so, using Theorem \ref{thmmin}, we get $\Gamma_\sigma \cong \Pr(Q_\sigma, W_\sigma, F)$.
\end{proof}

We deduce easily the following result:
\begin{corollary}
 Using notation of Section \ref{nonfroz} for $\uQ_\sigma$, and denoting by $\underline{W}_\sigma$ the potential on $\uQ_\sigma$ induced by $W_\sigma$, we get
  $$\Delta_\sigma \cong \Pr(\uQ_\sigma, \underline{W}_\sigma)$$
 where $\Pr(\uQ_\sigma, \underline{W}_\sigma) := \Pr(\uQ_\sigma, \underline{W}_\sigma, \emptyset)$ is the usual Jacobian algebra.
\end{corollary}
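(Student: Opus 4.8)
The plan is to quotient the isomorphism of Theorem~\ref{thmmin2} by the boundary idempotent $e_0$ and identify the outcome. By definition $\Delta_\sigma = \Gamma_\sigma/(e_0)$, and Theorem~\ref{thmmin2} supplies $\Gamma_\sigma \cong \Pr(Q_\sigma, W_\sigma, F)$ induced by the identity of $kQ_\sigma$; hence it suffices to produce an isomorphism $\Pr(Q_\sigma, W_\sigma, F)/(e_0) \cong \Pr(\uQ_\sigma, \underline{W}_\sigma)$ induced by the canonical projection $kQ_\sigma \to k\uQ_\sigma$.

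First I would analyse the quotient at the level of path algebras. The ideal $(e_0)$ is generated by the idempotents attached to boundary edges, so $kQ_\sigma/(e_0) \cong k\uQ_\sigma$, the projection killing every path meeting a boundary vertex. In particular all boundary arrows are sent to zero, and these include every frozen arrow, since by definition a frozen arrow joins two consecutive boundary edges winding around a hole; thus every arrow of $\uQ_\sigma$ is automatically non-frozen. Moreover $\underline{W}_\sigma$ is precisely the image of $W_\sigma$ under this projection.

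The key step is to show that $J(W_\sigma)$ projects onto $J(\underline{W}_\sigma)$. The heart of the matter is a compatibility between the cyclic derivative and the projection: for an arrow $\beta$ of $\uQ_\sigma$ (necessarily non-frozen), I claim the image of $\partial_\beta W_\sigma$ in $k\uQ_\sigma$ equals $\partial_\beta \underline{W}_\sigma$. Indeed, a cycle $u$ occurring in $W_\sigma$ either survives the projection, in which case $\partial_\beta u$ projects to $\partial_\beta \bar{u}$, or else $u$ contains a boundary arrow; in the latter case, since $\beta$ is not a boundary arrow, each summand $u_{i+1}\cdots u_{i-1}$ of $\partial_\beta u$ still contains that boundary arrow and therefore projects to zero, matching the fact that $\bar{u} = 0$ contributes nothing to $\partial_\beta \underline{W}_\sigma$. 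For a non-frozen boundary arrow $\alpha$, the element $\partial_\alpha W_\sigma$ already lies in $(e_0)$, because removing $\alpha$ from any cycle leaves a path starting at $t(\alpha)$ and ending at $s(\alpha)$, one of which is a boundary vertex; these generators thus become redundant in the quotient. Consequently the projection sends $\{\partial_\alpha W_\sigma : \alpha \notin F\}$ exactly to $\{\partial_\beta \underline{W}_\sigma : \beta \in (\uQ_\sigma)_1\}$ together with zeros, so $J(W_\sigma)$ maps onto $J(\underline{W}_\sigma)$.

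Finally I would check that the completions match. The ideal $J$ used to complete $\Pr(Q_\sigma, W_\sigma, F)$ projects to the arrow ideal of $\uQ_\sigma$, which is exactly the ideal defining the classical Jacobian algebra $\Pr(\uQ_\sigma, \underline{W}_\sigma) = \Pr(\uQ_\sigma, \underline{W}_\sigma, \emptyset)$; alternatively, since $\Delta_\sigma$ is finite-dimensional in the triangulation case by Corollary~\ref{dimd}, completion is harmless once one has passed to the quotient. The main obstacle is the cyclic-derivative/projection compatibility together with the verification that the non-frozen boundary relations are genuinely absorbed into $(e_0)$; once these are in place, assembling the isomorphism is routine.
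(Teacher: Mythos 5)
Your proposal is correct and is essentially the argument the paper intends: the corollary is stated as an easy deduction from Theorem~\ref{thmmin2}, and the implicit proof is precisely your quotient-by-$(e_0)$ analysis — $kQ_\sigma/(e_0) \cong k\uQ_\sigma$, frozen arrows join boundary vertices so every arrow of $\uQ_\sigma$ is non-frozen, cyclic derivatives at arrows of $\uQ_\sigma$ commute with the projection, and derivatives at non-frozen boundary arrows already lie in $(e_0)$. One minor caveat on your last step: in the presence of special monogons the ideal $J$ of Definition~\ref{defgamma} projects to a proper subideal of the arrow ideal of $\uQ_\sigma$ (arrows dividing idempotents are excluded), but since both algebras are completed with respect to the same $J$ and $J$ is nilpotent in $\Delta_\sigma$ (as used in the proof of Lemma~\ref{commC2}), your conclusion is unaffected.
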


Notice that in the case where there exist at least one special monogon in a triangulation, then $\Gamma_\sigma$ is not a basic algebra. We give a sketch of the method which permits to get a Morita equivalent basic Jacobian algebra. Suppose that there is in our triangulation a special monogon inducing a self-folded triangle. Let $\vec u$ be the oriented arc enclosing the special monogon and $\vec v$ the oriented arc pointing toward the special puncture. We can write the potential in the following way:
 $$W_\sigma = \ic{\vec v, -\vec u} \ic{\vec u, \vec v} \ic{-\vec v, -\vec v} - \lambda_{t(\vec v)} \ic{-\vec v, -\vec v} - \ic{\vec v, -\vec u} \cdot \io{-\vec u, \vec u} \cdot \ic{\vec u, \vec v} + \tilde W_\sigma$$
 where $\tilde W_\sigma$ does not contain any occurrence of $\ic{\vec v, -\vec u}$, $\ic{\vec u, \vec v}$ or $\ic{-\vec v, -\vec v}$. In particular
  $\partial_{\ic{-\vec v, -\vec v}} W_\sigma = \ic{\vec v, -\vec u} \ic{\vec u, \vec v} - \lambda_{t(\vec v)} e_v$
 and $\Pr(Q_\sigma, W_\sigma, F)$ is Morita equivalent to $(1-e_v)\Pr(Q_\sigma, W_\sigma, F) (1-e_v)$. Let us denote
 $$e_u^{\bullet} := \ic{\vec u, \vec v} \ic{\vec v, -\vec u} / \lambda_{t(\vec v)} \quad \text{and} \quad e_u^{\bowtie} := e_u - e_u^{\bullet}$$
 which are orthogonal idempotents of $(1-e_v)\Pr(Q_\sigma, W_\sigma, F)(1-e_v)$. We construct a quiver $Q'_\sigma$ in the following way: 
 $Q'_{\sigma, 0} := Q_{\sigma, 0} \setminus \{u, v\} \cup \{u^\bullet, u^{{\bowtie}}\}$ and $Q'_{\sigma, 1}$ consists of 
 \begin{itemize}
  \item all arrows of $Q_\sigma$ which are not incident to $u$ or $v$;
  \item for each arrow $\alpha$ pointing toward $u$ except $\ic{\vec v, -\vec u}$, two arrows $\alpha^\bullet$ and $\alpha^{\bowtie}$ with $s(\alpha^\bullet) = s(\alpha^{\bowtie}) = s(\alpha)$ and $t(\alpha^\bullet) = u^\bullet$ and $t(\alpha^{\bowtie}) = u^{\bowtie}$;
  \item for each arrow $\beta$ pointing from $u$ except $\ic{\vec u, \vec v}$, two arrows ${}^\bullet\beta$ and ${}^{\bowtie} \beta$ with $t({}^\bullet\beta) = t({}^{\bowtie}\beta) = t(\beta)$ and $s({}^\bullet \beta) = u^\bullet$ and $s({}^{\bowtie} \beta) = u^{\bowtie}$.
 \end{itemize}
 Finally, we define a potential on $Q'_\sigma$ by $W'_\sigma := - \lambda_{t(\vec v)} \underline{ \io{-\vec u, \vec u} } + \underline{ \tilde W_\sigma }$
 where
 \begin{itemize}
  \item $\underline{ \io{-\vec u, \vec u} }$ is obtained from $\io{-\vec u, \vec u}$ by substituting any arrow $\alpha$ pointing toward (respectively from) $u$ by $\alpha^\bullet$ (respectively ${}^\bullet \alpha$) and replacing the product $\ic{\vec u, \vec v} \ic{\vec v, -\vec u}$ as many times as it appears by a factor $\lambda_{t(\vec v)}$;
  \item $\underline{ \tilde W_\sigma }$ is obtained from $W_\sigma$ by substituting any arrow $\alpha$ pointing toward (respectively from) $u$ by $\alpha^\bullet + \alpha^{\bowtie} $ (respectively ${}^\bullet \alpha + {}^{\bowtie} \alpha$).
 \end{itemize}
 Then, it is easy to see that there is an isomorphism of algebras $$(1-e_v)\Pr(Q_\sigma, W_\sigma, F)(1-e_v) \cong \Pr(Q'_\sigma, W'_\sigma, F),$$ mapping 
 \begin{itemize}
  \item common elements of $Q_\sigma$ and $Q'_\sigma$ to themselves;
  \item $e_u^\bullet$ to $e_{u^\bullet}$ and $e_u^{\bowtie}$ to $e_{u^{\bowtie}}$;
  \item any arrow $\alpha$ pointing toward (respectively from) $u$ to $\alpha^\bullet + \alpha^{\bowtie}$ (respectively ${}^\bullet \alpha + {}^{\bowtie} \alpha$);
  \item $\ic{\vec u, \vec v}  \cdot \io{-\vec v, -\vec v}^\ell \cdot \ic{\vec v, -\vec u}$ to $\lambda_{s(\vec v)}\underline{ \io{-\vec u, \vec u} }^\ell e_{u^\bullet}$ for any $\ell \geq 0$.
 \end{itemize}
 Therefore, it gives a method to iteratively find a basic Jacobian algebra Morita equivalent to $\Pr(Q_\sigma, W_\sigma, F)$ as special monogons never share any arc under the hypotheses of this paper. We refer to \cite{CeLa12} for a more direct construction of the basic quiver with potential in the case where $m_M = 1$ for any $M \in \M$.

 As a corollary, we get:
 \begin{corollary}
  If $\sigma$ is a triangulation and $n_M = 1$ for any $M \in \M$, then the algebra $\Delta_\sigma$ is Morita equivalent to the classical Jacobian algebra corresponding to this triangulation in \cite{CeLa12}.
 \end{corollary}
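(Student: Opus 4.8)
The plan is to combine the description of $\Delta_\sigma$ as a Jacobian algebra, already obtained above, with the explicit treatment of self-folded triangles. First, applying the corollary following Theorem \ref{thmmin2}, the hypothesis $m_M = 1$ for all $M \in \M$ gives an isomorphism $\Delta_\sigma \cong \Pr(\uQ_\sigma, \underline{W}_\sigma)$, where the induced potential specializes to
$$\underline{W}_\sigma = \sum_{P \text{ triangle}} \omega_3^P \omega_2^P \omega_1^P - \sum_{M \in \M} \lambda_M \alpha_M,$$
the terms $\lambda_M \alpha_M$ for $M$ on the boundary vanishing in $k \uQ_\sigma$, so that only the cycles around punctures survive. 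Thus it suffices to identify this Jacobian algebra, up to Morita equivalence, with the one attached to $\sigma$ in \cite{CeLa12}.

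When $\sigma$ has no self-folded triangle, I would check directly that $\uQ_\sigma$ coincides with the quiver of \cite{CeLa12}: its vertices are the arcs of $\sigma$, each internal angle of a triangle produces exactly one arrow $\ic{\vec u, \vec v}$, and $\underline{W}_\sigma$ becomes the Labardini-Fragoso potential after rescaling the arrows so as to normalize the units $\lambda_M$ to $1$. This rescaling is an automorphism of $k \uQ_\sigma$, hence a right-equivalence of quivers with potential, so in this case the two Jacobian algebras are in fact isomorphic, a fortiori Morita equivalent.

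The remaining, and main, difficulty is the case of a self-folded triangle, which arises precisely from a special monogon under the hypothesis $m_M = 1$. Here $\uQ_\sigma$ carries an extra vertex $v$ (the arc pointing to the special puncture), is no longer basic, and therefore cannot agree on the nose with the reduced quiver of \cite{CeLa12}. I would invoke the reduction carried out before the statement: applying it inside $\Delta_\sigma = \Gamma_\sigma/(e_0)$ (the boundary idempotent $e_0$ does not meet the arcs $u$, $v$ of the self-folded triangle, so the reduction descends to the quotient), the vertex $v$ is eliminated and the vertex $u$ is split into the orthogonal idempotents $e_u^\bullet$ and $e_u^\bowtie$, producing a basic Jacobian algebra Morita equivalent to $\Pr(\uQ_\sigma, \underline{W}_\sigma)$. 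One then checks that the resulting reduced quiver with potential is exactly the one \cite{CeLa12} associates to a self-folded triangle. Since, under the standing hypotheses, distinct special monogons share no arc, these local reductions can be performed independently at every self-folded triangle, and combining them with the previous paragraph yields the claimed Morita equivalence. The crux is this last combinatorial identification: matching the reduced self-folded piece $(Q'_\sigma, W'_\sigma)$ produced by our procedure with the normalization used in \cite{CeLa12}.
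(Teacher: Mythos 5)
Your proposal is correct and follows essentially the same route as the paper, whose (sketched) proof of this corollary is precisely the combination of the isomorphism $\Delta_\sigma \cong \Pr(\uQ_\sigma, \underline{W}_\sigma)$ with the iterative Morita reduction at special monogons (the splitting of $e_u$ into $e_u^\bullet$ and $e_u^{\bowtie}$ and the passage to $(Q'_\sigma, W'_\sigma)$) described immediately before the statement, together with the observation that special monogons never share an arc. One small caution: your aside that one can rescale arrows so as to normalize all $\lambda_M$ to $1$ is not available in general --- on the sphere with four punctures and $m_M = 1$ the product $\lambda_\M$ is invariant under such rescalings, which is exactly why the paper assumes $\nu_\M = 1 - \lambda_\M$ invertible --- but this remark is inessential to the argument and the paper itself performs the identification with \cite{CeLa12} at the same level of precision without it.
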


\section{Case of Brauer graph algebras} \label{brauer}

\begin{definition} \label{sparse}
 We say that a partial triangulation $\sigma$ is \emph{sparse} if 
 \begin{itemize}
  \item no arc of $\sigma$ is incident to a marked point on the boundary of $\Sigma$;
  \item $\sigma$ does not contain any triangle without puncture;
  \item $\sigma$ does not contain any special monogon;
  \item if $2 \notin k^\times$, $\sigma$ does not contain any monogon enclosing a unique puncture $M$ with $m_M = 2$.
 \end{itemize}
\end{definition}

\begin{theorem} \label{thmbrauer}
 The following hold:
 \begin{enumerate}[\rm (a)]
  \item If $\sigma$ is sparse then $\Delta_\sigma \cong k \uQ_\sigma / I$ where $I$ is the ideal generated by the relations $C_{\vec u}$ and the relations $\ic{\vec u, \vec v} \ic{-\vec v, -\vec w}$ for any triple of oriented arcs $\vec u, \vec v, \vec w$ satisfying $s(\vec u) = s(\vec v)$ and $t(\vec v) = t(\vec w)$. In other terms, $\Delta_\sigma$ is the \emph{Brauer graph algebra} corresponding to the Brauer graph underlying to $\sigma$.
  \item Any Brauer graph algebra is $\Delta_\sigma$ for a partial triangulation of a marked surface.
 \end{enumerate}  
\end{theorem}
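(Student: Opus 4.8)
The plan is to derive (a) from the alternative presentation of Theorem \ref{altpres2}, and then to prove (b) by a ribbon-surface construction that uses (a) as a black box. For (a), I would begin with the isomorphism $\Delta_\sigma\cong k\uQ_\sigma/(JC_\sigma+(C_{\vec v},\RR_{\vec v}))$, where $\RR_{\vec v}=\ic{\vec u,-\vec v}\ic{\vec v,-\vec w}-f_{\vec v}$ and $\vec u,\vec w$ are the sides following and preceding $\vec v$ in its minimal polygon $P$. The three configurations producing a nonzero $f_{\vec v}$ are a triangle with $\M_P=\emptyset$, a special monogon, and (when $2\notin k^\times$) a monogon around a single puncture with $m_M=2$; these are exactly the configurations forbidden by Definition \ref{sparse}, so $f_{\vec v}=0$ for every $\vec v$. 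Since no arc is incident to a boundary marked point, both endpoints of each arc are punctures, so the neighbours $\vec u,\vec w$ are again arcs and, by minimality of $P$, each factor is a single arrow; thus $\RR_{\vec v}$ is the honest length-two monomial $\ic{\vec u,-\vec v}\ic{\vec v,-\vec w}$, a ``switching'' relation at the vertex $v$ (arrive via $-\vec v$, leave via $\vec v$).

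It then remains to identify $JC_\sigma+(C_{\vec v},\RR_{\vec v})$ with the Brauer ideal $I$. First, $JC_\sigma$ is redundant: for an arrow $\ic{\vec p,\vec q}$ one rewrites $e_p C_\sigma=\lambda_{s(\vec p)}\ic{\vec p,\vec p}^{m_{s(\vec p)}}\equiv\lambda_{t(\vec p)}\ic{-\vec p,-\vec p}^{m_{t(\vec p)}}$ modulo $C_{\vec v}$, so that $C_\sigma\ic{\vec p,\vec q}$ contains the switch ``arrive at $p$ via $-\vec p$, leave via $\vec p$'' and lies in $(\RR_{\vec v})$; by centrality of $C_\sigma$ the same holds on the right, whence $JC_\sigma\subseteq(C_{\vec v},\RR_{\vec v})$. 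Second, the $C_{\vec v}$ are literally the Brauer cycle relations, and the core switches $\RR_{\vec v}$ generate the same ideal as the stated family $\ic{\vec u,\vec v}\ic{-\vec v,-\vec w}$, since each such triple factors as a two-sided multiple of a single $\RR_{\vec z}$. Finally $(\uQ_\sigma,\ \text{cyclic orders around punctures},\ m)$ is by construction the quiver with multiplicities of the ribbon graph underlying $\sigma$, so $\Delta_\sigma\cong k\uQ_\sigma/I$ is its Brauer graph algebra. As a clean cross-check, the Brauer relations manifestly hold in $\Delta_\sigma$, giving a surjection $k\uQ_\sigma/I\to\Delta_\sigma$, and the basis $\Br$ of Theorem \ref{basisDsig} is precisely the standard monomial basis of a Brauer graph algebra, so a rank count forces this surjection to be an isomorphism.

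For (b), given a Brauer graph $G$ I would realise it as the spine of a closed oriented surface: thicken $G$ according to its ribbon structure and cap the resulting boundary circles with discs to obtain a closed $\bar\Sigma$ whose faces are open discs. Place a marked point of the prescribed multiplicity at each vertex of $G$, take the edges of $G$ as the arcs of $\sigma$, and insert a puncture in the interior of each face. Because a puncture incident to no arc affects $\Delta_\sigma$ only through the set $\M_P$ and never through its own $m_M$, these face punctures may be chosen freely: one puncture per face already forces every minimal polygon to enclose a puncture (so there are no empty triangles, and any parallel edges of $G$ become pairwise non-homotopic), and in the few faces that would otherwise create a special monogon or, when $2\notin k^\times$, an $m=2$ monogon, I would insert a second puncture so that $\#\M_P\geq2$. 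The resulting $\sigma$ is then sparse with underlying ribbon graph $G$, so part (a) identifies $\Delta_\sigma$ with the Brauer graph algebra of $G$; a disconnected $G$ is handled componentwise, and the finitely many small surfaces excluded by the standing hypotheses of Section \ref{algparttri} are adjusted by hand.

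The main obstacle I anticipate is the bookkeeping of conventions in (a): verifying that the counter-clockwise windings of $\uQ_\sigma$ reproduce exactly the ribbon cyclic order, that $JC_\sigma$ is genuinely absorbed, and that the core relations $\RR_{\vec v}$ generate precisely the stated switching relations, with loops based at a single puncture needing particular care. Alongside this sits the uniform realisation of sparseness in (b) across all faces. Once these conventional points are pinned down, the identification is routine and the basis $\Br$ makes the final rank count immediate.
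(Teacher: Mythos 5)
Your part (a) is essentially the paper's argument: the author also deduces it directly from Theorem \ref{altpres2}, and your extra verifications (that sparseness kills every $f_{\vec v}$, that $J C_\sigma$ is absorbed by rewriting $e_p C_\sigma$ modulo $C_{\vec p}$ so that a switch appears, and that the two-arrow relations $\RR_{\vec v}$ generate all the words $\ic{\vec u, \vec v} \ic{-\vec v, -\vec w}$, since any such word contains a minimal switch as a subword) are exactly the details behind the paper's one-line ``immediate consequence of Theorem \ref{altpres2}''. Part (b), however, takes a genuinely different route. The paper thickens the ribbon graph --- one $2d_M$-gon per vertex, glued along the edges --- and stops there: the unglued boundary arcs remain as holes, so every polygon of $\sigma$ has a hole, $\# \M_P = \infty$ for every face, and sparseness as well as the standing hypotheses of Section \ref{algparttri} hold with no case analysis whatsoever. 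You instead cap the boundary circles to a closed surface and repair sparseness by inserting one or two punctures per face. This buys a closed surface (which in fact matches the phrasing ``surface without boundary'' of the introduction better than the paper's own construction does), but it costs exactly the bookkeeping you flag: monogon faces need $\# \M_P \geq 2$ or $m_M \geq 3$, and the small degenerate configurations (sphere with three or four marked points, invertibility of $\nu_\M$) must be escaped by hand via extra punctures or multiplicities $\geq 2$ --- all doable, but vacuous in the paper's hole-based construction. One claim is overstated: a puncture with no incident arcs \emph{does} enter the relations through its own $m_M$ when it is the unique element of $\M_P$ (this is visible in the monogon cases $m_M \in \{1,2\}$ of the definition of $f_{\vec v}$, and in $R_{P,i}$ for $\M_P = \{M\}$); your construction is unaffected only because you arrange $\# \M_P \geq 2$ in precisely those faces, so the claim should be stated for $\# \M_P \geq 2$ rather than in general.
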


\begin{proof}
 (a) is an immediate consequence of Theorem \ref{altpres2}.

 (b) can be proven in the following classical way. Attach to each vertex $M$ of a Brauer graph a closed $2d_M$-gon $A_{\vec v_1} B_{\vec v_1} A_{\vec v_2} B_{\vec v_2} \cdots A_{\vec v_{d_M}} B_{\vec v_{d_M}}$ with $M$ at its centre, where $\vec v_1$, \dots, $\vec v_{d_M}$ are the oriented edges of the Brauer graph starting at $M$, cyclically ordered. Then, define $$\Sigma := \frac{\bigsqcup_{M} A_{\vec v_1} B_{\vec v_1} A_{\vec v_2} B_{\vec v_2} \cdots A_{\vec v_{d_M}} B_{\vec v_{d_M}}}{\sim}$$
 where the equivalence relation $\sim$ identifies $A_{\vec v} B_{\vec v}$ with $B_{-\vec v} A_{-\vec v}$ for any oriented edge $\vec v$ of the Brauer graph. Then the Brauer graph can be embedded in $\Sigma$ by drawing each oriented edge $\vec x$ orthogonally to $A_{\vec x} B_{\vec x} = B_{-\vec x} A_{-\vec x}$. In this case, no marked point of $\Sigma$ is on the boundary and every polygon has a hole so $\sigma$ is sparse.
\end{proof}

Thanks to Theorem \ref{thmbrauer}, we generalize (mildly) a result of Schroll \cite{Sc15}. For that recall that the trivial extension of a $k$-algebra $A$ is the $k$-algebra $A \oplus \Hom_k(A, k)$ with multiplication defined by $(a, f)(a',f') = (aa', af' + f'a)$. 

\begin{proposition} \label{trivext}
 If $\sigma$ is a partial triangulation of $(\Sigma, \M)$ such that every arc of $\sigma$ links two marked points on the boundary, without special monogon, then the trivial extension of $\Delta_\sigma$ is (canonically) isomorphic to the Brauer graph algebra with Brauer graph consisting of the arcs of $\sigma$ and multiplicity $1$ at each vertex.
\end{proposition}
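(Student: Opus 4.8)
The plan is to realize the right-hand Brauer graph algebra $B$ as $\Delta_{\sigma''}$ for a second partial triangulation $\sigma''$, and then to identify $B$ with the trivial extension of $\Delta_\sigma$ by comparing explicit bases. Let $G$ be the ribbon graph whose edges are the arcs of $\sigma$, with vertices and cyclic orders inherited from $\Sigma$; by hypothesis every arc joins two boundary marked points, so $G$ is exactly the multiplicity-$1$ Brauer graph of the statement. By Theorem \ref{thmbrauer}(b) there is a sparse partial triangulation $\sigma''$ with $\Delta_{\sigma''}\cong B$, and since $\Delta_{\sigma''}$ depends only on $G$ as a ribbon graph (Theorem \ref{thmbrauer}(a)), the quiver $\uQ_{\sigma''}$ has the same vertices as $\uQ_\sigma$. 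The key geometric observation is that each boundary marked point $M$ is flanked by exactly two boundary edges, so all $d_M$ arcs at $M$ occur in a single run; hence $\uQ_{\sigma''}$ is obtained from $\uQ_\sigma$ by adjoining, for each such $M$, a single arrow $\epsilon_M$ closing the cycle around $M$ (the arrow that in $\uQ_\sigma$ would have had to jump over the two boundary edges).

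First I would match dimensions. Applying Corollary \ref{dimd} to $\sigma$, and using that the only contributing marked points are the boundary vertices of $G$ (punctures have $d_M=0$, and $f=\tfrac12\sum_M d_M$), gives $\dim_k\Delta_\sigma=\tfrac12\sum_M d_M^2$. Applying the same corollary to $\sigma''$, where every marked point is a puncture of multiplicity $1$, gives $\dim_k B=\sum_M d_M^2=2\dim_k\Delta_\sigma=\dim_k\bigl(\Delta_\sigma\oplus\Hom_k(\Delta_\sigma,k)\bigr)$, so it suffices to produce an algebra isomorphism onto $B$. Next I would split the basis $\Br$ of Theorem \ref{basisDsig} for $B$ as $\Br_{\sigma''}=\Br_\sigma\sqcup\Br^c$, where $\Br_\sigma$ consists of the idempotents and the forward (run-internal) paths, which is precisely the basis of $\Delta_\sigma$, while $\Br^c$ consists of the full cycles $c_u$ (the socle of $B$) together with the paths traversing some $\epsilon_M$; the count above yields $\#\Br^c=\dim_k\Delta_\sigma$. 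I would then verify that $N:=\operatorname{span}_k\Br^c$ is a two-sided ideal with $N^2=0$: a product of two $\epsilon$-crossing or socle elements either turns around two distinct vertices, hence vanishes by the Brauer relations, or lands in $\soc(B)\cdot\rad(B)=0$. As $B$ is symmetric (Theorem \ref{symm} applied to $\sigma''$, which has no arc incident to a boundary), its symmetrizing form identifies $N$ with $\Hom_k(B/N,k)$ as $B/N$-bimodules.

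The main obstacle, and the heart of the proof, is that the quotient $B\to B/N$ does \emph{not} intertwine the relations naively. In $\Delta_\sigma$ a minimal empty triangle forces a nonzero product $\ic{\vec u,-\vec v}\ic{\vec v,-\vec w}=\io{-\vec u,\vec w}$, whereas the same composition in $B$ turns around two vertices and is killed by the Brauer relation. Consequently the isomorphism $\Delta_\sigma\oplus\Hom_k(\Delta_\sigma,k)\xrightarrow{\ \sim\ }B$ cannot send the arrows of $\uQ_\sigma$ to themselves: it must send each arrow to itself plus a socle correction, chosen so that every relation $\RR_{\vec v}$ of Theorem \ref{altpres2} is matched, with the surplus $f_{\vec v}$-terms absorbed into the $\Hom_k(\Delta_\sigma,k)$-summand through the symmetrizing form. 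I would construct these corrections one marked point at a time, check multiplicativity on $\Br$ (here Theorem \ref{basisDsig}(2) is decisive, as it guarantees that all structure constants are scalars), and confirm that the resulting map reduces to the identity on $\Delta_\sigma=B/N$ and to the canonical bimodule isomorphism on the square-zero part $N\cong\Hom_k(\Delta_\sigma,k)$; this is exactly the statement that $B$ is the trivial extension of $\Delta_\sigma$. Canonicity of the whole construction then follows from the canonicity of the symmetrizing form on a symmetric algebra.
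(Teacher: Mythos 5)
Your overall architecture (split the Brauer graph algebra $B$ as $\Delta_\sigma\oplus N$ with $N$ a square-zero ideal, then identify $N$ with $\Hom_k(\Delta_\sigma,k)$ via the symmetrizing form) agrees with the paper's, and your dimension count via Corollary \ref{dimd} and the verification that $N^2=0$ are sound. However, the step you single out as ``the main obstacle, and the heart of the proof'' rests on a false premise, and it is exactly there that your argument is both misdirected and unexecuted. You claim that in $\Delta_\sigma$ a minimal empty triangle forces a nonzero product $\ic{\vec u,-\vec v}\ic{\vec v,-\vec w}=\io{-\vec u,\vec w}$, so that the naive arrow-to-arrow identification cannot intertwine the relations and every arrow needs a socle correction. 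Under the hypotheses of the proposition this never happens: every arc joins two boundary marked points, so each vertex of such a triangle lies on the boundary, and $f_{\vec v}=\io{-\vec u,\vec w}$ is an external path winding all the way around the boundary marked point $t(\vec u)$; it therefore traverses a boundary edge and vanishes in $k\uQ_\sigma$, by the convention of Section \ref{nonfroz} that any path through a boundary component is $0$. The remaining potentially nonzero $f$-terms come from monogons, which are loops at boundary points: the special-monogon case is excluded by hypothesis, and the others involve full cycles $\ic{\vec v,\vec v}$ around a boundary point, which vanish for the same reason. This vanishing of all $f^\circ_{\vec v}$ is precisely the opening observation of the paper's proof: the relations $\RR^\circ_{\vec v}$ of Proposition \ref{altpres0} degenerate to pure zero-relations $\ic{\vec u,-\vec v}\ic{\vec v,-\vec w}$, i.e.\ exactly the Brauer relations. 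Consequently the naive embedding of $\uQ_\sigma$ into the quiver of $B$ does induce the isomorphism — which, as the paper remarks after the statement, is what ``canonically'' means — and a map sending arrows to ``arrow plus socle correction'' would in fact contradict the canonicity you invoke at the end.

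Because you deferred the entire correction scheme (``I would construct these corrections one marked point at a time, check multiplicativity\ldots''), the core of your proof is missing as written, and if carried out it would be solving a nonexistent problem. The repair is short and turns your outline into the paper's proof: note the vanishing above; use that $\Delta_\sigma$ is independent of the $m_M$ at boundary points (remark after Corollary \ref{dimd}) to take all multiplicities $1$, and add inert punctures so that the augmented triangulation $\sigma'$ of Definition \ref{defsp} is sparse — the paper works with this canonical $\sigma'$ rather than an abstract $\sigma''$ from Theorem \ref{thmbrauer}(b), which keeps the quiver embedding on the nose. Then $B\cong\Delta_{\sigma'}$ by Theorem \ref{thmbrauer}(a), the span of your $\Br_\sigma$ is literally the subalgebra $\Delta_\sigma\subset B$ (exactly one of $\ic{\vec u,\vec v}$, $\ic{\vec v,\vec u}$ avoids the boundary gap at each vertex), and the explicit duality of Lemma \ref{commC2}, namely $e_u^*\mapsto c_u$, $\ic{\vec u,\vec v}^*\mapsto\lambda_{s(\vec u)}\ic{\vec v,\vec u}$, $c_u^*\mapsto e_u$ (no $2$-special arcs occur here), carries $\Hom_k(\Delta_\sigma,k)$ exactly onto your $N$. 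Your symmetric-algebra paragraph then closes the proof with no correction terms at all.
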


In Proposition \ref{trivext}, \emph{canonically} means that $\uQ_\sigma$ is canonically embedded in the quiver defining the Brauer graph algebra and that this embedding induces the isomorphism.

\begin{proof}
 First of all, notice that all relations $\RR^\circ_{\vec v}$ of Proposition \ref{altpres0} are of the form $\RR^\circ_{\vec v} = \ic{\vec u, -\vec v} \ic{\vec v, -\vec w}$ (indeed, $f^\circ_{\vec v}$ always goes through a boundary). Thus, up to adding some punctures (without incident arcs), we can suppose that every polygon $P$ has at least three punctures and we can suppose that $m_M = 1$ for any $M \in \M$. Thus, it is enough to check that $\Delta_{\sigma'}$ (see Definition \ref{defsp}) is the trivial extension of $\Delta_\sigma$ ($\sigma'$ is sparse so $\Delta_{\sigma'}$ is the Brauer graph algebra by Theorem \ref{thmbrauer}). Using Definition \ref{trace}, Lemma \ref{commC2}, there is an isomorphism $\Hom_k(\Delta_{\sigma'}, k)$ to $\Delta_{\sigma'}$ given by
 $$e_u^* \mapsto c_u, \quad \ic{\vec u, \vec v}^* \mapsto \lambda_{s(\vec u)} \ic{\vec v, \vec u}, \quad c_u^* \mapsto e_u$$
 (notice that in this case, there are no $2$-special arcs). Moreover, $e_u \in \Delta_\sigma$ and $c_u \notin \Delta_\sigma$ for any arc $u$ of $\sigma$ and exactly one of $\ic{\vec u, \vec v}$ and $\ic{\vec v, \vec u}$ is in $\Delta_\sigma$ for any choice of two oriented arcs $\vec u, \vec v \in \sigma$ such that $s(\vec u) = s(\vec v)$. It permits to conclude.
\end{proof}

\section{Representation type of $\Delta_\sigma$} \label{reptype}

In this section, we prove the following result, generalizing \cite{GeLaSc16} in the case of triangulations of surfaces and \cite{WaWa85} in the case of Brauer graph algebras:

\begin{theorem} \label{tame}
 If $k$ is an algebraically closed field, the algebra $\Delta_\sigma$ is representation tame for any partial triangulation $\sigma$ of $(\Sigma, \M)$.
\end{theorem}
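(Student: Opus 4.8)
The plan is to peel off the two configurations whose tameness is already available---triangulations via \cite{GeLaSc16} and Brauer graph algebras via \cite{WaWa85}---and to isolate the one genuinely new feature, namely the marked points with $m_M>1$. Throughout I invoke Drozd's dichotomy, so it suffices to prove that $\Delta_\sigma$ is \emph{not} wild.

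\emph{Step 1: reduction to triangulations.} Every partial triangulation $\sigma$ is contained in a triangulation $\rho$ of $(\Sigma,\M)$, and Corollary \ref{subtri2} gives $\Delta_\sigma\cong e_\sigma\Delta_\rho e_\sigma$. I would use that tameness is inherited by corner algebras: the induction functor $\Delta_\rho e_\sigma\otimes_{e_\sigma\Delta_\rho e_\sigma}(-)$ is left adjoint to $e_\sigma(-)$ and satisfies $e_\sigma\bigl(\Delta_\rho e_\sigma\otimes N\bigr)\cong N$, so it is fully faithful, reflects isomorphism classes and preserves indecomposability; hence if $\Delta_\sigma=e_\sigma\Delta_\rho e_\sigma$ were wild then $\Delta_\rho$ would be wild. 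It therefore suffices to treat triangulations. If one prefers a surface without boundary, one may further invoke Definition \ref{defsp}, writing $\Delta_\sigma\cong\Delta_{\sigma'}/(e_0)$ and using that restriction along a quotient map is a full exact embedding of module categories, so that tameness passes from $\Delta_{\sigma'}$ to $\Delta_\sigma$.

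\emph{Step 2: the known cases.} When $m_M=1$ for all $M$, a triangulation yields the classical Jacobian algebra of the quiver with potential of \cite{La09,CeLa12} by Theorem \ref{recov}(2), and this is tame by \cite{GeLaSc16}. At the opposite extreme, sparse partial triangulations give exactly the Brauer graph algebras by Theorem \ref{thmbrauer}; these are special biserial and hence tame for an arbitrary multiplicity function by \cite{WaWa85}. Both results I would cite as black boxes.

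\emph{Step 3 (the main obstacle): triangulations with $m_M>1$.} This is where the new work lies. The difficulty is structural: the external winding paths $\io{-\vec u,\vec w}=\lambda_M\ic{-\vec u,-\vec u}^{\,m_M-1}\ic{-\vec u,\vec w}$ that appear in the relations $\RR_{\vec v}$ (the case $n=3$, $\M_P=\emptyset$ of Theorem \ref{altpres2}) make the relation attached even to an empty triangle depend on the multiplicities of its vertices; consequently $\Delta_\sigma$ is neither special biserial nor covered by \cite{GeLaSc16}. I would verify tameness directly from the definition, exhibiting the one-parameter families of indecomposables explicitly. Using the multiplicative basis of Theorem \ref{basisDsig}, the string-and-band combinatorics should extend to the present setting once one records a band as a closed curve on $\Sigma$ together with winding data around the marked points, the winding bounded by $m_M$ at each $M$. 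This specialises to the surface bands of \cite{GeLaSc16} when all $m_M=1$ and to the winding strings of Brauer graph algebras in the sparse case, and the point to establish is that in every dimension vector all but finitely many indecomposables lie in finitely many such families---which is exactly tameness. The technical heart, and the step I expect to be hardest, is controlling these winding contributions so that only finitely many bands occur in each dimension; a fall-back would be to realise a multiplicity-$m_M$ puncture as a corner $e'Ae'$ of the algebra of an auxiliary partial triangulation with all multiplicities equal to $1$, by an explicit local surgery near the puncture, and then to conclude through the corner argument of Step 1.
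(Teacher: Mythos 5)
Your Steps 1 and 2 reproduce the paper's own reductions (boundary removed via the augmented surface of Definition \ref{defsp}, completion to a triangulation via Corollary \ref{subtri2}, with the same corner/quotient functor arguments), and they are fine. The gap is Step 3, which is the entire content of the theorem: saying that "the string-and-band combinatorics should extend" to bands with winding data bounded by $m_M$, and that "the point to establish" is that only finitely many one-parameter families occur in each dimension, is a restatement of tameness, not an argument. No classification of indecomposables for these algebras exists in the literature (they are neither special biserial nor covered by \cite{GeLaSc16} once some $m_M>1$), and the paper deliberately avoids producing one. What the paper actually does is interpolate algebraically between $\Delta_\sigma$ and a degenerate algebra: it sets $A := k\uQ_\sigma/(JC_\sigma + (C_{\vec v})_{\vec v})$, constructs in Lemma \ref{pl} a weight function $p: Q_{\sigma,1}\to\N_{>0}$ whose total weight $\kappa$ around each marked point (scaled by $m_M$) is constant, with $\kappa_P<\kappa$ for ordinary minimal triangles and $\kappa_P>\kappa$ for special self-folded ones, defines deformed relations $\RR_{\vec v}(x)$ for $x\in k$, shows via the rescaling automorphism $q\mapsto x^{p_q}q$ that $A_x\cong\Delta_\sigma$ for every $x\neq 0$ while $A_0$ is a quotient of the Brauer graph algebra on the same ribbon graph, and concludes by Crawley-Boevey's degeneration theorem \cite[Theorem B]{CB95}. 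Even this route has an exceptional locus your sketch does not anticipate: for the two triangulations of the four-punctured sphere in Figure \ref{spectria} (where the coefficient $\nu_\M = 1-\lambda_\M$ intervenes) no such weight function exists, and the paper must instead transfer tameness along the flip derived equivalence of Theorem \ref{thmtilt}, using that derived equivalence of self-injective algebras implies stable equivalence.

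Your fallback is, moreover, provably unworkable. By Corollary \ref{subtri2} (and Corollary \ref{redidemp} for arbitrary idempotents, up to Morita equivalence), any corner $e'Ae'$ of the algebra of a partial triangulation is again the algebra of a partial triangulation of the \emph{same} marked surface with the \emph{same} multiplicity function; an auxiliary surface with all multiplicities equal to $1$ therefore only ever yields multiplicity-one algebras as corners. Your surgery would thus have to show that a puncture with $m_M>1$ produces an algebra isomorphic to a multiplicity-one one, and this is obstructed by the structure made explicit in Theorem \ref{basisDsig} and Corollary \ref{dimd}: the quiver of the corner is determined, the socle is spanned by the elements $e_u C_\sigma = \lambda_M\ic{\vec u,\vec u}^{m_M}$ reached only after winding $m_M$ times around $M$, and already the rank formula (a puncture contributing $m_M d_M^2$ versus $d_M^2$) rules out such an isomorphism on the simplest examples, e.g.\ a single loop enclosing a puncture of multiplicity $m$. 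So neither the main route nor the fallback of your Step 3 closes the argument; the missing idea is precisely the one-parameter degeneration to a Brauer graph quotient.
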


When $\Sigma$ has no boundary and $\sigma$ is a triangulation, we can be more precise. Recall the following definition from Ladkani \cite{La3}:

\begin{definition}[{\cite{La3}}]
 An algebra $A$ is of \emph{quasi-quaternion type} if it is of tame representation type, symmetric, indecomposable and for any $X \in \underline{\mod} A$, $\Omega^4 X \cong X$ where $\Omega$ is the syzygy functor in the stable category of $\mod A$.
\end{definition}

\begin{proposition} \label{qq}
 If $k$ is an algebraically closed field, $\Sigma$ has no boundary and $\sigma$ is a triangulation of $(\Sigma, \M)$, then $\Delta_\sigma$ is of quasi-quaternion type.
\end{proposition}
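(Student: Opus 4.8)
The plan is to assemble the three required structural properties -- tame, symmetric, indecomposable -- from results already available, and then reduce the periodicity statement $\Omega^4 X \cong X$ to the corresponding fact for the closely related, better-understood case.

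First I would dispose of the easy attributes. Tameness is Theorem \ref{tame}. Since $\Sigma$ has no boundary, $\sigma$ has no arc incident to the boundary, so Theorem \ref{symm} gives that $\Delta_\sigma$ is a symmetric $k$-algebra. Indecomposability (as a ring, i.e.\ connectedness of the quiver $\uQ_\sigma$) follows because $\Sigma$ is connected and $\sigma$ is a triangulation: any two arcs are linked through a chain of triangles sharing edges, so $\uQ_\sigma$ is connected and hence $\Delta_\sigma$ has no nontrivial central idempotents. The substantive content is therefore the $4$-periodicity $\Omega^4 X \cong X$ for every $X$ in the stable category $\underline{\mod}\,\Delta_\sigma$.

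For the periodicity, the cleanest route is to realize $\Delta_\sigma$ as a trivial extension, or more precisely to identify it with an algebra already known to be periodic, via the augmented construction. Since $\Sigma$ has no boundary we have $\Delta_\sigma = \Gamma_\sigma$, and $\Delta_\sigma$ is a symmetric special biserial (in fact Brauer-graph-type) algebra: by Theorem \ref{thmbrauer} and the presentations of Section \ref{nonfroz}, the relations are exactly the Brauer-graph relations twisted by the multiplicities $m_M$, so $\Delta_\sigma$ is the Brauer graph algebra (with multiplicities) attached to the ribbon graph underlying $\sigma$. For such symmetric special biserial algebras, periodicity of all modules is governed by the known theory: a symmetric algebra is $\Omega$-periodic on all modules precisely when it is a periodic algebra as a bimodule, and for Brauer graph algebras the minimal projective bimodule resolution of $A$ is itself periodic. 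I would invoke the results of Ladkani \cite{La3} (and the surrounding work \cite{Er16, Sk16}) which establish that the Brauer graph algebras arising here are bimodule-periodic of period dividing $4$; bimodule period $4$ forces $\Omega^4 X \cong X$ for every $X \in \underline{\mod}\,\Delta_\sigma$ by tensoring the periodic bimodule resolution with $X$.

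The main obstacle will be pinning down the period to be exactly $4$ rather than merely finite, and ensuring the argument is uniform across the multiplicity data $(m_M, \lambda_M)$ rather than only the classical $m_M = 1$ case. Concretely, one must compute (or cite) the minimal projective bimodule resolution of $\Delta_\sigma$ and verify its $4$-periodicity; the potential description from Section \ref{casetri}, which presents $\Delta_\sigma$ as a Jacobian algebra $\Pr(\uQ_\sigma, \underline{W}_\sigma)$, is the natural tool, since Jacobian algebras of triangulations of closed surfaces are known to have a self-dual periodic bimodule resolution of period $4$ built from the quiver, the relations, and their syzygies. I would therefore structure the proof as: (i) reduce to $\Delta_\sigma = \Pr(\uQ_\sigma, \underline{W}_\sigma)$ via the Corollary to Theorem \ref{thmmin2}; (ii) write down the standard length-$4$ periodic bimodule complex for a symmetric Jacobian/Brauer-graph algebra and check exactness against the explicit basis $\Br$ of Theorem \ref{basisDsig}; and (iii) conclude $\Omega^4 \cong \mathrm{id}$ stably, combining with tameness, symmetry, and indecomposability to match the definition of quasi-quaternion type from \cite{La3}. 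The delicate point throughout is that the multiplicities and scalars $\lambda_M$ deform the relations, so the periodicity verification must be carried out with these coefficients present, which is where I expect the real work to lie.
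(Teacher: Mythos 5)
Your handling of tameness, symmetry and indecomposability is fine and matches the paper (Theorems \ref{tame} and \ref{symm}; connectedness of $\uQ_\sigma$ for a triangulation of a connected surface). The substantive step, the $4$-periodicity, is where the proposal breaks. First, the identification of $\Delta_\sigma$ with a Brauer graph algebra via Theorem \ref{thmbrauer} is simply false here: that theorem requires $\sigma$ to be \emph{sparse} (Definition \ref{sparse}), and a triangulation is the opposite extreme --- it is full of triangles without punctures, so the relations are $\RR_{\vec v} = \ic{\vec u, -\vec v} \ic{\vec v, -\vec w} - \io{-\vec u, \vec w}$ with the nonzero long-path term $f_{\vec v}$; $\Delta_\sigma$ is then a Jacobian algebra (Section \ref{casetri}), not a special biserial/Brauer graph algebra, so the special biserial periodicity theory you want to quote does not apply. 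Second, invoking \cite{La3} for the bimodule periodicity is precisely what the paper cannot and does not do: the remark right after Proposition \ref{qq} notes that the version of \cite{La3} containing proofs is not available, which is why the paper supplies its own self-contained argument.

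The paper's actual mechanism (Proposition \ref{bimodres}, proved at the end of Subsection \ref{proofsymm}) is close to your fallback (ii) but uses a duality shortcut you did not identify. It takes the classical beginning of the projective bimodule resolution, $\bigoplus_{\vec u} \Delta_\sigma e_{\vec u} \otimes e_{\vec u^+} \Delta_\sigma \xrightarrow{\beta} \bigoplus_{u} \Delta_\sigma e_u \otimes e_u \Delta_\sigma \xrightarrow{\alpha} \Delta_\sigma \to 0$ with the next map $\gamma$ induced by the relations $\RR_{\vec v}$ (exactness of these terms is classical and costs nothing), then dualizes this three-term start through the duality $-^*$ coming from the non-degenerate trace $\Er^*$ of Theorem \ref{symm}, and the \emph{only} genuinely new verification is the self-adjointness $\gamma = \gamma^*$, checked via the pairing identity $(\Er^* \tens \Er^*)(b\,\phi(\RR_{\vec v})\,a) = (\Er^* \tens \Er^*)(a\,\phi(\RR_{\vec v'})\,b)$, where the coefficients $\lambda_M$, $m_M$ enter only through $\io{-\vec u, \vec w}$ --- this is where your ``delicate point'' about the deformed relations is actually dealt with. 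Splicing the sequence with its dual yields a self-dual exact sequence $0 \to \Delta_\sigma \to P_3 \to P_2 \to P_1 \to P_0 \to \Delta_\sigma \to 0$ with projective bimodule middle terms, and tensoring with any $X$ gives $\Omega^4 X \cong X$; your plan of verifying exactness of a guessed four-term complex directly against the basis $\Br$ would force you to check exactness at all four middle spots, which the paper never does. Finally, your worry about pinning the period to be exactly $4$ is moot: the definition of quasi-quaternion type only demands $\Omega^4 X \cong X$, so period dividing $4$ suffices.
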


\begin{remark}
%  In \cite{La3}, a similar result is stated (without proof) for some class of algebra which \emph{a priori} contains the class of $\Delta_\sigma$'s. However, the presentation given in \cite[(1.1)]{La3} contradicts the presentations of Theorems \ref{altpres0} and \ref{altpres2}. \rem{to check more}
 In \cite{La3}, Ladkani states a similar result. The final version of this paper, containing proofs, is not available yet.
\end{remark}

Proposition \ref{qq} is a consequence of Theorems \ref{symm}, \ref{tame} together with the following lemma, which will be proven at the end of Subsection \ref{proofsymm}:

\begin{proposition} \label{bimodres}
 If $\Sigma$ has no boundary and $\sigma$ is a triangulation of $(\Sigma, \M)$, then there is an exact sequence of $\Delta_\sigma$-bimodules of the form:
 $$\xymatrix{ 
   0 \ar[r] & \Delta_\sigma \ar[r]^-{\alpha^*} & \bigoplus_{u \in \sigma} \Delta_\sigma e_u \otimes e_u \Delta_\sigma \ar[r]^-{\beta^*} & \bigoplus_{\vec u \in \sigma} \Delta_\sigma e_{\vec u^+} \otimes e_{\vec u} \Delta_\sigma \ar[d]^-\gamma \\
   0 & \Delta_\sigma \ar[l] & \bigoplus_{u \in \sigma} \Delta_\sigma e_u \otimes e_u \Delta_\sigma \ar[l]^-\alpha & \bigoplus_{\vec u \in \sigma} \Delta_\sigma e_{\vec u} \otimes e_{\vec u^+} \Delta_\sigma \ar[l]^-\beta
 }$$
 where $\vec u^+$ is the oriented arc following immediately $\vec u$ around $s(\vec u)$ and $-^*$ is the duality of Theorem \ref{symm}.
\end{proposition}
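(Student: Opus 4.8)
The plan is to realize the displayed complex as an explicit, self-dual projective bimodule resolution of $\Delta_\sigma$, and to verify exactness by reducing, via the symmetric structure, to a single interior computation carried out with the basis of Theorem \ref{basisDsig}. Since $\Sigma$ has no boundary we have $\Delta_\sigma=\Gamma_\sigma$, and by Theorem \ref{symm} the algebra $\Delta_\sigma$ is symmetric; fix a symmetrizing form $\varepsilon$ and let $-^*=\Hom_k(-,k)$ be the induced (exact, contravariant) duality on bimodules of finite rank. The first point I would record is that this duality interchanges the two families of projective bimodules occurring here: for idempotents $e,f$ one has a canonical bimodule isomorphism $(\Delta_\sigma e\otimes f\Delta_\sigma)^*\cong\Delta_\sigma f\otimes e\Delta_\sigma$, given by the pairing $(p\otimes q,\,m\otimes n)\mapsto\varepsilon(qm)\,\varepsilon(np)$. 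Writing $P_0:=\bigoplus_{u\in\sigma}\Delta_\sigma e_u\otimes e_u\Delta_\sigma$ and $P_1:=\bigoplus_{\vec u}\Delta_\sigma e_{\vec u^+}\otimes e_{\vec u}\Delta_\sigma$, this gives $P_0^*\cong P_0$, $\Delta_\sigma^*\cong\Delta_\sigma$ and $P_1^*\cong\bigoplus_{\vec u}\Delta_\sigma e_{\vec u}\otimes e_{\vec u^+}\Delta_\sigma$, the remaining interior term. Hence the whole complex
\[
0\to\Delta_\sigma\xrightarrow{\alpha^*}P_0\xrightarrow{\beta^*}P_1\xrightarrow{\gamma}P_1^*\xrightarrow{\beta}P_0\xrightarrow{\alpha}\Delta_\sigma\to0
\]
becomes fixed by $-^*$ once $\alpha^*,\beta^*$ are defined as the duals of $\alpha,\beta$ and $\gamma$ is checked to be self-adjoint.

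Next I would pin down the three maps. Let $\alpha$ be the multiplication map $e_u\otimes e_u\mapsto e_u$; then $P_1^*\xrightarrow{\beta}P_0\xrightarrow{\alpha}\Delta_\sigma\to0$ is a copy of the standard minimal presentation of the regular bimodule attached to the quiver $\uQ_\sigma$, with $\beta$ sending the generator indexed by the oriented arc $\vec u$ to the usual telescoping difference built from the arrow $\ic{\vec u,\vec u^+}$ (one term in each of the two relevant summands of $P_0$), so that $\alpha\beta=0$. The map $\gamma$ is the \emph{relation differential}: it sends the generator indexed by $\vec u$ to the noncommutative derivative of the minimal relation attached to $\vec u$ in the presentation of Proposition \ref{altpres0} (the relation $\RR^\circ_{\vec u}$ together with the contribution of $C_{\vec u}$), i.e. to the element of $P_1^*$ recording, inside $\ker\beta$, that this relation holds in $\Delta_\sigma$. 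Finally $\beta^*$ and $\alpha^*$ are the $-^*$-duals, which by the first paragraph have exactly the sources and targets displayed. That this is a complex reduces to two identities: $\alpha\beta=0$ is the telescoping cancellation, and $\beta\gamma=0$ is precisely the assertion that the relations $\RR^\circ_{\vec v},C_{\vec v}$ hold, which I would deduce from the commutation $\omega^P_i\xi^P_i=\xi^P_{i+1}\omega^P_{i+1}$ of Definition \ref{defgamma}. The dual identities $\gamma\beta^*=0$ and $\beta^*\alpha^*=0$ are then automatic.

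For exactness, self-duality halves the work: since $-^*$ sends the complex to itself with the indexing reversed, the homology at a term is dual to the homology at the symmetric term, so it suffices to prove exactness at the rightmost $\Delta_\sigma$, at the right-hand $P_0$, and at $P_1^*$. Exactness at $\Delta_\sigma$ is the surjectivity of $\alpha$, clear since $1=\sum_u e_u$; exactness at the right-hand $P_0$ is the statement that $\beta$ surjects onto $\ker\alpha$, which holds because the arrows of $\uQ_\sigma$ generate the kernel of the multiplication map (the standard beginning of a bimodule presentation). The crux is exactness at $P_1^*$, i.e. the identity $\ker\beta=\operatorname{im}\gamma$: it says that the minimal relations $\RR^\circ_{\vec v},C_{\vec v}$ generate \emph{all} second syzygies of $\Delta_\sigma$, with no further ones. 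I would prove this directly, using the explicit $k$-basis $\Br$ of Theorem \ref{basisDsig} and the multiplication rules of part (2) of that theorem: given an element of $\ker\beta$, reduce each of its components to basis form and show that it is accounted for by $\gamma$ applied to a unique combination of generators. Here the hypothesis that $\sigma$ is a triangulation of a closed surface is essential, since it forces every minimal polygon to be a triangle or a monogon enclosing a single puncture, so that the relations have the rigid ``two paths around one polygon'' shape for which the syzygies close up after a single step.

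I expect this interior identity to be the main obstacle. A dimension count alone is useless: self-duality already forces the two interior homologies to have equal $k$-dimension, so their contributions to the Euler characteristic cancel identically (even granting the ranks from Corollary \ref{dimd}), and one cannot bypass a genuine analysis of $\ker\beta$. A convenient way to organize that analysis is to apply the exact functors $e_v(-)$ and $(-)e_w$, reducing exactness of the bimodule complex to exactness of complexes of $k$-modules indexed by pairs of arcs, each checkable combinatorially against $\Br$. As a sanity check and a shortcut in the generic case, when all $m_M$ are invertible Section \ref{casetri} identifies $\Delta_\sigma$ with the Jacobian algebra $\Pr(\uQ_\sigma,\underline{W}_\sigma)$, for which the displayed sequence is exactly the self-dual resolution attached to the potential $\underline{W}_\sigma$; the basis computation is what upgrades this to arbitrary $m_M$.
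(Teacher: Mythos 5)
Your architecture coincides with the paper's in outline (splice the classical four-term bimodule presentation with its $-^*$-dual), but the load-bearing step is absent: you never prove, nor sketch how to prove, the self-adjointness $\gamma=\gamma^*$. You mention it exactly once, as a checkbox (``$\gamma$ is checked to be self-adjoint''), and everything downstream silently presupposes it. Dualizing the exact sequence $P_1\xrightarrow{\gamma}P_1^*\xrightarrow{\beta}P_0\xrightarrow{\alpha}\Delta_\sigma\to 0$ only yields exactness of $0\to\Delta_\sigma\xrightarrow{\alpha^*}P_0\xrightarrow{\beta^*}P_1\xrightarrow{\gamma^*}P_1^*$; to identify this with the top row of the displayed diagram — and hence to splice the two halves, to get $\gamma\beta^*=0$ (note $\beta^*\alpha^*=(\alpha\beta)^*=0$ is automatic, but $\gamma\beta^*=\gamma^*\beta^*$ is not unless $\gamma=\gamma^*$), and to ``halve'' the exactness checks — you need precisely $\gamma=\gamma^*$. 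In the paper this identity \emph{is} the proof: after citing exactness of the last four terms as classical, it reduces to showing $(\Er^*\tens\Er^*)(b\,\phi(\RR_{\vec v})\,a)=(\Er^*\tens\Er^*)(a\,\phi(\RR_{\vec v'})\,b)$ for basis elements $a\in e_{\vec w}\Br e_{\vec u'}$, $b\in e_{\vec w'}\Br e_{\vec u}$ and pairs of triangles, verified by splitting $\RR_{\vec v}=\ic{\vec u,-\vec v}\ic{\vec v,-\vec w}-\io{-\vec u,\vec w}$ into its quadratic and long-cycle parts, using the uniqueness of the oriented triangle with prescribed consecutive sides and the trace $\Er^*$ of Definition \ref{trace} (Lemma \ref{commC2}). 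This is genuinely combinatorial — it is where the triangulation hypothesis actually enters — and no formal symmetry principle supplies it.

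You have, in effect, inverted the difficulty. The step you single out as ``the crux'', $\ker\beta=\operatorname{im}\gamma$, is the one the paper legitimately dispatches as classical: for an algebra presented by quiver and relations, a generating set of the relation ideal yields via the derivative map $\phi$ bimodule generators of $\ker\beta$, and for a triangulation the $\RR_{\vec v}$ alone suffice (the $C_{\vec v}$ are consequences, as in the proof of Theorem \ref{thmmin2}); your description of the generator attached to $\vec u$ as ``$\RR^\circ_{\vec u}$ together with the contribution of $C_{\vec u}$'' muddles this indexing, since $\gamma$ carries exactly one generator per oriented arc, mapped to $\phi(\RR_{\vec v})$ only. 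Two smaller slips: in a triangulation of a closed surface every minimal polygon is a triangle (possibly self-folded), never a punctured monogon; and your invertible-$m_M$ shortcut is circular — exactness and self-duality of this complex for a Jacobian algebra is not a citable general fact (the remark after Proposition \ref{qq} notes that the relevant proofs of Ladkani are not available); it is essentially the statement being proved.
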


Notice that Proposition \ref{bimodres} does not need $k$ to be an algebraically closed field. From now on, we suppose that $k$ is an algebraically closed field.
Following the same strategy than in \cite{GeLaSc16}, we use the known result for Brauer graph algebras and use the following result of Crawley-Boevey:

\begin{theorem}[{\cite[Theorem B]{CB95}}] \label{thcb}
 Let $A$ be a finite dimensional $k$-algebra, let $X$ be an irreducible algebraic variety over $k$ and let $g_1, \dots, g_r: X \to A$ be morphisms of varieties. For $x \in X$, denote $A_x := A/(g_i(x))_{i = 1 \dots r}$. If there exists a non-empty open subset $U$ of $X$ such that $A_x \cong A_{x'}$ for all $x, x' \in U$ and there exists $x_0 \in X$ such that $A_{x_0}$ is of tame representation type then $A_x$ is of tame representation type for $x \in U$.
\end{theorem}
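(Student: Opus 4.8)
The plan is to deduce the statement from the two standard pillars of tame--wild theory --- Drozd's dichotomy and the reformulation of tameness through the number of parameters of indecomposable modules --- combined with semicontinuity of fibre dimensions in an algebraic family. Throughout, $k$ is algebraically closed, $n := \dim_k A$, and for $d \geq 1$ I write $\mod_A(d)$ for the affine variety of $A$-module structures on $k^d$ (that is, $k$-algebra maps $A \to M_d(k)$), on which $\GL_d$ acts by conjugation with orbits the isomorphism classes. Recall the invariant $\mu_B(d)$ attached to a finite-dimensional algebra $B$: the number of parameters of the $d$-dimensional indecomposables, i.e. the maximum over irreducible components $\mathcal{C}$ of the indecomposable locus of $\mod_B(d)$ of $\dim \mathcal{C} - \dim(\GL_d\cdot \rho)$ for $\rho$ general in $\mathcal{C}$. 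The input I shall use as known is the parameter characterisation of tameness: $B$ is tame if and only if $\mu_B(d) \leq 1$ for every $d$ (this is where Drozd's dichotomy enters, wild algebras having unbounded $\mu_B(d)$).

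First I would set up the global family. Since each $g_i : X \to A \cong \mathbb{A}^n$ is a morphism, the condition ``$\rho(g_i(x)) = 0$ in $M_d(k)$'' is closed, so
$$\mathcal{Z}(d) := \{(x,\rho) \in X \times \mod_A(d) : \rho(g_i(x)) = 0,\ i = 1,\dots,r\}$$
is a closed subvariety carrying the fibrewise $\GL_d$-action, and the projection $\pi : \mathcal{Z}(d) \to X$ has fibre $\pi^{-1}(x) = \mod_{A_x}(d)$ over each $x$. Everything is thus encoded in this one family, and the problem becomes that of comparing the parameter count of the fibre over the special point $x_0$ with that over the points of $U$.

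The heart of the argument is to show that $x \mapsto \mu_{A_x}(d)$ is upper semicontinuous on $X$. Granting this, since $X$ is irreducible the minimal value of $\mu_{A_\bullet}(d)$ is attained on a dense open $V$ which meets $U$ (both being dense); as $A_x \cong A_{x'}$ is constant on $U$, the constant value $\mu_{A_x}(d)$ for $x \in U$ equals this minimum and hence is $\leq \mu_{A_{x_0}}(d)$ (note $x_0 \in \overline{U} = X$). Upper semicontinuity itself combines two facts. By Chevalley's theorem the fibre dimension $x \mapsto \dim \mod_{A_x}(d)$ is upper semicontinuous, so the containing variety is at least as large over $x_0$; after restricting to the (constructible, $\GL_d$-stable) indecomposable locus and passing to components this feeds the first term of $\mu$. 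For the second term, $\dim \End_{A_x}(\rho)$ is lower semicontinuous, hence the orbit dimension $\dim(\GL_d\cdot\rho) = d^2 - \dim\End_{A_x}(\rho)$ is upper semicontinuous and therefore no larger over $x_0$; subtracting a smaller quantity again pushes $\mu$ up at the special point. Both effects point the same way, giving $\mu_{A_{x_0}}(d) \geq \mu_{A_x}(d)$. Finally, tameness of $A_{x_0}$ gives $\mu_{A_{x_0}}(d) \leq 1$ for all $d$, whence $\mu_{A_x}(d) \leq 1$ for all $d$ and all $x \in U$, and the parameter characterisation yields that $A_x$ is tame.

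The main obstacle is making the semicontinuity of $\mu$ genuinely rigorous rather than a slogan: one must track simultaneously the dimension of the variety of indecomposables and the dimension of the generic orbit inside each component across the fibres of $\pi$, and handle the indecomposable locus correctly in a family, decomposability being a constructible (not closed) condition detected by a nontrivial idempotent in the endomorphism algebra. A clean way to control this is to stratify $\mathcal{Z}(d)$ by the value of $\dim\End$ and apply fibre-dimension semicontinuity stratum by stratum, using generic flatness to guarantee that the generic fibre of $\pi$ faithfully reflects the common algebra $A_x$, $x \in U$. I would also note an alternative, more intrinsic route following Crawley-Boevey's generic-module formalism: pass to the function field $K = k(X)$, identify $A_\eta = A_K/(g_i)$ with the constant algebra $A_x \otimes_k K$ by a spreading-out argument, and transfer generic modules along the specialisation $\mathcal{O}_{X,x_0} \to k$; the parameter-semicontinuity argument above is essentially the geometric shadow of that correspondence.
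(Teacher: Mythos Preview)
The paper does not prove this statement at all: Theorem~\ref{thcb} is quoted verbatim from Crawley-Boevey \cite{CB95} and used as a black box in the proof of Theorem~\ref{tame}. There is therefore no proof in the paper to compare your attempt against.

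That said, your outline is essentially the argument of \cite{CB95}: set up the family $\mathcal{Z}(d) \to X$ of module varieties, invoke the parameter characterisation of tameness via $\mu_B(d) \leq 1$, and prove upper semicontinuity of $\mu_{A_x}(d)$ by combining Chevalley-type fibre-dimension semicontinuity with lower semicontinuity of $\dim\End$. Your caveat in the final paragraph is well placed: the genuinely delicate point is controlling the indecomposable locus and the generic orbit dimension simultaneously across a family, and your sketch of the stratification by $\dim\End$ plus generic flatness is the right shape but would need to be fleshed out to be a proof. The alternative you mention via generic modules over $k(X)$ is closer to how Crawley-Boevey actually phrases things. If you want a complete argument you should consult \cite{CB95} directly; for the purposes of the present paper, citing it is what the author does and is entirely appropriate.
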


% \begin{remark}
%  Following the same strategy as the following proof and looking carefully the proof of Theorem \ref{thcb}, we can prove that, in the case of a sphere without boundary, with four punctures such that $m_M = 1$ for any $M \in \M$, then $\Delta_\sigma$ is at most tame except for finitely many values of $\lambda_\M$ (and we expect that it  always happen for $\lambda_\M \neq 1$).
% \end{remark}

To prove Theorem \ref{tame}, let us take notations of Definition \ref{defsp}. The quotient $\Delta_{\sigma'} \surj \Delta_\sigma$ induces a full and faithful functor $\mod \Delta_\sigma \to \mod \Delta_{\sigma'}$ so we can suppose that $\Sigma$ has no boundary. In the same way, for $\tau \subset \sigma$, the isomorphism $e_\tau \Delta_\sigma e_\tau \cong \Delta_\tau$ of Corollary \ref{subtri2} gives a full and faithful functor $\Delta_\sigma \otimes_{\Delta_\tau} -: \mod \Delta_\tau \to \mod \Delta_\sigma$ so we can suppose that $\sigma$ is a triangulation. In this case, minimal polygons of $\sigma$ are all triangles (Case a of Figure \ref{polnonz}).

Define
 $$A := \frac{k \uQ_\sigma}{JC_\sigma + (C_{\vec v})_{\vec v}}$$
which is obviously finite dimensional. We will define a family of functions $\RR_{\vec v}: k \to A$ indexed by oriented arcs $\vec v$ of $\sigma$ such that $A_x := A/(\RR_{\vec v}(x))_{\vec v}$ satisfies that $A_x \cong \Delta_\sigma$ if $x \neq 0$ and $A_0$ is a quotient of the Brauer graph algebra with Brauer graph coinciding with the partial triangulation. It will permit to conclude thanks to Theorem \ref{thcb} because $A_0$ is representation tame.

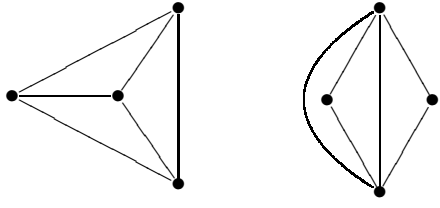
\begin{figure}
 $$\xymatrix@L=.05cm@R=0.3cm@C=0.3cm@M=0.0cm{
   & & & & & &  \ar@{-}[llddd] \ar@{-}[dddddd] \bullet \\ \\ \\ 
    \ar@{-}[uuurrrrrr] \ar@{-}[dddrrrrrr] \ar@{-}[rrrr] \bullet & & & & \ar@{-}[rrddd] \bullet \\ \\ \\
   & & & & & & \bullet
 } \quad \quad
 \xymatrix@L=.05cm@R=0.5cm@C=0.5cm@M=0.0cm{
  & & & \bullet \ar@{-}[ddl] \ar@/^-1cm/@{-}[dddd] \ar@{-}[dddd]  \ar@{-}[ddr] \\ \\
  & & \bullet \ar@{-}[ddr] & & \bullet \ar@{-}[ddl] \\ \\ 
  & & & \bullet
 }
$$
\caption{Two special triangulations of the sphere with four marked point and multiplicities $1$} \label{spectria}
\end{figure}

We first state a key lemma:

\begin{lemma} \label{pl}
 If $\sigma$ is not one of the two triangulations of Figure \ref{spectria}, there exists $p: Q_{\sigma, 1} \to \N_{> 0}$ satisfying that
 \begin{enumerate}[\rm (1)]
  \item for $M \in \M$, $\kappa := m_M \sum_{i = 1}^{d_M} p_{\ic{\vec u_i, \vec u_{i+1}}}$ is a constant independent of $M$, where $\vec u_1$, \dots, $\vec u_{d_M}$ are the oriented arcs starting at $M$ cyclically ordered;
  \item For any (minimal) triangle $P$ of $\sigma$, 
   \begin{itemize}
    \item $\kappa_P := p_{\omega^P_1} + p_{\omega^P_2} + p_{\omega^P_3} > \kappa$ if $P$ is a special self-folded triangle;
    \item $\kappa_P := p_{\omega^P_1} + p_{\omega^P_2} + p_{\omega^P_3} < \kappa$ else.
   \end{itemize}
 \end{enumerate}
\end{lemma}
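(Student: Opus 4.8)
The plan is to read conditions (1) and (2) as a feasibility problem for a positive weighting of the arrows of $Q_\sigma$ and to solve it by a transportation/flow argument, the sole obstruction being a global Euler-characteristic inequality. Since $\sigma$ is a triangulation of a closed surface, every arrow $\ic{\vec u,\vec v}$ of $Q_\sigma$ equals $\omega_i^P$ for a unique (minimal) triangle $P$ and index $i$: the arrows are exactly the \emph{corners} of the triangulation, and each corner is incident to exactly one marked point (its vertex) and exactly one triangle. Writing $S_M:=\sum_i p_{\ic{\vec u_i,\vec u_{i+1}}}$ for the total weight of the corners at $M$, condition (1) prescribes the weighted marginals $m_M S_M=\kappa$, i.e. $S_M=\kappa/m_M$, while condition (2) asks that the load $\kappa_P=\sum_i p_{\omega_i^P}$ of each non-special triangle be $<\kappa$. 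Summing corner weights in two ways gives the bookkeeping identity
$$\sum_{P}(\kappa-\kappa_P)=T\kappa-\sum_{M\in\M}\frac{\kappa}{m_M}=\kappa\Bigl(T-\sum_{M\in\M}\frac{1}{m_M}\Bigr),$$
where $T$ is the number of triangles.

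Next I would dispose of the special self-folded triangles. For such a $P$ the enclosed puncture $M$ has $m_M=1$ and degree one, so its unique corner carries all of $S_M$; condition (1) then forces this corner to equal $\kappa$, whence $\kappa_P>\kappa$ holds automatically and $P$ imposes no upper bound. Thus special triangles act as uncapped sinks, and only non-special triangles constrain the weighting from above. I would also reduce to finding a real solution: once a real weighting satisfying (1), positive on every arrow, and with $\kappa_P<\kappa$ for every non-special $P$ is produced, the strictness of these inequalities lets me perturb it to a nearby rational weighting with the same properties, and clearing denominators scales it (together with $\kappa$) to the required map $p\colon Q_{\sigma,1}\to\N_{>0}$.

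The heart of the argument is the feasibility of the resulting bipartite transportation problem (supplies $S_M$ at the vertices, capacities just below $\kappa$ at the non-special triangles and $\infty$ at the special ones, flow along the corner-incidences). By the max-flow / Hall (Gale) criterion, infeasibility would be witnessed by a cut: a set $\mathcal A$ of marked points incident only to non-special triangles with $\sum_{M\in\mathcal A}1/m_M\geq\#\{\text{triangles incident to }\mathcal A\}$. I would argue that the only binding cut is the global one $\mathcal A=\M$. Indeed, a proper cut forces the triangles incident to $\mathcal A$ to form a triangulated subsurface-with-boundary whose interior vertices are exactly $\mathcal A$; an Euler count for such a region shows it always contains strictly more triangles than interior vertices (it contains at least one triangle, hence at least three vertices), so $\sum_{M\in\mathcal A}1/m_M\leq\#\mathcal A<\#\{\text{triangles}\}$ and the cut never binds, while connectedness of $\Sigma$ forbids a proper \emph{closed} region. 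Hence the system is feasible exactly when $\sum_{M}1/m_M<T$ (or when a special sink relaxes the count), and by Euler's formula $T=2(\#\M-\chi(\Sigma))$ together with $\sum_M 1/m_M\leq\#\M$ this inequality holds in every case except the sphere with four marked points all of multiplicity one and with no self-folded triangle — precisely the two triangulations of Figure \ref{spectria}.

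The main obstacle is this last step. The subtle points are verifying that no proper cut can bind — the local Euler estimates for subsurfaces together with the connectivity argument that rules out hidden local obstructions — and matching the single extremal configuration exactly to Figure \ref{spectria} while correctly accounting for special self-folded triangles as uncapped sinks. It is precisely these sinks that keep the sphere with four weight-one points feasible once it carries a self-folded triangle, so that only the two self-fold-free triangulations remain genuinely exceptional.
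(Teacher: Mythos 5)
Your flow reformulation is a genuinely different strategy from the paper's (the paper simply takes the explicit candidate $p^\circ_{\ic{\vec u, \vec v}} = 1/(d_{s(\vec u)} m_{s(\vec u)})$, checks via the classical $1/a+1/b+1/c$ dichotomy that condition (2) can only fail-by-equality for triangles of vertex type $(2,4,4)$ or $(3,3,3)$, and then perturbs, with a terminal case analysis producing Figure \ref{spectria}), and it could in principle be completed; but as written it fails at exactly the two points where the content of the lemma lives. First, your Euler-count step contains false statements: for the region $R$ swept out by the triangles meeting $\mathcal{A}$, the interior vertices need not be exactly $\mathcal{A}$ (they can strictly contain it — take $\mathcal{A}$ to be three vertices of the tetrahedron), $R$ can be the whole closed surface even for proper $\mathcal{A}$ (so connectedness does not ``forbid a proper closed region''; such cuts simply do not bind), and the inequality ``strictly more triangles than interior vertices'' justified by ``at least one triangle, hence at least three vertices'' is wrong: the star of the puncture enclosed by a self-folded triangle is a disk with \emph{one} triangle, \emph{two} vertices (the vertices of a triangle need not be distinct here) and one interior vertex, giving equality $T(R)=V_i$. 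The correct per-component count $T(R) = 2V_i + V_b - 2\chi(R)$ does show that the binding proper cuts are precisely disjoint unions of self-folded stars around punctures with $m_M=1$, i.e.\ special triangles, which your uncapped sinks then absorb — but your text asserts this rather than derives it, and this borderline configuration is the entire difficulty, not a routine verification.

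Second, the endgame smuggles in an unproved classification: the claim that the global obstruction ($\Sigma$ a sphere, $\#\M=4$, all $m_M=1$, no special self-folded triangle) occurs ``precisely'' for the two triangulations of Figure \ref{spectria} requires showing that every self-fold-free triangulation of the four-punctured sphere is one of those two (e.g.\ one must rule out loops — a loop cuts off a once-punctured disk, forcing a self-folded triangle — and then see that $6$ arcs on $4$ vertices without loops force the tetrahedron or the doubled-edge triangulation). This is exactly what the paper establishes in the final two-case analysis of its proof (the cases $t(\vec u_2) = M$ and $t(\vec u_2) \neq M$); your proposal reduces the lemma to this statement without proving it. Minor but also unaddressed: feasibility with \emph{strictly positive} weights and \emph{strict} capacity inequalities needs Hoffman-type lower bounds or, more simply, a convex combination of a weakly feasible flow with the uniform weighting $p^\circ$, which your appeal to plain max-flow/Hall does not deliver. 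So the proposal identifies a workable alternative route but leaves genuine gaps at both of its critical steps.
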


We finish the construction before proving Lemma \ref{pl}. We fix the notations of Lemma \ref{pl}. For an oriented arc $\vec v$ of $\sigma$ and $x \in k$, taking $\vec u$ and $\vec w$ such that $\vec w$, $\vec v$ and $\vec u$ form a minimal triangle $P$ in this order, we define:
 $$\RR_{\vec v}(x) := \left\{\begin{array}{ll}
			      x^{\kappa_P - \kappa} \ic{\vec u, -\vec v} \ic{\vec v, -\vec w} -  \io{-\vec u, \vec w} & \text{if $P$ is special self-folded;} \\
                              \ic{\vec u, -\vec v} \ic{\vec v, -\vec w} - x^{\kappa - \kappa_P} \io{-\vec u, \vec w} & \text{else.}                              
                             \end{array} \right.$$

Then, we prove that $A_x \cong \Delta_\sigma$ for any non-zero $x$.
 We consider the automorphism $\psi$ of $k Q_\sigma$ defined by $\psi(q) := x^{p_q} q$ for any $q \in Q_{\sigma, 1}$. Then, for any oriented arc $\vec v$ and the minimal triangle $P$ with sides $\vec w$, $\vec v$, $\vec u$ containing it, we have $\psi(C_{\vec v}) = x^\kappa C_{\vec v}$, $\psi(J C_\sigma) = J C_\sigma$ and 
 $$\psi(\RR_{\vec v}) = \left\{\begin{array}{ll} x^{\kappa - \kappa_{\ic{-\vec u, \vec w}}}\RR_{\vec v}(x) & \text{if $P$ is special self-folded;} \\ x^{\kappa_P - \kappa_{\ic{-\vec u, \vec w}}}\RR_{\vec v}(x) & \text{else.} \end{array}\right.$$
 so $\psi$ induces an isomorphism from $\Delta_\sigma$ to $A_x$.

Moreover, it is immediate that $A_0$ is the quotient of the Brauer graph algebra with Brauer graph induced by the partial triangulation modulo the idempotents corresponding to self-folded edges of special self-folded triangle. It concludes the proof of Theorem \ref{tame} except in the two cases of Figure \ref{spectria}. In this latter case, we use Theorem \ref{thmtilt}. Indeed, by \cite[Corollary 2.2]{Ri89b}, for self-injective algebras, derived equivalence implies stable equivalence, which obviously implies invariance of the representation type. As there are other triangulations of the sphere with four punctures, the result for cases of Figure \ref{spectria} follows.

\begin{proof}[Proof of Lemma \ref{pl}]
 Up to rescaling, we can choose the $p_q$'s in $\Q_{> 0}$ and $\kappa = 1$ as these properties are invariant by scalar multiplication by positive integers. For $\vec u, \vec v$ such that $\ic{\vec u, \vec v}$ is an arrow, denote $p^\circ_{\ic{\vec u, \vec v}} := 1/(d_{s(\vec u)} m_{s(\vec u)})$. We start by proving that the $p^\circ_{\ic{\vec u, \vec v}}$'s almost satisfy (1) and (2). Indeed (1) is satisfied. We know that $1/a + 1/b + 1/c < 1$ except if $(a,b,c)$ is in
 $$E = \mathfrak{S}_3 \{(1, b, c), (2,2,c), (2,3,3), (2,3,4), (2,3,5), (2,3,6), (2,4,4), (3,3,3)\}.$$
 So we should identify (minimal) triangles $P$ of $\sigma$ such that $$(d_{P_1}m_{P_1}, d_{P_2}m_{P_2}, d_{P_3}m_{P_3}) \in E.$$
 Let us take such a $P$ and choose $P_1$ such that $d_{P_1} m_{P_1} \leq d_{P_2}m_{P_2}, d_{P_3}m_{P_3}$. 

 If $d_{P_1} m_{P_1} = 1$, then it is immediate that $P$ satisfies (2) (this is the case where $P$ is a special self-folded triangle). 

 If $d_{P_1} \in \{1,2\}$, an easy case by case analysis using hypotheses on $(\Sigma, \M)$ shows that each vertex $N$ sharing an edge with $P_1$ satisfies $d_N m_N \geq 4$. Thus the only possibility which makes (2) fail is $(d_{P_1} m_{P_1}, d_{P_2} m_{P_2}, d_{P_3} m_{P_3}) = (2,4,4)$. In this case, we have $p^\circ_{\omega^P_1} + p^\circ_{\omega^P_2} + p^\circ_{\omega^P_3} = \kappa$.

 If $m_{P_1} = 1$ and $d_{P_1} = 3$, a quick analysis shows that at least one $N$ connected to $P_1$ satisfies $d_N m_N \geq 4$. Thus, $P$ satisfy (2) in this case. Then for any (minimal) triangle $P$ of $\sigma$, 
 \begin{itemize}
  \item $p^\circ_{\omega^P_1} + p^\circ_{\omega^P_2} + p^\circ_{\omega^P_3} > \kappa$ if $P$ is a special self-folded triangle;
  \item $p^\circ_{\omega^P_1} + p^\circ_{\omega^P_2} + p^\circ_{\omega^P_3} \leq \kappa$ else.
 \end{itemize}
 We get the $p_q$'s by adding some small perturbations to the $p^\circ_q$'s. It is immediate that it is possible if every vertex $M$ with $d_M m_M = 4$ is in at least one triangle where the strict inequality is already satisfied (in this case, it is enough to add small numbers to the $p^\circ_q$'s appearing in a triangle satisfying the equality and to compensate this addition by subtracting a small number to the $p^\circ_q$ appearing in a triangle satisfying the strict inequality).

 Thus, we suppose that $M$ with $d_M m_M = 4$ appears only in triangles $P$ satisfying $p^\circ_{\omega^P_1} + p^\circ_{\omega^P_2} + p^\circ_{\omega^P_3} = \kappa$. We have $d_M = 4$ and $m_M = 1$ (other cases have already been excluded). Denote by $\vec u_1$, $\vec u_2$, $\vec u_3$ and $\vec u_4$ the cyclically ordered oriented arcs starting at $M$. According to our hypotheses, we can suppose that $d_{t(\vec u_1)} m_{t(\vec u_1)} = d_{t(\vec u_3)} m_{t(\vec u_3)} = 2$ and $d_{t(\vec u_2)} m_{t(\vec u_2)} = d_{t(\vec u_4)} m_{t(\vec u_4)} = 4$. We consider two cases:
 \begin{itemize}
  \item If $t(\vec u_2) = M$. In this case, $\vec u_4 = - \vec u_2$. As $\sigma$ is a triangulation, using Proof of Lemma \ref{findminpol}, there is a (minimal) triangle with sides $-\vec u_3$, $\vec u_2$, $\vec u_3$ in this order and therefore $d_{t(\vec u_3)} = 1$. In the same way, $d_{t(\vec u_1)} = 1$ and $(\Sigma, \M)$ is a sphere with three punctures. It contradicts our hypothesis that, in this case, $m_M \geq 2$.
  \item If $t(\vec u_2) \neq M$. As before, there is a (minimal) triangle with sides $-\vec u_3$, $\vec u_2$, $\vec v$ in this order for a certain $\vec v$. Notice that $s(\vec v) = t(\vec u_2)$ and $t(\vec v) = t(\vec u_3)$. So $v$ is not a $u_i$. So, using the hypotheses, $d_{t(\vec u_3)} = 2$ and $m_{t(\vec u_3)} = 1$. We get in the same way triangles with sides $-\vec u_4$, $\vec u_3$, $-\vec v$ and $-\vec u_1$, $\vec u_4$, $\vec v'$ and $-\vec u_2$, $\vec u_1$, $-\vec v'$ for a certain $\vec v'$. An easy analysis proves that these four triangles cover $\Sigma$ and therefore we are in the second case of Figure \ref{spectria}. It contradicts our hypothesis. \qedhere
 \end{itemize}   
\end{proof}

\section{Flipping partial triangulations} \label{secflip}

We start by defining a combinatorial operation on partial triangulations of $\sigma$.

\begin{definition} \label{defflip}
 Let $\vec u$ be an oriented arc of $\sigma$. If $u$ is the only arc incident to $s(\vec u)$, we define $\vec u^+ := \emptyset$. If $\vec u$ is followed by $\vec v$ around $s(\vec u)$ with $\vec v \neq -\vec u$, we define $\vec u^+ := \vec v$. If $\vec u$ is followed by $-\vec u$ and $u$ is not the only arc incident to $s(\vec u)$, we define $\vec u^+ := (-\vec u)^+$. We says that $u$ is \emph{close to the boundary} if $\vec u^+$ or $(-\vec u)^+$ is a boundary edge.

 Suppose now that $\vec u$ is not close to the boundary. Let $\tau$ be the partial triangulation obtained from $\sigma$ by removing $u$. The \emph{mutation} or \emph{flip of $\sigma$ with respect to $u$} is the partial triangulation $\mu_u(\sigma)$ obtained from $\tau$ by adding the oriented arc $\vec u^*$ constructed in the following way: 
 \begin{enumerate}[\rm (a)]
  \item if $\vec u^+ \neq \emptyset$, it starts at $t(\vec u^+)$ just after $-\vec u^+$ in the cyclic ordering around $t(\vec u^+)$, it follows $-\vec u^+$, winds around $s(\vec u)$ clockwisely; \\ if $\vec u^+ =\emptyset$, it starts at $s(\vec u)$ at the same position as $\vec u$;
  \item if follows $\vec u$;
  \item if $(-\vec u)^+ \neq \emptyset$ it winds around $t(\vec u)$ counter-clockwisely, follows $(-\vec u)^+$ and ends at $t((-\vec u)^+)$ just after $(-\vec u)^+$; \\  if $(-\vec u)^+ = \emptyset$, it ends at $t(\vec u)$ at the same position as $\vec u$.  
 \end{enumerate}
\end{definition}

To summarize Definition \ref{defflip}, we depict in Figure \ref{figflip} the three main possibilities (up to reorientation of $\vec u$). 
% In each case, we suppose that $\vec u^+ \neq \emptyset$ (if $\vec u^+ = (-\vec u)^+ = \emptyset$, then $\vec u$ forms an isolated component and $\mu_u(\sigma) = \sigma$) and that $\vec u$ does not immediately follows $-\vec u$ in the cyclic order around $t(\vec u)$. Then, we distinguish the following cases:
% \begin{enumerate}[\rm (F1)]
%  \item $(-\vec u)^+ = \emptyset$;
%  \item $-\vec u$ follows immediately $\vec u$ in the cyclic order around $t(\vec u)$;
%  \item $(-\vec u)^+ \neq \emptyset$ and $-\vec u$ does not follow immediately $\vec u$ around $t(\vec u)$.
% \end{enumerate}
\begin{figure}
 \begin{center} \begin{tabular}{ccccc}
                 \boxinminipage{$$\xymatrix@L=.05cm@!=0cm@R=.5cm@C=.7cm@M=0.01cm{
                  & & & \\
		    & & &  \\
			  & & & & \\
		    \ar@{..}[d] & & \ar@{-->}[uur]^{\vec u^*} \ar[ll] \ar@{..}[d]  & & \ar[uul]_{\vec u} \ar[ll]^{\vec u^+} \ar@{..}[d]  \\ & & & & 
                 }$$} &  &
		\boxinminipage{$$\xymatrix@L=.05cm@!=0cm@R=.5cm@C=.7cm@M=0.01cm{
                     & & &  \cdots \\
			   & & & \\
		   \ar@{..}[d] & & \ar@`{p+(14,10),p+(8,14),p+(0,10)}@{-->}[]_(.7){\vec u^*} \ar@{..}[d] \ar[ll] & & \ar@`{p+(0,10),p+(-8,14),p+(-14,10)}_(.3){\vec u} \ar[ll]_{\vec u^+}^{ (-\vec u)^+} \ar@{..}[d]  \\ & & & & 
                 }$$} &  & 
                \boxinminipage{$$\xymatrix@L=.05cm@!=0cm@R=.5cm@C=.7cm@M=0.01cm{
                  & & & & & \\
		    & \ar@{..}[u] \ar[rr]^{(-\vec u)^{+}}  & & \ar@{..}[u] \ar[rr] & & \ar@{..}[u]  \\ & & & & 
			  & \\
		    \ar@{..}[d] & & \ar@{-->}[uur]^(.2){\vec u^*} \ar[ll] \ar@{..}[d]  & & \ar[uulll]_(.2){\vec u} \ar[ll]^{\vec u^+} \ar@{..}[d]  \\ & & & & 
                 }$$} \\
	    (F1) & &  (F2) & & (F3)
                \end{tabular}  
 \end{center}
 \caption{Flip of an oriented arc in a partial triangulation} \label{figflip}
\end{figure}
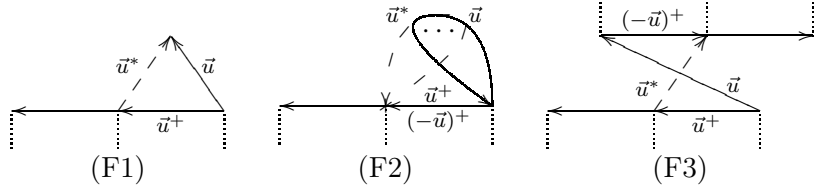

We need also to define a mutation for coefficients:
\begin{definition} \label{defmutcoeff} We define coefficients $\mu_u(\lambda)_M = \lambda^*_M$ in the following way:
 \begin{itemize}
  \item If there exists a monogon $\vec x$ enclosing a unique puncture $M$ and  
  \begin{itemizedec}
   \item[either] $\vec u = \pm \vec x$ and $\sigma$ does not contain an arc incident to $M$,
   \item[or] $\vec u$ is incident to $M$,
  \end{itemizedec}
   $$\lambda^*_M := -\lambda_M ;\quad \lambda^*_{s(\vec x)} := \nu_\M^{-1} \lambda_{s(\vec x)};  \quad \lambda_N^* := \lambda_N \text{ for $N \in \M \setminus \{s(\vec x), M\}$}.$$
  \item For $\vec x = \pm \vec u$, if $(-\vec x)^+ = \emptyset$ and $\vec x$ is not a side of a self-folded triangle, $$\lambda^*_{t(\vec x)} = -\lambda_{t(\vec x)} ;\quad \lambda^*_{s(\vec x)} := (-1)^{m_{s(\vec x)}} \lambda_{s(\vec x)};  \quad \lambda_N^* := \lambda_N \text{ for other $N \in \M$}.$$
  \item In any other case, $\lambda^*_N := \lambda_N$ for any $N \in \M$.
 \end{itemize}
\end{definition}

The operation which maps $\sigma$ to $\mu_u(\sigma)$ is sometimes called \emph{Kauer move} as it was first introduced in \cite{Ka98} for Brauer graphs. The main theorem of this section generalizes \cite{Ka98} (see also \cite{Ai15}):

\begin{theorem} \label{thmtilt}
 If $u$ is an arc of a partial triangulation $\sigma$ of $(\Sigma, \M)$ which is not close to the boundary, then $\Delta^\lambda_\sigma$ and $\Delta^{\mu_u(\lambda)}_{\mu_u(\sigma)}$ are derived equivalent.
\end{theorem}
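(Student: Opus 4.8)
The plan is to construct an explicit tilting complex (or, more precisely, a two-term silting/tilting complex) over $\Delta_\sigma^\lambda$ whose endomorphism algebra is isomorphic to $\Delta_{\mu_u(\sigma)}^{\mu_u(\lambda)}$, and then invoke Rickard's theorem to conclude that the two algebras are derived equivalent. Since $\Delta_\sigma$ is symmetric when $\Sigma$ has no boundary (Theorem \ref{symm}), and the flip is only defined for $u$ not close to the boundary, the natural candidate is a Kauer-type tilting complex concentrated in degrees $-1$ and $0$: for the vertex $u$ being flipped we take a complex $T_u$ of the form $P_{u'} \to P_u$ (a minimal projective presentation of a suitable module, built from the arrows $\ic{\vec u^+, -\vec u}$ and $\ic{(-\vec u)^+, \vec u}$ that detect the neighbours of $u$ in the quiver $\uQ_\sigma$), while for every other arc $w \neq u$ we take the stalk complex $T_w = P_w$ concentrated in degree $0$. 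This is precisely the complex that, for Brauer graph algebras, realizes the Kauer move, so the construction specializes correctly in the sparse case via Theorem \ref{thmbrauer}.

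First I would verify that $T = \bigoplus_w T_w$ is a tilting complex: that it generates the perfect derived category $\operatorname{per}\Delta_\sigma$ (equivalently $\operatorname{K}^b(\operatorname{proj}\Delta_\sigma)$), and that $\Hom_{\Kb(\proj)}(T, T[i]) = 0$ for $i \neq 0$. The self-orthogonality in positive and negative degrees is where the combinatorics of the flip enters: I would compute the relevant $\Hom$ and $\Ext$ groups using the explicit $k$-basis $\Br$ of $\Delta_\sigma$ from Theorem \ref{basisDsig} together with the relations $\RR^\circ_{\vec v}$ of Proposition \ref{altpres0}. Generation follows from the standard triangle $P_{u'} \to P_u \to T_u \to$, which shows $P_u$ lies in the thick subcategory generated by $T$. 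The hypothesis that $u$ is not close to the boundary guarantees that both $\vec u^+$ and $(-\vec u)^+$ are genuine arcs (not boundary edges), so the morphism $P_{u'} \to P_u$ is the correct one and the three cases (F1), (F2), (F3) of Figure \ref{figflip} all produce valid complexes.

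The heart of the argument, and the main obstacle, is the identification of endomorphism algebras
\[
 \End_{\Kb(\proj \Delta_\sigma)}(T) \cong \Delta_{\mu_u(\sigma)}^{\mu_u(\lambda)}.
\]
I would present this as a quiver-with-relations isomorphism: the quiver of the endomorphism algebra is read off from the $\Hom$-spaces between the $T_w$, and a direct check shows that it coincides with $\uQ_{\mu_u(\sigma)}$, where the new arrows incident to $u^*$ correspond to compositions of the old arrows around $\vec u^+$ and $(-\vec u)^+$ (this is exactly where the flipped arc $\vec u^*$ of Definition \ref{defflip} acquires its neighbours). Matching the relations then forces the scalar bookkeeping of Definition \ref{defmutcoeff}: the sign changes $\lambda_M^* = -\lambda_M$ and the factor $(-1)^{m_{s(\vec x)}}$ arise because passing a relation through the mapping cone reverses the orientation of the paths winding around the affected marked points, and the factor $\nu_\M^{-1}$ appears in the monogon-enclosing-a-puncture case from the special relation recorded in Section \ref{algparttri}. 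I expect the delicate point to be checking that the induced relations in $\End(T)$ are \emph{exactly} the relations $C_{\vec v}$ and $\RR^\circ_{\vec v}$ defining $\Delta_{\mu_u(\sigma)}^{\mu_u(\lambda)}$ with the mutated coefficients, rather than merely up to rescaling; this requires a careful local analysis of each case in Figure \ref{polnonz} for the polygons adjacent to $u$, using Theorem \ref{subtri} and Corollary \ref{subtri2} to reduce to the subtriangulation supported on the arcs incident to $s(\vec u)$ and $t(\vec u)$. Once the isomorphism of algebras is established, Rickard's theorem (that an algebra possessing a tilting complex with endomorphism ring $B$ is derived equivalent to $B$) completes the proof. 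Finally, to handle all flips uniformly one can, via the augmented surface $\sigma'$ of Definition \ref{defsp} and the full faithful embeddings used in Section \ref{reptype}, reduce from the bordered case to the boundaryless case where the symmetric structure is available.
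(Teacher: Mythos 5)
Your plan coincides with the paper's actual proof: the paper uses exactly the Okuyama--Rickard two-term complex $T = e_\tau \Delta_\sigma \oplus P_u^*$ with $P_u^* = \bigl(e_{\vec u^+}\Delta_\sigma \oplus e_{(-\vec u)^+}\Delta_\sigma \to e_u\Delta_\sigma\bigr)$, verifies tilting via the approximation property and the socle analysis from Theorem \ref{basisDsig}, proves $\End_{\Kb(\proj \Delta_\sigma)}(T) \cong \Delta^{\mu_u(\lambda)}_{\mu_u(\sigma)}$ by a case-by-case local analysis (reduced via Corollary \ref{subtri2}) in which the coefficient mutations of Definition \ref{defmutcoeff} arise just as you predict, and concludes by Rickard's theorem. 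The only point your outline passes over is the preliminary treatment of the degenerate special-monogon configurations (Remark \ref{avoidcases}), which the paper must exclude or handle by Morita equivalence before the endomorphism computation.
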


\begin{example}
 We consider the two following partial triangulations of a disc with three punctures and no marked point on the boundary:
 \begin{center}
  \boxinminipage{$$\xymatrix@L=.05cm@!=0cm@R=1cm@C=.7cm@M=0.01cm{
                  & M \ar@{-}[dl] \ar@{-}[dr] & \\
                  N \ar@{-}[rr] & & P
                 }$$} \quad \quad \quad 
  \boxinminipage{$$\xymatrix@L=.05cm@!=0cm@R=.5cm@C=.7cm@M=0.01cm{
                   \\
                  M \ar@{-}[r] & N \ar@{-}[r] \ar@{-}@`{p+(-7,7), p+(-20,0), p+(-7,-7)}[] & P  \\
                 }$$}  
 \end{center}
 They are related by a flip so the following algebras, obtained for $\lambda_M = \lambda_N = \lambda_P = 1$ and $m_M = m_N = m_P = m$ are derived equivalent:
 $$\frac{k \left(\boxinminipage{\xymatrix@L=.05cm@!=0cm@R=2cm@C=1.4cm{
        & \bullet \ar@/_/[dr]_x \ar@/_/[dl]_y \\
        \bullet \ar@/_/[ur]_x \ar@/_/[rr]_y & & \bullet \ar@/_/[ul]_y \ar@/_/[ll]_x}}\right)}{(x^2-(yx)^{m-1}y, y^2)}
  \quad \text{and} \quad
   \frac{k \left(\boxinminipage{\xymatrix@L=.05cm@!=0cm@R=1cm@C=1.2cm{
            \\
         & \bullet \ar@/_/[r]_{\beta_1} \ar@`{p+(-5,5), p+(-15,0), p+(-5,-5)}[]_\alpha & \bullet \ar@/_/[r]_{\gamma_1} \ar@/_/[l]_{\beta_2} & \bullet \ar@/_/[l]_{\gamma_2} \ar@`{p+(5,-5), p+(15,0), p+(5,5)}[]_\delta & \\  & }}
        \right)}{\left(\begin{array}{c} \beta_2 \alpha - (\gamma_1 \gamma_2 \beta_2 \beta_1)^{m-1} \gamma_1 \gamma_2 \beta_2, \\ \alpha \beta_1 - (\beta_1 \gamma_1 \gamma_2 \beta_2)^{m-1} \beta_1 \gamma_1 \gamma_2, \\ \beta_1 \beta_2 - \alpha^{m-1}, \gamma_1 \delta, \delta \gamma_2, \gamma_2 \gamma_1,\\ \delta^m - (\gamma_2 \beta_2 \beta_1 \gamma_1)^m \end{array}\right)}.
$$

\end{example}

\begin{remark}
 We can of course reverse this construction to obtain an inverse of $\mu_u$ which also gives a derived equivalence.
\end{remark}

Before giving the proof, we deal with two particular cases coming from special monogons: 
\begin{remark} \label{avoidcases}
\begin{itemize}
 \item  In case (F2), if $\vec u$ encloses a special monogon and the arc $v$ pointing to the special puncture is not in $\sigma$. By Proposition \ref{revidem}, and the discussion preceding it, $\Delta^\lambda_\sigma$ is Morita equivalent to $\Delta^{\mu_u(\lambda)}_{\sigma \cup \{v\}}$. Then, we get easily that $\Delta^{\mu_u(\lambda)}_{\mu_u(\sigma)}$ is Morita equivalent to $\Delta^{\mu_u(\lambda)}_{\mu_v(\mu_u(\sigma \cup \{v\}))}$.
 \item If $\vec u$ is the double side of a special self-folded triangle, pointing toward the special puncture, $\mu_u(\sigma) \cong \sigma$ and $\Delta^{\mu_u(\lambda)}_{\mu_u(\sigma)}$ is Morita equivalent to $\Delta^\lambda_\sigma$ according to Proposition \ref{revidem} so the result is trivial in this case. 

  Notice that, in this case, $\Delta^{\mu_u(\lambda)}_{\mu_u(\sigma)}$ is not obtained by a non-trivial tilting. Let $\vec v$ be the oriented side enclosing the special monogon with special puncture $t(\vec u)$ or $s(\vec u)$. If $\vec v$ is not close to the boundary, a non-trivial tilting can be obtained by putting $\mu'_u(\sigma) = \mu_v(\sigma)$ (notice that $\Delta^{\mu_u(\lambda)}_{\mu'_u(\sigma)} \neq \Delta^{\mu_v(\lambda)}_{\mu_v(\sigma)}$).
 Indeed, using Proposition \ref{revidem}, $\Delta^\lambda_\sigma$ is Morita equivalent to $\Delta^{\mu_u(\lambda)}_\sigma$, ``exchanging'' the idempotents $\ic{\vec v, -\vec v}/\lambda_{t(\vec u)}$ and $e_v - \ic{\vec v, -\vec v}/\lambda_{t(\vec u)}$. Moreover, we will see that $\mu_v(\sigma)$ corresponds to tilting at the idempotent $e_v - \ic{\vec v, -\vec v}/\lambda_{t(\vec u)}$ so computing the algebra of $\mu_v(\sigma)$ with respect to the coefficients ${\mu_u(\lambda)}$ consists to a tilting at the idempotent $\ic{\vec v, -\vec v}/\lambda_{t(\vec u)}$ which is equivalent to $e_u$.
\end{itemize}
\end{remark}

In view of Remark \ref{avoidcases}, we suppose that, in Case (F2), $\vec u$ does not enclose a special monogon, and that, in Case (F1), $\vec u$ is not the double side of a special self-folded triangle.

To prove Theorem \ref{thmtilt}, we use the Okuyama-Rickard complex \cite{Ri89b}. We prove that it is tilting under our assumptions. From now on, $\sigma$, $\vec u$, $\tau$, $\lambda^* := \mu_u(\lambda)$ and $\sigma^* := \mu_u(\sigma)$ are fixed as in Definitions \ref{defflip} and \ref{defmutcoeff}. We denote 
 $$\e := \left\{\begin{array}{ll}
            e_u - \lambda_M \ic{\vec u, -\vec u} & \text{in case (F3), if $\vec u$ encloses a special monogon;} \\
            e_u - \lambda_M \ic{-\vec u, \vec u} & \text{in case (F3), if $-\vec u$ encloses a special monogon;} \\
            e_u & \text{else,}
           \end{array}\right.$$
 and $X_\e$ is the maximal indecomposable module supported by $\e$.

We consider the following object of the homotopy category $\Kb(\proj \Delta_\sigma)$ of right $\Delta_\sigma$-modules: $T := e_\tau \Delta_\sigma \oplus P_u^*$ where $P_u^*$ is the following complex concentrated in degree $0$ and $1$:
$$e_{\vec u^+} \Delta_\sigma \oplus e_{(-\vec u)^+} \Delta_\sigma \xrightarrow{\begin{sbmatrix} \alpha_1 & \alpha_2 \end{sbmatrix}} e_u \Delta_\sigma$$
where $\alpha_1 := \ic{\vec u, \vec u^+}$ and $\alpha_2 := \ic{-\vec u, (-\vec u)^+}$ with the convention that $e_\emptyset = 0$, $\ic{\vec u, \emptyset} = \ic{-\vec u, \emptyset} = 0$. This complex can be understood as a projective presentation of $X_\e$. In other terms, $\begin{sbmatrix} \alpha_1 & \alpha_2 \end{sbmatrix}$ is a minimal right $\add (e_\tau \Delta_\sigma)$-approximation of $e_u \Delta_\sigma$. Notice that $P_u^*$ contains a split direct summand if $u$ encloses a special monogon.

This is known that $T$ is a tilting complex when $\Delta_\sigma$ is symmetric. We prove that it is still the case here:

\begin{lemma}
 The complex $T$ is tilting. In other terms, 
 \begin{enumerate}[\rm (a)]
  \item $\Hom_{\Kb(\proj \Delta_\sigma)}(T, T[i]) = 0$ for any $i \neq 0$ where $[i]$ is the $i$-shift functor;
  \item $\Delta_\sigma$ is in the triangulated subcategory of $\Kb(\Delta_\sigma)$ generated by $T$ (and therefore $\Kb(\proj \Delta_\sigma)$ is generated by $T$).
 \end{enumerate}
\end{lemma}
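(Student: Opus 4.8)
The plan is to verify the two tilting conditions (a) and (b) by reducing every morphism space in $\Kb(\proj\Delta_\sigma)$ to a corner space of the form $e_w\Delta_\sigma e_v$ and then computing with the explicit $k$-basis $\Br$ of Theorem \ref{basisDsig}. The only projectives occurring in $T$ are $e_\tau\Delta_\sigma$ and the three summands $e_u\Delta_\sigma$, $e_{\vec u^+}\Delta_\sigma$, $e_{(-\vec u)^+}\Delta_\sigma$; since $u$ is not close to the boundary, $\vec u^+$ and $(-\vec u)^+$ are arcs in $\tau$, so $Q := e_{\vec u^+}\Delta_\sigma \oplus e_{(-\vec u)^+}\Delta_\sigma$ lies in $\add(e_\tau\Delta_\sigma)$. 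Under $\Hom_{\Delta_\sigma}(e_a\Delta_\sigma, e_b\Delta_\sigma) \cong e_b\Delta_\sigma e_a$, the differential $[\alpha_1\ \alpha_2]$ of $P_u^*$ induces, on the relevant Hom-complexes, post- and pre-composition with $[\alpha_1\ \alpha_2]$, i.e.\ left and right multiplication by $\alpha_1 = \ic{\vec u,\vec u^+}$ and $\alpha_2 = \ic{-\vec u,(-\vec u)^+}$, so the whole verification becomes a matter of tracking these two multiplications on basis elements, using part (2) of Theorem \ref{basisDsig}.

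I would first dispose of (b). Applying the stupid truncation to $P_u^*$ yields a triangle
$$P_u^* \longrightarrow Q \xrightarrow{\ [\alpha_1\ \alpha_2]\ } e_u\Delta_\sigma \longrightarrow P_u^*[1]$$
in $\Kb(\proj\Delta_\sigma)$. As $Q\in\add(e_\tau\Delta_\sigma)\subset\add(T)$ and $P_u^*$ is a summand of $T$, the third term $e_u\Delta_\sigma$ lies in the thick subcategory generated by $T$; hence so does $\Delta_\sigma = e_u\Delta_\sigma\oplus e_\tau\Delta_\sigma$, which is (b).

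For (a) I would split $\Hom_{\Kb(\proj\Delta_\sigma)}(T,T[i])$ according to the summands $e_\tau\Delta_\sigma$ and $P_u^*$. The $e_\tau$--$e_\tau$ part is concentrated in degree $0$ because $e_\tau\Delta_\sigma$ is projective. Since $P_u^*$ sits in degrees $0,1$, the mixed and diagonal pieces are cohomology of two- or three-term complexes: $\Hom_{\Kb(\proj\Delta_\sigma)}(e_\tau\Delta_\sigma, P_u^*[i])$ can only be nonzero for $i\in\{0,1\}$, and its vanishing for $i=1$ is exactly the assertion, already recorded before the lemma, that $[\alpha_1\ \alpha_2]$ is a right $\add(e_\tau\Delta_\sigma)$-approximation of $e_u\Delta_\sigma$. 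Dually, $\Hom_{\Kb(\proj\Delta_\sigma)}(P_u^*, e_\tau\Delta_\sigma[i])$ can only be nonzero for $i\in\{-1,0\}$, and its vanishing for $i=-1$ amounts to injectivity of $g\mapsto(g\alpha_1,g\alpha_2)$ on $e_\tau\Delta_\sigma e_u$. Finally $\Hom_{\Kb(\proj\Delta_\sigma)}(P_u^*,P_u^*[i])$ lives in degrees $-1,0,1$, and the tilting condition is the vanishing of $H^{\pm1}$ of the endomorphism complex of $P_u^*$. These last two vanishings are the genuine content; I would establish them by a local case analysis following the three configurations (F1), (F2), (F3) of Figure \ref{figflip}, with the further subcases $\vec u^+=\emptyset$ or $(-\vec u)^+=\emptyset$, and with the special-monogon and self-folded-triangle situations excluded in Remark \ref{avoidcases}. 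When $u$ encloses a special monogon one of $\alpha_1,\alpha_2$ splits $P_u^*$, so I would first strip off that split summand and reduce to the remaining complex.

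The main obstacle is precisely the degree-$\pm1$ self-extensions of $P_u^*$ in the presence of boundary: when $\Delta_\sigma$ is symmetric (equivalently, no arc of $\sigma$ meets the boundary, by Theorem \ref{symm}) these vanish by the general Okuyama--Rickard theory, but here symmetry may fail. I would handle this either by a direct basis computation, showing via rule (2) of Theorem \ref{basisDsig} that any chain endomorphism of $P_u^*$ of degree $\pm1$ is null-homotopic because the only candidate components, namely $e_{\vec u^+}\Delta_\sigma e_u\oplus e_{(-\vec u)^+}\Delta_\sigma e_u$ in degree $-1$ and $e_u\Delta_\sigma e_{\vec u^+}\oplus e_u\Delta_\sigma e_{(-\vec u)^+}$ in degree $+1$, are absorbed by the homotopies built from $\alpha_1,\alpha_2$; or, alternatively, by transporting the statement from the augmented surface, since by Definition \ref{defsp} one has $\Delta_\sigma=\Delta_{\sigma'}/(e_0)$ with $\Delta_{\sigma'}$ symmetric and with $u,\vec u^+,(-\vec u)^+$ lying outside $e_0$, so that the corner spaces controlling $P_u^*$ are unchanged by the quotient and the required vanishings descend from $\Delta_{\sigma'}$. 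I expect the boundary subcases of (F1) and (F3) with $(-\vec u)^+=\emptyset$, where the coefficient twists $\lambda^*$ of Definition \ref{defmutcoeff} enter, to be the most delicate, since there the socle of $e_u\Delta_\sigma$ behaves asymmetrically and one must check that no spurious degree-$1$ map survives.
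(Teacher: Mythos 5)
Your skeleton coincides with the paper's own proof: the same stupid-truncation triangle $P_u^* \to e_{\vec u^+}\Delta_\sigma \oplus e_{(-\vec u)^+}\Delta_\sigma \to e_u\Delta_\sigma \to P_u^*[1]$ disposes of (b), the same four-block decomposition handles (a), and the two ``degree $+1$'' blocks are killed exactly as in the paper by the right $\add(e_\tau\Delta_\sigma)$-approximation property of $\begin{sbmatrix}\alpha_1 & \alpha_2\end{sbmatrix}$ (the paper phrases the vanishing of $\Hom_{\Kb(\proj\Delta_\sigma)}(e_\tau\Delta_\sigma, P_u^*[1])$ as $\Hom(e_\tau\Delta_\sigma, X_\e)=0$, which is equivalent to your factorization argument). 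The substance of the lemma sits entirely in the degree $-1$ blocks, and there your plan has a genuine gap plus a flawed fallback.

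First, for $i=-1$ there are \emph{no} homotopies available: the shifted targets have zero components in the degrees where a homotopy would live, so ``any chain endomorphism of degree $-1$ is null-homotopic because \dots absorbed by the homotopies built from $\alpha_1,\alpha_2$'' is the wrong mechanism. One must show the component maps are \emph{literally zero}, i.e.\ that no nonzero $x \in e_\tau\Delta_\sigma e_u$ satisfies $x\alpha_1 = x\alpha_2 = 0$; equivalently $\Hom(X_\e, e_\tau\Delta_\sigma) = 0$. The paper's proof isolates the decisive fact your plan never states: by Theorem \ref{basisDsig}, the socle of $e_\tau\Delta_\sigma$ is concentrated on the vertices of $\tau$ \emph{and on arcs close to the boundary}, so the hypothesis that $u$ is not close to the boundary is what forbids a copy of the simple at $u$ inside $e_\tau\Delta_\sigma$. (Your remark about the socle of $e_u\Delta_\sigma$ points at the wrong module; the relevant socles are those of the targets $e_w\Delta_\sigma$, $w \in \tau$.) Second, your alternative route via the augmented surface fails: $e_w\Delta_\sigma e_v$ is a \emph{proper} quotient of $e_w\Delta_{\sigma'}e_v$ whenever $v$ or $w$ meets the boundary, since the ideal $(e_0)$ kills all paths passing through the former boundary-edge vertices --- compare the boundary-edge condition in the definition of $\Br$ --- so the corner spaces are not unchanged. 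Indeed no formal descent can work: by symmetry of $\Delta_{\sigma'}$ the Okuyama--Rickard complex is tilting upstairs for \emph{every} arc, yet downstairs it fails precisely for arcs close to the boundary, because the quotient by $(e_0)$ creates new socle at such arcs; this is exactly the obstruction the hypothesis on $u$ excludes, and any correct argument must use it inside the quotient. Your route A (direct computation with $\Br$) can be completed and would amount to the paper's socle argument, but as written the key step is asserted rather than proved.
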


\begin{proof}
 (a) The only possibly non-trivial terms of $\Hom_{\Kb(\proj \Delta_\sigma)}(T, T[i])$ are 
  \begin{itemize}
   \item $\Hom_{\Kb(\proj \Delta_\sigma)}(e_\tau \Delta_\sigma, P_u^*[1])$: as there is no morphism from $e_\tau \Delta_\sigma$ to $X_\e$, it is immediate that any element of $\Hom_{\Kb(\proj \Delta_\sigma)}(e_\tau \Delta_\sigma, P_u^*[1])$ is homotopic to $0$.
   \item $\Hom_{\Kb(\proj \Delta_\sigma)}(P_u^*, e_\tau \Delta_\sigma[-1])$: such a non-zero morphism gives a non-zero map from $X_\e$ to $e_\tau \Delta_\sigma$. But, using Theorem \ref{basisDsig}, the socle of $e_\tau \Delta_\sigma$ has to be concentrated on $\tau$ and arcs which are close to the boundary. As $u$ is not close to the boundary, we get a contradiction.
   \item $\Hom_{\Kb(\proj \Delta_\sigma)}(P_u^*, P_u^*[-1])$: it is similar to the previous case.
   \item $\Hom_{\Kb(\proj \Delta_\sigma)}(P_u^*, P_u^*[1])$: a morphism in this space is induced by a morphism from $e_{\vec u^+} \Delta_\sigma \oplus e_{(-\vec u)^+} \Delta_\sigma$ to $e_u  \Delta_\sigma$. As $\begin{sbmatrix}\alpha_1 & \alpha_2\end{sbmatrix}$ is a right $\add(e_\tau \Delta_\sigma)$-approximation, such a morphism is homotopic to $0$.
  \end{itemize}
 (b) It is enough to prove that $e_u \Delta_\sigma$ is in the triangulated category generated by $T$. This is immediate: $e_u \Delta_\sigma$ is isomorphic to the cone of the canonical map $P_u^* \to e_{\vec u^+} \Delta_\sigma \oplus e_{(-\vec u)^+} \Delta_\sigma$.
\end{proof}

Then, to prove Theorem \ref{thmtilt}, by the famous Theorem of Rickard \cite{Ri89}, it is enough to prove the following proposition:

\begin{proposition} \label{endo}
 There is an isomorphism $\End_{\Kb(\proj \Delta_\sigma)} (T) \cong \Delta^{\lambda^*}_{\sigma^*}$. 
\end{proposition}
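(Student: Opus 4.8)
The plan is to compute $\End_{\Kb(\proj \Delta_\sigma)}(T)$ explicitly as a quiver with relations and to match it, through the subtriangulation-compatible presentation of Proposition \ref{altpres0}, with $\Delta^{\lambda^*}_{\sigma^*}$. Since $\Delta_\sigma$ is not symmetric in general (Theorem \ref{symm} requires no arc incident to the boundary), the standard argument available for symmetric algebras does not apply, and every Hom-space must be computed by hand as homotopy classes of chain maps. I would organise the computation around the indecomposable summands of $T$, which are the $e_v \Delta_\sigma$ for $v \in \tau$ together with $P_u^*$ (the latter splitting off a projective direct summand only when $u$ encloses a special monogon, a situation already reduced away by Remark \ref{avoidcases}). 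Thus the primitive idempotents of $\End(T)$ are indexed by $\tau \cup \{u^*\}$, which is precisely the set of edges of $\sigma^* = \mu_u(\sigma)$.

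The first, easy part is the \emph{$\tau$-block}. Restricting to $e_\tau \Delta_\sigma$ one has $\End(e_\tau \Delta_\sigma) = e_\tau \Delta_\sigma e_\tau \cong \Delta_\tau$ by Corollary \ref{subtri2}; because $\tau \subset \sigma^*$ as well, the same corollary applied to $\sigma^*$ identifies this with $e_\tau \Delta^{\lambda^*}_{\sigma^*} e_\tau$. Using the presentation of Proposition \ref{altpres0} (which, unlike Theorem \ref{altpres2}, is compatible with the naive identification of arrows under subtriangulation), the comparison map is then already fixed and correct on the $\tau$-block, with no contribution from $\lambda^*$ since the mutated coefficients differ from $\lambda$ only at $s(\vec u)$, $t(\vec u)$ and a possible enclosed puncture. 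Next I would compute the homotopy classes $\Hom_{\Kb}(e_v\Delta_\sigma, P_u^*)$, $\Hom_{\Kb}(P_u^*, e_v\Delta_\sigma)$ and $\Hom_{\Kb}(P_u^*, P_u^*)$ for $v \in \tau$, using that $\begin{sbmatrix} \alpha_1 & \alpha_2 \end{sbmatrix}$ is a right $\add(e_\tau \Delta_\sigma)$-approximation together with the explicit $k$-basis of $\Delta_\sigma$ from Theorem \ref{basisDsig}. The surviving generators read off the arrows of $Q_{\sigma^*}$ incident to $u^*$: those built from the differential components $\alpha_1 = \ic{\vec u, \vec u^+}$ and $\alpha_2 = \ic{-\vec u, (-\vec u)^+}$ on one side, and their approximation partners in the opposite direction on the other, exactly as prescribed by the cyclic orders around the endpoints of $u^*$ in each of the cases (F1), (F2), (F3) of Figure \ref{figflip}.

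The heart of the argument is the verification of relations, and this is where I expect the main obstacle. One must check that the relations $C_{\vec v}$ and $\RR^\circ_{\vec v}$ defining $\Delta^{\lambda^*}_{\sigma^*}$ in Proposition \ref{altpres0} hold in $\End(T)$. Those not meeting $u^*$ are inherited from the $\tau$-block; the genuinely new relations, attached to oriented arcs having $u^*$ as a side or immediate neighbour, are obtained by composing the chain maps produced above and reducing modulo homotopy. It is precisely here that the coefficient mutation of Definition \ref{defmutcoeff} enters: the signs $-\lambda_{t(\vec x)}$ and $(-1)^{m_{s(\vec x)}} \lambda_{s(\vec x)}$ arise when resolving compositions through the two-term complex $P_u^*$ and through $C_\sigma$ acting on the module $X_\e$, while the unit $\nu_\M^{-1}$ surfaces exactly in the degenerate sphere case (where $\nu_\M \neq 1$, so that $C_\sigma$ fails to be a non-zero-divisor and a genuine correction is needed). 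The special-monogon subcases are isolated using the idempotent $\e$ and Remark \ref{avoidcases}. Matching every sign and unit against Definition \ref{defmutcoeff} across all of (F1)--(F3) and their degenerations is the delicate bookkeeping that carries the proof.

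Finally, the comparison map $k \uQ_{\sigma^*} \to \End(T)$ constructed above annihilates the relation ideal, so it descends to $\bar\Phi : \Delta^{\lambda^*}_{\sigma^*} \to \End(T)$. This $\bar\Phi$ is surjective, since the generators described together with the $\tau$-block exhaust all Hom-spaces between summands. To conclude I would compare $k$-ranks: Corollary \ref{dimd} computes the rank of $\Delta^{\lambda^*}_{\sigma^*}$ from the degrees $d_M$ in $\sigma^*$, while the explicit Hom-space bases give $\dim_k \End(T)$; the flip alters only $d_{s(\vec u)}$ and $d_{t(\vec u)}$, and the resulting bookkeeping shows the two numbers coincide. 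Equal finite ranks together with surjectivity then force $\bar\Phi$ to be an isomorphism, which is the desired identification $\End_{\Kb(\proj \Delta_\sigma)}(T) \cong \Delta^{\lambda^*}_{\sigma^*}$.
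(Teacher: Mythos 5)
Your overall skeleton (idempotents indexed by $\tau \cup \{u^*\}$, a comparison map $k\uQ_{\sigma^*} \to \End_{\Kb(\proj \Delta_\sigma)}(T)$ built from the approximation data, then checking relations) matches the paper's strategy, but there is a genuine gap at the step you dismiss as easy: the $\tau$-block. You claim the identification $e_\tau \Delta^{\lambda^*}_{\sigma^*} e_\tau \cong \Delta^{\lambda^*}_\tau \cong \Delta_\tau = e_\tau \Delta_\sigma e_\tau$ is the naive one, ``with no contribution from $\lambda^*$'' because the mutated coefficients only change at $s(\vec u)$, $t(\vec u)$ and an enclosed puncture. That reasoning fails precisely in cases (F1) and (F1'): there $\vec u^+$ is a loop of $\tau$ enclosing the puncture $t(\vec u)$, and the relations of $\Delta_\tau$ genuinely involve $\lambda_{t(\vec u)}$ when $m_{t(\vec u)} \in \{1,2\}$ (special monogon and degenerate cases of Figure \ref{polnonz}), while Definition \ref{defmutcoeff} sets $\lambda^*_{t(\vec u)} = -\lambda_{t(\vec u)}$ and $\lambda^*_{s(\vec u)} = (-1)^{m_{s(\vec u)}}\lambda_{s(\vec u)}$. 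So $\Delta^{\lambda^*}_\tau$ and $\Delta^{\lambda}_\tau$ are \emph{not} identified by the identity of $k\uQ_\tau$; the paper must construct a nontrivial isomorphism $\psi: \Delta^{\lambda^*}_\tau \to \Delta_\tau$ together with a correction element $\alpha \in e_{\vec u^+}\Delta_\tau e_{\vec u^{*+}}$ (Lemmas \ref{tech1}(4) and \ref{tech3}), and this $\alpha$ enters the definition of the map $\theta_+$ and the exactness argument of Lemma \ref{exseq}, so it cannot be confined to ``bookkeeping at $u^*$.'' Worse, the construction of $\psi$ is itself a bootstrap: it invokes Corollary \ref{isoa}, which rests on case (F3) of the very derived-equivalence statement being proved (see Remark \ref{isoarem}). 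Your proposal, which front-loads all $\lambda^*$-corrections into the new relations at $u^*$ while keeping the $\tau$-block untouched, would produce a map whose restriction to $e_\tau(-)e_\tau$ does not respect the relations of $\Delta^{\lambda^*}_{\sigma^*}$ in the degenerate cases, so the descent to $\bar\Phi$ breaks before the relation checks at $u^*$ even begin.

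A secondary divergence concerns injectivity. You close by a rank count (explicit bases for all homotopy Hom-spaces, compared with Corollary \ref{dimd} for $\sigma^*$), which over the paper's general commutative ground ring $k$ requires proving that each homotopy quotient is free of the predicted rank --- essentially redoing the whole computation, since the rank of $\Delta_{\sigma^*}$ differs from that of $\Delta_\sigma$ (the flip changes the degrees $d_M$). The paper avoids this entirely: it shows the kernel of $\phi$ meets the socle trivially, using the characterization $Jx = xJ = 0 \Leftrightarrow x \in (C_\sigma)$ of Theorem \ref{basisDsig}(3) to reduce to $x = \lambda e_{u^*}C_{\sigma^*}$, and then kills $\lambda$ with the non-degenerate trace $\Er^*$ of Lemma \ref{commC2}. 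Your route is salvageable over a field, but the trace argument is both shorter and valid in the stated generality. Finally, note that Remark \ref{avoidcases} does not eliminate all special-monogon splittings of $P_u^*$: in case (F3) with $\vec u$ enclosing a special monogon the split summand survives and is handled via the idempotent $\e$, not by the reduction of that remark.
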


Proposition \ref{endo} is proven in Subsection \ref{proofendo}. We give a useful corollary which permits to get easily some isomorphisms:

\begin{corollary} \label{isoa}
 We suppose that $\Sigma$ has no boundary. Fix three partial triangulations $\sigma_1$, $\sigma_2$ and $\sigma^\circ$ of $(\Sigma, \M)$ such that:
 \begin{enumerate}[\rm (a)]
  \item $\sigma_1 \cup \sigma^\circ$ is a triangulation;
  \item $\sigma_1 \cap \sigma^\circ = \sigma_2 \cap \sigma^\circ = \emptyset$;
  \item any triangle of $\sigma_1 \cup \sigma^\circ$ that has a side in $\sigma^\circ$ has all its sides in $(\sigma_1 \cap \sigma_2) \cup \sigma^\circ$.
 \end{enumerate}
 We also consider a second family of coefficient $(\mu_M)_{M \in \M}$. 

 Then, for any isomorphism $\psi_1: \Delta^{\mu}_{\sigma_1} \to \Delta^{\lambda}_{\sigma_1}$ such that $\psi_1(e_u) = e_u$ for all $u \in \sigma_1$, there exists an isomorphism $\psi_2: \Delta^{\mu}_{\sigma_2} \to \Delta^\lambda_{\sigma_2}$ such that $\psi_2(e_u) = e_u$ for all $u \in \sigma_2$ and $\psi_2|_{\Delta^{\mu}_{\sigma_1 \cap \sigma_2}} = \psi_1|_{\Delta^{\mu}_{\sigma_1 \cap \sigma_2}}$, where $\Delta^{\mu}_{\sigma_1 \cap \sigma_2}$ is identified to subalgebras of $\Delta^{\mu}_{\sigma_1}$ and $\Delta^{\mu}_{\sigma_2}$ via Corollary \ref{subtri2}.
\end{corollary}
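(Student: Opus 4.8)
The plan is to reduce the statement to a single flip and then transport $\psi_1$ through the tilting complex of Proposition \ref{endo}. Since $\Sigma$ has no boundary, no arc is close to the boundary and Proposition \ref{endo} applies to every flip. First I would complete $\sigma_2 \cup \sigma^\circ$ (a partial triangulation, by the hypotheses) to a triangulation $\Theta'$ containing $\sigma^\circ$, and regard $\Theta := \sigma_1 \cup \sigma^\circ$ and $\Theta'$ as two triangulations that share the arcs of $\sigma^\circ$ together with those of $\sigma_1 \cap \sigma_2$. Condition (c) says exactly that every triangle of $\Theta$ incident to $\sigma^\circ$ uses only arcs of $(\sigma_1 \cap \sigma_2) \cup \sigma^\circ$, so one can pass from $\Theta$ to $\Theta'$ by a sequence of flips none of which touches $\sigma^\circ$, flipping only arcs of $\sigma_1 \setminus \sigma_2$ and leaving $\sigma_1 \cap \sigma_2$ and $\sigma^\circ$ fixed throughout (connectivity of the flip graph of the subsurface cut out along $\sigma^\circ$ and $\sigma_1 \cap \sigma_2$). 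Each intermediate partial triangulation again satisfies (a)--(c) relative to $\sigma_2$ and $\sigma^\circ$, so by induction on the number of flips it suffices to treat the case $\sigma_2 = \mu_u(\sigma_1)$ for a single arc $u \in \sigma_1 \setminus \sigma_2$ disjoint from the $\sigma^\circ$-triangles, with $\tau := \sigma_1 \setminus \{u\} \supseteq \sigma_1 \cap \sigma_2$.

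For such a single flip, the tilting complex $T = e_\tau \Delta_\sigma \oplus P_u^*$ of Proposition \ref{endo} is given by the same combinatorial recipe over $\Delta^\lambda_{\sigma_1}$ and over $\Delta^\mu_{\sigma_1}$; write $T^\lambda$, $T^\mu$ for the two complexes. Because $\psi_1$ fixes all idempotents and sends each structure arrow $\ic{\vec a, \vec b}$ to a scalar multiple of itself, transport of structure along $\psi_1$ carries the summand $P_u^*$ of $T^\mu$ isomorphically onto that of $T^\lambda$ and $e_\tau \Delta^\mu_{\sigma_1}$ onto $e_\tau \Delta^\lambda_{\sigma_1}$; hence $\psi_1$ induces a $k$-algebra isomorphism $\Psi \colon \End_{\Kb(\proj \Delta^\mu_{\sigma_1})}(T^\mu) \to \End_{\Kb(\proj \Delta^\lambda_{\sigma_1})}(T^\lambda)$ still fixing the idempotents $e_v$ ($v \in \tau$) and the idempotent onto $P_u^*$. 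By Proposition \ref{endo} these endomorphism algebras are $\Delta^{\mu_u(\mu)}_{\sigma_2}$ and $\Delta^{\mu_u(\lambda)}_{\sigma_2}$, so $\Psi$ is an idempotent-fixing isomorphism $\Delta^{\mu_u(\mu)}_{\sigma_2} \to \Delta^{\mu_u(\lambda)}_{\sigma_2}$. On the summand $e_\tau \Delta e_\tau$ the map $\Psi$ is by construction $\psi_1$ itself, and under the identification of Corollary \ref{subtri2} this gives the required $\Psi|_{\Delta_{\sigma_1 \cap \sigma_2}} = \psi_1|_{\Delta_{\sigma_1 \cap \sigma_2}}$.

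It then remains to replace the mutated coefficients by $\mu$ and $\lambda$ themselves. An idempotent-fixing isomorphism $\Delta^\nu_{\sigma_2} \to \Delta^{\nu'}_{\sigma_2}$ is the datum of invertible scalars $\{d_q\}_{q \in \uQ_{\sigma_2}}$ carrying the relations of Proposition \ref{altpres0} for $\nu$ to those for $\nu'$: writing $D_M$ for the monodromy $\prod d_q$ around a marked point $M$, the relation $C_{\vec v}$ forces $\nu'_{s(\vec v)} D_{s(\vec v)}^{m_{s(\vec v)}}/\nu_{s(\vec v)}$ to be constant along edges, and the $\RR^\circ_{\vec v}$ impose the analogous conditions read off from $f^\circ_{\vec v}$ in Figure \ref{fcircv}. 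The crucial point is that the mutation of Definition \ref{defmutcoeff} multiplies each coefficient by a factor $r_M \in \{1,-1,(-1)^{m_M}\}$ depending only on the combinatorial type of the flip, so that $\mu_u(\lambda)_M/\lambda_M = r_M = \mu_u(\mu)_M/\mu_M$ for every $M$. Therefore every equation constraining $\{d_q\}$ for the pair $(\mu_u(\mu), \mu_u(\lambda))$ coincides verbatim with the corresponding equation for $(\mu, \lambda)$, the factors $r_M$ cancelling in all the relevant ratios. Hence the very scalars $\{d_q\}$ realising $\Psi$ also realise an idempotent-fixing isomorphism $\psi_2 \colon \Delta^\mu_{\sigma_2} \to \Delta^\lambda_{\sigma_2}$, which still restricts to $\psi_1$ on $\Delta^\mu_{\sigma_1 \cap \sigma_2}$.

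The main obstacle is the coefficient reconciliation of the previous paragraph, and in particular the one degenerate configuration it excludes: when $(\Sigma, \M)$ is a sphere with four punctures and all $m_M = 1$, the mutation factor $\nu_\M^{-1}$ genuinely depends on the coefficient system, since $\nu_\M = 1 - \lambda_\M$, so $\mu_u(\lambda)_M/\lambda_M \neq \mu_u(\mu)_M/\mu_M$ and the cancellation breaks down. This case must be treated by hand, checking directly that the explicit relations of $\Delta_{\sigma_2}$ transform correctly, or absorbed by routing the flip sequence of the first paragraph so as to avoid the monogon configurations producing a nontrivial $\nu_\M$. A secondary difficulty is to verify that this flip sequence can indeed be chosen inside the subsurface determined by $\sigma^\circ$ and $\sigma_1 \cap \sigma_2$ and that conditions (a)--(c) persist along it, which is precisely where condition (c) enters in an essential way.
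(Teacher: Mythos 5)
Your overall architecture (reduce to a single flip via a flip sequence fixing $(\sigma_1 \cap \sigma_2) \cup \sigma^\circ$, then transport $\psi_1$ through the tilting complex of Proposition \ref{endo}) is the paper's, and your use of condition (c) to ensure the flipped quadrilateral lies entirely in $\sigma_1$ is correct. But there is a genuine gap, and it has two intertwined sources. First, your transport-of-structure step and your entire third paragraph rest on the premise that an idempotent-fixing isomorphism ``sends each structure arrow $\ic{\vec a, \vec b}$ to a scalar multiple of itself,'' i.e.\ is a diagonal rescaling $\{d_q\}$. This is false, and fatally so for the intended application: Corollary \ref{isoa} is invoked in the proof of Proposition \ref{revidem} precisely for the isomorphism of Lemma \ref{initcase}, which sends $\ic{\vec u, -\vec u} \mapsto \ic{\vec u, -\vec u} - \lambda_M e_u$ and $\ic{-\vec w, -\vec v} \mapsto \nu_\M^{-1}(\ic{-\vec w,-\vec v} - \lambda_M \io{\vec w, \vec v})$ --- not arrow rescalings. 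The correct justification that $\psi_1$ carries the tilting complex over $\Delta^\lambda_{\sigma_1}$ to the one over $\Delta^\mu_{\sigma_1}$ is categorical, not coordinate-wise: $T$ is characterized by $\begin{sbmatrix}\alpha_1 & \alpha_2\end{sbmatrix}$ being a minimal right $\add(e_\tau\Delta_{\sigma_1})$-approximation of $e_u\Delta_{\sigma_1}$ (a projective presentation of $X_\e$), a property preserved by any isomorphism fixing the idempotents, so $\psi_1^*(T)$ is the tilting object for $u$ over $\Delta^\mu_{\sigma_1}$ with no assumption on how $\psi_1$ acts on arrows.

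Second, your coefficient-reconciliation problem, including the sphere-with-four-punctures case you explicitly leave open (``must be treated by hand \dots or absorbed by routing the flip sequence''), is a phantom: every flip in your sequence passes between triangulations, hence is of type (F3), and for (F3) flips Definition \ref{defmutcoeff} falls into its last clause, giving $\mu_u(\lambda) = \lambda$ and $\mu_u(\mu) = \mu$ \emph{identically} (the first two clauses require a special monogon configuration or $(-\vec x)^+ = \emptyset$, which force types (F1), (F1') or (F2)). This is exactly the parenthetical observation in the paper's proof, and it is also why Remark \ref{isoarem} can assert that the proof relies only on case (F3) --- essential, since the (F1)/(F1') analysis (Lemma \ref{tech3}) itself uses Corollary \ref{isoa}, so any route through non-(F3) mutation factors $r_M$, as in your third paragraph, would risk circularity on top of resting on the false rescaling lemma. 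As written, your proposal neither closes its admitted open case nor has a sound mechanism for the cases it does treat; both defects disappear once the (F3) observation replaces the reconciliation apparatus.
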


\begin{remark} \label{isoarem}
 Proof of Corollary \ref{isoa} only relies on case (F3). This is important as we will use it for the proof of case (F1).
\end{remark}

\begin{proof}
 Let us first suppose that $\sigma_2 \cup \sigma^\circ$ is a triangulation. Then, it is a classical fact that there exists a sequence of flips from $\sigma_1 \cup \sigma^\circ$ to $\sigma_2 \cup \sigma^\circ$ which do not involve arcs in their intersection $(\sigma_1 \cap \sigma_2) \cup \sigma^\circ$. Notice also that all flips involved are of type (F3) as other flips stabilize triangulations.

 Notice that the first flip $u$ of the sequence can be applied to $\sigma_1$. Indeed, $u \notin (\sigma_1 \cap \sigma_2) \cup \sigma^\circ$, so by (c) the quadrilateral $u$ is a diagonal of has all its sides in $\sigma_1$. Moreover, the partial triangulation $\mu_u(\sigma_1)$ satisfies the same hypotheses (a), (b) and (c) as $\sigma$. Thus, we suppose that $\sigma_2 = \mu_u(\sigma_1)$, and the result is obtained by induction.

 We consider the tilting object $T$ of $\Kb(\proj \Delta^\lambda_{\sigma_1})$ defined for $u$. As $T$ is a projective presentation of $X_\e$, we get that $\psi_1^*(T)$ is the tilting object defined for $u$ in $\Kb(\proj \Delta^\mu_{\sigma_1})$. Then, using Proposition \ref{endo}, we get an isomorphism
  $$\psi_2 : \Delta^\mu_{\sigma_2} \xrightarrow{\phi_a} \End_{\Kb(\proj \Delta^\mu_{\sigma_1})} (\psi_1^*(T)) \xrightarrow{(\psi_1^*)^{-1}} \End_{\Kb(\proj \Delta^\lambda_{\sigma_1})} (T) \xrightarrow{\phi_b} \Delta^\lambda_{\sigma_2}$$
 (notice that we have $\mu_u(\lambda) = \lambda$ and $\mu_u(\mu) = \mu$ as we are in case (F3)). The fact that $\psi_2$ satisfies $\psi_2(e_u) = e_u$ for $u \in \sigma_2$ and $\psi_2|_{\Delta^\mu_{\sigma_1 \cap \sigma_2}} = \psi_1|_{\Delta^\mu_{\sigma_1 \cap \sigma_2}}$ is an immediate consequence of the construction of $\phi_a$, $\phi_b$ (see Lemma \ref{defmorphtilt}).

 Finally, if $\sigma_2 \cup \sigma^\circ$ is not a triangulation, the result is obtained by completing $\sigma_2 \cup \sigma^\circ$ to a triangulation and using Corollary \ref{subtri2}.
\end{proof}

\section{Dealing with special monogons} \label{spemon}

Let us fix $\vec u \in \sigma$ enclosing a special monogon with special puncture $M$. We have that $\ic{\vec u, -\vec u} / \lambda_M$ is an idempotent and therefore also $e_{s(\vec u)} - \ic{\vec u, -\vec u} / \lambda_M$. It leads to several observations. First of all, if $\sigma$ contains the arc $v$ joining $s(\vec u)$ to $M$, $\Gamma_\sigma$ and $\Delta_\sigma$ are not basic. In fact $\Gamma_\sigma$ and $\Gamma_{\sigma \setminus \{v\}}$ are Morita equivalent in this case. This is also the case for $\Delta_\sigma$ and $\Delta_{\sigma \setminus \{v\}}$. 

Another aspect of this remark is that, in view of Corollary \ref{subtri2}, $e \Delta_\sigma e$ can be obtained as an algebra of partial triangulation for any idempotent $e$ except if $e \cdot e_{s(\vec u)} = e_{s(\vec u)} - \ic{\vec u, -\vec u} / \lambda_M$ for a special monogon. The next proposition gives a change of basis which permit to get rid of this issue:

\begin{proposition} \label{revidem}
 We suppose that $\sigma$ does not contain the arc connecting $M$ to $s(\vec u)$. For $N \in \M$, denote 
 $$\mu_N = \left\{\begin{array}{ll} -\lambda_N & \text{if $N = M$;} \\ \nu_\M^{-1} \lambda_N & \text{if $N = s(\vec u)$;} \\ \lambda_M & \text{else.} \end{array} \right.$$
 Then there is an isomorphism $\psi: \Delta^{\mu}_{\sigma} \to \Delta^{\lambda}_\sigma$  satisfying:
 \begin{align*}
   \psi(\ic{\vec u, -\vec u}) &= \ic{\vec u, -\vec u} - \lambda_M e_u; \\
   \psi(e_v) &= e_v & \text{for any $v \in \sigma$}; \\
   \psi(\ic{\vec x, -\vec x}) & = \ic{\vec x, -\vec x} & \text{for any $\vec x \neq \vec u$ special.}
 \end{align*}
\end{proposition}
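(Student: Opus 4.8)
The plan is to write $\psi$ explicitly on the presentation $\Delta_\sigma \cong k\uQ_\sigma/(JC_\sigma + (C_{\vec v},\RR_{\vec v}))$ of Theorem \ref{altpres2}, to check that it carries the relations defining $\Delta^\mu_\sigma$ into those defining $\Delta^\lambda_\sigma$, and to produce an inverse of the same shape.

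First I would record the local picture. Write $N := s(\vec u) = t(\vec u)$ and $a := \ic{\vec u, -\vec u}$; since $\vec u$ encloses a special monogon we have $m_M = 1$, and the relation $\RR_{\vec u}$ of Theorem \ref{altpres2} reads $a^2 = \lambda_M a$. Hence $\epsilon := a/\lambda_M$ is an idempotent of $\Delta^\lambda_\sigma$ with complement $e_u - \epsilon$, and because $\mu_M = -\lambda_M$ the element $-a/\lambda_M$ is the corresponding idempotent of $\Delta^\mu_\sigma$. Setting $a' := \psi(a) = a - \lambda_M e_u$, one computes $a a' = a' a = 0$ and $(a')^2 = -\lambda_M a'$, so that $a = \lambda_M \epsilon$ and $a' = -\lambda_M (e_u - \epsilon)$: the prescription of the statement is exactly the exchange of the two summands of $e_u$, the sign being absorbed into the coefficient at $M$.

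Then I would describe $\psi$ most transparently by splitting $e_u = \epsilon \oplus (e_u - \epsilon)$ into two vertices $u_1, u_2$; the map is induced by the exchange $u_1 \leftrightarrow u_2$, followed by merging the vertices back. On the unsplit algebra this fixes every $e_v$ and every loop $\ic{\vec x, -\vec x}$ of a special monogon with $\vec x \neq \vec u$, while sending $a \mapsto a - \lambda_M e_u$, which are exactly the three properties required. To see that $\psi$ descends to an algebra map $\Delta^\mu_\sigma \to \Delta^\lambda_\sigma$, it suffices to send each generator of the relation ideal of $\Delta^\mu_\sigma$ into that of $\Delta^\lambda_\sigma$. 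Relations not meeting $N$ are untouched and the coefficients there agree, since $\mu_L = \lambda_L$ for $L \neq M, s(\vec u)$. The relation $\RR_{\vec u}$ is immediate: $\psi(a^2 - \mu_M a) = (a')^2 + \lambda_M a' = 0$. The substantive checks are (i) the relation $C_{\vec u}$, where the substitution $a \mapsto a'$ turns $(\ic{\vec u, \vec u})^{m_N} - (\ic{-\vec u, -\vec u})^{m_N}$ into an expression that must be reduced, via $C_{\vec u}$ in $\Delta^\lambda_\sigma$ together with the relation governing the outside loop $\ic{-\vec u, \vec u}$, to a relation of $\Delta^\lambda_\sigma$; and (ii) the relations $\RR_{\vec v}$ for the arcs $\vec v$ incident to $N$, whose winding terms around $N$ are reorganised by the exchange.

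The coefficient mutation of Definition \ref{defmutcoeff} is forced precisely by (i)--(ii): the sign $\mu_M = -\lambda_M$ matches $(a')^2 = -\lambda_M a'$, while $\mu_{s(\vec u)} = \nu_\M^{-1}\lambda_{s(\vec u)}$ is needed only when $\nu_\M \neq 1$, i.e. in the sphere with four marked points and all $m_L = 1$, where $\nu_\M = 1 - \lambda_\M$ enters the defining relations involving $C_\sigma$; there the exchange of idempotents rescales the relevant occurrence of $C_\sigma$, and the factor $\nu_\M^{-1}$ at $s(\vec u)$ is what restores the relation. Finally, for bijectivity I would build the inverse by running the same recipe on $\Delta^\lambda_\sigma$: one checks at the level of coefficients that $-\mu_M = \lambda_M$, that $\nu_\M$ recomputed from $\mu$ equals $\nu_\M^{-1}$, and hence $\nu_\M(\mu)^{-1}\mu_{s(\vec u)} = \lambda_{s(\vec u)}$, so that the two maps are mutually inverse on generators; alternatively one compares images of the basis $\Br$ of Theorem \ref{basisDsig}, the ranks agreeing by Corollary \ref{dimd}. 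I expect steps (i)--(ii), and especially the bookkeeping of the $\nu_\M^{-1}$ rescaling in the degenerate spherical case, to be the main obstacle, since this is the only place where the coefficient mutation is genuinely non-trivial and where the verification is least formal.
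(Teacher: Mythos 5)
There is a genuine gap, and it sits exactly where you locate the ``main obstacle''. Your candidate map modifies only the loop $a = \ic{\vec u, -\vec u}$ and is described as ``the exchange $u_1 \leftrightarrow u_2$ followed by merging'', with all other generators fixed; but such a map is not well defined in general, and the paper's actual isomorphism is not of this shape. In the paper's explicit construction (proof of Lemma \ref{initcase}) a \emph{second} arrow must be corrected, namely $\ic{-\vec w, -\vec v} \mapsto \nu_\M^{-1}\bigl(\ic{-\vec w, -\vec v} - \lambda_M \io{\vec w, \vec v}\bigr)$, and the verification is then a long chain of identities ((a)--(n) in Subsection \ref{proofinitcase}). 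The reason the naive substitution fails is that every external path $\io{\vec x, \vec y}$ winding around $s(\vec u)$ traverses $a$, so replacing $a$ by $a - \lambda_M e_u$ inserts ``shortcut'' terms of the form $-\lambda_M\, \ic{\cdots, \vec u}\,\ic{-\vec u, \cdots}$ into the relations $\RR_{\vec v}$ of \emph{every} polygon incident to $s(\vec u)$; these concatenations at the vertex $u$ are in general nonzero modulo the relation ideal (compare Lemma \ref{tech2} and the basis of Theorem \ref{basisDsig}), so they must be cancelled by compensating corrections on neighbouring arrows. Concretely, for the triangle relation $\ic{\vec v, \vec u}\ic{-\vec u, \vec w} = \io{-\vec v, -\vec w}$ of Lemma \ref{initcase}, your map fixes the left side while changing the right side, so it does not descend. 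Your own well-definedness check (i) already shows the strain: for $m_{s(\vec u)} \geq 2$ it requires identities such as $a\beta^2 = \beta^2 a$ (with $\beta = \ic{-\vec u, \vec u}$) which do not follow from $C_{\vec u}$ and $a^2 = \lambda_M a$.

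The second, structural gap is that even with the corrected formulas the paper never carries out your step (ii) for an arbitrary $\sigma$: the direct generators-and-relations verification is done \emph{only} in the rigid configuration of Lemma \ref{initcase}, where $\sigma$ contains a specific puncture-free triangle of loops $-\vec u$, $\vec v$, $-\vec w$ at $s(\vec u)$ (this configuration is what makes the correction term $\lambda_M \io{\vec w, \vec v}$ available and the computation closable). The general case is then obtained by a completely different mechanism: one chooses $\sigma_1$ containing all special monogons and satisfying the hypothesis of Lemma \ref{initcase}, and transports the isomorphism to $\sigma$ via Corollary \ref{isoa}, i.e.\ along a sequence of flips of type (F3) using the tilting complexes of Section \ref{secflip} and Proposition \ref{endo} (with the dependency carefully restricted to case (F3) to avoid circularity, cf.\ Remark \ref{isoarem}). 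Your plan to check $\RR_{\vec v}$ for all arcs incident to $s(\vec u)$ in general position has no analogue in the paper and would have to treat all shapes of minimal polygons (Cases a--f of Figure \ref{polnonz}), self-gluings, and the $\nu_\M \neq 1$ sphere simultaneously; nothing in the proposal indicates how. Two smaller remarks: your bookkeeping for the inverse ($-\mu_M = \lambda_M$ and $\nu_\M$ recomputed from $\mu$ equal to $\nu_\M^{-1}$) does agree with the end of the paper's proof of Lemma \ref{initcase}; but the fallback ``compare images of $\Br$, ranks agree by Corollary \ref{dimd}'' does not yield bijectivity over a general commutative ring $k$ without first establishing surjectivity, so the explicit two-sided inverse is not optional.
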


Before proving Proposition \ref{revidem}, we state the following corollary:

\begin{corollary} \label{redidemp}
 For any idempotent $e$ of $\Delta_\sigma$, $e \Delta_\sigma e$ is Morita equivalent to an algebra of partial triangulation of $(\Sigma, \M)$.
\end{corollary}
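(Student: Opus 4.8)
The plan is to reduce an arbitrary idempotent to one assembled from the standard primitive idempotents of $\Delta_\sigma$, then to recognise the corner as $\Delta_\tau$, using Corollary \ref{subtri2} for the generic situation and Proposition \ref{revidem} to remove the single obstruction coming from special monogons. The reduction rests on the principle that the Morita class of a corner $e\Delta_\sigma e\cong\End_{\Delta_\sigma}(e\Delta_\sigma)$ depends only on the additive closure $\add(e\Delta_\sigma)$, since endomorphism rings of two additive generators of the same additive subcategory are Morita equivalent. As $\Delta_\sigma$ is $k$-free of finite rank (Corollary \ref{dimd}), $e\Delta_\sigma$ is a direct summand of $\Delta_\sigma=\bigoplus_u e_u\Delta_\sigma$, so its indecomposable summands lie among those of the $e_u\Delta_\sigma$. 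For an arc $u$ that is not the enclosing side of a special monogon, $e_u\Delta_\sigma$ is indecomposable; for a special monogon with enclosing side $\vec u$ and special puncture $M$, the element $p_u:=\ic{\vec u,-\vec u}/\lambda_M$ is idempotent and $e_u=p_u+q_u$ with $q_u:=e_u-p_u$ splits $e_u\Delta_\sigma$ into two indecomposables. It therefore suffices to treat a \emph{standard} idempotent, a sum of distinct primitives chosen among the $e_v$ (for $v$ not special) and, at each special monogon, among $p_u$, $q_u$ and $e_u$.

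Suppose first that at every special monogon the idempotent uses only $p_u$, $e_u$ or $0$, never $q_u$ alone. Adjoining to $\sigma$ the arc $w_u$ joining $s(\vec u)$ to the special puncture $M$ gives a Morita equivalence $\Delta_\sigma\sim\Delta_{\sigma\cup\{w_u\}}$ (the discussion opening this section), under which $p_u$ is identified with $e_{w_u}$ and $q_u$ with $e_u$, since in $\Delta_{\sigma\cup\{w_u\}}$ the path $\ic{\vec u,-\vec u}$ factors through $w_u$. After these adjunctions the idempotent becomes $e_\tau$ for a subset $\tau$ of the edges of the enlarged partial triangulation, and Corollary \ref{subtri2} identifies $e_\tau\Delta_{\sigma\cup\{w\}}e_\tau$ with $\Delta_\tau$, an algebra of a partial triangulation.

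It remains to remove the obstruction of an idempotent using $q_u=e_{s(\vec u)}-\ic{\vec u,-\vec u}/\lambda_M$ alone, exactly the exceptional case flagged before Proposition \ref{revidem}. Here I would apply that proposition: writing $\mu$ for the modified coefficients, $\ic{\vec u,-\vec u}/\mu_M$ is a good-form idempotent of $\Delta^\mu_\sigma$, and since $\mu_M=-\lambda_M$ and $\psi(\ic{\vec u,-\vec u})=\ic{\vec u,-\vec u}-\lambda_M e_u$, the isomorphism $\psi\colon\Delta^\mu_\sigma\to\Delta^\lambda_\sigma$ sends it precisely to $q_u$. Thus $q_u\Delta^\lambda_\sigma q_u$ is isomorphic to the corner of $\Delta^\mu_\sigma$ at the good-form half $\ic{\vec u,-\vec u}/\mu_M$. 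Applying this at each special monogon where $q_u$ occurs alone, I would transport the given idempotent to an idempotent $e'$ of a single twisted algebra $\Delta^\mu_\sigma$ with $e'\Delta^\mu_\sigma e'\cong e\Delta^\lambda_\sigma e$ and $e'$ using only good-form halves; the preceding paragraph then applies to $e'$ and concludes.

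The conceptual content lies entirely in Corollary \ref{subtri2} and Proposition \ref{revidem}, so I expect the main obstacle to be organisational: applying Proposition \ref{revidem} simultaneously at several special monogons. One must check that the coefficient modifications $\lambda\mapsto\mu$ prescribed at different monogons do not clash --- distinct special monogons share no arc, but may share a base point, so the changes at the various $s(\vec u)$ and enclosed punctures must be verified to act on independent data and to compose into one coefficient system --- and that the added-arc Morita equivalence genuinely matches $p_u$ with $e_{w_u}$ rather than with $e_u$.
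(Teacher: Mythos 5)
Your proposal is correct and takes essentially the same route as the paper's own (very terse) proof: decompose the idempotent into standard primitives, realise the halves $\ic{\vec u, -\vec u}/\lambda_M$ by adjoining the arc to the special puncture and applying Corollary~\ref{subtri2}, and remove the remaining obstruction $e_u - \ic{\vec u, -\vec u}/\lambda_M$ with Proposition~\ref{revidem}. The bookkeeping you flag at the end is all that is missing and it is harmless: Proposition~\ref{revidem} requires the arc from $s(\vec u)$ to the special puncture to be absent, which you arrange by first discarding it via the preliminary Morita equivalence of Section~\ref{spemon} (this is exactly the paper's opening reduction), and simultaneous application at several monogons composes without clash because $\psi$ fixes $\ic{\vec x, -\vec x}$ for every other special $\vec x$ while the new $\nu_\M$ equals $\nu_\M^{-1}$, hence stays invertible; your side remark identifying $q_u$ with $e_u$ after adjoining the arc is inaccurate ($q_u$ corresponds to a genuinely new indecomposable, the vertex $u^{\bowtie}$ of the basic algebra), but that identification is never used in your argument.
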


\begin{proof}
 Using the preliminary remark, we can suppose that $\sigma$ does not contain any arc incident to a special puncture up to Morita equivalence. Then, Proposition \ref{revidem} permits to exchange the role of idempotents $\ic{\vec u, -\vec u} / \lambda_M$ and $e_{s(\vec u)} - \ic{\vec u, -\vec u} / \lambda_M$ for each special monogon to make sure that, in any case, $e \cdot e_{s(\vec u)} \neq e_{s(\vec u)} - \ic{\vec u, -\vec u} / \lambda_M$. Thus, Corollary \ref{subtri2} permits to conclude.
\end{proof}

\begin{remark}
 Proposition \ref{revidem} corresponds, using the vocabulary of tagged triangulation \cite{CeLa12}, to changing the tags at $M$.
\end{remark}

The proof of Proposition \ref{revidem} relies on this key lemma:

\begin{lemma} \label{initcase}
 Suppose that $\sigma$ contains a triangle without puncture or hole, with sides $-\vec u$, $\vec v$ and $-\vec w$ such that $s(\vec v) = t(\vec v)$:
 $$\xymatrix@L=.05cm@R=.75cm@C=.75cm@M=0.01cm{
    & \bullet M & \\ \\
    & \ar@`{p+(10,15),p+(0,30),p+(-10,15)}[]_{\vec u} \ar@`{p+(15,0),p+(25,-15),p+(5,-14)}[]^{\vec v} \ar@`{p+(-15,0),p+(-25,-15),p+(-5,-14)}[]_{\vec w} & \\
    \dots & & \dots
 }$$

 Then Proposition \ref{revidem} holds for $\sigma$.
\end{lemma}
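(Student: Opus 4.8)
The plan is to prove the lemma by a direct computation, exhibiting $\psi$ explicitly on the presentation of Theorem \ref{altpres2} (equivalently Proposition \ref{altpres0}) and checking that it is a well-defined isomorphism. The crucial structural observation is that $\Delta_\sigma^\lambda$ and $\Delta_\sigma^\mu$ share the \emph{same} quiver $\uQ_\sigma$, so $\psi$ will be induced by a map on $k\uQ_\sigma$ carrying the $\mu$-relations into the $\lambda$-relations; invertibility will then follow from the fact that the coefficient mutation $\lambda \mapsto \mu$ of Proposition \ref{revidem} is an involution (applying it twice restores $\lambda$, since $-\mu_M = \lambda_M$ and, using $\nu_\M = 1-\lambda_\M$ in the relevant case, $(\nu_\M^\mu)^{-1}\mu_{s(\vec u)} = \lambda_{s(\vec u)}$).

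First I would record the local combinatorics. Writing $V := s(\vec u) = t(\vec u) = s(\vec v) = t(\vec v) = s(\vec w) = t(\vec w)$, the three arcs are loops at $V$, and reading off the cyclic order around $V$ produces the arrows $a := \ic{\vec u, -\vec u}$ (the monogon loop around $M$), together with $\ic{\vec v, \vec u}$, $\ic{-\vec u, \vec w}$, $\ic{\vec w, -\vec w}$, $\ic{-\vec w, -\vec v}$, $\ic{-\vec v, \vec v}$. Since $m_M = 1$, the special-monogon relation $\RR_{\vec u}$ reads $a^2 = \lambda_M a$ in $\Delta_\sigma^\lambda$ (resp. $a^2 = \mu_M a = -\lambda_M a$ in $\Delta_\sigma^\mu$), so $p := a/\lambda_M$ and $q := e_u - p$ are complementary idempotents at $u$; the prescribed value $\psi(a) = a - \lambda_M e_u = -\lambda_M q$ is precisely the swap $p \leftrightarrow q$. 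The remaining data to track are the triangle relations $\RR_{-\vec u}$ of Case a of Figure \ref{polnonz}, whose external term $\io{-\vec v, -\vec w}$ necessarily passes through $a$, and the relation $C_{\vec u}$.

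Next I would define $\psi$ on all arrows: the identity away from $u$, the prescribed value on $a$, and on the two arrows $\ic{\vec v, \vec u}$, $\ic{-\vec u, \vec w}$ incident to $u$ a correction (possibly trivial) chosen to absorb the rescaling $\lambda_{V} \mapsto \nu_\M^{-1}\lambda_V$. I would then check that $\psi$ sends every defining relation of $\Delta_\sigma^\mu$ to $0$ in $\Delta_\sigma^\lambda$. The monogon relation is immediate: $\psi(a)^2 = (a-\lambda_M e_u)^2 = -\lambda_M(a - \lambda_M e_u) = \mu_M\psi(a)$, using $a^2 = \lambda_M a$. Relations not meeting $u$ are fixed. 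For the triangle relation one substitutes $\psi(a) = a - \lambda_M e_u$ into $\io{-\vec v, -\vec w}$ and compares with the internal term $\ic{\vec v, \vec u}\ic{-\vec u, \vec w}$; because substituting $e_u$ for $a$ deletes $a$ and shortens the external path back to (a multiple of) the internal path, one obtains a correction term to be controlled. Bijectivity I would deduce from the involutivity above by constructing the analogous map $\Delta_\sigma^\lambda \to \Delta_\sigma^\mu$ and checking the two are mutually inverse on the basis $\Br$ of Theorem \ref{basisDsig}.

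The hard part will be the triangle computation in the degenerate situation where the surface is so small that $C_\sigma$ is forced to interact with the monogon --- precisely the sphere with four marked points and all $m_M = 1$, where $\nu_\M = 1 - \lambda_\M \neq 1$. There the external path $\io{-\vec v, -\vec w}$ wraps exactly once around the whole surface, so after the substitution $a \mapsto a - \lambda_M e_u$ the deleted-$a$ summand is not killed by length but equals a nonzero multiple of $C_\sigma$ proportional to $\lambda_\M$; the content of the lemma is that rescaling the coefficient at $V$ by $\nu_\M^{-1} = (1-\lambda_\M)^{-1}$ is exactly what reabsorbs this term and restores the triangle relation. In every other configuration $\nu_\M = 1$, the offending summand is too long and vanishes in $\Delta_\sigma$, and the verification reduces to a routine unwinding of the monogon and triangle relations.
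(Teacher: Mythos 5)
Your global strategy does coincide with the paper's: an explicit map on the shared quiver $k\uQ_\sigma$, fixed away from the configuration, carrying the $\mu$-relations into the $\lambda$-relations, with invertibility deduced from the involutivity of the coefficient mutation ($\mu_M=-\lambda_M$, $\nu'_\M=\nu_\M^{-1}$); and you correctly isolate the sphere with four punctures and all multiplicities $1$ as the only locus where $\nu_\M\neq 1$. But there is a genuine gap at the heart of your plan: you claim that whenever $\nu_\M=1$ the corrections on arrows other than $a=\ic{\vec u,-\vec u}$ can be trivial, because the deleted-$a$ summands are ``too long and vanish''. That is false. Take the triangle relation at a corner \emph{adjacent} to the monogon, say $\ic{-\vec u,\vec w}\,\ic{-\vec w,-\vec v}=\io{\vec u,\vec v}$. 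Substituting $a\mapsto a-\lambda_M e_u$ into the external side produces the summand $-\lambda_M\,\io{-\vec u,\vec v}=-\lambda_M\lambda_{s(\vec u)}\ic{-\vec u,-\vec u}^{m_{s(\vec u)}-1}\ic{-\vec u,\vec v}$, which by Theorem \ref{basisDsig} is a unit multiple of a basis element (the $\ell=m_{s(\vec u)}-1$ element with $\vec x=-\vec u\neq\vec v$), hence nonzero in \emph{every} configuration, independently of $\nu_\M$. So ``identity except on $a$'' is never a homomorphism, and a nontrivial correction is forced in all cases; the paper takes $\ic{-\vec w,-\vec v}'=\nu_\M^{-1}\bigl(\ic{-\vec w,-\vec v}-\lambda_M\io{\vec w,\vec v}\bigr)$, chosen precisely so that $\ic{-\vec u,\vec w}\,\ic{-\vec w,-\vec v}'$ regenerates the missing term. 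Your ``too long'' heuristic is valid only at the \emph{opposite} corner, $\ic{\vec v,\vec u}\ic{-\vec u,\vec w}=\io{-\vec v,-\vec w}$, where the deletion terms reduce via the relations and $C_\sigma J=0$ to socle terms that die except when $-\vec v$ and $\vec w$ enclose special monogons and $m_{s(\vec u)}=1$, the product of the corresponding scalars being $1-\nu_\M$; note also that the survivor there is a multiple of the external path $\io{-\vec v,-\vec w}$, not of $C_\sigma$ as you state.

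A second gap is that your relation checklist (monogon, triangle, $C_{\vec u}$) is too short, and your remark that ``relations not meeting $u$ are fixed'' is wrong in the relevant sense: the cycle relations $C_{\vec x}$ for \emph{every} oriented edge $\vec x$ based at the common vertex, and the relations of every minimal polygon enclosed by $-\vec v$ or $\vec w$ with a corner there, all have coefficiented external paths winding fully around $s(\vec u)$, hence through $a$ and through whichever arrow you correct. Verifying that the corrected map preserves them is the bulk of the paper's proof (the identities $\io{\vec v,\vec x}'=\io{\vec v,\vec x}$, $\io{\vec x,-\vec v}'=\io{\vec x,-\vec v}$, their analogues for $\vec w$, and $\lambda'_{s(\vec x)}\ic{\vec x,\vec x}'^{\,m_{s(\vec x)}}=e_xC_\sigma$, which is what actually absorbs the rescaling $\nu_\M^{-1}\lambda_{s(\vec u)}$). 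Your alternative of correcting the two arrows $\ic{\vec v,\vec u}$, $\ic{-\vec u,\vec w}$ instead of $\ic{-\vec w,-\vec v}$ is not obviously fatal, since only the net effect around the full cycle matters, but as proposed --- with trivial corrections whenever $\nu_\M=1$ and with the polygon-and-cycle verifications omitted --- the map fails already on the triangle relations.
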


Lemma \ref{initcase} is proven in Subsection \ref{proofinitcase}. We deduce Proposition \ref{revidem}:

\begin{proof}[Proof of Proposition \ref{revidem}]
 Thanks to Definition \ref{defsp} and easy observations, we can suppose without loss of generality that $\Sigma$ has no boundary. The strategy is to use Corollary \ref{isoa}. Let $\sigma_2 = \sigma$ and take $\sigma^\circ = \{\vec t\}$ where $\vec t$ is the special arc pointing at $M$. Then we take a partial triangulation $\sigma_1$ which contains all special monogons of $\sigma$, which does not contain $\vec t$, which satisfies the hypothesis of Lemma \ref{initcase} and which is maximal for these properties (it is an easy observation that taking all special monogons of $\sigma$ does not prevent to complete the partial triangulation as in Lemma \ref{initcase}). According to Lemma \ref{initcase}, there is an isomorphism $\psi_1: \Delta_{\sigma_1}^\mu \to \Delta_{\sigma_1}^\lambda$ satisfying
 \begin{align*}
   \psi_1(\ic{\vec u, -\vec u}) &= \ic{\vec u, -\vec u} - \lambda_M e_u; \\
   \psi_1(e_v) &= e_v & \text{for any $v \in \sigma$}; \\
   \psi_1(\ic{\vec x, -\vec x}) & = \ic{\vec x, -\vec x} & \text{for any $\vec x \neq \vec u$ special.}
 \end{align*}
 Then we can apply Corollary \ref{isoa} to get an isomorphism $\psi_2: \Delta_{\sigma_2}^\mu \to \Delta_{\sigma_2}^\lambda$ satisfying the same conclusions.
\end{proof}

\section{Proofs}

\subsection{Proof of Proposition \ref{Csig}} \label{proofCsig}
 (1) The element $C_\sigma \in \Gamma^\circ_\sigma$ clearly does not depend on the choice of the orientations by definition of $I_\sigma^\circ$.

 (2) Let two oriented edges $\vec u$ and $\vec v$ of $\sigma$ such that $s(\vec u) = s(\vec v)$. We get
 $$\ic{\vec u, \vec v} C_\sigma = \ic{\vec u, \vec v} \cdot \io{\vec v, \vec u} \cdot \ic{\vec u, \vec v} = C_\sigma \ic{\vec u, \vec v} $$
 and, as $C_\sigma$ clearly commutes with idempotents of $Q_\sigma$, $C_\sigma$ is in the centre of $\Gamma^\circ_\sigma$. Moreover, we can grade $k Q_\sigma$ by letting the degree of an arrow $\ic{\vec u, \vec v}$ be $1/ d_{s(\vec u)} m_{s(\vec u)}$ where $d_{s(\vec u)}$ is the number of oriented edges of $\sigma$ starting at $s(\vec u)$. Thus, it is clear that all the $C_{\vec u}$ have degree $1$ and $\Gamma^\circ_\sigma$ is graded. As the degree of $C_\sigma^\ell$ is $\ell$, the $C_\sigma^\ell$'s are linearly independent in $\Gamma^\circ_\sigma$ so $k[C_\sigma]$ is included in the centre of $\Gamma^\circ_\sigma$. 

 (3) Let us prove that for a path $\alpha$ and two scalar multiple of paths $\omega$ and $\omega'$, if $\alpha \omega = \alpha \omega'$ in $\Gamma^\circ_\sigma$ then $\omega = \omega'$ in $\Gamma^\circ_\sigma$. We denote by $\ell$ the minimal number of relations $C_{\vec w}$ to apply to relate $\alpha \omega$ and $\alpha \omega'$ and by $n$ the length of $\alpha$. We will do an induction on $(\ell, n)$ and we will also prove that we need at most $\ell$ relations to go from $\omega$ to $\omega'$. If $\ell = 1$, the result is immediate. Suppose that $\ell > 1$.

 Suppose that $n = 1$, \emph{i.e.} $\alpha = \ic{\vec u, \vec v}$ is an arrow of $Q_\sigma$. Fix a sequence of arrows 
 $\alpha_0 = \alpha$, $\alpha_1$, \dots, $\alpha_{\ell-1}$, $\alpha_\ell = \alpha$ and a sequence of scalar multiples of paths $\omega_0 = \omega$, \dots, $\omega_{\ell-1}$, $\omega_\ell = \omega'$ such that $\alpha_i \omega_i$ is related with $\alpha_{i-1} \omega_{i-1}$ in one step (\emph{i.e.} by one $C_{\vec w}$). If $\alpha_i = \alpha$ for some $i \neq 0, \ell$, applying the induction hypothesis is immediate. So we can suppose that $\alpha_1 = \alpha_2 = \dots = \alpha_{\ell-1} = \ic{-\vec u, \vec v'}$ where $\ic{-\vec u, \vec v'}$ is the only other arrow starting at $u$. The first and last relations applied have to be $C_{\vec u}$, so one must have $$\omega_1 = \io{\vec v', -\vec u} \omega_1' \quad \text{and} \quad \omega_{\ell - 1} = \io{\vec v', -\vec u} \omega_{\ell-1}'.$$ These two scalar multiples of paths are equal in $\Gamma^\circ_\sigma$ in $\ell-2$ steps so, by induction hypothesis, $\omega_1' = \omega_{\ell-1}'$ in $\Gamma^\circ_\sigma$. We have $\omega = \io{\vec v, \vec u} \omega_1'$ and $\omega' =  \io{\vec v, \vec u} \omega_{\ell-1}'$ so the result is true in this case. 

 Suppose now that $\alpha = \alpha_0 \alpha'$ where $\alpha_0$ has length $1$ and $\alpha'$ has length $n-1$. By induction hypothesis, as $\alpha_0 \alpha' \omega$ and $\alpha_0 \alpha' \omega'$ are equal in $\Gso$, then $\alpha' \omega$ and $\alpha' \omega'$ are equal. Applying once again the induction hypothesis, we get $\omega' = \omega$ in $\Gso$.

 According to (2) and by definition of $C$-irreducible paths, it is immediate that elements $C_\sigma^\ell \omega$ for $\ell \geq 0$ and $\omega$ a $C$-irreducible path generates $\Gamma_\sigma^\circ$ over $k$. Thus, it is enough to prove that they are linearly independent over $k$. 

 Let $E$ be the set of paths of $Q_\sigma$. Let $\sim$ be the smallest equivalence relation on $E$ such that $\omega_1 \ic{\vec u, \vec u}^{m_{s(\vec u)}} \omega_2 \sim \omega_1 \ic{-\vec u, -\vec u}^{m_{t(\vec u)}} \omega_2$ for any $\omega_1, \omega_2 \in E$ and any $\vec u \in \sigma$ such that $\omega_1 \ic{\vec u, \vec u}^{m_{s(\vec u)}} \omega_2 \neq 0$. Suppose that $C_\sigma^\ell \omega = \lambda C_\sigma^{\ell'} \omega'$ for $\ell \leq \ell'$ and $\omega$, $\omega'$ two $C$-irreducible paths and $\lambda \in k$. Then, according to the previous discussion, we have $\omega = \lambda C_\sigma^{\ell' - \ell} \omega'$. As $\omega$ does not appear in any relation of $\Gamma^\circ_\sigma$, we get that $\ell' = \ell$, $\lambda = 1$ and $\omega = \omega'$. Thus, there is at most one multiple of a $C_\sigma^\ell \omega$ in each equivalence class of $E$. As relations relate only multiple of paths in the same equivalence class, it implies that the $C_\sigma^\ell \omega$'s are linearly independent over $k$. \qed

\subsection{Proof of Proposition \ref{caracpoly}} \label{proofcaracpoly}

We need the following technical preliminaries to be able to construct easily polygons.

\begin{definition}
 Let us define
 \begin{align*} & \Delta^\circ := \{(x,y) \in \R^2 \,|\, x, y > 0 \text{ and } x+y \leq 1\} \\ \subset \,\,& \Delta := \{(x,y) \in \R^2 \,|\, x, y \geq 0 \text{ and } x+y \leq 1\}.\end{align*}
 We call \emph{angle} of $\sigma$ a continuous map $\hat \phi: \Delta \rightarrow \Sigma$ such that  
 \begin{itemize}
  \item $\hat \phi$ is injective and oriented on $\Delta^\circ$;
  \item $\hat \phi|_{[0,1] \times \{0\}}$ is an oriented edge called \emph{first side} of $\hat \phi$;
  \item $\hat \phi|_{\{0\} \times [0,1]}$ is an oriented edge called \emph{second side} of $\hat \phi$.
 \end{itemize} 
 The \emph{interior of $\hat \phi$} is the image of $\Delta^\circ$.

 We define now 
 $$\Delta^{(1)} := \{(x,y) \in \R^2 \,|\, y \geq 0 \text{ and } x \geq 2 y \text{ and } x+y \leq 1\} \subset \Delta$$
 $$\Delta^{(2)} := \{(x,y) \in \R^2 \,|\, x \geq 0 \text{ and } y \geq 2 x \text{ and } x+y \leq 1\} \subset \Delta.$$
 We say that two angles $\hat \alpha$ and $\hat \beta$ such that the second side of $\hat \alpha$ is opposite to the first side of $\hat \beta$ are \emph{compatible} if  
 \begin{itemize}
  \item $\hat \beta^{-1}(\hat \alpha(\Delta)) \setminus (\{0\} \times (0,1] ) = \Delta^{(1)}$;
  \item $\hat \alpha^{-1}(\hat \beta(\Delta)) \setminus ((0,1] \times \{0\}) = \Delta^{(2)}$;
  \item $\hat \beta|_{\Delta^{(1)}} = \hat \alpha|_{\Delta^{(2)}} \circ \psi$ where $\psi: \Delta^{(1)} \rightarrow \Delta^{(2)}$ is the oriented homeomorphism defined by $\phi(x, y) = (y, 1-x+y)$.
 \end{itemize}
\end{definition}

We will prove the following more precise version of Proposition \ref{caracpoly}:

\begin{lemma} 
 We consider a sequence of oriented edges $\vec u_1$, $\vec u_2$, \dots, $\vec u_n$, with indices considered modulo $n$, such that $t(\vec u_i) = s(\vec u_{i+1})$ for $i = 1 \dots n$. The following conditions are equivalent:
 \begin{enumerate}[\rm (i)]
  \item \label{lcaracpoly1} There is a $n$-gon having oriented sides $\vec u_1$, $\vec u_2$, \dots, $\vec u_n$ in this order.
  \item \label{lcaracpoly2} There exist a sequence $\hat \alpha_1$, $\hat \alpha_2$, \dots, $\hat\alpha_n$ of angles of $\sigma$ such that
   \begin{itemize}
    \item the first side of $\hat \alpha_i$ is $\vec u_{i+1}$ and its second side is $-\vec u_i$;
    \item if $j \neq i-1, i, i+1$ then the interiors of $\hat \alpha_i$ and $\hat \alpha_j$ do not intersect;
    \item $\hat \alpha_{i+1}$ and $\hat \alpha_i$ are compatible for any $i$.
   \end{itemize}
  \item \label{lcaracpoly3} The following conditions are satisfied:
    \begin{itemize}
     \item if $i \neq j$ then $\vec u_i \neq \vec u_j$;
     \item for each $i$, if $\vec u_i$ is a boundary component, then it is oriented clockwisely around the boundary;
     \item for any $i$ and $j$ such that $M := s(\vec u_i) = s(\vec u_j)$, we have that $-\vec u_{i-1}$, $\vec u_i$, $-\vec u_{j-1}$ and $\vec u_j$ are ordered clockwisely around $M$;
     \item for any oriented boundary component $\vec v$ and $i$ such that $M := s(\vec u_i) = s(\vec v)$, we have that $-\vec u_{i-1}$, $\vec u_i$ and $\vec v$ are ordered clockwisely around $M$.
    \end{itemize}
 \end{enumerate}
\end{lemma}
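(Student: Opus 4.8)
The plan is to prove the two equivalences (i)$\Leftrightarrow$(ii) and (ii)$\Leftrightarrow$(iii), using condition (ii) — the existence of a compatible family of angles — as the geometric bridge between the honest polygon of (i) and the purely combinatorial data of (iii). The guiding idea is that the angle $\hat\alpha_i$ is meant to be the \emph{corner of the polygon at the vertex} $P_i = t(\vec u_i) = s(\vec u_{i+1})$: its first side is the outgoing edge $\vec u_{i+1}$, its second side is the incoming edge $-\vec u_i$, and its hypotenuse is a chord cutting off the corner along a path from $t(\vec u_{i+1})$ to $s(\vec u_i)$. With this reading, two consecutive angles $\hat\alpha_{i+1}$ and $\hat\alpha_i$ share the polygon side $u_{i+1}$ (the second side $-\vec u_{i+1}$ of $\hat\alpha_{i+1}$ being opposite to the first side $\vec u_{i+1}$ of $\hat\alpha_i$), and the compatibility condition records exactly how these two corner triangles overlap in the wedge-shaped regions $\Delta^{(1)}$ and $\Delta^{(2)}$ adjacent to the shared side.

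For (i)$\Rightarrow$(ii), I would start from an $n$-gon $P\colon \Pr(n)\to\Sigma$ (or $\Pr^\circ(n)\to\Sigma$) and, for each vertex $i$ of the model polygon, restrict $P$ to a corner triangle with apex at $i$ and hypotenuse a chord joining the two neighbouring sides; reparametrising this triangle by $\Delta$ produces an angle $\hat\alpha_i$. The axioms of an angle (injective and oriented on $\Delta^\circ$, sides mapped to oriented edges) are inherited directly from the corresponding properties of $P$. Disjointness of interiors for $j\neq i-1,i,i+1$ follows from injectivity of $P$ on the interior of the model polygon together with disjointness of the corresponding model corner triangles, while compatibility of $\hat\alpha_{i+1}$ and $\hat\alpha_i$ is a local statement near the shared side $u_{i+1}$, where both restrictions coincide with $P$, so the homeomorphism $\psi$ is just the identification of the two descriptions of this overlap induced by $P$.

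The combinatorial translation (ii)$\Leftrightarrow$(iii) is entirely local at each marked point $M$. An angle $\hat\alpha_{i-1}$ incident to $M$ (so $s(\vec u_i)=M$) occupies a definite wedge at $M$, and its being oriented forces that wedge to be swept clockwise from $-\vec u_{i-1}$ to $\vec u_i$. The disjointness hypothesis for two such wedges at the same $M$, coming from indices $i$ and $j$, then says precisely that $-\vec u_{i-1}$, $\vec u_i$, $-\vec u_{j-1}$, $\vec u_j$ are ordered clockwise around $M$, which is the third bullet of (iii); the boundary-edge bullet is the same statement with one wedge replaced by a boundary edge, and the orientation requirement on boundary sides records that the corner lies on the interior side of $\Sigma$. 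Conversely, to get (iii)$\Rightarrow$(ii) I would build each $\hat\alpha_i$ as an embedded triangle filling the prescribed clockwise wedge between $\vec u_{i+1}$ and $-\vec u_i$ at $P_i$, chosen small enough that non-adjacent interiors remain disjoint; compatibility of consecutive angles is then arranged by shrinking them near the shared edge, and the first two bullets of (iii) guarantee that the sides are genuinely distinct oriented edges with the correct boundary orientation.

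The remaining and most delicate step is (ii)$\Rightarrow$(i), the \emph{reconstruction of the polygon by gluing}. Here I would glue the $n$ model triangles cyclically, identifying the overlap region $\Delta^{(1)}\subset\hat\alpha_i$ with $\Delta^{(2)}\subset\hat\alpha_{i+1}$ along $\psi$; compatibility makes the resulting quotient a well-defined surface-with-boundary $X$ carrying a continuous map to $\Sigma$ that restricts to each $\hat\alpha_i$, and the disjointness conditions make this map injective on the interior of $X$. The crucial topological point is to identify $X$: gluing $n$ triangles in a cyclic fan around the boundary cycle $\vec u_1\cdots\vec u_n$ yields either a disc, in which case $X\cong\Pr(n)$ and one gets an $n$-gon without hole, or an annulus, in which case $X\cong\Pr^\circ(n)$ and one gets an $n$-gon with hole. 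I expect this dichotomy — and, in the annular case, the verification that the hole cannot be filled (that $P$ does not extend across it to a map on $\Pr(n)$) — to be the main obstacle, since it requires tracking how the total turning of the glued corners determines the global topology of $X$ and using that $\sigma$ is a partial triangulation of $\Sigma$, so that an extension over the hole would force a forbidden homotopy or a self-crossing. Once $X$ is identified, the induced map is the desired polygon, closing the chain of equivalences.
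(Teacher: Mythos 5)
Your overall route coincides with the paper's: (i)$\Rightarrow$(ii) by restricting the polygon to corner angles of the model polygon, (ii)$\Leftrightarrow$(iii) by the local clockwise-wedge reading at each marked point, and (ii)$\Rightarrow$(i) by reassembling the angles. Your sketch of (iii)$\Rightarrow$(ii) is exactly the paper's construction in outline; the paper makes it precise with collar maps $f_i$ defined on $\Delta^{(1)}$ along each $\vec u_i$, connecting arcs $\gamma_i$ chosen disjoint using the clockwise-ordering hypotheses, and the fact that a homeomorphism of the circle extends to the disc.

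The one place where your plan misdiagnoses the difficulty is the endgame of (ii)$\Rightarrow$(i). Since $\Delta^{(1)}$ and $\Delta^{(2)}$ meet only at the origin, the cyclic gluing of the $n$ model triangles along $\psi$ always produces the closed annulus $\overline{\Pr^\circ(n)}$, for every $n$ --- there is no disc-versus-annulus dichotomy to detect by tracking total turning, and the paper accordingly skips the abstract quotient $X$ altogether: it fixes model angles $\hat \alpha_i^\circ$ of $\overline{\Pr^\circ(n)}$ whose images cover it, and shows that compatibility forces a unique well-defined continuous $P$ with $\hat \alpha_i = P \circ \hat \alpha_i^\circ$. More importantly, your proposed verification ``that the hole cannot be filled'' is neither needed nor generally true: whether $P$ extends over the hole depends on the configuration, and both outcomes are acceptable because the definition of an $n$-gon absorbs them --- if $P$ extends, one fills the hole and obtains an $n$-gon without hole (this is where the ``disc case'' actually lives), and if it does not extend, then non-extendability is precisely the defining clause of an $n$-gon with hole, so there is nothing to prove. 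The paper disposes of this in one line (``up to filling the hole of $P$ if possible, $P$ is an $n$-gon''); had you insisted on proving non-fillability in the annular case you would have been attempting a statement that is false in general. With this correction your argument closes exactly as the paper's does.
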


\begin{proof}
 ~\ref{lcaracpoly1} $\Rightarrow$ \ref{lcaracpoly2}. Suppose that $\vec u_1$, \dots, $\vec u_n$ forms a $n$-gon $P: \Pr^{?}(n) \rightarrow \Sigma$. It is immediate that we can take angles $\hat \alpha^\circ_i$ in $\Pr^?(n)$ satisfying the conditions expected for the $\hat \alpha_i$'s. Then the angles $\hat \alpha_i := P \circ \hat \alpha^\circ_i$ satisfy the expected conditions.

 \ref{lcaracpoly2} $\Rightarrow$ \ref{lcaracpoly1}. We denote by $\overline{\Pr^\circ(n)}$ the closure of $\Pr^\circ(n)$ in $\Pr(n)$. We fix $\hat \alpha_i$ as in \ref{lcaracpoly2}. We also fix angles $\hat \alpha_i^\circ$ of $\overline{\Pr^\circ(n)}$ satisfying the same hypotheses and such that the maps $\hat \alpha_i^\circ: \Delta \rightarrow \overline{\Pr^\circ(n)}$ are injective and their images cover entirely $\overline{\Pr^\circ(n)}$ (this is easy). Let us prove that there is a unique map $P: \overline{\Pr^\circ(n)} \rightarrow \Sigma$ such that $\hat \alpha_i = P \circ \hat \alpha_i^\circ$ for every $i$. First of all, as the images of the $\hat \alpha_i^\circ$'s cover $\overline{\Pr^\circ(n)}$, such a map has to be unique if it exists. Suppose that $\hat \alpha_i^\circ(x) = \hat \alpha_j^\circ(y)$ for some $i$, $j$ and $x, y \in \Delta$. If $i = j$ then $x = y$ by hypothesis and therefore $\hat \alpha_i (x) = \hat \alpha_j(y)$. If $i \neq j$, the hypotheses imply that $j = i \pm 1$, say $j = i+1$ without loss of generality, and $y = \psi(x)$ by definition of compatibility of angles (using also the fact that $\hat \alpha_i^\circ$ and $\hat \alpha_j^\circ$ are injective), and then $\hat \alpha_i(x) = \hat \alpha_j(y)$ also by definition of compatibility. We proved that $P$ is well defined. By 
 continuity of the $\hat \alpha_i$'s and the $\hat \alpha_i^\circ$'s, $P$ is also continuous. The map $P$ is also injective on the interior of $\overline{\Pr^\circ(n)}$ and maps injectively each side of $\overline{\Pr^\circ(n)}$ to an edge of $\Sigma$ by definition of angles and compatibility. Finally, up to filling the hole of $P$ if possible, $P$ is an $n$-gon.

 \ref{lcaracpoly2} $\Rightarrow$ \ref{lcaracpoly3}. We fix $\hat \alpha_i$ as in \ref{lcaracpoly2}. Notice that $\vec u_i = \hat \alpha_{i-1} |_{[0,1] \times \{0\}}$ so we immediately get that $\vec u_i \neq \vec u_j$ by hypotheses on the angles and definition of compatible angles. In the same way, if $\vec u_i$ is an oriented boundary edge, as it has a left neighbourhood which is in the image of $\hat \alpha_{i-1}$, it has to winds around the boundary in the clockwise direction. Finally, suppose that $M := s(\vec u_i) = s(\vec u_j)$. If $i = j$, clearly $- \vec u_{i-1}$, $\vec u_i$, $-\vec u_{j-1}$ and $\vec u_j$ are ordered clockwisely around $M$. If $i \neq j$, we use the fact that $-\vec u_{i-1}$ and $\vec u_i$ are the second and first sides of $\hat \alpha_{i-1}$ and the fact that the interior of $-\vec u_{j-1}$ does not intersect the image of $\hat \alpha_{i-1}$ to see that $-\vec u_{i-1}$, $\vec u_i$ and $-\vec u_{j-1}$ are ordered clockwisely. The rest of the orderings comes by analogous arguments. The last point is proved in the same way.

 \ref{lcaracpoly3} $\Rightarrow$ \ref{lcaracpoly2}. For each $i$, let us fix an injective oriented map $f_i: \Delta^{(1)} \rightarrow \Sigma$ such that
 \begin{itemize}
  \item $f_i |_{[0,1] \times \{0\}} = \vec u_i$;
  \item the sets $f_i(\Delta^{(1)} \setminus ([0,1] \times \{0\}))$ do not intersect any of the $\vec u_i$'s, the boundary of $\Sigma$ and do not intersect each other.
 \end{itemize}
 It is clearly possible by the first two hypotheses of \ref{lcaracpoly3}. For each $i$, we can choose an arc $\gamma_i$ of $\Sigma$ satisfying
 \begin{itemize}
  \item $\gamma_i$ links $f_i(2/3,1/3)$ to $f_{i+1}(2/3,1/3)$;
  \item $\gamma_i$ is homotopic to $f_i([(2/3,1/3),(1,0)]) \cup f_{i+1}([(0,0), (2/3,1/3)])$ relatively to its endpoints;
  \item the interiors of the $\gamma_j$'s do not intersect the $\vec u_j$'s and do not intersect each other.  
 \end{itemize}
 This is because, by construction, the arcs $$-\vec u_i, \quad f_i([(2/3,1/3),(1,0)]), \quad f_{i+1}([(0,0), (2/3,1/3)]), \quad \vec u_{i+1}$$ are ordered clockwisely around $t(\vec u_i) = f_i(1,0) = f_{i+1}(0,0) = s(\vec u_{i+1})$, and, according to the third and fourth conditions, there is neither another $\vec u_j$, nor a boundary component which can be inserted between $-\vec u_i$ and $\vec u_{i+1}$.

 Then, we can construct angles $\hat \alpha_i$ satisfying
 \begin{itemize}
  \item $\hat \alpha_i|_{\Delta^{(1)}} = f_{i+1}$;
  \item $\hat \alpha_i|_{\Delta^{(2)}} = f_i \circ \psi^{-1}$;
  \item the boundary of $\hat \alpha_i(\Delta \setminus (\Delta^{(1)} \cup \Delta^{(2)}))$ is $$f_i([(2/3,1/3),(1,0)]) \cup f_{i+1}([(0,0), (2/3,1/3)]) \cup \gamma_i.$$
 \end{itemize}
 where the last point is obtained using the fact that an homeomorphism of the circle can be extended to an homeomorphism of the disc. Then the $\hat \alpha_i$'s satisfy all the conditions.
\end{proof}

\subsection{Proof of Theorem \ref{thmmin}} \label{proofmainthmsa}

Before proving Theorem \ref{thmmin}, we need to develop some tools:

The following lemma permits to understand better relations between $J$ and external paths winding around polygons:

\begin{lemma} \label{xiJ}
 \begin{enumerate}[\rm (1)] \item For two oriented edges $\vec u$ and $\vec v$ of $\sigma$ starting at the same marked point, the following are equivalent:
  \begin{enumerate}[\rm (i)]
   \item $\ic{\vec u, \vec v} \notin J$;
   \item $\ic{\vec u, \vec v}$ is of the form $\ic{\vec u', -\vec u'}$, $\ic{\vec u', \vec v'}$ or $\ic{\vec v', -\vec u'}$ where $\vec u'$ is the oriented side enclosing a special monogon and $\vec v'$ is the oriented edge pointing to the special puncture if it is in $\sigma$.
  \end{enumerate}
 \end{enumerate}
 Let $P$ be a polygon. We have
 \begin{enumerate}[\rm (1)] \rs{1}
  \item $\xi_i^P \notin J$ if and only if $m_{P_i} = 1$ and either $\vct{P_i P_{i-1}} = \vct{P_i P_{i+1}}$ or $P$ is complementary to a special monogon.
  \item We have $\xi_i^P \xi_{i+1}^P \notin J$ if and only if $m_{P_i} = m_{P_{i+1}} = 1$ and either $P$ is a \emph{flat} digon (\emph{i.e.} a digon with oriented sides $\vec u$ and $-\vec u$) or $P$ is complementary to a special monogon.
  \item We have $C_\sigma \in J$.
 \end{enumerate}
\end{lemma}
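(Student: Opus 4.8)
The strategy rests on a single structural observation: by the very definition of $J$, it is the ideal of $\Gamma^\circ_\sigma/I_\sigma$ generated by all arrows \emph{except} those of the three forms $\ic{\vec u', -\vec u'}$, $\ic{\vec u', \vec v'}$, $\ic{\vec v', -\vec u'}$ attached to a special monogon (side $\vec u'$, special puncture $M$ with $m_M=1$, base $N=s(\vec u')=t(\vec u')$). I will call these three kinds of arrows \emph{special} and all other arrows \emph{ordinary}. Since $J$ is an ideal, any path having an ordinary arrow as a factor already lies in $J$; so for each item the two things to settle are (a) the combinatorial shape of paths built only from special arrows, and (b) their non-vanishing in $A:=(\Gamma^\circ_\sigma/I_\sigma)/J$. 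I would first dispatch \textrm{(4)} as a warm-up: each summand of $C_\sigma$ is $\lambda_{s(\vec u)}\ic{\vec u, \vec u}^{m_{s(\vec u)}}$, hence contains the full loop $\ic{\vec u, \vec u}$ around $s(\vec u)$. A full loop around any marked point traverses, in particular, the arrow leaving $-\vec u'$ whenever a special monogon side $\vec u'$ occurs, and that arrow is ordinary; under the standing assumptions (no sphere with three marked points all of multiplicity one) the special arrows around a single point never close up into a cycle, so every full loop meets an ordinary arrow and $C_\sigma\in J$. This same ``every full loop meets an ordinary arrow'' principle is exactly what will force $m_{P_i}=1$ in \textrm{(2)}.

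For \textrm{(1)}, the implication (i)$\Rightarrow$(ii) is the ideal property: if $\ic{\vec u,\vec v}\notin J$ then every arrow factoring it is special. I would then argue combinatorially that a maximal string of consecutive special arrows around $N$ is forced to be $\vec u'\to\vec v'\to-\vec u'$ (or a single piece, or the single arrow $\vec u'\to -\vec u'$ when $\vec v'\notin\sigma$): special arrows start only at $\vec u'$ or $\vec v'$, the string must terminate at $-\vec u'$ since no special arrow starts there, and a connecting arrow into a \emph{different} special monogon is always ordinary. Here I use that $-\vec u'$ could start a special arrow only if $\vec u'$ enclosed special monogons on both sides, which the hypotheses exclude (it would force a three-punctured sphere of multiplicity one). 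This yields exactly the three listed forms. For (ii)$\Rightarrow$(i) I would prove (b) by an explicit witness: for each special monogon, a module on which $J$ acts by $0$ while the special path acts nonzero. When $\vec v'\notin\sigma$ put $k$ at vertex $u'$ and let $\ic{\vec u',-\vec u'}$ act by $\lambda_M$; when $\vec v'\in\sigma$ put $k$ at $u'$ and at $v'$ and let $\ic{\vec u',\vec v'}$, $\ic{\vec v',-\vec u'}$ act by $1$ and $\lambda_M$. All ordinary arrows act by $0$, every $C_{\vec w}$ and every $R_{P,i}$ is satisfied (the relevant monogon relation reads $\ic{\vec u',-\vec u'}^2=\lambda_M\ic{\vec u',-\vec u'}$ since $m_M=1$), and the special path acts nonzero; hence it lies outside $J$.

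Parts \textrm{(2)} and \textrm{(3)} then reduce to \textrm{(1)}. For \textrm{(2)}, write $\xi_i^P=\lambda_{P_i}\ic{\vec a, \vec a}^{m_{P_i}-1}\ic{\vec a, \vec b}$ with $\vec a=\vct{P_i P_{i-1}}$, $\vec b=\vct{P_i P_{i+1}}$. If $m_{P_i}\ge 2$ the factor $\ic{\vec a, \vec a}$ is a full loop, so $\xi_i^P\in J$; thus $m_{P_i}=1$ is necessary. For $m_{P_i}=1$ there are two subcases. If $\vec a=\vec b$, the degenerate convention for $\io{\,\cdot\,}$ gives $\xi_i^P=\lambda_{P_i}e$, a scalar multiple of a vertex idempotent, which lies outside $J$; this is precisely the case $\vct{P_i P_{i-1}}=\vct{P_i P_{i+1}}$. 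If $\vec a\ne\vec b$, then $\xi_i^P=\lambda_{P_i}\ic{\vec a, \vec b}$ and \textrm{(1)} says it avoids $J$ exactly when $\ic{\vec a, \vec b}$ is special, which by the geometry of external windings happens precisely when the external region of $P$ at $P_i$ is a special monogon, i.e. $P$ is complementary to a special monogon. For \textrm{(3)}, since $J$ is an ideal the product $\xi_i^P\xi_{i+1}^P$ avoids $J$ only if both factors do, whence $m_{P_i}=m_{P_{i+1}}=1$ by \textrm{(2)}; a short case check on how the two factors (each idempotent-type or special-type) compose then shows the product remains idempotent- or special-type exactly when both vertices are flat (so $P$ is a flat digon, where $\xi_i^P\xi_{i+1}^P$ is a nonzero scalar multiple of $e_u$) or $P$ is complementary to a special monogon.

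The main obstacle I anticipate is the combinatorial bookkeeping in the ``only if'' directions of \textrm{(2)} and especially \textrm{(3)}: pinning down that the special-arrow configurations keeping $\ic{\vec a, \vec b}$ (respectively the product) special are \emph{exactly} ``complementary to a special monogon'' and ``flat digon'', and checking en route that the surface hypotheses rule out any closed-up cycle of special arrows. By contrast, the non-vanishing assertions, which are the only genuinely algebraic content, are handled uniformly once and for all by the explicit modules in the proof of \textrm{(1)}, so the remaining work is purely the geometry of polygons and their external windings.
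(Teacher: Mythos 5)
Your proposal is correct and takes essentially the same route as the paper: the decomposition into arrows plus the cyclic-order combinatorics for (i)$\Rightarrow$(ii), and a one-dimensional-per-vertex witness for (ii)$\Rightarrow$(i) --- your explicit module is exactly the representation whose matrix coefficient is the paper's linear functional $\psi\colon k Q_\sigma \to k$ (sending a path through the special arrows to $\lambda_{t(\vec v')}^{a(\omega)}$ and everything else to $0$), and the verification you assert in a parenthesis (that \emph{every} $C_{\vec w}$ and \emph{every} $R_{P,i}$ annihilates the witness, including relations of polygons other than the special monogon and the induced self-folded triangle, which is where a relation could a priori act nonzero through only one of its two terms) is precisely the case analysis via Proposition \ref{caracpoly} that occupies the bulk of the paper's proof. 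Parts (2)--(4) are then reduced to (1) exactly as in the paper (full loops lie in $J$, necessity of $m_{P_i}=1$, and the ``only if'' geometry you defer to the external windings is the same Proposition \ref{caracpoly} bookkeeping the paper writes out), so the deferred work you honestly flag at the end is real but routine, and the strategy goes through.
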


\begin{proof}
 (1) (i) $\Rightarrow$ (ii). Suppose that $\ic{\vec u, \vec v} \notin J$. The decomposition as product of arrows is
   $$\ic{\vec u, \vec v} = \ic{\vec u_0, \vec u_1} \ic{\vec u_1, \vec u_2} \cdots \ic{\vec u_{\ell-1}, \vec u_\ell}$$
  where $\vec u_0 = \vec u$, $\vec u_1$, \dots, $\vec u_\ell = \vec v$ are successive oriented edges of $\sigma$ around the common starting point. None of the $\ic{\vec u_{i-1}, \vec u_i}$ is in $J$ so for each of them, there exists a special monogon with oriented side $\vec v_i$ and possibly $\vec v_i'$ pointing to the special puncture such that we are in one of the following three cases
  \begin{itemize}
   \item $\vec u_{i-1} = \vec v_i$ and $\vec u_i = \vec v_i'$;
   \item $\vec u_{i-1} = \vec v_i$ and $\vec u_i = - \vec v_i$;
   \item $\vec u_{i-1} = \vec v_i'$ and $\vec u_i = - \vec v_i$.
  \end{itemize}
  As we excluded the case where $\Sigma$ is a sphere without boundary, $\# \M = 3$ and $m_M = 1$ for some $M \in \M$, it is not possible that $\vec v_i$ and $-\vec v_i$ both enclose a special monogon. Thus, an easy analysis gives that $\ell = 1$ or $\ell = 2$ and the result follows. 

 (ii) $\Rightarrow$ (i). Take $\vec u'$ and $\vec v'$ as in (ii) ($\vec v'$ is not necessarily present). Let us prove that $\ic{\vec u', \vec v'}$, $\ic{\vec u', -\vec u'}$ and $\ic{\vec v', -\vec u'}$ are not in $J$. Let $E$ be the set of paths of $k Q_\sigma$ generated by $e_{u'}$, $e_{v'}$, $\ic{\vec u', \vec v'}$, $\ic{\vec u', -\vec u'}$ and $\ic{\vec v', -\vec u'}$. Consider the linear map
 $$ \psi: k Q_\sigma \rightarrow k, \quad \omega \notin E \mapsto 0, \quad \omega \in E \mapsto \lambda_{t(\vec v')}^{a(\omega)} $$ where $a(\omega)$ is the number of times $\omega$ goes through $u'$. By definition $J \subset \ker \psi$ and $\ic{\vec u', \vec v'}, \ic{\vec u', -\vec u'}, \ic{\vec v', -\vec u'} \notin \ker \psi$ so it is enough to prove that $I_\sigma + I_\sigma^\circ \subset \ker \psi$. We clearly have $C_{\vec u} \in \ker \psi$ for all $\vec u$. Consider a relation $R_{P, i} \in \Gamma^\circ_\sigma$ coming from a $n$-gon $P$.

 If $\omega_{i+1}^P \omega_i^P \in E$, by a similar argument as in the converse part of the proof, we get that $\vct{P_i P_{i+1}} = \vec u'$. The two only possibilities for $P$ is then the special monogon itself or an induced self-folded triangle. In the first case, the relation is
 \begin{equation} R_{P,i} = \ic{\vec u', -\vec u'}^2 - \lambda_{t(\vec v')} \ic{\vec u', -\vec u'} \label{rp1} \end{equation}
 and in the second case
 \begin{equation} R_{P,i} = \ic{\vec v', -\vec u'} \ic{\vec u', \vec v'} - \lambda_{t(\vec v')} e_{v'}\label{rp2} \end{equation}
 so in both case $R_{P,i} \in \ker \psi$.

 Suppose now that $\alpha := R_{P,i} - \omega_{i+1}^P \omega_i^P$ is a non-zero multiple of an element of $E$. We know that $\alpha$ is not a multiple of $C_\sigma$ so, using the definition of $R_{P,i}$, there are two cases:
 \begin{enumerate}[\rm (a)]
  \item If $\M_P = \{M\}$, we need to have $m_M = 1$ and $P$ is a monogon. So $P$ is a special monogon and it is immediate that $R_{P,i}$ is the one defined in \eqref{rp1}.
  \item If $\M_P = \emptyset$, we have $n \geq 3$ and $\xi_{i+2}^P, \xi_{i+3}^P, \dots, \xi_{i-1}^P \in E$. Let us discuss the possible values of $\xi_{i+2}^P$:
   \begin{itemize}
    \item If $\xi_{i+2}^P$ is a scalar multiple of $e_{u'}$, it means that $-\vec u'$, $\vec u'$ are consecutive sides of $P$. It is impossible thanks to Proposition \ref{caracpoly}.
    \item If $\xi_{i+2}^P$ is a scalar multiple of $\ic{\vec u', \vec v'}$, it means that $-\vec u'$, $\vec v'$ are consecutive sides of $P$. Thus, $-\vec v'$ should be the next side. It is impossible thanks to Proposition \ref{caracpoly}.
    \item In the same way it is impossible that $\xi_{i+2}^P$ is a scalar multiple of $\ic{\vec v', -\vec u'}$.
    \item If $\xi_{i+2}^P$ is a scalar multiple of $\ic{\vec u', -\vec u'}$, it means that $-\vec u'$ is consecutive to $-\vec u'$ which is only possible if $P$ is a monogon. It contradicts $n \geq 3$.
    \item If $\xi_{i+2}^P$ is a scalar multiple of $e_{v'}$, it means that $\vct{P_{i-1} P_i} = \vec v'$ and $\vct{P_{i} P_{i+1}} = -\vec v'$. If $n > 3$, $\xi_{i+3}^P$ should satisfy the same condition. It is impossible. So $n = 3$. As a consequence, $\vct{P_{i+1} P_{i+2}}$ is a loop starting at $s(\vec v')$. As $\M_P = \emptyset$, we get $\vct{P_{i+1} P_{i+2}} = \vec u'$. Finally, $R_{P, i}$ is the relation defined in \eqref{rp2}.    
   \end{itemize}
 \end{enumerate}
 
 (2) First of all, if $\vct{P_i P_{i-1}} = \vct{P_i P_{i+1}}$ and $m_{P_i} = 1$, then $$\xi_i^P = \io{\vct{P_i P_{i-1}}, \vct{P_i P_{i+1}}} = \lambda_{P_1} e_{P_i P_{i+1}}$$ is multiple of an idempotent so is not in $J$. If $P$ is a monogon with special complementary and $m_{P_i} = 1$ then $\xi_i^P \notin J$ using (1). Suppose now that $\xi_i^P \notin J$. If $m_{P_i} > 1$, we get that $\ic{\vct {P_i P_{i-1}},\vct {P_i P_{i-1}}}$ divides $\xi_i^P$ which is a contradiction thanks to (1). Suppose that $m_{P_i} = 1$ and $\vct{P_i P_{i-1}} \neq \vct{P_i P_{i+1}}$. Then $\lambda_{P_i}^{-1} \xi_i^P = \ic{\vct{P_i P_{i-1}}, \vct{P_i P_{i+1}}}$ is of the form $\ic{\vec u, \vec v}$ or $\ic{\vec u, -\vec u}$ or $\ic{\vec v, -\vec u}$ where $\vec u$ is enclosing a special monogon and $\vec v$ is possibly pointing to the special puncture thanks to (1). By a similar reasoning as in (1) (ii) $\Rightarrow$ (i), the only possibility is that $\vct{P_i P_{i-1}} = \vec u$ and $\vct{P_i P_{i+1}} = -\vec u$ and we are in the case of a monogon with special complementary.

 (3) If $\xi_i^P \xi_{i+1}^P \notin J$ then $\xi_i^P \notin J$ and $\xi_{i+1}^P \notin J$ so $m_{P_i} = m_{P_{i+1}} = 1$ and $P$ is a flat digon or a monogon with special complementary thanks to (2). The converse is immediate using the map $\psi$ defined in (1) (ii) $\Rightarrow$ (i).

 (4) Each term of $C_\sigma$ is multiple of a $\ic{\vec u, \vec u}$ so this is immediate as, thanks to (1), $\ic{\vec u, \vec u} \in J$.
\end{proof}

The following lemma tells us that there are always minimal polygons:
\begin{lemma} \label{findminpol}
 For any oriented edge $\vec u$ of $\sigma$ such that $\vec u$ is not a counter-clockwise oriented boundary edge, there exist a unique minimal polygon $P$ having $\vec u$ as an oriented edge, up to equivalence. Moreover, each internal path winding around a vertex of this polygon is an arrow of $Q_\sigma$.
\end{lemma}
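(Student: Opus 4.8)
The plan is to encode the ``turn as tightly as possible'' rule as a permutation of oriented edges whose cycles are exactly the minimal polygons, and then to promote one cycle to a genuine polygon using Proposition~\ref{caracpoly}. Write $\mathcal{S}$ for the set of oriented edges of $\sigma$ that are \emph{not} boundary edges oriented counter-clockwise around a hole; by Proposition~\ref{caracpoly}(\ref{caracpoly3}) these are precisely the oriented edges that can occur as a side of a polygon, and the hypothesis says exactly that $\vec u \in \mathcal{S}$. For $\vec v \in \mathcal{S}$ with $M := t(\vec v)$, let $\Phi(\vec v)$ be the oriented edge starting at $M$ that immediately follows $-\vec v$ in the clockwise cyclic order around $M$; equivalently, $-\vec v$ is the counter-clockwise successor of $\Phi(\vec v)$, so that $\ic{\Phi(\vec v), -\vec v}$ is a single arrow of $Q_\sigma$. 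The reverse description $\Phi(\vec v) = \vec w$ iff $\vec v = -(\text{counter-clockwise successor of } \vec w)$ shows $\Phi$ is injective.

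First I would check that $\Phi$ maps $\mathcal{S}$ into itself, so that $\Phi$ is a permutation of the finite set $\mathcal{S}$. This is a purely local computation at each $M = t(\vec v)$: in the half-disc model at a boundary point one reads off the clockwise cyclic order of oriented edges at $M$ as $\vec b_+, (\text{interior edges}), \vec b_-$ followed by the gap across the hole, where $\vec b_+$ is the boundary edge oriented clockwise around the hole and $\vec b_-$ the one oriented counter-clockwise; one then verifies that the only way $\Phi(\vec v)$ could be a counter-clockwise boundary edge is $-\vec v = \vec b_+$, which forces $\vec v = -\vec b_+ \notin \mathcal{S}$. Hence $\Phi(\mathcal{S}) \subseteq \mathcal{S}$ and $\Phi|_{\mathcal S}$ is a bijection. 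Taking the $\Phi$-orbit of $\vec u$ produces a sequence $\vec u = \vec u_1, \vec u_2 = \Phi(\vec u_1), \dots, \vec u_n$ of distinct oriented edges with $\vec u_{n+1} = \vec u_1$, and I would invoke Proposition~\ref{caracpoly}(\ref{caracpoly3})$\Rightarrow$(\ref{caracpoly1}) to realise it as an $n$-gon $P$: distinctness is built in, no side is a counter-clockwise boundary edge because every $\vec u_i \in \mathcal{S}$, and the two clockwise-ordering conditions follow because $\ic{\vec u_i, -\vec u_{i-1}}$ being a single arrow makes $(-\vec u_{i-1}, \vec u_i)$ a clockwise-consecutive pair at each repeated vertex. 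By construction each internal path $\omega_i^P = \ic{\vct{P_iP_{i+1}}, \vct{P_iP_{i-1}}} = \ic{\vec u_i, -\vec u_{i-1}}$ is an arrow, which is the ``moreover'' part.

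Minimality of $P$ comes from the angle condition: since $\omega_i^P$ is an arrow, no oriented edge lies strictly between $\vec u_i$ and $-\vec u_{i-1}$ in the interior sector at $P_i$. An arc connected to $P_i$ and meeting the interior of $P$ would have to leave $P_i$ into that open sector (it cannot cross the sides of $P$ by compatibility), contradicting this; so $P$ has no reduction arc. For uniqueness, let $P'$ be any minimal polygon with $\vec u$ as a side. At each vertex $P'_i$ the open interior sector can contain no arc (it would be a reduction arc) and no boundary edge (otherwise $\partial\Sigma$ would pass through the interior of $P'$, which is impossible as that interior lies in $\Sigma$); hence $\omega_i^{P'}$ is an arrow, forcing $\vec u'_i = \Phi(\vec u'_{i-1})$. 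Thus the side sequence of $P'$ is the $\Phi$-orbit of $\vec u$, so $P'$ and $P$ have the same sequence of sides and are therefore equivalent.

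The main obstacle is the boundary bookkeeping in the step $\Phi(\mathcal{S}) \subseteq \mathcal{S}$ and in the exclusion of boundary edges from interior sectors: one must pin down, once and for all, which orientation of a boundary edge is ``clockwise around the hole'' relative to the orientation of $\Sigma$, and check that the greedy turn never crosses into a hole. Everything else is the routine verification of the clockwise-ordering clauses of Proposition~\ref{caracpoly}, which I expect to dispatch by the consecutive-pair observation above; the degenerate coincidences (such as $\vec u_i = -\vec u_{j-1}$) only collapse some of the four edges appearing in an ordering clause and do not cause trouble.
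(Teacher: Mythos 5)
Your proposal is correct and takes essentially the same route as the paper: your permutation $\Phi$ is precisely the paper's inductively defined successor sequence (the unique $\vec u_{i+1}$ with $\ic{\vec u_{i+1}, -\vec u_i}$ an arrow of $Q_\sigma$, shown never to produce a counter-clockwise boundary edge), after which both arguments close the cycle, verify the combinatorial criterion of Proposition \ref{caracpoly}, and obtain minimality and uniqueness from the characterization that a polygon is minimal if and only if every internal path around a vertex is an arrow. Your permutation packaging merely makes explicit the injectivity of the successor map, and hence the distinctness of the $\vec u_i$, which the paper leaves implicit.
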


\begin{proof}
 We prove by induction on $n$ that there is a unique sequence $\vec u_0$, $\vec u_1$, \dots, $\vec u_n$ such that
 \begin{itemize}
  \item $\vec u_0 = \vec u$;
  \item for each $i \leq n$, if $\vec u_i$ is a boundary edge then $\vec u_i$ is oriented clockwisely around the boundary;
  \item for each $i < n$, $s(\vec u_{i+1}) = t(\vec u_i)$ and $\ic{\vec u_{i+1}, -\vec u_i}$ is an arrow of $Q_\sigma$.
 \end{itemize}
 For $n = 0$, it is obvious. Let us suppose the result proven for $n$ and let us prove it for $n+1$. By construction of $Q_\sigma$, there is a unique $\vec u_{n+1}$ satisfying that $\ic{\vec u_{n+1}, -\vec u_n}$ is an arrow of $Q_\sigma$. Then, it is easy to see that $\vec u_{n+1}$ can not be a counter-clockwise boundary component, as otherwise $\vec u_n$ would be also one. It finishes the induction.

 By definition, it is immediate that a polygon is minimal if and only if every internal path winding around a vertex in an arrow of $Q_\sigma$. Thus, if a minimal polygon containing $\vec u$ exists, using Proposition \ref{caracpoly}, it has to have sides $\vec u_0$, \dots $\vec u_{n-1}$ where $n$ is the smallest integer satisfying $\vec u_{n} = \vec u_0$. Conversely, let us fix the smallest possible $n$ such that $\vec u_{n} = \vec u_0$. We have that the conditions of Proposition \ref{caracpoly} \ref{caracpoly3} are satisfied for $(\vec u_i)_{0 \leq i \leq n-1}$. Notice in particular that for any $i$ and for any oriented edge $\vec v$ of $\sigma$ such that $s(\vec v) = s(\vec u_i)$, $-\vec u_{i-1}$, $\vec u_i$ and $\vec v$ are ordered clockwisely around $s(\vec u_i)$ as by definitions of arrows of $Q_\sigma$, there is no oriented edge between $-\vec u_{i-1}$ and $\vec u_i$ in the clockwise order.
\end{proof}

\begin{definition}
 Suppose that $(\Sigma, \M)$ is a sphere without boundary and with four punctures and $m_{M} = 1$ for all $M \in \M$. This is the case where $\nu_\M \neq 1$. Let $P$ be a $n$-gon. For $1 \leq \ell \leq n$, if $\vct{P_{\ell+1} P_\ell}$ encloses a special monogon, we call $S_i^P$ the relation coming from this monogon. Otherwise we denote $S_i^P = 0$. We call \emph{special ideal} of $P$ the ideal $S^P := (S_1^P, S_2^P, \dots, S_n^P)$.

 In any other case, we put $S^P = 0$.
\end{definition}

\begin{lemma} \label{RRp}
 Suppose that an $n$-gon $P$ satisfies $\vct{P_{i} P_{i+1}} = - \vct{P_{j} P_{j+1}}$ for two sides $\vct{P_{i} P_{i+1}}$ and $\vct{P_{j} P_{j+1}}$ and one of the following conditions:
 \begin{enumerate}[\rm (a)]
  \item $\vct{P_{i} P_{i+1}}$ and $\vct{P_{j} P_{j+1}}$ are not consecutive;
  \item $\# \M_P > 0$.
 \end{enumerate}
 Then the following statements hold in $\widehat{\Gamma^\circ_\sigma}^J$:
 \begin{enumerate}[\rm (1)]
  \item $(R_{P,i}) + S^P = (\omega_{i+1}^P \omega_i^P) + S^P$;
  \item $(R_{P,j}) + S^P = (\omega_{j+1}^P \omega_j^P) + S^P$;
  \item for $1 \leq \ell \leq n$, 
   \begin{itemize}
    \item $\omega^P_\ell (R_{P,\ell} - \omega^P_{\ell+1} \omega^P_\ell), (R_{P,\ell} - \omega^P_{\ell+1} \omega^P_\ell)\omega^P_{\ell+1} \in ( \omega^P_{i+1} \omega^P_i, \omega^P_{j+1} \omega^P_j)$, if $i + 1 = \ell = j - 1$ or $j + 1 = \ell = i - 1$;
    \item $R_{P,\ell} - \omega^P_{\ell+1} \omega^P_\ell \in ( \omega^P_{i+1} \omega^P_i, \omega^P_{j+1} \omega^P_j)$, else.
   \end{itemize}
 \end{enumerate}
\end{lemma}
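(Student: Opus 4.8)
The plan is to reduce all three assertions to membership statements about the \emph{external terms} $f^P_\ell$, defined by $R_{P,\ell} = \omega^P_{\ell+1}\omega^P_\ell - f^P_\ell$, so that $R_{P,\ell} - \omega^P_{\ell+1}\omega^P_\ell = -f^P_\ell$. The basic tool is the identity $\omega^P_\ell \xi^P_\ell = \xi^P_{\ell+1}\omega^P_{\ell+1} = e_{u_\ell}C_\sigma$, where $u_\ell$ is the edge carrying the side $\vct{P_\ell P_{\ell+1}}$, together with the centrality of $C_\sigma$ from Proposition \ref{Csig}. The hypothesis $\vct{P_iP_{i+1}} = -\vct{P_jP_{j+1}}$ says the two sides lie on one edge $u := u_i = u_j$, so $e_{u_i}C_\sigma = e_{u_j}C_\sigma = e_u C_\sigma$. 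Multiplying $\omega^P_{i+1}\omega^P_i$ by suitable external paths and collapsing with this identity, I would first record the memberships $C_\sigma\omega^P_i,\ C_\sigma\omega^P_{i+1},\ C_\sigma^2 e_u \in (\omega^P_{i+1}\omega^P_i)$ (for instance $\xi^P_{i+1}(\omega^P_{i+1}\omega^P_i)\xi^P_i = C_\sigma^2 e_u$), and symmetrically for $j$; these let external windings be traded for copies of $C_\sigma$ sitting inside the ideal.

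The engine for moving between the relations is the \emph{rotation identity} $\omega^P_{\ell+2}R_{P,\ell} = R_{P,\ell+1}\omega^P_\ell$, which I would verify from the identity above by checking $\omega^P_{\ell+2}f^P_\ell = f^P_{\ell+1}\omega^P_\ell$ (both equal $C_\sigma\,\xi^P_{\ell+3}\cdots\xi^P_{\ell-1}$ when $\M_P=\emptyset$; the cases $\M_P=\{M\}$ are analogous with an extra puncture winding, and the case $\#\M_P\geq2$ is trivial, since then $f^P_\ell=0$). This identity propagates a membership of one $f^P_\ell$ to its neighbours at the cost of multiplication by a single internal arrow $\omega^P$.

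With these in place I would prove (1) and (2) as base cases. Writing $f^P_i$ (for $\M_P=\emptyset$) as $\xi^P_{i+2}\cdots\xi^P_{i-1}$ and cutting it at the vertex $u$ it passes through at position $j$ (here hypothesis (a) guarantees the two halves are nonempty), I would analyse the two pieces, using Lemma \ref{xiJ} to detect exactly the configurations (flat digons, monogons complementary to a special monogon) where an external factor fails to lie in $J$; these deficient windings are precisely what is recorded by the special ideal $S^P$, explaining the $+S^P$ in (1) and (2). Modulo $S^P$, $f^P_i$ then acquires enough copies of $C_\sigma$ around $u$ to be absorbed into $(\omega^P_{i+1}\omega^P_i)$ through the memberships above, and in the completion $\widehat{\Gamma^\circ_\sigma}^J$ the reverse inclusion is obtained by inverting a factor $1-(\text{element of }J)$. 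Assertion (3) is deduced by applying the rotation identity to walk from the base cases at $i$ and $j$ around both arcs of the polygon: for a generic $\ell$ both walks contribute and one obtains $f^P_\ell \in (\omega^P_{i+1}\omega^P_i, \omega^P_{j+1}\omega^P_j)$ outright, whereas for the index sandwiched between $i$ and $j$ (the case $j=i+2$) one rotation step is unavailable, so only the $\omega^P_\ell$- and $\omega^P_{\ell+1}$-multiplied forms are reachable, which is exactly the weaker conclusion stated.

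The hard part will be the combinatorial bookkeeping in the base case: controlling how the external winding $f^P_i$ decomposes in $C$-irreducible form (Proposition \ref{Csig}) and pinning down, via Lemma \ref{xiJ} and Proposition \ref{caracpoly}, precisely which degenerate sub-configurations force a contribution from $S^P$ rather than from $(\omega^P_{i+1}\omega^P_i)$ alone. Treating the special-monogon cases uniformly (including when $2\notin k^\times$) and making the completion argument for the reverse inclusion precise are where the genuine care lies; the rotation identity reduces everything else to routine propagation.
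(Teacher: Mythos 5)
Your preliminary identities are correct: the memberships $C_\sigma\omega^P_i,\ C_\sigma\omega^P_{i+1},\ C_\sigma^2 e_u \in (\omega^P_{i+1}\omega^P_i)$ hold, and the rotation identity $\omega^P_{\ell+2}f^P_\ell = f^P_{\ell+1}\omega^P_\ell$ is valid in all three cases (it follows from sliding $\omega^P_{\ell+1}\xi^P_{\ell+1} = \xi^P_{\ell+2}\omega^P_{\ell+2}$ along the external loop). But your plan for (3) does not work. The rotation identity transports only \emph{padded} memberships: from the base case $f^P_i \in (\omega^P_{i+1}\omega^P_i)$ you obtain $f^P_{i+1}\omega^P_i \in I_0 := (\omega^P_{i+1}\omega^P_i, \omega^P_{j+1}\omega^P_j)$, then $f^P_{i+2}\omega^P_{i+1}\omega^P_i \in I_0$, and so on, accumulating one $\omega$-factor per step. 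These factors cannot be stripped: membership $x\omega \in I_0$ does not yield $x \in I_0$ (the cancellation property in the proof of Proposition \ref{Csig} is for equalities of paths in $\Gso$, not for ideal membership), so the unpadded conclusion $f^P_\ell \in I_0$ claimed in (3) for generic $\ell$ is unreachable by your walk; moreover, even at the sandwiched index the walk produces $f^P_\ell\,\omega^P_{\ell-1}$ and $\omega^P_{\ell+2}f^P_\ell$, whereas the statement asserts $\omega^P_\ell f^P_\ell$ and $f^P_\ell\,\omega^P_{\ell+1}$ --- different elements. The paper's mechanism is direct and per-$\ell$: since the two sides lie on one edge, the external winding inside $f^P_\ell$ contains a crossing pair $\xi^P_j\xi^P_{j+1}$ (or $\xi^P_i\xi^P_{i+1}$), and that pair \emph{factors} as $\io{\vct{P_jP_{j-1}},\vct{P_{i+1}P_{i+2}}}\cdot\omega^P_{i+1}\omega^P_i\cdot\io{\vct{P_iP_{i-1}},\vct{P_{j+1}P_{j+2}}}$, exposing the internal product as an honest factor of $f^P_\ell$ whenever the pair sits inside $f^P_\ell$'s range (with an extra $C_\sigma$-insertion, supplied by the puncture term, when the sides are consecutive, i.e. $j=i+1$ under hypothesis (b)); the sandwiched indices are exactly those where a one-sided padding by $\omega^P_\ell$ or $\omega^P_{\ell+1}$ is needed to complete a crossing pair, which is why (3) is weaker there. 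This same factorization, not $C_\sigma$-absorption, is what makes the base cases work: $f^P_i = \alpha'\,\omega^P_{i+1}\omega^P_i\,\beta'$ directly, so $(R_{P,i}) \subseteq (\omega^P_{i+1}\omega^P_i)$ needs neither $S^P$ nor your recorded $C_\sigma$-memberships.

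The second gap is in your reverse inclusion. Writing $R_{P,i} = \omega^P_{i+1}\omega^P_i(1 - \cdots)$-style and "inverting a factor $1-(\text{element of }J)$" works precisely when one of $\alpha'$, $\beta'$ lies in $J$; but in the exceptional configurations that Lemma \ref{xiJ} detects (all on a sphere with four punctures, Figure \ref{csfour}), the correcting factor is $1 - \lambda_\M\, e_\alpha(\cdot)e_\beta$ where $e_\alpha, e_\beta$ are idempotents modulo $S^P$ \emph{not} lying in $J$, and no $J$-adic geometric series converges. The paper handles this by the splitting $(1-e_\alpha)R_{P,i}$, $R_{P,i}(1-e_\beta)$ and $e_\alpha R_{P,i} e_\beta = \nu_\M\, e_\alpha\,\omega^P_{i+1}\omega^P_i\, e_\beta$, invoking the standing hypothesis that $\nu_\M = 1-\lambda_\M$ is invertible --- an ingredient entirely absent from your proposal, and without which the lemma is false (this is exactly why $\nu_\M$ is assumed invertible at the outset of Section \ref{algparttri}). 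You correctly sense that $S^P$ records the deficient windings, but $S^P$ alone does not close the argument; flagging "genuine care" at this point leaves the decisive step missing.
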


\begin{proof}
 We can suppose that $\# \M_P \leq 1$ as the result is trivial for $\# \M_P \geq 2$. We also suppose that $i = 1$.

 (1) (a) We suppose first that $n = 2$: this is the case of a flat digon and we have $\M_P = \{M\}$ for some $M \in \M$. Hence $(\Sigma, \M)$ is a sphere with three punctures. So $m_M, m_{P_1}, m_{P_2} > 1$. Thus:
 \begin{align*}R_{P, 1} &= \omega^P_2 \omega^P_1 - \lambda_M \omega^P_2 (\xi^P_2 \xi^P_1)^{m_M-1} \xi^P_2 \\ &= \omega^P_2 \omega^P_1 - \lambda_\M (\omega_2^P)^{m_{P_2} - 1} (\omega_2^P \omega_1^P) (\omega_1^P)^{m_{P_1} - 2}  (\xi^P_2 \xi^P_1)^{m_M-2} \xi_2^P\end{align*}
 which implies the result as $\xi_2^P \in J$.

 (b) Suppose now that $n \geq 3$. We suppose that $j < n$. Indeed, we cannot have $n = j = 2$. Thus $j = n \neq 2$ is analogous to $j = 2 \neq n$. We get easily
  $$R_{P,1} =  \omega_{2}^P \omega_1^P - \alpha \io{\vct{P_jP_{j-1}}, \vct{P_{2} P_{3}}}  \omega_{2}^P \omega_1^P  \io{\vct{P_1 P_{n}}, \vct{P_{j+1} P_{j+2}}} \beta$$
    where  
   \begin{align*}  \alpha &:= \left\{\begin{array}{ll}\xi_{3}^P \xi_{4}^P \cdots \xi_{j-1}^P &\text{if } \M_P = \emptyset\text{ (and $j>2$)}, \\
                                       \lambda_M (\xi_{3}^P \cdots \xi_{n}^P \xi_{1}^P \xi_{2}^P)^{m_M-1} &\text{if } \M_P = \{M\} \text{ and } j = 2, \\
                                       \lambda_M \omega_{2}^P (\xi_{2}^P  \cdots \xi_{n}^P \xi_{1}^P)^{m_M-1} \xi_{2}^P \xi_{3}^P \cdots \xi_{j-1}^P  &\text{if } \M_P = \{M\} \text{ and } j > 2,
                                     \end{array}\right.
\\  \beta &:= \xi_{j+2}^P \cdots \xi_{n}^P.\end{align*}
  If at least one of $\alpha$, $\beta$, $\io{\vct{P_jP_{j-1}}, \vct{P_{2} P_{3}}}$ and $\io{\vct{P_1 P_{n}}, \vct{P_{j+1} P_{j+2}}}$ is in $J$, it is immediate that $(R_{P,1}) = (\omega^P_{2} \omega^P_1)$. Suppose that they are not in $J$.

  As $\io{\vct{P_jP_{j-1}}, \vct{P_2 P_3}} \notin J$, thanks to Lemma \ref{xiJ}, we get that $m_{P_2} = 1$ and either $\vct{P_{j-1} P_j} = -\vct{P_2 P_3}$, or $-\vct{P_{j-1} P_{j}} = -\vct{P_2 P_3}$ encloses a special monogon. More precisely, we are in one of these cases:
  \begin{itemize}
   \item $j = 2$ (and $\M_P = \{M\}$ as $\vct{P_1 P_2}$ and $\vct{P_j P_{j+1}}$ are consecutive).
   \item $-\vct{P_{j-1} P_j} = -\vct{P_2 P_3}$ encloses a special monogon and $j = 3$ as $P$ cannot have twice the same side.
   \item $\vct{P_{j-1} P_j} = -\vct{P_2 P_3}$ and $j > 3$. In this case, as $\alpha \notin J$, we have $\xi_3^P \notin J$ so, as $P$ is not a monogon, $m_{P_3} = 1$ and $\vct{P_3 P_4} = - \vct{P_2 P_3}$. As two different (oriented) sides of $P$ cannot be equal, we deduce that $j = 4$.
  \end{itemize}
  In the same way, as $\io{\vct{P_1 P_{n}}, \vct{P_{j+1} P_{j+2}}} \notin J$ and $j \neq n$, we have $m_{P_1} = 1$ and we are in one of these cases:
  \begin{itemize}
   \item $n = j+1$ and $-\vct{P_{n} P_{1}}$ encloses a special monogon.
   \item $\vct{P_n P_{1}} = \vct{P_{j+1} P_{j+2}}$, $n = j+2$ and $m_{P_n} = 1$.
  \end{itemize}
  As $P$ is not a monogon or a self-folded triangle without puncture, we get $\omega_2^P \in J$ so, as $\alpha \notin J$, we get either $\M_P = \emptyset$ or $j = 2$ and $\M_P = \{M\}$. Moreover, in this last case, we get also $m_M = 1$ as $\xi_1^P \in J$. We summarize all possible cases in Figure \ref{csfour} (all on a sphere with four punctures and without boundary).
  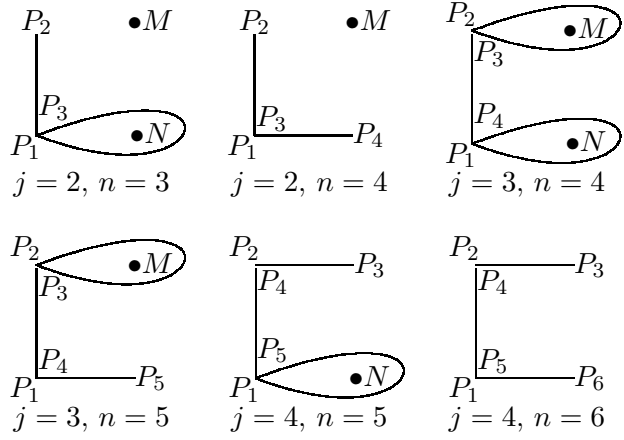
\begin{figure}
\begin{center}
 \begin{tabular}{ccccc}
 \boxinminipage{$\renewcommand{\labelstyle}{\textstyle} \xymatrix@L=.05cm@!=0cm@R=1.5cm@C=1.5cm@M=0.00cm{ 
  P_2 \ar@{-}[d] & \bullet M \\
  \ar@{{}{ }{}}@`{p+(-2,-2),p+(-2,-2)}[]|{P_1} \ar@{{}{ }{}}@`{p+(3,5),p+(3,5)}[]|{P_3} \ar@{-}@`{p+(13,5),p+(26,2),p+(13,-5)}[] & \bullet N
 }$}
& & 
 \boxinminipage{$\renewcommand{\labelstyle}{\textstyle} \xymatrix@L=.05cm@!=0cm@R=1.5cm@C=1.5cm@M=0.00cm{ 
  P_2 \ar@{-}[d] & \bullet M \\
  \ar@{{}{ }{}}@`{p+(-2,-2),p+(-2,-2)}[]|{P_1} \ar@{{}{ }{}}@`{p+(3,3),p+(3,3)}[]|{P_3} \ar@{-}[r] & P_4
 }$}
 & & 
 \boxinminipage{$\renewcommand{\labelstyle}{\textstyle} \xymatrix@L=.05cm@!=0cm@R=1.5cm@C=1.5cm@M=0.00cm{ 
  \ar@{-}[d] \ar@{{}{ }{}}@`{p+(-2,3),p+(-2,3)}[]|{P_2} \ar@{{}{ }{}}@`{p+(3,-4),p+(3,-4)}[]|{P_3} \ar@{-}@`{p+(13,5),p+(26,2),p+(13,-5)}[] & \bullet M \\
  \ar@{{}{ }{}}@`{p+(-2,-2),p+(-2,-2)}[]|{P_1} \ar@{{}{ }{}}@`{p+(3,5),p+(3,5)}[]|{P_4} \ar@{-}@`{p+(13,5),p+(26,2),p+(13,-5)}[] & \bullet N
 }$} \\
   $j = 2$, $n = 3$ & & $j = 2$, $n = 4$ & & $j = 3$, $n = 4$ \\ \\
 \boxinminipage{$\renewcommand{\labelstyle}{\textstyle} \xymatrix@L=.05cm@!=0cm@R=1.5cm@C=1.5cm@M=0.00cm{ 
  \ar@{-}[d] \ar@{{}{ }{}}@`{p+(-2,3),p+(-2,3)}[]|{P_2} \ar@{{}{ }{}}@`{p+(3,-4),p+(3,-4)}[]|{P_3} \ar@{-}@`{p+(13,5),p+(26,2),p+(13,-5)}[] & \bullet M \\
  \ar@{{}{ }{}}@`{p+(-2,-2),p+(-2,-2)}[]|{P_1} \ar@{{}{ }{}}@`{p+(3,3),p+(3,3)}[]|{P_4} \ar@{-}[r] & P_5
 }$} 
  & &
\boxinminipage{$\renewcommand{\labelstyle}{\textstyle} \xymatrix@L=.05cm@!=0cm@R=1.5cm@C=1.5cm@M=0.00cm{ 
  \ar@{-}[d] \ar@{{}{ }{}}@`{p+(-2,3),p+(-2,3)}[]|{P_2} \ar@{{}{ }{}}@`{p+(3,-3),p+(3,-3)}[]|{P_4} \ar@{-}[r] & P_3 \\
  \ar@{{}{ }{}}@`{p+(-2,-2),p+(-2,-2)}[]|{P_1} \ar@{{}{ }{}}@`{p+(3,5),p+(3,5)}[]|{P_5} \ar@{-}@`{p+(13,5),p+(26,2),p+(13,-5)}[] & \bullet N
 }$} & &
% & & \def\svgwidth{2cm}\input{exccas6.pdf_tex} \\
\boxinminipage{$\renewcommand{\labelstyle}{\textstyle} \xymatrix@L=.05cm@!=0cm@R=1.5cm@C=1.5cm@M=0.00cm{ 
  \ar@{-}[d] \ar@{{}{ }{}}@`{p+(-2,3),p+(-2,3)}[]|{P_2} \ar@{{}{ }{}}@`{p+(3,-3),p+(3,-3)}[]|{P_4} \ar@{-}[r] & P_3 \\
  \ar@{{}{ }{}}@`{p+(-2,-2),p+(-2,-2)}[]|{P_1} \ar@{{}{ }{}}@`{p+(3,3),p+(3,3)}[]|{P_5} \ar@{-}[r] & P_6
 }$} \\
   $j = 3$, $n = 5$ & & $j = 4$, $n = 5$ & & $j = 4$, $n = 6$ 
 \end{tabular}
\end{center}
  \caption{Special polygons in a sphere with four punctures} \label{csfour}   
  \end{figure}
  In all cases, for all $N \in \M$, we have $m_N = 1$. We can simplify $\alpha \io{\vct{P_jP_{j-1}}, \vct{P_{2} P_{3}}}$ and $\io{\vct{P_1 P_{n}}, \vct{P_{j+1} P_{j+2}}} \beta$ in the following way:
  \begin{itemize}
   \item If $j = 2$, $\alpha  \io{\vct{P_jP_{j-1}}, \vct{P_{2} P_{3}}} = \lambda_M \lambda_{P_2} e_\alpha$ where $e_\alpha := e_{P_2 P_3}$.
   \item If $j = 3$, $\alpha  \io{\vct{P_jP_{j-1}}, \vct{P_{2} P_{3}}} = \lambda_{P_2} \lambda_M e_\alpha$ where $e_\alpha := \ic{\vct{P_3 P_2}, \vct{P_{2} P_{3}}} / \lambda_{M}$ is an idempotent modulo $S^P$.
   \item If $j = 4$, $\alpha  \io{\vct{P_jP_{j-1}}, \vct{P_{2} P_{3}}} = \lambda_{P_3} \lambda_{P_4} e_\alpha$ where $e_\alpha := e_{P_2 P_3}$.
   \item If $n = j+1$, $\io{\vct{P_1 P_{n}}, \vct{P_{j+1} P_{j+2}}} \beta = \lambda_N \lambda_{P_1} e_\beta$ where $e_\beta := \ic{\vct{P_1 P_n}, \vct{P_{n} P_{1}}} / \lambda_{N}$ is idempotent modulo $S^P$.
   \item If $n = j+2$, $\io{\vct{P_1 P_{n}}, \vct{P_{j+1} P_{j+2}}} \beta = \lambda_{P_n} \lambda_{P_1} e_\beta$ where $e_\beta := e_{P_n P_1}$.
  \end{itemize}
  so, in any case, $R_{P,1} = \omega_{2}^P \omega_1^P - \lambda_\M e_\alpha \omega_{2}^P \omega_1^P  e_\beta$. Thus we get \begin{align*} ((1-e_\alpha) R_{P,1}) + S^P &= ((1-e_\alpha) \omega_{2}^P \omega_1^P) + S^P \\ (R_{P,1}(1-e_\beta)) + S^P &= (\omega_{2}^P \omega_1^P (1-e_\beta)) + S^P \end{align*} and $e_\alpha R_{P,1} e_\beta = \nu_\M e_\alpha \omega_2^P \omega_1^P e_\beta$ so $$(e_\alpha R_{P,1} e_\beta) + S^P= (e_\alpha \omega_2^P \omega_1^P e_\beta) + S^P$$ as $\nu_\M$ is invertible. We conclude that $(R_{P,1}) + S^P = (\omega_2^P \omega_1^P) + S^P$.

 (2) This is the analogous to (1).

 (3) If $n = 2$, this is an easy consequence of (1) and (2). So, up to swapping $i = 1$ and $j$, we can suppose that $j \neq n$. 

 (a) If $j \neq 2$, for any $\ell \neq 1, 2, n$, $R_{P,\ell} - \omega^P_{\ell+1} \omega^P_\ell$ is right divisible by
  \begin{align*}
   &  \xi^P_{\ell+2} \cdots \xi^P_n \xi^P_1 \xi^P_2 \xi^P_3 \cdots \xi^P_{\ell-1} \\
	    =\,\, & \xi^P_{\ell+2} \cdots \xi^P_n \cdot \io{\vct{P_1 P_n}, \vct{P_{j+1} P_{j+2}}} \cdot \omega_{j+1}^P \omega_j^P \cdot \io{\vct{P_jP_{j-1}},\vct{P_2 P_3}} \cdot \xi^P_3  \cdots \xi^P_{\ell-1}
  \end{align*}
  so $R_{P,\ell} - \omega_{\ell+1}^P \omega_\ell^P \in (\omega^P_{j+1} \omega^P_j)$. In the same way, $R_{P,\ell} - \omega_{\ell+1}^P \omega_\ell^P \in (\omega^P_2 \omega^P_1)$ if $\ell \neq j-1, j, j+1$. As $j \neq 1, 2, n$, the only remaining possibilities are $i + 1 = 2 = \ell = j - 1$ and $j+1 = \ell = n = i - 1$. 

  By symmetry of the situation, we suppose that $\ell = 2$ and $j = 3$ so that $(R_{P,\ell} - \omega^P_{\ell+1} \omega^P_\ell) \omega^P_{\ell+1}$ is right divisible by 
   $$ \xi^P_{4} \cdots \xi^P_n \xi^P_1 \omega^P_3  = \xi^P_{4} \cdots \xi^P_n \cdot \io{\vct{P_1 P_n}, \vct{P_{4} P_{5}}} \cdot \omega_{4}^P \omega_3^P $$
  and $\omega^P_{\ell} (R_{P,\ell} - \omega^P_{\ell+1} \omega^P_\ell) $ is left divisible by 
   $$ \omega^P_2 \xi^P_{4} \cdots \xi^P_n \xi^P_1 = \omega^P_2 \omega^P_1 \cdot \io{\vct{P_1 P_n}, \vct{P_{4} P_{5}}} \cdot \xi^P_5 \cdots \xi^P_n \xi^P_1$$
  so $\omega^P_\ell (R_{P,\ell} - \omega^P_{\ell+1} \omega^P_\ell), (R_{P,\ell} - \omega^P_{\ell+1} \omega^P_\ell)\omega^P_{\ell+1} \in ( \omega^P_{i+1} \omega^P_i, \omega^P_{j+1} \omega^P_j)$.

  (b) If $j = 2$, for $\ell \neq 1, n$, $\omega^P_{\ell+1} \omega^P_\ell - R_{P,\ell}$ is right divisible by
  \begin{align*}
   & C_\sigma \xi^P_{\ell+2} \cdots \xi^P_n \xi^P_1 \xi^P_2 \xi^P_3 \cdots \xi^P_{\ell-1} \\
	    =\,\, & \xi^P_{\ell+2} \cdots \xi^P_n \cdot \io{\vct{P_1 P_n}, \vct{P_{3} P_{4}}} \cdot \omega_{3}^P C_\sigma \xi^P_2 \xi^P_3  \cdots \xi^P_{\ell-1} \\
            =\,\, & \xi^P_{\ell+2} \cdots \xi^P_n \cdot \io{\vct{P_1 P_n}, \vct{P_{3} P_{4}}} \cdot \omega_{3}^P \omega_2^P \xi_2^P \cdot \xi^P_2 \xi^P_3  \cdots \xi^P_{\ell-1}
  \end{align*}
  so $\omega^P_{\ell+1} \omega^P_\ell - R_{P,\ell} \in (\omega^P_{j+1} \omega^P_j)$. For $\ell \neq 2, 3$, a similar computation gives the result. The last possibility is $\ell = n = 3$. In this case, $j+1 = \ell = i-1$ and $(\omega^P_{\ell+1} \omega^P_\ell - R_{P,\ell}) \omega^P_{\ell+1}$ is right divisible by
  $C_\sigma \omega^P_{\ell+1} = \xi^P_2 \omega^P_2 \omega^P_1$
  so $(\omega^P_{\ell+1} \omega^P_\ell - R_{P,\ell}) \omega^P_{\ell+1} \in (\omega^P_2 \omega^P_1)$. In the same way, $\omega^P_{\ell} (\omega^P_{\ell+1} \omega^P_\ell - R_{P,\ell}) \in (\omega^P_{\ell} \omega^P_{\ell-1}) = (\omega^P_{j+1} \omega^P_j)$.
\end{proof}

We will need the following easy observation:
\begin{lemma} \label{RRp2}
 If $P$ is a flat $4$-gon (\emph{i.e.} $\vct{P_1 P_2} = - \vct{P_4 P_1}$ and $\vct{P_2 P_3} = - \vct{P_3 P_4}$) then we have the equalities
 $$(R_{P,1}, R_{P,2}) = (\omega^P_2 \omega^P_1, \omega^P_3 \omega^P_2) \quad \text{and} \quad (R_{P,3}, R_{P,4}) = (\omega^P_4 \omega^P_3, \omega^P_1 \omega^P_4).$$
\end{lemma}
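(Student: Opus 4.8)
The plan is to make the defining relations of the flat $4$-gon completely explicit and then to recognise the external correction terms as the central element $C_\sigma$ localised at the two relevant vertices. Write $\vec u := \vct{P_1 P_2}$ and $\vec v := \vct{P_2 P_3}$, so that the hypothesis reads $\vct{P_4 P_1} = -\vec u$, $\vct{P_3 P_4} = -\vec v$ and forces $P_4 = P_2$; the four oriented sides are thus $\vec u, \vec v, -\vec v, -\vec u$. First I would observe that this is exactly the configuration \emph{not} covered by Lemma~\ref{RRp}: the two pairs of opposite sides $(\vct{P_1 P_2}, \vct{P_4 P_1})$ and $(\vct{P_2 P_3}, \vct{P_3 P_4})$ are both consecutive, so case (a) is impossible, and case (b) would require $\#\M_P>0$; hence the content of the lemma is precisely the case $\M_P = \emptyset$, which I assume from now on. A direct reading of the cyclic orders gives $\omega^P_1 = \ic{\vec u, \vec u}$ and $\omega^P_3 = \ic{-\vec v, -\vec v}$ (the full internal loops at $P_1$ and $P_3$), together with $\omega^P_2 = \ic{\vec v, -\vec u}$, $\omega^P_4 = \ic{-\vec u, \vec v}$, while $\xi^P_4 = \io{\vec v, -\vec u} = \lambda_{P_2}\ic{\vec v, \vec v}^{m_{P_2}-1}\omega^P_2 = \lambda_{P_2}\omega^P_2(\omega^P_4\omega^P_2)^{m_{P_2}-1}$, using $\ic{\vec v, \vec v} = \omega^P_2\omega^P_4$. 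The crucial point is that the two remaining external paths are full external loops, so that $\xi^P_1 = \io{\vec u, \vec u} = \lambda_{P_1}\ic{\vec u, \vec u}^{m_{P_1}} = C_\sigma e_u$ and, symmetrically, $\xi^P_3 = C_\sigma e_v$.

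Granting these identities, the inclusion $(R_{P,1}, R_{P,2}) \subseteq (\omega^P_2\omega^P_1, \omega^P_3\omega^P_2)$ is immediate. Since $C_\sigma$ is central (Proposition~\ref{Csig}) and $\xi^P_4$ runs from the vertex $v$ to the vertex $u$, the two correction terms coincide: $\xi^P_3\xi^P_4 = C_\sigma e_v\,\xi^P_4 = C_\sigma\xi^P_4 = \xi^P_4\,C_\sigma e_u = \xi^P_4\xi^P_1$. Moreover $C_\sigma\omega^P_2 = C_\sigma e_v\,\omega^P_2 = \xi^P_3\omega^P_2 = \lambda_{P_3}(\omega^P_3)^{m_{P_3}-1}(\omega^P_3\omega^P_2)$, so that $C_\sigma\xi^P_4 = \lambda_{P_2}\lambda_{P_3}(\omega^P_3)^{m_{P_3}-1}(\omega^P_3\omega^P_2)(\omega^P_4\omega^P_2)^{m_{P_2}-1}$ lies in $(\omega^P_3\omega^P_2)$. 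As $R_{P,1} = \omega^P_2\omega^P_1 - C_\sigma\xi^P_4$ and $R_{P,2} = \omega^P_3\omega^P_2 - C_\sigma\xi^P_4$, both generators of the left-hand ideal visibly lie in the right-hand one.

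For the reverse inclusion it suffices to prove $C_\sigma\xi^P_4 \in (R_{P,1}, R_{P,2})$, since then $\omega^P_2\omega^P_1 = R_{P,1} + C_\sigma\xi^P_4$ and $\omega^P_3\omega^P_2 = R_{P,2} + C_\sigma\xi^P_4$ both belong to the ideal. Substituting $\omega^P_3\omega^P_2 = R_{P,2} + C_\sigma\xi^P_4$ into the factorisation of $C_\sigma\xi^P_4$ just obtained yields an identity $C_\sigma\xi^P_4 = \rho + L\,(C_\sigma\xi^P_4)\,M$ with $\rho \in (R_{P,1}, R_{P,2})$, $L = \lambda_{P_2}\lambda_{P_3}(\omega^P_3)^{m_{P_3}-1}$ and $M = (\omega^P_4\omega^P_2)^{m_{P_2}-1}$. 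If $m_{P_2}>1$ or $m_{P_3}>1$ then $L$ or $M$ lies in $J$ (the loops $\omega^P_i$ lie in $J$ by Lemma~\ref{xiJ}), and iterating the substitution pushes the remainder $L^N(C_\sigma\xi^P_4)M^N$ into ever higher powers of $J$; working in $\widehat{\Gamma^\circ_\sigma}^{\,J}$ it therefore vanishes and $C_\sigma\xi^P_4 \in (R_{P,1}, R_{P,2})$, completing the proof in this case.

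I expect the genuine difficulty to be the fully degenerate case $m_{P_2}=m_{P_3}=1$, where $L = \lambda_{P_2}\lambda_{P_3}$ is a scalar and $M = 1$, so that the substitution closes up after a single step to $(1-\lambda_{P_2}\lambda_{P_3})\,C_\sigma\xi^P_4 \in (R_{P,1}, R_{P,2})$ (and symmetrically, reducing via $R_{P,1}$ first, to $(1-\lambda_{P_1}\lambda_{P_2})\,C_\sigma\xi^P_4 \in (R_{P,1}, R_{P,2})$). The argument then concludes once one knows that one of these scalars is invertible, which is where the standing hypotheses on $(\Sigma,\M)$ (in particular the invertibility of $\nu_\M$, together with the constraints excluding spheres with too few punctures) must be invoked; pinning down this invertibility is the main obstacle. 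Finally, the second pair of equalities $(R_{P,3}, R_{P,4}) = (\omega^P_4\omega^P_3, \omega^P_1\omega^P_4)$ follows from the first by the evident symmetry of the flat $4$-gon exchanging $(\vec u, P_1)$ with $(\vec v, P_3)$, i.e.\ the relabelling $P_i \mapsto P_{i+2}$.
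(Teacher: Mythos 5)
Your overall route coincides with the paper's (dispose of $\#\M_P>0$ via Lemma~\ref{RRp}, then treat $\M_P=\emptyset$ explicitly), but the explicit analysis rests on a misreading of the notation, and the case you leave open is exactly where the paper's one essential idea is needed. The misreading: by the paper's conventions $\io{\vec u,\vec u}=\lambda_M\ic{\vec u,\vec u}^{m_M-1}$, \emph{not} $\lambda_M\ic{\vec u,\vec u}^{m_M}$ --- this is forced both by the clause that $\io{\vec u_i,\vec u_j}$ is ``$\lambda_M$ times the idempotent at $u_i$ if $\vec u_i=\vec u_j$ and $m_M=1$'' and by the identity $\ic{\vec u,\vec u}\cdot\io{\vec u,\vec u}=\lambda_M\ic{\vec u,\vec u}^{m_M}$. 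Hence $\xi^P_1=\lambda_{P_1}(\omega^P_1)^{m_{P_1}-1}\neq C_\sigma e_u$ and $\xi^P_3=\lambda_{P_3}(\omega^P_3)^{m_{P_3}-1}\neq C_\sigma e_v$, and your central simplification --- that both correction terms equal $C_\sigma\xi^P_4$, in particular $\xi^P_3\xi^P_4=\xi^P_4\xi^P_1$ --- is false. The correct terms are
$\xi^P_3\xi^P_4=\lambda_{P_2}\lambda_{P_3}(\omega^P_3)^{m_{P_3}-1}(\omega^P_2\omega^P_4)^{m_{P_2}-1}\omega^P_2$ and
$\xi^P_4\xi^P_1=\lambda_{P_1}\lambda_{P_2}(\omega^P_2\omega^P_4)^{m_{P_2}-1}\omega^P_2(\omega^P_1)^{m_{P_1}-1}$,
two genuinely different elements (one winds once more around $P_3$, the other around $P_1$). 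Consequently, exhibiting $\omega^P_3\omega^P_2$ as a factor of the first requires $m_{P_3}\geq 2$, and $\omega^P_2\omega^P_1$ in the second requires $m_{P_1}\geq 2$; your substitute-and-iterate argument therefore needs these specific multiplicities to exceed $1$, not merely ``$m_{P_2}>1$ or $m_{P_3}>1$'', and for instance $m_{P_3}=1<m_{P_2}$ escapes your mechanism altogether.

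The gap you flag at the end cannot be closed the way you hope: invertibility of $1-\lambda_{P_2}\lambda_{P_3}$ is nowhere among the standing hypotheses (only $\nu_\M=1-\lambda_\M$ is assumed invertible, and only for the four-punctured sphere with all $m_M=1$). The missing idea is topological, and it makes all the degenerate cases vacuous: since each of the two edges $u,v$ occurs with \emph{both} orientations as a side of the flat $4$-gon, the image of $P$ is open and closed in $\Sigma$, so a flat $4$-gon with $\M_P=\emptyset$ (the hole case being trivial, as $R_{P,i}=\omega^P_{i+1}\omega^P_i$ there) fills the whole surface; thus $\Sigma$ is a sphere without boundary with $\M=\{P_1,P_2,P_3\}$, and the standing assumptions then force $m_M>1$ for \emph{all} $M\in\M$. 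In that situation there are no special monogons, so every arrow lies in $J$, both cofactors of the strict factors above lie in $J$ (or are scalars), and the $J$-adic completeness yields both equalities, the second pair by the rotation $P_i\mapsto P_{i+2}$ exactly as you say. This reduction to the three-punctured sphere is precisely how the paper's proof works, and your argument is incomplete without it.
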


\begin{proof}
 If we are not in the case of a sphere without boundary and with $\# \M = 3$, then $\# \M_P \geq 1$ and this is an easy consequence of Lemma \ref{RRp}. So we suppose that $\Sigma$ is a sphere without boundary and $\# \M = 3$. We get easily
 \begin{align*} &R_{P,1} = \omega^P_2 \omega^P_1 - \lambda_{P_2} \lambda_{P_3} (\omega^P_3)^{m_{P_3} - 1} (\omega^P_2 \omega^P_4)^{m_{P_2} - 1} \omega^P_2 \\ \text{and} \quad & R_{P,2} = \omega^P_3 \omega^P_2 - \lambda_{P_1} \lambda_{P_2}  (\omega^P_2 \omega^P_4)^{m_{P_2} - 1} \omega^P_2   (\omega^P_1)^{m_{P_1} - 1} \end{align*}
 and, as we did the hypothesis that $m_{P_1}, m_{P_2}, m_{P_3} > 1$, we get easily that $\omega^P_3 \omega^P_2$ is a strict factor of $ (\omega^P_3)^{m_{P_3} - 1} (\omega^P_2 \omega^P_4)^{m_{P_2} - 1} \omega^P_2$ and $\omega^P_2 \omega^P_1$ is a strict factor of $(\omega^P_2 \omega^P_4)^{m_{P_2} - 1} \omega^P_2   (\omega^P_1)^{m_{P_1} - 1}$. This gives the first equality by completion with respect to $J$ (which contains all arrows in this case). The second equality is similar.
\end{proof}

Let $P$ be a polygon. We call \emph{reduction situation} the datum of an oriented edge $\vec u$ such that $s(\vec u) = P_1$ and two polygons $P'$ and $P''$ such that one of the following holds:
\begin{enumerate}[({R}a)]
 \item $P' = P''$ has $n+2$ sides and $\vct{P'_\ell P'_{\ell+1}} = \vct{P_\ell P_{\ell+1}}$ for $\ell = 1 \dots n-1$, $\vct{P'_n P'_{n+1}} = \vct{P_n P_1}$, $\vct{P'_{n+1} P'_{n+2}} = \vec u$ and $\vct{P'_{n+2} P'_1} = -\vec u$. Notice that either $\# \M_P = \# \M_{P'} = \infty$, or $\M_P = \M_{P'} \cup \{t(\vec u)\}$.
 \item $P' = P''$ has $n' > n+2$ sides and $\vct{P'_\ell P'_{\ell+1}} = \vct{P_\ell P_{\ell+1}}$ for $\ell = 1 \dots n-1$, $\vct{P'_n P'_{n+1}} = \vct{P_n P_1}$, $\vct{P'_{n+1} P'_{n+2}} = \vec u$ and $\vct{P'_{n'} P'_1} = -\vec u$.
 \item $P'$ has $n'$ sides and $P''$ has $n''$ sides with $n'+n'' = n+2$ and $\vct{P'_\ell P'_{\ell+1}} = \vct{P_\ell P_{\ell+1}}$ for $\ell = 1 \dots n'-1$, $\vct{P'_{n'} P'_1} = - \vec u$, $\vct{P''_\ell P''_{\ell+1}} = \vct{P_{n'-1+\ell} P_{n'+\ell}}$ for $\ell = 1 \dots n''-1$, $\vct{P''_{n''} P''_1} = \vec u$.
\end{enumerate}

Reductions situations are illustrated on Figure \ref{reds}. Notice that, for $i = 1, \dots, n'$ in case (Rc) and $i = 1, \dots, n+1$ in cases (Ra) and (Rb), we have $\omega_i^P = \alpha_i \omega_i^{P'} \beta_i$ and $\xi_i^{P'} = \beta_i \xi_i^P \alpha_i$ where 
  $$\alpha_i = \left\{\begin{array}{ll}
                       \omega_1^{P'} & \text{if } i = n+1, \\
		       1 & \text{else,}
                      \end{array}\right. \quad 
    \beta_i = \left\{\begin{array}{ll}
                       \omega_{n+1}^{P'} & \text{if } i = 1, \\
			1 & \text{else,}
                      \end{array}\right.$$
in cases (Ra) and (Rb) and
  $$\alpha_i = \left\{\begin{array}{ll}
                       \omega_1^{P''} & \text{if } i = n' , n'' \neq 1, \\
		       \omega_1^{P'} \omega_1^{P''} & \text{if } i = n' , n'' = 1, \\
			1 & \text{else,}
                      \end{array}\right. \quad 
    \beta_i = \left\{\begin{array}{ll}
                       \omega_{n''}^{P''} & \text{if } i = 1 , n'' \neq 1, \\
		       \omega_{n''}^{P''} \omega_{n'}^{P'} & \text{if } i = 1 , n'' = 1, \\
			1 & \text{else.}
                      \end{array}\right.$$
in case (Rc). 

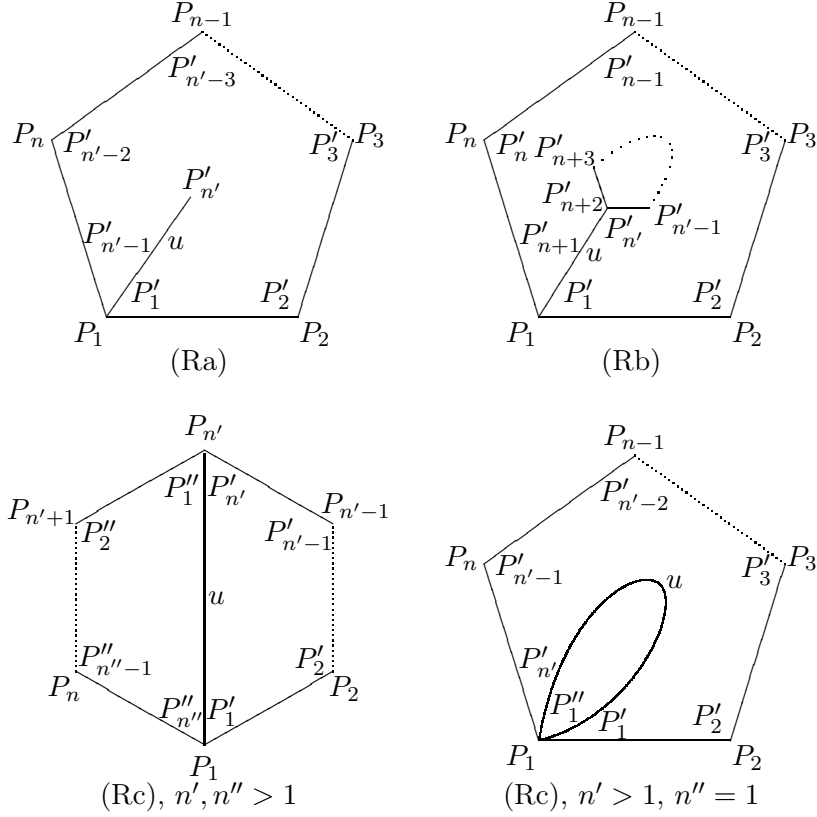
\begin{figure}
\begin{center}
 \begin{tabular}{ccc}
   \boxinminipage{$\renewcommand{\labelstyle}{\textstyle} \xymatrix@L=.05cm@!=0cm@R=.18cm@C=.18cm@M=0.00cm{ 
    & & & & & & & & & & & \ar@{..}[rrrrrrrrrrrdddddddd] \ar@{{}{ }{}}@`{p+(0,3),p+(0,3)}[]|{P_{n-1}} \ar@{{}{ }{}}@`{p+(0,-7),p+(0,-7)}[]|{P'_{n'-3}} \\ \\ \\ \\ \\ \\ \\ \\
   \ar@{-}[rrrrrrrrrrruuuuuuuu] \ar@{{}{ }{}}@`{p+(-4,1),p+(-4,1)}[]|{P_n} \ar@{{}{ }{}}@`{p+(8,-1),p+(8,-1)}[]|{P'_{n'-2}} & & & & & & & & & & & & & & & & & & & & & & \ar@{-}[llllddddddddddddd] \ar@{{}{ }{}}@`{p+(3,1),p+(3,1)}[]|{P_3} \ar@{{}{ }{}}@`{p+(-5,-1),p+(-5,-1)}[]|{P_3'} \\ \\ \\ 
    & & & & & & & & & & & P'_{n'} \ar@{-}[llllllldddddddddd]^(.4)u \\ \\ \\ \\ \\ \\ \\ \\ \\ \\
    & & & & \ar@{{}{ }{}}@`{p+(-3,-3),p+(-3,-3)}[]|{P_1} \ar@{{}{ }{}}@`{p+(7,4),p+(7,4)}[]|{P_1'} \ar@{{}{ }{}}@`{p+(2,14),p+(2,14)}[]|{P'_{n'-1}} \ar@{-}[lllluuuuuuuuuuuuu] & & & & & & & & & & & & & & \ar@{-}[llllllllllllll] \ar@{{}{ }{}}@`{p+(3,-3),p+(3,-3)}[]|{P_2} \ar@{{}{ }{}}@`{p+(-4,4),p+(-4,4)}[]|{P_2'}
 }$}
 & & 
\boxinminipage{$\renewcommand{\labelstyle}{\textstyle} \xymatrix@L=.05cm@!=0cm@R=.18cm@C=.18cm@M=0.00cm{ 
    & & & & & & & & & & & \ar@{..}[rrrrrrrrrrrdddddddd] \ar@{{}{ }{}}@`{p+(0,3),p+(0,3)}[]|{P_{n-1}} \ar@{{}{ }{}}@`{p+(0,-7),p+(0,-7)}[]|{P'_{n-1}} \\ \\ \\ \\ \\ \\ \\ \\
   \ar@{-}[rrrrrrrrrrruuuuuuuu] \ar@{{}{ }{}}@`{p+(-4,1),p+(-4,1)}[]|{P_n} \ar@{{}{ }{}}@`{p+(5,-1),p+(5,-1)}[]|{P'_{n}} & & & & & & & & & & & & & & & & & & & & & & \ar@{-}[llllddddddddddddd] \ar@{{}{ }{}}@`{p+(3,1),p+(3,1)}[]|{P_3} \ar@{{}{ }{}}@`{p+(-5,-1),p+(-5,-1)}[]|{P_3'} \\ \\ 
    & & & & & & & & \ar@{{}{ }{}}@`{p+(-5,3),p+(-5,3)}[]|{P'_{n+3}} \ar@{..}@`{c+(6,6),p+(8,12)}[dddrrrr] \\ \\ \\
    & & & & & & & & & \ar@{{}{ }{}}@`{p+(3,-4),p+(3,-4)}[]|{P'_{n'}} \ar@{{}{ }{}}@`{p+(-6,2),p+(-6,2)}[]|{P'_{n+2}} \ar@{-}[llllldddddddd]^(.37)u \ar@{-}[rrr] \ar@{-}[uuul] & & &  \ar@{{}{ }{}}@`{p+(7,-2),p+(7,-2)}[]|{P'_{n'-1}}  \\ \\ \\ \\ \\ \\ \\ \\
    & & & & \ar@{{}{ }{}}@`{p+(-3,-3),p+(-3,-3)}[]|{P_1} \ar@{{}{ }{}}@`{p+(7,4),p+(7,4)}[]|{P_1'} \ar@{{}{ }{}}@`{p+(2,14),p+(2,14)}[]|{P'_{n+1}} \ar@{-}[lllluuuuuuuuuuuuu] & & & & & & & & & & & & & & \ar@{-}[llllllllllllll] \ar@{{}{ }{}}@`{p+(3,-3),p+(3,-3)}[]|{P_2} \ar@{{}{ }{}}@`{p+(-4,4),p+(-4,4)}[]|{P_2'}
 }$}
\\
   (Ra) & & (Rb) \\ \\
\boxinminipage{$\renewcommand{\labelstyle}{\textstyle} \xymatrix@L=.05cm@!=0cm@R=.089cm@C=.089cm@M=0.00cm{ 
    & & & & & & & & & & & & & & & & & & & \ar@{-}[rrrrrrrrrrrrrrrrrrrddddddddddd] \ar@{-}[dddddddddddddddddddddddddddddddddddddddddddd]^u \ar@{{}{ }{}}@`{p+(0,4),p+(0,4)}[]|{P_{n'}} \ar@{{}{ }{}}@`{p+(4,-7),p+(4,-7)}[]|{P'_{n'}} \ar@{{}{ }{}}@`{p+(-4,-7),p+(-4,-7)}[]|{P''_1} \\ \\ \\ \\ \\ \\ \\ \\ \\ \\ \\
    \ar@{-}[rrrrrrrrrrrrrrrrrrruuuuuuuuuuu] \ar@{{}{ }{}}@`{p+(4,-2),p+(4,-2)}[]|{P''_2} \ar@{{}{ }{}}@`{p+(-6,2),p+(-6,2)}[]|{P_{n'+1}} & & & & & & & & & & & & & & & & & & & & & & & & & & & & & & & & & & & & & & \ar@{..}[dddddddddddddddddddddd] \ar@{{}{ }{}}@`{p+(-6,-2),p+(-6,-2)}[]|{P'_{n'-1}} \ar@{{}{ }{}}@`{p+(4,3),p+(4,3)}[]|{P_{n'-1}} \\ \\ \\ \\ \\ \\ \\ \\ \\ \\ \\ \\ \\ \\ \\ \\ \\ \\ \\ \\ \\ \\
    \ar@{..}[uuuuuuuuuuuuuuuuuuuuuu] \ar@{{}{ }{}}@`{p+(-2,-3),p+(-2,-3)}[]|{P_n} \ar@{{}{ }{}}@`{p+(7,2),p+(7,2)}[]|{P''_{n''-1}} & & & & & & & & & & & & & & & & & & & & & & & & & & & & & & & & & & & & & & \ar@{-}[dddddddddddlllllllllllllllllll] \ar@{{}{ }{}}@`{p+(2,-3),p+(2,-3)}[]|{P_2} \ar@{{}{ }{}}@`{p+(-4,2),p+(-4,2)}[]|{P'_2} \\ \\ \\ \\ \\ \\ \\ \\ \\ \\ \\
    & & & & & & & & & & & & & & & & & & & \ar@{-}[llllllllllllllllllluuuuuuuuuuu] \ar@{{}{ }{}}@`{p+(0,-4),p+(0,-4)}[]|{P_1} \ar@{{}{ }{}}@`{p+(3,6),p+(3,6)}[]|{P'_1} \ar@{{}{ }{}}@`{p+(-4,6),p+(-4,6)}[]|{P''_{n''}}
 }$}
& & 
\boxinminipage{$\renewcommand{\labelstyle}{\textstyle} \xymatrix@L=.05cm@!=0cm@R=.18cm@C=.18cm@M=0.00cm{ 
    & & & & & & & & & & & \ar@{..}[rrrrrrrrrrrdddddddd] \ar@{{}{ }{}}@`{p+(0,3),p+(0,3)}[]|{P_{n-1}} \ar@{{}{ }{}}@`{p+(0,-7),p+(0,-7)}[]|{P'_{n'-2}} \\ \\ \\ \\ \\ \\ \\ \\
   \ar@{-}[rrrrrrrrrrruuuuuuuu] \ar@{{}{ }{}}@`{p+(-4,1),p+(-4,1)}[]|{P_n} \ar@{{}{ }{}}@`{p+(8,-1),p+(8,-1)}[]|{P'_{n'-1}} & & & & & & & & & & & & & & & & & & & & & & \ar@{-}[llllddddddddddddd] \ar@{{}{ }{}}@`{p+(3,1),p+(3,1)}[]|{P_3} \ar@{{}{ }{}}@`{p+(-5,-1),p+(-5,-1)}[]|{P_3'} \\ \\ \\ \\ \\ \\ \\ \\ \\ \\ \\ \\ \\
    & & & & \ar@{{}{ }{}}@`{p+(-3,-3),p+(-3,-3)}[]|{P_1} \ar@{{}{ }{}}@`{p+(13,3),p+(13,3)}[]|{P_1'} \ar@{{}{ }{}}@`{p+(0,14),p+(0,14)}[]|{P'_{n'}} \ar@{-}[lllluuuuuuuuuuuuu] \ar@{-}@`{p+(3,18),p+(25,30),p+(12,3)}[]^u \ar@{{}{ }{}}@`{p+(5,6),p+(5,6)}[]|{P_1''} & & & & & & & & & & & & & & \ar@{-}[llllllllllllll] \ar@{{}{ }{}}@`{p+(3,-3),p+(3,-3)}[]|{P_2} \ar@{{}{ }{}}@`{p+(-4,4),p+(-4,4)}[]|{P_2'}
 }$}
\\
   (Rc), $n', n'' > 1$ & & (Rc), $n' > 1$, $n'' = 1$ 
 \end{tabular} 
\end{center}
  \caption{Reduction situations} \label{reds}   
  \end{figure}

\begin{lemma} \label{redideal}
 In a reduction situation, for any $i = 1, \dots, \min(n'-1,n)$, there exists $\kappa$ invertible in $\widehat{\Gamma^\circ_\sigma}^J / S^P$ such that
 $R_{P,i} \kappa - \alpha_{i+1} R_{P',i} \beta_i \in I_{P,u} + S^{P}$
 where
 \begin{itemize}
  \item $I_{P,u} = (R_{P', n+1}, R_{P', n'})$ in cases (Ra) or (Rb);
  \item $I_{P,u} = (R_{P',n'}, R_{P'', n''})$ in case (Rc).
 \end{itemize}
%  Moreover $\kappa = 1$ except for the case of a sphere without boundary and with four punctures having $m_M = 1$ for all $M \in \M$ and with two loops as depicted in Figure \ref{excdec}. 
\end{lemma}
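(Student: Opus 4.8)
The plan is to expand both $R_{P,i}$ and $R_{P',i}$ according to the three-case formula of Definition \ref{defgamma} and to match them term by term, using the factorisations $\omega_i^P = \alpha_i \omega_i^{P'} \beta_i$ and $\xi_i^{P'} = \beta_i \xi_i^P \alpha_i$ recorded just before the statement. Write $R_{P,i} = \omega_{i+1}^P \omega_i^P - f_i^P$, where $f_i^P$ is the external term depending on $\# \M_P$. First I would treat the internal product: substituting the factorisation gives $\omega_{i+1}^P \omega_i^P = \alpha_{i+1} \omega_{i+1}^{P'} (\beta_{i+1} \alpha_i) \omega_i^{P'} \beta_i$, and since the nontrivial $\alpha_j$, $\beta_j$ occur only at the extreme indices attached to the side $\vec u$ (namely $j = n+1, n'$ for $\alpha$ and $j = 1$ for $\beta$ in cases (Ra), (Rb), and the analogous indices in case (Rc)), for every $i$ in the range $1 \leq i \leq \min(n'-1, n)$ one checks $\beta_{i+1} \alpha_i = 1$. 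Hence the internal part of $R_{P,i}$ coincides \emph{exactly} with $\alpha_{i+1} (\omega_{i+1}^{P'} \omega_i^{P'}) \beta_i$, the internal part of $\alpha_{i+1} R_{P',i} \beta_i$.

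It then remains to compare the external terms. Here $\M_P$ and $\M_{P'}$ differ only by the puncture (if any) cut off by $\vec u$, so $f_i^{P'}$ is obtained from $f_i^P$ by inserting, at the position where the new sides $\vec u$ and $-\vec u$ appear, the extra external factors coming from those sides. The generators of $I_{P,u}$ are exactly what collapse this insertion: the relations $R_{P',n+1}$ and $R_{P',n'}$ (respectively $R_{P',n'}$ and $R_{P'',n''}$ in case (Rc)) are attached to the vertices carrying the opposite sides $\vec u = -(-\vec u)$, so by parts (1) and (2) of Lemma \ref{RRp} they identify, modulo $S^P$, the corresponding internal products $\omega_{n+2}^{P'} \omega_{n+1}^{P'}$ and $\omega_1^{P'} \omega_{n'}^{P'}$ with elements of the ideal. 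Dually, using $\omega_j^P \xi_j^P = \xi_{j+1}^P \omega_{j+1}^P$ and Lemma \ref{xiJ} to discard the factors lying in $J$, the extra external factors of $f_i^{P'}$ reduce, modulo $I_{P,u} + S^P$, to $f_i^P$ multiplied by an invertible cycle $\kappa$ at the terminal vertex of $R_{P,i}$. Collecting the scalars produced by this reduction — products of the relevant $\lambda_M$ and, in the exceptional sphere cases, the factor $\nu_\M$ — yields the required invertible $\kappa$, and the identity $R_{P,i} \kappa - \alpha_{i+1} R_{P',i} \beta_i \in I_{P,u} + S^P$ follows.

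The bookkeeping of the external factors is routine in the generic configuration, so the real work is the case analysis dictated by $\# \M_P \in \{0, 1, \geq 2\}$ and by the reduction type (Ra)/(Rb)/(Rc). The case $\# \M_P \geq 2$ is immediate, since both external terms vanish. The main obstacle is the degenerate case (Ra), where $\vec u$ and $-\vec u$ are consecutive in $P'$, so that $P'$ may carry no puncture and Lemma \ref{RRp}(b) does not apply directly: there $P'$ is forced (via Lemma \ref{findminpol} and Proposition \ref{caracpoly}) to be a flat digon, a self-folded triangle without puncture, or to live on a sphere with few punctures, and the collapse of the external term must be carried out by hand, exactly as in the proof of Lemma \ref{RRp}, with $\nu_\M$ entering as the invertible scalar absorbed into $\kappa$ and with $S^P$ accounting for the special monogons cut off by $\vec u$. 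I expect this degenerate analysis, together with the precise identification of $\kappa$ in each subcase, to be where essentially all the difficulty lies; the subcases of (Rb) and (Rc) follow the same template with $\vec u$, $-\vec u$ nonconsecutive, where Lemma \ref{RRp}(a) applies verbatim and $\kappa$ is merely a unit scalar.
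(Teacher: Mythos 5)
Your skeleton — match the internal products via the factorisations $\omega_i^P = \alpha_i \omega_i^{P'} \beta_i$ (with the middle terms $\beta_{i+1}\alpha_i$ trivial in the range $1 \leq i \leq \min(n'-1,n)$), then collapse the external terms modulo $I_{P,u} + S^P$ — is indeed the paper's strategy, but your case analysis contains a step that fails. You declare the case $\# \M_P \geq 2$ ``immediate, since both external terms vanish'': only $f_i^P$ vanishes there, while $\# \M_P \geq 2$ is perfectly compatible with $\# \M_{P'} = 1$ (case (Ra)) or with $\M_{P'} = \{M\}$, $\M_{P''} = \{N\}$ (case (Rc)), so that $R_{P',i}$ still carries a nonzero external term. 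The latter configuration is in fact the hardest point of the entire lemma: there the paper obtains, modulo $R_{P'',n''}$, the identity $\alpha_{i+1} R_{P',i} \beta_i = R_{P,i}(1 - \lambda_M \lambda_N \nu)$ where $\nu$ is a genuine path element, not a scalar, so $\kappa = 1 - \lambda_M \lambda_N \nu$. Its invertibility requires either $\nu \in J$ — invoking $J$-adic completeness, which is precisely why the statement places $\kappa$ in $\widehat{\Gamma^\circ_\sigma}^J / S^P$ rather than in $k$ — or, when $\nu \notin J$, a classification via Lemma \ref{xiJ} showing that $(\Sigma, \M)$ must be a sphere with four punctures and that $\nu$ is a multiple of an idempotent $e$ modulo $S^P$, whence $\kappa^{-1} = 1 + \lambda_\M \nu_\M^{-1} e$; this is the one place where invertibility of $\nu_\M$ is used. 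Your plan, which treats $\kappa$ as ``collected scalars'' built from $\lambda_M$'s and $\nu_\M$, cannot produce this.

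Relatedly, you misplace the difficulty. In case (Ra) with $\M_{P'} = \emptyset$ (where, as you correctly note, Lemma \ref{RRp} fails because the opposite sides are consecutive), the paper's by-hand step is a clean telescoping computation modulo $R_{P',n'}$, namely $\alpha_{n+1} \xi_{n+2}^{P'} = \lambda_{P'_{n+2}} (\xi_2^P \cdots \xi_n^P \xi_1^P)^{m_{P'_{n+2}}-1} \omega_1^{P'}$, which yields $\kappa = 1$ exactly: no $\nu_\M$ enters, and $P'$ is not forced into flat digons or exceptional sphere configurations (that forcing analysis lives inside the proof of Lemma \ref{RRp} itself, not here). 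Conversely, in case (Rc) Lemma \ref{RRp}(a) does not ``apply verbatim'': the opposite sides $\vec u$ and $-\vec u$ lie in two \emph{different} polygons $P'$ and $P''$, whereas Lemma \ref{RRp} concerns two sides of a single polygon; the paper instead combines $R_{P',i}$ with $R_{P'',n''}$ directly, through the external correction factors $\theta, \theta'$ and $\eta, \eta'$. So the cases you dismiss as routine are exactly where the content of the lemma sits, and as written your argument for (Rc) would not close.
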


\begin{proof}
 Let us consider the three cases separately:
 \begin{enumerate}[({R}a)]
  \item If $\# \M_{P'} \geq 1$, we have $R_{P,i} = \alpha_{i+1} \omega^{P'}_{i+1} \omega^{P'}_i \beta_i$ and thanks to Lemma \ref{RRp}, we get:
  $$ \alpha_{i+1}\left(R_{P',i} - \omega^{P'}_{i+1} \omega^{P'}_i\right) \beta_i \in (\omega_{1}^{P'} \omega_{n'}^{P'}, \omega_{n'}^{P'} \omega_{n'-1}^{P'}) + S^{P'} = I_{P,u} + S^{P}$$
  so $R_{P,i} - \alpha_{i+1} R_{P',i} \beta_i \in I_{P,u} + S^{P}$. 

So we suppose that $\M_{P'} = \emptyset$. Then, modulo $R_{P',n+2} = R_{P',n'}$ we get
  \begin{align*}
   \alpha_{n+1} \xi_{n+2}^{P'} &= \lambda_{P'_{n+2}} \omega_1^{P'} (\omega_{n+2}^{P'})^{m_{P'_{n+2} } -1} = \lambda_{P'_{n+2}} \xi_2^{P'} \cdots \xi_n^{P'} \xi_{n+1}^{P'} (\omega_{n+2}^{P'})^{m_{P'_{n+2} } - 2} \\
   &= \lambda_{P'_{n+2}} \xi_2^{P} \cdots \xi_n^{P} \xi_{1}^{P} \omega_1^{P'} (\omega_{n+2}^{P'})^{m_{P'_{n+2} } - 2} \\ &= \cdots = \lambda_{P'_{n+2}} (\xi_2^{P} \cdots \xi_n^{P} \xi_{1}^{P})^{m_{P'_{n+2} } - 1} \omega_1^{P'}.
  \end{align*}

  Then, modulo $R_{P', n'}$, for $i = 1 \dots n$, we have
  \begin{align*} \alpha_{i+1} R_{P',i} \beta_i &= \alpha_{i+1} \omega_{i+1}^{P'} \omega_i^{P'} \beta_i - \alpha_{i+1} \xi_{i+2}^{P'} \cdots \xi_n^{P'} \xi_{n+1}^{P'} \xi_{n+2}^{P'} \xi_1^{P'} \xi_2^{P'} \cdots \xi_{i-1}^{P'} \beta_i \\ 
    &= \omega_{i+1}^P \omega_i^P - \xi_{i+2}^P \cdots \xi_n^P \xi_1^P \alpha_{n+1} \xi_{n+2}^{P'} \beta_1 \xi_1^P \xi_2^P \cdots \xi_{i-1}^P \\
    &= \omega_{i+1}^P \omega_i^P - \xi_{i+2}^P \cdots \xi_1^P \lambda_{P'_{n+2}} (\xi_2^{P} \cdots \xi_{1}^{P})^{m_{P'_{n+2} - 1}} \omega_1^{P'}\beta_1 \xi_1^P \cdots \xi_{i-1}^P \\
   &= \omega_{i+1}^P \omega_i^P - \lambda_{P'_{n+2}} (\xi_{i+2}^{P} \cdots \xi_{i+1}^{P})^{m_{P'_{n+2} - 1}} \xi_{i+2}^P \cdots \xi_1^P \omega_1^{P} \xi_1^P \cdots \xi_{i-1}^P \\
    &= \omega_{i+1}^P \omega_i^P - \lambda_{P'_{n+2}} C_\sigma (\xi_{i+2}^{P} \cdots \xi_{i+1}^{P})^{m_{P'_{n+2} - 1}} \xi_{i+2}^P \cdots \xi_{i-1}^P = R_{P,i}.
  \end{align*}
  \item In this case, we get that $\# \M_P \geq 2$ so we can use Lemma \ref{RRp} as in the first case of (Ra).
  \item As $1 \leq i \leq n'-1$, if $\# \M_{P'} > 1$, we get that $R_{P,i} = \alpha_{i+1} R_{P',i} \beta_i$ so we can suppose that $\# \M_{P'} \leq 1$. If $\M_{P'} = \{M\}$, denote
  $$\theta = \lambda_M C_\sigma (\xi_{1}^{P} \xi_{2}^{P} \cdots \xi_{n'}^{P} \alpha_{n'} \beta_1)^{m_M - 1} \quad \text{and} \quad \theta' = \lambda_M C_\sigma (\xi_{1}^{P'} \xi_{2}^{P'} \cdots \xi_{n'}^{P'})^{m_M - 1} $$
  and if $\M_{P'} = \emptyset$, denote $\theta = \theta'= 1$, so that, in both cases, $\beta_1 \theta = \theta' \beta_1$. By an easy analysis, we have
  \begin{align*} \alpha_{i+1} R_{P',i} \beta_i &= \alpha_{i+1} \omega_{i+1}^{P'} \omega_i^{P'} \beta_i - (\alpha_{i+1} \xi_{i+2}^{P'} \cdots \xi_{n'}^{P'}) \theta' (\xi_1^{P'} \cdots \xi_{i-1}^{P'} \beta_i)\\ 
    &= \omega_{i+1}^P \omega_i^P - (\xi_{i+2}^P \cdots \xi_{n'-1}^P \xi_{n'}^P \alpha_{n'}) \theta' (\beta_1 \xi_1^P \xi_2^P \cdots \xi_{i-1}^P) \\
    &= \omega_{i+1}^P \omega_i^P - \xi_{i+2}^P \cdots \xi_{n'-1}^P \xi_{n'}^P \alpha_{n'} \beta_1 \theta \xi_1^P \xi_2^P \cdots \xi_{i-1}^P.
  \end{align*}
  
 Write $\gamma = \omega_1^{P'}$ and $\delta = \omega_{n'}^{P'}$ if $n'' = 1$ and $\gamma = \delta = 1$ else. If $\# \M_{P''} > 1$ then $\gamma R_{P'',n''} \delta = \gamma \omega_1^{P''} \omega_{n''}^{P''} \delta = \alpha_{n'} \beta_1$ and therefore $\alpha_{i+1} R_{P',i} \beta_i = R_{P,i}$ modulo $R_{P'',n''}$ and we conclude in this case.
  
  If $\# \M_{P''} \leq 1$, let 
    $$\eta' = \lambda_N C_\sigma (\xi_{n''}^{P''} \xi_1^{P''} \cdots \xi_{n''-1}^{P''})^{m_N-1} \  \text{and} \  \eta = \lambda_N C_\sigma (\xi_1^{P} \omega_1^{P'} \omega_{n'}^{P'} \xi_{n'}^{P} \cdots \xi_{n}^{P})^{m_N-1}$$
    if $\M_{P''} = \{N\}$ and $\eta = \eta' = 1$ if $\M_{P''} = \emptyset$. In any case, $\eta' \delta = \delta \eta$ so that
  \begin{align*}\gamma R_{P'', n''} \delta &= \gamma \omega_1^{P''} \omega_{n''}^{P''} \delta - \gamma \xi_2^{P''} \cdots \xi_{n''-1}^{P''} \eta' \delta \\ &= \alpha_{n'} \beta_1 - \gamma \xi_2^{P''} \cdots \xi_{n''-1}^{P''} \delta \eta =  \alpha_{n'} \beta_1 - \xi_{n'+1}^{P} \cdots \xi_n^{P} \eta  
 \end{align*}
 with the convention that $\xi_{n+2}^P \cdots \xi_n^{P} C_\sigma = \omega_1^P$ (if $n' = n+1$).
  So, modulo $R_{P'',n''}$, we get,
  $$\alpha_{i+1} R_{P',i} \beta_i = \omega_{i+1}^P \omega_i^P - \xi_{i+2}^P \cdots \xi_{n}^P \eta \theta \xi_1^P \xi_2^P \cdots \xi_{i-1}^P$$
  If $\M_{P'} = \M_{P''} = \emptyset$, we have $\eta = \theta = 1$ so $\alpha_{i+1} R_{P', i} \beta_i = R_{P,i}$ modulo $R_{P'', n''}$. 
  If $\M_{P'} = \{M\}$ and $\M_{P''} = \emptyset$, modulo $R_{P'', n''}$, we get
  $\theta =  \lambda_M C_\sigma (\xi_{1}^{P} \xi_{2}^{P} \cdots \xi_{n}^{P})^{m_M - 1}$
  so we get
  \begin{align*}\alpha_{i+1} R_{P',i} \beta_i &= \omega_{i+1}^P \omega_i^P - \lambda_M C_\sigma \xi_{i+2}^P \cdots \xi_{n}^P (\xi_{1}^{P} \xi_{2}^{P} \cdots \xi_{n}^{P})^{m_M - 1} \xi_1^P \xi_2^P \cdots \xi_{i-1}^P \\ &= \omega_{i+1}^P \omega_i^P - \lambda_M C_\sigma (\xi_{i+2}^P \cdots  \xi_{i+1}^{P})^{m_M - 1} \xi_{i+2}^P \cdots \xi_{i-1}^P = R_{P,i}
  \end{align*}

  Symmetrically, we get that if $\M_{P'} = \emptyset$ and $\M_{P''} = \{N\}$, modulo $R_{P'',n''}$ and $R_{P', n'}$, $\alpha_{i+1} R_{P',i} \beta_i = R_{P,i}$.

  If $\M_{P'} = \{M\}$ and $\M_{P''} = \{N\}$, the equality modulo $R_{P'', n''}$ becomes $$\alpha_{i+1} R_{P',i} \beta_i = \omega_{i+1}^P \omega_i^P(1 - \lambda_N \lambda_M \nu) = R_{P,i}(1 - \lambda_N \lambda_M \nu)$$
 where
 $$\nu = \xi_i^P \cdots \xi_n^P (\xi_1^P \omega_1^{P'} \omega_{n'}^{P'} \xi_{n'}^P \cdots \xi_n^P)^{m_N-1} (\xi_1^P \cdots \xi_{n'}^P \alpha_{n'} \beta_1)^{m_M-1} \xi_1^P \xi_2^P \cdots \xi_{i-1}^P.$$
 If $\nu \in J$, we get the result immediately taking $\kappa := 1-\lambda_M \lambda_N \nu$. 

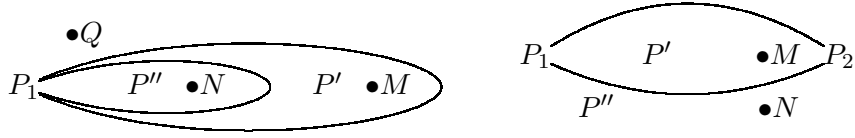
\begin{figure}
\begin{center}
\boxinminipage{$\renewcommand{\labelstyle}{\textstyle} \xymatrix@L=.05cm@!=0cm@R=.7cm@C=.8cm@M=0.00cm{ 
    & \bullet Q \\
    P_1 \ar@{-}@`{p+(15,6),p+(50,0),p+(15,-6)}[] \ar@{-}@`{p+(20,10),p+(90,0),p+(20,-10)}[] & & P'' & \bullet N & & P' &  \bullet M & & \\
 }$}
\boxinminipage{$\renewcommand{\labelstyle}{\textstyle} \xymatrix@L=.05cm@!=0cm@R=.7cm@C=.8cm@M=0.00cm{ 
    \\
    P_1 \ar@/_.5cm/@{-}[rrrrr] & & P' & & \bullet M & P_2 \ar@/_.7cm/@{-}[lllll] \\
    & P'' & & & \bullet N
 }$}
\end{center}
  \caption{Two special cases} \label{excdec}   
  \end{figure}
 Suppose that $\nu \notin J$. 
 A quick analysis, using Lemma \ref{xiJ} (2) proves that the only possibility is that $(\Sigma, \M)$ is a sphere with four punctures with $P'$ and $P''$ as in Figure \ref{excdec} and $m_{M'} = 1$ for $M' \in \M$. For the left diagram, we get $\nu = \xi_1^P = \lambda_{P_1} \ic{\vct{P_2 P_1}, \vct{P_1 P_2}}$
 and $e := \ic{\vct{P_2 P_1}, \vct{P_1 P_2}} / \lambda_{Q}$ is an idempotent modulo $S^{P}$. For the right diagram, we get $\nu = \lambda_{P_1} \lambda_{P_2} e$ where $e := e_{\vct{P_1 P_2}}$. Finally, we find that, in both cases, modulo $R_{P'', n''}$, 
 $$\alpha_{i+1} R_{P',i} \beta_i = R_{P,i}(1 - \lambda_\M e)$$ 
 which induces the result, as $\kappa := 1 - \lambda_\M e$ satisfies $\kappa^{-1} = 1 + \lambda_\M \nu_\M^{-1} e$. \qedhere
 \end{enumerate}
\end{proof}

The following lemma permits to do inductions:
\begin{lemma} \label{defpoly}
 \begin{enumerate}[\rm (1)]
  \item In a reduction situation, $P'$ and $P''$ have less reduction arcs than $P$.
  \item If $P$ is a polygon of $\sigma$ which is not minimal with reduction arc $u$ then up to replacing $P$ by an equivalent polygon $\tilde P$, there exist a reduction situation involving $P$ and an orientation $\vec u$ of $u$.
 \end{enumerate}
\end{lemma}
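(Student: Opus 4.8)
The plan is to treat the two assertions separately, proving (2) by cutting $P$ open along $u$ and checking the combinatorial criterion of Proposition \ref{caracpoly}, and deriving (1) from the containment of interiors together with a bookkeeping of the vertices created by the cut.

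For (2), since $u$ is a reduction arc of $P$ it is incident to a vertex of $P$ and its interior meets the interior of $P$. Replacing $P$ by an equivalent polygon $\tilde P$ I may assume this vertex is $P_1$, and I orient $u$ so that $s(\vec u)=P_1$. As $u$ is compatible with the sides of $P$, the arc issuing from $P_1$ stays in the interior of $P$ until it reaches its other endpoint $t(\vec u)$, which is a marked point that is either an interior marked point of $P$ (a puncture, or a point on the hole) or another vertex of $P$. If $t(\vec u)$ is an interior marked point, cutting along $u$ and returning along $-\vec u$ yields the single polygon of case (Ra), with $\M_P=\M_{P'}\cup\{t(\vec u)\}$ or both polygons carrying a hole. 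If $t(\vec u)$ is a vertex, then $u$ is a chord of $P$, which either disconnects the interior into the two polygons $P'$, $P''$ of case (Rc), or is non-separating and yields the single polygon of case (Rb). In each case I would verify that the prescribed sequence of oriented sides satisfies condition (ii) of Proposition \ref{caracpoly}; the conditions needing attention are the clockwise-ordering conditions at $P_1$ and at $t(\vec u)$, which follow from the orderings for $P$ together with the position at which $\vec u$ (respectively $-\vec u$) is inserted in the cyclic order around $P_1$ (respectively $t(\vec u)$). This is precisely where the replacement of $P$ by $\tilde P$ is used.

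For (1), the basic mechanism is that the interior of $P'$ (and of $P''$) is contained in the interior of $P$, while $u$, being a reduction arc of $P$, becomes a \emph{side} of $P'$ and of $P''$ and so is no longer a reduction arc of either. Moreover every vertex of $P'$ (respectively $P''$) is either a vertex of $P$ or an endpoint of $u$. In cases (Rb) and (Rc) both endpoints $s(\vec u)=P_1$ and $t(\vec u)$ are already vertices of $P$, so any reduction arc $v$ of $P'$ is incident to a vertex of $P$ and meets the interior of $P'\subseteq P$, hence is a reduction arc of $P$ different from $u$. This produces an injection from the reduction arcs of $P'$ (respectively $P''$) into the reduction arcs of $P$ that misses $u$, and (1) follows in these two cases.

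The delicate case, and the step I expect to be the main obstacle, is (Ra). There $t(\vec u)$ is an interior marked point of $P$ that becomes a genuine vertex of $P'$, sitting at the tip of the slit with the full angle around it lying in the interior of $P'$. Consequently an arc $v$ incident to $t(\vec u)$ and running into the interior is a reduction arc of $P'$ even though, not being incident to any vertex of $P$, it was \emph{not} a reduction arc of $P$; thus the naive injection used in (Rb) and (Rc) can fail, and the literal count of reduction arcs need not drop. To push the induction through I would either refine the bookkeeping—ordering by the lexicographic quantity $(\#\M_P,\,\#\{\text{reduction arcs of }P\})$, which strictly decreases in the puncture subcase of (Ra) since there $\#\M_{P'}=\#\M_P-1$—or show that in the reduction situations actually produced by part (2) the tip $t(\vec u)$ carries no further incident arc meeting the interior, handling separately the subcase where $P'$ still has a hole. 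Identifying the exact formulation that makes the induction underlying Theorem \ref{thmmin} valid is the real technical crux of the argument.
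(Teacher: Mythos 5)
Your part (2) is essentially the paper's own argument (the paper disposes of it in one line as ``an immediate consequence of Proposition~\ref{caracpoly}''), and your key geometric observation there is correct: since $u$ cannot cross the sides of $P$, its interior lies entirely inside the interior of $P$, so after relabelling one may orient $\vec u$ to emanate from the corner at $P_1$ and the ordering conditions of Proposition~\ref{caracpoly} can be checked for the slit or chord sequence. One correction, though: your trichotomy misreads case (Rb). The interior of a polygon is a disc (or an annulus, for a polygon with hole), so an arc joining two vertices of $P$ \emph{always} separates it and lands in case (Rc); the ``non-separating chord'' branch is vacuous. Case (Rb) is instead the situation where $t(\vec u)$ is an \emph{interior} endpoint of further edges of $\sigma$ (the ``island'' visible in Figure~\ref{reds}), in which $P'$ enters along $\vec u$, runs around the island, and returns along $-\vec u$. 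For the bare existence claim of (2) this does not hurt you, because your (Ra)/(Rc) dichotomy already covers all cases (an interior endpoint with an island can still be treated by the in-and-back slit of (Ra), the island simply remaining inside the interior of $P'$).

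For part (1) there is a genuine gap, and it is larger than you acknowledge. The obstruction you flag in (Ra) is real: take $P$ a triangle enclosing two punctures $M,N$, with $u$ joining $P_1$ to $M$ and $v$ joining $M$ to $N$; then $P$ and the slit polygon $P'$ each have exactly one reduction arc ($u$, respectively $v$), so the literal count does not drop. But your claim that (Rb) and (Rc) are safe because ``both endpoints of $u$ are already vertices of $P$'' is false for (Rb): there $t(\vec u)=P'_{n+2}$ and the island vertices $P'_{n+3},\dots,P'_{n'-1}$ are interior points of $P$ that become vertices of $P'$, so the same phenomenon recurs. Consequently your first repair, the lexicographic order $(\#\M_P,\#\{\text{reduction arcs}\})$, fails exactly in the subcases you set aside: in the hole subcase of (Ra) and in (Rb) one has $\#\M_{P'}=\#\M_P=\infty$ symbolically, while the second coordinate need not decrease; and your second repair (no further arcs at the tip) is false in general, as the example above shows. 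The clean fix is to change the measure: count the arcs of $\sigma$ whose interior meets the interior of the polygon. Since the interiors of $P'$ and $P''$ are contained in the interior of $P$ with $u$ removed, while $u$ itself meets the interior of $P$, this quantity strictly decreases in \emph{every} reduction situation, and the new reduction arcs created at the tip cause no harm because they already met the interior of $P$. The descending induction behind Theorem~\ref{thmmin} then runs verbatim on this measure, terminating at minimal polygons by part (2). The paper itself declares (1) ``immediate'', so your scrutiny here is better than the source's; but as written your proposal leaves (1) unproven in cases (Ra) and (Rb), and the missing idea is precisely this replacement of the reduction-arc count by the interior-intersection count.
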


\begin{proof}
 First of all, (1) is immediate. Then (2) is an immediate consequence of Proposition \ref{caracpoly}.
\end{proof}

Now, we can conclude the proof of Theorem \ref{thmmin}:
\begin{proof}[Proof of Theorem \ref{thmmin}]
 It is easy to prove, using Lemmas \ref{defpoly} and \ref{redideal}, by descending induction on $N \geq 0$ that relations coming from polygons having at most $N$ reduction arcs and special monogons generate $I_\sigma$. So $I_\sigma$ is generated by relations coming from minimal polygons and special monogons. Let $P$ be a non-minimal special monogon. By hypothesis, a loop of $\sigma$ can not cut $(\Sigma, \M)$ into two special monogons, so $S^P = 0$ and thanks to Lemma \ref{redideal}, we conclude that the relation coming from $P$ is in the ideal generated by relations coming from minimal polygons.
\end{proof}

\subsection{Proof of Theorem \ref{subtri}} \label{proofmainthmsb}

 Denote by $\phi^{\circ\circ}: k Q_\tau \hookrightarrow e_\tau k Q_\sigma e_\tau$ the only injective morphism of algebras such that $e_u$ for $u \in \tau$ is mapped to $e_u$ and $\ic{\vec u, \vec v}$ is mapped to $\ic{\vec u, \vec v}$ for all arrows $\ic{\vec u, \vec v}$ of $Q_\tau$. 

 First, $\phi^{\circ\circ}$ induces an injective morphism $\phi^\circ: \Gamma^\circ_\tau \hookrightarrow e_\tau \Gamma^\circ_\sigma e_\tau$. Indeed, the relation $C_{\vec u}$ is mapped to $C_{\vec u}$ for any oriented edge $\vec u$. The injectivity is a clear consequence of Proposition \ref{Csig}. 

 Then, $\phi^\circ$ induces a morphism $\phi: \Gamma_\tau \rightarrow e_\tau \Gs e_\tau$. Indeed, every relation defining $\Gamma_\tau$ is also a relation defining $\Gs$ (since every polygon of $\tau$ is a polygon of $\sigma$). To prove that it is an isomorphism, it is enough to look at the case where the difference between $\tau$ and $\sigma$ is only one edge $u$ (then the result comes by an immediate induction). Notice that $C_\sigma = \phi(C_\tau) + \lambda_{s(\vec u)} \ic{\vec u, \vec u}^{m_{s(\vec u)}}$. 

 Let us prove that $\phi$ is an isomorphism. By Lemma \ref{findminpol}, there are two minimal polygons $P'$ and $P''$, unique up to equivalence, the first one having $\vec u$ as a side and the second one having $-\vec u$ as a side. We suppose that the number $n'$ of sides of $P'$ is greater or equal that the number $n''$ of sides of $P''$. If $P'$ and $P''$ both involve only the edge $u$, then we get easily that $u$ forms a connected component of $\sigma$ and the result is immediate. Otherwise, using Lemma \ref{caracpoly}, we construct a reduction situation involving a polygon $P$ having the sides of $P'$ and $P''$ except $u$.

 (a) We suppose first that we are in case (Ra) or in case (Rb) or in case (Rc) with $\# \M_{P'}, \# \M_{P''} \in \{0, \infty\}$. We consider the following, clearly well defined, morphism of algebras:
 \begin{align*}
  \psi^{\circ\circ} : e_\tau k Q_\sigma e_\tau & \rightarrow \Gamma_\tau \\
   e_v & \mapsto e_v & v \in \tau \\
   \ic{\vec v, \vec w}& \mapsto \ic{\vec v, \vec w} & \vec v, \vec w \in \tau, s(\vec v) = s(\vec w) \\
   \omega^{P'}_1 (\omega^{P'}_{n'})^\ell \omega^{P'}_{n'-1} & \mapsto \left(\delta_{\M_{P'}, \emptyset} \xi_2^P \xi_3^P \cdots \xi_1^P \right)^\ell \omega_1^P & \text{(Ra), $\ell \geq 0$}  \\
   \omega^{P'}_1 \omega^{P'}_{n'}, \omega^{P'}_{n+2} \omega^{P'}_{n+1} & \mapsto 0 & \text{(Rb)} \\
   \omega^{P'}_1 \omega^{P'}_{n'} & \mapsto \delta_{\M_{P'}, \emptyset}\xi^{P}_2 \xi^P_3 \cdots \xi^P_{n'-1}  & \text{(Rc)} \\
   \omega^{P''}_1 \omega^{P''}_{n''} & \mapsto \delta_{\M_{P''}, \emptyset} \xi^{P}_{n'+1} \xi^P_{n'+2} \cdots \xi^P_{n}  & \text{(Rc), $n'' > 1$} \\
   \omega^{P'}_1 (\omega^{P''}_1)^\ell \omega^{P'}_{n'} & \mapsto \delta_{\ell, 1} \omega_1^P & \text{(Rc), $\ell \geq 1$, $n'' = 1$}.
 \end{align*}

 We will prove that $\psi^{\circ \circ}$ induces an inverse of $\phi$. Let us prove that it induces a morphism $\psi^\circ: e_\tau \Gamma^\circ_\sigma e_\tau \rightarrow \Gamma_\tau$. If $\vec v$ is not an orientation of $\vec u$ then $\psi^{\circ\circ} (C_{\vec v}) = C_{\vec v} = 0$. We need to look at generators of $e_\tau (C_{\vec u}) e_\tau$ in each case:
 \begin{enumerate}[\rm ({R}a)]
  \item The generator of $e_\tau (C_{\vec u}) e_\tau$ are $\omega^{P'}_1 (\omega_{n'}^{P'})^{\ell_1} C_{\vec u} (\omega_{n'}^{P'})^{\ell_2} \omega^{P'}_{n'-1}$ for $\ell_1, \ell_2 \geq 0$ and we have
 \begin{align*}
   & \psi^{\circ\circ}\left(\omega^{P'}_1 (\omega_{n'}^{P'})^{\ell_1} C_{\vec u} (\omega_{n'}^{P'})^{\ell_2} \omega^{P'}_{n'-1}\right) \\ =\, & \psi^{\circ\circ}\left(\omega^{P'}_1 (\omega_{n'}^{P'})^{\ell_1} (\omega^{P'}_{n'-1} \xi^P_1 \omega^{P'}_1- \xi^{P'}_{n'} \omega^{P'}_{n'}) (\omega_{n'}^{P'})^{\ell_2} \omega^{P'}_{n'-1}\right) \\
    =\, & \left(\delta_{\M_{P'}, \emptyset} \xi_2^P \xi_3^P \cdots \xi_1^P \right)^{\ell_1} \omega_1^P \xi_1^P \left(\delta_{\M_{P'}, \emptyset} \xi_2^P \xi_3^P \cdots \xi_1^P \right)^{\ell_2} \omega_1^P \\ & - \psi^{\circ\circ}\left(\lambda_{P'_{n'}} \omega^{P'}_1 (\omega^{P'}_{n'})^{\ell_1 + m_{P'_{n'}} +  \ell_2} \omega^{P'}_{n'-1}\right) \\
    =\, & \left(\delta_{\M_{P'}, \emptyset} \xi_2^P \xi_3^P \cdots \xi_1^P \right)^{\ell_1 + \ell_2} \left(\xi_2^P \omega_2^P \omega_1^P -  \lambda_{P'_{n'}} (\delta_{\M_{P'}, \emptyset} \xi_2^P \xi_3^P \cdots \xi_1^P)^{m_{P'_{n'}}}  \omega_1^P \right) \\ =\, & \left(\delta_{\M_{P'}, \emptyset} \xi_2^P \xi_3^P \cdots \xi_1^P \right)^{\ell_1 + \ell_2} \xi^P_2 R_{P,1} = 0.
  \end{align*}
  \item The generators are $\omega^{P'}_1 C_{\vec u} \omega^{P'}_{n'}$, $\omega^{P'}_1 C_{\vec u} \omega^{P'}_{n+1}$, $\omega^{P'}_{n+2} C_{\vec u} \omega^{P'}_{n'}$, $\omega^{P'}_{n+2} C_{\vec u} \omega^{P'}_{n+1}$. Denoting $\rho := \io{\vct{P_{n'} P_{n'-1}}, \vct{P_{n+2} P_{n+3}}}$, we have
  \begin{align*}
   \psi^{\circ\circ}(\omega^{P'}_1 C_{\vec u} \omega^{P'}_{n'}) &= \psi^{\circ\circ}(\omega^{P'}_1 (\omega_{n+1}^{P'} \xi_1^{P} \omega_1^{P'} - \omega_{n'}^{P'} \rho \omega_{n+2}^{P'} ) \omega^{P'}_{n'}) \\
           &= C_\tau \psi^{\circ\circ}(\omega_1^{P'} \omega_{n'}^{P'}) - \psi^{\circ\circ}(\omega_1^{P'} \omega_{n'}^{P'}) C_\tau = 0;
  \end{align*}
  \begin{align*}
   \psi^{\circ\circ}(\omega^{P'}_1 C_{\vec u} \omega^{P'}_{n+1}) &= \psi^{\circ\circ}(\omega^{P'}_1 (\omega_{n+1}^{P'} \xi_1^{P} \omega_1^{P'} - \omega_{n'}^{P'} \rho \omega_{n+2}^{P'} ) \omega^{P'}_{n+1}) \\
           &= C_\tau \omega_1^P - \psi^{\circ\circ}(\omega_1^{P'} \omega_{n'}^{P'}) \rho \psi^{\circ\circ}(\omega_{n+2}^{P'} \omega_{n+1}^{P'}) \\
	   &= \xi_2^P \omega_2^P \omega_1^P = 0
  \end{align*}
  as $\# \M_P \geq 2$. The two other cases are similar.
\item with $n', n'' > 1$. Recall that we supposed that $\# \M_{P'}, \# \M_{P''} \in \{0, \infty\}$. The generators of $e_\tau (C_{\vec u}) e_\tau$ are $\omega^{P'}_1 C_{\vec u} \omega^{P'}_{n'}$, $\omega^{P'}_1 C_{\vec u} \omega^{P''}_{n''}$, $\omega^{P''}_1 C_{\vec u} \omega^{P'}_{n'}$ and $\omega^{P''}_1 C_{\vec u} \omega^{P''}_{n''}$. We have
  \begin{align*}
   \psi^{\circ\circ}(\omega^{P'}_1 C_{\vec u} \omega^{P'}_{n'}) &= \psi^{\circ\circ}(\omega^{P'}_1 (\omega^{P''}_{n''} \xi^P_1 \omega^{P'}_1 - \omega^{P'}_{n'} \xi^P_{n'} \omega^{P''}_1) \omega^{P'}_{n'}) \\           
    &= C_\tau \psi^{\circ\circ} (\omega_1^{P'} \omega_{n'}^{P'}) - \psi^{\circ\circ}(\omega_1^{P'} \omega^{P'}_{n'}) C_\tau = 0;
  \end{align*}
  \begin{align*}
   \psi^{\circ\circ}(\omega^{P'}_1 C_{\vec u} \omega^{P''}_{n''}) &= \psi^{\circ\circ}(\omega^{P'}_1 (\omega^{P''}_{n''} \xi^P_1 \omega^{P'}_1 - \omega^{P'}_{n'} \xi^P_{n'} \omega^{P''}_1) \omega^{P''}_{n''}) \\           
    &= C_\tau \omega^P_1 - \psi^{\circ\circ}(\omega_1^{P'} \omega^{P'}_{n'}) \xi_{n'}^P \psi^{\circ\circ}(\omega_1^{P''} \omega^{P''}_{n''}) 
    = \xi^P_2 R_{P,1} = 0
  \end{align*}
  and the two other cases are similar. 
  
  \rs{2}
  \item with $n'' = 1$. The generators of $e_\tau (C_{\vec u}) e_\tau$ are $\omega^{P'}_1 (\omega^{P''}_1)^{\ell_1} C_{\vec u} (\omega^{P''}_1)^{\ell_2} \omega^{P'}_{n'}$ for $\ell_1, \ell_2 \in \N$. As $\# \M_{P''} = \infty$, we have 
  \begin{align*}
   & \psi^{\circ\circ}(\omega^{P'}_1 (\omega^{P''}_1)^{\ell_1} C_{\vec u} (\omega^{P''}_1)^{\ell_2} \omega^{P'}_{n'}) \\ =\, & \psi^{\circ\circ}\left(\omega^{P'}_1 (\omega^{P''}_1)^{\ell_1} \left[\omega^{P''}_1 \omega^{P'}_{n'} \xi_1^P \omega^{P'}_1 - \omega^{P'}_{n'} \xi_1^P \omega^{P'}_1 \omega^{P''}_1\right] (\omega^{P''}_1)^{\ell_2} \omega^{P'}_{n'}\right) \\
    = \, & \delta_{\ell_1, 0} \delta_{\ell_2, 0} \delta_{\M_{P'}, \emptyset} \left(\omega^P_1 \xi^P_1 \xi^{P}_2 \cdots \xi^P_{n'-1} - \xi^{P}_2 \xi^P_3 \cdots  \xi^P_1 \omega^P_1\right) \\ &+ \delta_{\ell_1, 0} \delta_{\ell_2, 1} \omega^P_1 \xi^P_1 \omega^P_1 -\delta_{\ell_1, 1} \delta_{\ell_2, 0} \omega^P_1 \xi^P_1 \omega^P_1 \\
   = \, & \delta_{\ell_1, 0} \delta_{\ell_2, 0} \delta_{\M_{P'}, \emptyset} \left(C_\tau \xi^{P}_2 \cdots \xi^P_{n'-1} - \xi^{P}_2 \cdots \xi^P_{n'-1} C_\tau \right) \\ &+ (\delta_{\ell_1, 0} \delta_{\ell_2, 1} - \delta_{\ell_1, 1} \delta_{\ell_2, 0}) \xi^P_2 \omega^P_2 \omega^P_1 = 0.
  \end{align*}
 \end{enumerate}

 We finished to prove that $\psi^\circ: e_\tau \Gamma^\circ_\sigma e_\tau \rightarrow \Gamma_\tau$ is well defined. Let us prove that it induces a morphism $\psi: e_\tau \Gs e_\tau \rightarrow \Gamma_\tau$. We need to prove that $\psi^\circ (e_\tau I_\sigma e_\tau) = 0$. Using Theorem \ref{thmmin}, we know that $I_\sigma$ is generated by relations coming for minimal polygons. Any minimal polygon of $\sigma$ which does not contain $u$ as a side also exists in $\tau$ so we can focus on relations coming from $P'$ and $P''$. According to Lemma \ref{redideal}, for $i = 1, \dots, \min(n'-1,n)$, we have
  $$\psi^\circ(\alpha_{i+1} R_{P', i} \beta_i) \in \psi^\circ((R_{P,i}) + e_\tau (I_{P,u} + S^P) e_\tau) = \psi^\circ(e_\tau I_{P,u} e_\tau)$$
 and we have analogous observations for some relations coming from $P'$ and $P''$. We look at relations coming from $P'$ which cannot be simplified in that way:
 \begin{enumerate}[\rm ({R}a)]
  \item In this case, these relations are
   \begin{itemize}
    \item $R_{P',1} (\omega_{n'}^{P'})^\ell \omega_{n'-1}^{P'}$ for $\ell \geq 0$ if $n > 1$,
    \item $\omega_1^{P'} (\omega_{n'}^{P'})^\ell R_{P',n'-2}$ for $\ell \geq 0$ if $n > 1$,
    \item $\omega_1^{P'} (\omega_{3}^{P'})^\ell R_{P',1} (\omega_{3}^{P'})^{\ell'} \omega_{2}^{P'}$ for $\ell, \ell' \geq 0$ if $n = 1$,
    \item $R_{P',n'} (\omega_{n'}^{P'})^\ell \omega_{n'-1}^{P'}$ for $\ell \geq 0$,
    \item $\omega_1^{P'} (\omega_{n'}^{P'})^\ell R_{P',n'-1}$ for $\ell \geq 0$.
   \end{itemize}
  Suppose first that $\# \M_{P'} = 0$. If $n > 1$, we get
  \begin{align*}
   &\psi^\circ(R_{P',1} (\omega_{n'}^{P'})^\ell \omega_{n'-1}^{P'}) \\ =\,\,& \psi^\circ((\omega^{P'}_2 \omega^{P'}_1 - \xi^{P'}_3 \xi^{P'}_4 \cdots \xi^{P'}_{n'}) (\omega_{n'}^{P'})^\ell \omega_{n'-1}^{P'}) \\
    =\,\,& \omega^P_2 (\xi^P_2 \cdots \xi^P_1)^\ell \omega_1^P - \xi^P_3 \cdots \xi^P_n \xi^P_1 \psi^\circ\left(\omega^{P'}_1 \lambda_{P'_{n'}} (\omega^{P'}_{n'})^{m_{P'_{n'}} - 1 + \ell} \omega_{n'-1}^{P'}\right) \\
    =\,\,&  (\xi^P_3 \cdots \xi^P_2)^\ell \omega^P_2 \omega_1^P - \lambda_{P'_{n'}} \left(\xi^P_3 \cdots \xi^P_2\right)^{m_{P'_{n'}} - 1 + \ell}\xi^P_3 \cdots \xi^P_1 \omega_1^P \\
    =\,\,&  (\xi^P_3 \cdots \xi^P_2)^\ell R_{P, 1} = 0.
  \end{align*}
  In the same way, we prove that $\psi^\circ(\omega_1^{P'} (\omega_{n'}^{P'})^\ell R_{P',n'-2}) = 0$ and if $n = 1$, $\psi^\circ(\omega_1^{P'} (\omega_{3}^{P'})^\ell R_{P',1} (\omega_{3}^{P'})^{\ell'} \omega_{2}^{P'}) = 0$. Similarly, it is easy to compute that $\psi^\circ(R_{P',n'} (\omega_{n'}^{P'})^\ell \omega_{n'-1}^{P'}) = \psi^\circ (\omega_1^{P'} (\omega_{n'}^{P'})^\ell R_{P',n'-1}) = 0$.

  If $\# \M_{P'} \geq 2$, the computations are immediate. Finally, if $\# \M_{P'} = 1$, we have $(\alpha_{i+1} R_{P', i} \beta_i)_{i = 1\dots n'} + S^P = (\alpha_{i+1} \omega^{P'}_{i+1} \omega^{P'}_i \beta_i )_{i = 1\dots n'} + S^P$ thanks to Lemma \ref{RRp} and the result is easy to obtain.
  \item In this case, thanks to Lemma \ref{RRp}, $$e_{\tau} \left((R_{P', i})_{i = 1\dots n'} + S^P\right) e_\tau = e_\tau \left((\omega^{P'}_{i+1} \omega^{P'}_i)_{i = 1\dots n'} + S^P\right) e_\tau.$$ It is easy to check that 
   $$\psi^\circ\left(e_\tau \left(\omega^{P'}_{i+1} \omega^{P'}_i\right)_{i = 1\dots n'} e_\tau\right) \subset \left(\omega^{P}_{i+1} \omega^{P}_i\right)_{i = 1\dots n} + \left(\omega^{\tilde P}_{i+1} \omega^{\tilde P}_i\right)_{i = 1\dots \tilde n} $$
   where $\tilde P$ is the polygon with sides $\vct{P'_{n+2} P'_{n+3}}$, \dots, $\vct{P'_{n'-1} P'_{n'}}$ and $R_{P, i} = \omega_{i+1}^P \omega_i^P$ and $R_{\tilde P, i} = \omega_{i+1}^{\tilde P} \omega_i^{\tilde P}$ as $\# \M_P > 1$ and $\# \M_{\tilde P} > 1$ so the result is immediate in this case.
  \item if $n', n'' > 1$. In this case, we need to look at $R_{P', 1}\omega^{P'}_{n'}$, $\omega^{P'}_1 R_{P', n'-1}$ and $R_{P',n'}$. We get
  \begin{align*}
   \psi^\circ\left(R_{P', n'}\right) &= \psi^\circ\left(\omega^{P'}_1 \omega^{P'}_{n'} - \delta_{\M_{P'}, \emptyset}\xi^{P'}_2 \xi^{P'}_3 \cdots \xi^{P'}_{n'-1}\right) = 0,  \\
      \psi^\circ\left(R_{P', 1}\omega^{P'}_{n'}\right) &= \psi^\circ\left(\left(\omega^{P'}_2 \omega^{P'}_1 - \delta_{\M_{P'}, \emptyset} \xi^{P'}_3 \xi^{P'}_4 \cdots \xi^{P'}_{n'}\right)\omega^{P'}_{n'}\right) \\
     &= \psi^\circ\left(\omega^{P'}_{2} R_{P', n'}\right) = 0
   \end{align*}
   and the same for $\omega^{P'}_1 R_{P', n'-1}$. 
  \rs{2}
  \item if $n'' = 1$ and $n' \geq 3$. We have to check relations $R_{P', 1} (\omega^{P''}_1)^\ell \omega^{P'}_{n'}$, $R_{P', n'}$ and $\omega^{P'}_{1} (\omega^{P''}_1)^\ell R_{P', n'-1}$ for $\ell = 0$ or $\ell \geq 2$. It is clear, by definition of $\psi^{\circ}$, that $\psi^\circ(R_{P',n'}) = 0$. As we made the hypothesis that $\# \M_{P''} \in \{0, \infty\}$, we necessarily have $\# \M_{P''} = \infty$ and therefore $\# \M_P = \infty$ so 
  \begin{align*}
   \psi^\circ\left(R_{P', 1} \omega^{P'}_{n'}\right) &= \psi^\circ\left(\left[\omega^{P'}_2 \omega^{P'}_1 - \delta_{\M_{P'}, \emptyset} \xi^{P'}_3 \dots \xi^{P'}_{n'}\right] \omega^{P'}_{n'} \right) \\
    &= \delta_{\M_{P'}, 0} \left(\omega_2^{P} \xi^P_2 \dots \xi^P_{n'-1} - \xi_3^{P} \dots \xi^{P}_{n'-1} C_\sigma \right) \\
    &= \delta_{\M_{P'}, 0} \left(C_\sigma \xi^P_3 \dots \xi^P_{n'-1} - \xi_3^{P} \dots \xi^{P}_{n'-1} C_\sigma \right)  = 0.
  \end{align*}
  and if $\ell \geq 2$, $\psi^\circ(R_{P', 1} (\omega^{P''}_1)^\ell \omega^{P'}_{n'}) = 0$ by direct computation. The proof works analogously for $\omega^{P'}_{1} (\omega^{P''}_1)^\ell R_{P', n'-1}$.
  \rs{2}
  \item if $n'' = 1$ and $n' = 2$. We have to check $\omega^{P'}_{1} (\omega^{P''}_1)^{\ell_1} R_{P', 1} (\omega^{P''}_1)^{\ell_2} \omega^{P'}_{2}$ for $\ell_1, \ell_2 \geq 0$ and $R_{P', 2}$. It is similar as before.
 \end{enumerate}

 The only case which is not analogous for $P''$ is (Rc) when $n'' = 1$. In this case, $e_\tau ( R_{P'', 1} ) e_\tau$ is generated by relations $\omega^{P'}_1 (\omega^{P''}_1)^{\ell_1} R_{P'', 1} (\omega^{P''}_1)^{\ell_2} \omega^{P'}_{n'}$ for $\ell_1, \ell_2 \geq 0$. As in this case $\# \M_{P''} = \infty$, we get
 $$\psi^\circ \left(\omega^{P'}_1 (\omega^{P''}_1)^{\ell_1} R_{P'', 1} (\omega^{P''}_1)^{\ell_2} \omega^{P'}_{n'} \right) = \psi^\circ \left(\omega^{P'}_1 (\omega^{P''}_1)^{\ell_1 + 2 + \ell_2}  \omega^{P'}_{n'} \right) = 0$$
 so we finished to prove that $\psi^\circ(e_\tau I_\sigma e_\tau) = 0$ and $\psi: e_\tau \Gs e_\tau \rightarrow \Gamma_\tau$ is a well defined morphism of algebra.

 It is immediate that $\psi \circ \phi = \id_{\Gamma_\tau}$. We also have $\phi \circ \psi = \id_{e_\tau \Gs e_\tau}$. It is enough to check the generators of $e_\tau k Q_\sigma e_\tau$ one by one. It is easy in every case. Finally, $e_\tau \Gamma_\sigma e_\tau \cong \Gamma_\tau$.

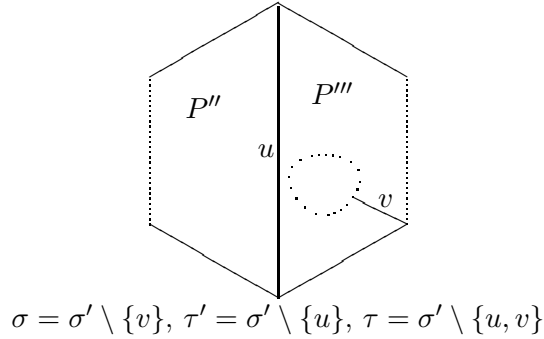
\begin{figure}
\begin{center}
\boxinminipage{$\renewcommand{\labelstyle}{\textstyle} \xymatrix@L=.05cm@!=0cm@R=.089cm@C=.089cm@M=0.00cm{ 
    & & & & & & & & & & & & & & & & & & & \ar@{-}[rrrrrrrrrrrrrrrrrrrddddddddddd] \ar@{-}[dddddddddddddddddddddddddddddddddddddddddddd]_u \\ \\ \\ \\ \\ \\ \\ \\ \\ \\ \\
    \ar@{-}[rrrrrrrrrrrrrrrrrrruuuuuuuuuuu] & & & & & & & & & & & & & & & & & & & & & & & & & & & & & & & & & & & & & & \ar@{..}[dddddddddddddddddddddd] \\ \\ \\ & & & & & & & & & & & & & & & & & & & & & & & & & & & P''' \\ \\ & & & & & & & & P'' \\ \\ \\ \\ \\ \\ \\ \\ \\ \\ \\ \\ \\ & & & & & & & & & & & & & & & & & & & & & & & & & & & & & & \ar@{..}@`{p+(4,6), p+(-16,8), p+(-4,-6)}[]  \\  \\ \\ \\
    \ar@{..}[uuuuuuuuuuuuuuuuuuuuuu]  & & & & & & & & & & & & & & & & & & & & & & & & & & & & & & & & & & & & & & \ar@{-}[dddddddddddlllllllllllllllllll] \ar@{-}[uuuullllllll]_v \\ \\ \\ \\ \\ \\ \\ \\ \\ \\ \\
    & & & & & & & & & & & & & & & & & & & \ar@{-}[llllllllllllllllllluuuuuuuuuuu] 
}$}

   $\sigma = \sigma' \setminus \{v\}$, $\tau' = \sigma' \setminus \{u\}$, $\tau = \sigma' \setminus \{u,v\}$
\end{center}
  \caption{Inductive argument: partial triangulation $\sigma'$  } \label{illind}   
  \end{figure}

 (b) We have to prove the isomorphism in case (Rc) when $0 < \# \M_{P'} < \infty$ or $0 < \# \M_{P''} < \infty$. We will do an induction on $(\# \M_{P'}, \# \M_{P''})$ with the product order. The cases $(0,0)$, $(0,\infty)$, $(\infty, 0)$ and $(\infty, \infty)$ have been proved in (a). Suppose that $0 < \# \M_{P'} < \infty$. As $P'$ is minimal, it is immediate that we can construct a partial triangulation $\sigma'$ such that $\sigma = \sigma' \setminus \{v\}$ where $s(\vec v)$ is a vertex of $P'$ and $t(\vec v) \in \M_{P'}$ for an orientation $\vec v$ of $v$. See Figure \ref{illind}. By minimality of $P'$, it is clear that the reduction situation in $\sigma'$ involving $P'$ and $v$ is of type (Ra) or (Rb). So we already know that $e_\sigma \Gamma_{\sigma'} e_\sigma \cong \Gs$. On the other hand, the minimal polygon $P'''$ containing $\vec u$ in $\sigma'$ has at least one puncture less than $P'$ (indeed $\M_{P'''} \subset \M_{P'} \setminus \{t(\vec v)\}$). Thus, by induction hypothesis, $e_{\tau'} \Gamma_{\sigma'} e_{\tau'} \cong \Gamma_{\tau'}$ where $\tau' = \sigma' \setminus \{u\}$. As before, the reduction situation involving $P$ and $v$ in $\tau'$ is of type (Ra) or (Rb) so we know that $e_\tau \Gamma_{\tau'} e_\tau \cong \Gamma_\tau$. Finally, we have
  $$e_\tau \Gamma_{\sigma} e_\tau \cong e_\tau e_\sigma \Gamma_{\sigma'} e_\sigma e_\tau = e_\tau e_{\tau'} \Gamma_{\sigma'} e_{\tau'} e_\tau \cong e_\tau \Gamma_{\tau'} e_\tau \cong \Gamma_\tau.$$
 A similar argument gives the induction step when $0 < \# \M_{P''} < \infty$. \qed

\subsection{Proof of Proposition \ref{boundord}} \label{proofboundord}
 Denote by $E_{i,j}$ the matrix with entry $1$ in cell $(i,j)$ and $0$ everywhere else. In each case, we give the images of the generators:
 
 (1) In this case, the isomorphism is generated by
   \begin{align*}
    \ic{\vct{P_1 P_2}, \vct{P_1 P_n}}  & \mapsto \lambda_{P_1}^{n-2} t^{-m} E_{1,n} \\
    \ic{\vct{P_i P_{i+1}}, \vct{P_i P_{i-1}}} & \mapsto t^m E_{i, i-1} & (1<i\leq n) \\
    \ic{\vct{P_1 P_n}, \vct{P_1 P_2}} & \mapsto \lambda_{P_1}^{2-n} t^{m+1} E_{n,1} \\
    \ic{\vct{P_i P_{i-1}}, \vct{P_i, P_{i+1}}} & \mapsto \lambda_{P_1} \lambda_{P_i}^{-1} E_{i-1,i} & (1 < i \leq n).    
   \end{align*}

 (2) In this case, the isomorphism is generated by
   \begin{align*}
    \ic{\vct{P_1 P_2}, \vct{P_1 P_n}}  & \mapsto \lambda_M (0, x^{m-1}) E_{1,n} \\
    \ic{\vct{P_i P_{i+1}}, \vct{P_i P_{i-1}}} & \mapsto \lambda_M (0, x^{m})  E_{i, i-1} & (1<i\leq n) \\
    \ic{\vct{P_1 P_n}, \vct{P_1 P_2}} & \mapsto \lambda_{P_1}^{-1} (x,x) E_{n,1} \\
    \ic{\vct{P_i P_{i-1}}, \vct{P_i, P_{i+1}}} & \mapsto \lambda_{P_i}^{-1} (1,1) E_{i-1,i} & (1 < i \leq n).
   \end{align*}

 (3) In this case, the isomorphism is generated by
   \begin{align*}
    \ic{\vct{P_1 P_2}, \vct{P_1 P_n}}  & \mapsto t^{-1} \varepsilon E_{1,n} \\
    \ic{\vct{P_i P_{i+1}}, \vct{P_i P_{i-1}}} & \mapsto \varepsilon  E_{i, i-1} & (1<i\leq n) \\
    \ic{\vct{P_1 P_n}, \vct{P_1 P_2}} & \mapsto \lambda_{P_1}^{-1} x E_{n,1} \\
    \ic{\vct{P_i P_{i-1}}, \vct{P_i, P_{i+1}}} & \mapsto \lambda_{P_i}^{-1} E_{i-1,i} & (1 < i \leq n).
   \end{align*}

 The proof that these definitions give isomorphisms is each time elementary. In the first case the central element used is $U := C_\sigma$. In the second and third case, it is 
 $U := \sum_{i = 1}^n (\xi^P_{i+1} \xi^P_{i+2} \cdots \xi^P_i)$. We have easily $k \llbracket x \rrbracket \cong k \llbracket U \rrbracket$. Then, the key point in each case is that it is easy to determine a $k \llbracket U \rrbracket$-basis of $e_u \Gamma_\sigma e_v$ using relations for any pair of sides $u$ and $v$. \qed

\subsection{Proofs of Theorem \ref{classiford2}} \label{proofclassiford2}
(1) This is a consequence of Corollary \ref{dimd} proven later (indeed, in this case, $e \Gs e = e \Delta_\sigma e$).

We will now prove (2). We need some preparation. Using the same strategy as for Proposition \ref{boundord}, we get the following lemma:
\begin{lemma} \label{bascases}
 \begin{enumerate}[\rm (1)]
  \item We consider the following triangulation $\sigma$ of a disc with one puncture and one marked point on the boundary: 
  $$\xymatrix@L=.05cm@R=1.5cm@C=1.5cm@M=0.01cm{
   & & \\
   & \ar[u]^{\vec v} \ar@`{p+(15,15),p+(0,30),p+(-15,15)}[]_{\vec u} &   
  }$$
  with $m_{s(\vec v)} = 1$ and $m_{t(\vec v)} = m$ (and $m > 1$ by hypothesis). Then we get
  $$\Gamma_\sigma \cong \begin{bmatrix}
                         R_{m-1}  &  0 \times R \\
			 0 \times t^{m-1} R & 0 \times R
                        \end{bmatrix}$$
  using the notation of Proposition \ref{boundord} (2).

 \item We consider the following triangulation $\sigma$ of a disc with one puncture and two marked points on the boundary: 
  $$\xymatrix@L=.05cm@R=1.5cm@C=1.5cm@M=0.01cm{
   & \ar@/^1cm/[dd]^{\vec u_2} \ar[d]^{\vec v_2} & \\
   & & \\
   & \ar@/^1cm/[uu]^{\vec u_1} \ar[u]^{\vec v_1} & 
  }$$
  with $m_{s(\vec v_1)} = m_{s(\vec v_2)} = 1$ and $m_{t(\vec v_1)} = m$. Then we get
  $$\Gamma_\sigma \cong \begin{bmatrix}
                         R_m  &  R_{m-1} & 0 \times R & 0 \times R \\
			 tR_{m-1}  &  R_m & 0 \times tR & 0 \times R \\
			 0 \times t^m R  &  0 \times t^{m-1} R & 0 \times R & 0 \times R \\
		         0 \times t^m R  &  0 \times t^m R & 0 \times t R & 0 \times R
                        \end{bmatrix}.$$
 \end{enumerate}
\end{lemma}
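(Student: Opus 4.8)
The plan is to imitate step by step the proof of Proposition \ref{boundord}: determine the quiver and the defining relations of $\Gamma_\sigma$ explicitly, write down a candidate isomorphism on the generating arrows, and then verify bijectivity by matching $k\llbracket x\rrbracket$-bases on the two sides.

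First I would record $Q_\sigma$ in each case. In (1) the two vertices are $u$ and $v$; the puncture $t(\vec v)$ has degree one, so the internal path $\ic{-\vec v,-\vec v}$ is a loop at $v$, while $s(\vec v)$ carries the three oriented edges $\vec u$, $-\vec u$, $\vec v$, producing the arrows $\ic{\vec u,\vec v}$, $\ic{\vec v,-\vec u}$, $\ic{-\vec u,\vec u}$. By Theorem \ref{thmmin} the ideal $I_\sigma$ is generated by the relations $R_{P,i}$ coming from the finitely many minimal polygons of $\sigma$, together with the relations $C_{\vec u}$ and $C_{\vec v}$; I would list these so that $\Gamma_\sigma$ is presented by a short, completely explicit set of relations. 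Case (2) is analogous, with four vertices $u_1,u_2,v_1,v_2$ and the minimal polygons sharing the single puncture.

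Next, exactly as in Proposition \ref{boundord}, I would define the isomorphism by assigning to each arrow $\ic{\vec w,\vec w'}$ an explicit elementary matrix with entries in $R=k\llbracket x\rrbracket$ (using the notation $R_j=\{(P,Q)\in R^2\mid P-Q\in x^jR\}$ and the embeddings recorded there), and identify the central variable: the relevant central element $U$ is the sum of external cycles $\sum_i\xi^P_{i+1}\cdots\xi^P_i$ (or $C_\sigma$), as in Proposition \ref{boundord}, for which $k\llbracket x\rrbracket\cong k\llbracket U\rrbracket$. The decisive point is that, using the relations $C_{\vec w}$ and $R_{P,i}$, each space $e_w\Gamma_\sigma e_{w'}$ is free over $k\llbracket U\rrbracket$ with an explicit monomial basis; matching these bases against the prescribed entries $R_{m-1}$, $R_m$, $\{0\}\times R$, $\{0\}\times t^{\,m-1}R$, and so on, shows simultaneously that the candidate map kills the relations (is well defined) and is bijective.

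The hard part will be purely computational and bookkeeping in nature: verifying that the matrix images satisfy $C_{\vec w}$ and $R_{P,i}$, and that the image is exactly the stated (non-obvious) collection of submodules. This hinges on tracking the exponent shift created by the multiplicity $m$ at the puncture — the source of the $x^{m-1}$ and $t^{m-1}$ entries — and, in (2), on checking coherence across the four blocks. As in Proposition \ref{boundord}, once the $k\llbracket U\rrbracket$-bases are in hand these checks are elementary, so I would organise the write-up around the explicit generator table and the basis computation rather than a relation-by-relation verification.
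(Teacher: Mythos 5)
Your plan is exactly the paper's proof: the paper proves Lemma \ref{bascases} by the same strategy as Proposition \ref{boundord}, simply listing an explicit matrix image for each arrow of $Q_\sigma$ (e.g.\ in case (1), $\ic{\vec u,\vec v}\mapsto \lambda_{t(\vec v)}(0,1)E_{1,2}$, $\ic{-\vec v,-\vec v}\mapsto (0,t)E_{2,2}$, $\ic{\vec v,-\vec u}\mapsto (0,t^{m-1})E_{2,1}$, $\ic{-\vec u,\vec u}\mapsto \lambda_{s(\vec v)}^{-1}(t,t)E_{1,1}$) and leaving the verification to the elementary basis-matching over $k\llbracket U\rrbracket$ that you describe, with the same central elements ($C_\sigma$, respectively the sum of external cycles). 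Your proposal is correct and takes essentially the same approach.
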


\begin{proof}
 We just give the images of arrows of quivers:
 
 (1) We put 
  \vspace*{-1cm}\begin{multicols}{2} 
  \begin{align*} \ic{\vec u, \vec v} &\mapsto \lambda_{t(\vec v)} (0,1) E_{1,2} \\ \ic{-\vec v, -\vec v} &\mapsto (0, t) E_{2,2} \end{align*} 

  \begin{align*} \ic{\vec v, -\vec u} &\mapsto (0, t^{m-1}) E_{2,1}\\ \ic{-\vec u, \vec u} &\mapsto \lambda_{s(\vec v)}^{-1} (t,t) E_{1,1}.\end{align*}
  \end{multicols}

 (2) We put
  \vspace*{-1cm}\begin{multicols}{2}%
  \begin{align*}
   \ic{-\vec u_2, \vec v_1} &\mapsto (0,1) E_{1,3} \\
   \ic{-\vec u_1, \vec v_2} &\mapsto (0,1) E_{2,4} \\
   \ic{\vec v_1, \vec u_1} &\mapsto \lambda_M (0,t^{m-1}) E_{3,2} \\
   \ic{\vec v_2, \vec u_2} &\mapsto \lambda_M (0,t^{m}) E_{4,1}
  \end{align*}

  \begin{align*}
   \ic{-\vec v_1, -\vec v_2} &\mapsto (0,1) E_{3,4} \\
   \ic{-\vec v_2, -\vec v_1} &\mapsto (0,t) E_{4,3} \\
   \ic{\vec u_1, -\vec u_2} &\mapsto \lambda_{s(\vec u_1)}^{-1} (t,t) E_{2,1} \\
   \ic{\vec u_2, -\vec u_1} &\mapsto \lambda_{s(\vec u_2)}^{-1} (1,1) E_{1,2}. \qedhere
  \end{align*}
  \end{multicols}
\end{proof}

The following lemma is strongly inspired of the strategy of \cite{DeLu16, DeLu16-2}. We need here a small part of the results of these articles, but in bigger generality.

\begin{lemma} \label{inducord}
 We consider partial triangulations $\sigma$ and $\sigma'$ of two marked surfaces $(\Sigma, \M)$ and $(\Sigma', \M')$ such that there is an embedding $\Sigma' \subset \Sigma$ satisfying:
 \begin{itemize}
  \item $\M = \M' \sqcup \{M\}$ where $M$ in on a boundary component and $m_M = 1$;
  \item $\sigma = \sigma' \sqcup \{u, v\}$ where $u$ and $v$ are boundary edges of $\Sigma$ incident to $M$;
  \item $\Sigma \setminus \Sigma'$ is connected.
 \end{itemize}
 We take the following notation:
 $$\xymatrix@L=.05cm@R=.5cm@C=.5cm@M=0.01cm{
  & & & M = P_1 \ar[lld]_{\vec v} & & & \\
  & P_2 \ar[rrrr]_{\vec w} \ar[ddl]^{\vec v'} & & & & P_3 \ar[ull]_{\vec u} & \\
  & & & \sigma' & & &   \\
  & & & & & &  \ar[uul]^{\vec u'}
 }$$
 where $\vec u$, $\vec v$, $\vec w$ form a triangle without puncture.

 Then $\Gamma_{\sigma'} = e_{\sigma'} \Gamma_\sigma e_{\sigma'}$ through the canonical inclusion of paths algebras $k Q_{\sigma'} \subset k Q_{\sigma}$. Moreover, this identification induces equalities of left $\Gamma_{\sigma'}$-modules $\Gamma_{\sigma'} \ic{-\vec u', \vec u} = e_{\sigma'} \Gamma_{\sigma} e_u$, $\Gamma_{\sigma'} \ic{\vec w, -\vec v} = e_{\sigma'} \Gamma_{\sigma} e_v$ and equalities of right $\Gamma_{\sigma'}$-modules $\ic{\vec u, -\vec w} \Gamma_{\sigma'} = e_u \Gamma_\sigma e_{\sigma'}$, $\ic{-\vec v, \vec v'} \Gamma_{\sigma'} = e_v \Gamma_\sigma e_{\sigma'}$. 
\end{lemma}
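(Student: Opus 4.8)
The plan is to establish $\Gamma_{\sigma'} = e_{\sigma'}\Gamma_\sigma e_{\sigma'}$ by imitating the proof of Theorem \ref{subtri} in Subsection \ref{proofmainthmsb}, the essential difference being that $\sigma'$ is \emph{not} a sub-partial-triangulation of $\sigma$ inside $(\Sigma,\M)$: it is a partial triangulation of the smaller surface $(\Sigma',\M')$, in which $w$ is a boundary edge, whereas $w$ is an arc of $\sigma$. First I would record the local combinatorics around $M$, $P_2$ and $P_3$. Since the triangle $P$ with sides $\vec v,\vec w,\vec u$ is minimal and has no puncture, Lemma \ref{findminpol} gives that $\omega_1^P=\ic{\vec v,-\vec u}$, $\omega_2^P=\ic{\vec w,-\vec v}$ and $\omega_3^P=\ic{\vec u,-\vec w}$ are single arrows of $Q_\sigma$; equivalently, around $M$ the only oriented edges are $\vec v$ and $-\vec u$ (with $m_M=1$), around $P_2$ the edge $-\vec v$ is inserted immediately after $\vec w$, and around $P_3$ the edge $\vec u$ is inserted immediately before $-\vec w$, so that $-\vec u'$ precedes $\vec u$. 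Consequently the passage from $Q_{\sigma'}$ to $Q_\sigma$ only splits the arrow of $Q_{\sigma'}$ leaving $\vec w$ at $P_2$ into $\ic{\vec w,-\vec v}$ followed by an arrow through $v$, and the arrow of $Q_{\sigma'}$ entering $-\vec w$ at $P_3$ into an arrow through $u$ followed by $\ic{\vec u,-\vec w}$; every other arrow is common to both quivers.

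Next I would define, exactly as in Subsection \ref{proofmainthmsb}, the morphism $\phi^{\circ\circ}\colon kQ_{\sigma'}\to e_{\sigma'}kQ_\sigma e_{\sigma'}$ sending $e_a\mapsto e_a$ for $a\in\sigma'$ and each arrow $\ic{\vec x,\vec y}$ of $Q_{\sigma'}$ to the path $\ic{\vec x,\vec y}$ of $Q_\sigma$ (of length one, or length two through $v$ or $u$ in the two cases above). Because all internal and external paths are written in the $\ic{\,\cdot\,,\,\cdot\,}$ and $\io{\,\cdot\,,\,\cdot\,}$ notation, $\phi^{\circ\circ}$ automatically sends the elements $\omega_i$, $\xi_i$ and $C_\sigma$ attached to $\sigma'$ to the corresponding elements of $\sigma$, so it carries the defining relations $C_{\vec x}$ and the $R_{Q',i}$ of $\Gamma_{\sigma'}$ into the relations of $\Gamma_\sigma$. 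Here I use that, by Theorem \ref{thmmin}, the minimal polygons of $\sigma$ are precisely the minimal polygons of $\sigma'$ together with the triangle $P$ (since $u$ and $v$ are sides of no polygon other than $P$), and that the polygon of $\sigma'$ adjacent to $w$ reappears in $\sigma$ with the same relation. Thus $\phi^{\circ\circ}$ induces $\phi^\circ\colon\Gamma^\circ_{\sigma'}\to e_{\sigma'}\Gamma^\circ_\sigma e_{\sigma'}$, injective by Proposition \ref{Csig}, hence a morphism $\phi\colon\Gamma_{\sigma'}\to e_{\sigma'}\Gamma_\sigma e_{\sigma'}$.

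To see that $\phi$ is an isomorphism I would build a two-sided inverse $\psi$ by the recipe of Subsection \ref{proofmainthmsb}: send the arrows of $e_{\sigma'}Q_\sigma e_{\sigma'}$ not meeting $u,v$ to themselves, send the two length-two excursions $\ic{\vec w,-\vec v}\ic{-\vec v,\,\cdot\,}$ and $\ic{\,\cdot\,,\vec u}\ic{\vec u,-\vec w}$ back to the corresponding arrows of $Q_{\sigma'}$, and resolve the remaining excursions through $M$ using the triangle relations. Concretely, an excursion $\omega_2^P\omega_1^P\omega_3^P$ (enter $v$, pass to $u$, leave) reduces by $R_{P,1}$ and the identity $\xi_3^P\omega_3^P=\io{-\vec w,\vec u}\,\ic{\vec u,-\vec w}=\lambda_{P_3}\ic{-\vec w,-\vec w}^{m_{P_3}}$ to a cycle at $w$ lying in $\sigma'$, and longer excursions reduce by iterating; checking that $\psi$ annihilates the images of $R_{P,i}$ and of $C_{\vec u},C_{\vec v},C_{\vec w}$ is the routine but lengthy verification, entirely parallel to the computation for $\psi^{\circ\circ}$ in Subsection \ref{proofmainthmsb}. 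This yields $\Gamma_{\sigma'}=e_{\sigma'}\Gamma_\sigma e_{\sigma'}$ through the canonical inclusion $kQ_{\sigma'}\subset kQ_\sigma$.

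Finally, for the module identities I would treat $e_{\sigma'}\Gamma_\sigma e_u=\Gamma_{\sigma'}\ic{-\vec u',\vec u}$, the other three being symmetric. The inclusion $\supseteq$ is immediate from $\Gamma_{\sigma'}=e_{\sigma'}\Gamma_\sigma e_{\sigma'}$. For $\subseteq$, the only arrows into $u$ are $\ic{-\vec u',\vec u}$ (from $\sigma'$, at $P_3$) and $\ic{\vec v,-\vec u}=\omega_1^P$ (from $v$, at $M$); a path from $\sigma'$ ending with the latter must have reached $v$ from the $\sigma'$-side through $\omega_2^P=\ic{\vec w,-\vec v}$, hence ends with $\omega_2^P\omega_1^P$, and $R_{P,1}$ rewrites $\omega_2^P\omega_1^P=\xi_3^P=\io{-\vec w,\vec u}$, whose final arrow is $\ic{-\vec u',\vec u}$ and whose prefix is a $\sigma'$-path, so it lies in $\Gamma_{\sigma'}\ic{-\vec u',\vec u}$. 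An induction on the number of passages of the path through $\{u,v\}$ then gives the claim. The statements for $e_{\sigma'}\Gamma_\sigma e_v$, $e_u\Gamma_\sigma e_{\sigma'}$ and $e_v\Gamma_\sigma e_{\sigma'}$ use $R_{P,3}$, $R_{P,2}$ and the corresponding $C$-relations in the same way. The main obstacle throughout is the bookkeeping of the cyclic orders at $P_2$ and $P_3$ forced by $w$ changing from a boundary edge of $\sigma'$ into an arc of $\sigma$, and checking that this change is exactly absorbed by $\phi^{\circ\circ}$ and by the triangle relations $R_{P,i}$.
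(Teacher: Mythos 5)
Your proposal is correct in outline and your local combinatorial analysis (the cyclic orders at $M$, $P_2$, $P_3$, the two split arrows, and the identification of the minimal polygons of $\sigma$ as those of $\sigma'$ together with the triangle $P$) matches what the paper uses implicitly; but at the decisive step you take a genuinely different, and heavier, route than the paper. Where you propose to build an explicit two-sided inverse $\psi$ on all excursions through $\{u,v\}$ and then verify that it annihilates every contracted relation --- the verification you defer as ``routine but lengthy'', which is in fact where essentially all the work of your argument sits --- the paper avoids constructing an inverse altogether. It first exploits $m_M=1$ to eliminate the arrow $\ic{-\vec u, \vec v}$ via $R_{P,2}=\omega^P_3\omega^P_2-\lambda_M\ic{-\vec u,\vec v}$, observing that $C_{\vec u}, C_{\vec v}\in (R_{P,1},R_{P,2},R_{P,3})$, so that $\Gamma_\sigma$ is presented on the smaller quiver with only $R_{P,1}$, $R_{P,3}$ and the $\sigma'$-relations remaining; surjectivity of $\phi$ is then the same rewriting you describe, and injectivity follows from a three-line computation of the contracted ideal, $e_{\sigma'}(R_{P,1},R_{P,3})e_{\sigma'}=e_{\sigma'}(C_{\vec w},\,\omega^P_2\omega^P_1\omega^P_3-\omega^P_2\xi^P_2)e_{\sigma'}$, whose first generator is already a relation of $\Gamma_{\sigma'}$. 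The same contraction computed against $e_u$ instead of $e_{\sigma'}$ immediately gives the four module identities, replacing your induction on the number of passages through $\{u,v\}$. So your approach buys nothing here and hides the real cost, while the paper's arrow-elimination trick collapses the well-definedness checks for $\psi$ (which, in the analogous proof of Theorem \ref{subtri}, occupy several pages) into one ideal computation. One small imprecision to fix if you pursue your route: after reducing $\omega^P_2\omega^P_1\omega^P_3$ by $R_{P,1}$ you obtain $\xi^P_3\omega^P_3=\lambda_{P_3}\ic{-\vec w,-\vec w}^{m_{P_3}}$, which is not literally ``a cycle at $w$ lying in $\sigma'$'' --- as a path of $Q_\sigma$ it still passes through the vertex $u$ --- but it does lie in the image of $kQ_{\sigma'}$ since each passage is of the allowed form $\ic{-\vec u',\vec u}\ic{\vec u,-\vec w}$, which is what your $\psi$ actually needs.
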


\begin{proof}
 Let us number the vertices of the triangle $P$ with sides $\vec u$, $\vec v$, $\vec w$ in such a way that $P_1 = M$. We consider the quiver $Q'_\sigma$ obtained from $Q_\sigma$ by removing $\ic{-\vec u, \vec v}$. As $R_{P,2} = \omega^P_3 \omega^P_2 - \lambda_M \ic{-\vec u, \vec v}$ and $C_{\vec u}, C_{\vec v} \in (R_{P,1}, R_{P,2}, R_{P,3})$, it is immediate that $\Gamma_\sigma$ is $k Q'_\sigma$ modulo all relations except $R_{P, 2}$, $C_{\vec u}$ and $C_{\vec v}$.

 As all relations defining $\Gamma_{\sigma'}$ are relations in $\Gamma_\sigma$, the inclusion $\iota: k Q_{\sigma'} \subset k Q'_\sigma$ induces a morphism of algebras $\phi: \Gamma_{\sigma'} \to e_{\sigma'} \Gamma_\sigma e_{\sigma'}$.  Thanks to the relations coming from $P$, any path of $Q'_\sigma$ can be rewritten as a multiple of a path without factor $\omega^P_2 \omega^P_1$ or $\omega^P_1 \omega^P_3$. Thus, it is clear that $\phi$ is surjective. 

 Using the same argument, we have:
 \begin{align*}
  e_{\sigma'} (R_{P,1}, R_{P,3}) e_{\sigma'} &= e_{\sigma'} (\omega^P_2 \omega^P_1 - \xi^P_3, \omega^P_1 \omega^P_3 - \xi^P_2 ) e_{\sigma'} \\ 
    &= e_{\sigma'}(\omega^P_2 \omega^P_1 \omega^P_3 - \xi^P_3 \omega^P_3, \omega^P_2 \omega^P_1 \omega^P_3 - \omega^P_2 \xi^P_2)e_{\sigma'} \\
    &= e_{\sigma'}(\omega^P_2 \xi^P_2 - \xi^P_3 \omega^P_3, \omega^P_2 \omega^P_1 \omega^P_3 - \omega^P_2 \xi^P_2)e_{\sigma'} \\ &= e_{\sigma'}(C_{\vec w}, \omega^P_2 \omega^P_1 \omega^P_3 - \omega^P_2 \xi^P_2)e_{\sigma'}
 \end{align*}
 so $\iota^{-1}(e_{\sigma'} (I^\circ_\sigma + I_\sigma) e_{\sigma'}) = I^\circ_{\sigma'} + I_{\sigma'}$ and therefore $\phi$ is injective.
 We proved the first part of the statement. 

 With the same strategy, there is a surjective morphism of left $\Gamma_{\sigma'}$-modules $\psi: \Gamma_{\sigma'} \ic{-\vec u', \vec u} \to e_{\sigma'} \Gamma_\sigma e_u$. And, modulo $(I_{\sigma'} + I^\circ_{\sigma'}) \ic{-\vec u', \vec u}$, we have
  \begin{align*} e_{\sigma'} (R_{P,1}, R_{P,3}) e_u &= e_{\sigma'}(\omega^P_2 \omega^P_1 - \xi^P_3)e_u + e_{\sigma'}(\omega^P_2 \omega^P_1 \omega^P_3 - \omega^P_2 \xi^P_2) \ic{-\vec u', \vec u} \\
   &= e_{\sigma'}(\omega^P_2 \omega^P_1 - \xi^P_3)e_u + e_{\sigma'}(\omega^P_2 \omega^P_1 \omega^P_3 - \xi^P_3 \omega^P_3) \ic{-\vec u', \vec u} \\
   &= e_{\sigma'}(\omega^P_2 \omega^P_1 - \xi^P_3)e_u
  \end{align*}
  so $e_{\sigma'} (R_{P,1}, R_{P,3}) e_u  \cap k Q_{\sigma'} \ic{-\vec u', \vec u} \subset (I_{\sigma'} + I^\circ_{\sigma'}) \ic{-\vec u', \vec u}$ and therefore $\psi$ is injective. The three other equalities are proved in the same way.
\end{proof}

\begin{definition} \label{bigcycle}
 Suppose that we are in cases (b) or (c) of Theorem \ref{classiford2} (2). Let $u_0$ be in $E$ which is not incident to a puncture (thus, by hypothesis, it is homotopic to a part of a boundary component). We consider a $n$-gon $P$ of $\sigma$ such that
 \begin{itemize}
  \item $\vct{P_n P_1}$ is an orientation of $u_0$;
  \item $\M_P$ contains all punctures of $(\Sigma, \M)$;
  \item $\M_P$ contains all holes of $\Sigma$ except the hole $u_0$ is incident to.
 \end{itemize}
 Then we say that $\xi^P_1 \xi^P_2 \cdots \xi^P_n$ is a \emph{big cycle} at $u_0$.
\end{definition}

\begin{lemma} \label{centreord}
 Under the assumptions of Definition \ref{bigcycle}, we get 
 \begin{enumerate}[\rm (1)]
  \item In cases (b) or (c) of Theorem \ref{classiford2} (2), all big cycles at $u_0$ are equal to the same element $S_{u_0}$ of $\Gamma_\sigma$.
  \item Suppose that we are in the situation of Theorem \ref{classiford2} (2). For $u \in E$, we denote:
   $$U_u := \left\{\begin{array}{ll}
                    e_u C_\sigma & \text{in case (a) or if $u$ is incident to a puncture;} \\
		    \lambda_M S_u^{m_M} & \text{in case (b) if $u$ is not incident to the puncture $M$;} \\
                    S_u & \text{in case (c).}
                   \end{array}\right.$$
   Then the element $U_\sigma := \sum_{u \in E} U_u$ is in the centre of $e \Gamma_\sigma e$.
 \end{enumerate}
\end{lemma}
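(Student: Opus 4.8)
The plan is to prove both statements by exploiting the structural description of $\Gamma^\circ_\sigma$ given in Proposition \ref{Csig} together with the reduction machinery (Lemmas \ref{RRp}, \ref{redideal}, \ref{defpoly}) and the combinatorial characterization of polygons (Proposition \ref{caracpoly}). The underlying philosophy is that $S_{u_0}$ and $U_\sigma$ are built out of external paths $\xi^P_i$ which behave like ``global'' versions of $C_\sigma$; since $C_\sigma$ is already central by Proposition \ref{Csig}(2), the extra elements we introduce should commute with everything for essentially the same reason, once we understand that any two big cycles differ by relations in $I_\sigma$.

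\textbf{Part (1): independence of the big cycle.} Two big cycles at $u_0$ arise from two $n$-gons $P$ and $P'$ both having an orientation of $u_0$ as $\vct{P_nP_1}$, both swallowing all punctures and all holes except the one bordered by $u_0$. First I would argue that any such $P$ can be connected to a \emph{minimal} such polygon by a sequence of reduction situations (Lemma \ref{defpoly}(2)), and that under each reduction the external product $\xi^P_1\cdots\xi^P_n$ is preserved modulo $I_\sigma$: this is exactly the content of the identities $\xi_i^{P'} = \beta_i \xi_i^P \alpha_i$ recorded before Lemma \ref{redideal}, combined with the fact (Lemma \ref{redideal}) that the $R_{P,i}$ relate the relations of $P$ to those of $P'$ and $P''$. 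Since in cases (b) and (c) all marked points incident to $E$ have $m_M=1$ (for the relevant vertices), one checks that the minimal polygon with these properties is unique up to equivalence, exactly as in the proof of Lemma \ref{findminpol}; hence all big cycles reduce to the same element. I would phrase this as: $\xi^P_1\cdots\xi^P_n$ depends only on the homotopy data determined by $u_0$ and the requirement that $P$ encircle everything, and that data is rigid.

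\textbf{Part (2): centrality of $U_\sigma$.} The key computation is that each local summand $U_u$ commutes with every arrow $\ic{\vec x,\vec y}$ of $Q_\sigma$ with $e\ic{\vec x,\vec y}e \ne 0$, exactly mirroring the proof of Proposition \ref{Csig}(2). For the case-(a) and puncture-incident summands $U_u = e_u C_\sigma$ this is immediate from the centrality of $C_\sigma$, since $e_u C_\sigma = C_\sigma e_u$ and the $e_u$ collectively sum over $E$. For the case-(c) summands $U_u = S_u$ and case-(b) summands $U_u = \lambda_M S_u^{m_M}$, the point is the relation $\omega^P_i \xi^P_i = \xi^P_{i+1}\omega^P_{i+1}$ noted in Definition \ref{defgamma}, which lets the big cycle $S_u$ ``slide past'' any internal path $\omega^P_i$: writing an arrow $\ic{\vec x,\vec y}$ starting at a vertex $M$ of $E$ as (a factor of) $\omega^P_i$ for the appropriate polygon, one commutes $S$ through it by repeatedly applying this sliding identity around the polygon, landing on the big cycle based at the neighbouring edge, which by Part (1) equals $S$ at that edge. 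Summing over all $u\in E$, the boundary terms telescope and one gets $\ic{\vec x,\vec y}\,U_\sigma = U_\sigma\,\ic{\vec x,\vec y}$. I would organize the verification by fixing a marked point $M$ incident to $E$ and checking commutation with each of the two arrows leaving each edge at $M$, treating uniformly the cases where $M$ is a puncture (where $U_u = e_uC_\sigma$ and centrality of $C_\sigma$ suffices) versus where $M$ lies on the boundary with $m_M=1$ (where one uses the sliding identity and Part (1)).

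The main obstacle I anticipate is the bookkeeping in Part (1): making precise that the external product is genuinely invariant (not just invariant modulo the ideal in an uncontrolled way) under the reduction moves, and handling the distinction between holes and punctures encoded by the symbolic convention $\#\M_P=\infty$. Concretely, in case (b) the single puncture $M$ must be correctly excluded from the ``encircle everything'' condition so that $S_u$ is defined for $u$ not incident to $M$, and one must verify that raising to the power $m_M$ is the right normalization to make $U_u$ match $e_u C_\sigma$-type behaviour at vertices shared with punctured polygons; this is where Lemma \ref{redideal} with its invertible factor $\kappa$ and the special ideal $S^P$ must be invoked carefully. I would isolate these edge cases and dispatch them by direct computation using the explicit forms of $R_{P,i}$ in Definition \ref{defgamma}, rather than trying to treat them inside the general sliding argument.
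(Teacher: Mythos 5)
Your Part (2) is, in outline, the paper's own argument: centrality is checked arrow by arrow in the form $\ic{\vec u, \vec v}\, U_v = U_u\, \ic{\vec u, \vec v}$, the summands $e_u C_\sigma$ are handled by centrality of $C_\sigma$ (Proposition \ref{Csig}), and the $S_u$-summands by sliding the big cycle through the arrow using $\omega^P_i \xi^P_i = \xi^P_{i+1}\omega^P_{i+1}$ together with Part (1). What you defer to ``direct computation'' is exactly where the paper works: when no single polygon has $-\vec u,\vec v$ (or $-\vec v,\vec u$) as consecutive sides one needs an (Rc) reduction situation whose cut-off piece $P''$ is itself a big-cycle polygon for the other arc, and the mixed case (b) (one endpoint arc incident to the puncture $M$, one not) needs an (Ra) reduction giving $\ic{\vec u,\vec v}\,U_v = \lambda_{P'_{n'}}\,\omega^{P'}_1 (\omega^{P'}_{n'})^{m_{P'_{n'}}} = \lambda_{P'_{n'}} (\xi^P_2\cdots\xi^P_1)^{m_{P'_{n'}}}\ic{\vec u,\vec v}$, which is precisely why the normalization $U_u = \lambda_M S_u^{m_M}$ is correct. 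These are finite, local computations, so your plan for (2) is workable.

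Part (1), however, takes a genuinely different route from the paper and contains the real gap. You propose to connect any big-cycle polygon to a \emph{minimal} one by reduction situations, claiming the external product $\xi^P_1\cdots\xi^P_n$ is invariant under each move and invoking Lemma \ref{findminpol} for uniqueness of the endpoint. Three problems. First, the class of polygons in Definition \ref{bigcycle} is not closed under reductions: in type (Ra) one has $\M_P = \M_{P'} \cup \{t(\vec u)\}$, so the reduced polygon drops a puncture, and in type (Rc) the encircled punctures and holes distribute between $P'$ and $P''$; invariance of the external product only holds when the cut-off piece satisfies $\M_{P''} = \emptyset$ (so that $R_{P'',n''}$ lets you trade $\alpha_{n'}\beta_1$ for $\xi^P_{n'+1}\cdots\xi^P_n$), and when it carries the puncture $M$ of case (b) the relation instead introduces factors $\lambda_M(\xi\cdots)^{m_M-1}$ and a $C_\sigma$, destroying literal invariance. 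Second, nothing in Lemma \ref{defpoly} provides a \emph{path of moves between two given} big-cycle polygons; it only produces one reduction of one non-minimal polygon, so connectivity of the class under these moves is asserted, not proved. Third, Lemma \ref{findminpol} gives uniqueness of the minimal polygon through a fixed oriented edge \emph{without} the encircling constraint; the minimal polygon at $\vec u_0$ need not contain all punctures and holes, so a minimal representative of your class may simply not exist. The paper avoids all of this: it first uses Theorem \ref{subtri} to assume no arc is incident to a puncture (big-cycle polygons cannot pass through a puncture), then inducts on the number of marked points on the boundary component of $u_0$, using Lemma \ref{inducord} to delete a marked point $M$ with $m_M=1$ incident to no arc and replace the corner $\vec u,\vec v$ by the single edge $-\vec w$ in both polygons simultaneously — each big cycle is visibly unchanged under this surgery, and equality is inherited from the smaller surface. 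To repair your argument you would either have to prove the connectivity-plus-invariance claims for your moves (restricting to moves whose cut-off piece is punctureless and holeless, and showing such moves suffice), or switch to the paper's surface-shrinking induction.
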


\begin{proof}
 (1) Let us take two polygons $P_1$ and $P_2$ as in Definition \ref{bigcycle}. First of all, the hypotheses imply immediately that $P_1$ and $P_2$ cannot pass through a puncture (otherwise $P_1$ or $P_2$ would not contain this puncture). Thus, thanks to Theorem \ref{subtri}, we can suppose that $\sigma$ does not contain any arc incident to a puncture. 
 
 We will prove by induction on the the number of marked point on the boundary component incident to $u_0$ that the big cycles defined from $P_1$ and $P_2$ are equal in $\Gamma_\sigma$. If this number is $1$ or $2$, we necessarily have $P_1 = P_2$ so the result is immediate. If this number is at least $3$, it is then an easy combinatorial observation that there is a marked point $M$ such that:
 \begin{itemize}
  \item $M$ is on the boundary component $u_0$ is incident to;
  \item $M$ is not incident to $u_0$;
  \item $M$ is not incident to any (non-boundary) arc of $\sigma$.
 \end{itemize}
 Then, up to adding the arc $w$ if it is not in $\sigma$, the assumptions of Lemma \ref{inducord} are satisfied. For $i = 1,2$, we denote by $P'_i$ the polygon obtained from $P_i$ by replacing the sequence of sides $\vec u$, $\vec v$ by $-\vec w$ if it appears (it is possible using Proposition \ref{caracpoly} and the assumptions about $P_i$). Then, it is an immediate consequence of the relations that the the big cycle defined from $P_i$ is equal to the big cycle defined from $P'_i$ in $\Gamma_\sigma$. By induction hypothesis, the big cycles defined from $P'_1$ and $P'_2$ are equal in $\Gamma_{\sigma'}$ so they are equal in $\Gamma_\sigma$ thanks to Lemma \ref{inducord}.

 (2) Case (a) is immediate so we focus on Cases (b) and (c). Let $\ic{\vec u, \vec v}$ be an arrow of $Q_\sigma$ which links to arcs of $E$. Let us prove that $\ic{\vec u, \vec v} U_\sigma = U_\sigma \ic{\vec u, \vec v}$ \emph{i.e.} $\ic{\vec u, \vec v} U_v = U_u \ic{\vec u, \vec v}$. We consider several cases:
 \begin{enumerate}[\rm (a)]
  \item If $u$ and $v$ are both incident to punctures, it is an immediate consequence of the fact that $C_\sigma$ is central. 
  \item If none of $u$ and $v$ is incident to a puncture. If there is a polygon $P$ having $-\vec u$, $\vec v$ as two consecutive sides and satisfying hypotheses of Definition \ref{bigcycle}, then, with $P_1 = s(\vec u)$ we have
  \begin{align*} \ic{\vec u, \vec v} S_v &= \ic{\vec u, \vec v} \xi^P_2 \xi^P_3 \cdots \xi^P_n \xi^P_1 = \ic{\vec u, \vec v} \xi^P_2 \xi^P_3 \cdots \xi^P_n \lambda_{P_1} \ic{\vec u, \vec v} \\ &= \xi^P_1 \xi^P_2 \cdots \xi^P_n \ic{\vec u, \vec v} = S_u \ic{\vec u, \vec v}\end{align*}
  (where we used that $m_{P_1} = 1$). If there is a polygon $P$ having $-\vec v$, $\vec u$ as two consecutive sides and satisfying hypotheses of Definition \ref{bigcycle}, then, with $P_1 = s(\vec u)$ we have
  $$\ic{\vec u, \vec v} S_v = \ic{\vec u, \vec v} \xi^P_1 \xi^P_2 \cdots \xi^P_n = C_\sigma \xi^P_2 \cdots \xi^P_n  = \xi^P_2\cdots \xi^P_n \xi^P_1 \ic{\vec u, \vec v} = S_u \ic{\vec u, \vec v}.$$
  If none of these two cases are satisfied, then it is an easy consequence of the hypotheses that $-\vec v$ and $\vec u$ are consecutive sides of a polygon $P'$ with $\M_{P'} = \emptyset$ (because $u$ and $v$ are homotopic to parts of the boundary and $\ic{\vec u, \vec v}$ is an arrow). Then there is a polygon $P$ satisfying the hypothesis of Definition \ref{bigcycle} for $u$ (or for $v$) such that $v$ (or $u$) is a reduction arc for $P$ of type (Rc) as in Figure \ref{reds}. Moreover the polygon $P''$ defined by this reduction situation satisfy the hypotheses of Definition \ref{bigcycle} for $v$ (or for $u$). Using relations in $P'$, we get $\ic{\vec u, \vec v} S_v = S_u \ic{\vec u, \vec v}$ in any case.
  \item If $u$ is not incident to a puncture and $v$ is incident to a puncture. We are in Case (b) of Theorem \ref{classiford2} (2). Let $P$ be a polygon satisfying hypotheses of Definition \ref{bigcycle} for $u$. As $\ic{\vec u, \vec v}$ is an arrow, it is easy that $\vec u$ is an oriented side of $P$. Moreover $\vec v$ is a reduction arc for $P$ of type (Ra). Taking the notation of Figure \ref{reds}, we get
   $$ \ic{\vec u, \vec v} U_v = \lambda_{P'_{n'}} \omega^{P'}_1 \left(\omega^{P'}_{n'}\right)^{m_{P'_{n'}}} = \lambda_{P'_{n'}} (\xi^{P}_2 \cdots \xi^P_1)^{m_{P'_{n'}}} \ic{\vec u, \vec v} = U_u \ic{\vec u, \vec v}.$$
  \item If $u$ is incident to a puncture and $v$ is not, this is similar as (c). \qedhere
 \end{enumerate}
\end{proof}

To finish the proof of Theorem \ref{classiford2} (2), we suppose that we are in one of the three cases (a), (b) or (c). Consider the central element $U_\sigma$ defined in Lemma \ref{centreord}. We will prove that $e \Gamma_\sigma e$ is a $k\llbracket x \rrbracket$-lattice by mapping $x$ to $U_\sigma$. Let $u$ and $v$ be two edges of $\sigma$. Let us prove that $e_u \Gamma_\sigma e_v$ is free as a left $k\llbracket x \rrbracket$-module. First of all, thanks to Theorem \ref{subtri}, we can suppose that $\sigma$ contains only $u$ and $v$ in addition to boundary edges. We do an induction on the number of boundary edges.

If $u$ and $v$ are both boundary edges, or if we are in one of the cases of Lemma \ref{bascases}, the result is immediate by Proposition \ref{boundord} and Lemma \ref{bascases}: $x = t^m$ in cases (a) or (b) and $x = t$ in case (c). Otherwise, an easy combinatorial argument shows that the boundary contains a marked point $M$ satisfying $m_M = 1$ and neither $u$ nor $v$ is incident to $M$. Then Lemma \ref{inducord} permits to conclude the proof of Theorem \ref{classiford2} (2).

Finally, we prepare Theorem \ref{classiford2} (3).

\begin{lemma} \label{reford1}
 Let $E_0 \subset E$ correspond to a boundary component and $e_0$ be the corresponding idempotent. Let $P$ be the $n$-gon of $\sigma$ corresponding to this boundary component. If 
 \begin{itemize}
  \item[] $\# \M_P > 0$ and $m_{P_i} > 1$ for some $i$,
  \item[or] $m_{P_i}, m_{P_j} > 1$ for $i \neq j$, 
 \end{itemize}
 then $e_0 \Gamma_\sigma e_0$ is not finitely generated over its centre.
\end{lemma}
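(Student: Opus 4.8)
The plan is to reduce the statement to a growth estimate for a graded corner algebra. First I would invoke Theorem \ref{subtri} to replace $\sigma$ by the partial triangulation $\tau$ with no arcs (all boundary edges), so that $e_0\Gamma_\sigma e_0\cong e_0\Gamma_\tau e_0$ becomes the frozen algebra of the single boundary $n$-gon $P$, the other boundary components splitting off. Here, by Lemma \ref{findminpol}, the only minimal polygon coming from a clockwise orientation is $P$ itself, so by Theorem \ref{thmmin} the defining ideal is generated by the $R_{P,i}$, $1\leq i\leq n$ (together with special-monogon relations in the degenerate case $n=1$). Next I would use the elementary fact that a corner $eAe$ of an algebra $A$ that is module-finite over its centre is again module-finite over its centre: if $A=\sum_i Z(A)a_i$ then $eAe=\sum_i (Z(A)e)(ea_ie)$ with $Z(A)e\subseteq Z(eAe)$. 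Thus it suffices to exhibit one idempotent $e\leq e_0$ for which $e\Gamma_\sigma e$ is \emph{not} module-finite over its centre: when $m_{P_i},m_{P_j}>1$ I would take $e$ supporting the contiguous boundary segment from $P_i$ to $P_j$, and when $\#\M_P>0$ with $m_{P_i}>1$ an edge incident to $P_i$ facing the puncture or hole. The complementary situations are exactly the lattice cases of Proposition \ref{boundord}, which show what ``good'' behaviour looks like.

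The second step converts ``module-finite over its centre'' into a statement about graded dimensions. By Proposition \ref{Csig}(2), $\Gamma^\circ_\sigma$, and hence the quotient $\bar\Gamma:=\Gamma^\circ_\sigma/I_\sigma$, carries a $\Q_{\geq 0}$-grading in which $\ic{\vec u,\vec v}$ has degree $1/(d_{s(\vec u)}m_{s(\vec u)})$, all relations $C_{\vec u}$ and $R_{P,i}$ are homogeneous, and $C_\sigma$ is homogeneous of degree $1$; in particular the degree-$d$ piece of $k[C_\sigma]e$ is at most one-dimensional for every $d$. Writing $A=\bar\Gamma$, I would then reduce to proving two assertions: (A) $\dim_k(eAe)_d$ is unbounded as $d\to\infty$; and (B) the graded centre of $eAe$ is a finite module over $k[C_\sigma]$. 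Indeed, (B) shows that module-finiteness over the centre would force module-finiteness over $k[C_\sigma]$, whence $\dim_k(eAe)_d$ would be bounded by the number of generators, contradicting (A). Passing between the completed $\Gamma_\sigma$ and the graded $\bar\Gamma$ is harmless, since completion only adds elements of higher degree and affects neither estimate.

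For (A) the crux is that the hypothesis produces two genuinely independent \emph{windings}. At an edge $u=u_i$ the loops $a=\ic{\vec u_i,\vec u_i}$ and $b=\ic{-\vec u_i,-\vec u_i}$ wind around the endpoints $P_i,P_{i+1}$, and the relation $C_{\vec u_i}$ reads $\lambda_{P_i}a^{m_{P_i}}=\lambda_{P_{i+1}}b^{m_{P_{i+1}}}=C_\sigma e_u$. When a single endpoint multiplicity exceeds $1$, the other loop is a scalar multiple of $C_\sigma$ and hence central, leaving one effective winding and a lattice (Proposition \ref{boundord}); when both $m_{P_i},m_{P_j}>1$, or when a puncture in $\M_P$ supplies a second non-central loop through the $R_{P,i}$, the mixed words built from the two windings and the transport arrows along the boundary stay $C$-irreducible and, by Proposition \ref{Csig}(3) together with an analysis of the $R_{P,i}$, remain $k$-linearly independent in unbounded degree. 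For (B) I would show that a homogeneous central element commutes with both windings and with the transports; a leading-term argument, using that $\Gamma^\circ_\sigma$ is $C_\sigma$-torsion-free (Proposition \ref{Csig}(3)), forces it to agree with a polynomial in $C_\sigma$ up to terms in the finitely many graded pieces of degree below $1$, so the centre is generated over $k[C_\sigma]$ in a bounded degree range and is therefore finite over it.

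The hardest part will be the precise control of the polygon ideal $I_\sigma$ needed for both (A) and (B): one must verify that the mixed $a$--$b$ words (and their analogues using a puncture winding) are \emph{not} collapsed by the relations $R_{P,i}$, which is exactly the phenomenon that fails in the lattice cases of Proposition \ref{boundord}. I expect this to require a careful bookkeeping of $C$-irreducible representatives modulo $(R_{P,i})$, organised by the grading, together with a separate treatment of the subcases $\#\M_P=0$, $\#\M_P=1$, $\#\M_P\geq 2$ and $\#\M_P=\infty$, since the shape of $R_{P,i}$, and hence which words survive, differs in each; the special-monogon subcase ($n=1$, special puncture) would be dispatched on its own using the completion data in Definition \ref{defgamma}.
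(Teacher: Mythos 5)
Your reductions in the first paragraph are sound: the corner fact (if $A=\sum_i Z(A)a_i$ then $eAe=\sum_i (Z(A)e)(ea_ie)$ with $Z(A)e\subseteq Z(eAe)$) is correct, Theorem \ref{subtri} does let you strip the arcs, and this matches the spirit of the paper's own reductions (compare the proof of Lemma \ref{reford2}). The load-bearing step, however, fails: the grading you invoke does not exist on $\bar{\Gamma}=\Gamma^\circ_\sigma/I_\sigma$. The weights $1/(d_{s(\vec u)}m_{s(\vec u)})$ appear in the proof of Proposition \ref{Csig}(3) and make only the relations $C_{\vec u}$ homogeneous; they grade $\Gamma^\circ_\sigma$, not the quotient by $I_\sigma$. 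Concretely, normalise so that every full cycle around a marked point has degree $1$, so $\deg \xi^P_j = 1-\deg\omega^P_j$; then homogeneity of all the $R_{P,i}$ for a boundary $n$-gon with $\M_P=\emptyset$ is equivalent to $\sum_{j=1}^n \deg\omega^P_j = n-2$, which for your weights reads $\sum_{j=1}^n 1/(2m_{P_j}) = n-2$. This fails, e.g., for a disc with three boundary marked points and $m_{P_1}=m_{P_2}=m_{P_3}=2$ --- an instance of the lemma's second hypothesis. This inhomogeneity is not an accident you can rescale away: the gap between $\kappa_P$ and $\kappa$ for exactly these weights is the engine of the degeneration argument in Lemma \ref{pl} and Theorem \ref{tame}. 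Once the grading is gone, your ``graded centre'', the degreewise handling of the $J$-adic completion (where central elements may be genuinely infinite sums), and the inference from module-finiteness to bounded graded dimensions all lose their foundation.

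Even granting some ad hoc grading, your two pillars (A) and (B) contain the entire difficulty and are only gestured at. (B), finiteness of the centre over $k[C_\sigma]$, amounts to classifying central elements, which is at least as hard as the lemma; the paper instead proves the much weaker but sufficient statement that no central element $U$ can contain $\alpha^\ell$ (with $\alpha=\xi'_1\cdots\xi'_n$), respectively $\beta_1^\ell$, as a term: it checks that no word occurring in a defining relation is a factor of these monomials (hence the powers are linearly independent), and that any rewriting of $\omega^P_1 U$, respectively $\xi'_1\omega^P_1 U$, retains the offending term and so is never right divisible by $\omega^P_1$, respectively $\xi'_1\omega^P_1$, contradicting centrality. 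As for (A), unboundedness of the graded multiplicities is far from clear precisely in the case $\M_P=\emptyset$, $m_{P_i},m_{P_j}>1$: there the relations identify the exponentially many candidate words (in the paper's notation $\beta_1=\beta_2$, and all products $\beta_{i_1}\cdots\beta_{i_\ell}$ coincide), and the paper secures only \emph{one} independent element per degree. Note that bounded graded pieces are perfectly compatible with module-finiteness over the centre (the lattices of Proposition \ref{boundord} have one-dimensional growth per degree), so growth alone cannot be the obstruction here; the obstruction is the failure of commutation, which your ``leading-term argument'' defers rather than supplies. In short: your frame differs genuinely from the paper's direct divisibility argument, but as written it rests on a grading that provably does not exist in cases covered by the lemma, and on two unproved assertions of which one ((B)) is the theorem in disguise and the other ((A)) may be false.
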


\begin{proof}
 To simplify, we suppose that $m_{P_1} > 1$. For $i = 1, \dots, n$, we denote $$\xi'_i := \ic{\vct{P_i P_{i-1}}, \vct{P_i P_{i+1}}}.$$ 

 We suppose first that we are in the first case or in the second case with $j \neq i \pm 1$. Let $\alpha := \xi'_1 \xi'_2 \cdots \xi'_n$. It is immediate that the $\alpha^n$'s form a linearly independent set of $e_0 \Gamma_\sigma e_0$ as no path appearing in any relation defining $\Gamma_\sigma$ is a factor of any $\alpha^n$. As a consequence, if $e_0 \Gamma_\sigma e_0$ was finitely generated over its centre, there would be an element $U = \alpha^\ell + U'$ in the centre of $e_0 \Gs e_0$ where $\ell > 0$ and $U'$ does not contain a multiple of $\alpha^\ell$ as a summand. Again, no path appearing in any relation is a factor of $\omega_1^P \alpha^\ell$. Therefore, any element of the complete path algebra equivalent to $\omega_1^P U$ modulo the relations defining $e_0 \Gamma_\sigma e_0$ has $\omega_1^P \alpha^\ell$ as a term and cannot be divisible to the right by $\omega_1^P$. It contradicts the fact that $U$ is in the centre.

 Let us now suppose that $m_{P_1}, m_{P_2} > 1$ and $\# \M_P = 0$ (so $n \geq 3$ by hypothesis). We consider the following elements of the path algebra:
  $$\beta_1 := \xi'_1 \xi'_2 \omega^P_2 \omega^P_1 \quad \text{and} \quad \beta_2 := \xi'_1 \xi'_2 \xi^P_3 \xi^P_4 \cdots \xi^P_n$$
 and we notice that $\beta_1 = \beta_2$ in $\Gamma_\sigma$ satisfy that any $\beta_1^\ell$ can only be rewritten up to the relations as a linear combination of 
 $\beta_{i_1} \beta_{i_2} \cdots \beta_{i_\ell}$ for $i_1, \dots, i_\ell \in \{1,2\}$. In particular they are linearly independent. Thus, as before, we should have $U =  \beta_1^\ell + U'$ in the centre where $\ell > 0$ and $U'$ does not have any summand that is a scalar multiple of $\beta_1^\ell$. If $n > 3$ or $m_{P_1} > 2$, we notice as before that $\xi'_1 \omega^P_1 \beta_1^\ell$ can only be rewritten as $\xi'_1 \omega^P_1 \beta_{i_1} \beta_{i_2} \cdots \beta_{i_\ell}$ for $i_1, \dots, i_\ell \in \{1,2\}$. In particular, it is never right divisible by $\xi'_1 \omega^P_1$ so it contradicts the fact that $U$ is in the centre. In the same way, if $n = 3$ and $m_{P_1} = 2$, $\xi_1' \omega_1^P \beta_1^\ell$ can only be rewritten as $\xi_1' \omega_1^P \beta_{i_1} \beta_{i_2} \cdots \beta_{i_\ell}$ or $\lambda_{P_1}^{-1} \omega_3^P \omega_2^P \xi_2' \omega_2^P \omega_1^P \beta_{i_2} \cdots \beta_{i_\ell}$ or $\lambda_{P_1}^{-1} \omega_3^P \omega_2^P \xi_2' \xi_3^P \beta_{i_2} \cdots \beta_{i_\ell}$ for $i_1, \dots, i_\ell \in \{1,2\}$ so it is not right divisible by $\xi_1' \omega_1^P$ and $U$ is not in the centre.
\end{proof}

\begin{lemma} \label{reford2}
 Let $E_0 \subset E$ correspond to a boundary component, $P$ be the $n$-gon of $\sigma$ corresponding to this boundary component. Suppose that $P$ has at least two punctures or a hole and $E$ contains at least a non-boundary arc which is not homotopic to a part of a boundary. Then $e \Gamma_\sigma e$ is not finitely generated over its centre.
\end{lemma}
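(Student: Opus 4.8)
The plan is to combine the corner reduction of Theorem~\ref{subtri} with an explicit, non-telescoping cyclic path built from the external boundary paths and the essential arc, and then to run the same centrality obstruction as in Lemma~\ref{reford1}.

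First, a reduction. If $e'Ae'$ is a corner of a $k$-algebra $A$ that is finitely generated as a module over its centre $Z(A)$, then $e'Ae'$ is finitely generated as a module over its own centre: it is generated over the central subring $Z(A)e'$ by the images $e'x_ie'$ of a finite $Z(A)$-generating set $\{x_i\}$ of $A$, and $Z(A)e'\subseteq Z(e'Ae')$. Hence, by Theorem~\ref{subtri}, $\Gamma_\tau\cong e_\tau(e\Gamma_\sigma e)e_\tau$ is a corner of $e\Gamma_\sigma e$, so it suffices to exhibit a sub–partial-triangulation $\tau\subseteq E$ whose $\Gamma_\tau$ is not finitely generated over its centre. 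Moreover, if some $m_{P_i}>1$ then, since $\#\M_P>0$, the corner $e_0\Gamma_\sigma e_0$ is already not finitely generated over its centre by Lemma~\ref{reford1}, and we are done by the same corner argument. I may therefore assume $m_{P_i}=1$ for all $i$, so that each external path is a single arrow, $\xi_i^P=\lambda_{P_i}\ic{\vct{P_i P_{i-1}},\vct{P_i P_{i+1}}}$, and since $\#\M_P\geq 2$ the relations attached to $P$ read $R_{P,i}=\omega_{i+1}^P\omega_i^P$, i.e.\ every ``short internal turn'' $\omega_{i+1}^P\omega_i^P$ vanishes in $\Gamma_\sigma$.

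Next, the construction. Keeping the boundary polygon $P$ of $E_0$ together with the essential arc $a$ (and, if $a$ is not already incident to $P$, a minimal chain of arcs of $E$ joining it to a vertex $P_j$), I choose $\tau$ so that $a$ is incident to some $P_j$; the hypothesis that $a$ is not homotopic to a part of a boundary is preserved. Writing $u_0=\vct{P_n P_1}$, the boundary alone gives the cyclic external path $\alpha=\xi_1^P\cdots\xi_n^P$ at $u_0$, which by Proposition~\ref{boundord} is central in the lattice $e_0\Gamma_\sigma e_0$ and hence harmless. The essential arc lets me deform $\alpha$ into a genuinely different cyclic path $\gamma$ at $u_0$: near $P_j$ the long-way external path traverses $\vec a$, and I route $\gamma$ through $\vec a$ so that one passage of $\gamma$ winds once around the hole (resp.\ encloses a puncture) instead of staying boundary-parallel. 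By construction $\gamma$ is a composition of external segments and arrows incident to $a$, so it contains none of the short internal turns $\omega_{i+1}^P\omega_i^P$; by Theorem~\ref{thmmin} only minimal-polygon relations are relevant, and an analysis of these via Lemma~\ref{xiJ} shows that no relation-path is a factor of any power $\gamma^k$. Since all multiplicities incident to $E$ equal $1$, no $C$-reduction applies either, so each $\gamma^k$ is $C$-irreducible; in particular the $\gamma^k$ are pairwise linearly independent in $\Gamma_\sigma$. It is precisely the non-homotopy of $a$ that prevents $\gamma$ from telescoping back into the boundary lattice, that is, that makes the $\gamma^k$ escape the finite-rank part $e_0\Gamma_\sigma e_0$.

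Finally, the centrality obstruction, following Lemma~\ref{reford1}. If $e\Gamma_\sigma e$ were finitely generated over its centre, then (comparing the unbounded degrees of the $\gamma^k$ in the grading of Proposition~\ref{Csig} with finitely many module generators) its centre would contain an element $U=\gamma^\ell+U'$ with $\ell>0$ and $U'$ having no summand a scalar multiple of $\gamma^\ell$. Choosing the internal path $\omega=\omega_j^P$ pointing into $a$, the product $\omega U$ has $\omega\gamma^\ell$ as a term, and a rewriting analysis as above shows that $\omega\gamma^\ell$, hence $\omega U$, is not right-divisible by $\omega$, because the essential winding through $a$ cannot be undone; this contradicts $\omega U=U\omega$. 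The main obstacle is the linear-independence/no-rewriting claim of the third paragraph: one must control all transformations of a power $\gamma^k$ under the minimal-polygon relations in the presence of the hole (or two punctures) and the essential arc, and verify that the winding through $a$ is never cancelled. This is where both $\#\M_P\geq 2$ (forcing $\omega_{i+1}^P\omega_i^P=0$) and the non-homotopy of $a$ are used decisively, and the final non-right-divisibility in the centrality step requires the same care.
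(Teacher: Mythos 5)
Your reductions are sound and in fact match the paper's: the corner argument (a finite $Z(A)$-generating set $\{x_i\}$ of $A$ yields the generating set $\{e'x_ie'\}$ of $e'Ae'$ over the central subring $Z(A)e'\subseteq Z(e'Ae')$) makes explicit what the paper uses implicitly via Theorem~\ref{subtri}, and disposing of the case where some $m_{P_i}>1$ by Lemma~\ref{reford1} is exactly the paper's first step. The problem is the core of the argument, which you yourself flag as ``the main obstacle'': the claim that no relation-path is a factor of any $\gamma^k$ (hence linear independence, hence the non-divisibility in the centrality step) is asserted rather than proved, and for your choice of $\gamma$ it is genuinely in doubt. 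Your $\gamma$ is routed ``through $\vec a$'' so as to wind around the hole or puncture; in the quiver this forces $\gamma$ to wind around the \emph{far} endpoint $t(\vec a)$ of the essential arc, since merely passing through the quiver vertex $a$ while winding around $P_j$ is what the boundary segment $\xi_j^P$ already does. But the reduction via Lemma~\ref{reford1} only gives $m_{P_i}=1$ at the vertices of $P$; your statement that ``all multiplicities incident to $E$ equal $1$'' is false in general, as $m_{t(\vec a)}$ is arbitrary. At $t(\vec a)$ the relation $C_{\vec a}$ rewrites $\ic{-\vec a,-\vec a}^{m_{t(\vec a)}}$ back into boundary-parallel winding $\ic{\vec a,\vec a}^{m_{P_j}}=\ic{\vec a,\vec a}$, and the minimal polygons having $-\vec a$ as a side can have nonzero right-hand sides (e.g.\ when $t(\vec a)$ is a puncture $M$ with $m_M\leq 2$, cf.\ Cases b, d, e of Figure~\ref{polnonz}), so neither ``no $C$-reduction applies'' nor the $C$-irreducibility of $\gamma^k$ follows; Lemma~\ref{xiJ}, which you cite for this, concerns membership in $J$ and does not control these rewritings.

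The paper avoids all of this by a different choice of elements: it reduces to the boundary polygon plus a single arc $u$ attached at $P_1$ and takes $\ic{\vec u, \vct{P_1 P_n}}\,(\xi_1^P\cdots\xi_n^P)^\ell\,\xi_1^P\,\ic{\vct{P_1 P_2},\vec u}$ when $t(\vec u)$ is not a vertex of $P$, and the analogous element re-entering on the same side when $t(\vec u)$ lies on $P$, using there that $u$ is not homotopic to a part of $P$. These paths enter and leave $u$ only at its attachment to $P_1$ and never wind around the far endpoint, so with $m_{P_i}=1$ and $\#\M_P\geq 2$ (which makes $R_{P,i}=\omega_{i+1}^P\omega_i^P$) one checks directly that no factor of any defining relation occurs, and the Lemma~\ref{reford1} obstruction with $\omega_1^P$ applies verbatim. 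Incidentally, your choice $\omega=\omega_j^P$ ``pointing into $a$'' is type-mismatched: for $\omega\gamma^\ell\neq 0$ the path $\omega$ must end at the base vertex $u_0$ of $\gamma$, i.e.\ one needs $\omega_1^P$ exactly as in Lemma~\ref{reford1}; and the non-homotopy hypothesis is used in the paper precisely to rule out a relation coming from the region cut off by $u$ when $t(\vec u)$ is on $P$, not as a generic ``no telescoping'' principle. To repair your proof you would either have to carry out the deferred rewriting analysis for paths winding around $t(\vec a)$ with arbitrary $m_{t(\vec a)}$, or switch to elements that avoid $t(\vec a)$ altogether, as the paper does.
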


\begin{proof}
 Using Lemma \ref{reford1}, we can suppose that $m_N = 1$ for any vertex $N$ of $P$. We suppose thanks to Theorem \ref{subtri} that $E$ contains only one non-boundary arc $\vec u$ such that $s(\vec u) = P_1$ (as $E$ is connected). We will prove that $e_u \Gamma_\sigma e_u$ is not finitely generated over its centre. Let us distinguish two cases:
 \begin{enumerate}[\rm (a)]
%   \item If $u$ links two punctures. As we will see in Section \ref{nonfroz}, in this case $e_u \Gamma_\sigma e_u$ has finite rank over $k$ so it is not a lattice over $k \llbracket U \rrbracket$.
  \item If $t(\vec u)$ is not a vertex of $P$. We have $\# \M_P > 1$ so the elements
  $$\ic{\vec u, \vct{P_1 P_n}} \left(\xi^P_1 \xi^P_2 \xi^P_3 \cdots \xi^P_n \right)^\ell \xi^P_1 \ic{\vct{P_1 P_2}, \vec u}$$
 do not have any factor appearing in a relation. The end of the reasoning is the same as in Proof of Lemma \ref{reford1}.
% Thus, these elements are linearly independent in $e_u \Gamma_\sigma e_u$ with no strict factor in $e_u \Gamma_\sigma e_u$. As a consequence, $e_u \Gamma_\sigma e_u$ is infinitely generated as an algebra and it is not a lattice.
  \item If $t(\vec u)$ is a vertex of $P$ and $\vec u$ is not homotopic to a part of $P$. The elements
  $$\ic{\vec u, \vct{P_1 P_n}} \left(\xi_1^P \xi_2^P \cdots \xi^{P}_{n}\right)^\ell \xi^{P}_1 \ic{\vct{P_1 P_n}, \vec u}$$
  permit to conclude as before. 
 \end{enumerate} 
 Notice that in both case, $e_u \Gs e_u$ is infinitely generated as an algebra.
\end{proof}

Lemmas \ref{reford1} and \ref{reford2} conclude the proof of Theorem \ref{classiford2} (3).

\subsection{Proof of Proposition \ref{altpres0}} \label{proofaltpres0}

We start by giving two technical lemmas:

\begin{lemma} \label{Csigsoc}
  In $\Delta_\sigma$, we have $C_\sigma J = 0$.
 \end{lemma}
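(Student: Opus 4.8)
The plan is to prove the equivalent statement that $C_\sigma\cdot q = 0$ in $\Delta_\sigma$ for every arrow $q$ of $Q_\sigma$ that generates $J$. Since $C_\sigma$ is central in $\Gso$ by Proposition \ref{Csig}(2) it stays central in $\Delta_\sigma$, and $J$ is generated as a two-sided ideal by such arrows, so $C_\sigma\cdot(aqb) = a(C_\sigma q)b$ and $C_\sigma J = 0$ follows from the statement on generators. If $q = \ic{\vec u, \vec v}$ has source $u$ a boundary edge then $e_u = 0$ in $\Delta_\sigma$, whence $C_\sigma q = C_\sigma e_u q = 0$; so I may assume $\vec u$ is an oriented arc and, by Lemma \ref{xiJ}(1), that $q$ is not a special-monogon arrow. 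Writing $M := s(\vec u)$, recall $e_u C_\sigma = \lambda_M \ic{\vec u, \vec u}^{m_M}$.

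First I would realise $q$ as an internal arrow of a minimal polygon. By Lemma \ref{findminpol} there is a unique minimal polygon $P$ with $\vec u = \vct{P_1 P_2}$ and $P_1 = M$, and minimality forces $\omega^P_1 = \ic{\vct{P_1 P_2}, \vct{P_1 P_n}}$ to be a single arrow; since its target is the successor of $\vec u$ around $M$, this arrow is exactly $q = \omega^P_1$ with $\vec v = \vct{P_1 P_n}$. The defining identity $\ic{\vec u, \vec v}\cdot\io{\vec v, \vec u} = \lambda_M \ic{\vec u, \vec u}^{m_M}$ gives $e_u C_\sigma = \omega^P_1 \xi^P_1$, so, using $\omega^P_i \xi^P_i = \xi^P_{i+1}\omega^P_{i+1}$ (Definition \ref{defgamma}) at $i = n$,
$$C_\sigma q = \omega^P_1 \xi^P_1 \omega^P_1 = \omega^P_1 \omega^P_n \xi^P_n.$$
By Theorem \ref{thmmin} the relation $R_{P,n} = \omega^P_1 \omega^P_n - f_n$ holds in $\Gamma_\sigma$, where $f_n$ is the term attached to $P$ in Definition \ref{defgamma}; hence $C_\sigma q = f_n\,\xi^P_n$ in $\Delta_\sigma$. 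This reduces the lemma to the three possible shapes of $f_n$.

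The easy case is $\#\M_P \geq 2$: there $f_n = 0$, so $C_\sigma q = 0$ with no further work. The content lies in the other two. When $\M_P = \emptyset$ one gets $C_\sigma q = \xi^P_2 \xi^P_3 \cdots \xi^P_n$, and when $\M_P = \{M'\}$ the same computation yields the puncture analogue $C_\sigma q = \lambda_{M'}\,\omega^P_1(\xi^P_1 \xi^P_2 \cdots \xi^P_n)^{m_{M'}}$. In both cases $C_\sigma q$ has turned into an external path that winds once too often around the marked points of $P$, exceeding the maximal winding the $C$-irreducible monomials around $M$ can carry. To kill it I would feed in the minimal-polygon relations $R_{P',j}$ of the polygons $P'$ adjacent to the sides of $P$, together with the relations $C_{\vec w}$ and the $J$-membership criteria of Lemma \ref{xiJ} for the individual $\xi$-factors, rewriting $C_\sigma q$ as elements of higher and higher powers of $J$. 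Here the passage from $\Gamma_\sigma$ to $\Delta_\sigma$ is indispensable: a rewritten term that reaches a boundary component is $0$ in $\Delta_\sigma$, whereas in $\Gamma_\sigma$ this escape route keeps $C_\sigma$ a non-zero-divisor on the boundary lattices of Proposition \ref{boundord}. One then concludes $C_\sigma q \in \bigcap_k J^k = 0$, using that $J$ is nilpotent on any component disconnected from the boundary (Theorem \ref{classiford2}(1)) and, on the boundary-connected components, that $\Delta_\sigma$ remains $J$-adically separated as a quotient of the complete algebra $\Gamma_\sigma$.

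The hard part will be this last step. A single polygon's relations only rewrite $\xi^P_2\cdots\xi^P_n$ (and its puncture analogue) tautologically back into $C_\sigma q$, so the vanishing is genuinely global: one must bring in the relations around each vertex $P_i$ and exploit the death of boundary idempotents to force the $J$-adic order upward until either a boundary is met or a second full revolution is completed (triggering a $\#\M_{P'}\geq 2$ relation that produces $0$). I expect the bookkeeping of this induction — verifying that each admissible rewriting strictly raises the $J$-order and that no step leaks back to a lower filtration level — to be the main obstacle, with the $\M_P = \emptyset$ triangle case (as in a sphere with three punctures, where the two complementary triangles must be used against each other) the most delicate instance.
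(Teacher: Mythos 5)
Your reduction is sound as far as it goes: realizing $q$ as $\omega_1^P$ of the minimal polygon via Lemma \ref{findminpol}, computing $C_\sigma q = \omega_1^P \xi_1^P \omega_1^P = \omega_1^P \omega_n^P \xi_n^P$ and applying $R_{P,n}$ (legitimate by Theorem \ref{thmmin}) to get $C_\sigma q = f_n \xi_n^P$ is correct, and it does settle the case $\#\M_P \geq 2$. But the two cases carrying all the content --- $C_\sigma q = \xi_2^P \cdots \xi_n^P$ and $C_\sigma q = \lambda_{M'}\,\omega_1^P(\xi_1^P \cdots \xi_n^P)^{m_{M'}}$ --- are left as a plan, and the plan's pivotal claim, that each admissible rewriting \emph{strictly} raises the $J$-adic order, is exactly what needs proof. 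As you yourself observe, a single polygon's relations cycle tautologically, and you supply no invariant or mechanism preventing the global rewriting from doing the same; asserting $C_\sigma q \in \bigcap_k J^k = 0$ without it is a genuine gap, not bookkeeping. The delicate configurations you flag (sphere with three punctures, self-folded triangles, flat digons and $4$-gons) are precisely where naive rewriting stalls and where the structural identities of Lemmas \ref{RRp} and \ref{RRp2} are indispensable --- yet you never invoke them concretely.

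The paper closes this gap by an organization you should compare against. It first reduces to $\Sigma$ without boundary (the augmented surface of Definition \ref{defsp}) and then, via Corollary \ref{subtri2}, to $\sigma$ a \emph{triangulation} --- a reduction you never make, and the one that tames your ``genuinely global'' problem, since for a triangulation the possible configurations of an arrow $\ic{\vec u, \vec v} \in J$ form a short explicit list. Instead of an unbounded rewriting induction it then proves the single self-improving inclusion $C_\sigma \ic{\vec u, \vec v} \in C_\sigma J^2$, by cases on the relative positions of $s(\vec u)$, $t(\vec u)$, $t(\vec v)$: the generic case dies by Lemma \ref{RRp2} applied to the flat $4$-gon with sides $\vec u$, $-\vec u$, $\vec v$, $-\vec v$; the digon and punctured-triangle cases use Lemma \ref{RRp}; and in the hardest subcase (a self-folded triangle forcing the sphere with three punctures) an explicit chain of identities exhibits $C_\sigma \ic{\vec u, \vec v}$ as divisible by an element of $C_\sigma J^2$. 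Iterating $C_\sigma J \subseteq C_\sigma J^2$ and invoking completeness with respect to $J$ then concludes --- this inclusion is the precise form of the ``order-raising'' your sketch gestures at. Until you either perform the triangulation reduction and this case analysis, or construct an actual strictly increasing order invariant for your rewriting scheme, the proof is incomplete at its central step.
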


 \begin{proof}
  We can suppose that $\Sigma$ has no boundary as the result for $\sigma$ follows from the one for the partial triangulation $\sigma'$ of $\Sigma'$. We can also suppose that $\sigma$ is a triangulation as the result will be induced to any partial triangulation $\tau \subset \sigma$ thanks to Corollary \ref{subtri2}.

  As $C_\sigma$ is in the center of $\Delta_\sigma$, it is enough to prove that for an arrow $\ic{\vec u, \vec v}$ of $Q_\sigma$ which is in $J$, we have $C_\sigma \ic{\vec u, \vec v} = 0$. Using relations defining $\Delta_\sigma$, we get that $C_\sigma \ic{\vec u, \vec v}$ is equal to
   $$\lambda_{t(\vec u)} \ic{-\vec u, -\vec u}^{m_{t(\vec u)}} \ic{\vec u, \vec v} = \lambda_{s(\vec u)} \ic{\vec u, \vec u}^{m_{s(\vec u)}} \ic{\vec u, \vec v} = \lambda_{t(\vec v)} \ic{\vec u, \vec v} \ic{-\vec v, -\vec v}^{m_{t(\vec v)}}.$$
   We will prove that $C_\sigma \ic{\vec u, \vec v} \in C_\sigma J^2$ which is enough by completeness with respect to $J$. Under the hypothesis that $\ic{\vec u, \vec v}$ is an arrow and $\sigma$ is a triangulation, the following cases are impossible:
   \begin{itemize}
    \item $\vec v = -\vec u$ ;
    \item $t(\vec u) = t(\vec v) \neq s(\vec u)$ and $\vec u \neq \vec v$;
    \item $t(\vec u) = t(\vec v) = s(\vec u)$
   \end{itemize}
   so we are in one of the following cases:
  \begin{itemize}
   \item If $s(\vec u)$, $t(\vec u)$ and $t(\vec v)$ are distinct then, using Lemma \ref{caracpoly}, there is a polygon with sides $\vec u$, $-\vec u$, $\vec v$, $-\vec v$ and Lemma \ref{RRp2} gives $\ic{-\vec u, -\vec u} \ic{\vec u, \vec v} = 0$.
   \item If $\vec u = \vec v$. As $\vec v$ follows $\vec u$ around $s(\vec u)$, we have $t(\vec u) \neq s(\vec u)$ and Lemma \ref{RRp} permits to conclude in the digon with sides $\vec u$, $-\vec u$.
   \item If $s(\vec u) = t(\vec u) \neq t(\vec v)$. Let $P$ the triangle enclosed by $\vec u$, $\vec v$, $-\vec v$. Thanks to Lemma \ref{RRp}, if $\# \M_P \geq 1$ 
  then $\ic{\vec u, \vec v} \ic{\vec v, -\vec u} \ic{\vec u, \vec v} = 0$, which permits to conclude. 
  Suppose that $\M_P = \emptyset$. Then $m_{t(\vec v)} > 1$ as $\ic{\vec u, \vec v} \in J$. So $C_\sigma \ic{\vec u, \vec v}  = \io{-\vec u, \vec v} \cdot \ic{\vec v, -\vec u} \ic{\vec u, \vec v} = \lambda_{t(\vec v)} \io{-\vec u, \vec v} \cdot \ic{-\vec v, -\vec v}^{m_{t(\vec v)} - 1}$ is left divisible by $\io{-\vec u, \vec u} \cdot \ic{\vec u, \vec v} \ic{-\vec v, -\vec v} = \io{-\vec u, \vec u} \cdot \io{-\vec u, \vec v} = \io{-\vec u, \vec u}^2 \cdot \ic{\vec u, \vec v}$. If $- \vec u$ encloses more than one puncture then $\io{-\vec u, \vec u}^2 = 0$. If it encloses one puncture $M$ then $\Sigma$ is a sphere with three punctures. Thus, $m_N > 1$ for all $N \in \M$. Thus 
  \begin{align*}
   & \io{-\vec u, \vec u}^2 \cdot \ic{\vec u, \vec v} \\=\,& \lambda_{s(\vec u)}^2 \ic{-\vec u, -\vec u}^{m_{s(\vec u)} - 1} \ic{-\vec u, \vec u}^2 \ic{\vec u, \vec u}^{m_{s(\vec u)} - 1} \ic{\vec u, \vec v} \\
    =\,& \lambda_{s(\vec u)}^2 \lambda_M \ic{-\vec u, -\vec u}^{m_{s(\vec u)} - 1} \ic{-\vec u, \vec u} \cdot \io{\vec u, -\vec u}^{m_M - 1} \ic{\vec u, \vec v} \ic{\vec v, \vec v}^{m_{s(\vec u)} - 1} 
  \end{align*}
  is right divisible by $C_\sigma \ic{\vec u, \vec v} \ic{\vec v, \vec v}^{m_{s(\vec u) - 1}} \in C_\sigma J^2$ and the result follows.
  \item If $s(\vec u) = t(\vec v) \neq t(\vec u)$, the reasoning is similar. \qedhere
  \end{itemize}
 \end{proof}

Let $I$ be the kernel of the canonical projection $k \uQ_\sigma \surj \Delta_\sigma$. We denote $I_0 := (C_{\vec v})_{\vec v} + C_\sigma J \subset I$.

 \begin{lemma} \label{altpresl}
  Let $\vec v$ be an oriented arc of $\sigma$, and $P$ be the minimal $n$-gon containing $\vec v$ with $\vct{P_1 P_2} = \vec v$. Then we have 
  $$(R_{P,1}) + S^P + I_0 + J I + I J = (\RR^\circ_{\vec v}) + S^P + I_0 + J I + I J.$$
 \end{lemma}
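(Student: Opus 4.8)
The plan is to reduce the asserted equality of two-sided ideals to a single membership statement, and then verify that statement by matching the correction terms case by case. Write $N := S^P + I_0 + JI + IJ$ for the common summand on both sides. Each of $I_0$, $S^P$, $JI$, $IJ$ is a two-sided ideal (recall $C_\sigma$ is central by Proposition \ref{Csig}, so $C_\sigma J$ is two-sided, and $I$ is two-sided as the kernel of $k\uQ_\sigma \surj \Delta_\sigma$), hence so is $N$. The first observation is that, modulo $N$, a principal two-sided ideal generated by any homogeneous $a = e_s\, a\, e_t \in I$ collapses onto its $k$-span: any path of positive length lies in $J$, so multiplying $a$ on the left or right by it lands in $JI + IJ \subseteq N$, while multiplication by idempotents returns $a$ or $0$. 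Thus $(R_{P,1}) + N = k\,R_{P,1} + N$ and $(\RR^\circ_{\vec v}) + N = k\,\RR^\circ_{\vec v} + N$, and since both relations carry the same source $u$ and target $w$, the lemma is equivalent to the single membership $R_{P,1} - \RR^\circ_{\vec v} \in N$.

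Next I would line up leading terms. With $\vct{P_1 P_2} = \vec v$, the side of $P$ following $\vec v$ is $\vec u = \vct{P_2 P_3}$ and the side preceding it is $\vec w = \vct{P_n P_1}$, so $\omega_2^P \omega_1^P = \ic{\vec u, -\vec v}\,\ic{\vec v, -\vec w}$, which is exactly the quadratic term of $\RR^\circ_{\vec v}$. Consequently $R_{P,1} - \RR^\circ_{\vec v} = f^\circ_{\vec v} - s$, where $s$ is the subtracted term of $R_{P,1}$ (one of the three expressions according to $\#\M_P = 0$, $1$, or $\geq 2$). So the entire content is that this difference of correction terms lies in $N$. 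In particular, when a neighbour $\vec u$ or $\vec w$ is a boundary edge, $\RR^\circ_{\vec v} = 0$ and the winding path $\omega_2^P\omega_1^P$ already vanishes in $k\uQ_\sigma$ (it ends or starts at a boundary vertex), so the claim degenerates to $s \in N$.

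The verification then runs along the cases of Figure \ref{polnonz}, using precisely the tools assembled for this subsection. When $f^\circ_{\vec v} = 0$ (a neighbour is a boundary edge, or the configuration is not one of a--f), I would show each $\io{\cdots}$- or $\xi$-factor of $s$ either traverses a boundary component, hence vanishes in $k\uQ_\sigma$, or can be absorbed into $JI + IJ$ by rewriting the long external product through the remaining relations $R_{P,j}$ ($j \neq 1$) of the same polygon; Lemma \ref{xiJ} is what pins down which external subpaths avoid $J$, so that only the configurations with nonzero $f^\circ_{\vec v}$ survive. In the triangle case (Case a) the identity $\xi_3^P = \io{-\vec u, \vec w} = f^\circ_{\vec v}$ makes $s = f^\circ_{\vec v}$ outright. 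In the cases where $f^\circ_{\vec v}$ is a scalar multiple of $e_u C_\sigma$ (Cases b, d with $m_M = 2$, and e), I would identify the $m_M$-fold winding factor of $s$ with $e_u C_\sigma$, folding orientations by the relations $C_{\vec x} \in I_0$ and discarding the excess via centrality of $C_\sigma$ together with $C_\sigma J \subseteq I_0$ (Lemma \ref{Csigsoc}); Cases c, d ($m_M=1$), and f reduce to a direct simplification of an $\io{\cdots}$. Where an external subpath survives $J$ through a special monogon on the sphere with four punctures, it is absorbed by $S^P$ exactly as in Lemma \ref{RRp}.

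The step I expect to be the main obstacle is the monogon and digon cases enclosing a puncture (Cases b, d, e): there $s$ is genuinely long, winding $m_M$ times around $M$, and identifying it modulo $N$ with the compact expression $\lambda_M e_u C_\sigma$ (respectively $\lambda_M \ic{\vec u, -\vec w}$) is the one place where boundary-vanishing is unavailable and one must combine $C_\sigma J \subseteq I_0$ with the $C_{\vec x}$ relations in tandem. Keeping the scalars $\lambda_M$ and $\nu_\M$ straight there, together with the degenerate index conventions when $n = 1$ or $n = 2$, is where the bookkeeping is heaviest.
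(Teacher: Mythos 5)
Your reduction of the ideal equality to an element-level congruence modulo $N := S^P + I_0 + JI + IJ$ is sound (any $a\,R_{P,1}\,b$ with $a$ or $b$ of positive length lands in $JI+IJ$), and your base-case sketch — matching the correction term of $R_{P,1}$ against $f^\circ_{\vec v}$ through the configurations of Figure \ref{polnonz}, with Lemma \ref{xiJ} governing which external subpaths escape $J$ and $S^P$ absorbing the sphere-with-four-punctures exceptions — is essentially the paper's cases (a)--(g). But there is a genuine gap: you have implicitly assumed that the ambient partial triangulation is just the local configuration around $\vec v$. The lemma is asserted for an arbitrary $\sigma$, and the minimal polygon $P$ of $\sigma$ containing $\vec v$ can be large: for instance an empty quadrilateral with four \emph{distinct} sides and $\M_P = \emptyset$, where $R_{P,1} = \omega_2^P\omega_1^P - \xi_3^P\xi_4^P$ and $f^\circ_{\vec v} = 0$, so the claim amounts to $\xi_3^P\xi_4^P \in N$. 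Your proposed mechanism — ``rewriting the long external product through the remaining relations $R_{P,j}$ ($j\neq 1$) of the same polygon'' — cannot produce this membership. First, those relations are not available for free: neither side of the asserted equality contains them, so each use of an $R_{P,j}$ with $j\neq 1$ is legitimate modulo $N$ only when it is multiplied by an element of $J$ (so that the term lands in $JI+IJ$), which must be checked each time. Second, and more fundamentally, even granting such uses, same-polygon identities do not place $\xi_3^P\xi_4^P$ in $N$: the product has no factor in $I$, so it is not in $JI+IJ$, and it is not captured by $I_0$ or $S^P$ either.

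What the paper does at this point, and what your proposal is missing, is an induction on the number of arcs of $\sigma$: one removes an arc $x \notin \{u,v,w\}$, sets $\tau := \sigma\setminus\{x\}$, checks that $\RR^\circ_{\vec v}$ is unchanged when computed in $\tau$ (the corners at $P_1$, $P_2$ are untouched), and uses Lemma \ref{redideal} to compare $R_{P,1}$ with $R_{P',1}$, where $P'$ is the minimal polygon of $\tau$ containing $\vec v$. The key observation making this work is that $e_u I_{P,x} e_w \subset JI + IJ$, because the cut relations generating $I_{P,x}$ start or end at $x \neq u, w$ and hence acquire $J$-factors upon compression; together with the invertible $\kappa$ of Lemma \ref{redideal} this yields $R_{P,1} - R_{P',1} \in JI + IJ + S^P$, and the descent bottoms out exactly at your base configurations. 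The membership $\xi_3^P\xi_4^P \in N$ in the example above is only established through this descent — it is not a local consequence of the relations of $P$ itself — so without the induction your argument fails for every $\sigma$ whose minimal polygon at $\vec v$ is not already one of the degenerate shapes. (Two minor points: $C_\sigma J \subseteq I_0$ holds by definition of $I_0$, while Lemma \ref{Csigsoc} is what guarantees $I_0 \subseteq I$; and reducing to the single membership $R_{P,1} - \RR^\circ_{\vec v} \in N$ is only the sufficient direction of the ideal equality, which is harmless here but worth flagging since the inductive comparison naturally produces unit multiples $\kappa$ rather than exact differences.)
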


 \begin{proof}
  Let $\vec u := \vct{P_2 P_3}$ and $\vec w := \vct{P_n P_1}$. Thus, $-\vec v$ follows immediately $\vec u$ around $P_2$ and $-\vec w$ follows immediately $\vec v$ around $P_1$.

  Let us first suppose that $\sigma$ contains only $u$, $v$ and $w$. If $\# \M_P \geq 2$, the result is immediate. So we suppose that $\# \M_P \leq 1$. If $P$ has one puncture, we write $\M_P = \{M\}$. 
% According to Proof of Lemma \ref{findminpol}, $P$ contains the sides $\vec w$, $\vec v$, $\vec u$ in this order (but they are not necessarily distinct). 
   Let us distinguish several cases:
  \begin{enumerate}[\rm (a)]
   \item If $P$ is a monogon (Case d). In this case $\vec u = \vec v = \vec w$ and $P$ has one puncture. We get easily that, modulo $I_0$,
   $$\ic{\vec u, -\vec v} \ic{\vec v, -\vec w} - R_{P,1} = \left\{\begin{array}{ll}
                                                                                  \lambda_M \ic{\vec v, -\vec v} & \text{if $m_M = 1$;} \\
										  \lambda_M e_v C_\sigma & \text{if $m_M = 2$;} \\
										  0 & \text{if $m_M > 2$.}
                                                                                 \end{array}\right.$$
   \item If $P$ is a digon (Case b). In this case $\vec u = \vec w$ and $P$ has one puncture. Again, we easily find that, modulo $I_0$,
   $$\ic{\vec u, -\vec v} \ic{\vec v, -\vec w} - R_{P,1} = \left\{\begin{array}{ll}
										  \lambda_M e_u C_\sigma & \text{if $m_M = 1$;} \\
										  0 & \text{if $m_M > 1$.}
                                                                                 \end{array}\right.$$
   \item If $P$ is a triangle. The result is immediate if $P$ has no puncture (Case a) so we consider the case where $\M_P = \{M\}$. We have
    $$R_{P,1} = \omega^P_2 \omega^P_1 - \lambda_M C_\sigma (\xi^P_3 \xi^P_1 \xi^P_2)^{m_M - 1} \xi^P_3$$
    and, modulo $I_0$, using Lemma \ref{xiJ}, $R_{P,1} = \omega^P_2 \omega^P_1$ except if $m_M = m_{P_3} = 1$ and $\vec w = -\vec u$. It induces the expected result (Case e or zero relation).
   \item If $n \geq 4$ and $P$ has a puncture or two non-consecutive sides coinciding. In this case, using Proposition \ref{caracpoly}, the assumptions of Lemma \ref{RRp} are satisfied and the result easily follows (zero relation).
   \item If $n = 4$, the sides of $P$ are $\vec w$, $\vec v$, $\vec u$, $-\vec u$ in this order and $\M_P = \emptyset$. The case $\vec w = -\vec v$ is an easy consequence of Proof of Lemma \ref{RRp2} so we suppose that $\vec w \neq -\vec v$ (Case c). We have
   $$\ic{\vec u, -\vec v} \ic{\vec v, -\vec w} - R_{P,1} = \lambda_{P_2} \lambda_{P_3} \ic{-\vec u, -\vec u}^{m_{P_3}-1} \ic{\vec u, -\vec v} \ic{-\vec v, \vec w} \ic{\vec w, \vec w}^{m_{P_2}-1}.$$
   Notice that, modulo $I$, we have
   $$\ic{-\vec u, -\vec u} \ic{\vec u, -\vec v} = \lambda_{P_2} \lambda_{P_1} \ic{\vec u, -\vec v} \ic{-\vec v, -\vec v}^{m_{P_2} - 1} \ic{-\vec v, \vec w} \ic{-\vec w, \vec v} \ic{\vec v, \vec v}^{m_{P_1} - 1} = 0$$
   using the fact that, by (b) in the digon with sides $-\vec v$, $-\vec w$, $\ic{-\vec v, \vec w} \ic{-\vec w, \vec v}$ is a multiple of $C_\sigma$ and $\ic{\vec u, -\vec v} \in J$. Thus, if $m_{P_3} > 1$, $\ic{\vec u, -\vec v} \ic{\vec v, -\vec w} - R_{P,1} \in IJ$ (as $\ic{-\vec v, \vec w} \in J$). The case $m_{P_3} = 1$ is direct.
   \item If $n = 4$, the sides of $P$ are $\vec w$, $\vec v$, $\vec u$, $-\vec w$ in this order and $\M_P = \emptyset$, the reasoning is the same as (e) (Case c for $\vec w$).
   \item If $n = 5$ and the sides of $P$ are $\vec w$, $\vec v$, $\vec u$, $-\vec u$, $-\vec w$ in this order. The case $\M_P \neq \emptyset$ is an easy consequence of Lemma \ref{RRp}. Suppose that $\M_P = \emptyset$ (Case f). Using Proof of Lemma \ref{findminpol}, we have that $t(\vec u)$ and $s(\vec w)$ are both only incident to one edge. Moreover, we have $s(\vec v) = t(\vec w) = s(\vec u) = t(\vec v)$. So we get
   $$\ic{\vec u, -\vec v} \ic{\vec v, -\vec w} - R_{P,1} = \io{-\vec u, -\vec u} \cdot \io{\vec u, -\vec w} \cdot \io{\vec w, \vec w}$$
   and, thanks to (c) in the triangle $\vec v$, $\vec u$, $-\vec u$, modulo $I$, $\ic{-\vec u, -\vec u} \ic{\vec u, -\vec v} = 0$. So, if $m_{P_3} > 1$, we get that $\ic{\vec u, -\vec v} \ic{\vec v, -\vec w} - R_{P,1} \in I J$ (as $\ic{\vec v, -\vec w} \in J$). In the same way, if $m_{P_5} > 1$, we have $\ic{\vec u, -\vec v} \ic{\vec v, -\vec w} - R_{P,1} \in J I$. Finally, if $m_{P_3} = m_{P_5} = 1$, we get
   $$\ic{\vec u, -\vec v} \ic{\vec v, -\vec w} - R_{P,1} = \lambda_{P_3} \lambda_{P_5} \io{\vec u, -\vec w}.$$
  \end{enumerate}
  It is immediate that we exhausted all the possibilities.

  In the general case, we can suppose that $\sigma$ contains exactly the sides of $P$. We do an induction on the number of arcs of $\sigma$. Let $x$ be an arc of $\sigma$ different than $u$, $v$ or $w$ and let $\tau := \sigma \setminus \{x\}$. It is easy to check case by case that $\RR^\circ_{\vec v}$ as the same value considered with respect to $\sigma$ or $\tau$. Let $P'$ be the minimal polygon of $\tau$ containing $\vec v$. It is immediate to check, taking notations of Lemma \ref{redideal}, that $e_u I_{P,x} e_w \subset JI + IJ$ (as relations $R_{P,i}$ generating $I_{P,x}$ start or end at $x$ and neither $x$ and $u$, neither $x$ and $w$ can form a self-folded triangle). Thus, according to Lemma \ref{redideal}, we get $R_{P, 1} - R_{P', 1} \in JI + IJ + S^P$. Therefore
  \begin{align*} (R_{P,1}) + S^P + I_0 + JI + IJ &= (R_{P',1}) + S^P + I_0 + JI + IJ \\ &= (\RR^\circ_{\vec v}) + S^P + I_0 + JI + IJ \end{align*}
  where the last equality comes from the induction hypothesis.
 \end{proof}

 We are now ready to prove Proposition \ref{altpres0}:

\begin{proof}[Proof of Proposition \ref{altpres0}]
  Summing the equalities of Lemma \ref{altpresl} for all $\vec v$, we get
  \begin{equation}(R_{P,\ell})_{P, \ell} + I_0 + JI + IJ = (\RR^\circ_{\vec v})_{\vec v} + (S^P)_{P} + I_0 + JI + IJ \label{ap1} \end{equation}
  where $P$ runs over all the minimal polygons, $\ell$ over all vertices of minimal polygons and $\vec v$ runs over all oriented arcs of $\sigma$. The left member is clearly included in $I$ by definition of $\Delta_\sigma$. It is immediate that $k \uQ_\sigma / I_0$ is a finitely generated $k$-module. Moreover, thanks to Theorem \ref{thmmin}, up to completion with respect to $J$, $I$ is generated by $(C_{\vec v})_{\vec v}$ and $(R_{P, \ell})_{P, \ell}$. Therefore, $I \subset I_0 + (R_{P, \ell})_{P, \ell}$. Finally, the left member of \eqref{ap1} is $I$. As moreover, $(S^P)_{P} \subset (\RR^\circ_{\vec v})_{\vec v}$, we rewrite \eqref{ap1} as follows:
  $$I = (\RR^\circ_{\vec v})_{\vec v} + C_\sigma J + (C_{\vec v})_{\vec v} + JI + IJ$$
  and we deduce by immediate induction that:
  $$I = (\RR^\circ_{\vec v}, C_{\vec v})_{\vec v} + C_\sigma J + \sum_{n' = 0}^n J^{n'} I J^{n-n'}$$
  for any $n > 0$. For $n$ big enough, we have $\sum_{n' = 0}^n J^{n'} I J^{n-n'} \subset C_\sigma J$ so we finally get
  $I = (\RR^\circ_{\vec v}, C_{\vec v})_{\vec v} + C_\sigma J$.
\end{proof}

\subsection{Proof of Theorem \ref{altpres2}} \label{proofaltpres2}

Let $\Pr$ be the set of minimal polygon of $\sigma$. If $\epsilon \in \{0,1\}^\Pr$, for any $\vec v \in \sigma$, we define $\RR^\epsilon_{\vec v} = \RR_{\vec v}$ if $\epsilon(P) = 1$ where $P$ is the minimal polygon with oriented side $\vec v$ and $\RR^\epsilon_{\vec v} = \RR^\circ_{\vec v}$ else. Finally, we put $$\Delta_\sigma^\epsilon := \frac{k \uQ_\sigma}{J C_\sigma + (C_{\vec v}, \RR^\epsilon_{\vec v})_{\vec v \in \sigma}}.$$

We will prove the following proposition:
\begin{proposition} \label{multipres}
 For any $\epsilon \in \{0,1\}^\Pr$, there is an isomorphism $\psi^\epsilon: \Delta^\epsilon_\sigma \to \Delta_\sigma$ satisfying $\psi^\epsilon(e_u) = e_u$ for any $u \in \sigma$ and $\psi^\epsilon(\ic{\vec u, \vec u}^{m_{s(\vec u)}}) = \ic{\vec u, \vec u}^{m_{s(\vec u)}}$ for any $\vec u \in \sigma$.
\end{proposition}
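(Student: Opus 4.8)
The plan is to prove Proposition \ref{multipres} by induction on $N(\epsilon) := \sum_{P \in \Pr} \epsilon(P)$, the number of minimal polygons on which one uses the relation $\RR_{\vec v}$ rather than $\RR^\circ_{\vec v}$. The base case $N(\epsilon)=0$ is exactly Proposition \ref{altpres0}: all $\RR^\epsilon_{\vec v}$ equal $\RR^\circ_{\vec v}$, so $\Delta^0_\sigma=\Delta_\sigma$ as quotients by the same ideal, and $\psi^0=\id$ has the required properties. For the inductive step it suffices to produce, for each pair $\epsilon,\epsilon'$ that agree except at a single polygon $P_0$ with $\epsilon(P_0)=0$ and $\epsilon'(P_0)=1$, an isomorphism $\Delta^{\epsilon'}_\sigma\to\Delta^\epsilon_\sigma$ fixing every $e_u$ and every $\ic{\vec u,\vec u}^{m_{s(\vec u)}}$; composing it with the isomorphism $\psi^\epsilon$ furnished by the induction hypothesis yields $\psi^{\epsilon'}$. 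Taking $\epsilon\equiv 1$ then recovers Theorem \ref{altpres2} as announced.

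For the single-polygon step, the two presentations differ only in the relations attached to the oriented sides $\vec v$ of $P_0$, where one passes from $\RR^\circ_{\vec v}=\ic{\vec u,-\vec v}\ic{\vec v,-\vec w}-f^\circ_{\vec v}$ to $\RR_{\vec v}=\ic{\vec u,-\vec v}\ic{\vec v,-\vec w}-f_{\vec v}$, with $f_{\vec v}$ and $f^\circ_{\vec v}$ read off from Figure \ref{polnonz}. The key point is that these correction terms coincide in Case a (the only case arising for triangulations with punctureless triangles), while in every remaining case their difference $f^\circ_{\vec v}-f_{\vec v}$ is a combination of paths of strictly higher order in the natural grading of Proposition \ref{Csig}, typically carrying a central factor $C_\sigma$ (Cases b, d with $m_M=2$, and e) or an extra full turn (Cases c and f). I would realize the change of presentation by a unipotent endomorphism $\theta$ of $k\uQ_\sigma$ which is the identity on all idempotents, is the identity on every full loop $\ic{\vec u,\vec u}^{m_{s(\vec u)}}$, and adds to the finitely many arrows incident to the vertices of $P_0$ correction terms of higher winding order chosen so that $\theta(\ic{\vec u,-\vec v}\ic{\vec v,-\vec w})\equiv\ic{\vec u,-\vec v}\ic{\vec v,-\vec w}+(f^\circ_{\vec v}-f_{\vec v})$ modulo the remaining defining relations. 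Since the corrections raise order and the algebras are finite dimensional (the relations $C_{\vec v}$ and $\RR^\epsilon_{\vec v}$ bound path lengths), $\theta$ is automatically bijective; one then checks that it carries $JC_\sigma+(C_{\vec v},\RR^{\epsilon'}_{\vec v})_{\vec v}$ onto $JC_\sigma+(C_{\vec v},\RR^\epsilon_{\vec v})_{\vec v}$ and hence descends to the desired isomorphism.

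To keep the bookkeeping manageable I would first reduce the verification, via Theorem \ref{subtri} and Corollary \ref{subtri2}, to the situation where $\sigma$ contains only the sides of $P_0$ together with boundary edges, since the modified relations involve only the arrows winding around the marked points incident to $P_0$, and Lemma \ref{redideal} controls how $R_{P_0,1}$ behaves under inclusion of sub-triangulations. The proof that $\theta$ intertwines the two relation ideals then becomes a finite case analysis over the polygon types of Figure \ref{polnonz}; here Lemma \ref{Csigsoc} ($C_\sigma J=0$) is what legitimises the $C_\sigma$-valued corrections of Cases b, d and e, and Lemma \ref{altpresl} records precisely the identities needed.

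I expect the main obstacle to be the explicit construction of $\theta$ in each case while simultaneously respecting the two rigidity constraints, namely fixing all idempotents and all full loops $\ic{\vec u,\vec u}^{m_{s(\vec u)}}$: a naive correction of a short arrow perturbs the full loop it sits inside, so the correction terms must be tuned so as to cancel on the top-winding loops. The exceptional situations ($2\notin k^\times$ with a monogon enclosing a puncture of multiplicity $2$, and the spheres with few punctures where $\nu_\M\neq 1$) will, as usual in this paper, require separate treatment.
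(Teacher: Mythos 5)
Your skeleton agrees with the paper's: induction on $|\epsilon|$, base case Proposition \ref{altpres0}, one polygon at a time, explicit automorphisms of $k\uQ_\sigma$ modifying a single arrow, triviality when $f_{\vec v}=f^\circ_{\vec v}$, and $C_\sigma J=0$ as the enabling identity. For Cases d (with $m_M=2$), c and f your plan is essentially the paper's $\phi^\circ$. But two of your mechanisms break down. First, the maps are \emph{not} unipotent in the cases that matter: in Cases c and f the paper must rescale the modified arrow by $\tilde\nu^{-1}$ (with $\tilde\nu=\nu_\M$ in the sphere-with-four-punctures configuration), precisely because composing the correction with the return path produces terms like $\phi^\circ(\ic{\vec v,\vec v})=\tilde\nu^{-1}\bigl(\ic{\vec v,\vec v}-\delta_{m_{P_2},1}\tilde\lambda\lambda_{P_2}\lambda_{P_3}e_vC_\sigma\bigr)$, and only this normalization restores $\phi^\circ(\lambda_{P_1}\ic{\vec v,\vec v}^{m_{P_1}})=e_vC_\sigma$. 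So ``corrections raise order, hence $\theta$ is automatically bijective and can be tuned on top-winding loops'' is not available; this is where the standing hypothesis that $\nu_\M$ is invertible gets consumed, not a fringe case to be deferred. Second, and more seriously, for Case b with $m_M=1$, Case e, and the reduction of Case f, the paper constructs \emph{no} automorphism at all: it runs a second induction parameter $-\#\sigma$, adds an auxiliary arc (e.g.\ from $P_2$ to the puncture $M$ in Case b) so that the offending polygon becomes one where $\RR_{\vec v}=\RR^\circ_{\vec v}$, obtains the isomorphism for the enlarged triangulation $\tau$ by induction, and transports it via Corollary \ref{subtri2} using $e_\sigma\Delta^\epsilon_\tau e_\sigma=\Delta^\epsilon_\sigma$. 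Your single-parameter induction cannot make this move, and you offer no candidate correction absorbing $f^\circ_{\vec v}-f_{\vec v}=\lambda_M e_uC_\sigma$ while fixing every idempotent and every full loop; it is doubtful one exists, which is presumably why the paper sidesteps it.

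Your reduction step also points the wrong way. The endomorphism must be defined on $k\uQ_\sigma$ for the full $\sigma$, and checking that it preserves the relation ideal includes relations $\RR^\epsilon_{\vec x}$ at arcs $\vec x$ far from $P_0$ whose correction term $f^\circ_{\vec x}$ contains the modified arrow $\ic{\vec v,-\vec w}$ as a factor (external paths winding around shared marked points). Corollary \ref{subtri2} lets you \emph{restrict} an isomorphism of $\Delta_\sigma$ to a sub-triangulation; it does not globalize one built only on the sides of $P_0$. The paper handles this by an explicit analysis, showing any such $f^\circ_{\vec x}$ factors through $\io{\vec y,-\vec w}$ or $\io{\vec v,\vec y}$ and proving these elements are fixed by $\phi^\circ$. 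Finally, Lemma \ref{altpresl} compares $R_{P,1}$ with $\RR^\circ_{\vec v}$ modulo $JI+IJ$; it does not record the identities comparing $\RR^\circ_{\vec v}$ with $\RR_{\vec v}$ that your intertwining claim needs, so that step is unsupported as cited.
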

It implies Theorem \ref{altpres2} as $\Delta_\sigma^s = \Delta^{\mathbf{1}}_\sigma$.

\begin{proof}
For $\epsilon \in \{0,1\}^\Pr$, denote $|\epsilon| := \sum_{P \text{ minimal}} \epsilon(P)$. We prove by induction on $(-\# \sigma, |\epsilon|)$ the existence of $\psi^\epsilon$. As the case $|\epsilon| = 0$ is trivial, we suppose that $|\epsilon| > 0$. Let $P$ be a minimal polygon of $\sigma$ such that $\epsilon(P) = 1$ and let $\eta \in \{0,1\}^\Pr$ be defined by $\eta(P) = 0$ and $\eta(P') = \epsilon(P')$ for any $P' \neq P$. By induction hypothesis, the isomorphism $\psi^{\eta}$ exists. If $P$ does not correspond to Cases b with $m_M = 1$, c with $m_{P_3} = 1$, d with $m_M = 2$, e with $m_{P_3} = m_M = 1$ or f with $m_{P_3} = m_{P_5} = 1$ of Figure \ref{polnonz}, the result is immediate as $\Delta_\sigma^{\epsilon} = \Delta_\sigma^{\eta}$. 

If $P$ is as in Case b with $m_M = 1$, we consider the partial triangulation $\tau$ obtained from $\sigma$ by adding an arc linking $P_2$ and $M$. By induction hypothesis, there is an isomorphism $\psi_\tau^{\epsilon}: \Delta^\epsilon_\tau \to \Delta_\tau$ (where, for $\epsilon$, $P$ is replaced by the self-folded quadrilateral). Hence we have an isomorphism $e_\sigma \psi_\tau^{\epsilon} e_\sigma: e_\sigma\Delta^\epsilon_\tau e_\sigma \to e_\sigma \Delta_\tau e_\sigma = \Delta_\sigma$ by Corollary \ref{subtri2}. Moreover, it is easy to observe that $e_\sigma\Delta^\epsilon_\tau e_\sigma = \Delta^\epsilon_\sigma$ so the result follows in this case. Case e with $m_M = m_{P_3} = 1$ is solved in the same way by adding an arc linking $P_1$ to $M$. In Case f with $m_{P_3} = m_{P_5} = 1$ we can suppose by the same argument that there is an arc linking $P_3$ to $P_5$.

In each remaining case, we define $\phi^\circ: k \uQ_\sigma \to \Delta^{\eta}_\sigma$ and we prove that it induces an isomorphism $\phi: \Delta^\epsilon_\sigma \to \Delta^{\eta}_\sigma$ satisfying $\phi^\circ(e_u) = e_u$ for any $u \in \sigma$ and $\phi^\circ(\ic{\vec u, \vec u}^{m_{s(\vec u)}}) = \ic{\vec u, \vec u}^{m_{s(\vec u)}}$ for any $\vec u \in \sigma$. It permits to conclude by putting $\psi^\epsilon = \psi^{\eta} \circ \phi$.

If $P$ is as in Case d with $m_M = 2$ and $2$ invertible in $k$, for $q$ an arrow of $\uQ_\sigma$, we put
$$\phi^\circ(q) := \left\{\begin{array}{ll}
                              q - \lambda_M \io{-\vec u, \vec u}/2 & \text{if $q = \ic{\vec u, -\vec u}$;} \\
			      q & \text{else.}
                             \end{array}\right.$$
Indeed, $\phi^\circ(\ic{\vec u, -\vec u}^2) = 0$. For any $\vec x \in \sigma$ such that $s(\vec x) = P_1$ and $x \neq u$, we get $\io{-\vec u, \vec u} \cdot \ic{-\vec u, \vec x} = \ic{\vec x, \vec u} \cdot \io{-\vec u, \vec u} = 0$ (we use that in the case of a sphere with three punctures, $m_N \geq 2$ for any $N \in \M$). Hence we get $\phi^{\circ}(\io{\vec u, \vec x}) = \io{\vec u, \vec x}$, $\phi^{\circ}(\io{\vec x, -\vec u}) = \io{\vec x, -\vec u}$ and $\phi^\circ(\ic{\vec v, \vec v}^{m_{s(\vec v)}}) = \ic{\vec v, \vec v}^{m_{s(\vec v)}}$ for any $\vec v \in \sigma$. From these observations, $\phi^\circ$ induces a well defined morphism from $\Delta^\epsilon_\sigma$ to $\Delta^{\eta}_\sigma$. It is clearly invertible.

If $P$ is as in Case c with $m_{P_3} = 1$, denote $\tilde \lambda = \lambda_N$ and $\tilde \nu = \nu_\M$ if the polygon with sides $-\vec v$, $-\vec w$ contains a unique puncture $N$, $m_N = 1$, and the minimal polygon $P'$ containing $-\vec w$ is a triangle or $\eta(P') = 1$. In any other case, write $\tilde \lambda = 0$ and $\tilde \nu = 1$. It permits to have the equalities $\ic{-\vec v, \vec w} \ic{-\vec w, \vec v} = \tilde \lambda e_v C_\sigma$ and $\ic{-\vec w, \vec v} \ic{-\vec v, \vec w} = \tilde \lambda e_w C_\sigma$ in $\Delta_\sigma^\eta$. Then write:
$$\phi^\circ(q) := \left\{\begin{array}{ll}
                              \tilde \nu^{-1}(q - \lambda_{P_3} \io{-\vec v, \vec w}) & \text{if $q = \ic{\vec v, -\vec w}$;} \\
			      q & \text{else.}
                             \end{array}\right.$$
We easily get that $\phi^\circ(\RR_{\vec u}) = \phi^\circ(\RR_{\vec v}) = \phi^\circ(\RR_{\vec w}) = \phi^\circ(\RR_{-\vec u}) = 0$. Then, an easy computation gives
$\phi^\circ(\ic{\vec v, \vec v}) = \tilde \nu^{-1}(\ic{\vec v, \vec v} - \delta_{m_{P_2}, 1} \tilde \lambda \lambda_{P_2} \lambda_{P_3} e_v C_\sigma)$ so using $C_\sigma J = 0$, $\phi^\circ(\io{\vec v, -\vec w}) = \tilde \nu^{-1}(\io{\vec v, -\vec w} - \delta_{m_{P_1}, 1} \lambda_{P_1} \lambda_{P_3} \io{-\vec v, \vec w})$. Then, we get $\phi^\circ(\lambda_{P_1} \ic{\vec v, \vec v}^{m_{P_1}}) = \phi^\circ(\io{\vec v, -\vec w}) \ic{-\vec w, \vec v} = e_v C_\sigma$ and in the same way $\phi^\circ(\lambda_{P_1} \ic{-\vec w, -\vec w}^{m_{P_1}}) = e_w C_\sigma$. We now have to check relations $\RR^{\epsilon}_{\vec x}$ when $\vec x \neq \pm \vec u, \vec v, \vec w$. The only case where we do not have trivially $\phi^\circ (\RR^{\epsilon}_{\vec x}) = 0$ is when $f^\circ_{\vec x}$ has $\ic{\vec v, -\vec w}$ as a factor. In this case, an easy case by case analysis proves that there exist $f'$ multiple of a path which does not have $\ic{\vec v, -\vec w}$ as a factor and $\vec y \in \sigma$ with $s(\vec y) = P_1$ such that
\begin{itemize}
 \item[] $f^\circ_{\vec x} = \io{\vec y, -\vec w} f'$, and $t(\vec y) \neq P_1$ or $-\vec w$, $\vec y$, $-\vec y$, $\vec v$ are ordered around $P_1$,
 \item[or] $f^\circ_{\vec x} = f' \io{\vec v, \vec y}$, and $t(\vec y) \neq P_1$ or $-\vec w$, $-\vec y$, $\vec y$, $\vec v$ are ordered around $P_1$.
\end{itemize}
So it is enough to prove that $\phi^\circ(\io{\vec y, -\vec w}) = \io{\vec y, -\vec w}$ in the first case and $\phi^\circ(\io{\vec v, \vec y}) = \io{\vec v, \vec y}$ in the second case. By symmetry, we prove the first one. We use $\phi^\circ(\io{\vec y, -\vec w}) = \ic{\vec y, \vec v} \phi^\circ(\io{\vec v, -\vec w})$ and the easy observation that $\ic{\vec y, \vec v} \ic{-\vec v, \vec w} = \tilde \lambda \io{\vec y, -\vec w}$ so $\ic{\vec y, \vec v} \cdot \io{-\vec v, \vec w} = \delta_{m_{P_2}, 1} \tilde \lambda \lambda_{P_2} \io{\vec y, -\vec w}$ in  $\Delta^\eta_\sigma$. So $\phi^\circ$ induces a morphism $\phi: \Delta^\epsilon_\sigma \to \Delta^\eta_\sigma$, which is clearly invertible.

Finally, we consider Case f where $m_{P_3} = m_{P_5} = 1$ and there is an arc $\vec z$ from $P_3$ to $P_5$. In this case, we put
$$\phi^\circ(q) := \left\{\begin{array}{ll}
                              \tilde \nu_\M^{-1}(q - \lambda_{P_3} \lambda_{P_5} \io{-\vec v, -\vec w}) & \text{if $q = \ic{\vec v, -\vec w}$;} \\
			      q & \text{else.}
                             \end{array}\right.$$
It is immediate that $\phi^\circ(\RR_{\pm \vec u}) = \phi^\circ(\RR_{\vec z}) = \phi^\circ(\RR_{ \pm \vec w}) = \phi^\circ(\RR_{\vec v}) = 0$. We denote $\tilde \lambda := \lambda_N$ if $-\vec v$ encloses a special monogon with puncture $N$ and $\tilde \lambda = 0$ else. An easy computation gives $\phi^\circ(\ic{\vec v, \vec v}) = \nu_\M^{-1}(\ic{\vec v, \vec v} - \tilde \lambda \lambda_{P_3} \lambda_{P_5} e_v C_\sigma)$ so, using $C_\sigma J = 0$, $\phi^\circ(\io{\vec v, -\vec w}) = \nu_\M^{-1}(\io{\vec v, -\vec w} - \delta_{m_{P_1}, 1} \lambda_{P_1} \lambda_{P_3} \lambda_{P_5} \io{-\vec v, -\vec w})$. Using $\ic{-\vec v, \vec v}^2 - \tilde \lambda \ic{-\vec v, \vec v} \in (C_\sigma)$ and $C_\sigma J = 0$, we deduce $\phi^\circ(\lambda_{P_1} \ic{\vec x, \vec x}^{m_{P_1}}) = e_x C_\sigma$ for $\vec x = \pm \vec v, \vec u, -\vec w$. Moreover, $\phi^\circ(\io{\vec u, -\vec w}) = \io{\vec u, -\vec w}$ so $\phi^\circ(\RR^\epsilon_{-\vec z}) = 0$ and $\phi^\circ(\io{\vec v, -\vec v}) = \io{\vec v, -\vec v}$ so all other $\RR^\epsilon_{\vec x}$ are mapped to $0$.
\end{proof}

\subsection{Proof of Theorem \ref{basisDsig}} \label{proofbasisDsig}

We start by proving the theorem for $\Delta_\sigma$:

\begin{proof}[Proof of Theorem \ref{basisDsig} for $\Delta_\sigma$]
 We can check it for $\Delta_{\sigma'}$ as it is immediate that it induces the result for $\Delta_\sigma$. So we suppose that $\Sigma$ has no boundary. We will use the presentation of Proposition \ref{altpres0}.

 For (3), Lemma \ref{Csigsoc} gives that if $x \in (C_\sigma)$ then $J x = x J = 0$. So it is enough to prove (3)' If $J x = x J = 0$ then $x \in (C_\sigma)$.

 We will prove at the same time (1) and (3)'. It is enough to check the result for $e_u \Delta_\sigma e_v$ for any pair of edges $u$, $v$. Thanks to Corollary \ref{subtri2}, we can suppose that $\sigma$ contains only $u$ and $v$. If $u$ and $v$ are disconnected, it is immediate that $e_u \Delta_\sigma e_v = 0$. Let us suppose that $u$ and $v$ are connected. We call $P$ the minimal polygon containing $\vec u$ and $P'$ the minimal polygon containing $-\vec u$. 

 Suppose first that $u = v$. If $u$ has two distinct endpoints, we immediately get, using $\RR^\circ_{\vec u}$ and $\RR^\circ_{-\vec u}$:
  $$e_u \Delta_\sigma e_u = k[\omega_1^P, \omega_2^P]/(\omega_1^P \omega_2^P, \omega_2^P \omega_1^P, \lambda_{P_1} (\omega_1^P)^{m_{P_1}} - \lambda_{P_2} (\omega_2^P)^{m_{P_2}}) $$
 which clearly has a basis consisting of $e_u$, $e_u C_u$, $(\omega_1^P)^\ell$ for $1 \leq \ell \leq m_{P_1} - 1$ and $(\omega_2^P)^\ell$ for $1 \leq \ell \leq m_{P_2} - 1$ and it is immediate that if $a \in e_u \Delta_\sigma e_u$ satisfies $a \omega_1^P = a \omega_2^P = 0$ then $a \in (C_\sigma)$.

 Let us now suppose that $u$ is a loop. Denote $x := \ic{\vec u, -\vec u}$ and $y := \ic{-\vec u, \vec u}$.
 We use the implicit notation $\M_P = \{M\}$ if $\# \M_P = 1$ and $\M_{P'} = \{M'\}$ if $\# \M_{P'} = 1$. Moreover, if $\# \M_P > 1$ we denote $m_M = \infty$ and if $\# \M_{P'} > 1$ we denote $m_{M'} = \infty$. Denote $c :=  \lambda_{s(\vec v)} (xy)^{m_{s(\vec v)}}$ and $c' :=  \lambda_{s(\vec v)} (yx)^{m_{s(\vec v)}}$. We get easily: 
 \begin{itemize}
  \item if $m_M > 1$ and $m_{M'} > 1$ then 
   $$e_u \Delta_\sigma e_u = k\langle x, y \rangle / (c - c', x^2 - \delta_{m_M, 2} \lambda_M c, y^2 - \delta_{m_{M'}, 2} \lambda_M c, cx, cy);$$
  \item if $m_M = 1$ then $m_{M'} = \infty$ and
   $e_u \Delta_\sigma e_u = k\langle x, y \rangle / (c - c', x^2 - \lambda_M x, y^2)$;
  \item if $m_{M'} = 1$ then $m_{M} = \infty$ and
   $e_u \Delta_\sigma e_u = k\langle x, y \rangle / (c - c', x^2 , y^2 - \lambda_{M'} y)$.
 \end{itemize}
 The result follows in this case.

 Suppose now that $u \neq v$. Let us choose orientations of $u$ and $v$ such that $s(\vec u) = s(\vec v)$ and $\vec v$ follows immediately $\vec u$ around $s(\vec u)$ (in the case where $u$ or $v$ is a loop). We distinguish several cases 
 \begin{enumerate}[\rm (a)] 
  \item If $t(\vec v)$, $s(\vec u)$ and $t(\vec u)$ are distinct. We get $\ic{-\vec u, -\vec u} \ic{\vec u, \vec v} = \ic{\vec u, \vec v} \ic{-\vec v, -\vec v} = 0$ so, using the basis of $e_v \Delta_\sigma e_v$, any element of $e_u \Delta_\sigma e_v$ is a linear combination of elements of the form $\ic{\vec u, \vec v} \ic{\vec v, \vec v}^\ell$ for $0 \leq \ell < m_{s(\vec u)}$. Multiplying on the left by $\ic{\vec v, \vec u}$ maps these elements to $\ic{\vec v, \vec v}^{\ell+1}$ which are linearly independent so the statement is true in this case. As $\ic{\vec v, \vec u} \in J$, it also proves that no $x \in e_u \Delta_\sigma e_v \setminus \{0\}$ satisfies $J x = 0$.
  \item If $t(\vec v) = t(\vec u) \neq s(\vec u)$. In this case, $\ic{\vec u, \vec v} \ic{-\vec v, -\vec u}$ and $\ic{-\vec u, -\vec v} \ic{\vec v, \vec u}$ are multiple of $C_\sigma$. Therefore, any element of $e_u \Delta_\sigma e_v$ is a linear combination of $\ic{\vec u, \vec v} \ic{\vec v, \vec v}^\ell$ for $0 \leq \ell < m_{s(\vec u)}$ and $\ic{-\vec u, -\vec v} \ic{-\vec v, -\vec v}^{\ell}$ for $0 \leq \ell < m_{t(\vec u)}$. 
%   $$\tilde \lambda_M = \left\{ \begin{array}{ll} \lambda_M & \text{if $\M_P = \{M\}$ and $m_M = 1$;} \\ 0 & \text{else,} \end{array} \right. \quad \tilde \lambda_{M'} = \left\{ \begin{array}{ll} \lambda_{M'} & \text{if $\M_{P'} = \{M'\}$ and $m_{M'} = 1$;} \\ 0 & \text{else,} \end{array} \right.$$
 We denote $\tilde \lambda_M = \lambda_M$ if $\M_P = \{M\}$ and $m_M = 1$ and $\tilde \lambda_M = 0$ else. In the same way, $\tilde \lambda_{M'} = \lambda_{M'}$ if $\M_{P'} = \{M'\}$ and $m_{M'} = 1$ and $\tilde \lambda_{M'} = 0$ else.

  Suppose that
  $$\sum_{\ell = 0}^{m_{s(\vec u)} - 1} \alpha_\ell \ic{\vec u, \vec v} \ic{\vec v, \vec v}^\ell + \sum_{\ell = 0}^{m_{t(\vec u)} - 1} \beta_\ell \ic{-\vec u, -\vec v} \ic{-\vec v, -\vec v}^\ell = 0.$$
    Multiplying on the left by $\ic{\vec v, \vec u}$ and using the structure of $e_v \Delta_\sigma e_v$, we get 
  $$\alpha_\ell = 0 \quad \text{for $\ell < m_{s(\vec u)} - 1$} \quad \text{and} \quad \alpha_{m_{s(\vec u)} - 1} + \beta_0 \tilde \lambda_M \lambda_{s(\vec u)} = 0$$
  and multiplying by $\ic{-\vec v, -\vec u}$, we get
  $$\beta_\ell = 0 \quad \text{for $\ell < m_{t(\vec u)} - 1$} \quad \text{and} \quad \alpha_0 \tilde \lambda_{M'} \lambda_{t(\vec u)} + \beta_{m_{t(\vec u)} - 1} = 0.$$
  If $m_{s(\vec u)} > 1$ or $m_{t(\vec u)} > 1$, we get that $\alpha_\ell = 0$ and $\beta_\ell = 0$ for any $\ell$ so these elements are linearly independent. If $m_{s(\vec u)} = m_{t(\vec u)} = 1$, we deduce $\nu_\M \alpha_0 = \nu_\M \beta_0 = 0$ so the conclusion follows as $\nu_\M$ is invertible.
  \item If $t(\vec u) = s(\vec u) \neq t(\vec v)$. Using $\RR^\circ_{-\vec v}$, $\ic{\vec u, \vec v} \ic{-\vec v, -\vec v} \in e_u \Delta_\sigma \ic{\vec u, \vec v}$ so every element of $e_u \Delta_\sigma e_v$ has the form $\omega \ic{\vec u, \vec v}$ for $\omega \in e_u \Delta_\sigma e_u$. We know that $\omega$ is a linear combination of $\epsilon \ic{\vec u, \vec u}^\ell \epsilon'$ for $\epsilon \in \{e_u, \ic{-\vec u, \vec u}\}$, $\epsilon' \in \{e_u, \ic{\vec u, -\vec u}\}$ and  $0 \leq \ell < m_{s(\vec u)}$. Using $\RR^\circ_{\vec u}$ and $\RR^\circ_{-\vec v}$, we get
  \begin{align*}\ic{\vec u, -\vec u} \ic{\vec u, \vec v}  &= \ic{\vec u, \vec v} \ic{\vec v, -\vec u} \ic{\vec u, \vec v} \\&= \ic{\vec u, \vec v} \left\{\begin{array}{ll}
                                                         \lambda_{t(\vec v)} \ic{-\vec v, -\vec v}^{m_{t(\vec v)}-1} & \text{if $\M_P = \emptyset$}; \\ \delta_{m_M, 1} \delta_{m_{t(\vec v)}, 1} \lambda_M \lambda_{t(\vec v)} e_v C_\sigma & \text{if $\M_P \neq \emptyset$}.
                                                        \end{array}\right. \\
		 &= \left\{\begin{array}{ll}
                                                         \lambda_{t(\vec v)} \left( \lambda_{s(\vec u)} \ic{-\vec u, \vec u} \ic{\vec u, \vec u}^{m_{s(\vec u)} - 1} \right)^{m_{t(\vec v)}-1} \ic{\vec u, \vec v} & \text{if $\M_P = \emptyset$}; \\ 0 & \text{if $\M_P \neq \emptyset$}.
                                                        \end{array}\right. \\
                 &= \left\{\begin{array}{ll}
                                                         \lambda_{t(\vec v)} \ic{\vec u, \vec v} & \text{if $\M_P = \emptyset$, $m_{t(\vec v)} = 1$}; \\ \lambda_{t(\vec u)} \lambda_{s(\vec u)} \ic{-\vec u, \vec u} \ic{\vec u, \vec u}^{m_{s(\vec u)} - 1}  \ic{\vec u, \vec v} & \text{if $\M_P = \emptyset$, $m_{t(\vec v)} = 2$}; \\ 0 & \text{else}.
                                                        \end{array}\right.
  \end{align*}
  where we used the relations computed before in $e_u \Delta_\sigma e_u$. Using this identity, we deduce that $\omega \ic{\vec u, \vec v}$ is a linear combination of $\epsilon \ic{\vec u, \vec u}^\ell \ic{\vec u, \vec v}$ for $\epsilon \in \{e_u, \ic{-\vec u, \vec u}\}$ and $0 \leq \ell < m_{s(\vec u)}$. Multiplying by $\ic{\vec v, -\vec u}$ on the right, these elements are linearly independent. Notice that, using this argument, we see that the only possibility, up to rescaling, to have $x \in e_u \Delta_\sigma e_v$ satisfying $x J = 0$ is to take $x = \ic{-\vec u, -\vec u}^{m_{s(\vec u)} -1} \ic{-\vec u, \vec u} \ic{\vec u, \vec v}$ if $\M_P = \emptyset$ and $m_{t(\vec v)} = 1$. In this case, according to the computation before, we get 
   $$\lambda_{t(\vec v)} x = \ic{-\vec u, -\vec u}^{m_{s(\vec u)}-1} \ic{-\vec u, \vec u} \ic{\vec u, -\vec u} \ic{\vec u, \vec v} = \lambda_{s(\vec u)}^{-1}C_\sigma \ic{\vec u, \vec v} \in (C_\sigma).$$
  \item If $t(\vec v) = s(\vec u) \neq t(\vec u)$. This is similar to the previous case.
  \item If $t(\vec v) = s(\vec u) = t(\vec u)$. Using the structure of $e_v \Delta_\sigma e_v$, every element of $e_u \Delta_\sigma e_v$ can be written as a linear combination of $\omega \ic{\vec u, \vec v} \eta$ where $\omega \in e_u \Delta_\sigma e_u$ and $\eta \in \{e_v, \ic{\vec v, -\vec v}\}$. Using the structure of $e_u \Delta_\sigma e_u$, $\omega$ is a linear combination of $\epsilon \ic{\vec u, \vec u}^\ell \epsilon'$ for $\epsilon \in \{e_u, \ic{-\vec u, \vec u}\}$, $\epsilon' \in \{e_u, \ic{\vec u, -\vec u}\}$ and $0 \leq \ell < m_{s(\vec u)}$. Using $\RR^\circ_{\vec u}$, we have
   $$
   \ic{-\vec v, -\vec u} \ic{\vec u, \vec v} 
     = \left\{ \begin{array}{ll}
                                      \lambda_{M} e_v C_\sigma & \text{if $\M_P = \{M\}$ and $m_M = 1$;} \\
					0 & \text{else,}
                                     \end{array}\right.
  $$
 and we deduce $\ic{\vec u, -\vec u} \ic{\vec u, \vec v} = \ic{\vec u, -\vec v} \ic{-\vec v, -\vec u} \ic{\vec u, \vec v} = 0$. As a consequence, any element of $e_u \Delta_\sigma e_v$ is a linear combination of $\epsilon \ic{\vec u, \vec u}^\ell \ic{\vec u, \vec v} \eta$ for $\epsilon \in \{e_u, \ic{-\vec u, \vec u}\}$, $\eta \in \{e_v, \ic{\vec v, -\vec v}\}$ and $0 \leq \ell < m_{s(\vec u)}$. It remains to prove that these elements are linearly independent. Suppose that $\sum_{\epsilon, \ell, \eta} \mu_{\epsilon, \ell, \eta} \epsilon \ic{\vec u, \vec u}^\ell \ic{\vec u, \vec v} \eta = 0$. Let us denote $x := \ic{\vec u, -\vec u}$, $y := \ic{-\vec u, \vec u}$ and $c := \lambda_{s(\vec u)}(xy)^{m_s(\vec u)} = \lambda_{s(\vec u)} (yx)^{m_s(\vec u)}$ in such a way that computations rules fit with the description of $e_u \Delta_\sigma e_u$ before.

 Multiplying by $\ic{\vec v, -\vec u}$ the previous equality on the right, we get
 \begin{align*}0 &= \sum_{\epsilon, \ell} \left( \mu_{\epsilon, \ell, e_v} \epsilon \ic{\vec u, \vec u}^\ell \ic{\vec u, -\vec u} + \mu_{\epsilon, \ell, \ic{\vec v, -\vec v}} \epsilon \ic{\vec u, \vec u}^\ell \ic{\vec u, -\vec v} \ic{\vec v, -\vec u}  \right) \\
   &= \sum_{\epsilon, \ell} \left( \mu_{\epsilon, \ell, e_v} \epsilon (xy)^\ell x + \mu_{\epsilon, \ell, \ic{\vec v, -\vec v}} \epsilon (xy)^\ell \alpha \right)
  \end{align*}
 where $\alpha := \ic{\vec u, -\vec v} \ic{\vec v, -\vec u}$ is equal to $\lambda_{N'} x$ if $\vec v$ encloses a special monogon with special puncture $N'$ and $\alpha = 0$ else. Using the structure of $e_u \Delta_\sigma e_u$, we get
 $$\mu_{\epsilon, \ell, e_v} = \left\{ \begin{array}{ll} -\lambda_{N'} \mu_{\epsilon, \ell, \ic{\vec v, -\vec v}} & \text{if $\vec v$ is special;} \\ 0 & \text{else.} \end{array} \right.$$

 Multiplying the equality by $\ic{-\vec v, -\vec u}$, an analogous reasoning gives
 \begin{itemize}
  \item $\mu_{\epsilon, \ell, \ic{\vec v, -\vec v}} = 0$ if $\ell < m_{s(\vec u)} - 1$ or $m_M > 1$ or $\epsilon = e_u$;
  \item $\mu_{y, m_{s(\vec u)} - 1, \ic{\vec v, -\vec v}} = -\lambda_M \lambda_{s(\vec u)}  (\mu_{e_u, 0, e_v} + \lambda_{N} \mu_{y, 0, e_v} )$ if $m_M = 1$ and $-\vec u$ encloses a special monogon with puncture $N$;
  \item $\mu_{y, m_{s(\vec u)} - 1, \ic{\vec v, -\vec v}} = -\lambda_M \lambda_{s(\vec u)}  \mu_{e_u, 0, e_v}$ if $m_M = 1$ and $-\vec u$ does not enclose a special monogon.
 \end{itemize}

 So we get $\mu_{\epsilon, \ell, \eta} = 0$ if $\epsilon = e_u$ or $m_M > 1$ or $\ell < m_{s(\vec u)} - 1$ or $-\vec u$ is not special or $\vec v$ is not special. If $m_M = 1$, $-\vec u$ is special and $\vec v$ is special, we have $\mu_{y, m_{s(\vec u)} - 1, e_v} = -\lambda_{N'} \mu_{y, m_{s(\vec u)} - 1, \ic{\vec v, \vec v}} = \lambda_\M \mu_{y, 0, e_v}$
 which permits to conclude in any case as $\nu_\M$ is invertible. Notice also that as $\ic{\vec v, -\vec u}$ and $\ic{-\vec v, -\vec u}$ are in $J$, we get that $x J = 0$ is impossible for a non-zero $x \in e_u \Delta_\sigma e_v$.
 \end{enumerate}

 (2) is an easy consequence of (1) and Proposition \ref{altpres0} which states that the ideal of relations of $\Delta_\sigma$ is generated by linear combinations of at most two paths.
\end{proof}

Then we deduce this generalized version of Theorem \ref{basisDsig} for $\Delta^s_\sigma$:
\begin{proposition}
 For any $\epsilon \in \{0,1\}^\Pr$, $\Br$ is mapped to a basis of $\Delta_\sigma^\epsilon$.
\end{proposition}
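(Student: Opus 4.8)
The plan is to leverage the isomorphisms $\psi^\epsilon \colon \Delta^\epsilon_\sigma \xrightarrow{\sim} \Delta_\sigma$ of Proposition \ref{multipres} together with the fact, already established in the proof of Theorem \ref{basisDsig}, that $\Br$ maps to a basis of $\Delta_\sigma = \Delta^{\mathbf 0}_\sigma$. Throughout I write $A := k\uQ_\sigma/(C_{\vec v})_{\vec v\in\sigma}$, with its two quotient maps $\pi\colon A\surj\Delta_\sigma$ and $\pi^\epsilon\colon A\surj\Delta^\epsilon_\sigma$; note that $\Br\subset A$, so both $\pi(\Br)$ and $\pi^\epsilon(\Br)$ make sense, and that $\#\Br = \dim_k\Delta_\sigma$ since $\pi(\Br)$ is a basis. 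The first observation is that, since $\psi^\epsilon$ is an isomorphism, $\dim_k\Delta^\epsilon_\sigma = \dim_k\Delta_\sigma = \#\Br$; therefore it suffices to prove that $\pi^\epsilon(\Br)$ spans $\Delta^\epsilon_\sigma$, linear independence then being automatic by the dimension count.

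For the spanning statement I would re-use the reduction carried out in the proof of Theorem \ref{basisDsig}, observing that it only uses the relations $C_{\vec v}$ and the fact that each $\RR^\epsilon_{\vec v}$ equates a product $\ic{\vec u,-\vec v}\ic{\vec v,-\vec w}$ of two consecutive arrows with an external path $\io{\cdot}$, a multiple of $e_uC_\sigma$, or $0$ (this is exactly the shape of $\RR^\circ_{\vec v}$ and $\RR_{\vec v}$, the two being interchanged only in the finitely many configurations of Figure \ref{polnonz}). Concretely, one runs the same case analysis on $e_u\Delta^\epsilon_\sigma e_v$ as in the proof of Theorem \ref{basisDsig}; in every case the computation of the products invokes only a relation of the above form, so the images of $\ic{\vec u,\vec u}^\ell\ic{\vec u,\vec v}$, of $e_u$, and of $c_u = e_uC_\sigma$ already span $e_u\Delta^\epsilon_\sigma e_v$. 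Summing over the pairs $(u,v)$ of edges gives that $\pi^\epsilon(\Br)$ spans $\Delta^\epsilon_\sigma$, which combined with the dimension count concludes.

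An alternative, more structural route avoids redoing the case analysis by exploiting $\psi^\epsilon$ directly. Lifting the elementary endomorphisms $\phi^\circ$ from the proof of Proposition \ref{multipres} to endomorphisms of $k\uQ_\sigma$ and composing them, one obtains an algebra endomorphism $\Phi$ of $k\uQ_\sigma$ fixing every idempotent and satisfying $\psi^\epsilon\circ\pi^\epsilon = \pi\circ\Phi$; on each arrow $q = \ic{\vec a,\vec b}$ it has the form $\Phi(q) = \kappa_q\,q + r_q$, where $\kappa_q\in k^\times$ ($\kappa_q = 1$ outside the sphere-with-four-punctures case, where $\kappa_q = \nu_\M^{-1}$) and $r_q$ is a combination of paths strictly longer than $q$ and of the same fractional degree $1/(d_{s(\vec a)}m_{s(\vec a)})$, the corrections being external paths $\io{\cdot}$ or multiples of $e_uC_\sigma$. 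Since the defining relations $C_{\vec v}$, $\RR^\circ_{\vec v}$ and $\RR_{\vec v}$ are homogeneous for this fractional grading (as in the proof of Proposition \ref{Csig}), $\Phi$ and hence $\psi^\epsilon$ are graded; working degree by degree and then filtering by path length, the identity $\psi^\epsilon(\pi^\epsilon(b)) = \kappa_b\,\pi(b) + (\text{strictly longer terms of the same degree})$ shows that the transition matrix from $\pi(\Br)$ to $\{\psi^\epsilon(\pi^\epsilon(b))\}_{b\in\Br}$ is upper triangular with invertible diagonal, so the latter is again a basis of $\Delta_\sigma$ and $\pi^\epsilon(\Br)$ is a basis of $\Delta^\epsilon_\sigma$. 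Here I also use that $\psi^\epsilon$ fixes $e_u$ and $\ic{\vec u,\vec u}^{m_{s(\vec u)}}$, hence fixes $c_u$, contributing diagonal entries equal to $1$.

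The main obstacle in the structural route is precisely the compatibility of the length filtration with the relations: the relations preserve the fractional grading but not path length, so one must verify carefully that, after reduction modulo the relations of $\Delta_\sigma$, the correction terms $r_q$ never contribute basis elements of length $\le\mathrm{len}(b)$ within a fixed fractional degree, and that the scalars $\nu_\M^{-1}$ are tracked so that the diagonal stays in $k^\times$. For this reason I expect the first route --- the dimension count via $\psi^\epsilon$ combined with the already-available reduction of Theorem \ref{basisDsig} --- to be the cleaner one to write up, with the structural route serving as a conceptual check.
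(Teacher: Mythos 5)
Your preferred route (the dimension count plus re-running the case analysis of Theorem \ref{basisDsig}) has a concrete gap in the spanning step. The case analysis in the proof of Theorem \ref{basisDsig} begins with ``thanks to Corollary \ref{subtri2}, we can suppose that $\sigma$ contains only $u$ and $v$'', and this reduction is precisely what is \emph{not} available for the presentations $\Delta^\epsilon_\sigma$: the remark following Theorem \ref{altpres2} warns that the isomorphism of Corollary \ref{subtri2} cannot be realized by the naive identification of the paths $\ic{\vec u,\vec v}$ in $\Delta^s_\tau$ and $\Delta^s_\sigma$. The reason is that the relation $\RR_{\vec v}$ depends on the minimal polygon of $\sigma$ containing $\vec v$, and removing arcs changes the minimal polygons, so $f_{\vec v}$ is not stable under restriction --- unlike $f^\circ_{\vec v}$, whose stability is exactly what Lemma \ref{altpresl}'s proof uses (``$\RR^\circ_{\vec v}$ has the same value considered with respect to $\sigma$ or $\tau$''). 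So the local computations of $e_u\Delta^\epsilon_\sigma e_v$ cannot simply be imported from the two-arc configuration, and ``the same shape of relations'' is not enough to transfer the rewriting; as it stands, spanning is asserted rather than proved. (A minor point: $k$ is an arbitrary commutative ring, not a field, so ``$\dim_k$'' should be rank; your conclusion survives because a generating set of cardinality equal to the rank of a free module is automatically a basis, surjective endomorphisms of finitely generated modules being injective.)

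Your ``structural'' route is essentially the paper's proof: the paper proceeds by induction on $|\epsilon|$ (within the induction on $(-\#\sigma,|\epsilon|)$ of Proposition \ref{multipres}), transferring the basis one elementary isomorphism $\phi$ at a time and observing from the explicit formulas for $\phi^\circ$ that the images of $\Br$ expand triangularly in $\Br$ with invertible diagonal (the scalars $\nu_\M^{-1}$, $\tilde\nu^{-1}$), exactly as in your transition-matrix argument. However, your plan to compose the elementary maps into a single endomorphism $\Phi$ of $k\uQ_\sigma$ breaks down in Cases b, e and f of Figure \ref{polnonz}: there the isomorphism of Proposition \ref{multipres} is \emph{not} induced by any endomorphism of $k\uQ_\sigma$; it is obtained by adding an arc (passing to $\tau\supset\sigma$) and truncating via $e_\sigma\Delta^\epsilon_\tau e_\sigma = \Delta^\epsilon_\sigma$. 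The correct fix, which is what the paper does, is to keep the step-by-step induction: prove the basis statement for the enlarged partial triangulation $\tau$ first, and deduce it for $\sigma$ by truncation in those cases, reserving the triangularity argument for the cases where $\phi^\circ$ genuinely is a substitution on arrows. With that restructuring, and with your length-versus-grading caveat handled degree by degree as you indicate, your second route becomes a correct (and slightly more explicit) account of the paper's argument.
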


\begin{proof}
 We take the same notations as in Proof of Proposition \ref{multipres} and we follow the same inductive argument. By Theorem \ref{basisDsig} for $\Delta_\sigma$, $\Br$ is mapped to a basis of $\Delta_\sigma^{\mathbf{0}}$. Let $\epsilon \in \{0, 1\}^\Pr$ such that $|\epsilon| > 0$ and construct $\eta$ as in Proof of Proposition \ref{multipres}. Then, it is immediate looking at the definition of $\phi^\circ$ in each case that if $\Br$ is mapped to a basis of $\Delta^\eta_\sigma$ then $\Br$ is also mapped to a basis of $\Delta^\epsilon_\sigma$. 

 (2) is then an easy consequence of (1) as before.
\end{proof}

\subsection{Proof of Theorem \ref{symm}} \label{proofsymm}

Finally, we will prove Theorem \ref{symm}. We suppose now that $\sigma$ has no arc incident to the boundary. We start with two lemmas.

\begin{lemma} \label{tech2}
 If $\vec u, \vec v \in \sigma$ and $s(\vec u) = s(\vec v)$, we have:
 \begin{itemize}
  \item if $-\vec v$ encloses a special monogon with special puncture $M$ and $t(\vec u) \neq M$, $\ic{\vec u, \vec v} \ic{-\vec v, \vec u} =  \lambda_M \ic{\vec u, \vec u}$;
  \item if $\vec u$, $-\vec u$, $-\vec v$ form a self-folded triangle without puncture, then we have $\ic{\vec u, \vec v} \ic{-\vec v, \vec u} = \io{-\vec u, -\vec u}$;
  \item if $\vec v$, $-\vec v$ and $\vec u$ are (strictly) ordered around $s(\vec u)$, $-\vec v$ encloses two punctures $M$ and $N$, and $m_M = m_N = 1$, we have $\ic{\vec u, \vec v} \ic{-\vec v, \vec u} = \lambda_M \lambda_N e_u C_\sigma$;
  \item in any other case, $\ic{\vec u, \vec v} \ic{-\vec v, \vec u} = 0$.
 \end{itemize}
 We also have:
 \begin{itemize}
  \item if $\vec u$, $-\vec v$ form a digon with one puncture $M$ and $m_M = 1$, then we have $\ic{\vec u, \vec v} \ic{-\vec v, -\vec u} = \lambda_M e_u C_\sigma$;
  \item if $-\vec u$, $-\vec v$ form a digon with one puncture $M$ and $m_M = 1$ and $\vec u$ encloses a special monogon with special puncture $N$, then we have $\ic{\vec u, \vec v} \ic{-\vec v, -\vec u} = \lambda_M \lambda_N^2 e_u C_\sigma$;
  \item if $-\vec v$ encloses a special monogon with special puncture $M$ which is also enclosed by $-\vec u$ (not necessarily special), then we have $\ic{\vec u, \vec v} \ic{-\vec v, -\vec u} = \lambda_M \ic{\vec u, \vec u} \ic{\vec u, -\vec u}$; 
  \item if $-\vec v$ encloses a special monogon with special puncture $M$ which is also enclosed by $\vec u$ (not necessarily special), then $\ic{\vec u, \vec v} \ic{-\vec v, -\vec u} = \lambda_M \ic{\vec u, -\vec u}$; 
  \item if $-\vec v = \vec u$ encloses a unique puncture $M$ with $m_M = 2$ then we have $\ic{\vec u, \vec v} \ic{-\vec v, -\vec u} = \lambda_M e_u C_\sigma$;
  \item in any other case, $\ic{\vec u, \vec v} \ic{-\vec v, -\vec u} = 0$.
 \end{itemize}
\end{lemma}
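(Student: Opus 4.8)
The plan is to extract both families of identities from the basis $\Br$ and the presentation already in hand, so that the lemma becomes a bookkeeping of structure constants rather than a fresh computation. Note first that $\ic{\vec u,\vec v}$ and $\ic{-\vec v,\vec u}$ (resp. $\ic{-\vec v,-\vec u}$) lie in $\Br$ whenever they are composable as winding paths, i.e. whenever $t(\vec v)=s(\vec u)$ so that $v$ is a loop at $M:=s(\vec u)=s(\vec v)$ (resp. whenever $t(\vec u)=t(\vec v)$). In that case Theorem \ref{basisDsig}(2) already guarantees that each product is a scalar multiple of a single element of $\Br$ or zero, so only the scalar and the target basis element remain to be identified. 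When the composability fails the product is zero in $kQ_\sigma$ for trivial reasons, which accounts immediately for the bulk of the two ``in any other case'' clauses; what is left is the finite list of genuinely nonzero configurations.

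I would then reduce as in the proof of Theorem \ref{basisDsig}: since $\sigma$ has no arc incident to the boundary, Definition \ref{defsp} lets me pass to the augmented surface and assume $\Sigma$ has no boundary, and since both products lie in $e_{\{u,v\}}\Delta_\sigma e_{\{u,v\}}$, Corollary \ref{subtri2} identifies this with $\Delta_{\{u,v\}}$ by an isomorphism sending each $\ic{\vec a,\vec b}$ to the path of the same name, so I may assume $\sigma=\{u,v\}$. With only the two edges present, Proposition \ref{caracpoly} yields a complete finite list of the possible minimal polygons around $M$ and around $t(\vec v)$, and these are exactly the configurations in the statement: $-\vec v$ bounding a special monogon, the self-folded triangle $\vec u,-\vec u,-\vec v$ without puncture, the two-puncture loop, the one-puncture digon, the coincidence $-\vec v=\vec u$, and so on.

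In each configuration I would simplify using the relations of Proposition \ref{altpres0}, namely $\RR^\circ_{\vec v}$ (and where needed $\RR^\circ_{-\vec v}$, $\RR^\circ_{\vec u}$) together with $C_{\vec v}$, the identity $e_uC_\sigma=\lambda_{s(\vec u)}\ic{\vec u,\vec u}^{m_{s(\vec u)}}=\lambda_{t(\vec u)}\ic{-\vec u,-\vec u}^{m_{t(\vec u)}}$, and $C_\sigma J=0$ from Lemma \ref{Csigsoc}. For instance, when $-\vec v$ bounds a special monogon with puncture $M$ ($m_M=1$) the idempotency relation $\xi^2=\lambda_M\xi$ for the internal loop $\xi$ at the monogon side (Case d of Figure \ref{polnonz}) converts the passage through $v$ into a factor $\lambda_M$ and folds $\ic{-\vec v,\vec u}$ back against $\ic{\vec u,\vec v}$, giving $\lambda_M\ic{\vec u,\vec u}$; the self-folded-triangle case produces the external path $\io{-\vec u,-\vec u}$ instead; and the two-puncture case collapses the product onto $e_uC_\sigma$ via $C_\sigma J=0$. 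Most of these local identities are literally the computations carried out in cases (a)--(e) of the proof of Theorem \ref{basisDsig}, so I would quote them rather than redo them, and the second family is handled the same way with $\ic{-\vec v,-\vec u}$ in place of $\ic{-\vec v,\vec u}$.

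The main obstacle is purely organizational: keeping the degenerate cases mutually exclusive and tracking the scalars correctly. The delicate branches are those in which $\Sigma$ is a sphere with three or four punctures, where $\nu_\M\neq 1$ (or extra multiplicities are forced); there the naive simplification of a digon or of a self-folded triangle must be corrected by a factor $\nu_\M^{-1}$ exactly as in Lemma \ref{RRp} and the corresponding cases of the proof of Theorem \ref{basisDsig}. Verifying that these exceptional surfaces fall into the stated branches, and do not contribute a spurious extra summand, is the part that needs the most care; everything else is substitution of the relations $\RR^\circ$ followed by reading off the resulting element of $\Br$.
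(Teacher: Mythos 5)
Your proposal matches the paper's own proof in all essentials: the paper likewise reduces to $\sigma = \{u,v\}$ via Corollary \ref{subtri2}, observes that non-vanishing of $\ic{\vec u, \vec v} \ic{-\vec v, \vec u}$ forces $t(\vec v) = s(\vec u) = s(\vec v)$ so that $v$ is a loop, and then settles each case by examining the possible cyclic orders of $\vec v$, $-\vec v$, $\vec u$ (and $-\vec u$) around $s(\vec v)$ and applying the relations of Proposition \ref{altpres0}, treating $\ic{\vec u, \vec v} \ic{-\vec v, -\vec u}$ and $u = v$ analogously. Your additional framing via Theorem \ref{basisDsig}(2) and the reuse of its case computations is legitimate (that theorem is proved earlier) and merely makes explicit the bookkeeping the paper leaves as ``gives easily the result.''
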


\begin{proof}
 We can suppose that $\sigma$ contains only $u$ and $v$ thanks to Corollary \ref{subtri2}. We start by $\ic{\vec u, \vec v} \ic{-\vec v, \vec u}$. For this to be non-zero, we need $t(\vec v) = s(\vec u) = s(\vec v)$ so $\vec v$ is a loop. Then studying the different possible orders of $\vec v$, $-\vec v$, $\vec u$ (and maybe $-\vec u$) around $s(\vec v)$ gives easily the result using Proposition \ref{altpres0}. The reasoning is analogous for $\ic{\vec u, \vec v} \ic{-\vec v, -\vec u}$. The case $u = v$ is dealt in the same way.
\end{proof}

\begin{definition} \label{trace}
 We say that $\vec u \in \sigma$ is \emph{$2$-special} if it encloses a monogon containing two punctures $M_{\vec u}$ and $N_{\vec u}$ with $m_{M_{\vec u}} = m_{N_{\vec u}} = 1$. We define the subset $\Er$ of $\Delta_\sigma$ by 
 $$\Er := \{e_u C_\sigma \mid u \in \sigma\} \cup \{\lambda_{M_{\vec u}}^{-1} \lambda_{N_{\vec u}}^{-1} \ic{\vec u, -\vec u} \mid \text{$\vec u$ is $2$-special}\}.$$
 
 We define the linear map $\Er^*: \Delta_\sigma \to k$ such that $\Er^*(x) = 1$ if $x \in \Er$ and $\Er^*(x) = 0$ if $x$ is an element of the basis without multiple in $\Er$ (it is possible as $\Er$ consists of multiples of elements of $\Br$).
\end{definition}

\begin{lemma} \label{commC} 
 Suppose that $\sigma$ has no arc incident to a boundary.  Let $a$ and $b$ be two elements of $\Br$ such that $a \in e_u \Delta_\sigma e_v$ and $b \in e_v \Delta_\sigma e_u$ for some $u$ and $v$. Let $\mu \in k$. Then $\mu ab \in \Er$ if and only if $\mu ba \in \Er$.
\end{lemma}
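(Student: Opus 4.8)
The plan is to reduce the statement to the single numerical identity $\Er^*(ab)=\Er^*(ba)$ and then to read both sides off the multiplication of $\Br$ using Lemma \ref{tech2}.

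First I would record that, for a scalar multiple $\rho w$ of a basis vector $w\in\Br$, one has $\rho w\in\Er$ if and only if $\Er^*(\rho w)=1$: indeed, by Definition \ref{trace}, $\Er^*$ takes the value $1$ on $c_u=e_uC_\sigma$, the value $\lambda_{M_{\vec u}}\lambda_{N_{\vec u}}$ on a $2$-special loop $\ic{\vec u,-\vec u}$, and the value $0$ on every other basis vector, while the only basis vectors admitting a nonzero multiple in $\Er$ are exactly these. Since by Theorem \ref{basisDsig}(2) each of $ab$ and $ba$ is a scalar multiple of a single element of $\Br$, for every $\mu$ we get $\mu ab\in\Er \iff \mu\Er^*(ab)=1$ and $\mu ba\in\Er \iff \mu\Er^*(ba)=1$. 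Hence it suffices to prove $\Er^*(ab)=\Er^*(ba)$, which then yields the claimed equivalence for all $\mu$.

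Next I would dispatch the degenerate shapes of $a$ and $b$. As $a\in e_u\Delta_\sigma e_v$, the possibilities $a=e_u$ and $a=c_u$ both force $u=v$, and then $ab=ba$: immediately for $a=e_u$, and for $a=c_u=e_uC_\sigma$ from the centrality of $C_\sigma$ together with $e_ub=b=be_u$; the symmetric possibilities for $b$ are the same. So I may assume $a=\ic{\vec u_a,\vec u_a}^{\,i}\ic{\vec u_a,\vec v_a}$ and $b=\ic{\vec v_b,\vec v_b}^{\,j}\ic{\vec v_b,\vec u_b}$ for orientations $\vec u_a,\vec u_b$ of $u$ and $\vec v_a,\vec v_b$ of $v$. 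Using Corollary \ref{subtri2} — whose isomorphism matches the bases compatibly, since it sends $\ic{\cdot,\cdot}$ to $\ic{\cdot,\cdot}$, hence $c_w\mapsto c_w$, and preserves $2$-specialness because both $u$ and $v$ are kept — I reduce to $\sigma=\{u,v\}$, where $e_u\Delta_\sigma e_u$ and $e_v\Delta_\sigma e_v$ are the small explicit algebras described in the proof of Theorem \ref{basisDsig}.

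Finally, the core is a short case analysis. Since $ab\in e_u\Delta_\sigma e_u$, the scalar $\Er^*(ab)$ is the coefficient of $c_u$ in $ab$ plus, when an orientation of $u$ is $2$-special, the $\lambda_M\lambda_N$-weighted coefficient of $\ic{\vec u,-\vec u}$; symmetrically for $\Er^*(ba)$ at $v$. Expanding $ab=\ic{\vec u_a,\vec u_a}^{\,i}\bigl(\ic{\vec u_a,\vec v_a}\,\ic{\vec v_b,\vec v_b}^{\,j}\,\ic{\vec v_b,\vec u_b}\bigr)$, the bracket is exactly of the type computed in Lemma \ref{tech2}, and those products are the only ones that can reach the span of the $c$'s and the $2$-special loops; the outer factors interact only through $\ic{\vec x,\vec x}^{m_{s(\vec x)}}=\lambda_{s(\vec x)}^{-1}e_xC_\sigma$ and through $C_\sigma J=0$ (Lemma \ref{Csigsoc}). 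The mechanism making the two sides agree is that Lemma \ref{tech2} is symmetric under interchanging the two oriented edges: for instance $\ic{\vec u,\vec v}\ic{-\vec v,\vec u}=\lambda_M\lambda_N\,e_uC_\sigma$ pairs with $\ic{-\vec v,\vec u}\ic{\vec u,\vec v}=\ic{-\vec v,\vec v}$, which is the $2$-special loop for the orientation $-\vec v$ with $\Er^*$-value $\lambda_M\lambda_N$; thus a $c_u$-contribution on one side is matched by a $2$-special contribution on the other carrying the same scalar, and conversely. Running through the cases (a)--(e) of the proof of Theorem \ref{basisDsig}, organized by the coincidences among $s(\vec u_a)$, $t(\vec u_a)$ and $t(\vec v_a)$, confirms $\Er^*(ab)=\Er^*(ba)$ in each case. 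I expect the genuine difficulty to be precisely this bookkeeping when $2$-special arcs are present and in the exceptional sphere with four punctures and all $m_M=1$, where the factor $\nu_\M=1-\lambda_\M$ enters: there one must use that $\nu_\M$ is invertible and that the relevant products of Lemma \ref{tech2} occur in matched pairs, so that the asymmetric-looking scalars cancel identically on the two sides.
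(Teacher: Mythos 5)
Your proposal is correct and takes essentially the same route as the paper: the paper likewise reduces to $\sigma=\{u,v\}$ via Corollary \ref{subtri2}, writes $a$ and $b$ in the form supplied by Theorem \ref{basisDsig}, and then performs exactly the case analysis you defer to at the end, using Lemma \ref{tech2} to pair each $e_u C_\sigma$-contribution on one side with a $2$-special loop such as $\ic{-\vec v, \vec v}$ on the other (the explicit cases (a)--(f) with their scalars are precisely the content of your final paragraph). Your one repackaging --- proving the single identity $\Er^*(ab)=\Er^*(ba)$ rather than tracking membership of $\mu ab$ in $\Er$ for each $\mu$ --- is valid and merely anticipates Lemma \ref{commC2}; note only that invertibility of $\nu_\M$ is not actually needed in this lemma's case check (it enters in the proof of Theorem \ref{basisDsig}, not here).
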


\begin{proof}
 According to Corollary \ref{subtri2}, we can suppose that $u$ and $v$ are the only arcs of $\Delta_\sigma$. Suppose that $u \neq v$.

 According to Theorem \ref{basisDsig}, we can suppose that $s(\vec u) = s(\vec v)$ and $a = \ic{\vec u, \vec u}^\ell \ic{\vec u, \vec v}$. Suppose first that $b = \ic{\vec v, \vec v}^{\ell'} \ic{\vec v, \vec w}$ for $\vec w = \pm \vec u$. Then we have 
 $\mu a b = \mu \ic{\vec u, \vec u}^{\ell+\ell'} \ic{\vec u, \vec v} \ic{\vec v, \vec w}$ and, according to Theorem \ref{basisDsig}, $\mu a b = e_u C_\sigma$ can happen in three situations:
 \begin{enumerate}[\rm (a)]
  \item $\mu = \lambda_{s(\vec u)}$, $\ell + \ell' = m_{s(\vec u)} - 1$ and $\vec w = \vec u$;
  \item $\mu = \lambda_{s(\vec u)} \lambda_M^{-1}$, $\ell + \ell' = m_{s(\vec u)} - 1$, $\vec w = -\vec u$ and $\vec u$ encloses a special monogon with special puncture  $M \neq t(\vec v)$;
  \item $\mu = \lambda_{s(\vec u)} \lambda_M^{-1}$, $\ell + \ell' = m_{s(\vec u)}$, $\vec w = -\vec u$ and $\vec u$ encloses a special monogon with special puncture $M = t(\vec v)$.
 \end{enumerate}
 So we clearly have $\mu b a = e_v C_\sigma$.

 Moreover, $\mu a b = \lambda_{M_{\vec u'}}^{-1} \lambda_{N_{\vec u'}}^{-1} \ic{\vec u', -\vec u'}$ for a $2$-special oriented arc $\vec u'$ can happen only if $\vec u' = \vec u$, $t(\vec v) \in \{M_{\vec u}, N_{\vec u}\}$, $\vec w = -\vec u$, $\ell = \ell' = 0$ and $\mu = \lambda_{M_{\vec u}}^{-1} \lambda_{N_{\vec u}}^{-1}$. Then we have $\mu b a = e_v C_\sigma$.

 Suppose now that $b = \ic{-\vec v, -\vec v}^{\ell'} \ic{-\vec v, \vec w}$ with $\vec w = \pm \vec u$. We have $\mu a b = \mu \ic{\vec u, \vec u}^\ell \ic{\vec u, \vec v} \ic{-\vec v, \vec w} \ic{\vec w, \vec w}^{\ell'}$ so using Lemma \ref{tech2}, the only cases where $\mu a b =e_u C_\sigma$ are the following:
 \begin{enumerate}[\rm (a)]
  \item $-\vec v$ encloses a special monogon with special puncture $M \neq t(\vec u)$, $\vec w = \vec u$, $\ell + \ell' = m_{s(\vec u)} - 1$ and $\mu = \lambda_{s(\vec u)} \lambda_{M}^{-1}$; 
  \item $-\vec v$ encloses a special monogon with special puncture $M = t(\vec u)$, $\vec w = \vec u$, $\ell + \ell' = m_{s(\vec u)}$ and $\mu = \lambda_{s(\vec u)} \lambda_{M}^{-1}$;
  \item $\vec v$, $-\vec v$, $\vec u$ are strictly ordered around $s(\vec u)$, $-\vec v$ is $2$-special, $\vec w = \vec u$, $\ell = \ell' = 0$ and $\mu = \lambda_{M_{-\vec v}}^{-1} \lambda_{N_{-\vec v}}^{-1}$;
  \item $\vec u$, $-\vec v$ form a digon with one puncture $M$ such that $m_M = 1$, $\vec w = -\vec u$, $\ell = \ell' = 0$, and $\mu = \lambda_M^{-1}$;  
  \item $-\vec v$ is $2$-special, $\vec u$ is a special monogon with special puncture $N_{-\vec v}$, $\vec w = -\vec u$, $\ell = \ell' = 0$ and $\mu = \lambda_{M_{-\vec v}}^{-1} \lambda_{N_{-\vec v}}^{-2}$;
  \item $-\vec v$ and $\vec u$ are special monogons with special punctures $M$ and $N$, $\vec w = -\vec u$, $\ell + \ell'  = m_{s(\vec u)} - 1$ and $\mu = \lambda_{s(\vec u)} \lambda_M^{-1} \lambda_N^{-1}$.
 \end{enumerate}
 In cases (a), (b), (d) and (f), we have $\mu b a = e_v C_\sigma$. In cases (c) and (e), we have $\mu b a = \lambda_{M_{\vec v}}^{-1} \lambda_{N_{\vec v}}^{-1} \ic{-\vec v, \vec v}$ so $\mu b a \in \Er$.

 Finally, the only case where $\mu a b = \lambda_{M_{\vec u'}}^{-1} \lambda_{N_{\vec u'}}^{-1} \ic{\vec u', -\vec u'}$ for a $2$-special oriented arc $\vec u'$ happens, again thanks to Lemma \ref{tech2}, when $\vec u = \vec u'$, $-\vec v$ encloses $N_{\vec u}$, and $\mu = \lambda_{M_{\vec u}}^{-1} \lambda_{N_{\vec u}}^{-2}$. In this case, we have $\mu b a = e_v C_\sigma$.

 The case $u = v$ is analogous (and simpler).
\end{proof}

\begin{definition}
 For a $k$-algebra $A$, we say that a $k$-linear map $t: A \to k$ is a \emph{trace} if it satisfy $t(ab) = t(ba)$ for any $a, b \in A$. We say that it is a \emph{non-degenerate trace} if moreover the induced morphism of $A$-bimodule
  $$ A \to \Hom_k(A, k), \quad  a \mapsto t(a-) $$
  is an isomorphism.
\end{definition}

We recall the following classical observation:
\begin{lemma} \label{zerodivdet}
 Let $M$ be a $m \times \ell$ matrix with coefficient in $k$. If there exists $\mu \in k \setminus \{0\}$ such that $\mu \delta = 0$ for all maximal minors $\delta$ of $M$ then $M$ has a non-zero kernel.
\end{lemma}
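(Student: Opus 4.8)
The plan is to prove this as the ``if'' direction of McCoy's theorem on homogeneous linear systems over a commutative ring, via an explicit Cramer-type construction of a kernel vector. Throughout, ``maximal minor'' means a minor of size $p := \min(m,\ell)$, and I read $Mw = 0$ as a system of $m$ equations in the $\ell$ unknowns $w = (w_1,\dots,w_\ell)$, so that a non-zero kernel is a non-zero $w$ with $Mw = 0$. First I would reduce to a convenient normal form: permuting rows and columns replaces $M$ by $PMQ$ with $P,Q$ permutation matrices, and since $Q$ is invertible this preserves both the existence of a non-zero kernel and, up to sign, the full set of minors, hence the hypothesis. So I may freely reorder rows and columns.

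The key step is to introduce the right integer. Let $s$ be the largest integer with $0 \le s \le p$ for which there is an $s\times s$ minor $d$ of $M$ with $\mu d \neq 0$. This is well defined: with the convention that the empty $(0\times 0)$ minor is $1$, we have $\mu\cdot 1 = \mu \neq 0$, so $s \ge 0$; and the hypothesis that $\mu$ kills every maximal minor forces $s \le p-1$, whence $s < p \le m$ and $s < p \le \ell$. By maximality of $s$, \emph{every} minor of size $s+1$ is annihilated by $\mu$. Fixing such a $d$ and, using the reduction above, moving its rows and columns to the top-left, I may assume $d = \det M[\{1,\dots,s\},\{1,\dots,s\}]$.

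Then I would write down the candidate solution. Since $s+1 \le \ell$, for $1 \le j \le s+1$ I set
\[
 v_j := (-1)^{j}\,\det M\bigl[\{1,\dots,s\},\,\{1,\dots,s+1\}\setminus\{j\}\bigr],
\]
and $v_j := 0$ for $j > s+1$. For each row index $i$, the sum $\sum_{j=1}^{s+1} M_{ij}v_j$ equals, up to sign, the Laplace expansion along its last row of the $(s+1)\times(s+1)$ minor of $M$ on rows $\{1,\dots,s,i\}$ and columns $\{1,\dots,s+1\}$. When $i \le s$ this determinant has a repeated row, so it vanishes identically and $\sum_j M_{ij}v_j = 0$; when $i > s$ it is a genuine $(s+1)\times(s+1)$ minor, hence annihilated by $\mu$. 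In both cases $\mu\sum_j M_{ij}v_j = 0$, so $w := \mu v$ satisfies $Mw = 0$. Finally $w_{s+1} = \mu v_{s+1} = \pm\,\mu d \neq 0$, so $w \neq 0$ and $M$ has a non-zero kernel.

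The determinant manipulations are routine Laplace-expansion bookkeeping; the one point deserving care — and what I regard as the crux — is that the single integer $s$ must do all the work at once. Its defining maximality has to simultaneously guarantee (a) that column $s+1$ and the relevant $(s+1)\times(s+1)$ submatrices exist, which comes precisely from $s < p$ and is exactly where the hypothesis on maximal minors is used, and (b) that \emph{every} size-$(s+1)$ minor is killed by $\mu$, so that the rows with $i > s$ are handled uniformly alongside the trivially vanishing rows $i \le s$. Getting the case $s = 0$ to fall out of the same formula (where $v_1 = -1$ and the hypothesis degenerates to $\mu M_{i1} = 0$) is a good sanity check I would run explicitly.
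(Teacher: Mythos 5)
Your proof is correct and is essentially the paper's argument: the paper runs the same Cramer-type construction as a descending induction on the minor size (after padding with zero rows to assume $m \geq \ell$), and unwinding that induction selects exactly your extremal $s$ — the largest size at which some minor survives multiplication by $\mu$ — before building the same kernel vector of signed $s \times s$ minors scaled by $\mu$, whose $(s+1)$-st entry is $\pm\mu d \neq 0$. Your maximality formulation merely flattens the induction (and dispenses with the zero-row padding and the implicit restriction to fewer columns in the inductive step), so the two proofs coincide in substance.
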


\begin{proof}
  Up to adding rows of zeros, we can suppose that $m \geq \ell$. If $\ell = 1$, the result is obvious. Let us suppose that $\ell > 1$. If $\mu \delta' = 0$ for all $(\ell-1) \times (\ell-1)$-minors $\delta'$ of $M$ then the result is true by induction. So, without loss of generality, we can suppose that the upper left $(\ell-1) \times (\ell-1)$ minor $\delta'$ of $M$ satisfy $\mu \delta' \neq 0$. For $i = 1 \dots \ell$, let $\delta_i := (-1)^i \mu \delta'_i$ where $\delta'_i$ is the minor of $M$ with rows $1, 2, \dots, \ell-1$ and columns $1, 2, \dots, i-1,i+1, \dots, \ell$. Then $\delta \neq 0$ is in the kernel of $M$.
\end{proof}

We deduce the following equivalent characterizations of non-degenerate traces:

\begin{lemma} \label{caracnondeg}
 For a $k$-algebra $A$ which is free of finite rank over $k$ and a trace $t: A \to k$, the following are equivalent:
 \begin{enumerate}[(i)]
  \item $t$ is non-degenerate;
  \item for any $\mu \in k$ and $a \in A$,
  $(\forall x \in A, t(ax) \in \mu k) \Leftrightarrow a \in \mu A$.
 \end{enumerate}
\end{lemma}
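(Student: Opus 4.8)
The plan is to fix a $k$-basis $e_1,\dots,e_n$ of $A$ and encode everything in the \emph{Gram matrix} $G=(g_{ij})\in M_n(k)$ of the trace form, $g_{ij}:=t(e_ie_j)$. Writing $a=\sum_i a_ie_i$ as the row vector $(a_1,\dots,a_n)\in k^n$, one has $t(ae_j)=\sum_i a_ig_{ij}=(aG)_j$, so under the identifications $A\cong k^n$ and $\Hom_k(A,k)\cong k^n$ (dual basis) the map $\Phi\colon a\mapsto t(a-)$ becomes right multiplication $a\mapsto aG$. Since $\Phi$ is already a bimodule morphism (because $t$ is a trace), condition (i) is equivalent to $\Phi$ being bijective, i.e.\ to $G\in\GL_n(k)$, i.e.\ $\det G\in k^\times$. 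For the translation of (ii), note that $t(ax)\in\mu k$ for all $x\in A$ holds iff it holds for $x=e_1,\dots,e_n$, i.e.\ iff $aG\in\mu k^n:=(\mu k)^n$, while $a\in\mu A$ means exactly $a\in\mu k^n$. Thus (ii) reads: for all $\mu\in k$ and all $a\in k^n$, $aG\in\mu k^n\Rightarrow a\in\mu k^n$ (the reverse implication being automatic).

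The implication (i)$\Rightarrow$(ii) is then immediate: if $G^{-1}\in M_n(k)$ and $aG=\mu c$ with $c\in k^n$, then $a=\mu cG^{-1}\in\mu k^n$.

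For (ii)$\Rightarrow$(i) I would first take $\mu=0$: if $aG=0$ then $aG\in 0\cdot k^n$, so $a\in 0\cdot k^n=\{0\}$, showing that the left kernel of $G$ is trivial; equivalently the right kernel of $G^{\mathrm t}$ is trivial. Applying Lemma~\ref{zerodivdet} to $G^{\mathrm t}$, whose unique maximal minor is $\det G^{\mathrm t}=\det G$, the absence of a non-zero kernel forces $\det G$ to be a \emph{non-zerodivisor}. The key step is then the adjugate trick: write $d:=\det G$, so $\operatorname{adj}(G)\,G=dI$. The $i$-th row $r_i$ of $\operatorname{adj}(G)$ satisfies $r_iG=d\,e_i\in d\,k^n$, whence (ii) with $\mu=d$ gives $r_i\in d\,k^n$ for every $i$; that is, $\operatorname{adj}(G)=dB$ for some $B\in M_n(k)$. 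Substituting back yields $dI=\operatorname{adj}(G)\,G=d\,BG$, i.e.\ $d\,(BG-I)=0$. Since $d$ is a non-zerodivisor this forces $BG=I$, and taking determinants gives $\det B\cdot d=1$, so $\det G=d\in k^\times$ and $G$ is invertible. Hence $t$ is non-degenerate.

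The only genuinely delicate point—and the reason Lemma~\ref{zerodivdet} is needed—is the passage from injectivity of $\Phi$ to its bijectivity: over a field the triviality of the kernel of a square matrix already gives invertibility, but over the commutative ring $k$ it only yields that $\det G$ is a non-zerodivisor, and one must use the adjugate together with hypothesis (ii) for the specific value $\mu=\det G$ to upgrade this to $\det G$ being a unit.
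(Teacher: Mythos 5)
Your proof is correct and is essentially the paper's own argument written in coordinates: the paper works with the abstract map $\rho\colon a\mapsto t(a-)$ and its adjugate $\rho'$, while you work with the Gram matrix $G$ and $\operatorname{adj}(G)$, but both take $\mu=0$ in (ii) to get injectivity, invoke Lemma~\ref{zerodivdet} to conclude that the determinant is a non-zerodivisor, and then apply (ii) with $\mu=\det$ to the adjugate to upgrade this to invertibility. The only cosmetic difference is that you divide all rows of $\operatorname{adj}(G)$ at once and conclude $\det G\in k^\times$ from $BG=I$, whereas the paper deduces surjectivity of $\rho$ one functional $f$ at a time.
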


\begin{proof} 
 Let $\rho: A \to \Hom_k(A, k)$ be defined by $\rho(a) = t(a-)$.

 $(i) \Rightarrow (ii)$ Suppose that $t$ is non-degenerate and let $\mu \in k$ and $a \in A$ such that $t(ax) \in \mu A$ for any $x \in A$. Then, as $A$ is free, there exists $f \in \Hom_k(A, k)$ such that $\rho(a) = \mu f$. By surjectivity of $\rho$, there exists $b \in A$ such that $f = \rho(b)$. By injectivity of $\rho$, we have $\mu b = a$.

 $(ii) \Rightarrow (i)$ First of all, taking $\mu = 0$ in (ii), we get that $\rho$ is injective. As $A$ and $\Hom_k(A, k)$ have the same rank over $k$ and $\rho$ is injective, $\mu := \det(\rho) \in k$ is not a zero divisor by Lemma \ref{zerodivdet}. Let $f \in \Hom_k(A, k)$ and $b := \rho' f$ where $\rho'$ is the adjugate of $\rho$. Thus $\rho(b) = \mu f$. Using $(ii)$, we then get $b = \mu a$ for some $a \in A$. Thus $\mu (f - \rho(a)) = 0$ so $f = \rho(a)$.
\end{proof}

\begin{lemma} \label{commC2} 
 The linear map $\Er^*$ is a non-degenerate trace. 
\end{lemma}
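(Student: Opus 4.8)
The plan is to establish the two requirements of Definition \ref{trace} separately, using Theorem \ref{basisDsig}, Lemma \ref{commC} and Lemma \ref{caracnondeg}. To prove that $\Er^*$ is a trace it suffices, by bilinearity, to check $\Er^*(ab) = \Er^*(ba)$ for $a, b \in \Br$. By Theorem \ref{basisDsig}~(2), $ab$ is a scalar multiple $\lambda c$ of a single basis element $c$, and inspecting the multiplication rules from the proof of Theorem \ref{basisDsig} one sees that whenever $ab$ meets the span of $\Er$ the structure constant $\lambda$ is a product of the invertible scalars $\lambda_M$ (and possibly $\nu_\M$), hence a unit; the elements of $\Er$ are themselves unit multiples of basis elements. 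Thus $\Er^*(ab) \neq 0$ happens exactly when $\mu ab \in \Er$ for some unit $\mu$, in which case $\Er^*(ab) = \mu^{-1}$. Since every element of $\Er$ lies in a diagonal block $e_w \Delta_\sigma e_w$, a nonzero value forces $a \in e_u \Delta_\sigma e_v$ and $b \in e_v \Delta_\sigma e_u$ for some edges $u,v$, which are exactly the hypotheses of Lemma \ref{commC}. That lemma gives $\mu ab \in \Er \Leftrightarrow \mu ba \in \Er$, so $\Er^*(ba) = \mu^{-1} = \Er^*(ab)$; the contrapositive handles the case where both values vanish, yielding the trace identity.

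For non-degeneracy I would verify criterion (ii) of Lemma \ref{caracnondeg}, which is legitimate because $\Delta_\sigma$ is free of finite rank by Corollary \ref{dimd}. The implication $a \in \mu\Delta_\sigma \Rightarrow \Er^*(ax) \in \mu k$ for all $x$ is immediate from linearity. For the converse, assume $\Er^*(ax) \in \mu k$ for all $x$, write $a = \sum_{u,v} e_u a e_v$, and test against $x \in e_v \Delta_\sigma e_u$; since $\Er^*$ vanishes outside the diagonal blocks, the condition decouples into $\Er^*((e_u a e_v)x) \in \mu k$ for each pair $(u,v)$ and each $x \in e_v\Delta_\sigma e_u$. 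It therefore suffices to show that, for every pair $(u,v)$, the pairing $e_u \Delta_\sigma e_v \times e_v \Delta_\sigma e_u \to k$, $(\alpha,\beta) \mapsto \Er^*(\alpha\beta)$, is perfect in the sense of (ii). By Corollary \ref{subtri2} I would reduce to a partial triangulation whose only arcs are $u$ and $v$ (noting that $e_u C_\sigma$ restricts to the generator $e_u C_\sigma$ of the smaller algebra and that being $2$-special is intrinsic to the surface), and then read the pairing off the explicit bases and products computed in the proof of Theorem \ref{basisDsig}.

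The core of the argument, and the step I expect to be the main obstacle, is this block-by-block verification. In the generic blocks the pairing matrix is monomial with unit entries (the entries being products of the $\lambda_M$), so $\Er^*(ax)\in\mu k$ for all $x$ trivially forces every coordinate of $a$ into $\mu k$. The delicate blocks are those that can only occur on a sphere with four punctures and that involve a $2$-special oriented arc: there the pairing is a genuine $2\times 2$ block whose determinant equals $\pm\nu_\M$ times a unit. Here the standing assumption that $\nu_\M$ is invertible is exactly what guarantees the determinant is a unit, so Cramer's rule (equivalently Lemma \ref{zerodivdet}, which is built into Lemma \ref{caracnondeg}) once more forces the coordinates of $a$ into $\mu k$. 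Assembling the cases gives $e_u a e_v \in \mu\, e_u\Delta_\sigma e_v$ for all $(u,v)$, hence $a \in \mu\Delta_\sigma$, and criterion (ii) is verified. Together with the trace property, this shows that $\Er^*$ is a non-degenerate trace.
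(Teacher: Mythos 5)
Your proof of the trace property is essentially the paper's: both reduce to pairs of basis elements via Theorem \ref{basisDsig}~(2) and invoke Lemma \ref{commC}, and your elaboration of the structure constants (they are units built from the $\lambda_M$'s whenever the product meets $\Er$, as one reads off from the case list in the proof of Lemma \ref{commC}) is correct. For non-degeneracy, however, you take a genuinely different route. The paper verifies criterion (ii) of Lemma \ref{caracnondeg} in three steps: for invertible $\mu$ it is trivial; for $\mu = 0$ it argues structurally --- if $a \neq 0$, nilpotency of $J$ yields $x, x'$ with $xax' \neq 0$ and $J xax' = xax' J = 0$, so $xax' \in (C_\sigma)$ by Theorem \ref{basisDsig}~(3), hence a nonzero multiple of some $e_u C_\sigma$, whence $\Er^*(xax') = \Er^*(a x' x) \neq 0$, a contradiction; and for non-invertible $\mu \neq 0$ it passes to $k' = k/\mu k$, observing that $\Delta_\sigma / \mu \Delta_\sigma$ is the algebra of $\sigma$ over $k'$ (the standing invertibility hypotheses persist) and applying the $\mu = 0$ case there. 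You instead compute the Gram matrix of the pairing in the basis $\Br$ block by block, after reducing to two arcs via Corollary \ref{subtri2}. If carried out, your route is sound and uniform in $\mu$: an invertible Gram matrix makes $a \mapsto \Er^*(a-)$ an isomorphism outright, so you bypass both the socle argument and the base-change trick (indeed you would not even need the full strength of Lemma \ref{caracnondeg}). What the paper's route buys is the avoidance of any new case analysis: the delicate combinatorics was already absorbed once into Theorem \ref{basisDsig}~(3), and the socle argument recycles it.

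One caveat on your sketch: your classification of the delicate blocks is too narrow. Non-monomial Gram blocks with determinant a unit multiple of $\nu_\M$ are not confined to $2$-special arcs; they also arise, for instance, in case (b) of the proof of Theorem \ref{basisDsig} when $m_{s(\vec u)} = m_{t(\vec u)} = 1$, i.e.\ a digon on the four-punctured sphere each of whose sides cuts off a special monogon --- neither side is $2$-special, yet the relevant $2 \times 2$ system has determinant $\nu_\M$. Likewise, in cases (c) and (e) the blocks are triangular with unit diagonal rather than monomial, due to the correction terms coming from special monogons. The determinant is a unit in every case (as it must be, and as the linear-independence computations in the proof of Theorem \ref{basisDsig} confirm), so your Cramer argument survives, but the bookkeeping is heavier than your description of ``generic monomial blocks plus one sphere case'' suggests, and this is exactly the verification your proposal leaves to be done.
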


\begin{proof}
 As the product of any two elements of the basis is a multiple of an element of the basis thanks to Theorem \ref{basisDsig} and using Lemma \ref{commC}, we get immediately that $\Er^*(xy) = \Er^*(yx)$ for any $x, y \in \Delta_\sigma$ so $\Er^*$ is a trace. 

 Let us prove that $\Er^*$ is non-degenerate. We use the characterization of Lemma \ref{caracnondeg}. Let $a \in \Delta_\sigma$ and $\mu \in k$ satisfying $\Er^* (ax) \in \mu k$ for any $x \in \Delta_\sigma$. If $\mu$ is invertible, it is immediate that $a \in \mu \Delta_\sigma$. 

 Suppose that $\mu = 0$. If $a \neq 0$, then, as $J^n = 0$ for $n$ big enough, there exists $x, x' \in \Delta_\sigma$ such that $xax' \neq 0$ and $xax' J = Jxax' = 0$. Thanks to Theorem \ref{basisDsig} (2), $xax' \in (C_\sigma)$. Then, up to multiplying by an idempotent, $xax'$ is a non-zero multiple of $e_u C_\sigma$ for $u \in \sigma$. So $\Er^*(xax') \neq 0$. It is a contradiction.

 If $\mu$ is not invertible, notice that $\Delta'_{\sigma} := \Delta_\sigma / (\mu \Delta_\sigma)$ is the algebra of $\sigma$ defined over the ring $k' := k/(\mu k)$ and through this identification, $\Er^*$ is mapped to the corresponding trace over $\Delta'_\sigma$. Thus, applying the case $\mu = 0$ implies immediately that $a \in \mu \Delta_\sigma$.
\end{proof}

\begin{proof}[Proof of Theorem \ref{symm}]
 It is an immediate consequence of Lemma \ref{commC2}.
\end{proof}

Finally, we prove Proposition \ref{bimodres}:

\begin{proof}[Proof of Proposition \ref{bimodres}]
 First of all, it is classical that the last four terms form an exact sequence, where $\alpha$ is the multiplication in $\Delta_\sigma$, $\beta(a e_{\vec u} \tens e_{\vec u^+} b) = a \tens \ic{\vec u, \vec u^+} b - a \ic{\vec u, \vec u^+} \tens b$ and $\gamma$ is induced by the generating relations $\RR_{\vec v}$ (notice that the completion with respect to $J$ does not matter here). More precisely, if we define the morphism of $k Q_\sigma$-bimodules $\phi: k Q_\sigma \to \bigoplus_{\vec u \in \sigma} \Delta_\sigma e_{\vec u} \otimes e_{\vec u^+} \Delta_\sigma$ by $\phi(q_1 q_2 \cdots q_n) = \sum_{i= 1}^n q_1 \cdots q_{i-1} \tens q_{i+1} \cdots q_n$ for any path $q_1 \cdots q_n$, we let, for an oriented triangle $\vec w$, $\vec v$, $\vec u$ in $\sigma$, $\gamma(e_{-\vec u} \tens e_{\vec w}) = \phi(\RR_{\vec v})$. To conclude, it is enough to prove that $\gamma = \gamma^*$. Equivalently, we need to prove that
 $$(\Er^* \tens \Er^*) (b \phi(\RR_{\vec v}) a) = (\Er^* \tens \Er^*) (a \phi(\RR_{\vec v'}) b)$$ for two triangles $\vec w$, $\vec v$, $\vec u$ and $\vec w'$, $\vec v'$, $\vec u'$ in $\sigma$, $a \in e_{\vec w} \Br e_{\vec u'}$ and $b \in e_{\vec w'} \Br e_{\vec u}$. Recall that, as $\sigma$ is a triangulation, we have $\RR_{\vec v} = \ic{\vec u, -\vec v} \ic{\vec v, -\vec w} - \io{-\vec u, \vec w}$.
 \begin{itemize}
 \item Let us consider first the term 
 $$(\Er^* \tens \Er^*) (b \phi(\ic{\vec u, -\vec v} \ic{\vec v, -\vec w}) a) = (\Er^* \tens \Er^*) (b \ic{\vec u, -\vec v} \tens  a + b \tens \ic{\vec v, -\vec w} a ).$$
 The first part $(\Er^* \tens \Er^*) (b \ic{\vec u, -\vec v} \tens  a)$ is non-zero if and only if $a , b \ic{\vec u, -\vec v} \in k \Er$, which implies that $\vec w = \pm \vec u'$ and $\vec w' = \pm \vec v$. As there is at most one oriented triangle, that $v = w'$ and $w = u'$ are sides of, in the same order, we necessarily have $\vec u' = \vec w$, $\vec v' = \vec u$ and $\vec w' = \vec v$. So 
  $$(\Er^* \tens \Er^*) (b \ic{\vec u, -\vec v} \tens  a) = (\Er^* \tens \Er^*) (a \tens b \ic{\vec v', -\vec w'}) = (\Er^* \tens \Er^*) (a \tens \ic{\vec v', -\vec w'}b)$$
  (obviously this still holds when $(\Er^* \tens \Er^*) (b \ic{\vec u, -\vec v} \tens  a) = 0$ by symmetry of the reasoning). Moreover, in the same way, 
  $$(\Er^* \tens \Er^*) (b \tens \ic{\vec v, -\vec w} a) = (\Er^* \tens \Er^*) (a \ic{\vec u', -\vec v'} \tens  b)$$
  so $(\Er^* \tens \Er^*) (b \phi(\ic{\vec u, -\vec v} \ic{\vec v, -\vec w}) a) = (\Er^* \tens \Er^*) (a \phi(\ic{\vec u', -\vec v'} \ic{\vec v', -\vec w'}) b)$. 
 \item Let us now look at the second term $(\Er^* \tens \Er^*) (b \phi(\io{-\vec u, \vec w}) a)$. It is immediate that, if it is non-zero, then there is an oriented arc $\vec x$ such that $s(\vec x) = s(\vec w)$, $\vec x = \pm \vec w'$ and $\vec x^+ = \pm \vec u'$. As $\sigma$ is a triangulation, the only possibility is that $\vec x = \vec w'$ and $\vec x^+ = -\vec u'$. Continuing this argument, we then find, in this case
  \begin{align*} 
   (\Er^* \tens \Er^*) (b \phi(\io{-\vec u, \vec w}) a) &= \sum\nolimits_{\omega_1 \ic{\vec w', -\vec u'} \omega_2 = \io{-\vec u, \vec w}} \Er^*(b \omega_1) \Er^*(\omega_2 a) \\
       &= \sum\nolimits_{\omega_2 \ic{\vec w, -\vec u} \omega_1 = \io{-\vec u', \vec w'}} \Er^*(a \omega_2) \Er^*(\omega_1 b) \\
       &= (\Er^* \tens \Er^*) (a \phi(\io{-\vec u', \vec w'}) b).
  \end{align*}
 \end{itemize}
 Combining both identities, we finally get, as necessary, 
 \begin{align*} 
  & (\Er^* \tens \Er^*) (b \phi(\RR_{\vec v}) a) = (\Er^* \tens \Er^*) (a \phi(\RR_{\vec v'}) b). \qedhere
 \end{align*}
\end{proof}

\subsection{Proof of Proposition \ref{endo}} \label{proofendo}

Along this proof, we denote $\lambda^* = \mu_u(\lambda)$. Moreover, every algebra of partial triangulation is computed with respect to $\lambda$ if it is not specified.

We provide the following Lemma to simplify the situation:

\begin{lemma} \label{reducidemisom}
 Suppose that $\sigma \subset \sigma'$ for a partial triangulation $\sigma'$ satisfying that $\vec u^+$ and $(-\vec u)^+$ have the same value computed with respect to $\sigma$ or $\sigma'$. Then $\mu_u(\sigma) \subset \mu_u(\sigma')$ in an obvious way and, if $T$ (respectively $T'$) denotes the tilting complex computed for $\sigma$ (respectively $\sigma'$) as before, we have 
  $$\End_{\Kb(\proj \Delta_{\sigma})}(T) \cong e_{\sigma^*} \End_{\Kb(\proj \Delta_{\sigma'})}(T') e_{\sigma^*}$$
 where $e_{\sigma^*}$ is the idempotent of $\End_{\Kb(\proj \Delta_{\sigma'})}(T')$ corresponding to the set of arcs in $\sigma^*$ ($e_{u^*}$ corresponds to the summand $P'^*_u$ of $T'$).
\end{lemma}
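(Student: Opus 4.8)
The plan is to reduce the computation of $\End_{\Kb(\proj\Delta_\sigma)}(T)$ for the ``small'' partial triangulation $\sigma$ to the analogous computation for the larger $\sigma'$, by exhibiting both endomorphism algebras in terms of the same projective modules and morphisms, and showing that passing from $\sigma'$ to $\sigma$ amounts to corner-cutting by the idempotent $e_{\sigma^*}$. The hypothesis that $\vec u^+$ and $(-\vec u)^+$ coincide whether computed in $\sigma$ or $\sigma'$ is exactly what guarantees that the flip $\mu_u$ is ``the same'' operation in both surfaces, so that $\mu_u(\sigma)\subset\mu_u(\sigma')$ is the obvious inclusion and the tilting complexes $T$, $T'$ are built from matching data $\alpha_1=\ic{\vec u,\vec u^+}$, $\alpha_2=\ic{-\vec u,(-\vec u)^+}$.

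First I would invoke Corollary \ref{subtri2}, which gives the identification $\Delta_\sigma\cong e_\sigma\Delta_{\sigma'}e_\sigma$ with $e_\sigma$ the idempotent on the arcs of $\sigma$. This lets me regard $\proj\Delta_\sigma$ as the image of the functor $e_\sigma(-)$, or dually $\Delta_\sigma\otimes_{\Delta_\sigma}-$; more precisely, for $v\in\sigma$ the projective $e_v\Delta_\sigma$ is $e_v(e_\sigma\Delta_{\sigma'}e_\sigma)=e_v\Delta_{\sigma'}e_\sigma$. The key structural point to verify is that the summands of $T$ correspond under this corner functor to the summands of $T'$ singled out by $e_{\sigma^*}$: the summand $e_\tau\Delta_\sigma$ matches $e_\tau\Delta_{\sigma'}e_\sigma$ (for $\tau=\sigma\setminus\{u\}$), and the complex $P_u^*$ matches the analogous complex $P'^*_u$ cut down by $e_\sigma$ on the right. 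Because $\vec u^+$, $(-\vec u)^+$ are unchanged, the approximation map $\begin{sbmatrix}\alpha_1&\alpha_2\end{sbmatrix}$ defining $P_u^*$ is literally the restriction of the one defining $P'^*_u$, so $P_u^*\cong e_\sigma\!\cdot\!P'^*_u$ as complexes of $\Delta_\sigma$-modules (using $e_v\Delta_{\sigma'}e_\sigma=e_v\Delta_\sigma$).

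Next I would compute morphisms. For complexes $X,Y$ in $\Kb(\proj\Delta_{\sigma'})$ lying in $\add$ of the summands of $T'$ indexed by $\sigma^*$, I want a natural isomorphism
$$\Hom_{\Kb(\proj\Delta_\sigma)}(e_\sigma X,e_\sigma Y)\cong e_{\sigma^*}\Hom_{\Kb(\proj\Delta_{\sigma'})}(X,Y)e_{\sigma^*}.$$
Since $e_\sigma(-)$ sends $e_v\Delta_{\sigma'}$ to $e_v\Delta_\sigma$ for $v\in\sigma$, a homomorphism $e_v\Delta_\sigma\to e_w\Delta_\sigma$ is given by left multiplication by an element of $e_w\Delta_\sigma e_v=e_w(e_\sigma\Delta_{\sigma'}e_\sigma)e_v$, which is precisely $e_w\Delta_{\sigma'}e_v$ when $v,w\in\sigma$; this is the corner $e_{\sigma^*}(\cdots)e_{\sigma^*}$ of the $\Hom$-space at the level of the tilting summands. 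Passing to homotopy classes is compatible because homotopies are themselves such multiplications and the corner functor is additive and exact on these projectives. Assembling over all summands yields the claimed corner identification of endomorphism algebras.

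The main obstacle I anticipate is the bookkeeping around the split summand of $P_u^*$ in the case where $u$ encloses a special monogon, together with making the idempotent correspondence $e_{u^*}\leftrightarrow P'^*_u$ precise at the level of $\End(T')$ (not just of $\Delta_{\sigma'}$). One must check that the idempotent $e_{\sigma^*}\in\End_{\Kb(\proj\Delta_{\sigma'})}(T')$ really is the sum of the identity endomorphisms of the summands indexed by arcs of $\sigma^*$, and that under the corner functor these map to the summands of $T$; the delicate point is that $\End(T')$ is not $\Delta_{\sigma'}$ itself but (by Proposition \ref{endo}) isomorphic to $\Delta^{\lambda^*}_{\sigma'^*}$, so I must confirm that cutting by $e_{\sigma^*}$ on the $\End(T')$ side corresponds, via that isomorphism, to the inclusion $\mu_u(\sigma)\subset\mu_u(\sigma')$ and hence to Corollary \ref{subtri2} applied to the flipped triangulations. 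Once this compatibility of the two corner operations — one before flipping, one after — is checked, the isomorphism $\End_{\Kb(\proj\Delta_\sigma)}(T)\cong e_{\sigma^*}\End_{\Kb(\proj\Delta_{\sigma'})}(T')e_{\sigma^*}$ follows formally, and I would record that everything is natural enough to commute with the structure maps used later in the proof of Proposition \ref{endo}.
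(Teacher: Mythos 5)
Your proposal is correct and takes essentially the same route as the paper: both identify $T \cong e_{\sigma^*} T' e_\sigma$ and observe that $- e_\sigma : \add(e_\sigma \Delta_{\sigma'}) \to \proj \Delta_\sigma$ is an equivalence by Corollary \ref{subtri2}, hence induces an equivalence on $\Kb$, the hypothesis on $\vec u^+$ and $(-\vec u)^+$ serving exactly to guarantee that $e_{\sigma^*} T'$ lies in $\Kb(\add(e_\sigma \Delta_{\sigma'}))$ and is sent to $T$; your componentwise computation of $\Hom$-spaces is just this equivalence unpacked. One remark: the compatibility with Proposition \ref{endo} that you flag as a delicate point is not needed (and invoking it would be circular, since the lemma is a reduction step in the proof of that proposition) — $e_{\sigma^*}$ is defined directly as the sum of the identity endomorphisms of the summands $e_v \Delta_{\sigma'}$, $v \in \sigma \setminus \{u\}$, and $P'^*_u$ of $T'$, so no identification of $\End_{\Kb(\proj \Delta_{\sigma'})}(T')$ with an algebra of a partial triangulation enters the argument.
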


\begin{proof}
 First of all, it is clear under these assumptions that $T \cong e_{\sigma^*} T' e_{\sigma}$ as $e_{\sigma^*} \End_{\Kb(\proj \Delta_{\sigma'})}(T')^{\op} e_{\sigma^*} \times \Delta_\sigma$-modules. Moreover, $- e_{\sigma}: \add(e_\sigma \Delta_{\sigma'}) \to \proj \Delta_\sigma$ is an equivalence of categories (as $\End_{\Delta_{\sigma'}}(e_\sigma \Delta_{\sigma'} ) = e_\sigma \Delta_{\sigma'} e_\sigma \cong \Delta_\sigma$ by Corollary \ref{subtri2}). Hence, $- e_\sigma : \Kb( \add(e_\sigma \Delta_{\sigma'})) \to \Kb(\proj \Delta_\sigma )$ is also an equivalence of category. Moreover, it is immediate that $e_{\sigma^*} T' \in \Kb(\add(e_\sigma \Delta_{\sigma'}))$, so
  \begin{align*}  & e_{\sigma^*} \End_{\Kb(\proj \Delta_{\sigma'})} ( T') e_{\sigma^*} = \End_{\Kb(\add (e_\sigma \Delta_{\sigma'}))} (e_{\sigma^*} T') \\ =\, & \End_{\Kb(\proj \Delta_\sigma)} (e_{\sigma^*} T' e_{\sigma}) = \End_{\Kb(\proj \Delta_\sigma)} (T). \qedhere\end{align*}
\end{proof}

In view of Lemma \ref{reducidemisom} and Corollary \ref{subtri2}, up to adding some arcs to $\sigma$, we can suppose that we are in one of the case of Figure \ref{figflip2}. We also fix notations of Figure \ref{figflip2}.
\begin{figure}
 \begin{center} \begin{tabular}{cccc}
                 \boxinminipage{$\renewcommand{\labelstyle}{\textstyle} \xymatrix@L=.05cm@!=0cm@R=.7cm@C=.7cm@M=0.00cm{ 
    \\ \\ \\ 
    \ar@{<-}@`{p+(13,11),p+(0,42),p+(-13,11)}[]_{\vec u^+ = \vec u^{*+}} \ar[uuu]|{\vec u = \vec u^*} \ar@{..}[d] \\ \quad
 }$}  & \boxinminipage{$\renewcommand{\labelstyle}{\textstyle} \xymatrix@L=.05cm@!=0cm@R=.7cm@C=.7cm@M=0.00cm{ \ar@{..}[d] \\
    \ar@{<-}@/_.8cm/[dddd]_(.2){\vec u^{+}} \ar@{-->}[dd]_{\vec u^*} \\ \\  \\ \\
    \ar[uu]_{\vec u} \ar@{<-}@/_.8cm/[uuuu]_(.2){\vec u^{*+}} \ar@{..}[d] \\ \quad
 }$} & 
		\boxinminipage{$\renewcommand{\labelstyle}{\textstyle} \xymatrix@L=.05cm@!=0cm@R=.7cm@C=.7cm@M=0.00cm{ \ar@{..}[d] \\
    \ar@{<-}@/_1.2cm/[dddd]_(.2){\vec u^{+}} \ar@{-->}@`{p+(-8,-11),p+(0,-28),p+(8,-11)}[]_(.85){\vec u^*} \\ \\ \vdots \\ \\
    \ar@{->}@`{p+(8,11),p+(0,28),p+(-8,11)}[]_(.85){\vec u}  \ar@{<-}@/_1.2cm/[uuuu]_(.2){\vec u^{*+}} \ar@{..}[d] \\ \quad
 }$} &  
                \boxinminipage{$$\renewcommand{\labelstyle}{\textstyle} \xymatrix@L=.05cm@!=0cm@R=.5cm@C=.5cm@M=0.01cm{
                   & & & & & & & \\
                   & \ar[rrrr]^{(-\vec u)^+} \ar@{..}[ul] & & & & \ar[dddd]^{(-\vec u^*)^+} \ar@{..}[ur] \\ \\ \\ \\ 
                   & \ar[uuuu]^{\vec u^{*+}} \ar@{..}[dl] \ar@{-->}[uuuurrrr]|(.25){\vec u^*} & & & & \ar[llll]^{\vec u^+} \ar@{..}[dr] \ar[uuuullll]|(.25){\vec u} \\ & & & & & & &
                 }$$} \\
	    (F1) &  (F1') & (F2) & (F3)
                \end{tabular}  
 \end{center}
 \caption{Flip of an oriented arc in a partial triangulation} \label{figflip2}
\end{figure}

From now on, we identify $\Delta_\tau$ and $e_\tau \Delta_\sigma e_\tau$ on the one hand and $\Delta^{\lambda^*}_\tau$ and $e_\tau \Delta^{\lambda^*}_{\sigma^*} e_\tau$ on the other hand using Corollary \ref{subtri2}. We also use notation of Section \ref{nonfroz}. More precisely, we use $\RR_{\vec v}$ and $f_{\vec v}$ for $\sigma$ and $\RR^*_{\vec v}$ and $f^*_{\vec v}$ for $\sigma^*$. We need the following lemmas:

\begin{lemma} \label{tech1}
 \begin{enumerate}[\rm (1)]
  \item In cases (F2) or (F3), in $\Delta_\tau$,
  \begin{itemize}
   \item $\ic{-\vec u, (-\vec u)^+} \ic{-(-\vec u)^+, (-\vec u^*)^{+}} = \ic{\vec u, \vec u^+} f^*_{-\vec u^*}$;
   \item $\io{\vec u^+, \vec u} \cdot \ic{-\vec u, (-\vec u)^+} = f^*_{-\vec u^*} \io{(-\vec u^*)^{+}, -(-\vec u)^+}$.
  \end{itemize}
  \item In case (F3), in $\Delta_\tau$,
  \begin{itemize}
   \item $\ic{\vec u, \vec u^+} \ic{-\vec u^+, \vec u^{*+}} = \ic{-\vec u, (-\vec u)^+} f^*_{\vec u^*}$;
   \item $\io{(-\vec u)^+, -\vec u} \cdot \ic{\vec u, \vec u^+} = f^*_{\vec u^*} \io{\vec u^{*+}, -\vec u^+}$.
  \end{itemize}
  \item In case (F2), $\ic{\vec u, \vec u^+} \ic{-\vec u^+, \vec u^{*+}} = 0$.
  \item In cases (F2) or (F3), the identity of $k Q_\tau$ induces an isomorphism $\psi: \Delta^{\lambda^*}_\tau \to \Delta_\tau$.
 \end{enumerate}
\end{lemma}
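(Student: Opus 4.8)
The statement to prove is Lemma \ref{tech1}, a collection of four local identities in $\Delta_\tau$ (equivalently in $e_\tau\Delta_\sigma e_\tau$) comparing paths near the flipped arc $u$ with the flip data $f^*_{\pm\vec u^*}$. The overall strategy is to reduce everything to an explicit computation in a small partial triangulation and then transport the result by Corollary \ref{subtri2}. First I would invoke Corollary \ref{subtri2} to restrict to the subtriangulation $\tau$ consisting only of the arcs actually appearing in the relevant paths, namely $u$, $\vec u^+$, $(-\vec u)^+$ together with $\vec u^{*+}$ and $(-\vec u^*)^+$; since all the asserted identities are statements about specific elements $e_{\vec w}\Delta_\tau e_{\vec w'}$ and the idempotent embedding is an algebra isomorphism onto a corner, it suffices to check them in this local model. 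The geometric configurations are exactly those drawn in Figure \ref{figflip2} for cases (F2) and (F3), so I would fix the combinatorics of the minimal polygons containing $\vec u$ and $-\vec u$ from those pictures.

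\textbf{Key steps.} For parts (1), (2), (3), the plan is to expand each side using the presentation of $\Delta_\tau$ from Proposition \ref{altpres0} (or equivalently Theorem \ref{altpres2}), unwinding the definitions of the composite paths $\ic{\,,\,}$ and $\io{\,,\,}$ around the common marked points $s(\vec u)$ and $t(\vec u)$. The identities $f^*_{\vec u^*}$ and $f^*_{-\vec u^*}$ are read off from Figure \ref{fcircv} applied to the flipped triangulation $\sigma^*$, so the computation amounts to matching the external path $\io{\cdots}$ that implements the winding around the puncture or hole of the minimal polygon of $\sigma^*$ against the corresponding internal paths of $\sigma$. The relation $\RR_{\vec v}$ (and the defining relations $C_{\vec v}$) is what converts a product $\ic{\,,\,}\ic{\,,\,}$ into the coefficiented external path $f_{\vec v}$; I would apply $\RR$ precisely once at the appropriate arc to pass from the left-hand side to the right-hand side in each of the four displayed equalities. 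For part (3), the vanishing $\ic{\vec u,\vec u^+}\ic{-\vec u^+,\vec u^{*+}}=0$ reflects that in case (F2) the concatenation runs through the boundary/hole so the corresponding $f^*$ is zero by the last clause of Figure \ref{fcircv}; this should fall out directly from the presentation once the configuration is fixed.

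\textbf{Part (4).} The final claim, that the identity of $k Q_\tau$ induces an isomorphism $\psi\colon\Delta^{\lambda^*}_\tau\xrightarrow{\sim}\Delta_\tau$ in cases (F2) and (F3), is where I expect to do the real bookkeeping. The two algebras have the same quiver $\uQ_\tau$, so it suffices to show the ideals of relations coincide, i.e. that $\RR^*_{\vec v}=0$ in $\Delta_\tau$ and $\RR_{\vec v}=0$ in $\Delta^{\lambda^*}_\tau$ for every oriented arc $\vec v$ of $\tau$, together with equality of the $C_{\vec v}$ and of $JC_\sigma$. Because $\tau$ differs from $\sigma$ only by local data and the mutation of coefficients $\mu_u(\lambda)$ is defined so that the surviving $\lambda$'s are unchanged while the few that flip only sit on arcs incident to $u$ (which is no longer in $\tau$), most relations $\RR_{\vec v}$ are literally identical for $\sigma$ and $\sigma^*$; the only relations that could differ are those involving the sides adjacent to the flipped region, and these are governed precisely by parts (1)--(3). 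So the plan is to feed the identities of (1)--(3) into a verification that the two relation ideals agree.

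\textbf{Main obstacle.} The hard part will be the coefficient bookkeeping in part (4): one must check that Definition \ref{defmutcoeff} assigns exactly the coefficients that make $\RR^*_{\vec v}$ and $\RR_{\vec v}$ correspond under the identity map, including the delicate sign changes $\lambda^*_{t(\vec x)}=-\lambda_{t(\vec x)}$ and $\lambda^*_{s(\vec x)}=(-1)^{m_{s(\vec x)}}\lambda_{s(\vec x)}$ triggered when $(-\vec x)^+=\emptyset$, as well as the $\nu_\M$-twist in the special-monogon case. Keeping track of which puncture/hole each minimal polygon of $\sigma^*$ encloses — and hence which clause of Figure \ref{fcircv} produces $f^*_{\vec u^*}$ — across the three geometric cases of Figure \ref{figflip2} is the genuinely error-prone part; once those are pinned down, parts (1)--(3) are routine applications of the presentation, and (4) follows by comparing generators of the two relation ideals arc by arc.
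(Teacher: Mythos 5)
Your treatment of (1)--(3) coincides with the paper's: the proof there consists exactly of direct computations with the presentation of Theorem \ref{altpres2} in the configurations of Figure \ref{figflip2} (after localizing via Corollary \ref{subtri2}), with the one caveat you should record that in case (F2) the arc $\vec u$ does not enclose a special monogon (Remark \ref{avoidcases}); so that part of your plan is sound.

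For (4), however, your plan misidentifies where the content lies and omits the one observation that makes the statement true. The ``delicate sign changes'' $\lambda^*_{t(\vec x)}=-\lambda_{t(\vec x)}$ and $\lambda^*_{s(\vec x)}=(-1)^{m_{s(\vec x)}}\lambda_{s(\vec x)}$ that you propose to track are triggered by the clause $(-\vec x)^+=\emptyset$ of Definition \ref{defmutcoeff}, which can only occur in cases (F1)/(F1'): in cases (F2) and (F3) both $\vec u^+$ and $(-\vec u)^+$ are nonempty by the very definition of those cases, so this clause never fires --- and indeed in (F1)/(F1') the identity of $k Q_\tau$ does \emph{not} induce an isomorphism; that situation is Lemma \ref{tech3}, which requires a genuinely nontrivial $\psi$ with a correction term $\alpha$. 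In case (F3) one checks $\mu_u(\lambda)=\lambda$ outright, so (4) is immediate there. The only case with $\lambda^*\neq\lambda$ is (F2) when $\vec u$ encloses a unique puncture $M$ (first clause of Definition \ref{defmutcoeff}), where $\lambda^*_M=-\lambda_M$; the paper's entire proof of (4) is the remark that $\lambda_M$ occurs in \emph{no} defining relation of $\Delta^{\lambda^*}_\tau$ or $\Delta_\tau$: one has $u\notin\tau$, no arc of $\tau$ is incident to $M$, $m_M>1$ (special monogons being excluded in (F2) by Remark \ref{avoidcases}, which also forces $\nu_\M=1$, so the $\nu_\M^{-1}$-twist at $s(\vec x)$ is trivial), and the minimal polygon of $\tau$ containing the region of $M$ encloses strictly more than $M$ (the annulus between $u$ and $u^+$ cannot be empty since $u$ and $u^+$ are not homotopic), so by Theorem \ref{altpres2} the relevant $f_{\vec v}$ all vanish instead of contributing a $\lambda_M$-term. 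Your proposal neither isolates this case nor supplies this argument; moreover, your claim that the ideal comparison is ``governed precisely by parts (1)--(3)'' is not how the proof goes: (1)--(3) are identities in $\Delta_\tau$ used later to construct the map $\phi^\circ$ on the tilting complex (Lemma \ref{defmorphtilt} and the subsequent computations), and they play no role in establishing (4).
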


\begin{proof}
 (1), (2) and (3) are easy computations using Theorem \ref{altpres2} in Figure \ref{figflip2} (recall that in case (F2), $\vec u$ does not enclose a special monogon).

 (4) The only case where it is not immediate is when $\lambda^* \neq \lambda$. So we focus on case (F2) when $\vec u$ encloses a unique puncture $M$. In this case, we have $\lambda^*_M = - \lambda_M$. However, no relation of $\Delta^{\lambda^*}_\tau$ does involve $\lambda^*_M$ (see Theorem \ref{altpres2} and recall that $\vec u \notin \tau$ and $m_M > 1$ by hypothesis). It permits to conclude.
\end{proof}

\begin{lemma} \label{tech3}
 Suppose that we are in case (F1) or (F1'). Then there exists $\alpha \in e_{\vec u^+} \Delta_\tau e_{\vec u^{*+}}$ and an isomorphism $\psi: \Delta^{\lambda^*}_\tau \to \Delta_\tau$ 
% and $\kappa \in k$ invertible 
such that
  \begin{enumerate}[\rm (1)]
   \item $\ic{\vec u, \vec u^+} \ic{-\vec u^+, \vec u^{*+}} = \ic{\vec u, \vec u^+} \alpha$;
   \item $\psi (\ic{-\vec u^+, \vec u^{*+}}) = \ic{-\vec u^+, \vec u^{*+}} - \alpha$;
   \item $\psi (C^*_\tau) = C_\tau$;
   \item $\alpha \io{\vec u^{*+}, -\vec u^+} = (1 - \nu_\M) e_{\vec u^+} C_\tau$ or $\alpha \io{\vec u^{*+}, -\vec u^+} = 0$;
   \item for any element $\omega$ of the basis $\Br$ of the form $\ic{\vec u^{*+}, x} \ic{\vec x, \vec x}^\ell$, we have
     \begin{itemize}
      \item[] $\alpha \omega = (1-\nu_\M) \ic{-\vec u^+, \vec u^{*+}} \omega$,
      \item[or ] $\alpha \omega$ is multiple of an element of $\Br$ strictly longer than $\ic{-\vec u^+, \vec u^{*+}} \omega$,
      \item[or ] $\alpha \omega = 0$.
     \end{itemize}
  \end{enumerate}
\end{lemma}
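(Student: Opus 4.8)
The plan is to produce $\alpha$ and $\psi$ explicitly and then verify the five properties one by one, all computations taking place in the local configurations of Figure \ref{figflip2}. First I would reduce the set-up: by Lemma \ref{reducidemisom} together with Corollary \ref{subtri2} it suffices to treat the case where $\tau$ contains only the arcs visible in the local pictures (F1) and (F1'), so that every identity below lives in a small algebra presented by Theorem \ref{altpres2} and can be checked on generators. In both cases one endpoint of $\vec u$ carries $\vec u$ as essentially its only incident arc, so the coefficient mutation $\lambda^\ast = \mu_u(\lambda)$ is the non-trivial one of Definition \ref{defmutcoeff} (the sign changes at $s(\vec u)$ and $t(\vec u)$), and $\nu_\M \neq 1$ can occur only when $(\Sigma,\M)$ is a sphere with four marked points all of multiplicity one, where $1-\nu_\M = \lambda_\M$. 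Throughout I identify $\Delta_\tau$ with $e_\tau \Delta_\sigma e_\tau$ via Corollary \ref{subtri2}, so that $\alpha$ and all the displayed products may be read inside $\Delta_\sigma$.

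For the isomorphism $\psi$ I would follow the pattern of Propositions \ref{revidem} and \ref{multipres}: define $\psi$ on $kQ_\tau$ by rescaling the arrows by suitable units so that the sign discrepancies between $\lambda^\ast$ and $\lambda$ are absorbed. Checking that it descends to $\Delta^{\lambda^\ast}_\tau \to \Delta_\tau$ reduces to verifying that each defining relation $\RR^\ast_{\vec v}$, $C^\ast_{\vec v}$ and $JC^\ast_\tau$ of Theorem \ref{altpres2} is carried to a relation of $\Delta_\tau$; the key simplification is that, since $u\notin\tau$, no relation of $\Delta_\tau$ actually involves the mutated coefficient at the special puncture, which is exactly what makes the sign change harmless. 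Property (3), $\psi(C^\ast_\tau)=C_\tau$, then follows because $C_\tau$ is a sum of the central powers $\lambda_{s(\vec x)}\ic{\vec x,\vec x}^{m_{s(\vec x)}}$ and the rescaling is chosen compatibly with these.

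Next I would pin down $\alpha$. Since $\ic{-\vec u^+,\vec u^{\ast+}}$ is an arrow of $Q_\tau$, I set $\alpha := \ic{-\vec u^+,\vec u^{\ast+}}$ corrected by a single external-path term, weighted precisely so that property (1) becomes an identity in $\Delta_\sigma$. Concretely, property (1) is forced by the relation $\RR_{\vec v}$ of Theorem \ref{altpres2} attached to the minimal polygon $P$ through $\vec u$: expanding $\ic{\vec u,\vec u^+}\ic{-\vec u^+,\vec u^{\ast+}}$ and repeatedly applying $\omega^P_i \xi^P_i = \xi^P_{i+1}\omega^P_{i+1}$ determines the required correction, and this correction is the value of $\alpha$. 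Property (2) is then merely $\psi$ evaluated on the arrow $\ic{-\vec u^+,\vec u^{\ast+}}$. For property (4) I would multiply $\alpha$ on the right by the external path $\io{\vec u^{\ast+},-\vec u^+}$: the product telescopes around $P$, and invoking Lemma \ref{Csigsoc} (so that $C_\sigma J=0$) collapses all terms except the one winding once fully around, leaving either $0$ or $(1-\nu_\M)e_{\vec u^+}C_\tau$, where the surviving factor $1-\nu_\M=\lambda_\M$ is exactly the scalar picked up by a complete loop in the special sphere.

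Finally, property (5) and the main difficulty. For each basis element $\omega=\ic{\vec u^{\ast+},x}\ic{\vec x,\vec x}^{\ell}$ of $\Br$ (Theorem \ref{basisDsig}) I would compute $\alpha\omega$ directly: either the external-path part of $\alpha$ multiplied by $\omega$ lengthens the path, giving a multiple of a strictly longer basis element (or $0$ once it enters the annihilated part $C_\sigma J$), or it reproduces $\ic{-\vec u^+,\vec u^{\ast+}}\omega$ up to the scalar $1-\nu_\M$. I expect the main obstacle to be exactly this last verification: as in the proof of Theorem \ref{basisDsig}, one must run through the possible mutual positions of $\vec u^{\ast+}$, $\vec x$ and the special puncture around their common marked point and track precisely when the special-sphere factor $\nu_\M$ is produced. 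This bookkeeping with $\Br$ and with $C_\sigma J=0$ is delicate but elementary, and combining the four verified properties yields the lemma.
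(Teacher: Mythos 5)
Your plan has a genuine gap at its very first step: the reduction. Lemma \ref{reducidemisom} and Corollary \ref{subtri2} let you identify \emph{corner} algebras $e_{\tau'}\Delta_\tau e_{\tau'}$ for $\tau' \subset \tau$, but the lemma asserts an isomorphism $\psi\colon \Delta^{\lambda^*}_\tau \to \Delta_\tau$ of the \emph{whole} algebra for an arbitrary $\tau$, and an isomorphism between small corners does not extend to one of the ambient algebras. This is exactly the difficulty the paper has to confront, and it is resolved in the opposite direction from your reduction: the paper first \emph{enlarges} the configuration by an auxiliary arc $\vec v$ (so that the explicit local computation of $\psi$ becomes possible), and then propagates the resulting isomorphism to an arbitrary $\tau$ via Corollary \ref{isoa} --- whose proof rests on the already-established case (F3) derived equivalences (this is the whole point of Remark \ref{isoarem}). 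Your proposal contains no mechanism playing this role. Relatedly, your justification that ``no relation of $\Delta_\tau$ actually involves the mutated coefficient'' is the argument from Lemma \ref{tech1}(4), which is valid in case (F2) but false in (F1)/(F1'): there $\lambda^*_{t(\vec u)} = -\lambda_{t(\vec u)}$ and, in (F1'), also $\lambda^*_{s(\vec u)} = (-1)^{m_{s(\vec u)}}\lambda_{s(\vec u)}$, and $s(\vec u)$ is precisely the marked point where $\vec u^+$, $\vec u^{*+}$ and arbitrarily many other arcs of $\tau$ meet, so many relations $C_{\vec x}$ and $\RR_{\vec x}$ do involve the changed coefficients.

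Your ansatz for $\psi$ is also inconsistent with the statement in the hard cases ((F1) with $m_{t(\vec u)}=2$, (F1') with $m_{t(\vec u)}=1$). If $\psi$ merely rescales arrows by units, then $\psi(\ic{-\vec u^+, \vec u^{*+}})$ is a scalar multiple of the arrow, and property (2) forces $\alpha$ to be a multiple of that same arrow; combined with property (1) this forces $\ic{\vec u,\vec u^+}\ic{-\vec u^+,\vec u^{*+}}=0$, i.e.\ the trivial case $\alpha=0$. In the paper, $\alpha$ is instead a pure external-path term ($\alpha = \lambda_{t(\vec u)}\io{\vec u^+,-\vec u^{*+}}$, not the arrow plus a correction as you write --- your normalization would break (4) and (5), where the coefficient in front of $\ic{-\vec u^+,\vec u^{*+}}\omega$ must be $1-\nu_\M$, not $1$), and $\psi$ carries \emph{additive} corrections by external paths on several arrows, e.g.\ $\psi(\ic{-\vec u^+,\vec u^{*+}}) = \ic{-\vec u^+,\vec u^{*+}} - \lambda_{t(\vec u)}\io{\vec u^+,-\vec u^{*+}}$ together with, in (F1'), the rescaling $\psi(\ic{\vec u^{*+},-\vec u^+}) = \nu_\M^{-1}\ic{\vec u^{*+},-\vec u^+}$ and further corrections on $\ic{-\vec u^{*+},\vec u^+}$ and $\ic{\vec u^+,\vec v}$, all verified by a long list of identities using $C_\tau J = 0$. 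Your sketches of (4) and (5) are in the right spirit (telescoping around the polygon, case analysis on the position of $\vec x$, tracking when the factor $\nu_\M$ appears), but without the auxiliary-arc construction and Corollary \ref{isoa} the proof does not establish the lemma as stated.
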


\begin{proof}
 We prove the result when $\Sigma$ has no boundary as it implies the general result via Definition \ref{defsp}. Suppose first that we are in case (F1) with $m_{t(\vec u)} > 2$, or in case (F1') with $m_{t(\vec u)} > 1$. Then we have $\ic{\vec u, \vec u^+} \ic{-\vec u^+, \vec u^{*+}} = 0$. Thus, we can take $\alpha = 0$. No relation of $\Delta_{\tau}$ (respectively $\Delta^{\lambda^*}_\tau$) depends on $\lambda_{t(\vec u)}$ (respectively $\lambda^*_{t(\vec u)}$). So, in case (F1), we can take  $\psi(q) = q$ for any $q \in Q_\tau$. Notice that in case (F1'), $\ic{-\vec u^{*+}, \vec u^+}$ appears only in $0$-relations and relations involving $\lambda^*_{s(\vec u)}$. So we can take $\psi(\ic{-\vec u^{*+}, \vec u^+}) = -\ic{-\vec u^{*+}, \vec u^+}$ and $\psi(q) = q$ for any other $q \in Q_\tau$ in case (F1').

 We will now focus on cases (F1) with $m_{t(\vec u)} = 2$ and (F1') with $m_{t(\vec u)} = 1$ (we have excluded (F1) with $m_{t(\vec u)} = 1$).
 
 \begin{itemize}[\rm (i)]
  \item[(F1)] with $m_{t(\vec u)} = 2$. We start by supposing that $\sigma$ contains $\vec v$ as follows:
   $$\renewcommand{\labelstyle}{\textstyle} \xymatrix@L=.05cm@!=0cm@R=.4cm@C=.4cm@M=0.00cm{ 
    & & & \\ & & &  \\ \\ \\ 
    & & & \ar@{<-}@`{p+(13,8),p+(0,25),p+(-13,8)}[]_{\vec u^+ = \vec u^{*+}} \ar[uuu]^(.7){\vec u} \ar[dd]^{\vec v} \\
    \\ & & & 
   }$$
   and we will check that we can choose:
   \begin{itemize}[$\bullet$]
    \item $\alpha = \lambda_{t(\vec u)} \io{\vec u^+, -\vec u^{+}}$,
    \item $\psi (\ic{-\vec u^+, \vec u^{+}}) = \ic{-\vec u^+, \vec u^{+}} - \lambda_{t(\vec u)} \io{\vec u^+, -\vec u^{+}}$,
    \item $\psi(q) = q$ for any other arrow $q$ of $Q_\tau$.
   \end{itemize}
   (1) is immediate. We will prove that $\psi$ is an isomorphism, (2), (3) and (4) at the same time. Notice that $\lambda^*_{t(\vec u)} = -\lambda_{t(\vec u)}$ and $\lambda^*_M = \lambda_M$ for any other $M$.
   For each puncture $M$, we denote $\tilde \lambda_M = \delta_{m_M, 1} \lambda_M$. We denote $\tilde \lambda = \lambda_N$ in the case of a sphere with four puncture where the only puncture $N$ which is not on the diagram satisfies $m_N = 1$. In any other case, we denote $\tilde \lambda = 0$. Using Theorem \ref{altpres2}, we get successively:
   \begin{enumerate}[\rm (a)]
    \item $\io{\vec u^+, -\vec u^{+}}  \cdot \ic{\vec u^+, \vec v} = 0$. Indeed,
     $$\io{\vec u^+, -\vec u^{+}}  \cdot \ic{\vec u^+, \vec v} = \io{\vec u^+, \vec v} \cdot \ic{\vec v, -\vec u^+} \ic{\vec u^+, \vec v}.$$
     If $(\Sigma, \M)$ is not a sphere with three punctures, $$\ic{\vec v, -\vec u^+} \ic{\vec u^+, \vec v} = \tilde \lambda \tilde \lambda_{t(\vec v)} e_{\vec v} C_\tau \quad \text{so} \quad \io{\vec u^+, -\vec u^{+}}  \cdot \ic{\vec u^+, \vec v} = 0$$ as $\io{\vec u^+, \vec v} \in J$. If $(\Sigma, \M)$ is a sphere with three punctures, we get
     \begin{align*}
      & \io{\vec u^+, -\vec u^{+}}  \cdot \ic{\vec u^+, \vec v} = \io{\vec u^+, \vec v} \cdot \io{-\vec v, -\vec v} \\
             =\, & \lambda_{t(\vec v)} \io{\vec u^+, \vec u^+} \cdot \io{-\vec u^+, \vec v} \cdot \ic{-\vec v, -\vec v}^{m_{t(\vec v)} - 2} \\
             =\, & \lambda_{t(\vec v)} \io{\vec u^+, \vec u^+}^2 \cdot \ic{-\vec u^+, \vec v} \cdot \ic{-\vec v, -\vec v}^{m_{t(\vec v)} - 2} \\
             =\, & \lambda_{t(\vec v)} \lambda_{s(\vec u)}^2  \ic{\vec u^+, \vec u^+}^{2 m_{s(\vec u)} - 2} \cdot \ic{-\vec u^+, \vec v} \cdot \ic{-\vec v, -\vec v}^{m_{t(\vec v)} - 2} \in C_\tau J = 0
     \end{align*}
     where we used that in this case $m_{t(\vec v)} > 1$ and $2 m_{s(\vec u)} - 2 \geq m_{s(\vec u)}$.
    \item $\psi(\io{-\vec u^+, \vec v}) = \io{-\vec u^+, \vec v} $. Indeed, using (a), we get $\psi(\ic{-\vec u^+, \vec v}) = \ic{-\vec u^+, \vec v}$ and the result follows.
    \item $\psi(\io{\vec v, \vec u^+}) = \io{\vec v, \vec u^+}$. Analogous as (b).
    \item $\psi(\lambda_{s(\vec u^+)} \ic{\vec u^+, \vec u^+}^{s(\vec u^+)}) = e_{\vec u^+} C_\tau$. If follows from (c).
    \item $\psi(\lambda_{t(\vec u^+)} \ic{-\vec u^+, -\vec u^+}^{t(\vec u^+)}) = e_{\vec u^+} C_\tau$. Analogous as (d).
    \item $\psi(\lambda_{s(\vec v)} \ic{\vec v, \vec v}^{s(\vec v)}) = e_{\vec v} C_\tau$. Analogous as (d).
    \item $\psi(\ic{-\vec u^+, \vec u^+}^2) = \lambda^*_{t(\vec u)} e_{\vec u^+} C_\tau$. Indeed, by (a),
     \begin{align*}
      \psi(\ic{-\vec u^+, \vec u^+}^2) &= (\ic{-\vec u^+, \vec u^{+}} - \lambda_{t(\vec u)} \io{\vec u^+, -\vec u^{+}})^2 \\
		      &= \lambda_{t(\vec u)} e_{\vec u^+} C_\tau - 2 \lambda_{t(\vec u)} e_{\vec u^+} C_\tau + \lambda_{t(\vec u)}^2 \io{\vec u^+, -\vec u^{+}}^2 \\
		      &= -\lambda_{t(\vec u)} e_{\vec u^+} C_\tau = \lambda^*_{t(\vec u)} e_{\vec u^+} C_\tau.
     \end{align*}
   \end{enumerate}
    Then we get $\psi(C_{\vec u^+}) = \psi(C_{\vec v}) = 0$ and $\psi(C^*_\tau) = C_\tau$ by (d), (e) and (f). We get $\psi(\RR_{-\vec u^+}) = 0$ by (g). Finally, all other relations $\RR^*_{\vec x}$ defining $\Delta^{\lambda^*}_\tau$ involve $\io{-\vec u^+, \vec v}$, $\io{\vec v, \vec u^+}$ and arrows other than $\ic{-\vec u^+, \vec u^+}$ so we conclude that $\psi(\RR^*_{\vec x}) = \RR_{\vec x} = 0$ by (b) and (c). So $\psi$ is a morphism. It is invertible with inverse satisfying $\psi^{-1} (\ic{-\vec u^+, \vec u^{+}}) = \ic{-\vec u^+, \vec u^{+}} + \lambda_{t(\vec u)} \io{\vec u^+, -\vec u^{+}}$. The condition (4) comes from (a).

    We proved the existence of $\psi$ under the condition that $\vec v \in \sigma$. The general case is obtained by applying Corollary \ref{isoa} (see also Remark \ref{isoarem}). Indeed, put $\sigma_2 = \tau$, $\sigma^\circ = \{\vec u\}$. Let $\sigma_1$ be any partial triangulation which contains $\vec u$, $\vec u^+$ and $\vec v$, which does not intersect $\sigma^\circ$ and which is maximal for these properties. Then it is immediate that $\sigma_1$, $\sigma_2$ and $\sigma^\circ$ satisfy the hypotheses of Corollary \ref{isoa} (the isomorphism $\Delta^{\lambda^*}_{\sigma_1} \to \Delta_{\sigma_1}$ was just constructed). For (5), it is immediate via Corollary \ref{subtri2} that we can suppose that $\omega$ is left divisible by $\ic{\vec u^+, \vec v}$. Then, by (a), $\alpha \omega = 0$.

  \item[(F1')] with $m_{t(\vec u)} = 1$. We suppose first that $\sigma$ contains $\vec v$ as follows:
   $$\renewcommand{\labelstyle}{\textstyle} \xymatrix@L=.05cm@!=0cm@R=.4cm@C=.4cm@M=0.00cm{ 
    & & & \ar@{<-}@/_.6cm/[ddddd]_(.3){\vec u^{+}}  & & & \\ & & & \\ & & & \\ \\ \\ 
    & & & \ar[uuuu]_(.7){\vec u} \ar@{<-}@/_.6cm/[uuuuu]_(.7){\vec u^{*+}} \ar@{<--}@`{p+(25,6),p+(0,40),p+(-25,6)}[]_{\vec v}
   }$$
  and we will check that we can set:
   \begin{itemize}[$\bullet$]
    \item $\alpha = \lambda_{t(\vec u)} \io{\vec u^+, -\vec u^{*+}}$,
    \item $\psi (\ic{-\vec u^+, \vec u^{*+}}) = \ic{-\vec u^+, \vec u^{*+}} - \lambda_{t(\vec u)} \io{\vec u^+, -\vec u^{*+}}$,
    \item $\psi (\ic{\vec u^{*+}, -\vec u^+}) = \nu_\M^{-1} \ic{\vec u^{*+}, -\vec u^+}$,
    \item $\psi (\ic{-\vec u^{*+}, \vec u^+}) = -\ic{-\vec u^{*+}, \vec u^{+}} - \lambda_{t(\vec u)} \nu_\M^{-1} \io{\vec u^{*+}, -\vec u^{+}}$,
    \item $\psi (\ic{\vec u^{+}, \vec v}) = \ic{\vec u^{+}, \vec v} - \lambda_{t(\vec u)} \tilde \lambda_{t(\vec u^+)} \io{\vec u^+, -\vec v}$,
    \item $\psi(q) = q$ for any other arrow $q$ of $Q_\tau$.
   \end{itemize}
  Notice that $\lambda^*_{t(\vec u)} = -\lambda_{t(\vec u)}$, $\lambda^*_{s(\vec u)} = (-1)^{m_{s(\vec u)}} \lambda_{s(\vec u)}$ and $\lambda^*_M = \lambda_M$ for any other $M$. We use the same notation as in the latter case for $\tilde \lambda$ and $\tilde \lambda_M$. Identity (1) is immediate. We prove (2), (3) and (4). Using Theorem \ref{altpres2}, we get successively:
  \begin{enumerate}[\rm (a)]
   \item $\psi(\ic{-\vec u^+, -\vec u^+}) = \nu_\M^{-1} (\ic{-\vec u^+, -\vec u^+} - \lambda_{t(\vec u)} \tilde \lambda_{s(\vec u)} \tilde \lambda e_{\vec u^+} C_\tau)$. Indeed
   \begin{align*}
    \psi(\ic{-\vec u^+, -\vec u^+}) &= (\ic{-\vec u^+, \vec u^{*+}} - \lambda_{t(\vec u)} \io{\vec u^+, -\vec u^{*+}}) \nu_\M^{-1} \ic{\vec u^{*+}, -\vec u^+} \\
     &= \nu_\M^{-1} (\ic{-\vec u^+, -\vec u^+} - \lambda_{t(\vec u)} \tilde \lambda_{s(\vec u)} \tilde \lambda e_{\vec u^+} C_\tau)
   \end{align*}
   \item $\psi(\io{-\vec u^+, \vec u^{*+}}) = \io{-\vec u^+, \vec u^{*+}} - \lambda_{t(\vec u)} \tilde \lambda_{t(\vec u^+)} \io{\vec u^+, -\vec u^{*+}}$. Indeed, using (a), $C_\tau J = 0$ and $\nu_\M = 1$ when $m_{t(\vec u^+)} > 1$,
   \begin{align*}
    \psi(\io{-\vec u^+, \vec u^{*+}}) &= \io{-\vec u^+, -\vec u^{+}} (\ic{-\vec u^+, \vec u^{*+}} - \lambda_{t(\vec u)} \io{\vec u^+, -\vec u^{*+}}) \\
	  &= \io{-\vec u^+, \vec u^{*+}} - \lambda_{t(\vec u)} \tilde \lambda_{t(\vec u^+)} \io{\vec u^+, -\vec u^{*+}}
   \end{align*} 
    where we used at the end that $\ic{-\vec u^+, -\vec u^+} \ic{\vec u^+, -\vec u^{*+}} \in J C_\tau = 0$.
   \item $\psi(\ic{\vec u^+, \vec u^+}) = -\ic{\vec u^+, \vec u^+}$. Indeed
    \begin{align*}
     \ic{\vec v, \vec u^+} \ic{-\vec u^+, -\vec u^+} &\in J \ic{-\vec u^{*+}, \vec u^+} \ic{-\vec u^+, \vec u^{*+}} J \in C_\tau J = 0 \\
    \text{so} \quad 
     \ic{\vec v, -\vec u^{*+}} \cdot \io{\vec u^{*+}, -\vec u^+} &= \ic{\vec v, -\vec v} \ic{-\vec v, -\vec u^{*+}} \ic{\vec u^{*+}, -\vec u^+} \cdot \io{-\vec u^{+}, -\vec u^+}  \\
      &= \ic{\vec v, -\vec v} \cdot \io{\vec v, \vec u^{+}} \cdot \io{-\vec u^{+}, -\vec u^+} 
      \\ &= \tilde \lambda \io{\vec v, \vec v} \cdot \ic{\vec v, \vec u^+} \cdot \io{-\vec u^{+}, -\vec u^+} =  \tilde \lambda \tilde \lambda_{t(\vec u^+)} \io{\vec v, \vec u^+}
    \end{align*}
    \begin{align*}
     \text{so} \quad & -\psi(\ic{\vec u^+, \vec u^+}) \\ =\,&  \psi(\ic{\vec u^+, \vec v}) \ic{\vec v, -\vec u^{*+}} (\ic{-\vec u^{*+}, \vec u^{+}} + \lambda_{t(\vec u)} \nu_\M^{-1} \io{\vec u^{*+}, -\vec u^{+}}) \\ 
     =\,& (\ic{\vec u^{+}, \vec v} - \lambda_{t(\vec u)} \tilde \lambda_{t(\vec u^+)} \io{\vec u^+, -\vec v}) (\ic{\vec v, \vec u^+}  + \lambda_{t(\vec u)} \nu_\M^{-1} \tilde \lambda \tilde \lambda_{t(\vec u^+)} \io{\vec v, \vec u^+} ) \\
     =\,& (\ic{\vec u^+, \vec u^+} - \lambda_{t(\vec u)} \tilde \lambda_{t(\vec u^+)} \tilde \lambda e_{\vec u^+} C_\tau) (1 + \lambda_{t(\vec u)} \nu_\M^{-1} \tilde \lambda \tilde \lambda_{t(\vec u^+)} \io{\vec u^+, \vec u^+} ) \\
     =\,& \ic{\vec u^+, \vec u^+} + \lambda_{t(\vec u)} \tilde \lambda_{t(\vec u^+)} \tilde \lambda(\nu_\M^{-1}  - 1 - \nu_\M^{-1} \lambda_{t(\vec u)} \tilde \lambda_{t(\vec u^+)} \tilde \lambda \tilde \lambda_{s(\vec u^+)}   ) e_{\vec u^+} C_\tau \\ =\, & \ic{\vec u^+, \vec u^+}.
    \end{align*}
   \item $\psi(\io{-\vec v, \vec v}) = \io{-\vec v, \vec v}$. Indeed, $\ic{\vec u^{*+}, -\vec u^+} \ic{\vec u^+, \vec u^+} = 0$ so, by (c), using $\lambda^*_{s(\vec u)} =   (-1)^{m_{s(\vec u)}} \lambda_{s(\vec u)}$,
    \begin{align*}
     \psi(\io{-\vec u^{*+}, \vec u^+}) &= (\ic{-\vec u^{*+}, \vec u^{+}} + \lambda_{t(\vec u)} \nu_\M^{-1} \io{\vec u^{*+}, -\vec u^{+}}) \io{\vec u^+, \vec u^+} \\
            &= \io{-\vec u^{*+}, \vec u^{+}} + \lambda_{t(\vec u)} \tilde \lambda_{s(\vec u)} \nu_\M^{-1} \io{\vec u^{*+}, -\vec u^{+}}
    \end{align*}
    \begin{align*}
     \text{so} \;\; & \psi(\io{-\vec v, \vec v}) \\ =\, & \ic{-\vec v, -\vec u^{*+}} (\io{-\vec u^{*+}, \vec u^{+}} + \lambda_{t(\vec u)} \tilde \lambda_{s(\vec u)} \nu_\M^{-1} \io{\vec u^{*+}, -\vec u^{+}}) \psi(\ic{\vec u^+, \vec v}) \\
       =\, & (\io{-\vec v, \vec u^{+}} + \lambda_{t(\vec u)} \tilde \lambda_{s(\vec u)} \nu_\M^{-1}  \io{\vec v, \vec u^{+}}  \cdot \io{-\vec u^{+}, -\vec u^{+}}) \psi(\ic{\vec u^+, \vec v}) \\
       =\, & (\io{-\vec v, \vec u^{+}} + \lambda_{t(\vec u)} \tilde \lambda_{s(\vec u)} \nu_\M^{-1}  \tilde \lambda_{t(\vec u^+)}\io{\vec v, \vec u^{+}}) (\ic{\vec u^{+}, \vec v} - \lambda_{t(\vec u)} \tilde \lambda_{t(\vec u^+)} \io{\vec u^+, -\vec v}) \\
       =\, & \io{-\vec v, \vec v} + \lambda_{t(\vec u)} \tilde \lambda_{t(\vec u^+)} \tilde \lambda_{s(\vec u)} (\nu_\M^{-1} - 1 - \lambda_{t(\vec u)} \tilde \lambda_{t(\vec u^+)} \tilde \lambda_{s(\vec u)} \nu_\M^{-1} \ic{\vec v, -\vec v}) e_{\vec v} C_\tau \\
       =\, & \io{-\vec v, \vec v} + \lambda_{t(\vec u)} \tilde \lambda_{t(\vec u^+)} \tilde \lambda_{s(\vec u)} (\nu_\M^{-1} - 1 - \lambda_{t(\vec u)} \tilde \lambda_{t(\vec u^+)} \tilde \lambda_{s(\vec u)} \nu_\M^{-1} \tilde \lambda ) e_{\vec v} C_\tau = \io{-\vec v, \vec v}
    \end{align*}
    \item $\psi(\ic{-\vec u^+, -\vec u^+}^{m_{t(\vec u^+)}}) = \ic{-\vec u^+, -\vec u^+}^{m_{t(\vec u^+)}}$. This is an immediate consequence of (a) and $C_\tau J = 0$.
    \item $\psi(\ic{\vec u^{*+}, \vec u^{*+}}^{m_{s(\vec u^{*+})}}) = \ic{\vec u^{*+}, \vec u^{*+}}^{m_{s(\vec u^{*+})}}$. Analogous to (e).
    \item $\psi(\ic{\vec u^+, \vec u^+}^{m_{s(\vec u^+)}}) = (-1)^{m_{s(\vec u^+)}} \ic{\vec u^+, \vec u^+}^{m_{s(\vec u^+)}}$. Consequence of (c).
    \item $\psi(\ic{-\vec u^{*+}, -\vec u^{*+}}^{m_{t(\vec u^{*+})}}) = (-1)^{m_{s(\vec u^+)}} \ic{-\vec u^{*+}, -\vec u^{*+}}^{m_{t(\vec u^{*+})}}$. As (g).
    \item $\psi(\ic{-\vec u^+, \vec u^{*+}}\ic{-\vec u^{*+}, \vec u^+}) = \lambda_{t(\vec u)}^* e_{\vec u^+} C_\tau$. Indeed,
     \begin{align*}
      & -\psi(\ic{-\vec u^+, \vec u^{*+}}\ic{-\vec u^{*+}, \vec u^+}) \\ =\, & (\ic{-\vec u^+, \vec u^{*+}} - \lambda_{t(\vec u)} \io{\vec u^+, -\vec u^{*+}}) (\ic{-\vec u^{*+}, \vec u^{+}} + \lambda_{t(\vec u)} \nu_\M^{-1} \io{\vec u^{*+}, -\vec u^{+}}) \\
      =\,& (\lambda_{t(\vec u)} - \lambda_{t(\vec u)} + \lambda_{t(\vec u)} \nu_\M^{-1} - \lambda_{t(\vec u)}^2 \nu_\M^{-1} \tilde \lambda_{s(\vec u^+)} \tilde \lambda_{t(\vec u^+)}\tilde \lambda )e_{\vec u^+} C_\tau = \lambda_{t(\vec u)} e_{\vec u^+} C_\tau.
     \end{align*}
    \item $\psi(\ic{-\vec u^{*+}, \vec u^{+}}\ic{-\vec u^{+}, \vec u^{*+}}) = \lambda^*_{t(\vec u)} e_{\vec u^{*+}} C_\tau$. Analogous to (i).
    \item $\psi(\ic{\vec u^+, \vec v} \ic{-\vec v, -\vec u^{*+}}) = \psi(\io{-\vec u^+, \vec u^{*+}})$. Indeed,
     \begin{align*}
      \psi(\ic{\vec u^+, \vec v} \ic{-\vec v, -\vec u^{*+}}) &= (\ic{\vec u^+, \vec v} - \lambda_{t(\vec u)} \tilde \lambda_{t(\vec u^+)}  \io{\vec u^+, -\vec v})\ic{-\vec v, -\vec u^{*+}} \\ &= \io{-\vec u^+, \vec u^{*+}} - \lambda_{t(\vec u)} \tilde \lambda_{t(\vec u^+)} \io{\vec u^+, -\vec u^{*+}}
     \end{align*}
     and we conclude with (b).
    \item $\psi(\ic{\vec u^{*+}, -\vec u^{+}} \ic{\vec u^+, \vec v}) = \psi(\io{-\vec u^{*+}, -\vec v})$. Indeed,
     \begin{align*}
      \psi(\ic{\vec u^{*+}, -\vec u^{+}} \ic{\vec u^+, \vec v}) &= \nu_\M^{-1} \ic{\vec u^{*+}, -\vec u^+}(\ic{\vec u^+, \vec v} - \lambda_{t(\vec u)} \tilde \lambda_{t(\vec u^+)} \io{\vec u^+, -\vec v}) \\ &= \nu_\M^{-1} \io{-\vec u^{*+}, -\vec v} (1 - \lambda_{t(\vec u)} \tilde \lambda_{t(\vec u^+)} \io{\vec v, -\vec v})) \\ &= \nu_\M^{-1}  (1 - \lambda_{t(\vec u)} \tilde \lambda_{t(\vec u^+)} \tilde \lambda \tilde \lambda_{s(\vec u)}  ) \io{-\vec u^{*+}, -\vec v} \\ &= \io{-\vec u^{*+}, -\vec v}
     \end{align*}
     and by (c) and $\lambda^*_{s(\vec u)} =   (-1)^{m_{s(\vec u)}} \lambda_{s(\vec u)}$,
     \begin{align*}
      & \psi(\io{-\vec u^{*+}, -\vec v}) \\ =\,& (\ic{-\vec u^{*+}, \vec u^{+}} + \lambda_{t(\vec u)} \nu_\M^{-1} \io{\vec u^{*+}, -\vec u^{+}}) \io{\vec u^+, \vec u^+} \psi(\ic{\vec u^+, -\vec v}) \\ =\,& (\io{-\vec u^{*+}, \vec u^{+}} + \lambda_{t(\vec u)} \nu_\M^{-1} \tilde \lambda_{s(\vec u^+)} \io{\vec u^{*+}, -\vec u^{+}})  (\ic{\vec u^{+}, -\vec v} - \lambda_{t(\vec u)} \tilde \lambda_{t(\vec u^+)} \tilde \lambda \io{\vec u^+, -\vec v})
       \\ =\,& (1 + \lambda_{t(\vec u)} \nu_\M^{-1} \tilde \lambda_{s(\vec u^+)} \tilde \lambda \tilde \lambda_{s(\vec u^{*+})}) \io{-\vec u^{*+}, -\vec v} (1 - \lambda_{t(\vec u)} \tilde \lambda_{t(\vec u^+)} \tilde \lambda \io{-\vec v, -\vec v}) \\ =\,& \nu_\M^{-1}   (1 - \lambda_{t(\vec u)} \tilde \lambda_{t(\vec u^+)} \tilde \lambda \tilde \lambda_{s(\vec u)}) \io{-\vec u^{*+}, -\vec v} = \io{-\vec u^{*+}, -\vec v}.
     \end{align*}
    \item $\psi(\ic{-\vec v, -\vec u^{*+}} \ic{\vec u^{*+}, -\vec u^+}) = \psi(\io{\vec v, \vec u^+})$. Analogous to (l).   
  \end{enumerate}
  We conclude for (2), (3) and (4) as for (F1). We have to prove (5).
 \begin{itemize}[$\bullet$]
  \item If $t(\vec x) \neq s(\vec x), s(\vec u)$. We have 
  $$\alpha \ic{\vec u^{*+}, \vec x} =  \lambda_{t(\vec u)} \io{\vec u^+, \vec u^+} \cdot \ic{\vec u^+, -\vec u^{*+}} \ic{\vec u^{*+}, \vec x}.$$
  If $(\Sigma, \M)$ is not a sphere with four punctures, it vanishes. Otherwise, we deduce:
  \begin{align*}
   \alpha \ic{\vec u^{*+}, \vec x} \ic{\vec x, \vec x}^\ell &= \lambda_{t(\vec u)} \tilde \lambda_{t(\vec x)} \io{\vec u^+, \vec u^+} \cdot \io{-\vec u^+, \vec x} \cdot  \ic{\vec x, \vec x}^\ell \\ &= \lambda_{t(\vec u)} \tilde \lambda_{t(\vec x)} \tilde \lambda_{s(\vec u^{+})} \io{-\vec u^+, \vec x} \cdot \ic{\vec x, \vec x}^\ell 
  \end{align*}
  as $\ic{\vec u^+, \vec u^+} \ic{-\vec u^+, \vec x} = 0$. If $m_{t(\vec u^+)} > 1$, this is strictly right divisible by $\ic{-\vec u^+, \vec u^{*+}} \omega$ so it is  longer. If $m_{t(\vec u^+)} = 1$ we have necessarily $\ell = 0$ and 
  $$ \alpha \ic{\vec u^{*+}, \vec x} = \lambda_{t(\vec u)} \tilde \lambda_{t(\vec x)} \tilde \lambda_{s(\vec u^{+})} \tilde \lambda_{t(\vec u^+)} \ic{-\vec u^+, \vec x} = (1 - \nu_\M) \ic{-\vec u^+, \vec u^{*+}} \omega.$$
  \item If $t(\vec x) = s(\vec x)$ and $\vec x$ is winding counter-clockwisely. If the triangle with sides $\vec u^+$, $-\vec x$, $\vec u^{*+}$ contains at least one puncture, then $\alpha \ic{\vec u^{*+}, \vec x} = 0$. Otherwise, 
    \begin{align*}\alpha \ic{\vec u^{*+}, \vec x} \ic{\vec x, \vec x}^\ell &=  \lambda_{t(\vec u)} \io{\vec u^+, \vec u^+} \cdot \ic{\vec u^+, -\vec u^{*+}} \ic{\vec u^{*+}, \vec x}\ic{\vec x, \vec x}^\ell  \\ &= \lambda_{t(\vec u)} \io{\vec u^+, \vec u^+} \cdot \io{-\vec u^+, -\vec x} \cdot \ic{\vec x, \vec x}^\ell \\ &= \delta_{\ell, 0} \lambda_{t(\vec u)} \tilde \lambda_{s(\vec u^+)} \io{-\vec u^+, -\vec x} \end{align*}
   which is a strict multiple of $\ic{-\vec u^+, \vec u^{*+}} \omega$ so it is longer (or $0$).
  \item If $t(\vec x) = s(\vec x)$ and $\vec x$ is winding clockwisely. According to the previous case for $-\vec x$, the only possibility for $\alpha \omega$ to be non-zero is when the triangle with sides $\vec u^+$, $\vec x$, $\vec u^{*+}$ contains no puncture and $\ell = 0$. In this case,
   $$\alpha \ic{\vec u^{*+}, \vec x} = \lambda_{t(\vec u)} \tilde \lambda_{s(\vec u^+)} \io{-\vec u^+, \vec x} \cdot \ic{-\vec x, \vec x} = \lambda_{t(\vec u)} \tilde \lambda_{s(\vec u^+)} \tilde \lambda \io{-\vec u^+, \vec x}$$
  which is strictly longer than $\ic{-\vec u^+, \vec u^{*+}} \omega$ except if $m_{t(\vec u^+)} = 1$. In this case
   $$\alpha \ic{\vec u^{*+}, \vec x} = \lambda_{t(\vec u)} \tilde \lambda_{s(\vec u^+)} \tilde \lambda \tilde \lambda_{t(\vec u^+)} \ic{-\vec u^+, \vec x} = (1 - \nu_\M) \ic{-\vec u^+, \vec u^{*+}} \omega.$$
  \item If $t(\vec x) = s(\vec u)$. We have
  $$\alpha \ic{\vec u^{*+}, \vec x}\ic{\vec x, \vec x}^\ell =  \lambda_{t(\vec u)} \io{\vec u^+, \vec u^+} \cdot \ic{\vec u^+, -\vec u^{*+}} \ic{\vec u^{*+}, \vec x} \ic{\vec x, \vec x}^\ell$$
  which admit a factor $\ic{-\vec x, -\vec u^{*+}} \ic{\vec u^{*+}, \vec x} \in (C_\sigma)$ so the only way that this does not vanish is $\vec x = -\vec u^+$ and $\ell = 0$. In this case,
  $$\alpha \ic{\vec u^{*+}, \vec x} =  \lambda_{t(\vec u)} \tilde \lambda \io{\vec u^+, \vec u^+} e_{\vec u^+} C_\tau = \lambda_{t(\vec u)} \tilde \lambda \tilde \lambda_{s(\vec u^+)} e_{\vec u^+} C_\tau$$
  which is strictly longer than $\ic{-\vec u^+, \vec u^{*+}} \omega$ except if $m_{t(\vec u^+)} = 1$. In this case,
   \begin{align*}\alpha \ic{\vec u^{*+}, \vec x} &=  \lambda_{t(\vec u)} \tilde \lambda \tilde \lambda_{s(\vec u^+)} \tilde \lambda_{t(\vec u^+)} \ic{-\vec u^+, -\vec u^+} = (1 - \nu_\M)\ic{-\vec u^+, \vec u^{*+}} \omega. \qedhere \end{align*}
 \end{itemize}
 \end{itemize}
\end{proof}

We generalize the notation of Lemma \ref{tech3} by setting $\alpha = 0$ and taking $\psi$ as in Lemma \ref{tech1} in cases (F2) or (F3). Thus, we always have 
 \begin{itemize}
  \item $\psi (\ic{-\vec u^+, \vec u^{*+}}) = \ic{-\vec u^+, \vec u^{*+}} - \alpha$,
  \item $\psi (C^*_\tau) = C_\tau$,
  \item $\ic{-\vec u, (-\vec u)^+} \ic{-(-\vec u)^+, (-\vec u^*)^{+}} = \ic{\vec u, \vec u^+} f^*_{-\vec u^*}$,
  \item $\io{\vec u^+, \vec u} \cdot \ic{-\vec u, (-\vec u)^+} = f^*_{-\vec u^*} \io{(-\vec u^*)^{+}, -(-\vec u)^+}$,
 \end{itemize}
 and, in cases (F1), (F1') or (F3),
 \begin{itemize}
  \item $\ic{\vec u, \vec u^+} \ic{-\vec u^+, \vec u^{*+}} = \ic{\vec u, \vec u^+} \alpha + \ic{-\vec u, (-\vec u)^+} f^*_{\vec u^*}$,
  \item $\io{(-\vec u)^+, -\vec u} \cdot \ic{\vec u, \vec u^+} = f^*_{\vec u^*} \io{\vec u^{*+}, -\vec u^+}$
 \end{itemize}
 (in cases (F1) and (F1'), most of these equalities are trivial).

 Then, we get the following maps in $\Kb(\proj \Delta_\sigma)$:
 \begin{itemize}
  \item $\pi_+ := \left( \begin{bmatrix} \id_{e_{\vec u^+} \Delta_\sigma} & 0 \end{bmatrix}, 0\right): P_u^* \to e_{\vec u^+} \Delta_\sigma$;
  \item $\pi_- := \left( \begin{bmatrix} 0 & -\id_{e_{(-\vec u)^+} \Delta_\sigma} \end{bmatrix}, 0\right): P_u^* \to e_{(-\vec u)^+} \Delta_\sigma$;
  \item $\theta_+ := \left( \begin{bmatrix} \ic{-\vec u^+, \vec u^{*+}} - \alpha \\ -f^*_{\vec u^*} \end{bmatrix}, 0\right): e_{\vec u^{*+}} \Delta_\sigma \to P_u^*$ in cases (F1), (F1') or (F3);
  \item $\theta_- := \left( \begin{bmatrix} f^*_{-\vec u^*} \\ -\ic{-(-\vec u)^+, (-\vec u^*)^{+}} \end{bmatrix}, 0\right): e_{(-\vec u^*)^{+}} \Delta_\sigma \to P_u^*$ in cases (F2) or (F3);
  \item $\epsilon := \left( \io{\vec u^+, -\vec u^+}, \ic{-\vec u, -\vec u} \right): P_u^* \to P_u^*$ in case (F1);
  \item $\epsilon := \left( 0, \ic{-\vec u, -\vec u} \right): P_u^* \to P_u^*$ in case (F1');
  \item $\eta := -\left(\begin{sbmatrix} 0 & \id_{e_{\vec u^+} \Delta_\sigma} \\ 0 & 0 \end{sbmatrix} , \ic{\vec u, -\vec u} \right): P_u^* \to P_u^*$ in case (F2).
 \end{itemize}

\begin{lemma} \label{defmorphtilt}
 There exists a unique $\phi^\circ: k Q_{\sigma^*} \to \End_{\Kb(\proj \Delta_\sigma)} (T)$ satisfying $\phi^\circ(x) = \psi(x)$ for $x \in kQ_{\tau}$ and
 \begin{itemize}
  \item $\phi^\circ(e_{u^*}) = \id_{P_u^*}$;
  \item $\phi^\circ(\ic{-\vec u^+, \vec u^*}) = \pi_+$;
  \item $\phi^\circ(\ic{-(-\vec u)^+, -\vec u^*}) = \pi_-$ in case (F3);
  \item $\phi^\circ(\ic{\vec u^*, \vec u^{*+}}) = \theta_+$ in case (F1), (F1') or (F3);
  \item $\phi^\circ(\ic{-\vec u^*, (-\vec u^*)^{+}}) = \theta_-$ in cases (F2) or (F3);
  \item $\phi^\circ(\ic{-\vec u^*, -\vec u^*}) = \epsilon$ in case (F1) or (F1');
  \item $\phi^\circ(\ic{\vec u^*, -\vec u^*}) = \eta$ in case (F2).
 \end{itemize}
\end{lemma}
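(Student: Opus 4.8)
The statement concerns only the existence and uniqueness of an algebra map out of the path algebra $k Q_{\sigma^*}$, so it is a direct instance of the universal property of a path algebra and requires no verification of relations. The plan is to exhibit the data that determine such a homomorphism --- a complete family of orthogonal idempotents indexed by the vertices of $Q_{\sigma^*}$, together with, for each arrow, one element of the corner of $\End_{\Kb(\proj \Delta_\sigma)}(T)$ cut out by the idempotents of its source and target --- and then to check that the prescribed values meet exactly these two constraints. Uniqueness is the easy half: the idempotents $e_v$ and the arrows generate $k Q_{\sigma^*}$ as an algebra, and $\phi^\circ$ is prescribed on all of them, so two maps agreeing on these data coincide.

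For existence I would start from the decomposition $T = e_\tau \Delta_\sigma \oplus P_u^*$, whose indecomposable summands $e_v \Delta_\sigma$ ($v$ an arc of $\tau$) and $P_u^*$ are in bijection with the vertices of $Q_{\sigma^*}$. Under the identification $e_\tau \End_{\Kb(\proj \Delta_\sigma)}(T) e_\tau = \End_{\Delta_\sigma}(e_\tau \Delta_\sigma) = e_\tau \Delta_\sigma e_\tau \cong \Delta_\tau$ of Corollary~\ref{subtri2}, the projection onto $e_v \Delta_\sigma$ is $\psi(e_v)$, and the remaining summand contributes $\id_{P_u^*}$. Since $\psi \colon \Delta^{\lambda^*}_\tau \to \Delta_\tau$ is a unital algebra homomorphism (Lemmas~\ref{tech1} and~\ref{tech3}), the family $\{\psi(e_v)\}_v \cup \{\id_{P_u^*}\}$ consists of orthogonal idempotents summing to $\psi(e_\tau) + \id_{P_u^*} = \id_T$; this is the first required datum. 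For the arrows of $Q_\tau$ the corner condition is automatic, because $\psi$ carries $e_x \Delta^{\lambda^*}_\tau e_y$ into $e_x \Delta_\tau e_y$.

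It then remains to confirm that each arrow incident to $u^*$ goes to a genuine morphism of $\Kb(\proj \Delta_\sigma)$ between the correct summands, that is, that $\pi_+$, $\pi_-$, $\theta_+$, $\theta_-$, $\epsilon$, $\eta$ are chain maps. The source/target bookkeeping is read off from their definitions; the actual content is commutation with the differential $\begin{sbmatrix} \alpha_1 & \alpha_2 \end{sbmatrix}$ of $P_u^*$, and this is precisely where the identities assembled just before the lemma are used. For instance $\theta_+$ is a chain map exactly because
$$ \alpha_1(\ic{-\vec u^+, \vec u^{*+}} - \alpha) - \alpha_2 f^*_{\vec u^*} = \ic{\vec u, \vec u^+}(\ic{-\vec u^+, \vec u^{*+}} - \alpha) - \ic{-\vec u, (-\vec u)^+} f^*_{\vec u^*} = 0, $$
which is a rearrangement of the displayed relation $\ic{\vec u, \vec u^+}\ic{-\vec u^+, \vec u^{*+}} = \ic{\vec u, \vec u^+}\alpha + \ic{-\vec u, (-\vec u)^+} f^*_{\vec u^*}$; the condition for $\theta_-$ is the companion identity $\ic{-\vec u, (-\vec u)^+}\ic{-(-\vec u)^+, (-\vec u^*)^{+}} = \ic{\vec u, \vec u^+} f^*_{-\vec u^*}$, the maps $\pi_\pm$ are chain maps trivially since their targets are stalk complexes, and $\epsilon$, $\eta$ reduce to the analogous compatibilities with $\io{\vec u^+, \vec u}$ and $\ic{\vec u, -\vec u}$. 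With the idempotent family and the corner memberships checked, the universal property of $k Q_{\sigma^*}$ yields the desired unique $\phi^\circ$. The only real work is this last batch of chain-map verifications for $\theta_\pm$, $\epsilon$, $\eta$, to be run case by case through Figure~\ref{figflip2}; it is routine but is the single place where the collected identities are genuinely needed, so I expect it to be the main obstacle.
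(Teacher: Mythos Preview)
Your argument has a genuine gap. You correctly invoke the universal property of $kQ_{\sigma^*}$ and correctly identify the chain-map conditions for $\theta_\pm$, $\epsilon$, $\eta$ as nontrivial (these are exactly the identities collected just before the lemma, so the paper handles them implicitly when it declares these to be ``maps in $\Kb(\proj\Delta_\sigma)$''). What you miss is that the hypotheses of the lemma are \emph{over-determined}: the condition $\phi^\circ(x)=\psi(x)$ for $x\in kQ_\tau$ is not merely a prescription on some of the arrows of $Q_{\sigma^*}$. Not every arrow of $Q_\tau$ is an arrow of $Q_{\sigma^*}$; when $u^*$ is inserted, the arrow of $Q_\tau$ that jumped over the position of $u^*$ becomes a path of length two or three in $Q_{\sigma^*}$. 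Concretely, in cases (F1), (F1') and (F3) the $Q_\tau$-arrow $\ic{-\vec u^+,\vec u^{*+}}$ equals the $Q_{\sigma^*}$-path $\ic{-\vec u^+,\vec u^*}\ic{\vec u^*,\vec u^{*+}}$, and in case (F2) the $Q_\tau$-arrow $\ic{-\vec u^+,(-\vec u^*)^+}$ equals $\ic{-\vec u^+,\vec u^*}\ic{\vec u^*,-\vec u^*}\ic{-\vec u^*,(-\vec u^*)^+}$.

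So after you build $\phi^\circ$ from the universal property using the arrows of $Q_{\sigma^*}$, you still have to check that this $\phi^\circ$ agrees with $\psi$ on those $Q_\tau$-arrows which are longer paths in $Q_{\sigma^*}$: e.g.\ that $\pi_+\theta_+=\psi(\ic{-\vec u^+,\vec u^{*+}})$ and $\pi_+\eta\theta_-=\psi(\ic{-\vec u^+,(-\vec u^*)^+})$. This is the entire content of the paper's proof, and it is not automatic --- indeed the first of these identities is precisely why $\psi$ was required to satisfy $\psi(\ic{-\vec u^+,\vec u^{*+}})=\ic{-\vec u^+,\vec u^{*+}}-\alpha$ in Lemma~\ref{tech3}. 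Your sentence ``For the arrows of $Q_\tau$ the corner condition is automatic'' is where the argument breaks: it tacitly assumes $Q_\tau$-arrows are $Q_{\sigma^*}$-arrows.
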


\begin{proof}
 Notice first that if $\vec u^{*+}$ or $(-\vec u^*)^{+}$ is a boundary component, some equality becomes trivial.

 Arrows of $Q_{\sigma^*}$ which are not in $e_\tau k Q_{\sigma^*} e_\tau$ are the one defined case by case. Thus, such a map, if it exists, is unique. For the well definition, it is enough to check each arrow of $e_\tau k Q_{\sigma^*} e_\tau$ which appear as a composition of arrows of $Q_{\sigma^*}$. We consider all cases:
 \begin{itemize}
  \item $\ic{-\vec u^+, \vec u^{*+}} = \ic{-\vec u^+, \vec u^*} \ic{\vec u^*, \vec u^{*+}}$ (in cases (F1), (F1') or (F3)): 
   $$\phi^\circ(\ic{-\vec u^+, \vec u^*}) \phi^\circ(\ic{\vec u^*, \vec u^{*+}}) = \pi_+ \theta_+ = \psi(\ic{-\vec u^+, \vec u^{*+}}) = \phi^\circ(\ic{-\vec u^+, \vec u^{*+}});$$
  \item $\ic{-(-\vec u)^+, (-\vec u^*)^{+}} = \ic{-(-\vec u)^+, -\vec u^*} \ic{-\vec u^*, (-\vec u^*)^{+}}$ (in case (F3)): this is similar;
  \item $\ic{-\vec u^+, (-\vec u^*)^{+}} = \ic{-\vec u^+, \vec u^*} \ic{\vec u^*, -\vec u^*} \ic{-\vec u^*, (-\vec u^*)^{+}}$ (in case (F2)):
   \begin{align*}\phi^\circ(\ic{-\vec u^+, \vec u^*}) \phi^\circ(\ic{\vec u^*, -\vec u^*}) \phi^\circ(\ic{-\vec u^*, (-\vec u^*)^{+}}) &= \pi_+ \eta \theta_- \\&= \phi^\circ(\ic{-\vec u^+, (-\vec u^*)^{+}}). \qedhere\end{align*}
 \end{itemize}
\end{proof}

\begin{lemma} \label{exseq}
 The following sequence is exact: 
  $$e_{\vec u^{*+}} \Delta_\sigma \oplus e_{(-\vec u^*)^{+}} \Delta_\sigma \xrightarrow{A} e_{\vec u^+} \Delta_\sigma \oplus e_{(-\vec u)^+} \Delta_\sigma \xrightarrow{\begin{bmatrix}\ic{\vec u, \vec u^+} & \ic{-\vec u, (-\vec u)^+} \end{bmatrix}} e_u \Delta_\sigma$$
  where 
  $$A = \left\{\begin{array}{ll}
                \begin{bmatrix} \ic{-\vec u^+, \vec u^{*+}} - \alpha & 0 \\ 0 & 0 \end{bmatrix} & \text{in case (F1) or (F1'),} \\
                \begin{bmatrix} \ic{-\vec u^+, \vec u^{*+}} & f^*_{-\vec u^*} \\ 0 & -\ic{-\vec u^+, \vec u^{*+}}\end{bmatrix} & \text{in case (F2),}\\
		\begin{bmatrix} \ic{-\vec u^+, \vec u^{*+}} & f^*_{-\vec u^*} \\ -f^*_{\vec u^*} & -\ic{-(-\vec u)^+, (-\vec u^*)^{+}}\end{bmatrix} & \text{in case (F3).}
               \end{array} \right.$$ 
\end{lemma}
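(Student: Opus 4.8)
The plan is to establish the two inclusions $\Ima A \subseteq \Ker(\mathrm{diff})$ and $\Ker(\mathrm{diff}) \subseteq \Ima A$, where $\mathrm{diff} = \begin{bmatrix}\ic{\vec u, \vec u^+} & \ic{-\vec u, (-\vec u)^+}\end{bmatrix}$ is precisely the differential of the complex $P_u^*$. Before that I would cut the problem down to size: using Corollary \ref{subtri2} I may assume that $\sigma$ consists only of $u$ together with the handful of arcs appearing in the relevant case of Figure \ref{figflip2}, and using the augmented surface of Definition \ref{defsp} I may assume $\Sigma$ has no boundary (when $\vec u^{*+}$ or $(-\vec u^*)^{+}$ is a boundary edge the corresponding summand of $\Delta_\sigma$ vanishes and the sequence simply degenerates, so nothing is lost). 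As in the proofs of Lemmas \ref{tech1} and \ref{tech3} these reductions are routine. Their point is that all four modules then become free $k$-modules of explicit finite rank through Theorem \ref{basisDsig}.

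The inclusion $\Ima A \subseteq \Ker(\mathrm{diff})$ is the easy direction: it is the identity $\mathrm{diff}\cdot A = 0$, which I would check column by column straight from the relations collected immediately before the lemma (which are consequences of Lemmas \ref{tech1} and \ref{tech3}). For example, in case (F3) the first column of $\mathrm{diff}\cdot A$ is $\ic{\vec u, \vec u^+}\ic{-\vec u^+, \vec u^{*+}} - \ic{-\vec u, (-\vec u)^+}f^*_{\vec u^*}$, which vanishes because $\ic{\vec u, \vec u^+}\ic{-\vec u^+, \vec u^{*+}} = \ic{\vec u, \vec u^+}\alpha + \ic{-\vec u, (-\vec u)^+}f^*_{\vec u^*}$ together with $\alpha = 0$ in this case; the second column vanishes by the companion identity $\ic{-\vec u, (-\vec u)^+}\ic{-(-\vec u)^+, (-\vec u^*)^{+}} = \ic{\vec u, \vec u^+}f^*_{-\vec u^*}$. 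Cases (F1), (F1') and (F2) are analogous, using $\ic{\vec u, \vec u^+}\ic{-\vec u^+, \vec u^{*+}} = \ic{\vec u, \vec u^+}\alpha$ in (F1)/(F1') and $\ic{\vec u, \vec u^+}\ic{-\vec u^+, \vec u^{*+}} = 0$ in (F2).

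For the reverse inclusion I would exploit that $P_u^*$ is, by its very construction, a projective presentation of $X_\e$, so that $\Ima(\mathrm{diff}) = \Omega X_\e$ and hence $\Ker(\mathrm{diff})$ is determined; the task becomes to show that $A$ maps $e_{\vec u^{*+}} \Delta_\sigma \oplus e_{(-\vec u^*)^{+}} \Delta_\sigma$ \emph{onto} $\Ker(\mathrm{diff})$. Having reduced to free $k$-modules, I would write out $k$-bases of all four projectives from Theorem \ref{basisDsig}, express $\mathrm{diff}$ and $A$ as explicit matrices over $k$, and verify that the image of $A$ exhausts the kernel of $\mathrm{diff}$. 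Alternatively, since $\Delta_\sigma$ is symmetric in the boundaryless case (Theorem \ref{symm}) and the projectives are therefore injective, one can instead identify the top of $\Ker(\mathrm{diff})$ and check that the generators $\theta_+,\theta_-$ underlying the columns of $A$ surject onto it, which combined with $\Ima A \subseteq \Ker(\mathrm{diff})$ forces equality.

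The main obstacle is the case analysis in this reverse inclusion. The generic configurations are transparent, since there $f^*_{\pm\vec u^*} = 0$ and $\alpha = 0$, so $A$ is essentially the single arrow $\ic{-\vec u^+, \vec u^{*+}}$ and the kernel of $\mathrm{diff}$ is easily read off. The real work concentrates in the degenerate situations: when $\vec u$, $\vec u^*$ or a neighbour encloses a special monogon, when some $m_M$ is small (so that $f^*$ becomes a nonzero power relation or a multiple of $e C_\sigma$), and the exceptional spheres with few punctures — exactly the cases recorded in Figure \ref{polnonz} and in Definition \ref{defmutcoeff}. There the correction term $\alpha$ of Lemma \ref{tech3} and the precise scalars (involving $\nu_\M$) enter, and they must be tracked through the basis of Theorem \ref{basisDsig}; this is where I expect the bulk of the verification to lie.
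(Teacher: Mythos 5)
Your easy direction (checking $\begin{bmatrix}\ic{\vec u, \vec u^+} & \ic{-\vec u, (-\vec u)^+}\end{bmatrix}\cdot A = 0$ column by column from the identities collected before the lemma) is exactly what the paper does. The reverse inclusion, however, has genuine gaps, beginning with your two preliminary reductions, which are not valid for an exactness statement. Passing to the augmented surface of Definition \ref{defsp} replaces $\Delta_\sigma$ by $\Delta_{\sigma'}$ with $\Delta_\sigma = \Delta_{\sigma'}/(e_0)$, and exactness does not descend along a quotient by an idempotent ideal: an element killed by $\ic{\vec u, \vec u^+}$ in $\Delta_\sigma$ is only mapped into $(e_0)$ in $\Delta_{\sigma'}$, so exactness upstairs says nothing downstairs. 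Likewise, cutting $\sigma$ down to ``$u$ plus a handful of arcs'' via Corollary \ref{subtri2} changes the minimal polygons and hence the elements $f^*_{\pm\vec u^*}$ and $\alpha$ occurring in $A$ (these depend on the global configuration, cf.\ Figure \ref{polnonz} and Lemma \ref{tech3}); at best you would be proving exactness for the $\tau$-intrinsic data rather than the $\sigma$-induced maps. The paper's only normalization before the lemma goes in the opposite direction --- \emph{adding} arcs, controlled by Lemma \ref{reducidemisom} so that $\vec u^+$ and $(-\vec u)^+$ are unchanged --- and its proof then works over the full algebra with the global basis $\Br$ of Theorem \ref{basisDsig}.

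Second, the actual mechanism for $\Ker \subseteq \ima A$ is absent: ``express $A$ as a matrix over $k$ and verify'' defers precisely the hard step, and your remark that $P_u^*$ presents $X_\e$ so the kernel ``is determined'' is contentless, since the kernel is the second syzygy, which is what must be computed. In cases (F1)/(F1') the paper runs a triangular elimination: by Lemma \ref{tech3} (5), $(\ic{-\vec u^+, \vec u^{*+}} - \alpha)\omega = \kappa\,\ic{-\vec u^+, \vec u^{*+}}\omega + \omega'$ with $\kappa \in \{1, \nu_\M\}$ invertible and $\omega'$ strictly longer or zero, so a length induction reduces any $x$ with $\ic{\vec u, \vec u^+}x = 0$, modulo $\ima A$, to the span of an explicit subset $\Br' \subset \Br$ on which left multiplication by $\ic{\vec u, \vec u^+}$ is injective; case (F2) is a two-step variant, identifying the residual kernel with the span of $(\io{\vec u^+, -\vec u}, 0)$ and $(\io{\vec u^+, \vec u}, -\io{\vec u^+, -\vec u})$, both visibly in $\ima A$. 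For case (F3) the paper avoids computation altogether by applying $S_{\vec u} \tens_{\Delta_\sigma} -$ to the bimodule resolution of Proposition \ref{bimodres}, a route your uniform plan lacks and where a bare-hands matrix check would be hardest (all four entries of $A$ may be nonzero). Finally, your fallback via injectivity of projectives invokes Theorem \ref{symm} outside its hypotheses, since $\sigma$ may well have arcs incident to the boundary in the setting of the flip.
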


\begin{proof}
 The composition vanishes by Lemmas \ref{tech1} and \ref{tech3}. Let us prove the exactness:
  \begin{enumerate}[\rm (a)]
   \item In case (F1) or (F1'). Let $x \in e_{\vec u^+} \Delta_\sigma$ such that $\ic{\vec u, \vec u^+} x = 0$ and write $x = \sum_{b \in \Br} \mu_b b$. According to Lemma \ref{tech3} (5), for any $\omega = \ic{\vec u^{*+}, \vec x} \ic{\vec x, \vec x}^\ell \in \Br$, we have $(\ic{-\vec u^+, \vec u^{*+}} - \alpha) \omega = \kappa \ic{-\vec u^+, \vec u^{*+}} \omega + \omega'$ where $\kappa \in \{1, \nu_\M\}$ is invertible and $\omega'$ is strictly longer than $\ic{-\vec u^+, \vec u^{*+}} \omega$ or $0$. Thus, as the length of non-zero paths is bounded, an immediate induction permits to make the assumption, up to subtracting an element of $\ima A$, that $x = \sum_{b \in \Br'} \mu_b b$ where $\Br'$ consists of the following elements of $\Br$: $e_{\vec u^+}$ and $\ic{\vec u^+, \vec x} \ic{\vec x, \vec x}^\ell$ where $s(\vec x) = s(\vec u^+)$, $0 \leq \ell < m_{s(\vec u^+)}$ and $\ell \neq m_{s(\vec u^+)} -1$ if $\vec x = \vec u^+$. It is immediate that the left multiplication by $\ic{\vec u, \vec u^+}$ is injective on $\Br'$ so $\ic{\vec u, \vec u^+} x = 0$ implies $x = 0$.
   \item In case (F2). Notice that $(-\vec u)^+ = \vec u^+$. Let $(x, y) \in e_{\vec u^+} \Delta_\sigma \oplus e_{\vec u^+} \Delta_\sigma$. Modulo $\ima A$, we can suppose that $y$ is a linear combination of elements of $\Br'$ where $\Br'$ denotes the same subset of $\Br$ as in (a). Then, reducing again modulo $\ima A$, we can suppose that $x$ is also a linear combination of elements of $\Br'$. Then, if $(x, y)$ is in the kernel of $\begin{bmatrix}\ic{\vec u, \vec u^+} & \ic{-\vec u, (-\vec u)^+} \end{bmatrix}$, we get that $(x, y)$ is a linear combination of $a := (\io{\vec u^+, -\vec u}, 0)$ and $b := (\io{\vec u^+, \vec u}, -\io{\vec u^+, -\vec u})$. On the other hand, $a = A (\ic{-\vec u^{*+}, \vec u}, 0)$ and $b = A (0, \ic{-\vec u^{*+}, \vec u})$.
   \item In case (F3). It is obtained by applying $S_{\vec u} \tens_{\Delta_\sigma} -$ to the standard projective bimodule resolution of $\Delta_\sigma$ (see also Proposition \ref{bimodres}).\qedhere
  \end{enumerate} 
\end{proof}

\begin{lemma} \label{phisurj}
 The map $\phi^\circ: k Q_{\sigma^*} \to \End_{\Kb(\proj \Delta_\sigma)} (T)$ is surjective.
\end{lemma}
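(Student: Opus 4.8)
The plan is to prove surjectivity of $\phi^\circ$ by showing that every idempotent, and then every arrow, of the target algebra $\End_{\Kb(\proj \Delta_\sigma)}(T)$ lies in the image, since these generate the endomorphism algebra. First I would establish the block decomposition of $\End_{\Kb(\proj \Delta_\sigma)}(T)$ induced by the direct sum decomposition $T = e_\tau \Delta_\sigma \oplus P_u^*$. The idempotents $e_v$ for $v \in \tau$ together with $e_{u^*} = \id_{P_u^*}$ form a complete set of orthogonal idempotents, and these are manifestly in the image of $\phi^\circ$ by construction in Lemma~\ref{defmorphtilt}. It then suffices to treat the four families of \emph{off-idempotent} morphism spaces separately: $\Hom(e_v \Delta_\sigma, e_w \Delta_\sigma)$ for $v, w \in \tau$, $\Hom(e_v \Delta_\sigma, P_u^*)$, $\Hom(P_u^*, e_w \Delta_\sigma)$, and $\Hom(P_u^*, P_u^*)$, each computed in the homotopy category.

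For the corner $e_\tau \End_{\Kb}(T) e_\tau$, the relevant morphisms are the $\Hom_{\Kb(\proj \Delta_\sigma)}(e_v \Delta_\sigma, e_w \Delta_\sigma) = e_w \Delta_\sigma e_v \cong e_w \Delta_\tau e_v$ (using Corollary~\ref{subtri2}), which are generated as a bimodule over the idempotents by the paths $\ic{\vec v, \vec w}$ internal to $\tau$; these are hit by $\psi$, so this block is covered. The genuinely new work is the three blocks involving $P_u^*$. Here I would use the maps $\pi_+, \pi_-, \theta_+, \theta_-, \epsilon, \eta$ defined just above, and Lemma~\ref{exseq}. The key structural input is that, since $\begin{sbmatrix}\alpha_1 & \alpha_2\end{sbmatrix}$ is a minimal right $\add(e_\tau \Delta_\sigma)$-approximation, $P_u^*$ is the projective presentation of $X_\e$, so morphisms in and out of $P_u^*$ in the homotopy category are controlled by $X_\e$ and its approximation. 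Concretely, a morphism $e_{\vec u^{*+}}\Delta_\sigma \to P_u^*$ (or $e_{(-\vec u^*)^+}\Delta_\sigma \to P_u^*$) up to homotopy is classified by the cokernel of $A$ from Lemma~\ref{exseq}; the exactness there shows that every such morphism is a $\Delta_\sigma$-multiple of $\theta_+$ (resp.\ $\theta_-$) modulo homotopy, and $\theta_\pm$ are in the image. Dually, morphisms $P_u^* \to e_{\vec u^+}\Delta_\sigma$ are generated by $\pi_+$, $\pi_-$ (together with right multiplication by $\tau$-paths), again using that the complex computes $X_\e$.

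The hardest block, and the main obstacle, will be $\Hom_{\Kb(\proj \Delta_\sigma)}(P_u^*, P_u^*)$: I must show that every self-endomorphism of the complex $P_u^*$, modulo homotopy, is a $\phi^\circ(e_\tau)$-combination of the identity, the composites $\theta_\pm \pi_\pm$ of the already-constructed maps, and the special elements $\epsilon$ or $\eta$. This requires writing a chain map $P_u^* \to P_u^*$ as a pair of matrices over $\Delta_\sigma$ between the degree-$0$ and degree-$1$ terms, imposing the chain-map condition against $\begin{sbmatrix}\alpha_1 & \alpha_2\end{sbmatrix}$, and then quotienting by the null-homotopic maps (those factoring through $\begin{sbmatrix}\alpha_1 & \alpha_2\end{sbmatrix}$). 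The surjectivity onto this endomorphism space then reduces to a bookkeeping argument, using the basis $\Br$ of $\Delta_\sigma$ from Theorem~\ref{basisDsig} and the relation $C_\sigma J = 0$ (Lemma~\ref{Csigsoc}), to check that the residual degrees of freedom are exactly spanned by $\epsilon$ (in cases (F1), (F1')) or $\eta$ (in case (F2)). I would organize this case by case following Figure~\ref{figflip2}, reducing via Lemma~\ref{reducidemisom} to the model configurations so that only finitely many shapes must be inspected, and in each invoking Lemma~\ref{exseq} to translate the homotopy equivalence of two chain maps into the exactness statement already proved. Once all four blocks are shown to lie in the image, surjectivity of $\phi^\circ$ follows, completing the proof.
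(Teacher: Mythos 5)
Your proposal is correct and takes essentially the same route as the paper: the same block decomposition of $\End_{\Kb(\proj \Delta_\sigma)}(T)$ with the $e_\tau$-corner covered by surjectivity of $\psi$, maps out of $P_u^*$ factored through $\pi_\pm$, and maps into $P_u^*$ factored through $\theta_\pm$ via the exact sequence of Lemma \ref{exseq} together with projectivity of $e_\tau \Delta_\sigma$. The only divergence is in execution of the block $\End_{\Kb(\proj \Delta_\sigma)}(P_u^*)$, where the paper sidesteps your matrix-and-basis bookkeeping by passing to the endomorphism $f_0$ induced on the cokernel $X_\e$, noting that $f_0$ is a combination of the identity and powers of $\epsilon$ (respectively $\eta$), reducing to $f_1 = 0$ up to homotopy, and then factoring $f_2$ through $A$ of Lemma \ref{exseq}, hence through $\theta_\pm$ and $\pi_\pm$ --- exactly the reduction your direct computation would have to reproduce.
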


\begin{proof}
 First of all, it is of course surjective onto $e_\tau \End_{\Kb(\proj \Delta_\sigma)} (T) e_\tau \cong \Delta_\tau$ as $\psi$ is surjective. If $f \in \Hom_{\Kb(\proj \Delta_\sigma)} (P_u^*, e_\tau \Delta_\sigma)$, it is immediate that $f$ factors through $\pi_{\pm}$ so $f$ is in the image of $\phi^\circ$ ($\pi_- = \pi_+ \eta$ in case (F2)). 
 
  If $f \in \Hom_{\Kb(\proj \Delta_\sigma)} (e_\tau \Delta_\sigma, P_u^*)$, using the exact sequence of Lemma \ref{exseq} and the fact that $e_\tau \Delta_\sigma$ is projective, we get that $f$ factors through $\theta_{\pm}$ (we replace $\theta_+$ by $\eta \theta_-$ in case (F2)). So $f$ is in the image of $\phi^\circ$ also in this case.

  Finally, take $(f_2, f_1) \in \End_{\Kb(\proj \Delta_\sigma)}(P^*_u)$. This endomorphism induces via cokernel a endomorphism $f_0$ of $X_\e$. It is easy to see that $f_0$ is a linear combination of the identity and morphisms induced by powers of $\epsilon$ (case (F1)) and $\eta$ (case (F2)). Thus, up to an element of the image of $\phi^\circ$, we can suppose that $f_0 = 0$. Then, up to homotopy, we can suppose that $f_1 = 0$. So $f_2$ factors through $A$ of Lemma \ref{exseq} and it permits to factor $(f_2, f_1)$ through $\pi_{\pm}$. Finally, $(f_2, f_1)$ is in the image of $\phi^\circ$.
\end{proof}

\begin{lemma} \label{relC}
 For any oriented edge $\vec v$ of $\sigma^*$, $\phi^\circ(C_{\vec v}) = 0$.
\end{lemma}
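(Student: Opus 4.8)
The plan is to read the statement $\phi^\circ(C_{\vec v}) = 0$ as the assertion that the image under $\phi^\circ$ of $\lambda^*_{s(\vec v)} \ic{\vec v, \vec v}^{m_{s(\vec v)}}$ does not change when $\vec v$ is replaced by $-\vec v$; indeed, by definition $C_{\vec v} = \lambda^*_{s(\vec v)} \ic{\vec v, \vec v}^{m_{s(\vec v)}} - \lambda^*_{t(\vec v)} \ic{-\vec v, -\vec v}^{m_{t(\vec v)}}$, which is precisely the orientation-independence recorded in Proposition \ref{Csig}(1) applied to $\sigma^*$. I would therefore fix an oriented edge $\vec v$ of $\sigma^*$ and treat separately the cases according to whether $v$ is the flipped arc $u^*$ and whether the endpoints of $v$ are incident to $u^*$.

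First, suppose neither $s(\vec v)$ nor $t(\vec v)$ is incident to $u^*$ in $\sigma^*$ (in particular $v \neq u^*$). Then both loops $\ic{\vec v, \vec v}^{m_{s(\vec v)}}$ and $\ic{-\vec v, -\vec v}^{m_{t(\vec v)}}$ are paths of $Q_\tau$, so $C_{\vec v}$ lies in $kQ_\tau$ and $\phi^\circ(C_{\vec v}) = \psi(C_{\vec v})$ under the identification of $e_\tau \End_{\Kb(\proj \Delta_\sigma)}(T) e_\tau$ with $\Delta_\tau$ coming from Corollary \ref{subtri2}. Since $\psi \colon \Delta^{\lambda^*}_\tau \to \Delta_\tau$ is the algebra isomorphism produced in Lemmas \ref{tech1} and \ref{tech3} and $C_{\vec v}$ vanishes in $\Delta^{\lambda^*}_\tau$, we obtain $\psi(C_{\vec v}) = 0$ at once.

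It remains to treat the edges whose source or target is incident to $u^*$, and the edge $\vec u^*$ itself. Here the loop $\ic{\vec v, \vec v}$ around a vertex incident to $u^*$ factors through the new arc, so its image passes through $\pi_+$ and $\theta_+$ (or $\pi_-$, $\theta_-$). I would decompose such a loop as an arrow into $u^*$, a path of $Q_\tau$, and an arrow out of $u^*$, and compute the resulting endomorphism of $P_u^*$ explicitly using the formulas of Lemma \ref{defmorphtilt} for $\pi_\pm, \theta_\pm, \epsilon, \eta$, together with the identities of Lemmas \ref{tech1} and \ref{tech3} (the relations expressing $\ic{\vec u, \vec u^+}\ic{-\vec u^+, \vec u^{*+}}$ and $\io{\vec u^+,\vec u}\,\ic{-\vec u,(-\vec u)^+}$ through $f^*_{\pm \vec u^*}$ and $\alpha$) and the annihilation $C_\sigma J = 0$ of Lemma \ref{Csigsoc}. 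The target of both computations is the central endomorphism of $T$ given by right multiplication by $C_\sigma$; showing that $\phi^\circ(\lambda^*_{s(\vec v)}\ic{\vec v,\vec v}^{m_{s(\vec v)}})$ and $\phi^\circ(\lambda^*_{t(\vec v)}\ic{-\vec v,-\vec v}^{m_{t(\vec v)}})$ both reduce to this common endomorphism yields $\phi^\circ(C_{\vec v}) = 0$. For $\vec v = \vec u^*$ one uses in addition that $\phi^\circ(\ic{-\vec u^*,-\vec u^*}) = \epsilon$ (cases (F1), (F1')) or that $\phi^\circ(\ic{\vec u^*,-\vec u^*}) = \eta$ (case (F2)), so that the relevant powers of these loop endomorphisms realise $\ic{-\vec u^*,-\vec u^*}^{m_{t(\vec u^*)}}$.

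The main obstacle will be the bookkeeping of the coefficient mutation $\lambda^* = \mu_u(\lambda)$ across the four configurations (F1), (F1'), (F2), (F3). In cases (F1) and (F1') the mutated scalars carry the signs $\lambda^*_{t(\vec u)} = -\lambda_{t(\vec u)}$ and $\lambda^*_{s(\vec u)} = (-1)^{m_{s(\vec u)}}\lambda_{s(\vec u)}$, and the factors of $\nu_\M^{\pm 1}$ coming from $\alpha$ in Lemma \ref{tech3} must cancel exactly when one raises the loop endomorphism $\epsilon$ to the power $m_{t(\vec u^*)}$ and compares with the source loop; verifying that $\epsilon^{m}$ computes the two-term complex endomorphism with the correct degree-$1$ component $\ic{-\vec u,-\vec u}^{m}$, up to homotopy, is the delicate point. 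Once these sign and $\nu_\M$ cancellations are checked configuration by configuration, the identity $\phi^\circ(C_{\vec v}) = 0$ follows in every situation.
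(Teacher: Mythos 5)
Your overall architecture is the paper's: dispatch $\vec v \neq \pm\vec u^*$ through $\psi$, then compute at $\vec u^*$ up to homotopy. One simplification first: your intermediate case (edges $v \neq u^*$ with an endpoint incident to $u^*$) requires no separate computation. The compatibility identities verified in the well-definition of $\phi^\circ$ (Lemma \ref{defmorphtilt}, e.g.\ $\pi_+\theta_+ = \psi(\ic{-\vec u^+, \vec u^{*+}})$, $\pi_+\eta\theta_- = \phi^\circ(\ic{-\vec u^+, (-\vec u^*)^{+}})$) already force $\phi^\circ$ to agree with $\psi$ on all of $e_\tau k Q_{\sigma^*} e_\tau$, even on cycles that pass through the vertex $u^*$; since $C_{\vec v}$ vanishes in $\Delta^{\lambda^*}_\tau$, this handles every $\vec v \neq \pm \vec u^*$ in one line, which is exactly the paper's reduction.

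The genuine gap is your anchor for the crucial case $\vec v = \vec u^*$: you propose to show that both $\phi^\circ(\lambda^*_{s(\vec u^*)}\ic{\vec u^*,\vec u^*}^{m_{s(\vec u^*)}})$ and $\phi^\circ(\lambda^*_{t(\vec u^*)}\ic{-\vec u^*,-\vec u^*}^{m_{t(\vec u^*)}})$ reduce to right multiplication by $C_\sigma$ on $P^*_u$. This is false. In case (F1), the map $\io{\vec u^+, \vec u}: e_u\Delta_\sigma \to e_{\vec u^+}\Delta_\sigma$ satisfies $\ic{\vec u, \vec u^+}\cdot\io{\vec u^+, \vec u} = e_u C_\sigma$ and $\io{\vec u^+, \vec u}\cdot\ic{\vec u, \vec u^+} = e_{\vec u^+} C_\sigma$, so it is a null-homotopy of multiplication by $C_\sigma$ on the two-term complex $P^*_u$; on the other hand $\phi^\circ(\lambda^*_{s(\vec u^*)}\ic{\vec u^*,\vec u^*}^{m_{s(\vec u^*)}}) = \phi(e_{u^*}C_{\sigma^*}) = (e_{\vec u^+}C_\sigma, 0)$ is \emph{not} null-homotopic — its nonvanishing, detected by the trace $\Er^*$, is precisely what drives the injectivity of $\phi$ in Lemma \ref{surjphi}. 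So neither orientation image is homotopic to multiplication by $C_\sigma$, and pursuing that identification would fail in cases (F1) and (F1') (your target is only correct on the summands $e_v\Delta_\sigma$ with $v \in \tau$). The correct device, and what the paper does, is to compare the two orientation images directly and exhibit an explicit null-homotopy of their difference $\phi^\circ(C_{\vec u^*})$: the map $\io{\vec u^+, \vec u}$ in cases (F1), (F1'), and the column $\begin{sbmatrix} \io{\vec u^+, \vec u} \\ -\io{(-\vec u)^+, -\vec u} \end{sbmatrix} : e_u\Delta_\sigma \to e_{\vec u^+}\Delta_\sigma \oplus e_{(-\vec u)^+}\Delta_\sigma$ in cases (F2), (F3). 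Note also that in case (F3) neither $\epsilon$ nor $\eta$ exists, so your sentence about powers of loop endomorphisms does not apply there: both powers of $\ic{\vec u^*,\vec u^*}$ must be expanded as $\theta_\pm \psi(\cdots) \pi_\pm$, using Lemma \ref{tech1} to rewrite the off-diagonal entries, and the two-component homotopy above is indispensable. With the false anchor replaced by this direct comparison, your plan coincides with the paper's proof.
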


\begin{proof}
 This is immediate if $\vec v \neq \pm \vec u^*$ as in this case $C_{\vec v} \in e_\tau Q_{\sigma^*} e_\tau$ and $\phi^\circ(C_{\vec v}) = \psi(C_{\vec v}) = 0$. So we check $C_{\vec u^*} = -C_{-\vec u^*}$. In case (F3), we have
 \begin{align*}
  \phi^\circ(\lambda^*_{s(\vec u^*)} \ic{\vec u^*, \vec u^*}^{m_{s(\vec u^*)}}) &= \theta_+ \psi(\io{\vec u^{*+}, -\vec u^+}) \pi_+ \\
    &= \left(\begin{bmatrix} \ic{-\vec u^+, \vec u^{*+}} \\ -f^*_{\vec u^*} \end{bmatrix}, 0\right) \io{\vec u^{*+}, -\vec u^+} \left(\begin{bmatrix} \id_{e_{\vec u^+} \Delta_\sigma} & 0 \end{bmatrix}, 0\right) \\ &= \left(\begin{bmatrix} \lambda_{t(\vec u^+)} \ic{-\vec u^+, -\vec u^+}^{m_{t(\vec u^+)}} & 0\\ -f^*_{\vec u^*} \io{\vec u^{*+}, -\vec u^+} & 0\end{bmatrix}, 0\right)
    \\ &= \left(\begin{bmatrix} \lambda_{s(\vec u^+)} \ic{\vec u^+, \vec u^+}^{m_{s(\vec u^+)}} & 0\\ -f^*_{\vec u^*} \io{\vec u^{*+}, -\vec u^+} & 0\end{bmatrix}, 0\right)
 \end{align*}
 and, in the same way,
 \begin{align*}
  \phi^\circ(\lambda_{s(-\vec u^*)} \ic{-\vec u^*, -\vec u^*}^{m_{s(-\vec u^*)}}) = \left(\begin{bmatrix} 0 & -f^*_{-\vec u^*} \io{(-\vec u)^{*+}, -(-\vec u)^+}\\ 0 & \lambda_{s((-\vec u)^+)} \ic{(-\vec u)^+, (-\vec u)^+}^{m_{s((-\vec u)^+)}}\end{bmatrix}, 0\right)
 \end{align*}
 Using the following homotopy:
 $$\begin{bmatrix} \io{\vec u^+, \vec u} \\ -\io{(-\vec u)^+, -\vec u} \end{bmatrix} : e_u \Delta_\sigma \to e_{\vec u^+} \Delta_\sigma \oplus e_{(-\vec u)^+} \Delta_\sigma,$$
 we get that the following endomorphism of $P_{u}^*$ is homotopic to zero:
 $$\left(\begin{bmatrix} \lambda_{s(\vec u^+)} \ic{\vec u^+, \vec u^+}^{m_{s(\vec u^+)}} & \io{\vec u^+, \vec u} \cdot \ic{-\vec u, (-\vec u)^+} \\ -\io{(-\vec u)^+, -\vec u} \cdot \ic{\vec u, \vec u^+} & -\lambda_{s((-\vec u)^+)} \ic{(-\vec u)^+, (-\vec u)^+}^{m_{s((-\vec u)^+)}}\end{bmatrix}, 0\right)$$
 which is equal to $\phi^\circ (C_{\vec u^*})$, using the previous computation and Lemma \ref{tech1}.

 In case (F1) or (F1'), we have
 \begin{align*}
  \phi^\circ(\lambda_{s(\vec u^*)}^* \ic{\vec u^*, \vec u^*}^{m_{s(\vec u^*)}}) &= \theta_+ \psi(\io{\vec u^{*+}, -\vec u^+}) \pi_+ \\
    &= \left((\ic{-\vec u^+, \vec u^{*+}} - \alpha) \psi(\io{\vec u^{*+}, -\vec u^+}), 0\right) \\
    &= \left(\psi(\ic{-\vec u^+, \vec u^{*+}} ) \psi(\io{\vec u^{*+}, -\vec u^+}), 0\right) \\
    &= \left(\psi(e_{\vec u^+} C^*_\tau), 0\right) 
    = \left(\lambda_{s(\vec u)} \ic{\vec u^+, \vec u^+}^{m_{s(\vec u)}}, 0\right)
 \end{align*}
 (we use Lemma \ref{tech3} at the last step). 
 As we excluded the case where $u$ is the special arc of a special monogon, we have
 \begin{align*}\phi^\circ(\lambda^*_{t(\vec u^*)} \ic{-\vec u^*, -\vec u^*}^{m_{t(\vec u^*)}}) &=  \lambda^*_{t(\vec u^*)} \epsilon^{m_{t(\vec u^*)}} = \left(0, -\lambda_{t(\vec u)} \ic{-\vec u, -\vec u}^{m_{t(\vec u)}}\right) \\ &= \left(0, -\lambda_{s(\vec u)} \ic{\vec u, \vec u}^{m_{s(\vec u)}}\right).\end{align*}
 Using the homotopy $\io{\vec u^+, \vec u}: e_u \Delta_\sigma \to e_{\vec u^+} \Delta_\sigma$, we get that:
 $$\phi^\circ (C_{\vec u^*}) = \left( \lambda_{s(\vec u)} \ic{\vec u^+, \vec u^+}^{m_{s(\vec u)}}, \lambda_{s(\vec u)} \ic{\vec u, \vec u}^{m_{s(\vec u)}} \right)$$
 is homotopic to $0$.

 Finally, in case (F2), we have
 \begin{align*}\phi^\circ(\lambda_{s(\vec u^*)} \ic{\vec u^*, \vec u^*}^{m_{s(\vec u^*)}})  &= \eta \theta_- \io{\vec u^{*+}, -\vec u^+} \pi_+ \\  &= \left( \begin{bmatrix} \lambda_{s(\vec u^+)} \ic{\vec u^+, \vec u^+}^{m_{s(\vec u^+)}} & 0 \\ 0 & 0 \end{bmatrix}, 0 \right) \end{align*}
 and
 \begin{align*}
  \phi^\circ(\lambda_{s(\vec u^*)} \ic{-\vec u^*, -\vec u^*}^{m_{s(\vec u^*)}}) &= \theta_- \io{\vec u^{*+}, -\vec u^+} \pi_+ \eta \\
  &=\left( \begin{bmatrix} 0 & -f^*_{-\vec u^*} \io{\vec u^{*+}, -\vec u^+} \\ 0 & \lambda_{t(\vec u^+)} \ic{-\vec u^+, -\vec u^+}^{m_{t(\vec u^+)}} \end{bmatrix}, 0 \right) \\
  &=  \left( \begin{bmatrix} 0 & -\io{\vec u^+, \vec u} \cdot \ic{-\vec u, \vec u^+} \\ 0 & \lambda_{s(\vec u^+)} \ic{\vec u^+, \vec u^+}^{m_{s(\vec u^+)}} \end{bmatrix}, 0 \right)
 \end{align*}
 thanks to Lemma \ref{tech1}. Using the homotopy 
 $$\begin{bmatrix}
    \io{\vec u^+, \vec u} \\ -\io{(-\vec u)^+, -\vec u}
   \end{bmatrix}: e_u \Delta_\sigma \to e_{\vec u^+} \Delta_\sigma \oplus e_{(-\vec u)^+} \Delta_\sigma,$$
 we get the following null-homotopic endomorphism of $P_u^*$:
 $$\left(\begin{bmatrix}
          \lambda_{s(\vec u^+)} \ic{\vec u^+, \vec u^+}^{m_{s(\vec u^+)}} & \io{\vec u^+, \vec u} \cdot \ic{-\vec u, \vec u^+} \\
	  0 & -\lambda_{s(\vec u^+)} \ic{\vec u^+, \vec u^+}^{m_{s(\vec u^+)}}
         \end{bmatrix}, 0\right)$$
 which is $\phi^\circ (C_{\vec u^*})$. 
\end{proof}

\begin{lemma} \label{Rus}
 We have $\phi^\circ(\RR^*_{\vec u^*}) = \phi^\circ(\RR^*_{-\vec u^*}) = 0$.
\end{lemma}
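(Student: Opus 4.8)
The plan is to verify that the relation $\RR^*_{\vec u^*}$ (and symmetrically $\RR^*_{-\vec u^*}$) of the target algebra $\Delta^{\lambda^*}_{\sigma^*}$ is sent to zero by $\phi^\circ$, so that $\phi^\circ$ descends to a morphism $\Delta^{\lambda^*}_{\sigma^*} \to \End_{\Kb(\proj\Delta_\sigma)}(T)$. Recall from Proposition~\ref{altpres0} that $\RR^*_{\vec u^*} = \ic{\vec u^{*\prime\prime}, -\vec u^*}\ic{\vec u^*, -\vec u^{*\prime}} - f^*_{\vec u^*}$, where $\vec u^{*\prime}$ and $\vec u^{*\prime\prime}$ are the two sides of the minimal polygon $P^*$ of $\sigma^*$ flanking $\vec u^*$, and $f^*_{\vec u^*}$ is determined by the shape of $P^*$ via Figure~\ref{fcircv}. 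The first step is to read off, from the combinatorics of the flip in each of the cases (F1), (F1'), (F2), (F3) of Figure~\ref{figflip2}, exactly which oriented edges play the roles of $\vec u^{*\prime}$, $\vec u^{*\prime\prime}$ around the new arc $\vec u^*$, and hence which of $\pi_\pm$, $\theta_\pm$, $\epsilon$, $\eta$ occur in the expression $\phi^\circ(\ic{\vec u^{*\prime\prime}, -\vec u^*})\,\phi^\circ(\ic{\vec u^*, -\vec u^{*\prime}})$.

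First I would compute $\phi^\circ(\RR^*_{\vec u^*})$ case by case as a composite of the explicit maps in $\Kb(\proj\Delta_\sigma)$. The leading term $\phi^\circ(\ic{\vec u^{*\prime\prime}, -\vec u^*}\ic{\vec u^*, -\vec u^{*\prime}})$ is a composite passing through the summand $P_u^*$ of $T$, built from $\theta_\pm$ followed by $\pi_\pm$ (with $\epsilon$ or $\eta$ inserted in the (F1)/(F2) self-folded situations). Expanding this composite using the matrix formulas for $\theta_\pm$, $\pi_\pm$ and the definition of $P_u^*$, the key input will be the four identities collected right after Lemma~\ref{tech3} (and proven in Lemmas~\ref{tech1},~\ref{tech3}), namely $\ic{\vec u,\vec u^+}\ic{-\vec u^+,\vec u^{*+}} = \ic{\vec u,\vec u^+}\alpha + \ic{-\vec u,(-\vec u)^+}f^*_{\vec u^*}$ and its transposes, together with $\psi(\ic{-\vec u^+,\vec u^{*+}}) = \ic{-\vec u^+,\vec u^{*+}} - \alpha$ and $\psi(C^*_\tau) = C_\tau$. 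These let me rewrite the composite as $\phi^\circ(f^*_{\vec u^*})$ plus a term that is either genuinely zero in $\End_{\Kb}(T)$ or visibly null-homotopic (the homotopies used are exactly those of Lemma~\ref{relC}, i.e.\ $\io{\vec u^+,\vec u}$ or the column $\begin{sbmatrix}\io{\vec u^+,\vec u} \\ -\io{(-\vec u)^+,-\vec u}\end{sbmatrix}$).

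The subtle point is matching the scalar $f^*_{\vec u^*}$ with the mutated coefficients $\lambda^* = \mu_u(\lambda)$ of Definition~\ref{defmutcoeff}. Whenever $f^*_{\vec u^*}$ is a nonzero multiple of $e_{u^*}C^*_\sigma$ or of a short external path (Cases b--f of Figure~\ref{fcircv}), I must check that the factor $\alpha$ contributes exactly the correction predicted by Lemma~\ref{tech3}(4)--(5): there the identities $\alpha\,\io{\vec u^{*+},-\vec u^+} = (1-\nu_\M)e_{\vec u^+}C_\tau$ or $0$, and $\alpha\omega = (1-\nu_\M)\ic{-\vec u^+,\vec u^{*+}}\omega$ up to strictly longer (hence, after completion, negligible) terms, are precisely what turns the $\psi$-twisted leading term into $f^*_{\vec u^*}$ with the correct power of $\nu_\M$. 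I expect the main obstacle to be the bookkeeping in case (F1'), where $\alpha \neq 0$, $\nu_\M$ genuinely appears, and the mutation rule flips several signs simultaneously ($\lambda^*_{t(\vec u)} = -\lambda_{t(\vec u)}$, $\lambda^*_{s(\vec u)} = (-1)^{m_{s(\vec u)}}\lambda_{s(\vec u)}$); here I would lean heavily on the thirteen sub-identities (a)--(m) already established in the proof of Lemma~\ref{tech3} rather than recomputing, and verify that the resulting scalar matches $f^*_{\vec u^*}$ exactly.

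Finally, the statement for $-\vec u^*$ follows by the left--right symmetry of the whole setup: interchanging $\vec u \leftrightarrow -\vec u$ swaps $\vec u^+ \leftrightarrow (-\vec u)^+$, $\theta_+ \leftrightarrow \theta_-$, $\pi_+ \leftrightarrow \pi_-$, and the two halves of $\RR^*$, and all the input identities of Lemmas~\ref{tech1} and~\ref{tech3} were stated in both orientations. So once $\phi^\circ(\RR^*_{\vec u^*}) = 0$ is checked, I would remark that replacing $\vec u$ by $-\vec u$ throughout yields $\phi^\circ(\RR^*_{-\vec u^*}) = 0$ with no new computation. Combined with Lemmas~\ref{relC} and~\ref{phisurj}, this shows $\phi^\circ$ induces a surjection $\Delta^{\lambda^*}_{\sigma^*} \surj \End_{\Kb(\proj\Delta_\sigma)}(T)$, which a rank count (both sides being free $k$-modules of the dimension given by Corollary~\ref{dimd}) will promote to the isomorphism of Proposition~\ref{endo}.
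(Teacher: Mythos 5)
Your plan for the half $\phi^\circ(\RR^*_{\vec u^*})=0$ coincides with the paper's proof: a case-by-case expansion of the composite through $P_u^*$ built from $\theta_\pm$, $\pi_\pm$ (with $\epsilon$, $\eta$ inserted), reduction via the identities collected after Lemma \ref{tech3}, and the homotopies $\io{\vec u^+, \vec u}$ resp.\ $\begin{sbmatrix} \io{\vec u^+, \vec u} \\ -\io{(-\vec u)^+, -\vec u} \end{sbmatrix}$, with (F1') as the case where $\alpha$ and $\nu_\M$ genuinely enter. The gap is your final claim that $\phi^\circ(\RR^*_{-\vec u^*})=0$ then follows ``with no new computation'' by interchanging $\vec u \leftrightarrow -\vec u$. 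That interchange is only legitimate in case (F3). In cases (F1) and (F1') one has $(-\vec u)^+ = \emptyset$ while $\vec u^+ \neq \emptyset$, so reversing the orientation of $\vec u$ does not preserve the configuration, nor the fixed data $P_u^*$, $\epsilon$, $\theta_+$, $\pi_+$, all of which were defined relative to the chosen $\vec u$; and in case (F2) the arc $u$ is a loop ($s(\vec u)=t(\vec u)$), so the swap does not exchange the two relations at all. Accordingly the paper performs four genuinely distinct verifications: $\pi_+\theta_- = f^*_{-\vec u^*}$ in (F2)/(F3), which is essentially immediate because $f^*_{-\vec u^*}$ was built into the top entry of $\theta_-$; $\pi_+\epsilon = \io{\vec u^+, -\vec u^+}\,\pi_+$ in (F1); $\epsilon\theta_+ = \theta_+\,\io{\vec u^+, -\vec u^+} = \left(e_{\vec u^+} C_\sigma, 0\right)$ in (F1), using $\psi\bigl(\io{\vec u^+, -\vec u^+} \cdot \ic{-\vec u^+, \vec u^{*+}}\bigr) = e_{\vec u^+} C_\sigma$ from Lemma \ref{tech3}; and, hardest, $\eta^2 = \left(0, \ic{\vec u, -\vec u}^2\right) = \left(0, f_{\vec u}\right)$ in (F2), where matching with $\phi^\circ(f^*_{\vec u^*})$ in the sub-case of a monogon enclosing a single puncture $M$ with $m_M = 2$ requires the extra homotopy $\begin{sbmatrix} \lambda_M \io{\vec u^+, \vec u} - \tilde\lambda \,\io{\vec u^+, -\vec u} \\ 0 \end{sbmatrix}$ with a correction scalar $\tilde\lambda$ that no symmetry argument will produce. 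None of these follows from the others; each is short and of exactly the type you set up, so the gap is repairable, but as written your proof of the second equality is missing in three of the four flip cases.

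A tangential flag on your closing sentence: injectivity in Proposition \ref{endo} cannot be settled by a rank count, since the rank of $\End_{\Kb(\proj \Delta_\sigma)}(T)$ over $k$ is not known a priori (that rank is precisely what the proposition computes); the paper instead kills the socle of the kernel using the non-degenerate trace $\Er^*$ of Lemma \ref{commC2} (see Lemma \ref{surjphi}). This does not affect Lemma \ref{Rus} itself, but you should not rely on the rank count downstream.
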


\begin{proof}
 We start by $\RR^*_{-\vec u^*}$: 

 In cases (F2) or (F3), we have
 $$\phi^\circ(\ic{-\vec u^+, \vec u^*} \ic{-\vec u^*, (-\vec u^*)^{+}}) = \pi_+\theta_-  = f^*_{-\vec u^*} = \phi^\circ(f^*_{-\vec u^*})$$
 and $\phi^\circ(\RR^*_{-\vec u^*}) = 0$. 

 Suppose that we are in case (F1). We have
   $\phi^\circ(\ic{-\vec u^+, \vec u^*} \ic{-\vec u^*, -\vec u^*}) = \pi_+\epsilon  = \left(\io{\vec u^+, -\vec u^+}, 0\right) $
    and 
   $\phi^\circ(f^*_{-\vec u^*}) = \phi^\circ(\io{\vec u^+, \vec u^*}) = \io{\vec u^+, -\vec u^+} \pi_+$
   and therefore $\phi^\circ(\RR_{-\vec u^*}) = 0$ in this case. We used $\psi(\io{\vec u^+, -\vec u^+}) = \io{\vec u^+, -\vec u^+}$, see in particular Case (i) of Proof of Lemma \ref{tech3}. In case (F1'), we have $\phi^\circ(\ic{-\vec u^+, \vec u^*} \ic{-\vec u^*, -\vec u^*}) = 0 = f^*_{-\vec u^*}$ so $\phi^\circ(\RR^*_{-\vec u^*}) = 0$.

 Let us now consider $\RR^*_{\vec u^*}$: in case (F3), this is similar as before.

 Suppose that we are in case (F1). We have
   \begin{align*} \phi^\circ(\ic{-\vec u^*, -\vec u^*} \ic{\vec u^*, \vec u^+} ) &= \epsilon \theta_+ = \left(\psi(\io{\vec u^+, -\vec u^+} \cdot \ic{-\vec u^+, \vec u^{*+}}), 0\right) = \left(e_{\vec u^+} C_\sigma, 0\right) \\
     \text{and} \quad \phi^\circ(f^*_{-\vec u^*}) &= \phi^\circ(\io{\vec u^*, -\vec u^+}) = \theta_+ \io{\vec u^+, -\vec u^+} = \left(e_{\vec u^+} C_\sigma, 0\right)\end{align*} so $\phi^\circ(\RR_{\vec u^*}) = 0$. In case (F1'), $\phi^\circ(\ic{-\vec u^*, -\vec u^*} \ic{\vec u^*, \vec u^+}) = 0 = f^*_{\vec u^*}$ so the conclusion is immediate.

 In case (F2), we have
   $$\phi^\circ(\ic{\vec u^*, -\vec u^*}^2) = \eta^2 = \left(0, \ic{\vec u, -\vec u}^2 \right) = \left(0, f_{\vec u}\right)$$
   and the only case where $f_{\vec u} \neq 0$ is the same that the case where $f^*_{\vec u^*} \neq 0$: Case d of figure \ref{polnonz} with $m_M = 2$ (as we excluded $m_M = 1$ in this case). In this case,
   \begin{align*}\phi^\circ(f^*_{\vec u^*}) &= \phi^\circ(\lambda_M^* e_{u^*} C_{\sigma^*}) = -\lambda_M \eta \theta_- \io{\vec u^{*+}, -\vec u^+} \pi_+ \\ &= \left(\begin{bmatrix} -\lambda_M e_{\vec u^+} C_\sigma & 0 \\ 0 & 0 \end{bmatrix}, 0 \right) \end{align*}
   which is homotopic to $(0, f_{\vec u}) = (0, \lambda_M e_{u} C_{\sigma})$ via the homotopy
   $$ \begin{bmatrix} \lambda_M \io{\vec u^+, \vec u} - \tilde \lambda \io{\vec u^+, -\vec u} \\ 0 \end{bmatrix}: e_u \Delta_\sigma \to e_{\vec u^+} \Delta_\sigma \oplus e_{\vec u^+} \Delta_\sigma.$$
  where $\tilde \lambda = \lambda_\M$ if the digon with sides $\vec u^+$ and $\vec u^{*+}$ contains a unique puncture $N$ and $m_{s(\vec u)} = m_{s(\vec u^*)} = m_N = 1$, and $\tilde \lambda = 0$ in any other case.
\end{proof}

\begin{lemma} \label{mostrel}
 Let $x \in k Q_{\sigma^*} e_\tau$. If $x$ vanishes in $\Delta^{\lambda^*}_{\sigma^*}$ then $\phi^\circ(x) = 0$.
\end{lemma}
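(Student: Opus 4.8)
The plan is to prove the statement by pushing any such $x$ into the corner $e_\tau \Delta^{\lambda^*}_{\sigma^*} e_\tau \cong \Delta^{\lambda^*}_\tau$ (Corollary \ref{subtri2} for $\tau \subset \sigma^*$), where the isomorphism $\psi$ of Lemmas \ref{tech1} and \ref{tech3} already controls everything, and to cover the part of $x$ starting at $u^*$ by the factorisation used in Lemma \ref{phisurj}. By Proposition \ref{altpres0}, applied to $\sigma^*$ with coefficients $\lambda^*$, the kernel of $k Q_{\sigma^*} \surj \Delta^{\lambda^*}_{\sigma^*}$ is $J C_{\sigma^*} + (C_{\vec v}, \RR^*_{\vec v})_{\vec v \in \sigma^*}$, and the two families of relations that I already know to be annihilated are $C_{\vec v}$ (Lemma \ref{relC}) and $\RR^*_{\pm \vec u^*}$ (Lemma \ref{Rus}); these are exactly the relations used in the proof of Theorem \ref{subtri} to identify $e_\tau \Delta^{\lambda^*}_{\sigma^*} e_\tau$ with $\Delta^{\lambda^*}_\tau$.

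The core step is to compare, on the corner $e_\tau k Q_{\sigma^*} e_\tau$, the two algebra homomorphisms into $e_\tau \End_{\Kb(\proj \Delta_\sigma)}(T) e_\tau \cong \Delta_\tau$: first, the restriction of $\phi^\circ$; second, the composite $\Psi$ given by projecting to $e_\tau \Delta^{\lambda^*}_{\sigma^*} e_\tau \cong \Delta^{\lambda^*}_\tau$ and then applying $\psi$. Both agree with $\psi$ on $k Q_\tau$ by Lemma \ref{defmorphtilt}, so it remains to check that they agree on the ``bridges'' through $u^*$, that is, on the generating paths that leave $\tau$, enter $u^*$, possibly loop, and return to $\tau$. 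On these the matching is precisely the content of the identities verified in Lemma \ref{defmorphtilt} (for instance $\pi_+ \theta_+ = \psi(\ic{-\vec u^+, \vec u^{*+}})$) together with Lemmas \ref{tech1} and \ref{tech3}, and it mirrors the rewriting $\omega_1^{P'} (\omega_1^{P''})^\ell \omega_{n'}^{P'} \mapsto \delta_{\ell,1} \omega_1^P$ (and its analogues) by which Theorem \ref{subtri} realises the subalgebra isomorphism. Granting $\phi^\circ = \Psi$ on the corner, any $x \in e_\tau k Q_{\sigma^*} e_\tau$ vanishing in $\Delta^{\lambda^*}_{\sigma^*}$ has zero image in $\Delta^{\lambda^*}_\tau$, hence $\phi^\circ(x) = \psi(0) = 0$.

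For a general $x \in k Q_{\sigma^*} e_\tau$ I would split $x = e_\tau x + e_{u^*} x$. The summand $e_\tau x$ lies in the corner and is handled above. For $e_{u^*} x$, a path from $u^*$ to $\tau$, the value $\phi^\circ(e_{u^*} x)$ is a morphism $P_u^* \to e_\tau \Delta_\sigma$ in $\Kb(\proj \Delta_\sigma)$; by the argument of Lemma \ref{phisurj} it factors through $\pi_\pm$, so up to the already-annihilated relations $\RR^*_{\pm \vec u^*}$ it is again reduced to a corner computation in $\Delta^{\lambda^*}_\tau$ governed by $\psi$, and the same vanishing applies.

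The main obstacle is the bookkeeping in the comparison $\phi^\circ = \Psi$ exactly when the minimal polygon of $\sigma^*$ carrying the relevant side has $u^*$ as a side: there the rewritten bridge is sent by $\phi^\circ$ to a genuine chain endomorphism assembled from $\theta_\pm$, $\pi_\pm$ and $\epsilon$ or $\eta$, so the required equalities hold only up to the explicit null-homotopies already exploited in Lemmas \ref{relC} and \ref{Rus}. Controlling these homotopies uniformly across the flip cases (F1), (F1'), (F2), (F3) — in the spirit of the case-by-case analysis of Lemma \ref{tech3} — is where essentially all of the computation resides; once the factorisation through $\Delta^{\lambda^*}_\tau$ is established, the conclusion is purely formal.
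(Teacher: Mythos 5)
There is a genuine gap, and it sits exactly where you park ``essentially all of the computation''. Your plan requires the corner identity $\phi^\circ = \Psi$ on all bridge elements through $u^*$, that is on products $(\text{arrow into } u^*)\cdot(\text{loop at } u^*)^k\cdot(\text{arrow out of } u^*)$ for every $k \geq 0$, but Lemma \ref{defmorphtilt} --- which you cite as ``precisely the content'' of the needed matching --- only verifies the loop-free composites ($\pi_+\theta_+$, its analogue with $\theta_-$, and $\pi_+\eta\theta_-$). Bridges containing higher powers of $\epsilon$ or $\eta$ are not covered by it, nor by Lemmas \ref{tech1} and \ref{tech3}, and evaluating $\phi^\circ$ on them would require fresh null-homotopy computations of the kind carried out in Lemmas \ref{relC} and \ref{Rus}; you acknowledge this and defer it, so the crux of the proof is asserted rather than proved. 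A second problem is your treatment of the summand $e_{u^*} x$: the factorisation through $\pi_\pm$ extracted from Lemma \ref{phisurj} is a surjectivity-type statement (it shows that a morphism lies in the image of $\phi^\circ$) and by itself yields no vanishing; also, under the paper's conventions $\phi^\circ(e_{u^*}x)$ lives in $e_{u^*}\End_{\Kb(\proj\Delta_\sigma)}(T)\,e_\tau \cong \Hom_{\Kb(\proj \Delta_\sigma)}(\Delta_\tau, P_u^*)$, i.e.\ it is a map \emph{into} $P_u^*$, not a map $P_u^* \to e_\tau\Delta_\sigma$ as you write.

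The paper's proof shows that your main obstacle can be avoided entirely. For $x \in e_{u^*} k Q_{\sigma^*} e_\tau$ it never evaluates $\phi^\circ$ on loop-bridges: one left-multiplies $x$ by the two arrows $\ic{-\vec u^+, \vec u^*}$ and $\ic{-(-\vec u)^+, -\vec u^*}$ entering $u^*$; each product rewrites, modulo the ideal $(\RR^*_{\vec u^*}, \RR^*_{-\vec u^*})$ which $\phi^\circ$ kills by Lemma \ref{Rus}, as an element of $k Q_\tau$ that still vanishes in $\Delta^{\lambda^*}_{\sigma^*}$, hence vanishes in $\Delta^{\lambda^*}_\tau$ by Corollary \ref{subtri2} and is killed since $\phi^\circ = \psi$ there. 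The decisive ingredient missing from your proposal is the \emph{injectivity} of
$$\bar\pi: \Hom_{\Kb(\proj \Delta_\sigma)}(\Delta_\tau, P_u^*) \to \Hom_{\Kb(\proj \Delta_\sigma)}(\Delta_\tau, e_{\vec u^+}\Delta_\sigma \oplus e_{(-\vec u)^+}\Delta_\sigma), \qquad f \mapsto (\pi_+ f, \pi_- f),$$
which is immediate because a morphism from the stalk complex $\Delta_\tau$ to the two-term complex $P_u^*$ admits no non-trivial homotopies (with $\pi_- = 0$ in case (F1) and $\pi_- = \pi_+\eta$ in case (F2)). This detection principle converts the two corner vanishings into $\phi^\circ(x) = 0$, and the same rewriting modulo $(\RR^*_{\pm\vec u^*})$ disposes of corner paths through $u^*$, so the comparison $\phi^\circ = \Psi$ on loop-bridges --- the computation you could not carry out --- is never needed.
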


\begin{proof}
 First of all, if $x \in k Q_{\tau}$, it is immediate as $\phi^\circ(x) = \psi(x)$. 

 Let $x \in e_{u^*} k Q_{\sigma^*} e_\tau$ such that $x$ vanishes in $\Delta^{\lambda^*}_{\sigma^*}$. We have $\ic{-\vec u^+, \vec u^*} x = x^+ + y^+$ and $\ic{-(-\vec u)^+, -\vec u^*} x = x^- + y^-$ with $x^+, x^- \in k Q_\tau$ and $y^+, y^- \in (\RR^*_{\vec u^*}, \RR^*_{-\vec u^*})$. It is immediate that the following map is injective:
 \begin{align*} \bar \pi: \Hom_{\Kb(\proj \Delta_\sigma)} (\Delta_{\tau}, P_u^*) &\to \Hom_{\Kb(\proj \Delta_\sigma)} (\Delta_\tau, e_{\vec u^+} \Delta_\sigma \oplus e_{(-\vec u)^+} \Delta_\sigma)  \\
   f &\mapsto (\pi_+ f, \pi_- f)
 \end{align*}
 (notice that in case (F1), $\pi_- = 0$ and in case (F2), $\pi_- = \pi_+ \eta$). Moreover, 
  \begin{align*} \bar \pi(\phi^\circ(x)) &= (\phi^\circ(\ic{-\vec u^+, \vec u^*} x), \phi^\circ(\ic{-(-\vec u)^+, -\vec u^*} x)) \\ &= (\phi^\circ(x^+) + \phi^\circ(y^+), \phi^\circ(x^-) + \phi^\circ(y^-)) = (0,0)\end{align*}
 where we used Lemma \ref{Rus}. So $\phi^\circ(x) = 0$.
\end{proof}

\begin{lemma} \label{defphi}
 The morphism $\phi^\circ: k Q_{\sigma^*} \to \End_{\Kb(\proj \Delta_\sigma)} (T)$ induces a surjective morphism $\phi: \Delta^{\lambda^*}_{\sigma^*} \to \End_{\Kb(\proj \Delta_\sigma)} (T)$.
\end{lemma}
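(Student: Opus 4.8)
The plan is to use the presentation of $\Delta^{\lambda^*}_{\sigma^*}$ furnished by Proposition \ref{altpres0}, namely $\Delta^{\lambda^*}_{\sigma^*} = k\uQ_{\sigma^*}/I^*$ with two-sided relation ideal $I^* = J C_{\sigma^*} + (C_{\vec v}, \RR^*_{\vec v})_{\vec v \in \sigma^*}$, and to show that the algebra homomorphism $\phi^\circ$ of Lemma \ref{defmorphtilt} annihilates $I^*$. Because $\phi^\circ$ is a homomorphism of algebras, $\phi^\circ(I^*)=0$ holds as soon as $\phi^\circ$ kills each chosen generator of $I^*$, and the induced map $\phi$ is then automatically surjective since $\phi^\circ$ is (Lemma \ref{phisurj}). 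First I would record that $\phi^\circ$ descends to $k\uQ_{\sigma^*}$: the object $T$ is a complex of genuine right $\Delta_\sigma$-modules, so every boundary idempotent, hence every path of $Q_{\sigma^*}$ through a boundary component, is sent to $0$ (equivalently, $\psi$ already lands in $\Delta_\tau$).

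It then remains to treat the three families of generators, each of which is homogeneous for the source and target idempotents, say $r \in e_a k\uQ_{\sigma^*} e_b$. The relations $C_{\vec v}$ are killed for \emph{every} oriented edge $\vec v$ by Lemma \ref{relC}, and the two relations $\RR^*_{\pm\vec u^*}$ attached to the flipped arc are killed by Lemma \ref{Rus}. Every other generator $r$ --- an $\RR^*_{\vec v}$ with $\vec v \ne \pm\vec u^*$, or an element $q C_{\sigma^*}$ with $q$ an arrow of $J$ --- vanishes in $\Delta^{\lambda^*}_{\sigma^*}$ by construction. Whenever its target vertex $b$ is an arc of $\tau$ (that is, $b \ne u^*$), we have $r \in k\uQ_{\sigma^*} e_\tau$, so Lemma \ref{mostrel} applies verbatim and gives $\phi^\circ(r)=0$.

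The main obstacle is the remaining case $b = u^*$, i.e.\ relations that \emph{end} at the new vertex $u^*$; these are the $\RR^*_{\vec v}$ whose preceding side is an orientation of $u^*$, together with the $q C_{\sigma^*}$ with $t(q)=u^*$. Lemma \ref{mostrel} does not reach them, as it only controls elements ending in $\tau$. To handle them I would establish the right-handed analogue of Lemma \ref{mostrel}: for a morphism $g$ out of $P_u^*$, the assignment $g \mapsto (g\theta_+, g\theta_-)$ (only $g\mapsto g\theta_+$ in cases (F1), (F1$'$)) is injective. This is the statement dual to the injectivity of $\bar\pi$ used in the proof of Lemma \ref{mostrel}, and it follows from the exact sequence of Lemma \ref{exseq} together with the fact that the maps $\theta_\pm$ jointly present $P_u^*$. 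Granting it, for $r \in e_a k\uQ_{\sigma^*} e_{u^*}$ vanishing in $\Delta^{\lambda^*}_{\sigma^*}$ the right multiples $r\,\ic{\vec u^*,\vec u^{*+}}$ and $r\,\ic{-\vec u^*,(-\vec u^*)^+}$ (whichever arrows actually leave $u^*$ toward $\tau$ in the relevant case of Figure \ref{figflip2}) again vanish in $\Delta^{\lambda^*}_{\sigma^*}$ and now end in $\tau$; hence $\phi^\circ(r)\theta_+ = \phi^\circ(r)\theta_- = 0$ by Lemma \ref{mostrel}, and the injectivity gives $\phi^\circ(r)=0$. This single device disposes uniformly of both the $\RR^*_{\vec v}$ and the $q C_{\sigma^*}$ ending at $u^*$.

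For the family $q C_{\sigma^*}$ there is a shorter route I would keep in reserve: since $\Sigma$ may be assumed boundaryless (as in the proof of Lemma \ref{tech3}), $\Delta_\sigma$ is symmetric, $C_\sigma$ is central, and $C_\sigma J = 0$ by Lemma \ref{Csigsoc}; from the computations of Lemma \ref{relC} one checks that $\phi^\circ(C_{\sigma^*})$ is the central endomorphism of $T$ given by multiplication by $C_\sigma$ and that $\phi^\circ(q)$ factors through $J\Delta_\sigma$ for $q \in J$, whence $\phi^\circ(q C_{\sigma^*}) = \phi^\circ(q)\,\phi^\circ(C_{\sigma^*}) = 0$. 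Combining all cases yields $\phi^\circ(I^*)=0$, so $\phi$ is well defined, and it is surjective by Lemma \ref{phisurj}. I expect the delicate points to be establishing the dual injectivity cleanly and verifying, case by case through (F1), (F1$'$), (F2), (F3), that the arrows leaving $u^*$ toward $\tau$ genuinely detect every relation ending at $u^*$.
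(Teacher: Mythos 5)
Your reduction agrees with the paper's: surjectivity from Lemma \ref{phisurj}, the $C_{\vec v}$ from Lemma \ref{relC}, $\RR^*_{\pm\vec u^*}$ from Lemma \ref{Rus}, everything lying in $k Q_{\sigma^*} e_\tau$ from Lemma \ref{mostrel}, so that only the relations with target $u^*$ remain, namely $\RR^*_{-\vec u^+}$ (and $\RR^*_{-(-\vec u)^+}$ in case (F3)). The gap is in the device you propose for these: the claimed injectivity of $g \mapsto (g\theta_+, g\theta_-)$ on morphisms out of $P_u^*$ is false, and its failure is exactly the socle obstruction that forces the paper into explicit computations. Indeed, once Proposition \ref{endo} is granted, $\End_{\Kb(\proj \Delta_\sigma)}(T) \cong \Delta^{\lambda^*}_{\sigma^*}$ with $\theta_+ = \phi(\ic{\vec u^*, \vec u^{*+}})$, and (say when $\Sigma$ has no boundary) $e_{u^*} C_{\sigma^*}$ is a nonzero basis element of $\Delta^{\lambda^*}_{\sigma^*}$ by Theorem \ref{basisDsig}, while $C_{\sigma^*} J = 0$ by Lemma \ref{Csigsoc} applied to $\sigma^*$; since the arrows leaving $u^*$ lie in $J$ (outside special-monogon configurations), the nonzero endomorphism $g = \phi(e_{u^*} C_{\sigma^*})$ of $P_u^*$ satisfies $g\theta_+ = g\theta_- = 0$ (that $g \neq 0$ is precisely what Lemma \ref{surjphi} checks with the trace $\Er^*$). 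So in cases (F1) and (F1'), where $\RR^*_{-\vec u^+}$ is a cycle at $u^*$, your argument only shows that $\phi^\circ(\RR^*_{-\vec u^+})$ is killed on the right by the $\theta$'s, hence that it agrees with some multiple of $\phi^\circ(e_{u^*} C_{\sigma^*})$ --- not that it vanishes. Adding left composition with $\pi_\pm$ does not repair this: by Theorem \ref{basisDsig} (3) the two-sided annihilator of $J$ is exactly $(C_{\sigma^*})$, which is nonzero, so the ambiguity by a socle multiple persists. Pinning that coefficient to zero is the real content of the step, and the paper does it by direct homotopy computations, e.g.\ in case (F1) exhibiting the homotopy $\ic{-\vec u^+, \vec u}: e_u \Delta_\sigma \to e_{\vec u^+}\Delta_\sigma$ to show that $\phi^\circ(\ic{\vec u^*,\vec u^+}\ic{-\vec u^+,\vec u^*}) = \theta_+\pi_+$ and $\phi^\circ(f^*_{-\vec u^+})$ are both represented by $(-\alpha, -f_{-\vec u^+})$.

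Two secondary points. Even in case (F3), where $\RR^*_{-\vec u^+}$ starts in $\tau$, Lemma \ref{exseq} does not give your injectivity: it is an exact sequence of right $\Delta_\sigma$-modules, which is the right tool for post-composition (the map $\bar\pi$ in the proof of Lemma \ref{mostrel}); injectivity of pre-composition with $\theta_\pm$ would need the analogous exactness for left multiplication, which is a different statement, is nowhere proved, and cannot be obtained by dualizing since $\Delta_\sigma$ need not be symmetric here (arcs of $\sigma$ other than $u$ may be incident to the boundary). Your fallback for the generators $q C_{\sigma^*}$, via centrality of $C_\sigma$ and $C_\sigma J = 0$, is workable in spirit but rests on identifying $\phi^\circ(C_{\sigma^*})$ up to homotopy with central multiplication, which is essentially the computation already carried out in the proof of Lemma \ref{relC} (note the off-diagonal term $-f^*_{\vec u^*}\io{\vec u^{*+}, -\vec u^+}$ in case (F3) must be removed by an explicit homotopy) --- and in any case it leaves the relations $\RR^*_{-\vec u^+}$ untreated. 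In short: your identification of the residual relations is correct and matches the paper, but no uniform right-detection device can dispose of them, and the case-by-case verification with explicit homotopies (or some other argument controlling the socle coefficient) is unavoidable.
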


\begin{proof}
 The surjectivity comes from Lemma \ref{phisurj}. For the well-definition, we need to check that relations defining $\Delta^{\lambda^*}_{\sigma^*}$ are mapped to $0$. Using Lemmas \ref{relC}, \ref{Rus} and \ref{mostrel}, we need to check the $\RR^*_{\vec v}$'s which are in $\Delta^{\lambda^*}_{\sigma^*} e_{\vec u^*}$. In other terms, $\vec v = -\vec u^+$ always and $\vec v = -(-\vec u)^+$ in case (F3). By symmetry, it is enough to check $\RR^*_{-\vec u^+}$.

 We prove that $\phi^\circ(\RR^*_{-\vec u^+}) = 0$ case by case:
  \begin{enumerate}[\rm (a)]
   \item In case (F1), 
    $$\phi^\circ(\ic{\vec u^*, \vec u^+} \ic{-\vec u^+, \vec u^*}) = \theta_+ \pi_+ = (\ic{-\vec u^+, \vec u^{*+}} - \alpha, 0)$$
    which is homotopic, using $\ic{-\vec u^+, \vec u} : e_u \Delta_\sigma \to e_{\vec u^+} \Delta_\sigma$, to $(-\alpha, -f_{-\vec u^+})$. Moreover,
    \begin{align*}\phi^\circ(f^*_{-\vec u^+}) &= \phi^\circ(\io{-\vec u^*, -\vec u^*}) = -\lambda_{t(\vec u)} \epsilon^{m_{t(\vec u)}-1} \\&= -(\lambda_{t(\vec u)} \delta_{m_{t(\vec u)}, 2} \io{\vec u^+, -\vec u^+}, f_{-\vec u^+}) = (-\alpha, -f_{-\vec u^+}).\end{align*}
   \item In case (F1'),
     $$\phi^\circ(\ic{-\vec u^{*+}, \vec u^+} \ic{-\vec u^+, \vec u^*}) = \psi(\ic{-\vec u^{*+}, \vec u^+}) \pi_+ = \left(\psi(\ic{-\vec u^{*+}, \vec u^+}) , 0 \right).$$
     The homotopy $\ic{-\vec u^{*+}, \vec u} : e_u \Delta_\sigma \to e_{\vec u^{*+}} \Delta_\sigma$ gives that $(\ic{-\vec u^{*+}, \vec u^+}, 0) = 0$. So, if $m_{t(\vec u)} > 1$, $\phi^\circ(\ic{-\vec u^{*+}, \vec u^+} \ic{-\vec u^+, \vec u^*})$ is homotopic to $0$. In this case, we also have $\phi^\circ(f^*_{-\vec u^+}) = 0$.
     If $m_{t(\vec u)} = 1$, $\phi^\circ(\ic{-\vec u^{*+}, \vec u^+} \ic{-\vec u^+, \vec u^*})$ is homotopic to $(-\nu_\M^{-1} \lambda_{t(\vec u)} \io{\vec u^{*+}, -\vec u^+}, 0)$ (see Proof of Lemma \ref{tech3}). 
    Moreover, we have 
    \begin{align*}\phi^\circ(f^*_{-\vec u^+}) &= \phi^\circ(\lambda_{t(\vec u)}^* \io{\vec u^{*+}, \vec u^*}) = -\lambda_{t(\vec u)} \nu_\M^{-1} \io{\vec u^{*+}, -\vec u^+} \pi_+ \\ &= (-\nu_\M^{-1} \lambda_{t(\vec u)} \io{\vec u^{*+}, -\vec u^+}, 0).\end{align*}
   \item In cases (F2) or (F3),
     \begin{align*}\phi^\circ(\ic{-(-\vec u^*)^+, \vec u^+} \ic{-\vec u^+, \vec u^*}) &= \ic{-(-\vec u^*)^+, \vec u^+} \pi_+ \\ &= \left( \begin{bmatrix} \ic{-(-\vec u^*)^+, \vec u^+} & 0 \end{bmatrix}, 0 \right).\end{align*}
    The homotopy $\ic{-(-\vec u^*)^+, \vec u} : e_u \Delta_\sigma \to e_{(-\vec u^*)^+} \Delta_\sigma$ gives $$(\begin{bmatrix} \ic{-(-\vec u^*)^+, \vec u^+} & f_{-\vec u} \end{bmatrix}, 0) = 0, \quad \text{so,}$$
    \begin{align*} 
     & \phi^\circ(\ic{-(-\vec u^*)^+, \vec u^+} \ic{-\vec u^+, \vec u^*}) = \left( \begin{bmatrix} 0 & -f_{-\vec u} \end{bmatrix}, 0 \right) = f_{-\vec u} \pi_- \\ =\,& \phi^\circ(\io{(-\vec u^*)^+, -(-\vec u)^+} \cdot \ic{-(-\vec u^+), -\vec u^*}) = \phi^\circ(\io{(-\vec u^*)^+, -\vec u^*}) =\phi^\circ(f^*_{-\vec u^+}). \qedhere
    \end{align*}
  \end{enumerate}
\end{proof}

The next lemma concludes Proof of Proposition \ref{endo}:

\begin{lemma} \label{surjphi}
 The morphism $\phi: \Delta^{\lambda^*}_{\sigma^*} \surj \End_{\Kb(\proj \Delta_\sigma)}(T)$ is injective.
\end{lemma}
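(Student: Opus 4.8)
The plan is to deduce injectivity from the surjectivity already established in Lemma \ref{defphi} by a rank count. Both $\Delta^{\lambda^*}_{\sigma^*}$ and $B := \End_{\Kb(\proj \Delta_\sigma)}(T)$ are free $k$-modules of finite rank: the former by Corollary \ref{dimd} (its rank being independent of the coefficients $\lambda^*$), and the latter because it decomposes as the direct sum of the four Hom-spaces between $e_\tau \Delta_\sigma$ and $P_u^*$, each of which is a subquotient of a finite free $k$-module and is seen to be free from the explicit descriptions of the preceding lemmas. First I would record the elementary fact that a surjective $k$-linear map $\phi\colon F \surj G$ between free $k$-modules of the same finite rank $r$ is an isomorphism: choosing a $k$-module isomorphism $g\colon G \to F$, the composite $\phi g$ is a surjective endomorphism of a finitely generated module over the commutative ring $k$, hence bijective by the Cayley--Hamilton argument, so that $\phi = (\phi g) g^{-1}$ is bijective. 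Thus it suffices to prove the equality of ranks $\rg_k B = \rg_k \Delta^{\lambda^*}_{\sigma^*}$.

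Next I would compute $\rg_k B$ directly. Writing $B = \End_{\Kb}(e_\tau \Delta_\sigma) \oplus \Hom_{\Kb}(e_\tau \Delta_\sigma, P_u^*) \oplus \Hom_{\Kb}(P_u^*, e_\tau \Delta_\sigma) \oplus \End_{\Kb}(P_u^*)$, the first summand is $e_\tau \Delta_\sigma e_\tau \cong \Delta_\tau$ by Corollary \ref{subtri2}, whose rank is given by Corollary \ref{dimd}. Since $e_\tau \Delta_\sigma$ is projective and concentrated in degree $0$, the two middle summands reduce to Hom-spaces computed against the kernel and image of the differential of $P_u^*$, which are controlled by the exact sequence of Lemma \ref{exseq}; the morphisms $\pi_\pm$ and $\theta_\pm$ then exhibit explicit $k$-bases of these spaces. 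Finally $\End_{\Kb}(P_u^*)$ is handled using the null-homotopy computations together with the endomorphisms $\epsilon$ (in cases (F1), (F1')) and $\eta$ (in case (F2)) and the powers $\ic{\vec u, \vec u}^\ell$. Summing the four ranks and comparing with Corollary \ref{dimd} applied to $\sigma^* = \mu_u(\sigma)$ — where the flip alters the degrees $d_M$ and the set of punctures in a controlled way — should give the desired equality.

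Equivalently, since Lemma \ref{phisurj} shows that $\phi(\Br)$ spans $B$, the real content is to prove that the images under $\phi$ of the basis $\Br$ of $\Delta^{\lambda^*}_{\sigma^*}$ (Theorem \ref{basisDsig}) are $k$-linearly independent in $B$; the rank computation above does exactly this, matching $\phi(\Br)$ bijectively with a basis of $B$ assembled from the four summands. I expect the main obstacle to be precisely this bookkeeping: tracking how each basis element $\ic{\vec u, \vec u}^\ell \ic{\vec u, \vec v}$ is routed through $\psi$, $\pi_\pm$, $\theta_\pm$, $\epsilon$, $\eta$ into a specific component of $B$, and checking that no collision or cancellation occurs once the combinatorics of the flip are accounted for. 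In the no-boundary case one can shortcut part of this, avoiding ranks entirely: $\Delta^{\lambda^*}_{\sigma^*}$ is symmetric (Theorem \ref{symm}), so a nonzero $\ker \phi$, being a two-sided ideal, would contain a nonzero element killed by $J$ on both sides, hence lying in $(C_{\sigma^*})$ by Theorem \ref{basisDsig}~(3); computing $\phi$ on the corresponding $e_u C_{\sigma^*}$ (these are the central-cycle endomorphisms already appearing in Lemma \ref{relC}) shows it is nonzero, a contradiction. Reducing the general case to the no-boundary one via the augmented surface of Definition \ref{defsp} would then conclude.
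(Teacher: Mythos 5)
Your closing ``shortcut'' is, up to two gaps, exactly the paper's proof, and it is the only one of your two routes that survives over the paper's base ring. The paper takes $0 \neq x \in \soc \ker \phi$, writes $x = \lambda e_v C_{\sigma^*}$ via Theorem \ref{basisDsig}, forces $v = u^*$ because $\phi$ restricts on $\Delta^{\lambda^*}_\tau$ to the isomorphism $\psi$, and then evaluates $\phi(e_{u^*} C_{\sigma^*})$ using the matrices already computed in the proof of Lemma \ref{relC} --- all of which you have. First gap: your conclusion ``shows it is nonzero, a contradiction'' is insufficient, because $k$ is an arbitrary commutative ring and $\lambda$ may be a zero divisor; you must exclude $\lambda\,\phi(e_{u^*}C_{\sigma^*}) = 0$ with $\lambda \neq 0$, not merely $\phi(e_{u^*}C_{\sigma^*}) \neq 0$. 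The paper closes this with the non-degenerate trace $\Er^*$ of Definition \ref{trace} and Lemma \ref{commC2}: the trace induced on $\End_{\Kb(\proj \Delta_\sigma)}(T)$ by alternating sums of diagonal entries sends $\phi(e_{u^*}C_{\sigma^*})$ to $\Er^*(e_{\vec u^+}C_\sigma) = 1$ in every case, so $\lambda = 0$. Second gap: your reduction of the boundary case to the closed case via the augmented surface of Definition \ref{defsp} does not go through for this statement. The quotient $\Delta_{\sigma'} \surj \Delta_\sigma$ gives a fully faithful functor on module categories, but it does not send projective $\Delta_\sigma$-modules to projective $\Delta_{\sigma'}$-modules, so there is no evident comparison between $\End_{\Kb(\proj \Delta_\sigma)}(T)$ and the endomorphism ring of any complex over $\Delta_{\sigma'}$. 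The paper performs no such reduction here; the only normalization it allows itself is the idempotent-corner Lemma \ref{reducidemisom}, which is a genuinely different mechanism.

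Your primary route --- surjectivity plus equality of ranks --- has a more structural gap. Over an arbitrary commutative ring the Hom-spaces in $\Kb(\proj \Delta_\sigma)$ are homology modules of complexes of free $k$-modules, which can have torsion and need not be free (already $\Z \xrightarrow{2} \Z$ has non-free cokernel); your claim that each of the four summands of $B$ ``is seen to be free from the explicit descriptions'' is unsupported, and as sketched it is circular, since once $\phi$ is bijective the freeness of $B$ follows from Corollary \ref{dimd} rather than the other way around. Even granting freeness, the two computations you defer --- explicit $k$-bases of $\Hom_{\Kb}(\pm)$ and the invariance of the rank formula of Corollary \ref{dimd} under the flip in all four cases (F1), (F1'), (F2), (F3), where the degrees $d_M$ genuinely change --- are the entire content of the lemma. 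Your elementary observation that a surjection between free $k$-modules of equal finite rank is bijective is correct, and over a field the rank count could be turned into an honest if laborious alternative; over general $k$, however, the trace argument is not an optional refinement but the load-bearing step, which is why the paper's proof is organized around it.
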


\begin{proof}
 Let $x \in \soc \ker \phi$. We have $x = \lambda e_v C_{\sigma^*}$ for some $v \in \sigma^*$ and $\lambda \in k$, thanks to Theorem \ref{basisDsig}. As $\phi$ coincide with $\psi$ which is injective on $\Delta^{\lambda^*}_\tau$, we get that $v = u^*$. Using Proof of Lemma \ref{relC}, we get
  $$\phi(e_{u^*} C_{\sigma^*}) = \left\{\begin{array}{ll}
				 \left(e_{\vec u^+} C_\sigma, 0\right) & \text{in case (F1) or (F1');} \\
                                 \left(\begin{bmatrix} e_{\vec u^+} C_\sigma & 0 \\ 0 & 0 \end{bmatrix}, 0\right) & \text{in case (F2);} \\
                                 \left(\begin{bmatrix} e_{\vec u^+} C_\sigma & 0 \\ -f^*_{\vec u^*} \io{\vec u^{*+}, -\vec u^+} & 0 \end{bmatrix}, 0\right) & \text{in case (F3).}
                                \end{array}\right.$$
 and in every case, using the trace $\Er^*$ of Definition \ref{trace} (see Lemma \ref{commC2}), we get that $\phi(\lambda e_{u^*} C_{\sigma^*}) = 0$ implies $\lambda = 0$ (recall that a trace on $\Delta_\sigma$ induces a trace on endomorphism rings of $\Kb(\proj \Delta_\sigma)$ by alternate sum of diagonal terms).
\end{proof}

\subsection{Proof of Lemma \ref{initcase}} \label{proofinitcase}
 We start by naming some elements of $\Delta_\sigma$. Denote \begin{align*} \ic{\vec u, -\vec u}' &= \ic{\vec u, -\vec u} - \lambda_M e_u \\ \ic{-\vec w, -\vec v}' &= \nu_\M^{-1} (\ic{-\vec w, -\vec v} - \lambda_M \io{\vec w, \vec v}).\end{align*} Then for any other arrow $\ic{\vec s, \vec t}$ of $Q_\sigma$, denote $\ic{\vec s, \vec t}' = \ic{\vec s, \vec t}$ and extend this notation as before for any pair of oriented side starting at the same point. Finally, denote $\io{\vec s, \vec s}' = \lambda_{s(\vec s)}'\ic{\vec s, \vec s}'^{m_{s(\vec s)}-1}$ for any $\vec s \in \sigma$ and $\io{\vec s, \vec t}' = \io{\vec s, \vec s}' \ic{\vec s, \vec t}'$ for any $\vec t \neq \vec s$ in $\sigma$ such that $s(\vec s) = s(\vec t)$. 

 Let us denote $\lambda_v = \lambda_P$ if $-\vec v$ encloses a special monogon with special puncture $P$ and $\lambda_v = 0$ in any other case. Denote also $\lambda_w = \lambda_Q$ if $\vec w$ encloses a special monogon with special puncture $Q$ and $\lambda_w = 0$ in any other case. Finally, denote $\tilde \lambda = \lambda_{s(\vec u)}$  if $m_{s(\vec u)} = 1$ and $\tilde \lambda = 0$ else. Notice that $\lambda_M \lambda_v \lambda_w \tilde \lambda = 1 - \nu_\M$. 

  Then we prove the following identities in $\Delta_\sigma$:
 \begin{enumerate}[\rm (a)]
  \item $\ic{\vec w, \vec v}' = \nu_\M^{-1}  (\ic{\vec w, \vec v} - \lambda_M \lambda_w \lambda_v \io{\vec w, \vec v})$. Indeed, using Proposition \ref{altpres0} and $C_\sigma J = 0$,
    \begin{align*}
      \ic{\vec w, \vec v}' &=  \ic{\vec w, -\vec w} \ic{-\vec w, -\vec v}' \ic{-\vec v, \vec v} \\ &=  \nu_\M^{-1} \ic{\vec w, -\vec w}  (\ic{-\vec w, -\vec v} - \lambda_M \io{\vec w, \vec v}) \ic{-\vec v, \vec v} \\
     &= \nu_\M^{-1}  (\ic{\vec w, \vec v} - \lambda_M \ic{\vec w, -\vec w} \cdot \io{\vec w, \vec v} \cdot \ic{-\vec v, \vec v}) \\
     &= \nu_\M^{-1}  (\ic{\vec w, \vec v} - \lambda_M \ic{\vec w, -\vec w}^2 \cdot \io{-\vec w, -\vec v} \cdot \ic{-\vec v, \vec v}^2) \\
     &= \nu_\M^{-1}  (\ic{\vec w, \vec v} - \lambda_M \lambda_w \lambda_v \ic{\vec w, -\vec w} \cdot \io{-\vec w, -\vec v} \cdot \ic{-\vec v, \vec v}) \\
     &= \nu_\M^{-1}  (\ic{\vec w, \vec v} - \lambda_M \lambda_w \lambda_v \io{\vec w, \vec v}).
    \end{align*}
  \item $\ic{\vec v, \vec v}' - \ic{\vec v, \vec v} \in (C_\sigma)$. We have
   \begin{align*}
    \ic{\vec v, -\vec u} \cdot \io{-\vec u, \vec v} &= e_v C_\sigma \in (C_\sigma) \\
     \text{and} \quad \ic{\vec v, \vec u} \ic{-\vec u, \vec v} &= \ic{\vec v, \vec u} \ic{-\vec u, \vec w} \ic{\vec w, \vec v} = \io{-\vec v, -\vec w} \cdot \ic{\vec w, \vec v} \\ &= \io{-\vec v, \vec w} \cdot \ic{\vec w, -\vec w}^2 \ic{-\vec w, \vec v} \\ &= \lambda_w \io{-\vec v, \vec w} \cdot \ic{\vec w, -\vec v} \ic{-\vec v, \vec v} = \lambda_w C_\sigma \ic{-\vec v, \vec v} \in (C_\sigma)
   \end{align*}
  So, using (a), we have, modulo $(C_\sigma)$,
   \begin{align*}
    \ic{\vec v, \vec v}' &= \ic{\vec v, \vec u} \ic{\vec u, -\vec u}' \ic{-\vec u, \vec v}' \\
	&=  \nu_\M^{-1} \ic{\vec v, \vec u} (\ic{\vec u, -\vec u} - \lambda_M e_u )   (\ic{-\vec u, \vec v} - \lambda_M \lambda_w \lambda_v \io{-\vec u, \vec v}) 
        = \nu_\M^{-1} \ic{\vec v, \vec v}
   \end{align*}
  and, if $\nu_\M \neq 1$, $\ic{\vec v, \vec v} \in (C_\sigma)$ so the result follows.
  \item $\io{\vec v, \vec v}' - \nu_\M^{-1} \io{\vec v, \vec v} \in (C_\sigma)$. This follows from (b) and $\lambda_{s(\vec v)}' = \nu_\M^{-1} \lambda_{s(\vec v)}$.
  \item $\io{\vec v, \vec v}^2 - \tilde \lambda \io{\vec v, \vec v} \in (C_\sigma)$. It is an easy computation:
   $$\io{\vec v, \vec v}^2 = \lambda_{s(\vec v)}^2 e_v \ic{\vec v, \vec v}^{2 m_{s(\vec v)} - 2} = \left\{\begin{array}{ll} \lambda_{s(\vec v)}^2 e_v = \lambda_{s(\vec v)} \io{\vec v, \vec v} & \text{if $m_{s(\vec v)} = 1$;} \\ \lambda_{s(\vec v)} e_v C_\sigma & \text{if $m_{s(\vec v)} = 2$;} \\ 0 & \text{else.}\end{array} \right.$$
  \item $\io{\vec w, \vec v}' = \nu_\M^{-1}  \io{\vec w, \vec v}$. Using (a), (c), (d) and $C_\sigma J = 0$,
    \begin{align*}
     \io{\vec w, \vec v}' &= \ic{\vec w, \vec v}' \cdot \io{\vec v, \vec v}' = \nu_\M^{-2} (\ic{\vec w, \vec v} - \lambda_M \lambda_w \lambda_v \io{\vec w, \vec v})\io{\vec v, \vec v} \\
      &= \nu_\M^{-2} \ic{\vec w, \vec v} (\io{\vec v, \vec v} - \lambda_M \lambda_w \lambda_v \io{\vec v, \vec v}^2) \\ &= \nu_\M^{-2}  (1-\lambda_M \lambda_w \lambda_v \tilde \lambda) \ic{\vec w, \vec v} \cdot \io{\vec v, \vec v} = \nu_\M^{-1}  \io{\vec w, \vec v}.
    \end{align*}
   \item $\ic{-\vec u, \vec w}' \ic{-\vec w, -\vec v}' = \io{\vec u, \vec v}'$. Indeed, using (e),
    \begin{align*}\ic{-\vec u, \vec w}' \ic{-\vec w, -\vec v}' &= \nu_\M^{-1} \ic{-\vec u, \vec w}  (\ic{-\vec w, -\vec v} - \lambda_M \io{\vec w, \vec v}) \\ &= \nu_\M^{-1} (\io{\vec u, \vec v} - \lambda_M \io{-\vec u, \vec v}) = \ic{\vec u, \vec w}' \cdot \io{\vec w, \vec v}' = \io{\vec u, \vec v}'.\end{align*}
   \item $\ic{-\vec w, -\vec v}' \ic{\vec v, \vec u}' = \io{\vec w, -\vec u}'$. This is similar as (f).
   \item $\io{-\vec v, -\vec w}' = \io{-\vec v, -\vec w} = \ic{\vec v, \vec u}' \ic{-\vec u, \vec w}'$. By (c) and (d),
    \begin{align*}
     \io{-\vec v, -\vec w}' &= \nu_\M^{-1} \ic{-\vec v, \vec v} \cdot \io{\vec v, \vec v} \cdot \ic{\vec v, -\vec w}' \\
	      &= \nu_\M^{-1} \io{-\vec v, \vec v} (\ic{\vec v, -\vec w} - \lambda_M \io{-\vec v, -\vec w}\cdot \ic{\vec w, -\vec w}) \\
	      &= \nu_\M^{-1}  (\io{-\vec v, -\vec w} - \lambda_M \io{-\vec v, -\vec v} \cdot \ic{-\vec v, \vec v}^2 \cdot \io{\vec v, \vec w} \cdot \ic{\vec w, -\vec w}^2) \\
	      &= \nu_\M^{-1}  (\io{-\vec v, -\vec w} - \lambda_M \lambda_v \lambda_w \io{-\vec v, -\vec v}^2 \cdot \ic{-\vec v, -\vec w} ) \\
              &= \nu_\M^{-1}  (\io{-\vec v, -\vec w} - \lambda_M \lambda_v \lambda_w \tilde \lambda \io{-\vec v, -\vec w} ) = \io{-\vec v, -\vec w}
    \end{align*}
    and the second equality is trivial.
   \item $\io{\vec v, \vec x}' = \io{\vec v, \vec x}$ if $s(\vec x) = s(\vec v)$ and $t(\vec x)$ is enclosed by $-\vec v$. By (c),
    \begin{align*}
     \io{\vec v, -\vec v}' &= \nu_\M^{-1} \io{\vec v, \vec v} \cdot \ic{\vec v, \vec u} \ic{\vec u, -\vec u}' \ic{-\vec u, \vec w} \ic{\vec w, -\vec v}' \\
		  &= \nu_\M^{-1} \io{\vec v, \vec v} \cdot (\ic{\vec v, \vec w} - \lambda_M \io{-\vec v, -\vec w}) \ic{\vec w, -\vec v}' \\
                  &= \nu_\M^{-1} \io{\vec v, \vec v} \cdot (\ic{\vec v, -\vec w} - \lambda_M \io{-\vec v, \vec w} \cdot \ic{\vec w, -\vec w}^2) \ic{-\vec w, -\vec v}' \\
		  &= \nu_\M^{-2} \io{\vec v, \vec v} \cdot (\ic{\vec v, -\vec w} - \lambda_M \lambda_w \io{-\vec v, -\vec w}) (\ic{-\vec w, -\vec v} - \lambda_M \io{\vec w, \vec v}) \\
		  &= \nu_\M^{-2} \io{\vec v, \vec v} \cdot (\ic{\vec v, \vec w} - \lambda_M \lambda_w \io{-\vec v, \vec w}) (\ic{\vec w, -\vec v} - \lambda_M \ic{\vec w, -\vec w}^2 \cdot \io{-\vec w, \vec v}) \\
                  &= \nu_\M^{-2} \io{\vec v, \vec v} \cdot (\ic{\vec v, \vec w} - \lambda_M \lambda_w \io{-\vec v, \vec w}) (\ic{\vec w, -\vec v} - \lambda_M \lambda_w  \io{\vec w, \vec v})
    \end{align*}
    and by (d), we have
    \begin{align*}\io{-\vec v, \vec w} \cdot \io{\vec w, \vec v} &= \io{-\vec v, -\vec v} \cdot \ic{-\vec v, \vec w} \cdot \io{\vec w, \vec w} \cdot \ic{\vec w, \vec v} = \io{-\vec v, -\vec v}^2 \cdot \ic{-\vec v, \vec w} \ic{\vec w, \vec v} \\&= \tilde \lambda \io{-\vec v, -\vec v} \cdot \ic{-\vec v, -\vec v} \ic{-\vec v, \vec v} = \tilde \lambda e_v C_\sigma \ic{-\vec v, \vec v} = \tilde \lambda \lambda_v e_v C_\sigma\end{align*}
    so 
    \begin{align*}
     \io{\vec v, -\vec v}' &= \nu_\M^{-2} \io{\vec v, \vec v} \cdot (\ic{\vec v, -\vec v} - \lambda_M \lambda_w(2 - \lambda_M \lambda_w \tilde \lambda \lambda_v  ) e_v C_\sigma) \\
             &= \nu_\M^{-2} (\io{\vec v, -\vec v} - \lambda_M \lambda_w \tilde \lambda (1 + \nu_\M) e_v C_\sigma)
    \end{align*}
    Moreover, we have $C_\sigma \ic{-\vec v, \vec x} = \io{-\vec v, \vec x} \cdot \ic{\vec x, \vec v} \ic{\vec v, \vec x} = \lambda_v \io{\vec v, \vec x}$ so
    \begin{align*}
     \io{\vec v, \vec x}' &= \nu_\M^{-2} (\io{\vec v, -\vec v} - \lambda_M \lambda_w \tilde \lambda (1 + \nu_\M) e_v C_\sigma) \ic{-\vec v, \vec x} \\
			&= \nu_\M^{-2} (1 - \lambda_M \lambda_w \tilde \lambda \lambda_v (1 + \nu_\M) ) \io{\vec v, \vec x} \\
                       &= \nu_\M^{-2} (1 - (1 - \nu_\M) (1 + \nu_\M) ) \io{\vec v, \vec x} = \io{\vec v, \vec x}.
    \end{align*}
   \item $\io{\vec x, -\vec v}' = \io{\vec x, -\vec v}$ if $s(\vec x) = s(\vec v)$ and $t(\vec x)$ is enclosed by $-\vec v$. Same as (i).
   \item $\io{-\vec w, \vec x}' = \io{-\vec w, \vec x}$ if $s(\vec x) = s(\vec w)$ and $t(\vec x)$ is enclosed by $\vec w$. Same as (i).
   \item $\io{\vec x, \vec w}' = \io{\vec x, \vec w}$ if $s(\vec x) = s(\vec w)$ and $t(\vec x)$ is enclosed by $\vec w$. Same as (i).
   \item $\lambda'_{s(\vec x)} \ic{\vec x, \vec x}'^{m_{s(\vec x)}} = e_x C_\sigma$ for any $\vec x \in \sigma$ such that $s(\vec x) = s(\vec u)$ and $\vec x \neq \pm \vec u$. If $t(\vec x)$ is enclosed by $-\vec v$ then using (i), we have
    $$\lambda'_{s(\vec x)} \ic{\vec x, \vec x}'^{m_{s(\vec x)}} = \ic{\vec x, \vec v} \cdot \io{\vec v, \vec x}' = \ic{\vec x, \vec v} \cdot \io{\vec v, \vec x} = e_x C_\sigma.$$
   This is analogous if $t(\vec x)$ is enclosed by $\vec w$. If $\vec x = \vec v$, take $\vec y$ such that $t(\vec y)$ is enclosed by $-\vec v$. Thanks to (i), we have
    $$\lambda'_{s(\vec v)} \ic{\vec v, \vec v}'^{m_{s(\vec v)}} = \io{\vec v, \vec y}' \cdot \ic{\vec y, \vec v} = \io{\vec v, \vec y} \cdot \ic{\vec x, \vec v} = e_v C_\sigma.$$
    This is analogous if $\vec x = -\vec v$, $\vec x = \vec w$ or $\vec x = -\vec w$.
   \item $\ic{\vec u, \vec u}'^{m_{s(\vec u)}} = \ic{-\vec u, -\vec u}'^{m_{s(\vec u)}}$. It follows from (e).
 \end{enumerate}
 
 From these identities, we deduce that the following map is a morphism of algebras:
 \begin{align*}
  \phi: \Delta_\sigma^\mu &\to \Delta_\sigma^\lambda \\
        e_x &\mapsto e_x & \text{for $x \in \sigma$}; \\
       \ic{\vec x, \vec y} &\mapsto \ic{\vec x, \vec y}' & \text{for $\ic{\vec x, \vec y} \in Q_{\sigma, 1}$}.
 \end{align*}
 Indeed, relations of the form $C_{\vec x}$ for $\Delta^\mu_\sigma$ are mapped to $0$ by $\phi$ because of (m) and (n) if $s(\vec x) = s(\vec v)$ or $t(\vec x) = s(\vec v)$ and trivially otherwise. The relation coming from the special monogon enclosed by $\vec u$ is mapped to $0$ easily. Relations coming from the triangle $-\vec u$, $\vec v$, $-\vec w$ are mapped to $0$ thanks to (f), (g) and (h). Relations $R_{P, n}$ coming from minimal polygons $P$ completely enclosed by $-\vec v$ or by $\vec w$ are mapped to $0$ thanks to (i), (j), (k) and (l) which permit to identify external paths winding around $s(\vec v)$.

 If we denote by $\psi: \Delta^\lambda_\sigma \to \Delta^\mu_\sigma$ the morphism obtained similarly, it is easy to prove that $\phi$ and $\psi$ are inverse of each other by using (e), $\mu_M = -\lambda_M$ and $\nu_\M' = \nu_\M^{-1}$ where $\nu_\M'$ is computed for $\mu$. \qed

\bibliographystyle{alphanum}
\bibliography{../biblio/biblio}

\end{document}